\newtheorem{lem}[subsubsection]{Lemma}
\newtheorem{thm}[subsubsection]{Theorem}
\newtheorem{cor}[subsubsection]{Corollary}
\newtheorem{prop}[subsubsection]{Proposition}
\newtheorem*{TheMainThm}{Theorem}
\newtheorem*{EllCurveCor}{Corollary}
\newcommand{\Z}{\mathbb{Z}}
\newcommand{\Q}{\mathbb{Q}}
\newcommand{\R}{\mathbb{R}}
\newcommand{\C}{\mathbb{C}}
\newcommand{\F}{\mathbb{F}}
\newcommand{\A}{\mathbb{A}}
\newcommand{\bbP}{\mathbb{P}}
\newcommand{\Qbar}{\overline{\mathbb{Q}}}
\newcommand{\GL}{\mathrm{GL}}
\newcommand{\SL}{\mathrm{SL}}
\newcommand{\PGL}{\mathrm{PGL}}
\newcommand{\Hom}{\mathrm{Hom}}
\newcommand{\End}{\mathrm{End}}
\newcommand{\Gal}{\mathrm{Gal}}
\newcommand{\Spec}{\mathrm{Spec}\,}
\newcommand{\Spf}{\mathrm{Spf}}
\newcommand{\Ind}{\mathrm{Ind}}
\newcommand{\tr}{\mathrm{tr}\,}
\newcommand{\coker}{\mathrm{coker}}
\newcommand{\im}{\mathrm{im}}
\newcommand{\Sym}{\mathrm{Sym}}
\newcommand{\Frob}{\mathrm{Frob}}
\newcommand{\Nm}{\mathrm{Nm}}
\newcommand{\CNL}{\mathrm{CNL}}
\newcommand{\CNLO}{\mathrm{CNL}_{\mathcal{O}}}
\newcommand{\inv}{\mathrm{inv}}
\newcommand{\calO}{\mathcal{O}}
\newcommand{\frakp}{\mathfrak{p}}
\newcommand{\frakm}{\mathfrak{m}}
\newcommand{\val}{\mathrm{val}}
\newcommand{\Aut}{\mathrm{Aut}}
\newcommand{\rhobar}{\overline{\rho}}
\newcommand{\chibar}{\overline{\chi}}
\newcommand{\Ad}{\mathrm{Ad}}
\newcommand{\depth}{\mathrm{depth}}
\newcommand{\rk}{\mathrm{rk}}
\newcommand{\rank}{\mathrm{rank}}
\newcommand{\Nil}{\mathrm{Nil}}
\title[Modularity of nearly ordinary $2$-adic residually dihedral Galois representations]{Modularity of nearly ordinary $2$-adic residually dihedral Galois representations}
\author{Patrick B. Allen}
\date{}
\address{Department of Mathematics, Northwestern University,
2033 Sheridan Road, Evanston, IL 60208-2730, USA}
\email{pballen@math.northwestern.edu}
\begin{document}

\maketitle

\begin{abstract}

We prove modularity of some two dimensional, $2$-adic Galois representations over a totally real field that are nearly ordinary at all places above $2$ and that are residually dihedral. We do this by employing the strategy of Skinner and Wiles, using Hida families, together with the $2$-adic patching method of Khare and Wintenberger. As an application we deduce modularity of some elliptic curves over totally real fields that have good ordinary or multiplicative reduction at places above $2$.

\end{abstract}

\setcounter{tocdepth}{2}

\tableofcontents

\section*{Introduction}\label{Intro}

The Fontaine-Mazur-Langlands conjecture (in a special case) predicts that totally odd, geometric, absolutely irreducible $p$-adic representations of $\mathrm{Gal}(\overline{\mathbb{Q}}/F)$, for $F\subset \Qbar$ a totally real number field, arise from Hilbert modular forms. When $F=\Q$ and $p$ is odd, this conjecture has been resolved in almost all cases. When $[F:\Q] > 1$ and $p$ is odd, much is still known (especially if $p$ is split in $F$). Less is known when $p=2$ because the Taylor-Wiles method encounters technical difficulties. The first $2$-adic modularity lifting theorem (known to the author) was proved by Dickinson \cite{Dickinson2adic} for $F=\Q$. Recently, Khare and Wintenberger \cite{KW2} and Kisin \cite{Kisin2adic} developed an extension of the Taylor-Wiles method to prove modularity of a wide class of $2$-adic representations, and this was essential in their proof of Serre's conjecture. It's interesting to note that, since proofs of the known cases of the Fontaine-Mazur-Langlands conjecture for $\GL_2$ use Serre's conjecture as an important ingredient, $2$-adic modularity lifting theorems have had applications in proving modularity of $p$-adic representations even when $p$ is odd.

Due to their technical nature, the current $2$-adic modularity lifting theorems require stronger assumptions than their $p>2$ counterparts. One such assumption is that the residual representation has non-solvable image (the main theorem of \cite{Dickinson2adic} has a gap in the residually solvable case; namely in the proof of Lemma 40 on page 369 of \textit{loc. cit.} it is incorrectly assumed that certain matrices have distinct eigenvalues). A $2$-adic modularity lifting theorem in the residually solvable case is desirable for a number of reasons, one of which is that the $2$-adic representations arising from elliptic curves are always residually solvable. The main result of this paper is such a theorem.

This is done by employing a strategy of Skinner and Wiles. They showed, in the $p>2$ case, that certain representations of $\mathrm{Gal}(\Qbar/F)$ for (most) totally real fields obeying an ordinarity hypothesis are modular, assuming only that the residual representation is absolutely irreducible, cf. \cite{SWirreducible}.  Usually, one assumes further that the residual representation is absolutely irreducible when restricted to the finite index subgroup $\Gal(\Qbar/F(\zeta_p))$. Their strategy is to use Hida families in order to move to a new ``residual" representation where one can assume stronger conditions. We carry out the Skinner and Wiles method in the $2$-adic case and prove modularity of representations that are ordinary at places above $2$ and whose reductions are absolutely irreducible with solvable image. It is worth mentioning that this strategy is based in turn on a strategy of theirs for proving modularity in the more difficult case of residually reducible representations, albeit for less general totally real fields, c.f. \cite{SWreducible}.

The main result of this paper is the following theorem. In assumption (2) below, the isomorphism of local class field theory is normalized so that uniformizers correspond to arithmetic Frobenii. Also, the extension $L/F$ in assumption (5) is unique since any absolutely irreducible, $2$-dimensional, mod $2$ representation with solvable image is dihedral, by the classification of subgroups of $\mathrm{PGL}_2(\overline{\F}_2)$, and the image of a mod $2$ dihedral representation has order not divisible by $4$.

\begin{TheMainThm}

Let $F$ be a totally real subfield of $\overline{\Q}$.  Let $J_F$ denote set of embeddings $F \hookrightarrow \overline{\Q}$. Fix embeddings $\overline{\Q}\hookrightarrow \Qbar_2$ and $\overline{\Q}\hookrightarrow \C$. Via these embeddings we view $J_F$ as the set of embeddings $\{F \hookrightarrow \R\}$ as well as the set of embeddings $\{F \hookrightarrow \Qbar_2\}$. For any $v|2$ in $F$, let $J_{F_v}\subseteq J_F$ denote the subset of $\tau$ that give rise to $v$. We identify $J_{F_v}$ with the set of embedding $F_v \hookrightarrow \Qbar_2$.

Let
	\[ \rho: G_F \longrightarrow \GL_2(\Qbar_2)
	\]
be a continuous representation unramified outside finitely many primes. Assume there is some $(\mathbf{k},\mathbf{w}) \in J_F^2$, such that $k_\tau \ge 2$ for each $\tau\in J_F$ and $ w= k_\tau + 2w_\tau$ is independent of $\tau$, and such that:
\begin{enumerate}
	\item $\det\rho = \phi\epsilon_2^{w-1}$, with $\phi$ a finite order character and $\epsilon_2$ the $2$-adic cyclotomic character;
	\item for each $v|2$, $\rho|_{G_v} \cong \left( \begin{array}{cc} \ast & \ast \\ 
			& \chi_v \end{array} \right)$ and $\chi_v(y) = \prod_{\tau \in J_{F_v}} y^{-w_\tau}$ on some open subgroup of $\mathcal{O}_{F_v}^\times$, viewing $\chi_v$ as a character of $F_v^\times$ via class field theory;
	\item for each choice of complex conjugation $c$, $\det\rho(c) = -1$.
\end{enumerate}
	Let $\overline{\rho}: G_F \rightarrow \GL_2(\overline{\F_2})$ denote the residual representation associated to $\rho$. We also assume:
\begin{enumerate}
	\item[(4)] $\overline{\rho}$ is absolutely irreducible with solvable image;
	\item[(5)] letting $L/F$ denote the unique quadratic extension such that $\overline{\rho}|_{G_L}$ is abelian, if $L/F$ is CM, then there is some $v|2$ in $F$ that does not split in $L$.
\end{enumerate}

	Under these assumptions $\rho$ is modular, i.e. there is a $2$-nearly ordinary, regular algebraic, cuspidal automorphic representation $\pi$ of $\GL_2(\A_F)$ such that $\rho \cong \rho_\pi$.
\end{TheMainThm}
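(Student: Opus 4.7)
The plan is to adapt the Skinner--Wiles approach via nearly ordinary Hida families, combined with Khare--Wintenberger's 2-adic patching, to deduce modularity despite the residually small image of $\rhobar$.

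First, using (1)-(3), I would formulate a nearly ordinary deformation problem $\mathcal{D}$ for $\rhobar$: at each finite ramification place $v$ of $\rho$ not above $2$, prescribe a local condition matching $\rho|_{G_v}$; at each $v\mid 2$, prescribe the nearly ordinary condition compatible with (2) and the character $\chi_v$. Let $R_\mathcal{D}$ denote the universal deformation ring and $\boldT_\mathcal{D}$ the big nearly ordinary Hecke algebra localized at the maximal ideal of $\rhobar$, so there is a natural surjection $R_\mathcal{D}\twoheadrightarrow \boldT_\mathcal{D}$. The representation $\rho$ gives a $\Qbar_2$-valued point of $\Spec R_\mathcal{D}$, and modularity is equivalent to this point lying in $\Spec \boldT_\mathcal{D}$. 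Since $\rhobar$ is dihedral it is automorphic (via a CM theta series coming from the quadratic extension $L/F$), so $\Spec \boldT_\mathcal{D}$ is non-empty: a Hida family of dihedral lifts furnishes a ``seed'' classical modular point $y\in \Spec \boldT_\mathcal{D}$ reducing to $\rhobar$.

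Next, following Skinner--Wiles, one does not apply Taylor--Wiles patching to $\rhobar$ directly, since small residual image obstructs the usual bigness hypotheses; instead one uses the Hida family, together with an auxiliary choice of ramification and the 2-adic patching of Khare--Wintenberger, to produce a patched module $M_\infty$ over a completed deformation ring $R_\infty\twoheadrightarrow R_\mathcal{D}$. Commutative algebra (depth bounds and finite freeness of $M_\infty$) then forces every irreducible component of $\Spec R_\mathcal{D}$ that meets the modular locus to be entirely modular, and the Hida family through $y$ is used to propagate modularity across components to the one containing $\rho$. Hypothesis (5) plays its role precisely in the CM case of $L/F$: without a place $v\mid 2$ at which $L/F$ is non-split, the CM components of $\Spec R_\mathcal{D}$ would be isolated from non-CM ones, and there would be no way to reach a non-CM $\rho$; a non-split place above $2$ provides the local deformation-theoretic obstruction needed to break the induced-from-CM symmetry and keep $M_\infty$ large enough to fill out the component of $\rho$.

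The main obstacle is that three delicate issues interact simultaneously: the small residual image, which rules out standard Taylor--Wiles bigness; the 2-adic technicalities (complex conjugation lies in $G_F$ since $\zeta_2=-1\in F$) that are precisely the reason earlier 2-adic lifting theorems assumed non-solvable residual image; and the CM subtlety governed by (5). Each ingredient has been handled in isolation in the prior literature — Skinner--Wiles for small image at $p>2$, Khare--Wintenberger and Kisin for $p=2$ with large residual image, and the residually reducible variant of Skinner--Wiles for certain totally real situations — but combining them, in particular carrying out the Khare--Wintenberger 2-adic patching carefully enough to track signs of complex conjugation and the CM symmetry, while simultaneously running the Skinner--Wiles Hida-family argument in the dihedral setting, is the technical heart of the paper.
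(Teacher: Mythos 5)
Your proposal follows the correct broad strategy --- reduce to showing $\rhobar$ has a nearly ordinary modular lift (via dihedral automorphy and Wiles's insertion of a weight-one form into a Hida family), then run a Skinner--Wiles pro-modularity argument combined with the Khare--Wintenberger $2$-adic patching --- but it has genuine gaps at the two points where the method actually has to be altered to work in this setting. First, the assertion that ``depth bounds and finite freeness of $M_\infty$ force every irreducible component of $\Spec R_\mathcal{D}$ that meets the modular locus to be modular'' does not stand on its own, because patching only succeeds once one has Taylor--Wiles primes, and the small-image obstruction you acknowledge is not overcome merely by invoking Hida families. The paper's device is to localize the whole apparatus at a carefully chosen dimension-one, characteristic-$2$ \emph{nice prime} $\mathfrak{p}$ whose associated representation $\rho_\mathfrak{p}$ is non-dihedral, is distinguished at every $v\mid 2$, and --- crucially --- has a non-trivial unipotent element in its image. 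That unipotency hypothesis is what lets one apply Pink's theorem on compact subgroups of $\SL_2$ to show $\rho_\mathfrak{p}(G_F)\cap\SL_2$ is open in $\SL_2(K_0)$ for some finite-index subfield $K_0$ of the residue field at $\mathfrak{p}$, and this openness is the engine behind the explicit cocycle computations that produce Taylor--Wiles primes in \S\ref{AuxPrimesSec}. The Skinner--Wiles computation of these cohomology groups uses the semisimplicity of complex conjugation and breaks precisely because $p=2$, so something must replace it.

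Second, ``the Hida family through $y$ is used to propagate modularity across components'' misstates the propagation mechanism. What actually happens is: the localized $R=\mathbf{T}$ theorem (\ref{LocRredT}) shows any component through a nice prime is pro-modular; then Grothendieck's connectedness theorem (\ref{GrothConnectivity}), applied via the presentation of $\overline{R}_{F,S}^\psi$ from \ref{CMPres}, produces a chain of components $C_0,\dots,C_n$ joining a modular component to the one through $\rho$ with $\dim(C_i\cap C_{i+1})\ge [F:\Q]-1$; each intersection is then shown to contain a nice prime, so modularity hops to the next component. Making this run requires two further inputs your plan omits: (i) normality of the local deformation ring at nice primes (\ref{localdefringnormal}), so that the completion of its localization remains a domain --- without this one cannot identify which components support the patched module after localizing; and (ii) in the case $L/F$ not CM, a base change governed by known cases of (weak) Leopoldt's conjecture so that the dihedral locus has large enough codimension that it cannot contain any intersection $C_i\cap C_{i+1}$. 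Your reading of the role of hypothesis (5) in the CM case --- that a nonsplit $v\mid 2$ supplies the distinguishedness that forces non-dihedrality --- is on target, but it is only half the story; the other half is the Leopoldt input in the non-CM case.
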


We will actually prove this with assumption (4) replaced by the assumption that $\rhobar$ is absolutely irreducible and admits a $2$-nearly ordinary modular lift, c.f. \ref{finalthm}. A theorem of Wiles allows us to produce ordinary lifts in the residually dihedral case, cf. \ref{OrdinaryLift}, and the main theorem follows.

First, some comments on the assumptions in the theorem. Condition (2) is the \textit{near-ordinarity} condition, and it is essential to the method as we use Hida families. We do not assume distinguishedness, as in \cite{SWirreducible}, because of the advances in modularity lifting theorems since their paper; namely the use of framed deformation rings. The distinguishedness of certain deformations of $\overline{\rho}$ will, however, play an imoportant role in the argument. Namely, to show that certain points of the local deformation ring are normal. This will be elaborated on below. The condition (4), together with allowing general totally real fields $F$, is the main improvement in this paper. As mentioned above, both the results of \cite{KW2} and \cite{Kisin2adic} exclude the residually solvable case, and they also assume $F$ is unramified above $2$ in the potentially ordinary case. Condition (5) is technical and is related to the fact that when the extension $L/F$ is CM and every $v|2$ in $F$ splits in $L$, Hida's universal nearly ordinary Hecke algebra has CM components. I will elaborate on this below.

The main theorem has the following corollary.

\begin{EllCurveCor}

Let $F$ be a totally real field and let $E$ be an elliptic curve over $F$ with $j$-invariant $j_E$. Let $\Delta$ be the discriminant of some Weierstrass equation defining $E$. Assume:
\begin{enumerate} 
	\item for every $v|2$ in $F$, the valuation of $j_E$ at $v$ is $\le 0$;
	\item $E$ has no $2$-torsion defined over $F$ and $\Delta$ is not a square in $F$;
	\item if $\Delta$ is totally negative, then there is some $v|2$ in $F$ such that $\Delta$ is not a square in $F_v$.
\end{enumerate}
	Then $E$ is modular.
\end{EllCurveCor}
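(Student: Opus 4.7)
The plan is to apply the Main Theorem to the $2$-adic Tate module representation $\rho = \rho_{E,2} \colon G_F \to \GL_2(\Qbar_2)$, taking weights $k_\tau = 2$ and $w_\tau = 0$ for every $\tau \in J_F$, so that $w = 2$. Conditions (1) and (3) of the Main Theorem are automatic for an elliptic curve over a totally real field: the Weil pairing gives $\det\rho_{E,2} = \epsilon_2 = \epsilon_2^{w-1}$ (so one may take $\phi = 1$), and $\epsilon_2(c) = -1$ for any complex conjugation $c$.

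For condition (2), fix $v\mid 2$. By hypothesis $v(j_E)\le 0$, and I would split into two cases. If $v(j_E) < 0$ then $E$ has potentially multiplicative reduction at $v$; the theory of Tate curves presents $\rho|_{G_v}$ in upper triangular form with quotient character at most a quadratic twist, in particular finite order and trivial on an open subgroup of $\mathcal{O}_{F_v}^\times$. If $v(j_E) = 0$ then, because $j = 0$ is the unique supersingular $j$-invariant in characteristic $2$, the curve $E$ acquires good \emph{ordinary} reduction over a finite extension $F_v'/F_v$; the unit-root subspace is canonical, descends to a $G_v$-stable line in the Tate module, and the quotient character is unramified over $F_v'$, hence finite order on inertia at $v$. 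In either case the quotient character coincides with $\prod_{\tau \in J_{F_v}} y^{-w_\tau} = 1$ on some open subgroup of $\mathcal{O}_{F_v}^\times$, as (2) demands.

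For (4), the assumption $E[2](F) = 0$ forces the $2$-division cubic of $E$ to be irreducible over $F$, and then $\Delta \notin (F^\times)^2$ identifies $\Gal(F(E[2])/F)$ with the full group $\GL_2(\F_2) \cong S_3$. The standard $2$-dimensional $\F_2$-representation of $S_3$ remains irreducible over $\overline{\F}_2$: a $3$-cycle has eigenvalues $\omega, \omega^{-1} \in \overline{\F}_2$ with distinct eigenlines, and any transposition swaps these eigenlines, leaving no $S_3$-stable line. Hence $\bar\rho$ is absolutely irreducible with solvable image. For (5), the unique quadratic extension $L/F$ with $\bar\rho|_{G_L}$ abelian is the fixed field of $A_3 \subset S_3$, namely $L = F(\sqrt{\Delta})$; it is CM iff $\Delta$ is totally negative, and a place $v\mid 2$ splits in $L$ iff $\Delta$ is a square in $F_v$. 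Hypothesis (3) of the corollary therefore translates exactly to hypothesis (5) of the Main Theorem, and the Main Theorem yields modularity of $\rho_{E,2}$, equivalently of $E$.

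I do not expect a genuine obstacle; the only nontrivial input is the characteristic-$2$ coincidence used in (2), namely that $j = 0$ is the unique supersingular $j$-invariant in characteristic $2$, which is precisely what makes $v(j_E) \le 0$ equivalent to the potentially nearly ordinary condition at $v$. The remaining hypotheses are formal consequences of the assumptions on $E[2]$ and $\Delta$.
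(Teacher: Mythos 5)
Your proof is correct and follows essentially the same route as the paper: both reduce the corollary to the Main Theorem by checking conditions (1)--(5) for $\rho_{E,2}$ with weight $(\mathbf{k},\mathbf{w}) = ((2,\dots,2),(0,\dots,0))$. The paper's proof is terser, stating only the translations of hypotheses (2), (4), (5) into facts about the $2$-division cubic, the discriminant, and the $j$-invariant, whereas you spell out the verifications of (1), (3), and the two local cases at $v\mid 2$ (potentially multiplicative versus potentially good ordinary via the supersingularity of $j=0$ in characteristic $2$); but the underlying argument is the same.
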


This essentially follows immediately from the main theorem; the details are given in \ref{MainCorProof}. Some examples of elliptic curves satisfying the assumption are given in \ref{TheExamples}. Note that the discriminants of two different Weierstrass equations differ by a square in $F$; so, the assumptions of the corollary are insensitive to the choice of Weierstrass equation.

\subsection*{Strategy} I will now elaborate on the strategy of the proof.

We consider a certain large deformation ring where at places above $2$ we enforce no $p$-adic Hodge theory conditions but demand that the representations be reducible. Using standard arguments, we show that there is a surjective morphism from our large deformation ring $R$ (tensored with a certain Iwasawa algebra) to a $2$-nearly ordinary Hecke algebra $\mathbf{T}$, and we call a prime ideal of $R$ \textit{pro-modular} if it is the pullback of one from $\mathbf{T}$. If we prove that all prime ideals of $R$ are pro-modular, then Hida's classicality theorem implies that any representation satisfying (1), (2), and (3) in the statement of the theorem is a classical point of this large Hecke algebra.

In order to show that all prime ideals of $R$ are pro-modular, following Skinner and Wiles, we prove two things:
\begin{enumerate}
	\item there exist certain dimension $1$ characteristic $2$ pro-modular prime ideals $\mathfrak{p}$ of the Hecke algebra, that we will call ``nice primes" below, such that we can adapt the $2$-adic Taylor-Wiles method to prove the localizations and completions of $R$ and $\mathbf{T}$ at such primes are isomorphic;
	\item that every minimal prime of $R$ is contained in a nice prime.
\end{enumerate}

To prove step (1), there are two main difficulties in combining the methods of Skinner and Wiles with those of Khare and Wintenberger: namely, proving the existence of Taylor-Wiles primes and performing the patching. The reason why the $2$-adic Taylor-Wiles method fails in the residually dihedral case is because the image is too small. We thus need to ensure that our nice primes do not generate dihedral deformations; as in \cite{SWirreducible}, we require that they have dimension $1$, characteristic $2$, and when the universal deformation is specialized at $\mathfrak{p}$, the resulting representation is non-dihedral. The calculations of \cite{SWreducible} and \cite{SWirreducible} bounding the size of a certain cohomology group do not work in the $p=2$ case because they rely heavily on complex conjugation acting semisimply. To overcome this we further require that the image of the representation associated to our nice prime contains a non-trivial unipotent element. This allows us to use of a result of Pink \cite{PinkCompactSubs} to show that the image of the Galois representation attached to one of these prime ideals is open in $\SL_2(K_0)$, where $K_0$ is a finite index subfield of the residue field of $\mathfrak{p}$. This fact allows us to explicitly compute the cohomology groups in question, and to show the existence of the Taylor-Wiles primes. As in the work of Skinner and Wiles, there is a technicality that must be dealt with all the while; namely, we cannot adapt the Taylor-Wiles patching argument directly to the localized deformation rings and Hecke algebras. This is because at the heart of the patching argument is a simple diagonalization in the spirit of Cantor, based on the fact that we have infinitely many objects, each being finite. This finiteness is lost if one tries to perform patching after localizing, so we must perform the patching integrally, and then localize after taking limiting objects. Because things are done integrally, we must take care to control torsion submodules of the cohomology groups we compute. In particular, we will have to ensure that the torsion subgroups of certain cohomology groups and Hecke modules do not depend on the choice of Taylor-Wiles primes.

After carrying out the patching there is another complication that arises not present in the work of Skinner and Wiles. In the Taylor-Wiles argument as improved by Kisin, the point of the patching is to create a limiting object that, for dimension reasons, is isomorphic to a certain power series ring over a local deformation ring. One then uses a description of the minimal primes of the local deformation ring in order to show that the Hecke module in question is supported on the component of the universal deformation ring containing the point corresponding to the representation we wish to prove is modular. In our situation, the dimension argument can only be applied after localizing and completing the appropriate rings at our fixed nice prime. The completion may increase the number of minimal primes of our local deformation ring, and if we cannot describe them anymore, then we cannot hope to say our Hecke module is supported on the ones we want. In order to avoid this, we require further that the nice primes lie over the distinguished locus of the local deformation ring at all places above $2$. This allows us to show that the pullback of nice primes to the local deformation ring lie in the normal locus. Then the completion of the localized local deformation ring will again be normal, hence a domain.

In order to show that the local deformation ring is normal at nice primes, it is not enough to analyze its characteristic zero points. We must know something about its mod $2$ points as well. In the case of the local deformation ring at places above $2$, we use the Deumu\u{s}kin relation to explicitly compute the deformation ring on the locus of distinguished deformations. For the local deformation rings at primes not above $2$, we use recent ideas of Snowden that allow us to show the deformation ring mod $2$ may be thought of as representing a certain moduli space of unipotent matrices, whose structure can be described in detail.

The step (2) is carried out as in the work of Skinner and Wiles, except that we use a connectedness theorem of Gothendieck instead of the theorem of Mme. Raynaud. Grothendieck's theorem is more readily applicable to our situation, as we do not show that our local deformation ring satisfies Serre's property $(S_2)$, which is necessary (at least on a certain open locus) to apply Raynaud's theorem. We first show that a nice prime exists. Using step (1) we know that any irreducible component containing it is pro-modular. Let $C$ be one such component and let $C'$ be any other component. The connectedness theorem of Grothendieck implies that there is a chain of irreducible components $C = C_0,\ldots,C_m = C'$ such that the dimension of $C_i\cap C_{i+1}$ is large. Using this, we show that if $C_i$ is pro-modular, then $C_i\cap C_{i+1}$ contains a nice prime; applying step (1) again we deduce that $C_{i+1}$ is pro-modular. Continuing in this way, we deduce the pro-modularity of $C'$. 

In order to show that $C_i\cap C_{i+1}$ having large dimension implies that it contains a nice prime, we in particular need that $C_i \cap C_{i+1}$ is not contained in the dihedral locus. This is why we need the extra condition in the CM case. If $L/F$ is CM and every place above $2$ in $F$ splits in the quadratic extension $L/F$, the dihedral locus will form an irreducible component of the Hecke algebra, hence conjecturally also of the deformation ring. In this case, even if we start out with non-dihedral components $C$ and $C'$, the author does not know how to guarantee that the $C_i$ appearing above after applying Grothendieck's theorem are non-dihedral. If our extra assumption is satisfied, i.e. that there is some $v|2$ that does not split in $L$, then having large dimension implies that our representation is distinguished at this $v$, which then implies it is not dihedral. In the case that $L/F$ is not CM, we use known cases of Leopoldt's conjecture to base change to a situation where the dihedral locus has sufficiently large codimension so that it doesn't contain any of the intersections $C_i\cap C_{i+1}$.

\subsection*{Outline}\label{outlinesec}

The paper is organized as follows.

In \S \ref{DefTheory}, we start by recalling some commutative algebra facts about complete Noetherian local rings. In particular, we recall facts about their completed tensor product. We also record Grothendieck's connectedness theorem. We then recall the notion of group actions and group chunk actions from \cite{KW2} that will be necessary for the patching argument. After this, we turn to deformation theory, first recalling some general facts. We then study the nearly-ordinary deformation rings at places above $p$, the main points being to show that it is a domain of the correct dimension, that over a certain locus its reduction mod $p$ is still a domain, and that a certain locus of its characteristic zero points are smooth. We then recall some facts about the local deformation rings away from $p$ and use ideas of Snowden to show that they are domains mod $p$. After that we turn to global deformations. The main points of this subsection are to recall that our global deformation ring has an appropriate presentation over the local one, in order to later apply Grothendieck's connectedness theorem, and to show that a certain group action on the universal deformation ring is free. Lastly, we prove some small lemmas regarding deformations of dihedral representations, which will be useful in proving certain deformations are non-dihedral as well as to determine some properties of non-dihedral deformations.

In \S \ref{ModForms}, we recall Hida's theory of nearly ordinary automorphic forms in the totally definite quaternionic case. In the first subsection, we state definitions, analyse certain neatness properties of the open subgroups that will comprise our level, and state the relation to cuspidal automorphic representations of $\GL_2$. In the next subsection, we define the nearly ordinary Hecke algebra, and in the following subsection construct the universal nearly ordinary Hecke algebra. If the following subsection, we recall the Galois representations associated to eigenforms, and show how they give a Galois representation into the universal Hecke algebra such that the induced map from the universal deformation ring is surjective and factors through the quotient defined in \ref{globalquotient}. In the final subsection, we augment the level with Taylor-Wiles primes. The Taylor-Wiles primes we use to augment the level may have the property that the corresponding Frobenii do not have distinct eigenvalues under the residual representation. Because of this we cannot prove the standard control theorem, as there may be lifts of $\rhobar$ that are Steinberg at these primes. We show however, that the obstruction to the usual control theorem is annihilated by an element that becomes invertible after localizing at one of our nice prime ideals.

In \S \ref{AuxPrimesSec}, we show the existence of the Taylor-Wiles prime associated to a representation into $\GL_2(A)$, with $A$ the ring of integers in a characteristic $2$ local field $K$, satisfying some technical hypotheses. The main input is Pink's result \cite{PinkCompactSubs} that allows us to conclude that the intersection of the image with $\SL_2(A)$ is conjugate to an open subgroup in $\SL_2(A_0)$, where $A_0$ is the ring of integers of a characteristic $2$ local subfield $K_0 \subset K$. This allows us to compute explicitly with cocycles. As mentioned above, we must compute these cohomology groups integrally, and keep track (or at least bound) the torsion subgroups.

In \S \ref{LocRequalsT}, we prove the $R^\mathrm{red} = \mathbf{T}$ theorem. This section together with \S \ref{AuxPrimesSec} comprise the technical heart of the paper. The patching argument is a synthesis of the proof of \cite{KW2}*{Proposition 9.3} and \cite{SWreducible}*{\S 5}. The idea is to mimic the proof of \cite{KW2}*{Proposition 9.3}, but to define the maps from a power series over the local deformation ring $R_\mathrm{loc}[[x_1,\ldots,x_k]]$ to our augmented level global deformation rings $R_n$, in such a way that the $x_1,\ldots,x_k$ are mapped to the pullback to $R_n$ of our fixed nice prime ideal, instead of the maximal ideal. In this way, we get a surjection after localizing and completing at the nice prime. When defining these maps, we need to ensure that certain cokernels are finite of bounded size, so that when we take a projective limit the ranks of the resulting limiting modules do not grow. It is due to this reason that we needed to ensure that the torsion subgroups of the cohomology groups computed in \S \ref{AuxPrimesSec} did not depend on the Taylor-Wiles primes. After proving the localized $R^\mathrm{red} = \mathbf{T}$ theorem, we apply the connectivity argument to conclude that $R^\mathrm{red} = \mathbf{T}$.

The last section, \S \ref{MainThms}, proves the main theorem. We first recall some congruences proved in \cites{KisinFinFlat,KW2} necessary to show the existence of appropriate automorphic lifts after base change. We also prove a small lemma that shows the existence of ordinary lifts in the residually dihedral case, using a result of Wiles \cite{WilesOrdinary} that allows one to insert an ordinary Hilbert modular form of parallel weight $1$ into a $p$-adic family. We then prove the main theorem, by applying base change, combining the aforementioned congruences together with known cases of Leopoldt's conjecture so that we satisfy the assumptions of the $R^\mathrm{red} = \mathbf{T}$ theorem of \S \ref{LocRequalsT}.

\subsection*{Notation and conventions}\label{conventionsec}

We state some notation and conventions used in this paper. We will denote by $p$ a rational prime throughout. In the later sections we will usually take $p=2$. Let $\Qbar$ be the algebraic closure of $\Q$ in $\C$. For any subfield $L\subseteq\Qbar$ we set $G_L = \Gal(\Qbar/L)$. Throughout $F$ will denote a totally real number field inside $\Qbar$. Given a finite set $S$ of places of $F$ we denote by $G_{F,S} = \Gal(F_S/F)$, where $F_S$ is the maximal Galois extension of $F$ contained in $\Qbar$, unramified outside $S$. We let $J_F$ denote the set of embeddings $F\hookrightarrow \Qbar$.

For each rational prime $\ell$, fix an algebraic closure $\Qbar_\ell$ of $\Q_\ell$ and embeddings $\Qbar \hookrightarrow \Qbar_\ell$. We can then view $J_F$ as the set of embedding of $F$ into any of the $\Qbar_\ell$ or $\C$. For any place of $F$ we denote by $F_v$ the completion of $F$ at $v$ inside $\Qbar_\ell$, or $\C$. In the case that $v$ is non-archimedean, we write $G_v = \Gal(\Qbar_\ell/F_v)$ and let $I_v$ denote the inertia subgroup. For $v$ archimedean, we let $G_v = \Gal(\C/F_v)$. In either case we identify $G_v$ with a decomposition group of $G_F$ at $v$ via the embedding $\Qbar \hookrightarrow \Qbar_\ell$, or $\Qbar \hookrightarrow \C$.

As usual $\A_F$ will denote the ring of adeles of $F$ and $\A_F^\infty$ will denote the subring of finite adeles. We normalize the isomorphism of local class field theory so that uniformizers correspond to arithmetic Frobenii, and normalize global class field theory compatibly.

We will denote by $\epsilon_p$ the $p$-adic cyclotomic character and $\overline{\epsilon_p}$ is reduction mod $p$. We use homological conventions for our Galois representations; for example, the Galois representation attached to an elliptic curve is the one coming from its Tate module, not cohomology. With this convention, a representation is ordinary at $v$ if the local representation is reducible with an unramified quotient.

We will let $E$ denote a finite extension of $\Q_p$ contained in $\Qbar_p$. We will let $\mathcal{O}$ denote its ring of integers and $\F$ its residue field. We will occasionally enlarge $E$ if necessary. We let $\CNLO$ denote the category of complete, Noetherian, local, $\mathcal{O}$-algebras $A$ such that the structure morphism $\mathcal{O}\rightarrow A$ induces an isomorphism of residue fields. The morphisms in $\CNLO$ are local $\calO$-algebra morphisms. Given an object $B$ in $\CNLO$ we let $\CNL_B$ denote the subcategory whose objects are $B$-algebra and whose morphisms are $B$-algebra morphisms. We let $\CNL_B^\mathrm{op}$ be the opposite category of $\CNL_B$ and we identify it with the category of representable set-valued functors on $\CNL_B$ via the Yoneda embedding. Given a $\CNL_B$-algebra $A$, we denote by $\Spf A$ the correpsonding object of $\CNL_B^\mathrm{op}$, which should cause no confusion as the natural embedding of $\CNL_B^\mathrm{op}$ into the category of formal $B$-schemes is fully faithful. Given any local ring $A$, we denote by $\mathfrak{m}_A$ its maximal ideal.

Given a separable field extension $L/K$, we let $\mathrm{Nm}_{L/K}$ denote the norm from $L/K$. We refer the reader to the beginning of each section for more notation and conventions that will be employed in that particular section.

\section*{Acknowledgements}

I would like to thank my PhD advisor Chandrashekhar Khare for suggesting to me this problem and for his patience and guidance throughout its resolution. I would like to thank the entire number theory group at UCLA for providing a stimulating place to learn and work. I would like to thank David Geraghty, Gebhard B\"{o}ckle, Mark Kisin and Andrew Snowden for helpful conversations and correspondence. I would also like to thank Frank Calegari, Matthew Emerton, and Toby Gee for some helpful comments and corrections on a previous draft of this paper. I would like to thank Anush Tserunyan for constant moral support and encouragement. Lastly, I would like to thank the referee for helpful comments and corrections.

\section{Deformation Theory}\label{DefTheory}

In this section we develop the necessary deformation theory. In \S \ref{ComAlg}, we recall some commutative algebra facts for complete Noetherian local rings with finite residue field. Of particular importance to our applications later will be recalling some facts regarding the completed tensor product of such rings, cf. \ref{TensorCNLO}, and the connedness theorem of Grothendieck, cf. \ref{GrothConnectivity}.

The following subsection recalls some facts from \cite{KW2} regarding group actions in the category $\CNLO$. In particular, the notion of group chunks and the building of free actions using them is essential to the $2$-adic Taylor-Wiles method developed in \cite{KW2}, and we will use these results in \S \ref{LocRequalsT}. The following subsection recalls some general definitions and results about deformation theory.

The subsequent subsection deals with determining the local deformation rings at the prime $p$. We follow the construction of Geraghty \cite{GeraghtyOrdinary}. However, as we deal only with dimension $2$, we can be more explicit and use the Demu\u{s}kin relation to prove that a certain open subset of the special fibre is integral. This will be important for showing that a certain localization of a completed tensor product of local deformation rings is normal in \S \ref{LocRequalsT},  c.f. \ref{localdefringnormal}. Also important to establish this normality is to prove a certain open subset of the generic fibre is smooth, and we prove this following \cite{GeraghtyOrdinary}.

The next subsection deals with determining the local deformation rings at the places not equal to $p$. Most of this is simply recalling results proved in \cites{KisinFinFlat,Kisin2adic,KW2}. However we will also need information about the special fibre of such rings. This is easy in the archimedean case, and to do this in the case $l\ne p$ we use ideas of Snowden \cite{SnowdenDefRing}. This is important for showing the normality of the ring in \ref{localdefringnormal}, mentioned above.

In the next subsection we deal with deformations of global Galois groups. We recall some definitions and properties and show that a certain quotient of the universal deformation ring (tensored with an Iwasawa algebra) can be presented as a complete Noetherian domain modulo ``few" relations, which will be necessary to apply the connectivity result \ref{Connectivity} later. We also recall some facts regarding twists of the universal deformation by characters, as in \cite{KW2}, which is essential to the $2$-adic method.

In the last subsection, we prove some easy facts regarding deformations of dihedral representations, that will be useful for determining the images of non-dihedral deformations to characteristic $p$ local fields as well as to establish criteria for a deformation to be non-dihedral.

 We now set up some more notation. Fix a choice of uniformizer $\varpi_E$ for $E$. Given a $\CNLO$-algebra $B$, we denote by $\mathrm{Ar}_B$ the full subcategory of $\CNL_B$ consisting of Artinian objects. Given a finite extension $E'/E$, we also let $\mathrm{Ar}_{E'}$ denote the category of Artinian local rings with residue field $E'$ with topology given by its structure as a finite dimensional $E'$-vector space. Note that such rings are canonically $E'$-algebras. The morphisms in $\mathrm{Ar}_{E'}$ are local $E'$-algebra morphisms.
  
 \subsection{Some commutative algebra}\label{ComAlg}
 
 We recall some facts about objects in $\CNLO$ and their completed tensor products that will be useful to us later. 
 
 \subsubsection{}\label{Jacobsonness}
 
 Recall a scheme $X$ is called \textit{Jacobson} if for any closed subset $Z \subseteq X$, the set of closed points in $Z$ is dense. We say a ring $R$ is Jacobson if $\Spec R$ is Jacobson. If $R$ is a $\CNLO$-algebra, \cite{EGA4.3}*{ Corollary 10.5.8} shows that $\Spec R \smallsetminus \{\mathfrak{m}_R\}$ is Jacobson, and if $p$ is not nilpotent in $R$ then $R[1/p]$ is Jacobson. Parts (1) and (2) of the following proposition is \cite{KW2}*{Proposition 2.2 (i)} and part (3) is \cite{KW2}*{Corollary 2.3}.

\begin{prop}\label{OPointsOfR}

Let $R$ be an $\mathcal{O}$-flat $\CNLO$-algebra.

\begin{enumerate}
	\item There is a finite extension $E'/E$ with ring of integers $\mathcal{O}'$ and a local $\mathcal{O}$-algebra morphism $R\rightarrow \mathcal{O}'$.
	\item Ever maximal ideal of $R[1/p]$ is the image of the generic point of $\Spec\mathcal{O}' \rightarrow \Spec R$, with $R\rightarrow \mathcal{O}'$ as in \textnormal{(1)}.
	\item Let $I$ be an ideal of $R$ and let $X(I)$ denote the set of all morphisms $R \rightarrow \mathcal{O}'$ as in \textnormal{(1)} whose kernel contains $I$. If $R/I$ is $\mathcal{O}$-flat and reduced, then $I = \cap_{x\in X(I)} \ker(x)$
\end{enumerate}

\end{prop}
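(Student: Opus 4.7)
The plan is to reduce all three parts to the Jacobsonness of $R[1/p]$ (recalled in \ref{Jacobsonness}) combined with the key input that residue fields at maximal ideals of $R[1/p]$ are finite extensions of $E$.

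For (1), $\mathcal{O}$-flatness of $R$ makes $\varpi_E$ a non-zero-divisor, so $R \hookrightarrow R[1/p]$ is an injection into a nonzero Jacobson ring, and hence there is a maximal ideal $\mathfrak{m} \subset R[1/p]$. The first step is to prove that $E' := R[1/p]/\mathfrak{m}$ is a finite extension of $E$. For this, I would use Cohen's structure theorem to present $R$ as a quotient of $\mathcal{O}[[T_1, \ldots, T_n]]$ for some $n$; then $R[1/p]$ is a quotient of $\mathcal{O}[[T_1, \ldots, T_n]][1/p]$, whose maximal ideals correspond to Galois orbits of closed points of the rigid-analytic open unit polydisc over $E$, and in particular have residue fields finite over $E$. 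Granting this, let $\mathcal{O}'$ be the ring of integers of $E'$. The composition $R \to R[1/p] \twoheadrightarrow E'$ sends $\mathfrak{m}_R$ into the maximal ideal $\mathfrak{m}_{\mathcal{O}'}$ (because $\mathfrak{m}_R$ is topologically nilpotent over $\mathcal{O}$ and contains $\varpi_E$, whose image has positive valuation in $E'$), so the map factors through a local $\mathcal{O}$-algebra morphism $R \to \mathcal{O}'$.

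Part (2) is then the reverse observation: any local $\mathcal{O}$-algebra morphism $x \colon R \to \mathcal{O}'$ induces $R[1/p] \to E'$ whose kernel is a maximal ideal of $R[1/p]$ contracting to $\ker(x)$ in $R$, and this kernel is precisely the image of the generic point of $\Spec \mathcal{O}'$. Since the construction in (1) shows every maximal ideal of $R[1/p]$ arises this way, this gives the desired description. For (3), I would reduce to the case $I = (0)$ by replacing $R$ with $R/I$, so the hypothesis becomes that $R$ is $\mathcal{O}$-flat and reduced, and the goal is $\bigcap_x \ker(x) = 0$. Since $R$ is $\mathcal{O}$-flat the map $R \hookrightarrow R[1/p]$ is injective, and since $R$ is reduced $R[1/p]$ is also reduced. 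Jacobsonness of $R[1/p]$ then forces its nilradical and Jacobson radical to agree, so the intersection of all its maximal ideals vanishes. Pulling back along $R \hookrightarrow R[1/p]$ and using part (2) to identify this intersection with $\bigcap_x \ker(x)$ gives the result.

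The only non-formal ingredient is the finiteness of residue fields at maximal ideals of $R[1/p]$, and this is where I expect the main technical work to lie; once it is in hand, the three parts follow by bookkeeping around Jacobsonness and the bijection between $\mathcal{O}$-points of $R$ and maximal ideals of its generic fibre.
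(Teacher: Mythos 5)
Your proposal is correct, and since the paper simply cites \cite{KW2}*{Proposition 2.2, Corollary 2.3} for this proposition rather than giving a proof, there is no in-paper argument to compare against; the route you take (Jacobsonness of $R[1/p]$ plus finiteness over $E$ of residue fields at maximal ideals of $R[1/p]$) is the standard one and is almost certainly what \cite{KW2} does.

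A couple of small remarks on details. First, the rigid-geometric picture you invoke is fine, but the finiteness of $R[1/p]/\mathfrak{m}$ over $E$ has a direct commutative-algebra proof: if $\mathfrak{p}$ is the contraction of $\mathfrak{m}$ to a Cohen presentation $A = \mathcal{O}[[T_1,\ldots,T_n]]$, then $A/\mathfrak{p}$ is a $1$-dimensional complete Noetherian local domain that is $\mathcal{O}$-flat with finite residue field, so $(A/\mathfrak{p})/\varpi_E$ is Artinian with finite residue field, hence finite over $\F$, and completeness gives $A/\mathfrak{p}$ finite over $\mathcal{O}$; thus $A[1/p]/\mathfrak{m} = (A/\mathfrak{p})[1/p]$ is finite over $E$. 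Second, the map $R \to \mathcal{O}'$ in (1) is automatically local, not because $\mathfrak{m}_R$ is topologically nilpotent, but for the simpler reason that the composite $R \to \mathcal{O}' \to \F'$ has kernel a prime with finite residue field, hence a maximal ideal of $R$, which must be $\mathfrak{m}_R$ since $R$ is local. Your reduction in (3) to $I = (0)$ and the pullback of the vanishing Jacobson radical along the injection $R \hookrightarrow R[1/p]$ is exactly right.
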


\subsubsection{}\label{CompTensorProd}

Let $R_1$ and $R_2$ be $\CNLO$-algebras. Then their completed tensor product over $\mathcal{O}$ is again a $\CNLO$-algebra. To see this, let $\mathfrak{m}$ denote the kernel of the natural map $R_1 \hat{\otimes}_\mathcal{O} R_2 \rightarrow \F \otimes_\mathcal{O} \F = \F$. Since $R_1 \hat{\otimes}_\mathcal{O} R_2$ is complete for the $\mathfrak{m}$-adic topology and we have an injection $\F^\times \hookrightarrow (R_1 \hat{\otimes}_\mathcal{O} R_2)^\times$, any element which does not belong to $\mathfrak{m}$ is invertible, and so $R_1 \hat{\otimes}_\mathcal{O} R_2$ is local. We can write $R_i$ as a quotient of a power series ring $\mathcal{O}[[x_1,\ldots,x_{d_i}]]$, and so $R_1 \hat{\otimes}_\mathcal{O} R_2$ can be written as a quotient of the power series ring $\mathcal{O}[[x_1,\ldots,x_{d_1+d_2}]]$, hence is Noetherian.

If $R_1$ and $R_2$ are flat $\CNLO$-algebras then $R_i \rightarrow R_1\hat{\otimes}_\mathcal{O} R_2$ is flat, cf. Lemma 19.7.1.2 of section 0 of \cite{EGA4.1}. In particular $R_1\hat{\otimes}_\mathcal{O} R_2$ is $\mathcal{O}$-flat.

\begin{prop}\label{TensorCNLO}

Let $R_1$ and $R_2$ be $\CNLO$-algebras and let $R = R_1 \hat{\otimes}_\mathcal{O} R_2$.

\begin{enumerate}
	\item Let $E'/E$ be a finite extension and for each $i=1,2$ let $x_i : R_i[1/p] \rightarrow E'$ be an $E'$-point that is formally smooth over $E$. The $E'$-point $(x_1,x_2): R[1/p] \rightarrow E'$ is formally smooth over $E$.
	\item If for each $i=1,2$, $R_i$ is $\mathcal{O}$-flat and $R_i[1/p]$ is geometrically integral, then so is $R[1/p]$. In particular, $R$ is a domain.
	\item Assume that each $R_i$ is $\mathcal{O}$-flat and that for any minimal primes $\mathfrak{q}_i$ of $R_i$, $R_i/\mathfrak{q}_i[1/p]$ is geometrically integral. Then any minimal prime of $R$ is of the form $\mathfrak{q}_1\hat{\otimes} R_1 + R_1 \hat{\otimes}\mathfrak{q}_2$, with $\mathfrak{q}_i$ a minimal prime of $R_i$.
	\item Assume each $R_i$ is an $\F$-algebra and let $\mathrm{Nil}(R_i)$ denote the nilradical of $R_i$. The nilradical of $R$ is $\Nil(R_1)\hat{\otimes}_\F R_2 + R_1\hat{\otimes}_\F \mathrm{Nil}(R_2)$.
\end{enumerate}

\end{prop}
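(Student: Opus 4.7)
The four parts are proved in order, with (3) and (4) as consequences of (2).

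For part (1), the formal smoothness of the $E'$-point $x_i$ over $E$ is equivalent, via Cohen's structure theorem and the separability of $E'/E$, to the $\ker(x_i)$-adic completion of $R_i[1/p]$ being isomorphic as an $E$-algebra to a formal power series ring $E'[[t_1^{(i)}, \ldots, t_{n_i}^{(i)}]]$. Using the presentation of $R$ as a quotient of a power series ring in $d_1 + d_2$ variables over $\calO$ recorded in \ref{CompTensorProd}, I would verify directly that the completion of $R[1/p]$ at the maximal ideal corresponding to $(x_1, x_2)$ is canonically the completed tensor product over $E'$ of those two power series rings. That completed tensor product is itself a power series ring over $E'$ in $n_1 + n_2$ variables, and hence formally smooth over $E$.

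For part (2), observe first that completed tensor product commutes with finite extension of coefficients $\calO \to \calO'$, so geometric integrality of $R[1/p]$ reduces to the integrality of $R[1/p]$ under the hypothesis that each $R_i[1/p]$ is geometrically integral; integrality of $R$ itself then follows from its $\calO$-flatness (\ref{CompTensorProd}), since $R \hookrightarrow R[1/p]$. To show $R[1/p]$ is a domain I would verify separately that it is reduced and irreducible. For reducedness I would apply Serre's criterion $(R_0)+(S_1)$: property $(R_0)$ comes from density of smooth closed points, using part (1) to produce smooth closed points of $R[1/p]$ from pairs of smooth closed points of the $R_i[1/p]$, and using the Jacobson property \ref{Jacobsonness} together with the geometric integrality of each $\Spec R_i[1/p]$ to deduce that such pairs are Jacobson-dense in $\Spec R[1/p]$; property $(S_1)$ would be obtained from a standard depth argument on the completed tensor product, exploiting $\calO$-flatness of both factors. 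Irreducibility then follows from connectedness, inherited from the (geometric) connectedness of each $\Spec R_i[1/p]$ through the product structure. I expect the rigorous justification of the density-of-smooth-closed-points step to be the main technical obstacle.

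Parts (3) and (4) are corollaries. For (3), right-exactness of $\hat{\otimes}_\calO$ gives $R/(\mathfrak{q}_1 \hat{\otimes} R_2 + R_1 \hat{\otimes} \mathfrak{q}_2) \cong (R_1/\mathfrak{q}_1) \hat{\otimes}_\calO (R_2/\mathfrak{q}_2)$, which is a domain by (2) applied to the $R_i/\mathfrak{q}_i$, so each such ideal is prime; that these exhaust the minimal primes of $R$ is seen by noting that the maps $R_i \to R$ are flat, so going-down implies that any minimal prime $\mathfrak{p}$ of $R$ contracts to a minimal prime $\mathfrak{q}_i$ of each $R_i$, and then $\mathfrak{q}_1 \hat{\otimes} R_2 + R_1 \hat{\otimes} \mathfrak{q}_2 \subseteq \mathfrak{p}$ forces equality by minimality. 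For (4), right-exactness of $\hat{\otimes}_\F$ reduces the claim to showing $R_1 \hat{\otimes}_\F R_2$ is reduced when each $R_i$ is. Since $\F$ is perfect (being finite), reduced $\F$-algebras are geometrically reduced, so $R_1 \otimes_\F R_2$ is reduced; writing each $R_i$ as a quotient of a formal power series ring over $\F$ (which is excellent) then shows that the relevant completion preserves this reducedness.
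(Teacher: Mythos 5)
For parts (1) and (2), the paper simply cites \cite{KisinFinFlat}*{Lemma 3.4.12}, whereas you sketch a direct proof. Your Cohen-structure-theorem argument for (1) is a reasonable alternative. For (2), however, the concluding step has a gap: you propose to establish that $R[1/p]$ is reduced via Serre's criterion $(R_0)+(S_1)$ and then deduce irreducibility from connectedness, but for a reduced Noetherian ring, connectedness does \emph{not} imply irreducibility (e.g.\ $k[x,y]/(xy)$ is reduced and has connected spectrum with dense regular locus, yet is not a domain). One needs either normality ($(R_1)+(S_2)$, so that the connected components coincide with the irreducible components) or a direct argument that geometric irreducibility of the factors passes to the completed tensor product. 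Your argument for (3) follows the same strategy as the paper (contract a minimal prime of $R$ to each $R_i$, note the associated quotient is a domain by (2), conclude equality), but you justify minimality of the contracted primes explicitly via flatness of $R_i \to R$ and going-down; the paper leaves this implicit while asserting an equal-dimension claim, so here your version is actually more careful.

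The serious gap is in (4). Your reduction to showing that $R_1 \hat{\otimes}_\F R_2$ is reduced when both $R_i$ are, and your observation that perfectness of $\F$ gives reducedness of the ordinary tensor product $R_1 \otimes_\F R_2$, are both correct. But the step \textit{``writing each $R_i$ as a quotient of a formal power series ring over $\F$ (which is excellent) then shows that the relevant completion preserves this reducedness''} is not a proof. The ring $R_1 \otimes_\F R_2$ is generally \emph{not} Noetherian (already $\F[[x]] \otimes_\F \F[[y]]$ is non-Noetherian), so the map $R_1 \otimes_\F R_2 \to R_1 \hat{\otimes}_\F R_2$ is not the completion of a Noetherian local ring and the machinery of excellent rings does not directly apply; neither does the excellence of $\F[[x_1,x_2]]$ tell you anything about reducedness of a particular quotient. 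The passage from reducedness of $R_1 \otimes_\F R_2$ to reducedness of $R_1 \hat{\otimes}_\F R_2$ is precisely the content of Chevalley's theorem, cf.\ \cite{EGA4.2}*{Corollary 7.5.7}, which the paper invokes; it is a genuine theorem, not a corollary of excellence, and your argument needs either to cite it or to reproduce its proof.
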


\begin{proof}

Parts (1) and (2) are \cite{KisinFinFlat}*{Lemma 3.4.12}; see also \cite{KW2}*{Proposition 2.3}.

Let $\mathfrak{q}$ be a minimal prime of $R$ and let $\mathfrak{q}_i$ be its pullback to $R_i$. We have $\mathfrak{q}_1\hat{\otimes}_\mathcal{O} R_2 + R_1 \hat{\otimes}_\mathcal{O} \mathfrak{q}_2 \subset \mathfrak{q}$. Note that $(R_1/\mathfrak{q}_1)\hat{\otimes}_\mathcal{O} (R_2/\mathfrak{q}_2)$ is a domain by (2), and has the same dimension as $R$. The natural surjection $R \rightarrow (R_1/\mathfrak{q}_1)\hat{\otimes}_\mathcal{O} (R_2/\mathfrak{q}_2)$ then must have kernel $\mathfrak{q}$.

We now prove (4). Let $\Nil(R)$ denote the nilradical of $R$. Since each $\mathrm{Nil}(R_i)$ is finite length, $\mathrm{Nil}(R_1)\hat{\otimes}_\F R_2 = \mathrm{Nil}(R_1)\otimes_\F R_2$ and $R_1\hat{\otimes}_\F\mathrm{Nil}(R_2) = R_1\otimes_\F \mathrm{Nil}(R_1)$. By considering simple tensors, we see that $\mathrm{Nil}(R_1) \otimes_\F R_2 + R_1\otimes_\F \mathrm{Nil}(R_2)\subseteq \mathrm{Nil}(R)$. Consider the surjection
	\[ R \longrightarrow R/(R_1\otimes_\F\mathrm{Nil}(R_2) + R_1\otimes_\F \mathrm{Nil}(R_1))
	\cong (R_1/\mathrm{Nil}(R_1))\hat{\otimes}_\F (R_2/\mathrm{Nil}(R_2)).
	\]

A theorem of Chevalley, cf. \cite{EGA4.2}*{Corollary 7.5.7}, shows that $(R_1/\mathrm{Nil}(R_1))\hat{\otimes}_\F (R_2/\mathrm{Nil}(R_2))$ is reduced, and so $\mathrm{Nil}(R)\subseteq \mathrm{Nil}(R_1) \otimes_\F R_2 + R_1\otimes_\F \mathrm{Nil}(R_2)$.
	\end{proof}

\subsubsection{}\label{ConnectivitySubSec}

We record a connectedness theorem of Grothendieck. This will be used in \ref{ProMod} in the same way as \cite{SWreducible}*{Corollary A.2} is used in the proof of \cite{SWirreducible}*{Proposition 4.2}. We use Grothendieck's theorem because it is more readily implies connectivity results for the spectra of quotients of complete local Noetherian domains than does the theorem of Raynaud used in \cite{SWreducible}*{Corollary A.2}.

We say that a Noetherian scheme $X$ is $k$-\textit{connected} if $\dim X > k$ and for any closed subset $Z\subseteq X$ with $\dim Z < k$, the topological space $X\smallsetminus Z$ is connected. Clearly, if $X$ is $k$-connected, it is $k'$-connected for any $k'\le k$. Note that a Noetherian scheme $X$ is $k$-connected if and only if 
\begin{itemize}
	\item[-] every irreducible component of $X$ has dimension $> k$; and
	\item[-] for any two irreducible components $C$ and $C'$ of $X$, there is a sequence of irreducible components
	\[ C = C_0, C_1,\ldots, C_n = C'\]
such that $\dim (C_i\cap C_{i+1}) \ge k$ for each $0\le i < n$.
\end{itemize}
We will use this formulation later. The following is \cite{SGA2}*{Expos\'{e} XIII, Theorem 2.1}.

\begin{prop}\label{GrothConnectivity}

Let $A$ be a complete Noetherian local ring and let $f_1,\ldots,f_r \in \mathfrak{m}_A$. If $\Spec A$ is $k$-connected and $r\le k$, then $\Spec A/(f_1,\ldots,f_r)$ is $k-r$-connected.

\end{prop}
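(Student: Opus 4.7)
The natural strategy is induction on $r$, reducing everything to the case $r=1$: if $\Spec A$ is $k$-connected and $f \in \mathfrak{m}_A$, then $\Spec A/(f)$ is $(k-1)$-connected. Indeed, if the $r=1$ case is established, the general case follows by applying it iteratively along the chain $A \to A/(f_1) \to A/(f_1,f_2) \to \cdots$, decreasing the connectivity index by one at each step; the hypothesis $r \le k$ ensures that each intermediate ring is at least $0$-connected, so the inductive step is legitimate.

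For the $r = 1$ case, one verifies the two equivalent conditions for $(k-1)$-connectedness of $\Spec A/(f)$ spelled out just before \ref{GrothConnectivity}. The dimension condition---that every irreducible component of $V(f)$ has dimension $> k-1$---is a direct consequence of Krull's Hauptidealsatz: any irreducible component of $V(f)$ lies in some irreducible component $C$ of $\Spec A$ (which has dimension $> k$), and has codimension $0$ or $1$ in $C$ according to whether $f$ vanishes identically on $C$ or not. In either case the dimension is $> k-1$.

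The chain condition is the main obstacle and the reason the theorem is nontrivial. The standard approach, which I would follow, is the cohomological/formal-schemes framework of \cite{SGA2}. One reformulates $k$-connectedness in terms of the connectedness of punctured formal neighborhoods and relates this to idempotents in completions, which are controlled by local cohomology modules $H^i_I(A)$ for ideals $I$ of appropriate height; the multiplication-by-$f$ long exact sequence then transfers the cohomological vanishing from $A$ to $A/(f)$ with a shift, yielding the chain condition for $V(f)$. An attempt at a purely elementary chaining argument---given components $D, D'$ of $V(f)$, choose containing components $C, C'$ of $\Spec A$, use $k$-connectedness to chain $C$ to $C'$ with intersections of dimension $\ge k$, and apply Krull to each $V(f) \cap C_i \cap C_{i+1}$---breaks down because one still needs to chain the various components of $V(f) \cap C_i$ inside the irreducible $C_i$, which amounts to invoking the theorem recursively inside $\Spec A/\mathfrak{p}_i$; this circularity is precisely what the local cohomology machinery circumvents. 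I would therefore quote \cite{SGA2}*{Expos\'{e} XIII, Th\'{e}or\`{e}me 2.1} directly, as the author appears to do, rather than rederive the cohomological input.
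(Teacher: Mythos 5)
Your proposal is correct and ends in the same place the paper does: the paper offers no proof of this proposition, stating it as \cite{SGA2}*{Expos\'{e} XIII, Theorem 2.1} and pointing to \cite{FlennerOCarrollVogel}*{\S 3.1, Theorem 3.1.7} for a detailed argument, and you likewise defer to the SGA2 reference for the substantive (chain-condition) content. Your additional commentary is sound as far as it goes: the reduction to $r=1$ by induction is fine (the hypothesis $r \le k$ together with Krull guarantees each intermediate quotient still has positive dimension), the dimension bound for $V(f)$ follows from the Hauptidealsatz plus the fact that complete local rings are universally catenary (so $\dim A/\mathfrak{q} = \dim A/\mathfrak{p} - \mathrm{ht}(\mathfrak{q}/\mathfrak{p})$ for $\mathfrak{q} \supseteq \mathfrak{p}$ with $\mathfrak{p}$ minimal), and your diagnosis of why the naive chaining argument is circular is accurate. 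Since the paper itself treats this as an external input rather than something to be proved in-text, your choice to quote the reference matches its treatment.
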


For a detailed proof of this proposition see \cite{FlennerOCarrollVogel}*{\S 3.1}; in particular, see the proof of Theorem 3.1.7 found there. Upon noting that any Noetherian domain $A$ is $(\dim A - 1)$-connected, we have the following corollary.

\begin{cor}\label{Connectivity}

Let $A$ be a complete Noetherian local domain and let $f_1,\ldots,f_r \in \mathfrak{m}_A$. If $r \le \dim A - 1$, then $\Spec A/(f_1,\ldots,f_r)$ is $(\dim A - r -1)$-connected.

\end{cor}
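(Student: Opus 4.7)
The plan is to deduce this immediately from Proposition \ref{GrothConnectivity} by taking $k = \dim A - 1$. The only thing to verify is the hypothesis that $\Spec A$ itself is $(\dim A - 1)$-connected; then the proposition applied to $f_1, \ldots, f_r$ with $r \le k$ yields that $\Spec A/(f_1, \ldots, f_r)$ is $(k - r) = (\dim A - r - 1)$-connected, which is exactly the claim.

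To verify that $\Spec A$ is $(\dim A - 1)$-connected for a Noetherian domain $A$, I would use the equivalent formulation stated just before Proposition \ref{GrothConnectivity}. Since $A$ is a domain, $\Spec A$ is irreducible, so it has a unique irreducible component, namely $\Spec A$ itself, whose dimension is $\dim A > \dim A - 1$. The chain condition on pairs of components is then vacuous (there is only one component to consider), so both bullet points in the criterion are satisfied. Alternatively, one could verify the original definition directly: $\dim \Spec A = \dim A > \dim A - 1$, and for any proper closed subset $Z \subsetneq \Spec A$, the complement $\Spec A \smallsetminus Z$ is a nonempty open subset of the irreducible space $\Spec A$, hence itself irreducible and in particular connected.

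With $\Spec A$ thus shown to be $(\dim A - 1)$-connected, the hypothesis $r \le \dim A - 1 = k$ of Proposition \ref{GrothConnectivity} is precisely the given inequality, so we may apply it directly and read off the conclusion. There is no real obstacle here; the corollary is a formal consequence of the proposition together with the trivial observation about domains, which is presumably why the author states it as an immediate consequence with only the remark about $(\dim A - 1)$-connectedness. The genuine content of course sits in Proposition \ref{GrothConnectivity}, whose proof is cited to \cite{SGA2} and \cite{FlennerOCarrollVogel} and is not reproduced here.
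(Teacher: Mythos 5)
Your proposal is correct and follows exactly the route the paper intends: the sentence preceding the corollary reads ``Upon noting that any Noetherian domain $A$ is $(\dim A - 1)$-connected, we have the following corollary,'' which is precisely the observation you verify before invoking Proposition \ref{GrothConnectivity}. Your verification that a domain is $(\dim A - 1)$-connected (irreducibility forces $\Spec A \smallsetminus Z$ to be a nonempty open of an irreducible space whenever $\dim Z < \dim A - 1$, hence connected) is the right argument and fills in what the paper leaves implicit.
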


 \subsection{Group actions on $\CNLO$}\label{groupactions}
 
We quote some results and definitions regarding group action on the category $\CNLO$ from \cite{KW2}, which will be necessary for the patching argument in \S \ref{LocRequalsT}. This material is taken directly from \cite{KW2}*{ \S 2.4-2.6}, and we refer the reader there for proofs.

\subsubsection{}\label{CNLOgroups}

Let $G$ be a group object in $\CNLO^\mathrm{op}$. We call $G$ a $\CNLO$-\textit{group}. Leting $A(G)$ denote the $\CNLO$-algebra representing $G$, we note that the group structure on $G$ is defined in the same way as the Hopf algebra structure on an affine algebraic group except that our comultiplication takes values in the completed tensor product $A(G) \rightarrow A(G)\hat{\otimes}_\mathcal{O} A(G)$.

Let $X$ be an element of $\CNLO^\mathrm{op}$, and let $A(X)$ be its affine $\CNLO$-algebra. We similarly define an action of $G$ on $X$, i.e. a $\CNLO$-morphism $\gamma : A(X) \rightarrow A(G)\hat{\otimes}_\mathcal{O} A(X)$ making $G(A) \times X(A) \rightarrow X(A)$ a group action that is functorial in $A$. An action is \textit{free} if the map $G\times X \rightarrow X\times X$ given by $(g,x) = (gx,x)$ on points is a closed immersion. This is equivalent to requiring that for any $\CNLO$-algebra $A$, $G(A)$ acts freely on $X(A)$. We let $A(X)_0$ denote the subalgebra of $A(X)$ consisting of elements $a$ such that $\gamma(a) = 1 \otimes a$. The elements $a\in A(X)_0$ are the functions on $X$ that are constant on orbits, i.e. $a(gx) = a(x)$ for any $\CNLO$-algebra $A$ and $x\in X(A)$, $g\in G(A)$.

\subsubsection{}\label{diaggroups}

Let $H$ be a finitely generated abelian group whose torsion is a power of $p$. We call the \textit{diagonalizable} $\CNLO$-\textit{group} associated to $H$, denoted $H^\ast$, the $\CNLO$-group defined by completing the diagonalizable group associated to $H$ as in \cite{SGA2}*{Expos\'{e} VIII} at the identity element of the special fibre. Concretely, we have $\Z^\ast = \mathbb{G}_m^\wedge$, the formal torus on $\CNLO$, and $(\Z/p^r\Z)^\ast = \mu_{p^r}$. All other $H^\ast$ are products of these two examples. Note that a surjection of abelian groups $H \rightarrow H'$ induces a closed immersion $(H')^\ast \rightarrow H^\ast$. The following is a combination of \cite{KW2}*{Proposition 2.5} and \cite{KW2}*{Proposition 2.6}.

\begin{prop}\label{diaggroupaction}

Let $X$ be an object in $\CNLO^\mathrm{op}$ and let $H$ be a finitely generated group with a free action $H^\ast \times X \rightarrow X$.

\begin{enumerate}
	\item A quotient $H^\ast\backslash X$ exists in $\CNLO^\mathrm{op}$, and if $H$ is torsion free, then $X\rightarrow H^\ast\backslash X$ is formally smooth of relative dimension equal to the rank of $H$.
	\item The morphism $X \rightarrow H^\ast \backslash X$ makes $X$ a torsor over $H^\ast\backslash X$ for $H^\ast \times (H^\ast\backslash X)$, i.e. the map
		\[ (H^\ast \times (H^\ast\backslash X)) \times_{H^\ast\backslash X} X \longrightarrow X 
			 \times_{H^\ast\backslash X} X
			 \]
given on points by $(g,x) \mapsto (gx,x)$, is an isomorphism.
	\item If $H \rightarrow H'$ is a surjective morphism of abelian groups, then $(H')^\ast\backslash X$ has a natural free action of $H^\ast / (H')^\ast$ and $H^\ast \backslash X$ is naturally isomorphic to the quotient of $(H')^\ast\backslash X$ by the action of $H^\ast / (H')^\ast$.
\end{enumerate}

\end{prop}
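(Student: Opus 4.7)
The plan is to define $H^\ast\backslash X := \Spf A(X)_0$, with $A(X)_0 \subseteq A(X)$ the invariant subring of \ref{CNLOgroups}, and verify all three claims by reducing to the two basic cases $H = \Z$ and $H = \Z/p^k\Z$. Any surjection $H \twoheadrightarrow H'$ of our finitely generated abelian groups splits, so $H^\ast \cong (H')^\ast \times K^\ast$ with $K = \ker(H\twoheadrightarrow H')$; forming the quotient by $H^\ast$ can therefore be done in two stages, which handles (3) directly and, applied repeatedly, reduces the structure assertions in (1) and (2) for general $H$ to the two cyclic cases.

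For $H = \Z$ we have $A(H^\ast) = \mathcal{O}[[T]]$, and the coaction is a continuous $\CNLO$-morphism $\gamma\colon A(X) \to A(X)[[T]]$ satisfying $\gamma(a) \equiv a \pmod{T}$. The freeness hypothesis asserts that the ring map induced by $(g,x)\mapsto(gx,x)$,
\[
A(X)\hat{\otimes}_{\mathcal{O}} A(X) \longrightarrow A(X)[[T]], \qquad a\otimes b \mapsto \gamma(a)\cdot(1\otimes b),
\]
is surjective. Elements of the form $a\otimes 1 - 1\otimes a$ with $a\in A(X)_0$ lie in its kernel, so the map factors through $A(X)\hat{\otimes}_{A(X)_0} A(X)$; the core technical input I would verify is that this factored map is an isomorphism. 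Granting this, part (2) is immediate, and the isomorphism identifies $A(X)$ with $A(X)_0[[T]]$, which makes clear that $A(X)_0 \hookrightarrow A(X)$ is formally smooth of relative dimension one (giving the relative dimension statement in (1)) and that $A(X)_0$ is itself Noetherian, complete, and local with residue field $\F$: completeness and the residue field are inherited from $A(X)$ via the inclusion, and any ascending chain of ideals $I_j \subseteq A(X)_0$ gives a chain $I_j[[T]]$ in the Noetherian ring $A(X)$ (with $I_j = I_j[[T]] \cap A(X)_0$), so Noetherianity descends. The parallel case $H = \Z/p^k\Z$ runs similarly but more simply: $A(H^\ast) = \mathcal{O}[T]/((1+T)^{p^k}-1)$ is finite free of rank $p^k$ over $\mathcal{O}$, the analogous isomorphism exhibits $A(X)$ as finite free of rank $p^k$ over $A(X)_0$, and Noetherianity and completeness of $A(X)_0$ follow by Eakin--Nagata and closedness. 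Combining the two cases with the first-paragraph reduction yields (1), (2), and (3) in full generality.

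The main obstacle is the promotion of the freeness hypothesis (a closed immersion condition) to the genuine isomorphism $A(X)\hat{\otimes}_{A(X)_0} A(X) \cong A(X)[[T]]$. My approach is to combine two ingredients: first, flatness of $A(X)_0 \to A(X)$, which I would extract from the coaction $\gamma$ together with the counit $T \mapsto 0$ (providing a retraction) via the local criterion of flatness; and second, the closed immersion hypothesis, which gives surjectivity of the factored map. Injectivity then follows by reducing modulo $\mathfrak{m}_{A(X)_0}$, where both sides become finite-dimensional $\F$-algebras of the same dimension, and lifting this fibrewise isomorphism to the full rings by completeness. It is also at this step that the cyclic reduction from the first paragraph is crucial: writing $H^\ast$ as an iterated extension of formal tori and $\mu_{p^k}$'s allows one to check the torsor property one factor at a time, where the argument above applies verbatim.
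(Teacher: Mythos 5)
The paper offers no proof of this proposition — it is stated as ``a combination of \cite{KW2}*{Proposition 2.5} and \cite{KW2}*{Proposition 2.6}'' and the reader is referred there — so your argument can only be judged on its own merits, and it has several genuine gaps.

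Your reduction strategy opens with the claim that ``any surjection $H\twoheadrightarrow H'$ of our finitely generated abelian groups splits.'' This is false: $\Z\twoheadrightarrow\Z/p\Z$ does not split, and this is exactly the kind of surjection that arises in the paper (e.g.\ $\Z^t\twoheadrightarrow G_n$ in \S\ref{PatchingTwists}, where $G_n$ typically has torsion). For parts (1) and (2) you can instead decompose $H\cong\Z^r\times\prod_i\Z/p^{k_i}\Z$ and quotient one factor at a time, which does not depend on any ambient surjection, so the cyclic reduction there is salvageable. But part (3) concerns an arbitrary surjection $H\to H'$, and a ``split and reduce'' argument simply does not apply; you would need a direct quotient-in-stages argument for the closed subgroup $(H')^\ast\subset H^\ast$.

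The flatness of $A(X)_0\to A(X)$ is the crux, and the proposed justification — that the counit $T\mapsto 0$ composed with the coaction ``provides a retraction'' — does not work. That composite is the identity map $A(X)\to A(X)$, not a map landing in $A(X)_0$; a free action has no fixed points, so there is no $G$-equivariant retraction of $X$ onto $H^\ast\backslash X$ and no corresponding ring retraction onto $A(X)_0$. Relatedly, your injectivity step ``both sides become finite-dimensional $\F$-algebras of the same dimension'' fails outright when $H=\Z$: the target mod $\mathfrak{m}_{A(X)_0}$ is $(A(X)/\mathfrak{m}_{A(X)_0}A(X))[[T]]$, which is never finite-dimensional over $\F$. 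Even for $H=\Z/p^k\Z$, you would need to already know that $A(X)$ is finite of rank $p^k$ over $A(X)_0$ to run the dimension count, which is what you are trying to prove. There is also a foundational circularity: you form completed tensor products over $A(X)_0$ and invoke the local criterion of flatness, but you only deduce that $A(X)_0$ is a Noetherian complete local $\mathcal{O}$-algebra at the end. The correct route (the one in \cite{KW2}) proves the torsor identification and the structural facts by working with the truncations $A(X)^{[m]}$, where the closed-immersion hypothesis is inherited and the finiteness needed for the dimension comparisons is automatic, and then passes to the limit.
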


\subsubsection{}\label{groupchunks}

Let $m\ge 1$ be an integer. Denote by $\CNLO^{[m]}$ the full subcategory of $\CNLO$ consisting of objects $A$ such that $\mathfrak{m}_A^m = 0$. For a $\CNLO$-algebra $A$, we denote by $A^{[m]}$ the $\CNLO^{[m]}$-algebra $A/\mathfrak{m}_A^m$. Note that $A\mapsto A^{[m]}$ defines a functor from $\CNLO$ to $\CNLO^{[m]}$, and we call $A^{[m]}$ the \textit{truncation to level} $m$ of $A$. For $A$ a $\CNLO$-algebra, the restriction of $\Spf A$ to $\CNLO^{[m]}$ is isomorphic to $\Spf A^{[m]}$. If $X$ is an object in $\CNLO^\mathrm{op}$, $X = \Spf A(X)$, we let $X^{[m]} = \Spf A(X)^{[m]}$ on $\CNLO^{[m]}$, and call $X^{[m]}$ the \textit{truncation to level} $m$ of $X$. If $X = X^{[m]}$, then any $\CNLO^\mathrm{op}$-map $X \rightarrow Y$ factors through $Y^{[m]}$. A map $X \rightarrow Y$ is a closed immersion if and only if $X^{[m]} \rightarrow Y^{[m]}$ is a closed immersion for each $m$. Note that if $A_1$ and $A_2$ are $\CNLO^{[m]}$-algebras, then $A_1\hat{\otimes}_\mathcal{O}A_2$ may not be, and the restriction of $\Spf A_1 \times \Spf A_2$ to $\CNLO^{[m]}$ is represented by $(A_1\hat{\otimes}_\mathcal{O}A_2)^{[m]}$.

We define a \textit{group chunk of level} $m$ to be $G = \Spf A(G)$, where $A(G)$ is a $\CNLO^{[m]}$-algebra equiped with $\CNLO^{[m]}$-morphisms $A(G) \rightarrow (A(G)\hat{\otimes}_\mathcal{O} A(G))^{[m]}$, $A(G) \rightarrow A(G)$, and $A(G) \rightarrow \mathcal{O}^{[m]}$ satisfying the usual diagrams defining a Hopf algebra. A group chunk of level $m$ defines group functor on $\CNLO^{[m]}$. Note that if we are given a $\CNLO$-group $G$, $G^{[m]}$ is a group chunk of level $m$, for all $m\ge 1$.

Let $X = \Spf A(X)$ with $A(X)$ a $\CNLO^{[m]}$-algebra, and let $G$ be a group chunk of level $m$. We define a \textit{group action chunk of level} $m$, to be a morphism $(G\times X)^{[m]} \rightarrow X$ defining a functorial group action on $\CNLO^{[m]}$. Note that the map $(G\times X)^{[m]} \rightarrow X \times X$ given on points by $(g,x) \mapsto (gx,x)$ factors through $(X\times X)^{[m]}$. We call the group chunk action \textit{free} if $(G \times X)^{[m]} \rightarrow (X \times X)^{[m]}$ is a closed immersion. Given a $\CNLO$-action $G\times X\rightarrow X$, $(G^{[m]} \times X^{[m]})^{[m]} \rightarrow X^{[m]}$ is a group action chunk of level $m$. We record \cite{KW2}*{Proposition 2.7}.

\begin{prop}\label{goupchunkaction}

Let $G$ be a $\CNLO$-group. Suppose for each $m\ge 1$, we have $\CNLO^{[m]}$-algebras $A_m$ and $\CNLO$-morphisms $A_{m+1} \rightarrow A_m$, such that $A_\infty = \varprojlim A_m$ is in $\CNLO$ (i.e. is Noetherian). Assume that for each $m$, we have a group action chunk of $G^{[m]}$ on $\Spf A_m$. 
\begin{enumerate}\item Then there is a unique $\CNLO$-group action $G \times \Spf A_\infty \rightarrow \Spf A_\infty$ such that for each $m$, the group action chunk of $G^{[m]}$ on $\Spf A_\infty^{[m]}$ is compatible with the group action chunk of $G^{[m]}$ on $A_m$ via the closed immersion $\Spf A_m \rightarrow \Spf A_\infty^{[m]}$.
\item If the group action chunks of $G^{[m]}$ on $\Spf A_m$ are free, then so is the action of $G$ on $\Spf A_\infty$.
\end{enumerate}

\end{prop}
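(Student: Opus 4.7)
The plan is to build the $\CNLO$-action on $\Spf A_\infty$ by taking a suitable inverse limit of the given level-$m$ chunk actions, and then to verify the axioms and uniqueness by reducing to each truncation level. The essential commutative-algebra inputs are that $A_\infty = \varprojlim_m A_\infty^{[m]}$ (since $A_\infty$ is Noetherian, hence $\mathfrak{m}_{A_\infty}$-adically complete), and that for $R\in\CNLO$ Noetherian the completed tensor product satisfies $A(G)\hat{\otimes}_\mathcal{O} R = \varprojlim_m (A(G)\hat{\otimes}_\mathcal{O} R)^{[m]}$, with each truncation computable from the truncations of $A(G)$ and $R$.

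For part (1), I would first observe that the closed immersion $\Spf A_m\hookrightarrow\Spf A_\infty^{[m]}$ yields a compatible system of surjections $A_\infty^{[m]}\twoheadrightarrow A_m$. The given chunk actions $\gamma_m:A_m\to(A(G)\hat{\otimes}_\mathcal{O} A_m)^{[m]}$ must be (and, for the construction, will be assumed) compatible with the transition maps $A_{m+1}\to A_m$; this is in any case forced by the compatibility asserted in the conclusion. Taking the inverse limit of the $\gamma_m$, I obtain a $\CNLO$-morphism $A_\infty\to\varprojlim_m(A(G)\hat{\otimes}_\mathcal{O} A_m)^{[m]}$, and then identify the target with $A(G)\hat{\otimes}_\mathcal{O} A_\infty$ via the limit description above (using that $A(G)$ is Noetherian, together with the surjections $A_\infty^{[m]}\twoheadrightarrow A_m$). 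This defines $\gamma_\infty$; the associativity and unit axioms reduce modulo $\mathfrak{m}_{A_\infty}^m$ to those of the $\gamma_m$, so they hold on the nose by completeness. Uniqueness is automatic, since any two such actions agree modulo each $\mathfrak{m}_{A_\infty}^m$ by the compatibility requirement.

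For part (2), freeness of the $G$-action on $\Spf A_\infty$ is equivalent to the $\CNLO$-morphism $A_\infty\hat{\otimes}_\mathcal{O} A_\infty\to A(G)\hat{\otimes}_\mathcal{O} A_\infty$, with the first factor acting via $\gamma_\infty$ and the second via the identity, being surjective. A $\CNLO$-morphism of Noetherian complete local rings is surjective if and only if it is surjective on each truncation modulo $\mathfrak{m}^m$, so it suffices to check surjectivity at each level $m$. The freeness of the chunk action on $\Spf A_m$ gives exactly the surjectivity of $(A_m\hat{\otimes}_\mathcal{O} A_m)^{[m]}\to(A(G)\hat{\otimes}_\mathcal{O} A_m)^{[m]}$, and the compatibility with $\Spf A_m\hookrightarrow\Spf A_\infty^{[m]}$ propagates this to the surjectivity of $(A_\infty\hat{\otimes}_\mathcal{O} A_\infty)^{[m]}\to(A(G)\hat{\otimes}_\mathcal{O} A_\infty)^{[m]}$.

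The main obstacle is entirely technical: one must identify $\varprojlim_m(A(G)\hat{\otimes}_\mathcal{O} A_m)^{[m]}$ with $A(G)\hat{\otimes}_\mathcal{O} A_\infty$, noting that $(A(G)\hat{\otimes}_\mathcal{O} A_\infty)^{[m]}$ does not literally equal $(A(G)\hat{\otimes}_\mathcal{O} A_m)^{[m]}$, so the surjections $A_\infty^{[m]}\twoheadrightarrow A_m$ and the Noetherian hypothesis on both $A(G)$ and $A_\infty$ must be deployed to pass between them. Once this bookkeeping is set up, both parts are formal consequences of the corresponding statements at each finite truncation level combined with the universal property of the inverse limit.
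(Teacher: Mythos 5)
The paper does not prove this proposition; it records it verbatim as \cite{KW2}*{Proposition 2.7}, so there is no in-house argument to compare against. Your overall strategy — build the limiting coaction by passing to inverse limits of the chunk actions, then verify the axioms and freeness one truncation level at a time — is the right one, and you have correctly located where the work lies. But as written there are two genuine gaps.

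First, you flag but do not close the identification
\[
\varprojlim_m\bigl(A(G)\hat{\otimes}_\mathcal{O}A_m\bigr)^{[m]} \cong A(G)\hat{\otimes}_\mathcal{O}A_\infty .
\]
Saying the Noetherian hypotheses ``must be deployed'' is not an argument. The specific input you need is Chevalley's theorem for complete Noetherian local rings: if $K_{m}=\ker(A_\infty\to A_m)$ then $\bigcap_m K_m=0$ and $K_m\supseteq\mathfrak{m}_{A_\infty}^m$, and Chevalley implies the $K_m$ are cofinal with the $\mathfrak{m}_{A_\infty}$-adic filtration. Concretely, for each fixed $m$ the surjections $A_\infty^{[m]}\to A_{m'}^{[m]}$ are isomorphisms for all $m'\gg m$, and consequently the system $m'\mapsto (A(G)\hat{\otimes}A_{m'})^{[m]}$ is eventually constant and equal to $(A(G)\hat{\otimes}A_\infty)^{[m]}$. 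Without this stabilization the inverse limit you form is not visibly the completed tensor product, and the construction of $\gamma_\infty$ does not go through.

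Second, the propagation step in part (2) is incorrect as stated. You have a commutative square
\[
\begin{array}{ccc}
(A_\infty\hat{\otimes}A_\infty)^{[m]} & \longrightarrow & (A(G)\hat{\otimes}A_\infty)^{[m]} \\
\downarrow & & \downarrow \\
(A_m\hat{\otimes}A_m)^{[m]} & \longrightarrow & (A(G)\hat{\otimes}A_m)^{[m]}
\end{array}
\]
with surjective vertical maps and a surjective bottom row, and you conclude the top row is surjective. That implication fails in general: the lift of a preimage need only hit the target up to an element of $\ker\bigl((A(G)\hat{\otimes}A_\infty)^{[m]}\to(A(G)\hat{\otimes}A_m)^{[m]}\bigr)$, which you have no control over. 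The correct move is again the stabilization: apply freeness at level $m'\gg m$, truncate to level $m$, and use that $(A_{m'}\hat{\otimes}A_{m'})^{[m]}=(A_\infty\hat{\otimes}A_\infty)^{[m]}$ and $(A(G)\hat{\otimes}A_{m'})^{[m]}=(A(G)\hat{\otimes}A_\infty)^{[m]}$ for such $m'$. A minor further point: the compatibility of the $\gamma_m$ with the transition maps $A_{m+1}\to A_m$ is a necessary hypothesis that has to be imposed (or inherited from the setup, as it is in the patching application); it is not ``forced by the conclusion,'' which is an existence claim, not a constraint on the given data.
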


 \subsection{Some general deformation theory}\label{GenDefTheory}

We first introduce some notation and state some useful facts. Our references for this subsection are \cite{MazurDefGalRep} and \cite{KW2}*{\S 2}.

\subsubsection{}\label{DefDefs}

Let $G$ be a profinite group and let
	\[ \overline{\rho} : G \longrightarrow \GL_n(\F) \]
be a continuous homomorphism. Denote by $V_\F$ the representation space of $\rhobar$. Given $A \in \mathrm{Ob}(\CNL_\calO)$, a \textit{lift} of $\rhobar$ to $A$ is a continuous homomorphism $\rho : G \rightarrow \GL_n(A)$ whose reduction modulo $\mathfrak{m}_A$ is equal to $\rhobar$. A \textit{deformation} of $V_\F$ to $A$ is a pair $(V_A,\phi_A)$, where $V_A$ is a free rank two $A$-module with continuous $G$-action, and $\phi$ is an isomorphism $V_A \otimes_A \F \stackrel{\sim}{\longrightarrow} V_\F$. We will usually drop $\phi_A$ from the notation.  A deformation is an equivalence class of lifts, two lifts begin equivalent if they are conjugate by an element of $\ker(\GL_n(A)\rightarrow \GL_n(\F))$. Also, a deformation $V_A$ of $V_\F$ to $A$ together with a choice of basis for $V_A$ lifting our fixed basis of $V_\F$ determines a lift of $\rhobar$. For this reason we will also call lifts \textit{framed deformations}.

We define set valued functors $\mathcal{D}$ and $\mathcal{D}^\square$ on $\CNL_\calO$ by letting $\mathcal{D}(A)$, respectively $\mathcal{D}^\square(A)$, denote the set of deformations of $V_\F$ to $A$, respectively the set of lifts of $\rhobar$ to $A$. Sending a lift to its equivalence class of deformations gives a natural morphism $\mathcal{D}^\square \rightarrow \mathcal{D}$. If $G$ satisfies the $p$-finiteness condition, i.e. if $\Hom(G',\Z/p\Z)$ is finite for all finite index subgroups $G'$ of $G$, then $\mathcal{D}^\square$ is representable. We will give a proof of this fact bellow in \ref{RepOnTop}. If further $\End_{\F[G]}(V_\F) = \F$, then $\mathcal{D}$ is also representable. This can be checked using Schlessinger's criteria, cf. \cite{MazurDefGalRep}, or by taking the quotient of $\mathcal{D}^\square$ by the free action of $\mathrm{PGL}_2^\wedge$, the completion of $\mathrm{PGL}_{2/\mathcal{O}}$ at the identity section of the special fibre. If $R$ and $R^\square$ denote the objects representing $\mathcal{D}$ and $\mathcal{D}^\square$, respectively, then the natural morphism $R\rightarrow R^\square$ is formally smooth of relative dimension $n^2-1$, cf. \cite{KW2}*{\S 2.2}.

Let $\psi$ denote an $\mathcal{O}$-valued character of $G$ whose reduction is equal to $\det \rhobar$. In the case that $G$ is a local or global Galois group, or a quotient of one of these groups, we define a subfunctor $\mathcal{D}^\psi$ of $\mathcal{D}$ by letting $\mathcal{D}^\psi(A)$ be the subset of $\mathcal{D}(A)$ consisting of deformations $V_A$ such that $\det V_A = \psi\epsilon_p$. We define $\mathcal{D}^{\square,\psi}$ similarly. Let $R^\square$ and $\rho^{\mathrm{univ}}$ denote the object representing $\mathcal{D}^{\square}$ and its universal lift. If $I$ is the ideal generated by the elements $\det\rho^\mathrm{univ}(\sigma) - \psi\epsilon_p(\sigma)$, it is easy to see that $R^\square/I$ represents $\mathcal{D}^{\square,\psi}$. Similarly if $\mathcal{D}$ is representable, then so is $\mathcal{D}^\psi$.

\subsubsection{}\label{TOPCat}

We extend the functor $\mathcal{D}^\square$ to a larger category. Let $\mathrm{Top}_\mathcal{O}$ be the category whose objects are pairs $(A,I)$, where $A$ is a topological $\mathcal{O}$-algebra and $I$ is an ideal of $A$ defining the topology of $A$ such that $I$ contains the image of $\varpi_E$ under the structure map $\mathcal{O}\rightarrow A$ and such that $A$ is $I$-adically complete. The morphisms $(A,I) \rightarrow (A',I')$ in $\mathrm{Top}_\mathcal{O}$ are $\mathcal{O}$-algebra morphisms $\varphi : A \rightarrow A'$ such that $\varphi(I) \subseteq I'$. Note that the map $A \mapsto (A,\mathfrak{m}_A)$ embeds $\CNLO$ as a full subcategory of $\mathrm{Top}_\mathcal{O}$, and that for any $(A,I)$ in $\mathrm{Top}_\mathcal{O}$ the map $\mathcal{O} \rightarrow A$ induces an injection $\F \rightarrow A/I$.

Let $G$ and $\overline{\rho}$ be as in \ref{DefDefs}, and assume that $G$ satisfies the $p$-finiteness condition. We define a functor $\mathcal{D}^\square$ on $\mathrm{Top}_\mathcal{O}$ by letting $\mathcal{D}^\square (A,I)$ denote the set of continuous homomorphisms
	\[ \rho : G \longrightarrow \GL_n(A) \]
such that 
	\[ \xymatrix{
		G \ar[r]^-\rho \ar[d]_-{\rhobar} & \GL_n(A) \ar[d]^-{\mathrm{mod }I} \\
		\GL_n(\F) \ar[r] & \GL_n(A/I) }
	\]
commutes. Note that under the embedding $A \mapsto (A,\mathfrak{m}_A)$ of $\CNLO$ into $\mathrm{Top}_\mathcal{O}$ the restriction of $\mathcal{D}^\square$ on $\mathrm{Top}_\mathcal{O}$ to $\CNLO$ coincides with $\mathcal{D}^\square$ as defined in \ref{DefDefs}.

\begin{prop}\label{RepOnTop}

There is a $\CNLO$-algebra $R^\square$ such that $(R^\square,\mathfrak{m}_{R^\square})$ represents $\mathcal{D}^\square$ on $\mathrm{Top}_\mathcal{O}$. In particular $\mathcal{D}^\square$ is representable on $\CNLO$.

\end{prop}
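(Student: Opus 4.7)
The plan is first to produce $R^\square$ representing $\mathcal{D}^\square$ on $\CNLO$ by the standard arguments of Mazur, then to show that the natural extension of $\mathcal{D}^\square$ to $\mathrm{Top}_\calO$ is still represented by $(R^\square,\frakm_{R^\square})$. The construction on $\CNLO$ proceeds, as in \cite{MazurDefGalRep}, either by verifying Schlessinger's criteria or, more concretely, by presenting $R^\square$ as a quotient of a power series ring $\calO[[x_1,\dots,x_N]]$ (using the $p$-finiteness condition to guarantee Noetherianness). Throughout, I denote the universal lift by $\rho^{\mathrm{univ}}:G\to\GL_n(R^\square)$.

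The main task is then to show that, for $(A,I)\in\mathrm{Top}_\calO$, pullback along $\rho^{\mathrm{univ}}$ defines a bijection
\[
\Hom_{\mathrm{Top}_\calO}\bigl((R^\square,\frakm_{R^\square}),(A,I)\bigr) \;\stackrel{\sim}{\longrightarrow}\; \mathcal{D}^\square(A,I).
\]
In one direction, given $\varphi:(R^\square,\frakm_{R^\square})\to(A,I)$ one composes with $\rho^{\mathrm{univ}}$; continuity of the resulting $\rho$ is immediate since $\varphi(\frakm_{R^\square}^k)\subseteq I^k$ and $\rho^{\mathrm{univ}}$ is continuous for the $\frakm_{R^\square}$-adic topology. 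The content of the proposition is the reverse direction: given $\rho\in\mathcal{D}^\square(A,I)$, construct a unique $\varphi$.

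The strategy is to reduce to the already-known representability on $\CNLO$ by producing, for every $n\ge 1$, a sub-$\CNLO$-algebra $A_n\subseteq A/I^n$ through which $\rho\bmod I^n$ factors. By continuity of $\rho$ and the $p$-finiteness of $G$, there is an open normal subgroup $N_n\trianglelefteq G$ with $\rho(N_n)\subseteq 1+M_n(I^n)$, so $\rho\bmod I^n$ factors through the finite group $G/N_n$. Let $A_n\subseteq A/I^n$ be the $\calO$-subalgebra generated by the matrix entries of $\rho\bmod I^n(g)$ for $g$ in a set of coset representatives; this is a finitely generated (hence Noetherian) $\calO$-algebra. The ideal $A_n\cap (I/I^n)$ is nilpotent (since $(I/I^n)^n=0$), and the compatibility condition in the definition of $\mathcal{D}^\square(A,I)$ says that the entries of $\rho\bmod I$ lie in $\F\subseteq A/I$. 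Hence $A_n/(A_n\cap I/I^n)$ is an $\F$-subalgebra of $A/I$ generated over $\F$ by elements of $\F$, so it equals $\F$. Thus $A_n$ is a Noetherian local $\calO$-algebra with nilpotent maximal ideal and residue field $\F$, hence Artinian and automatically complete: $A_n\in\CNLO$.

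By representability on $\CNLO$, the lift $\rho\bmod I^n:G\to\GL_n(A_n)$ of $\rhobar$ comes from a unique morphism $\psi_n:R^\square\to A_n$; composing with $A_n\hookrightarrow A/I^n$ gives $\varphi_n:R^\square\to A/I^n$. Functoriality of the construction $A_n\to A_{n+1}$ makes the $\varphi_n$ compatible, so they assemble to $\varphi:R^\square\to\varprojlim A/I^n=A$, and $\varphi(\frakm_{R^\square})\subseteq I$ because each $\psi_n$ is a local homomorphism into $A_n$ whose maximal ideal lies in $I/I^n$. For uniqueness, any $\varphi:(R^\square,\frakm_{R^\square})\to(A,I)$ pulling back $\rho^{\mathrm{univ}}$ to $\rho$ has image of $\varphi\bmod I^n$ contained in the subring of $A/I^n$ generated by the entries of $\rho\bmod I^n$, i.e.\ in $A_n$, and this map agrees with $\psi_n$ by uniqueness on $\CNLO$. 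The only real obstacle in this argument is verifying that $A_n$ lies in $\CNLO$, which is precisely where the $p$-finiteness of $G$ and the compatibility condition in the definition of $\mathcal{D}^\square(A,I)$ are used.
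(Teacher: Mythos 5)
Your proof is correct, but it takes a genuinely different route from the paper's. The paper constructs $R^\square$ explicitly from scratch: it passes to $G/H$ where $H$ is the kernel of $G'\to G'(p)$, presents $G/H$ as a quotient of a free profinite group on finitely many generators, defines a lift of $\rhobar$ to a power series ring via Teichm\"{u}ller lifts, and cuts out $R^\square$ by the relations. The universal property on $\mathrm{Top}_\calO$ is then verified directly, with the key step being that any $\rho\in\mathcal{D}^\square(A,I)$ automatically factors through $G/H$ because $I^n/I^{n+1}$ is $p$-torsion. You instead treat representability on $\CNLO$ as a known black box and then prove that the same object works on $\mathrm{Top}_\calO$ via an ``Artinian image'' reduction: for each $n$ the map $\rho\bmod I^n$ has finite image, the $\calO$-subalgebra $A_n\subseteq A/I^n$ generated by the matrix entries lies in $\CNLO$, and the known universal property applied to each $A_n$ yields compatible maps $\psi_n$ assembling to $\varphi$. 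This is more modular; it is in fact the same style of reduction the paper reuses (and attributes to Kisin) for the companion result \ref{EAlgRep} on $\mathrm{Ar}_{E'}$-algebras, whereas the paper's proof of the present proposition is self-contained and also produces the explicit presentation $R^\square\cong\calO[[a_{ij}^k]]/J$ used implicitly in later arguments (e.g.\ the remark following the proof about base change of coefficients). Your proof does implicitly rely on $R^\square$ being topologically generated over $\calO$ by the entries of $\rho^{\mathrm{univ}}$ in the uniqueness step; this is a standard fact but worth flagging, since it follows either from the explicit construction or from a short Yoneda-type argument.

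One small misattribution: the $p$-finiteness hypothesis is not what you use in the reduction. The existence of the open normal subgroup $N_n$ with $\rho(N_n)\subseteq 1+M_n(I^n)$ needs only that $\rho$ is continuous, that $1+M_n(I^n)$ is open, and that $G$ is profinite (hence compact, so every open subgroup contains an open normal one); likewise the finiteness of $G/N_n$ comes from $N_n$ being open in a compact group, not from $p$-finiteness. The $p$-finiteness is consumed entirely inside the citation to Mazur, where it guarantees that $R^\square$ is Noetherian. Your final sentence locates $p$-finiteness in the wrong place, though this does not affect the validity of the argument.
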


\begin{proof}

Let $G' = \ker \rhobar$, $G'(p)$ its maximal pro-$p$ quotient, and $H$ the kernel of the natural surjection $G' \rightarrow G'(p)$. Note that $H$ is normal in $G$ as $G'$ is normal in $G$ and $H$ is a characteristic subgroup of $G'$. Since $G$ satisfies the $p$-finiteness condition, $G'/H$ is topologically finitely generated; hence, so is $G/H$. Fix a set of topological generators $\gamma_1,\ldots,\gamma_m$ of $G/H$. Let $F$ denote the free group on the set $\{\gamma_1,\ldots,\gamma_m\}$ and let $F^\wedge$ denote its profinite completion. We have a natural surjection $F^\wedge \rightarrow G/H$ and we denote by $K$ its kernel.

For each $\gamma_k$, let $[\rhobar(\gamma_k)] \in \GL_n(\mathcal{O})$ be the matrix whose entries are the Teichm\"{u}ller lifts of the entries of $\rhobar(\gamma_k)$. Consider the power series ring $\mathcal{O}[[a_{ij}^k]]$ where $1\le i,j\le n$ and $1\le k \le m$. We define a continuous homomorphism
 	\[ \varrho : F^\wedge \longrightarrow \GL_n(\mathcal{O}[[a_{ij}^k]])
	\]
by $\varrho(\gamma_k) = [\rhobar(\gamma_k)] + (a_{ij}^k)$. Let $J$ denote the ideal of $\mathcal{O}[[a_{ij}^k]]$ generated by the elements of the matrices $\varrho(r) - 1$ for all $r\in K$, and set $R^\square = \mathcal{O}[[a_{ij}^k]]/J$. Then the pushforward of $\varrho$ along $\mathcal{O}[[a_{ij}^k]] \rightarrow R^\square$ defines a continuous homomorphism $G/H \rightarrow \GL_n(R^\square)$, and we let
	\[ \rho^\mathrm{univ} : G \longrightarrow \GL_n(R^\square)
	\]
be the continuous homomorphism given by precomposing with the surjection $G \rightarrow G/H$. Note that $R^\square$ is an object in $\CNLO$. We will now show that $(R^\square,\mathfrak{m}_{R^\square})$ represents $\mathcal{D}^\square$ on $\mathrm{Top}_\mathcal{O}$ and that $\rho^\mathrm{univ}$ is the universal lift.

Let $(A,I)$ be an object in $\mathrm{Top}_\mathcal{O}$ and let 
	\[ \rho : G \longrightarrow \GL_n(A)
	\]
be an element of $\mathcal{D}^\square(A,I)$. Since $G \rightarrow \GL_n(A) \rightarrow \GL_n(A/I)$ has kernel $G'$, and $I^n/I^{n+1}$ is $p$-torsion for all $n$, the morphism $\rho$ factors through $G/H$. The morphism $\rho$ is equivalent to giving matrices $X_k \in \GL_n(A)$ for each $1\le k \le m$, such that their reduction modulo $I$ is equal to $\rhobar(\gamma_k)$, and such that the induced homomorphism $F^\wedge \rightarrow \GL_n(A)$ is trivial on the subgroup $K$. By viewing $\rho$ as a specialization of $\varrho$, this is then equivalent to giving an $\calO$-algebra morphism $\mathcal{O}[[a_{ij}^k]] \rightarrow A$ whose kernel contains $J$ and such that the maximal ideal of $\mathcal{O}[[a_{ij}^k]]$ is mapped to $I$, i.e to give an $\mathcal{O}$-algebra morphism $\varphi : R^\square \rightarrow A$ such that $\varphi (\mathfrak{m}_{R^\square}) \subseteq I$, and we see that $\rho$ is the pushforward of $\rho^\mathrm{univ}$ under this map.
	\end{proof}
	
This proposition has the following immediate consequence. If $E'/E$ is a finite extension with ring of integers $\mathcal{O}'$ and residue field $\F'$, then $R^\square \otimes_\mathcal{O} \mathcal{O}'$ represents the lifting functor $\mathcal{D}^\square$ for $\rhobar \otimes_\F \F'$ on $\CNL_{\mathcal{O}'}$.

\subsubsection{}\label{EAlgs}

Let $E'/E$ be finite with ring of intergers $\mathcal{O}'$ and residue field $\F'$. Assume we are given a continuous homomorphism $\rho_{\mathcal{O}'} : G \rightarrow \GL_n(\mathcal{O}')$ such that
	\[ \xymatrix{
		G \ar[r]^-\rho \ar[d]_-{\rhobar} & \GL_n(\mathcal{O}') \ar[d] \\
		\GL_n(\F) \ar[r] & \GL_n(\F') }
	\]
commutes. Let $\rho_{E'}$ denote the induced morphism $\rho_{E'} : G \rightarrow \GL_n(E')$. Recall that $\mathrm{Ar}_{E'}$ is the category of Artinian local rings with residue field $E'$ topologized by their structure as a finite dimensional $E'$-vector spaces, and morphisms local $E'$-algebra morphisms. Let $\mathcal{D}_{\rho_{E'}}^\square$ denote the functor on $\mathrm{Ar}_{E'}$ that sends an object $B$ of $\mathrm{Ar}_{E'}$ to the set of continuous homomorphisms $\rho_B : G \rightarrow \mathrm{GL}_n(B)$ such that $\rho_B$ modulo $\mathfrak{m}_B$ is equal to $\rho_{E'}$. The following proposition will be useful in determining the generic fibre of the local deformation ring in \S\ref{LocalDefsp} and the argument is due to Kisin \cite{KisinOverConvFM}*{Proposition 9.5}.

\begin{prop}\label{EAlgRep}

For any $B$ in $\mathrm{Ar}_{E'}$ and $\rho_B \in \mathcal{D}_{\rho_{E'}}^\square$ as in \ref{EAlgs}, there is a unique $\mathcal{O}$-algebra morphism $R^\square \rightarrow B$ such that $\rho_B$ is the pushforward of $\rho^\mathrm{univ}$ via this morphism.

\end{prop}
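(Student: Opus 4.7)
The plan is to use the explicit presentation $R^\square = \mathcal{O}[[a_{ij}^k]]/J$ from the proof of \ref{RepOnTop}, and to construct the desired $\mathcal{O}$-algebra morphism $R^\square \to B$ by sending each $a_{ij}^k$ to the element $\alpha_{ij}^k := \rho_B(\gamma_k)_{ij} - [\rhobar(\gamma_k)]_{ij} \in B$. The obstacle to invoking \ref{RepOnTop} directly is that $B$ is not an object of $\mathrm{Top}_\mathcal{O}$, since $\varpi_E$ is a unit in $B$; instead I would apply the universal property of the formal power series ring $\mathcal{O}[[a_{ij}^k]]$ mapping into $B$ equipped with its $p$-adic topology.

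The first step is to show that $\rho_B$ factors through $G/H$. The reduction of $\rho_B|_{G'}$ modulo $\mathfrak{m}_B$ equals $\rho_{E'}|_{G'}$, which takes values in $1 + M_n(\mathfrak{m}_{\mathcal{O}'})$ and is therefore pro-$p$. The kernel $1 + M_n(\mathfrak{m}_B)$ of $\GL_n(B) \to \GL_n(E')$ admits a filtration by the subgroups $1 + M_n(\mathfrak{m}_B^k)$ whose graded pieces are isomorphic as topological groups to $M_n(\mathfrak{m}_B^k/\mathfrak{m}_B^{k+1}) \cong \Q_p^{d_k}$; any compact subgroup of $\Q_p^{d_k}$ is a $\Z_p$-lattice and hence pro-$p$, so any compact subgroup of $1 + M_n(\mathfrak{m}_B)$ is pro-$p$ by induction along the filtration. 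Since $\rho_B(G')$ is compact and sits in an extension of pro-$p$ groups
\[ 1 \longrightarrow \rho_B(G') \cap (1 + M_n(\mathfrak{m}_B)) \longrightarrow \rho_B(G') \longrightarrow \rho_{E'}(G') \longrightarrow 1, \]
it is itself pro-$p$, and the universal property of the maximal pro-$p$ quotient $G'(p) = G'/H$ forces $\rho_B(H) = 1$.

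Next one checks that each $\alpha_{ij}^k$ is topologically nilpotent in $B$: its reduction modulo $\mathfrak{m}_B$ lies in $\mathfrak{m}_{\mathcal{O}'}$, so $\alpha_{ij}^k \in \mathfrak{m}_{\mathcal{O}'} + \mathfrak{m}_B$, and combining the nilpotence of $\mathfrak{m}_B$ with the $p$-adic convergence $\mathfrak{m}_{\mathcal{O}'}^n \to 0$ in $B$ shows the ideal $I_B := (\varpi_E, \alpha_{ij}^k) \subset B$ satisfies $I_B^n \to 0$. The universal property of the power series ring then produces a unique continuous $\mathcal{O}$-algebra morphism
\[ \varphi \colon \mathcal{O}[[a_{ij}^k]] \longrightarrow B, \qquad a_{ij}^k \longmapsto \alpha_{ij}^k. \]
Pushing forward the homomorphism $\varrho$ of \ref{RepOnTop} along $\varphi$ yields a homomorphism $F^\wedge \to \GL_n(B)$ agreeing with $\rho_B$ on each $\gamma_k$; since $\rho_B$ factors through $G/H = F^\wedge/K$, this composition is trivial on $K$, so $\varphi(J) = 0$ and $\varphi$ descends to the sought morphism $R^\square \to B$ whose pushforward of $\rho^{\mathrm{univ}}$ is $\rho_B$.

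For uniqueness, any $\mathcal{O}$-algebra morphism $\psi \colon R^\square \to B$ with $\psi \circ \rho^{\mathrm{univ}} = \rho_B$ must satisfy $\psi(a_{ij}^k) = \alpha_{ij}^k$ and $\psi(\varpi_E) = \varpi_E$, so $\psi(\mathfrak{m}_{R^\square}^n) \subseteq I_B^n \to 0$ in $B$. Hence $\psi$ is automatically continuous for the $\mathfrak{m}_{R^\square}$-adic topology on the source and the $p$-adic topology on $B$, and is therefore determined by its values on the dense polynomial subring $\mathcal{O}[a_{ij}^k]$ modulo $J$. The step I expect to require the most care is the first one, where the $p$-adic analytic structure of $\GL_n(B)$ must be exploited to deduce that $\rho_B|_{G'}$ is pro-$p$ despite $B$ being far from a profinite ring.
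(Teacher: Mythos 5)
The pivotal issue is your definition of $I_B$. You set $I_B := (\varpi_E, \alpha_{ij}^k) \subset B$ as an ideal of $B$. But $B$ is an $E'$-algebra, so $\varpi_E$ is a \emph{unit} in $B$; hence $I_B = B$ and $I_B^n = B$ for every $n$, and the claim $I_B^n \to 0$ is simply false. What you actually want for the existence half is the additive subgroup of $B$ generated by products of $n$ elements drawn from $\{\varpi_E,\alpha_{ij}^k\}$ together with their $\mathcal{O}[\alpha_{ij}^k]$-multiples, and since $\mathcal{O}[\alpha_{ij}^k]$ lands inside the bounded subring $\mathcal{O}' + \mathfrak{m}_B$ this set does tend to zero. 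With that correction your existence argument goes through: the displayed map $\varphi$ is well defined because each $c_{\mathbf m}\alpha^{\mathbf m}$ tends to zero, the factoring of $\rho_B$ through $G/H$ via the pro-$p$ filtration of $1 + M_n(\mathfrak{m}_B)$ is fine, and $\varphi$ kills $J$ by continuity and density of the abstract free group. This part is a genuinely more elementary alternative to the paper's construction.

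The uniqueness half, however, has a real gap that the $I_B$ confusion is masking. You write $\psi(\mathfrak{m}_{R^\square}^n) \subseteq I_B^n$ and conclude continuity, but an element of $\mathfrak{m}_{R^\square}^n$ has the form $\sum r_\alpha m_\alpha$ with $r_\alpha \in R^\square$ arbitrary and $m_\alpha$ a degree-$n$ monomial; applying $\psi$ gives $\sum \psi(r_\alpha)\psi(m_\alpha)$, and while $\psi(m_\alpha)\to 0$, you have no a priori bound on $\psi(r_\alpha)$. An abstract (not assumed continuous) $\mathcal{O}$-algebra map $\psi\colon R^\square \to B$ could in principle have unbounded image, and then nothing forces $\psi(\mathfrak{m}_{R^\square}^n)\to 0$. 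This is exactly what the paper's Baire category argument is for: it shows the image of any such $\psi$ lies in one of the compact Noetherian subrings $A_m \subseteq B$, and then invokes the universal property on $\mathrm{Top}_\mathcal{O}$. To repair your argument one would need to show independently that $\psi(R^\square)$ is bounded --- e.g. by noting that $\psi(R^\square)$ is a complete Noetherian local $\mathcal{O}$-algebra with finite residue field that embeds in $B$, and applying structure theory to see that its image in $E'$ is contained in $\mathcal{O}'$ (so its image in $B$ sits inside $\mathcal{O}'+\mathfrak{m}_B$); as written your proof asserts the continuity of $\psi$ without any such input, and this is the step I would flag as incomplete.
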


\begin{proof}

Choose a surjection $E'[[x_1,\ldots,x_n]] \rightarrow B$ and let $A_0$ denote the image of $\mathcal{O}'[[x_1,\ldots,x_n]]$ under this surjection. Since the representation $\rho_{E'}$ takes values in $\GL_n(\mathcal{O}')$, the representation $\rho_B$ takes values in $\GL_n(A_0+\mathfrak{m}_B)$. Let $\mathfrak{n}= A_0 \cap \mathfrak{m}_B$. Since $B$ is Artinian, we can define for any $m\ge 1$, $A_m = A_0 + \sum_{j=1}^\infty p^{-mj}\mathfrak{n}^j$. For each $m\ge 1$, $A_m$ is a $\CNL_{\mathcal{O}'}$-algebra subring of $B$. Note that $A_0 + \mathfrak{m}_B = \cup_{m\ge 0}^\infty A_m$. Since $G$ is compact, a standard Baire Category argument implies that $\rho_B$ takes values in $A_m$ for some $m$. By \ref{RepOnTop}, there is a unique $\CNLO$-morphism $R^\square \rightarrow A_m$ such that $G \rightarrow \GL_n(A_m)$ is the pushforward of $\rho^\mathrm{univ}$ via $R^\square\rightarrow A_m$.

It remains to show uniqueness. Let $\phi,\phi' : R^\square \rightarrow B$ be two such $\mathcal{O}$-algebra morphisms. Since the image of $R^\square$ under either of these maps lies in $A_0 + \mathfrak{m}_B = \cup_{m\ge 0} A_m$ and $R^\square$ is compact, again by Baire Category its image via either $\phi$ or $\phi'$ must lie in some $A_m$ for $m$ sufficiently large. By the universal property of $R^\square$ on $\mathrm{Top}_{\mathcal{O}}$, we must have $\phi = \phi'$.
	\end{proof}

\subsection{Local deformation rings at $p$}\label{LocalDefsp}

Throughout this subsection, $F_v$ will denote a finite extension of $\Q_p$, and $G_v = \Gal(\overline{\Q}_p/F_v)$. We assume that $E$ contains all embedding of $F_v$ into an algebraic closure of $E$. Our construction of the local deformation ring, as well as the analysis of its generic fibre follows \cite{GeraghtyOrdinary}*{\S 3}.

For a given ring $A$, we call an $A$-submodule $L\subset A^2$ a \textit{line} if both $L$ and $A^2/L$ are projective of rank one.

\subsubsection{}\label{localdefpstart}

We fix a continuous homomorphism
	\[ \overline{\rho} : G_v \longrightarrow \GL_2(\F) \]
and a continuous character $\psi : G_v \rightarrow \calO^\times$ such that $\det \overline{\rho} = \overline{\psi\epsilon_p}$. Let $V_\F$ denote the representation space of $\rhobar$. We will assume throughout this subsection that there is some line $L_\F$ in $V_\F$ that is stable by the action of $G_v$. Let $L_\F$ be one such line and let $\overline{\chi}$ denote the character of $G_v$ giving the action on $V_\F/L_\F$. Note that the choice of $\overline{\chi}$ is unique unless $V_\F$ is the direct sum of two distinct characters. In this case we simply make a choice of one of these characters.

We let $\mathcal{D}_v^\square$ and $\mathcal{D}_v^{\square,\psi}$ denote the functor of lifts of $\rhobar$ and the subfunctor consisting of lifts with determinant $\psi\epsilon_p$, repsectively. We denote the corresponding representing objects by $R_v^\square$ and $R_v^{\square,\psi}$, respectively. 

Let $G_v^\mathrm{ab}$ denote the abelianization of $G_v$ and $G_v^\mathrm{ab}(p)$ the maximal pro-$p$ quotient of the abelianization. Set $\Lambda(G_v) = \mathcal{O}[[G_v^\mathrm{ab}(p)]]$. Note that $G_v^\mathrm{ab}(p)$ is isomorphic to $\mu_{p^s}\times\Z_p^{d+1}$, where $d=[F_v:\Q_p]$ and $\mu_{p^s}$ is the group of $p$-power roots of unity in $F_v$. So $\Lambda(G_v)$ has $p^s$ minimal primes, corresponding to the $p^s$ distinct $\mathcal{O}$-valued characters of $\mu_{p^s}$ (recall we have assumed $E$ contains all embeddings of $F_v$ into $\overline{E}$), and its quotient by any of these minimal primes is isomorphic to a power series over $\mathcal{O}$ in $d+1$ variables. 

Let $\tilde{\chi}$ denote the Teichm\"{u}ller lift of $\overline{\chi}$. If $A$ is a $\CNLO$-algebra and $\chi : G_v \rightarrow A^\times$ is a continuous character, then writing $\chi = \tilde{\chi}\chi'$ with $\chi'$ factoring through the natural projection $G_v \rightarrow G_v^{\mathrm{ab}}(p)$, we see that $\Lambda(G_v)$ represents the set valued functor that assigns to each $\CNLO$-algebra $A$ the set of continuous characters $\{\chi_A : G_v\rightarrow A^\times\}$ that lift $\overline{\chi}$. We let $\chi^\mathrm{univ} : G_v \rightarrow \Lambda(G_v)$ denote the universal $\Lambda(G_v)$-valued character lifting $\overline{\chi}$.

We will need to consider quotients of $\Lambda(G_v)$ by its minimal primes in order to ensure that our local lifting ring is a domain. Recall $\mu_{p^s}$ is identified with the the $p$-power torsion subgroup of $G_v^\mathrm{ab}$. Fix a character $\eta : \mu_{p^s} \rightarrow \mathcal{O}^\times$, and let $\mathfrak{q}_\eta$ denote the corresponding minimal prime of $\Lambda(G_v)$. We set $\Lambda(G_v,\eta) = \Lambda(G_v)/\mathfrak{q}_\eta$, and let $\chi_\eta^\mathrm{univ}$ denote the character obtained by composing $\chi^\mathrm{univ}$ with the natural surjection $\Lambda(G_v)\rightarrow \Lambda(G_v,\eta)$. Then $\Lambda(G_v,\eta)$ represents the functor that assigns to each $\CNLO$-algebra $A$ the set of characters $\{\chi_A : G_v \rightarrow A^\times\}$ that lift $\overline{\chi}$ and whose restriction to the $p$-power torsion subgroup of $G_v^\mathrm{ab}$ is equal to $\eta$.

Set $R_{\Lambda(G_v,\eta)}^{\square,\psi} = R_v^{\square,\psi}\hat{\otimes}_\mathcal{O} \Lambda(G_v,\eta)$, and consider $\mathbb{P}^1_\mathcal{O} \otimes_\mathcal{O} R_{\Lambda(G_v,\eta)}^{\square,\psi}$. If $A$ is an $\calO$-algebra, then an $A$-point of this scheme is a triple $(\alpha,\beta,L)$ where $\alpha : R_v^{\square,\psi} \rightarrow A$ and $\beta:\Lambda(G_v,\eta)\rightarrow A$ are $\mathcal{O}$-algebra morphisms, and $L$ is a line in $A^2$. By pushing forward the universal $R_v^{\square,\psi}$ valued lift via $\alpha$, we get a homomorphism $\rho_A: G_v \rightarrow \GL_2(A)$, and by pushing forward $\chi_\eta^\mathrm{univ}$, we get a character $\chi_A : G_v\rightarrow A^\times$. We define a functor $\mathcal{DL}^{\square,\psi,\eta}$ on the category of $\calO$-algebras by letting $\mathcal{DL}^{\square,\psi}(A)$ be the subset of such triples $(\alpha,\beta,L)$ such that $\rho_A$ leaves $L$ invariant and $G_v$ acts on $A^2/L$ via $\chi_A$.

\begin{lem}\label{ProjRep}

$\mathcal{DL}^{\square,\psi}$ is represented by a closed subscheme $\mathscr{L}$ of $\bbP^1_\mathcal{O} \otimes_\mathcal{O} R_{\Lambda(G_v,\eta)}^{\square,\psi}$.

\end{lem}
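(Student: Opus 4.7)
The plan is to work on the two standard affine charts of $\bbP^1_\calO \otimes_\calO R_{\Lambda(G_v,\eta)}^{\square,\psi}$, showing that the restriction of $\mathcal{DL}^{\square,\psi,\eta}$ to each is cut out by an explicit ideal, and that these ideals glue. Let $U_1 = \Spec R_{\Lambda(G_v,\eta)}^{\square,\psi}[t]$ be the chart on which the tautological line is spanned by $(1,t)$, and $U_0 = \Spec R_{\Lambda(G_v,\eta)}^{\square,\psi}[s]$ the chart on which it is spanned by $(s,1)$.

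On $U_1$, for each $g \in G_v$ write $\rho^{\mathrm{univ}}(g) = \bigl(\begin{smallmatrix} a(g) & b(g) \\ c(g) & d(g) \end{smallmatrix}\bigr)$ and denote by $\chi(g) = \chi_\eta^{\mathrm{univ}}(g)$ the value of the universal character at $g$, all viewed as elements of $R_{\Lambda(G_v,\eta)}^{\square,\psi}$. The requirement that $L = \langle (1,t)\rangle$ be stable under $\rho(g)$ amounts to $\rho(g)(1,t)^T$ being a scalar multiple of $(1,t)^T$, i.e.
\[
c(g) + (d(g) - a(g))\,t - b(g)\,t^2 \;=\; 0,
\]
while the requirement that $G_v$ act on $A^2/L$ through $\chi$ amounts, after reducing $\rho(g)(0,1)^T$ modulo $L$, to
\[
d(g) - b(g)\,t - \chi(g) \;=\; 0.
\]
Let $I_1$ be the ideal of $R_{\Lambda(G_v,\eta)}^{\square,\psi}[t]$ generated by these two elements as $g$ ranges over $G_v$. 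An $\calO$-algebra morphism $R_{\Lambda(G_v,\eta)}^{\square,\psi}[t] \to A$ kills $I_1$ if and only if the associated triple $(\alpha,\beta,L)$ satisfies the two conditions for every $g \in G_v$, so $V(I_1)$ represents the restriction of $\mathcal{DL}^{\square,\psi,\eta}$ to $U_1$. A symmetric calculation on $U_0$ yields a closed subscheme $V(I_0)$ representing the restriction to $U_0$. On the overlap $U_0 \cap U_1$ both $V(I_0)$ and $V(I_1)$ represent the restriction of $\mathcal{DL}^{\square,\psi,\eta}$ there, so by Yoneda they agree and glue to a closed subscheme $\mathscr{L}$ of $\bbP^1_\calO \otimes_\calO R_{\Lambda(G_v,\eta)}^{\square,\psi}$ representing $\mathcal{DL}^{\square,\psi,\eta}$.

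No genuine obstacle is expected. The only mildly delicate point is that the ideal $I_1$ is a priori defined by infinitely many equations indexed by $g \in G_v$; this is harmless because $R_{\Lambda(G_v,\eta)}^{\square,\psi}[t]$ is Noetherian, so $I_1$ is in fact generated by the equations for finitely many $g$, and $V(I_1)$ is a bona fide closed subscheme. No continuity hypothesis on $A$ is used, since the universal objects built into $R_{\Lambda(G_v,\eta)}^{\square,\psi}$ already encode continuity on the source.
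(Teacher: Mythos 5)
Your proof is correct and takes essentially the same approach as the paper, which packages the identical chart computation intrinsically via the tautological surjection $\phi:\mathcal{O}_{\bbP^1_R}^2 \to \mathcal{O}_{\bbP^1_R}(1)$ and the sheaf of ideals generated by the coefficients of $\phi(\rho_A(\sigma)x - \chi_A(\sigma)x)$. The only cosmetic difference is that the paper records only the quotient-action condition (stability of $L$ being a formal consequence of it), and indeed on $U_1$ your stability equation $c + (d-a)t - bt^2$ equals $\bigl(c + \chi t - at\bigr) + t\bigl(d - bt - \chi\bigr)$, so the two presentations generate the same ideal.
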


\begin{proof}

For ease of notation, set $R = R_{\Lambda(G_v,\eta)}^{\square,\psi}$. Let $\phi$ denote the tautological morphism
	\[ \phi : \mathcal{O}_{\bbP^1_R}^2 \longrightarrow \mathcal{O}_{\bbP^1_R}(1), \]
and let $\mathcal{L}$ denote its kernel. Let $U=\Spec A$ be an open affine subset of $\bbP^1_R$ such that $\mathcal{O}_{\bbP^1_R}(1)(U)$ is free of rank one over $A$. Fix a generator $e$ of $\mathcal{O}_{\bbP^1_R}(1)(U)$. Let
	\[ \mathcal{I}(U) = \{ r \in A : \text{there is some } \sigma \in G_v \text{ and } x \in A^2	\text{ with } \phi(\rho_A(\sigma) x - \chi_A(\sigma)x) = r e \}.
	 \]
Then $\mathcal{I}(U)$ is an ideal in $A$ and does not depend on the choice of $e$. It is easy to see that $\mathcal{I}$ defines a sheaf of ideals of $\bbP^1_R$, and that $\mathcal{DL}^{\square,\psi}$ coincides with the functor of points of the closed subscheme of $\bbP^1_R$ defined by $\mathcal{I}$. 
\end{proof}

\subsubsection{}\label{triliftringdef}

We now let $R_{\Lambda(G_v,\eta)}^{\triangle,\psi}$ denote the quotient of $R_{\Lambda(G_v,\eta)}^{\square,\psi}$ by the kernel of the homomorphism
	\[ R_{\Lambda(G_v,\eta)}^{\square,\psi} \longrightarrow \mathcal{O}_{\mathscr{L}}(\mathscr{L}),
	\]
i.e. $R_{\Lambda(G_v,\eta)}^{\triangle,\psi}$ is the affine algebra of the scheme theoretic image of $\mathscr{L}$ in $\Spec R_{\Lambda(G_v,\eta)}^{\square,\psi}$.
		
\begin{prop}\label{TriLiftRing}
Let $E'/E$ be a finite extension with ring of integers $\mathcal{O}'$. Let $\rho :G_v\rightarrow \GL_2(\mathcal{O}')$ be a lift of $\rhobar$ with determinant $\psi\epsilon_p$, and let $\chi : G_v \rightarrow (\calO')^\times$ be a character lifting $\chibar$ whose restriction to the $p$-power torsion subgroup of $G_v^\mathrm{ab}$ is equal to $\eta$. The point of $R_{\Lambda(G_v,\eta)}^{\square,\psi}$ determined by the pair $(\rho,\chi)$ factors through $R_{\Lambda(G_v,\eta)}^{\triangle,\psi}$ if and only if there is a $G_v$-stable line in $(\mathcal{O}')^2$ such that $G_v$ acts on the quotient via $\chi$.
\end{prop}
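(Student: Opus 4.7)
My plan is to exploit the properness of the morphism $\pi : \mathscr{L} \to \Spec R_{\Lambda(G_v,\eta)}^{\square,\psi}$---inherited from the properness of $\bbP^1_\mathcal{O}$---together with the valuative criterion for the discrete valuation ring $\mathcal{O}'$. The direction $(\Leftarrow)$ is nearly immediate from the moduli description in \ref{ProjRep}: a $G_v$-stable line $L \subset (\mathcal{O}')^2$ on which the quotient action is $\chi$, together with the data of $\rho$ and $\chi$, defines an $\mathcal{O}'$-point of $\mathscr{L}$ whose projection to $\Spec R_{\Lambda(G_v,\eta)}^{\square,\psi}$ is $(\rho,\chi)$. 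Since $\pi$ factors through $\Spec R_{\Lambda(G_v,\eta)}^{\triangle,\psi}$ by construction, so does $(\rho,\chi)$.

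For the converse, I would first observe that $\pi$ is proper, and hence that the induced morphism $\pi' : \mathscr{L} \to \Spec R_{\Lambda(G_v,\eta)}^{\triangle,\psi}$ is both proper and scheme-theoretically dominant (the latter because the defining map $R_{\Lambda(G_v,\eta)}^{\triangle,\psi} \hookrightarrow \mathcal{O}_{\mathscr{L}}(\mathscr{L})$ is injective); hence $\pi'$ is surjective on underlying topological spaces. Letting $x : \Spec\mathcal{O}' \to \Spec R_{\Lambda(G_v,\eta)}^{\triangle,\psi}$ denote the point corresponding to $(\rho,\chi)$, the base change $\mathscr{L}_x := \mathscr{L} \times_{R_{\Lambda(G_v,\eta)}^{\triangle,\psi}} \mathcal{O}'$ is a proper closed subscheme of $\bbP^1_{\mathcal{O}'}$ which remains surjective onto $\Spec\mathcal{O}'$. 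In particular, its generic fiber $\mathscr{L}_{x,E'} \subseteq \bbP^1_{E'}$---which parametrizes $\rho_{E'}$-stable lines in $(E')^2$ with quotient character $\chi_{E'}$---is a non-empty closed subscheme.

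The key remaining step, and the main obstacle, is to show that $\mathscr{L}_{x,E'}$ has an $E'$-rational point rather than only a point over some strict extension. A two-dimensional representation $\rho_{E'}$ has, over $\overline{E'}$, either a unique stable line (when it is an indecomposable extension), exactly two distinct stable lines (when it is a sum of two distinct characters), or all of $\bbP^1$ (when scalar). A unique stable line is Galois-fixed and hence $E'$-rational; a scalar $\rho_{E'}$ makes $\mathscr{L}_{x,E'}$ all of $\bbP^1_{E'}$. In the two-line case, the geometric quotient characters multiply to $\det\rho_{E'} = \psi\epsilon_p$, so if $\chi_{E'}$ equals one of them, then the other is $\psi\epsilon_p/\chi_{E'}$, which is also $E'$-rational; hence $\rho_{E'}$ splits over $E'$ as a sum of two $E'$-rational characters, and the line with quotient $\chi_{E'}$ is $E'$-rational. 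The only scenario that would leave $\mathscr{L}_{x,E'}$ without an $E'$-point---namely $\rho_{E'}$ reducible over a strict extension of $E'$ with neither geometric quotient character $E'$-rational---forces $\mathscr{L}_{x,E'}$ to be empty, contradicting what was just established. Thus $\mathscr{L}_{x,E'}$ has an $E'$-point, which by the valuative criterion of properness applied to $\mathscr{L}_x \to \Spec\mathcal{O}'$ extends uniquely to an $\mathcal{O}'$-point; by construction this is the desired $G_v$-stable line $L \subset (\mathcal{O}')^2$ with quotient character $\chi$.
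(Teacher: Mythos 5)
Your proposal is correct and follows essentially the same route as the paper: properness of $\mathscr{L} \to \Spec R_{\Lambda(G_v,\eta)}^{\square,\psi}$, the coincidence of topological and scheme-theoretic images for proper morphisms (which you phrase as surjectivity of the induced map to $\Spec R_{\Lambda(G_v,\eta)}^{\triangle,\psi}$), and the valuative criterion of properness. The paper's proof is considerably terser: it asserts that ``the point determined by $(\rho,\chi)$ satisfies the conclusion if and only if it is in the image of $\mathscr{L}[1/p]$'', which leaves implicit exactly the rationality question you correctly single out as the main obstacle --- that set-theoretic membership in the image a priori only gives a stable line over some finite extension $E''/E'$. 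Your case analysis supplies that missing step, so you are not taking a different approach so much as expanding one the paper compresses.

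Two small remarks on the case analysis. First, your trichotomy for the stable lines of $\rho_{\overline{E'}}$ (unique line, exactly two lines, or all of $\bbP^1$) tacitly assumes $\rho_{\overline{E'}}$ is reducible and omits the absolutely irreducible case; this is harmless because an absolutely irreducible $\rho_{E'}$ has no stable lines at all, so $\mathscr{L}_{x,E'}$ would be empty, contradicting the non-emptiness you established, but it should be flagged rather than silently excluded. (Similarly, in the two-line case the possibility that neither quotient character equals $\chi$ again forces an empty fiber.) Second, the argument can be streamlined: any two distinct $\rho_{\overline{E'}}$-stable lines with the same quotient character $\chi$ also have the same sub-character $\psi\epsilon_p\chi^{-1}$, so $\rho_{\overline{E'}}$ would act on their span, which is all of $\overline{E'}^2$, by that scalar; hence the fiber over $(\rho_{E'},\chi_{E'})$ is either all of $\bbP^1_{E'}$ or a single $\overline{E'}$-point, and a singleton is automatically Galois-stable and therefore $E'$-rational. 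This replaces the longer discussion of when the eigenlines descend.
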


\begin{proof} Let $f$ denote the morphism $\mathscr{L}\rightarrow \Spec R_{\Lambda(G_v,\eta)}^{\square,\psi}$. The point determined by $(\rho,\chi)$ satisfies the conclusion if and only if it is in the image of $\mathscr{L}[1/p]$. Since $f$ is proper, so is $f[1/p]$, and the topological image of $f[1/p]$ is equal to scheme theoretic image of $f[1/p]$, which is $\Spec R_{\Lambda(G_v,\eta)}^{\triangle,\psi}[1/p]$.  \end{proof}

In order to determine the structure of $R_{\Lambda(G_v,\eta)}^{\triangle,\psi}$ more precisely, we will relate it to $\mathscr{L}$ via the following lemma.

\begin{lem}\label{opensetiso}

Let $Z$ denote the closed subscheme of $\Lambda(G_v,\eta)$ defined by $(\chi_\eta^\mathrm{univ})^2 = \psi\epsilon_p$, and let $V$ denote its complement. The map
	\[ \mathscr{L} \times_{\Spec \Lambda(G_v,\eta)} V \longrightarrow 
	\Spec R_{\Lambda(G_v,\eta)}^{\triangle,\psi} \times_{\Spec \Lambda(G_v,\eta)} V
	\]
is an isomorphism.

\end{lem}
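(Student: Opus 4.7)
The approach is to show that $f \colon \mathscr{L} \to \Spec R^{\square,\psi}_{\Lambda(G_v,\eta)}$, after restriction to the preimage $V'$ of $V$, is a closed immersion whose image is $\Spec R^{\triangle,\psi}_{\Lambda(G_v,\eta)}|_V$. The plan is to verify two things: that $f|_{V'}$ is a monomorphism of schemes (which reduces to uniqueness of the $G_v$-stable line on $V$), and that $f|_{V'}$ is proper (immediate from $\mathscr{L}$ being closed in $\mathbb{P}^1$ over its base). A proper monomorphism is a closed immersion, and because scheme-theoretic image commutes with restriction to open subschemes for quasi-compact quasi-separated morphisms, the image of $f|_{V'}$ is $\Spec R^{\triangle,\psi}_{\Lambda(G_v,\eta)}|_V$ by the very definition of $R^{\triangle,\psi}$.

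For uniqueness of the line, I would work locally. Let $A$ be a test algebra and suppose $(\rho_A,\chi_A,L)$ is an $A$-point of $\mathscr{L}$ lying over $V$; then the ideal of $A$ generated by $\chi_A(\sigma)^2 - \psi\epsilon_p(\sigma)$ for $\sigma \in G_v$ is the unit ideal, so $\Spec A$ is covered by the opens $D(\chi_A(\sigma)^2 - \psi\epsilon_p(\sigma))$. On such an open, $\delta_\sigma := \chi_A(\sigma) - \psi\epsilon_p(\sigma)/\chi_A(\sigma)$ is a unit. The reducibility witnessed by $L$ yields the trace identity $\tr \rho_A(\sigma) = \chi_A(\sigma) + \psi\epsilon_p(\sigma)/\chi_A(\sigma)$ (compute in a local basis extending a generator of $L$); combined with $\det\rho_A = \psi\epsilon_p$ and Cayley-Hamilton, this gives $(\rho_A(\sigma) - \chi_A(\sigma))(\rho_A(\sigma) - \psi\epsilon_p(\sigma)/\chi_A(\sigma)) = 0$, and $e_\sigma := (\rho_A(\sigma) - \psi\epsilon_p(\sigma)/\chi_A(\sigma))/\delta_\sigma$ is an idempotent with $K_\sigma := \mathrm{im}(1 - e_\sigma) = \ker(\rho_A(\sigma) - \psi\epsilon_p(\sigma)/\chi_A(\sigma))$, a direct summand of $A^2$ of rank $1$. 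Since $G_v$ acts on $L$ via $\psi\epsilon_p/\chi_A$, one has $L \subseteq K_\sigma$; as $L$ is itself a direct summand of $A^2$ (being a line), it is a direct summand of $K_\sigma$, and the rank forces $L = K_\sigma$. Hence $L$ is uniquely determined locally, and therefore globally.

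Given uniqueness, the induced map of functors of points shows $f|_{V'}$ is a monomorphism; combined with properness, it is a closed immersion. Since $R^{\triangle,\psi}_{\Lambda(G_v,\eta)}$ is by construction the scheme-theoretic image of $\mathscr{L}$ and this image commutes with restriction to the open $V$, the closed immersion $f|_{V'}$ is precisely the desired isomorphism onto $\Spec R^{\triangle,\psi}_{\Lambda(G_v,\eta)}|_V$. The main obstacle is promoting the trace identity from a point-by-point statement on $\mathscr{L}$ (which is transparent from the moduli description) to an identity in the ring $\mathcal{O}_\mathscr{L}(\mathscr{L})$ that pulls back to $R^{\triangle,\psi}_{\Lambda(G_v,\eta)}$; this is handled by noting that the trace relation holds in every local ring of $\mathscr{L}$ (after choosing local bases compatible with the universal $L$), hence in global sections, hence in the quotient cutting out $R^{\triangle,\psi}_{\Lambda(G_v,\eta)}$.
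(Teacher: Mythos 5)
Your proof is correct, and it lands on the same essential mechanism as the paper — on $V$, the two eigenvalues of $\rho_A(\sigma)$ are ``distinct'' (their difference is a unit), which uniquely determines the stable line — but the implementation differs. The paper works with $A$ a \emph{local} ring, picks $\sigma$ with $\chi_A(\sigma)^2 \ne \psi\epsilon_p(\sigma) \bmod \mathfrak{m}_A$, observes that $M = \rho_A(\sigma) - \psi\epsilon_p\chi_A^{-1}(\sigma)$ has rank $1$ mod $\mathfrak{m}_A$ so one of its entries is a unit, and then reads off the projective coordinates of $L_A = \ker M$ directly from the entries of $M$; this simultaneously gives that fibres are singletons and that the map on local rings is surjective, hence a closed immersion. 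You instead work with general $A$, cover $\Spec A$ by the opens $D(\chi_A(\sigma)^2-\psi\epsilon_p(\sigma))$, use Cayley--Hamilton to produce the idempotent $e_\sigma$, and identify $L$ with the rank-$1$ summand $K_\sigma = \ker(\rho_A(\sigma)-\psi\epsilon_p\chi_A^{-1}(\sigma))$, concluding via the general fact that a proper monomorphism of schemes is a closed immersion. Both are then finished with the same flat base change statement about scheme-theoretic images. The paper's route is more hands-on and makes the surjectivity of the ring map visible; yours outsources that to the proper-monomorphism theorem and trades explicit coordinates for the idempotent decomposition, which is cleaner over a non-local base. One small remark: your closing paragraph about ``promoting the trace identity'' to $\mathcal{O}_{\mathscr{L}}(\mathscr{L})$ is not actually needed for the argument as you set it up — the identity $\tr\rho_A(\sigma) = \chi_A(\sigma) + \psi\epsilon_p(\sigma)\chi_A(\sigma)^{-1}$ is a tautology for every $A$-point of $\mathscr{L}$ (compute in any local basis adapted to $L$), and you only ever invoke it on points of $\mathscr{L}$, so no promotion to the quotient ring $R^{\triangle,\psi}_{\Lambda(G_v,\eta)}$ is required.
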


\begin{proof}

Since $V$ is an open subscheme of $\Spec \Lambda(G_v,\eta)$ and scheme theoretic image commutes with flat base change,
	\[ \mathscr{L} \times_{\Spec \Lambda(G_v,\eta)} V \longrightarrow 
	\Spec R_{\Lambda(G_v,\eta)}^{\triangle,\psi} \times_{\Spec \Lambda(G_v,\eta)} V
	\]
has injective structural morphism, so to prove it is an isomorphism it suffices to show it is a closed immersion. To show this, it suffices to show that if $A$ is a local ring and $(\rho_A,\chi_A,L_A)\in (\mathscr{L} \times_{\Spec \Lambda(G_v,\eta)} V)(A)$ is an $A$-point, the line $L_A$ is unique and is defined over $B$, where $B$ is the image of $R_{\Lambda(G_v,\eta)}^{\square,\psi}$ in $A$ under the morphism determined by $(\rho_A,\chi_A)$. Indeed, this implies that the fibres are all singletons and that the corresponding maps on local rings are surjective.

Let $(\rho_A,\chi_A,L_A) \in (\mathscr{L} \times_{\Spec \Lambda(G_v,\eta)} V)(A)$, with $A$ a local ring. Take $\sigma\in G_v$ such that  $\chi_A^2(\sigma) \ne \psi\epsilon_p(\sigma) \mod \mathfrak{m}_A$. Consider the matrix $M = \rho_A(\sigma) - \psi\epsilon_p\chi_A^{-1}(\sigma)$. Since $G_v$ acts on $L_A$ via $\psi\epsilon_p\chi_A^{-1}$ we see that $\det M = 0$. But, by our assumption on $\sigma$, its reduction modulo the maximal ideal of $A$ has rank $1$, hence one of the entries of the matrix $M$ is a unit. This implies that the line $L_A$ is unique and its projective coordinates can be defined using the entries of $M$, which are elements in the image of  $R_{\Lambda(G_v,\eta)}^{\square,\psi}$ in $A$.
	\end{proof}

\subsubsection{}\label{borelliftmodp}

Let $\mathrm{B}_2$ denote the Borel subgroup of upper triangular matrices in $\GL_2$. Fix a continuous homomorphism $\overline{\varrho} : G_v \rightarrow \mathrm{B}_2(\F)$ with $\det \overline{\varrho} = \overline{\psi\epsilon_p}$ and such that
	\[ \overline{\varrho} = \left(\begin{array}{cc} \ast & \ast \\ & \overline{\chi} \end{array} \right)
	\]
with $\overline{\chi}$ our fixed character. Define a functor $\mathcal{D}_{\overline{\varrho}}^{\mathrm{Bor},\psi}$ on $\CNLO$ by letting letting $\mathcal{D}_{\overline{\varrho}}^{\mathrm{Bor},\psi}(A)$ be the set of continuous morphisms $\varrho_A : G_v \rightarrow \mathrm{B}_2(A)$ that reduce to $\overline{\varrho}$ modulo $\mathfrak{m}_A$, have determinant $\psi\epsilon_p$, and such that, writing
	\[ \varrho_A = \left( \begin{array}{cc} \chi_1 & \ast \\ & \chi_2 \end{array}\right),
	\]
$\chi_2$ coincides with $\eta$ on the $p$-power torsion subgroup of $G_v^\mathrm{ab}$. 

Before proceeding with our analysis of $\mathcal{D}_{\overline{\varrho}}^{\mathrm{Bor},\psi}$, we record a lemma, which is \cite{MatsumuraCRT}*{Exercise 16.10}, that will be useful in the proof of \ref{borelliftringmodp} below.

\begin{lem}\label{MatsEx}

Let $R$ be a local Noetherian ring and let $r_1,\ldots, r_n \in R$, $n\ge 1$. If $r_1,\ldots,r_n$ generate a prime ideal of height $n$, then $R$ is a domain, and for any $1\le i\le r$, $r_1,\ldots,r_i$ generates a prime ideal of height $i$.

\end{lem}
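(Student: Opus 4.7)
The plan is to run a reverse induction on $i$ from $n$ down to $0$, reducing each inductive step to the special case $n = 1$, which is handled directly by Nakayama's lemma. Note that the assertion ``$R$ is a domain'' is exactly the statement ``$(r_1,\ldots,r_0) = (0)$ is prime of height $0$'', so it emerges as the $i = 0$ output of the induction.

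The key ingredient is that whenever $\mathfrak{p}_i := (r_1,\ldots,r_i)$ is prime of height $i$, the localization $R_{\mathfrak{p}_i}$ is a regular local ring of dimension $i$ with $r_1,\ldots,r_i$ a regular system of parameters, simply because $\mathfrak{p}_i R_{\mathfrak{p}_i}$ is an $i$-generated ideal of height $i$. Consequently $R_{\mathfrak{p}_i}/(r_1,\ldots,r_{i-1})R_{\mathfrak{p}_i}$ is regular local of dimension $1$, so $(r_1,\ldots,r_{i-1})R_{\mathfrak{p}_i}$ is a prime of height $i-1$ in $R_{\mathfrak{p}_i}$. Its contraction $\mathfrak{q}$ to $R$ is then a prime of height $i-1$ satisfying $\mathfrak{p}_{i-1} \subseteq \mathfrak{q} \subsetneq \mathfrak{p}_i$. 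Passing to the quotient $R/\mathfrak{p}_{i-1}$, the image of $\mathfrak{p}_i$ is a prime generated by the single element $\bar r_i$, and Krull's Hauptidealsatz combined with the chain $\mathfrak{q}/\mathfrak{p}_{i-1} \subsetneq \mathfrak{p}_i/\mathfrak{p}_{i-1}$ forces this height to equal $1$. Applying the $n=1$ case of the lemma to the local Noetherian ring $R/\mathfrak{p}_{i-1}$ then gives that $R/\mathfrak{p}_{i-1}$ is a domain, so $\mathfrak{p}_{i-1}$ is prime. Primality of $\mathfrak{p}_{i-1}$ forces $\mathfrak{p}_{i-1} = \mathfrak{q}$, since any $r \in \mathfrak{q}$ satisfies $sr \in \mathfrak{p}_{i-1}$ for some $s \notin \mathfrak{p}_i \supseteq \mathfrak{p}_{i-1}$; hence $\mathrm{ht}(\mathfrak{p}_{i-1}) = i-1$.

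The main obstacle is therefore the $n = 1$ case: assuming $R$ local Noetherian and $\mathfrak{p} = (r)$ prime of height $1$, show $R$ is a domain. For this, note that $R_\mathfrak{p}$ is a DVR with uniformizer $r$, so $r$ is a non-zerodivisor in $R_\mathfrak{p}$. Let $K = \ker(R \to R_\mathfrak{p})$, a finitely generated ideal; using primality of $\mathfrak{p}$, one sees $K \subseteq \mathfrak{p}$ (if $a \in K$ and $sa = 0$ with $s \notin \mathfrak{p}$, then $a \in \mathfrak{p}$ since $sa \in \mathfrak{p}$). Given any $a \in K \subseteq \mathfrak{p} = (r)$, write $a = rb$ with $b \in R$. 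Because $rb = a$ maps to $0$ in $R_\mathfrak{p}$ and $r$ is a non-zerodivisor there, $b$ also maps to $0$, i.e.\ $b \in K$. Hence $K \subseteq rK = \mathfrak{p} K$, and Nakayama's lemma (applicable since $K$ is finitely generated and $\mathfrak{p} \subseteq \mathfrak{m}$) yields $K = 0$. Thus $R$ embeds into the domain $R_\mathfrak{p}$, so $R$ is a domain. Feeding this into the reverse induction completes the proof.
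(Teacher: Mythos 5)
Your proof is correct, and both the $n=1$ case and the inductive step take genuinely different routes from the paper's.

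For the base case, both arguments run Nakayama on an $r$-divisible ideal sitting inside $(r)$, but the ideals differ: the paper picks a minimal prime $\mathfrak{q}$ of $R$ contained in $(r)$ (which exists since $\mathrm{ht}(r) = 1$), observes $\mathfrak{q} = r\mathfrak{q}$ directly from primality of $\mathfrak{q}$, and concludes $\mathfrak{q} = (0)$; you instead take $K = \ker(R \to R_{(r)})$ and leverage the fact that $R_{(r)}$ is a DVR. Your version is perfectly fine, but note it silently invokes the nontrivial fact that a one-dimensional Noetherian local ring with principal maximal ideal is a domain — that is, the one-dimensional instance of ``regular local rings are domains.'' This is standard, but the paper's minimal-prime trick avoids any such input and is strictly more elementary. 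For the inductive step, the paper reduces directly to the $n=1$ case applied to $R/(r_1,\ldots,r_{n-1})$, citing Matsumura for the height bookkeeping, whereas you route through the regular local ring $R_{\mathfrak{p}_i}$ and the fact that killing part of a regular system of parameters preserves regularity, producing the auxiliary prime $\mathfrak{q}$ and later identifying it with $\mathfrak{p}_{i-1}$. This is a heavier hammer, but it has the merit of being self-contained: it derives, rather than cites, the fact that $\bar{r}_i$ generates a prime of height exactly $1$ in $R/\mathfrak{p}_{i-1}$. Both approaches are sound; yours trades elementariness for self-containment.
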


\begin{proof}

We induct on $n$. Assume $r\in R$ generates a prime ideal of height $1$. Take a minimal prime $\mathfrak{q} \subset rR$. For any $x \in \mathfrak{q}$ we have $x = ry$ for some $y$. Then $r \notin\mathfrak{q}$ implies that $y\in \mathfrak{q}$, and $r\mathfrak{q} = \mathfrak{q}$. Nakayama's lemma implies $\mathfrak{q} = 0$.

For $n >1$, the image of $r_n$ in $R/(r_1,\ldots,r_{n-1})$ generates a prime ideal of height $1$, cf. \cite{MatsumuraCRT}*{Theorem 13.6 (iii)}, so the $n=1$ case implies that $r_1,\ldots,r_{n-1}$ generates a prime ideal of height $n-1$, and the lemma follows by induction.
	\end{proof}

\begin{prop}\label{borelliftringmodp}
	\begin{enumerate}
	\item $\mathcal{D}_{\overline{\varrho}}^{\mathrm{Bor},\psi}$ is representable by a $\CNLO$-algebra $R_{\overline{\varrho}}^{\mathrm{Bor},\psi}$. 
	\item Assume that $\overline{\varrho}$ has (possibly trivial) $p$-power order image. $R_{\overline{\varrho}}^{\mathrm{Bor},\psi}$ is a complete intersection of dimension $3+2[F_v:\Q_p]$.
	\item Assume that the image of $\overline{\varrho}$ is either trivial or has order $p$, and that if $p=2$, then either $F_v$ contains a primitive $4$-th root of unity or $[F_v:\Q_2] \ge 3$. Both $R_{\overline{\varrho}}^{\mathrm{Bor},\psi}$ and $R_{\overline{\varrho}}^{\mathrm{Bor},\psi}\otimes_\mathcal{O} \F$ are domains.
	\end{enumerate}
\end{prop}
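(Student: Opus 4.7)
The strategy is to give an explicit presentation of $R^{\mathrm{Bor},\psi}_{\overline{\varrho}}$ by using a Demushkin-type presentation of the relevant pro-$p$ quotient of $G_v$, and then to verify the three claims by direct analysis of this presentation.

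For (1), a lift $\varrho_A : G_v \to \mathrm{B}_2(A)$ of $\overline{\varrho}$ is equivalent to specifying continuous characters $\chi_1,\chi_2 : G_v \to A^\times$ lifting the diagonal, subject to $\chi_1\chi_2 = \psi\epsilon_p$ and $\chi_2|_{\mathrm{tors}(G_v^{\mathrm{ab}})} = \eta$, together with a continuous $1$-cocycle $b \in Z^1(G_v, A(\chi_1\chi_2^{-1}))$ encoding the upper-right entry. The character data is represented by $\Lambda(G_v,\eta)$ from \ref{localdefpstart}; the cocycle functor is representable because $G_v$ is topologically finitely generated, being cut out in the tuple of values at generators by the cocycle condition applied to the defining relations.

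For (2), since $\overline{\varrho}$ has $p$-power order image and $\F^\times$ has order prime to $p$, both residual diagonal characters are trivial, so all lifts factor through the appropriate pro-$p$ Galois quotient. With $d=[F_v:\Q_p]$, the pro-$p$ Galois group $G_v(p)$ is either free of rank $d+1$ (if $\zeta_p \notin F_v$) or a Demushkin group of rank $d+2$ with a single relation (if $\zeta_p \in F_v$). The character contribution is $\Lambda(G_v,\eta)$, of Krull dimension $d+2$; the cocycle contribution is a power series in $d+2$ further variables modulo one relation $f$ obtained by linearizing the Demushkin word under the cocycle condition (or $d+1$ variables and no relation in the free case). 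This yields a presentation
\[
R^{\mathrm{Bor},\psi}_{\overline{\varrho}} \cong \calO[[x_1,\ldots,x_{2d+4}]]/(f),
\]
and a tangent-space computation shows $f$ has a nonzero linear part, hence is a regular element. Thus $R^{\mathrm{Bor},\psi}_{\overline{\varrho}}$ is a complete intersection of dimension $2d+3 = 3+2[F_v:\Q_p]$.

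For (3), by Lemma \ref{MatsEx} it suffices to show $f$ and its mod-$p$ image $\bar f$ generate prime ideals of height $1$ in the respective power series rings. Writing $\gamma_1,\ldots,\gamma_{d+2}$ for the Demushkin generators and $b_i = b(\gamma_i)$, the linearized cocycle relation gives
\[
f = \sum_{i=1}^{d+2} c_i\, b_i + (\text{terms in the character variables alone}),
\]
where the $c_i$ are explicit expressions in the universal characters evaluated at the $\gamma_i$. It suffices to produce a $c_i$ whose reduction in $\F = \Lambda(G_v,\eta)/\mathfrak{m}_{\Lambda(G_v,\eta)}$ is nonzero, since Weierstrass preparation with respect to $b_i$ then writes $f$ as a unit times a monic linear polynomial and shows $(f)$ and $(\bar f)$ are prime in both settings. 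The image hypothesis (trivial or of order $p$) is what ensures the residual $c_i$ are not distorted by contributions from the diagonal characters. For $p > 2$, with the Demushkin relation $\gamma_1^{p^s}[\gamma_1,\gamma_2]\cdots[\gamma_{d+1},\gamma_{d+2}] = 1$, the verification is uniform. The main obstacle is the case $p = 2$, where the Demushkin relation takes different forms depending on whether $F_v$ contains $\zeta_4$, and the low-degree cases $d = 1, 2$ with $\zeta_4 \notin F_v$ are precisely those in which every candidate $c_i$ vanishes modulo $2$; the hypothesis $\zeta_4 \in F_v$ or $[F_v:\Q_2] \ge 3$ is exactly what excludes them.
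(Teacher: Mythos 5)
The outline of (1) and (2) is basically the paper's approach (explicit presentation via the Demushkin relation), but there are two problems in (2) and a genuine gap in (3).

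In (2), the presentation should have $2d+3$ variables, not $2d+4$: the character data contributes $d+1$ free variables (one for each of the $\Z_p$-factors of $G_v^{\mathrm{ab}}(p)$) and the cocycle contributes $d+2$ variables, for a total of $2d+3$. More seriously, the claim that ``a tangent-space computation shows $f$ has a nonzero linear part'' is false: writing out the relation, the linear part of $f$ vanishes identically. (If it did have a nonzero linear part, $R^{\mathrm{Bor},\psi}_{\overline{\varrho}}$ would be formally smooth over $\mathcal{O}$, which is not the case.) The paper instead shows in part (3) that, under a suitable grading, the lowest-degree term of the mod-$\varpi_E$ reduction of $f$ has degree $2$ or $3$. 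For (2) one does not need a nonzero linear part at all: $f$ is a nonzero element of a power series ring, hence a regular element, and that already gives the complete intersection claim with the stated dimension.

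In (3), the Weierstrass argument cannot work as stated. You write $f = \sum c_i b_i + (\cdots)$ and propose to find some $c_i$ with nonzero image in $\F$, then apply Weierstrass preparation in $b_i$. But the coefficients $c_i$ all lie in $\mathfrak{m}$: e.g.\ the coefficient of the last cocycle variable $b_m$ is, up to a unit, $(1+a_{m-1}) - (1+a_{m-1})^{-1}$, which vanishes modulo the maximal ideal (and is in fact divisible by $2$, so modulo $\varpi_E$ it has leading term $a_{m-1}^2$). So no $c_i$ is a unit, and no Weierstrass division in $b_i$ is available. The paper's argument for (3) is of a different kind: after reducing to a relation $r_0$ in $\F[[a_{m-1},a_m,b_{m-1},b_m]]$ via Lemma \ref{MatsEx}, it invokes the fact that if a power series in $K[[x_1,\ldots,x_n]]$ is reducible then its lowest-degree term under any positive grading is reducible, and then checks irreducibility of the lowest-degree term (a quadratic or a weighted-cubic such as $a_{m-1}^2 b_m - a_m^2 b_{m-1}$) by hand in each of the configurations of $\overline{\varrho}(\gamma_{m-1}),\overline{\varrho}(\gamma_m)$. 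Your explanation of the role of the hypothesis on $F_v$ is also not quite right: all $c_i$ already vanish modulo $\mathfrak m$ regardless of $d$, so the hypothesis is not there to rescue some $c_i$. Rather, it guarantees $m\ge 5$ in the exceptional Demushkin cases (b) and (d), which is what lets one isolate the final commutator $(\gamma_{m-1},\gamma_m)$ cleanly after setting the other variables to zero; for $m\le 4$ in those cases the generator appearing in the last commutator also appears in the non-commutator part of the Demushkin word, and the reduction used in the proof of (3) breaks down.
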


\begin{proof}

The proof of representability in the general case is proved exactly as with $R_v^\square$ in \ref{RepOnTop}. We leave the details to the reader and henceforth assume that $\varrho$ has (possibly trivial) $p$-power order image.

For any $\sigma \in G_v$, we let $[\overline{\varrho}(\sigma)] \in \GL_2(\mathcal{O})$ denote the matrix whose entries are the Teichm\"{u}ller lifts of the entries of $\overline{\varrho}(\sigma)$. Let $G_v(p)$ be the maximal pro-$p$ quotient of $G_v$. Our assumption on the image of $\overline{\varrho}$ implies that for any $\varrho_A \in \mathcal{D}_{\overline{\varrho}}^{\mathrm{Bor},\psi}(A)$, $\varrho_A$ factors through $G_v(p)$. Let $\widetilde{\psi\epsilon_p}$ be the Teichm\"{u}ller lift of $\overline{\psi\epsilon_p}$, and set $\phi = (\psi\epsilon_p)(\widetilde{\psi\epsilon_p})^{-1}$.

If $F_v$ does not contain a $p$-th root of unity, then $G_v(p)$ is a free pro-$p$ group of rank $m=1+[F_v:\Q_p]$, cf. \cite{NSWCohomNumFields}*{Theorem 7.5.8}, and the $p$-part of the torsion subgroup of $G_v^\mathrm{ab}$ (and hence $\eta$) is trivial. Fix a set of generators $\gamma_1,\ldots,\gamma_m$ on which $G_v(p)$ is free, and define a lift
	\[ \varrho^\mathrm{univ} : G_v \rightarrow 
	\mathrm{B}_2(\mathcal{O}[[a_1,\ldots,a_m,b_1,\ldots,b_m]])
	\]
by
	\[ \varrho^{\mathrm{univ}}(\gamma_i) = [\overline{\varrho}(\gamma_i)]\left(\begin{array}{cc}
	\phi(\gamma_i)(1+a_i) & b_i \\ & (1+a_i)^{-1} \end{array}\right).
	\]
Any lift $\varrho_A \in \mathcal{D}_{\overline{\varrho}}^{\mathrm{Bor},\psi}(A)$ is a specialization of $\varrho^\mathrm{univ}$ via a unique $\CNLO$-morphism $\mathcal{O}[[a_1,\ldots,a_m,b_1,\ldots,b_m]] \rightarrow A$, and we have $R_{\overline{\varrho}}^{\mathrm{Bor},\psi} = \mathcal{O}[[a_1,\ldots,a_m,b_1,\ldots,b_m]]$. It has dimension $3+2[F_v:\Q_p]$, and both it and its reduction modulo $\varpi_E$ are domains. This proves both (2) and (3) in this case.

We now assume that $F_v$ contains a primitive $p$-th root of unity. Let $\mu_{p^s}$ denote the group of $p$-power roots of unity in $F_v$. Then $G_v(p)^\mathrm{ab}$ can be presented as the free pro-$p$ group on generators $\gamma_1,\ldots,\gamma_m$, with $m = 2+[F_v:\Q_p]$ modulo a single relation $k$. The shape of $k$ is divided into four subcases.
\begin{enumerate}
	\item[(a)] If $p^s>2$,
	\[
	 k = \gamma_1^{p^s}(\gamma_1,\gamma_2)(\gamma_3,\gamma_4)\cdots(\gamma_{m-1},
	\gamma_m),
	\]
where $(\gamma_i,\gamma_{i+1}) = \gamma_i^{-1}\gamma_{i+1}^{-1}\gamma_i\gamma_{i+1}$.
	\item[(b)] If $p^s = 2$ and $[F_v:\Q_2]$ is odd,
	\[
	 k = \gamma_1^2\gamma_2^4(\gamma_2,\gamma_3)\cdots(\gamma_{m-1},\gamma_m).
	\]
	\item If $p^s = 2$, $[F_v:\Q_2]$ is even, and the image of $\epsilon_2: G_v \rightarrow \Z_2^\times$ is procyclic,
	\[
	 k = \gamma_1^{2+2^f}(\gamma_1,\gamma_2)\cdots(\gamma_{m-1},\gamma_m),
	\]
for some $f\ge 2$.
	\item If $p^s = 2$, $[F_v:\Q_2]$ is even, and the image of $\epsilon_2: G_v \rightarrow \Z_p^\times$ is not procyclic,
	\[
	k = \gamma_1^2(\gamma_1,\gamma_2)\gamma_3^{2^f}(\gamma_3,\gamma_4)\cdots	
	(\gamma_{m-1},\gamma_m),
	\]
for some $f\ge 2$.
\end{enumerate}
Part (a) is due to Demu\u{s}kin, cf. \cite{NSWCohomNumFields}*{Theorem 7.5.9}, part (b) to Serre \cite{SerreProp}*{Corollaire 4.4}, and parts (c) and (d) to Labute \cite{LabuteDemuskin}*{Theorem 9}. We note that in \cite{SerreProp}, Serre uses a different convention for the commutator but either choice is valid, cf. \cite{NSWCohomNumFields}*{pg. 359}.
	
We note that the image of $\gamma_1,\ldots,\gamma_m$ in $G_v^\mathrm{ab}(p)$ are generators and are subject to the single relation $\gamma_1^{p^s} = 1$ in case (a), $\gamma_1^2\gamma_2^4 = 1$ in case (b), $\gamma_1^{2+2^f} = 1$ in case (c), and $\gamma_1^2\gamma_3^{2^f}=1$ in case (d). From this we see that the torsion subgroup of $G_v^\mathrm{ab}(p)$ is generated by the image of $\gamma_1$ in cases (a) and (c), by $\gamma_1\gamma_2^2$ in case (b), and by $\gamma_1\gamma_3^{2^{f-1}}$ in case (d). Let $F^\wedge$ be the free group pro-$p$ group on the set $\{\gamma_1,\ldots,\gamma_m\}$. Set $B = \mathcal{O}[[a_2,\ldots,a_m,b_1,\ldots,b_m]]$ and define
	\[ \varrho_{F^\wedge} : F^\wedge \longrightarrow \mathrm{B}_2(B)
	\]
as follows. In cases, (a) and (c) we set	
	\[ \varrho_{F^\wedge}(\gamma_1) =[\overline{\varrho}(\gamma_1)]
	 \left(\begin{array}{cc} \phi\eta^{-1}(\gamma_1) & b_1 \\ 
		& \eta(\gamma_1) \end{array}\right).
	\]
In case (b), we set
	\[ \varrho_{F^\wedge}(\gamma_1) =[\overline{\varrho}(\gamma_1)]
	 \left(\begin{array}{cc} \phi(\gamma_1)
	 \eta^{-1}(\gamma_1\gamma_2^2)(1+a_2)^{-2} & b_1 \\ 
		& \eta(\gamma_1\gamma_2^2)(1+a_2)^2 \end{array}\right).
	\]
In case (d), we set
	\[ \varrho_{F^\wedge}(\gamma_1) =[\overline{\varrho}(\gamma_1)]
	 \left(\begin{array}{cc} \phi(\gamma_1)
	 \eta^{-1}(\gamma_1\gamma_3^{2^{f-1}})(1+a_3)^{-2^{f-1}} & b_1 \\ 
		& \eta(\gamma_1\gamma_3^{2^{f-1}})(1+a_3)^{2^{f-1}} \end{array}\right).
	\]
And in all cases, let	
	\[ \varrho_{F^\wedge}(\gamma_i) =[\overline{\varrho}(\gamma_i)]
	 \left(\begin{array}{cc} \phi(\gamma_i)(1+a_i) & b_i \\ 
	 & (1+a_i)^{-1} \end{array}\right),
		\]
for $2\le i\le m$. It is easy to check that in all cases we have
	\[ \varrho_{F^\wedge}(k) - 1
	= \left(\begin{array}{cc} 1 & r \\ & 1
		\end{array}\right).
	\]
with $r$ a non-zero element in $\mathfrak{m}_B$. For any $\varrho_A \in \mathcal{D}_{\overline{\varrho}}^{\mathrm{Bor},\psi}(A)$, $\varrho_A$ is the pushforward of a unique $\CNLO$-morphism $B\rightarrow A$ that contains $r$ in its kernel. From this it follows that $R_\varrho^{\mathrm{Bor},\psi} \cong B/rB$ and the universal lift $\varrho^\mathrm{univ}$ is the pushforward of $\varrho_{F^\wedge}$ via the natural surjection. This proves (2).

We now prove (3). Let $\varpi_E$ be a uniformizer of $E$. Using \ref{MatsEx}, to show that $R_\varrho^{\mathrm{Bor},\psi} \cong B/(r)$ and $R_\varrho^{\mathrm{Bor},\psi}\otimes_\mathcal{O} \F \cong B/(r,\varpi_E)$ are domains, it suffices to show that $B/(r,\varpi_E,a_2,\ldots,a_{m-2},b_1,\ldots,b_{m-2})$ is a domain of dimension $3$. Let $r_0$ denote the image of $r$ in $B/(\varpi_E,a_2,\ldots,a_{m-2},b_1,\ldots,b_{m-2}) \cong \F[[a_{m-1},a_m,b_{m-1},b_m]]$. We are thus reduced to showing that $r_0$ is irreducible in $\F[[a_{m-1},a_m,b_{m-1},b_m]]$. 

Let $\varrho_0$ denote the pushforward of $\varrho_{F^\wedge}$ to $\F[[a_{m-1},a_m,b_{m-1},b_m]]$. Note that $m\ge 3$, and if $p=2$, since we are assuming either $F_v$ contains a primitive $4$-th root of unity or $[F_v:\Q_2]\ge 3$, if we are in case (b) or (d) above we must have $m\ge 5$. Then, in all cases, $r_0$ is given by
	\[ \varrho_0(\gamma_{m-1})^{-1}\varrho_0(\gamma_m)^{-1}\varrho_0(\gamma_{m-1})
	\varrho_0(\gamma_m) = \left(\begin{array}{cc} 1 & r_0 \\ & 1 \end{array}\right).
	\]
Our assumption on the image of $\overline{\varrho}$ implies that at most one of $\overline{\varrho}(\gamma_{m-1})$ and $\overline{\varrho}(\gamma_m)$ is non-trivial and that if it is non-trivial, then it is unipotent. Our assumptions also imply $\psi\epsilon_p$ is trivial mod $\varpi_E$. Define $\beta_i$, for $i=m-1,m$, by
	\[ \varrho_0(\gamma_i) = \left(\begin{array}{cc} 1+a_i & \beta_i \\ & (1+a_i)^{-1} 
	\end{array}\right) = 
	\overline{\varrho}(\gamma_i)\left(\begin{array}{cc} 1+a_i & b_i \\
	& (1+a_i)^{-1} \end{array} \right).
	\]
A straightforward computation shows that
	\[ r_0 = (1+a_{m-1})^{-1}(1+a_m)^{-1}\big(\beta_m ((1+a_{m-1})-(1+a_{m-1})^{-1}) - 
	\beta_{m-1}((1+a_m)-(1+a_m)^{-1})\big).
	\]
To finish the proof, it suffices to show that
	\begin{equation}\label{therelation}
	 r_1 = \beta_m ((1+a_{m-1})-(1+a_{m-1})^{-1}) - \beta_{m-1}((1+a_m)-(1+a_m)^{-1})
	\end{equation}
is irreducible. To do this we use the following easy but useful fact we leaned from \cite{KunzPlanAlgCurv}*{pg. 164}: if $K$ is a field and $f \in K[[x_1,\ldots,x_n]]$ is reducible, then for any grading $\deg(x_i) = n_i>0$, the lowest degree term of $f$ is reducible in $K[x_1,\ldots,x_n]$.

First consider the case that $\overline{\varrho}(\gamma_{m-1})$ is nontrivial, and write
	\[ \overline{\varrho}(\gamma_{m-1}) = \left(\begin{array}{cc} 1 & \alpha \\ & 1\end{array}\right),
	\]
with $\alpha \in \F^\times$, so that $\beta_{m-1} = \alpha(1+a_{m-1})^{-1} + b_{m-1}$. Note then $\beta_m = b_m$, since the image of $\overline{\varrho}$ has order $p$. If $p \ne 2$, we use the grading $\deg(a_i)=\deg(b_i)=1$, and \eqref{therelation} becomes
	\[ 2\alpha a_{m} + \text{ higher order terms}
	\]
and $2\alpha a_{m-1}$ is irreducible in $\F[a_{m-1},a_m,b_{m-1},b_m]$. If $p = 2$, we use the grading $\deg(a_{m-1})=\deg(b_{m-1}) = 1$ and $\deg(a_m) = \deg(b_m) = 2$. Then \eqref{therelation} becomes
	\[ -\alpha a_{m}^2 + b_{m}a_{m-1}^2 + \text{ higher order terms}
	\]
and $-\alpha a_{m-1}^2 + b_{m-1}a_m^2$ is irreducible in $\F[a_{m-1},a_m,b_{m-1},b_m]$. The case when $\overline{\varrho}(\gamma_m)$ is nontrivial is symmetric.

Now assume $\overline{\varrho}(\gamma_{m-1}) = \overline{\varrho}(\gamma_m) = 1$. We use the standard grading $\deg(a_i)=\deg(b_i) = 1$, and \eqref{therelation} becomes
	\[ 2a_{m-1}b_m - 2a_m b_{m-1} + \text{ higher order terms}
	\]
if $p \ne 2$, and
	\[ - a_{m-1}^2b_m + a_m^2b_{m-1} + \text{ higher order terms}
	\]
if $p=2$. In either case the leading term is irreducible in $\F[a_{m-1},a_m,b_{m-1},b_m]$. This concludes the proof of (3), and of the proposition.
	\end{proof}
	
Writing the universal lift $\varrho^\mathrm{univ} : G_v \rightarrow \mathrm{B}_2(R_{\overline{\varrho}}^{\mathrm{Bor},\psi})$ as
	\[ \varrho^\mathrm{univ} = \left(\begin{array}{cc} \chi_1^\mathrm{Bor} & \ast \\ & 
	\chi_2^\mathrm{Bor} \end{array}\right).
	\]
The character $\chi_2^\mathrm{Bor}$ gives a $\CNLO$-morphism $\Lambda(G_v,\eta) \rightarrow R_{\overline{\varrho}}^{\mathrm{Bor},\psi}$.
	
\subsubsection{}\label{closedpoints}

Let $\mathscr{L}$ be as in \ref{ProjRep}, and let $x$ be a closed point of $\mathscr{L}$. Then $x$ is simply a choice of $\rhobar$-stable line $L_x$ in the representation space of $\rhobar$ such that $G_v$ acts via $\overline{\chi}$ on $V_\F/L_x$. 

Consider the set valued functor $\mathcal{DL}_x^{\square,\psi}$ on $\CNLO$ that sends a $\CNLO$-algebra $A$ to the set of pairs $(\rho_A, L_A)$, where $\rho_A$ is a lift of $\rhobar$ to $\GL_2(A)$ with determinant $\psi\epsilon_p$, and $L_A$ is a $G_v$-stable line in $A^2$ lifting $L_x$ such that the action of the $p$-part of the torsion subgroup of $G_v^\mathrm{ab}$ on $A^2/L_x$ is given by $\eta$. Let $\mathcal{O}_{\mathscr{L},x}^\wedge$ denote the completion of the local ring of $\mathscr{L}$ at $x$. Note that the natural map
	\[ \Spec \mathcal{O}_{\mathscr{L},x}^\wedge \longrightarrow \mathscr{L} 
	\]
yields a lift $\rho_x^\wedge: G_v \rightarrow \GL_2(\mathcal{O}_{\mathscr{L},x}^\wedge)$ of $\rhobar$, and a $G_v$-stable line $L_x^\wedge$ lifting $L_x$. The following lemma is immediate.

\begin{lem}\label{complocrep}

$\mathcal{O}_{\mathscr{L},x}^\wedge$ represents $\mathcal{DL}_x^{\square,\psi}$ with universal object $(\rho_x^\wedge,L_x^\wedge)$.

\end{lem}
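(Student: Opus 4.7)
The plan is to observe that the lemma is essentially a reformulation: $\mathscr{L}$ already represents $\mathcal{DL}^{\square,\psi}$ on the category of $\mathcal{O}$-algebras, so the completed local ring at $x$ should represent the corresponding formal deformation problem at $x$. The only thing to verify is that the functors $\mathcal{DL}_x^{\square,\psi}$ and ``$A$-points of $\mathscr{L}$ reducing to $x$'' (on $\CNLO$) coincide.

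First I would note that, given $(\rho_A,L_A) \in \mathcal{DL}_x^{\square,\psi}(A)$, the character $\chi_A : G_v \to A^\times$ encoding the action of $G_v$ on $A^2/L_A$ is canonically determined by $(\rho_A,L_A)$ and lifts $\overline{\chi}$. Moreover, the defining condition that the restriction of $\chi_A$ to the $p$-power torsion of $G_v^\mathrm{ab}$ equals $\eta$ is precisely the condition that the corresponding morphism $\Lambda(G_v) \to A$ factors through $\Lambda(G_v,\eta)$. Thus the assignment $(\rho_A,L_A) \mapsto (\rho_A,\chi_A,L_A)$ gives a natural bijection between $\mathcal{DL}_x^{\square,\psi}(A)$ and the set of $A$-points of $\mathscr{L}$ whose reduction modulo $\mathfrak{m}_A$ is $x$.

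Next, by Yoneda applied to the scheme $\mathscr{L}$, such $A$-points reducing to $x$ correspond to local $\mathcal{O}$-algebra morphisms $\mathcal{O}_{\mathscr{L},x} \to A$. Since $A$ is $\mathfrak{m}_A$-adically complete, and any such morphism sends $\mathfrak{m}_{\mathscr{L},x}$ into $\mathfrak{m}_A$, it extends uniquely by continuity to a local $\mathcal{O}$-algebra morphism $\mathcal{O}_{\mathscr{L},x}^\wedge \to A$; conversely, every such morphism from the completion restricts to one from $\mathcal{O}_{\mathscr{L},x}$. Tracing the identity morphism of $\mathcal{O}_{\mathscr{L},x}^\wedge$ through the chain of bijections shows that the tautological pair $(\rho_x^\wedge, L_x^\wedge)$ is the universal element, proving representability.

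There is no real obstacle: the content of the lemma is the elementary observation that the ``extra'' datum $\chi_A$ appearing in the definition of $\mathscr{L}$ is already determined by the pair $(\rho_A,L_A)$, so no information is lost when passing to $\mathcal{DL}_x^{\square,\psi}$, and the passage from $\mathcal{O}$-algebras to $\CNLO$-algebras is handled by completion at $x$.
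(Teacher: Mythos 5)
Your proof is correct and is essentially the argument the paper has in mind: the paper offers no written proof, stating only that the lemma ``is immediate,'' and your two observations --- that $\chi_A$ is already determined by the pair $(\rho_A,L_A)$ as the action on $A^2/L_A$ (so the triple/pair distinction is cosmetic over the locus where the $\eta$-condition holds), and that for $A$ in $\CNLO$ a local $\mathcal{O}$-algebra map $\mathcal{O}_{\mathscr{L},x}\to A$ factors uniquely through the completion --- are precisely the details being suppressed. One small point you leave implicit, but which is covered by the paper's standing conventions (enlarging $\mathcal{O}$ if necessary), is that $x$ has residue field $\F$, so that $\mathcal{O}_{\mathscr{L},x}^\wedge$ is genuinely a $\CNLO$-algebra and ``$A$-points reducing to $x$'' matches local $\mathcal{O}$-algebra morphisms in the intended sense.
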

	
For any ring $A$, we will denote by $L_A^\mathrm{std}$ the $A$-line in $A^2$ fixed by the upper-triangular matrices. Let $x =(\overline{\rho},\overline{\chi},L_x)$ be a closed point of $\mathscr{L}$. Take $g \in \GL_2(\mathcal{O})$ such that $\overline{g}L_x = L_{\F}^\mathrm{std}$. Note that $\overline{g}\rhobar\overline{g}^{-1}$ is upper triangular, and we have the functor $\mathcal{D}_{\overline{g}\rhobar\overline{g}^{-1}}^{\mathrm{Bor},\psi}$ on $\CNLO$ as in \ref{borelliftmodp}, which is represented by $R_{g\rho_x g^{-1}}^{\mathrm{Bor},\psi}$ as in \ref{borelliftringmodp}.

\begin{lem}\label{smoverborelmodp}

There is an isomorphism $\mathcal{O}_{\mathscr{L},x}^\wedge \cong R_{\overline{g}\overline{\rho} \overline{g}^{-1}}^{\mathrm{Bor},\psi}[[z]]$ of $\Lambda(G_v,\eta)$-algebras.

\end{lem}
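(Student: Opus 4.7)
The plan is to identify the functor represented by $\mathcal{O}_{\mathscr{L},x}^\wedge$, namely $\mathcal{DL}_x^{\square,\psi}$ from Lemma~\ref{complocrep}, with $\mathcal{D}_{\overline{g}\overline{\rho}\overline{g}^{-1}}^{\mathrm{Bor},\psi} \times \widehat{\mathbb{A}}^1$, where $\widehat{\mathbb{A}}^1$ denotes the functor $A \mapsto \mathfrak{m}_A$ on $\CNLO$. Since $\mathcal{D}_{\overline{g}\overline{\rho}\overline{g}^{-1}}^{\mathrm{Bor},\psi}$ is represented by $R_{\overline{g}\overline{\rho}\overline{g}^{-1}}^{\mathrm{Bor},\psi}$ by \ref{borelliftringmodp} and $\widehat{\mathbb{A}}^1$ is represented by $\mathcal{O}[[z]]$, the product functor is represented by $R_{\overline{g}\overline{\rho}\overline{g}^{-1}}^{\mathrm{Bor},\psi}[[z]]$, which will yield the claimed isomorphism.

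For the construction of the natural transformation, given $(\rho_A, L_A) \in \mathcal{DL}_x^{\square,\psi}(A)$, first conjugate by $g$, so that $g L_A$ lifts $L_\F^{\mathrm{std}}$ and $g\rho_A g^{-1}$ lifts $\overline{g}\rhobar\overline{g}^{-1}$. Any line in $A^2$ reducing to $L_\F^{\mathrm{std}}$ can be written uniquely as the span of $(1, z)^T$ for a unique $z \in \mathfrak{m}_A$, since such lines form a copy of the formal neighborhood of $[1:0]$ in $\mathbb{P}^1$. Setting $h = \begin{pmatrix} 1 & 0 \\ z & 1 \end{pmatrix}$, the conjugate $\varrho_A := h^{-1} g \rho_A g^{-1} h$ is an upper triangular lift of $\overline{g}\rhobar\overline{g}^{-1}$ with determinant $\psi\epsilon_p$, and the condition on $\eta$ in the definition of $\mathcal{DL}_x^{\square,\psi}$ translates to the condition that the lower right character agrees with $\eta$ on the $p$-power torsion of $G_v^{\mathrm{ab}}$. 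This gives the map $(\rho_A, L_A) \mapsto (\varrho_A, z)$.

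For the inverse, given $(\varrho_A, z)$, form $h$ as above and set $L_A = g^{-1} h L_A^{\mathrm{std}}$ and $\rho_A = g^{-1} h \varrho_A h^{-1} g$. A direct check shows $\rho_A$ reduces to $\rhobar$, that $L_A$ is stable under $\rho_A$ (since $\varrho_A$ stabilizes $L_A^{\mathrm{std}}$), and that the action on $A^2/L_A$ restricts to $\eta$ on the relevant torsion subgroup. The two constructions are easily seen to be mutually inverse and functorial in $A$, so the natural transformation is an isomorphism of functors on $\CNLO$.

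Finally, the $\Lambda(G_v, \eta)$-algebra structure on both sides comes from the character giving the $G_v$-action on the quotient by the distinguished line (the universal $\chi_2^{\mathrm{Bor}}$ for the Borel deformation ring on one side, and the character on $A^2/L_A^\wedge$ on the other). Since our construction matches these two characters by design, the resulting isomorphism of representing rings is compatible with the structure morphisms from $\Lambda(G_v,\eta)$. The only subtlety worth emphasizing is the identification of the space of lines lifting $L_\F^{\mathrm{std}}$ with the formal affine line, for which one uses that such lines have projective coordinates $(1:z)$ with $z \in \mathfrak{m}_A$ (the other chart $(z:1)$ being excluded since $z$ would have to be a unit, contradicting that the line reduces to the standard one); this is essentially the statement that $\mathbb{P}^1$ is smooth along the section $[1:0]$ and the rest of the argument is routine.
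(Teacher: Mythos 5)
Your proposal is correct and takes essentially the same route as the paper: you parametrize the lines lifting $L_\F^\mathrm{std}$ by $z\in\mathfrak m_A$ via the lower unipotent $h=\begin{pmatrix}1&0\\z&1\end{pmatrix}$, conjugate by $g$ and $h$ to reduce to the Borel lifting functor, and then invoke representability of the product functor. The paper phrases this by exhibiting the universal pair $(\rho_{R[[z]]},L_{R[[z]]})=(g^{-1}h\,\varrho\, h^{-1}g,\ g^{-1}h L_R^{\mathrm{std}})$ over $R[[z]]$ and checking the universal property directly, which is the same construction presented as a specific representing object rather than a natural isomorphism of functors.
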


\begin{proof}

For ease of notation set $R = R_{\overline{g}\rhobar \overline{g}^{-1}}^{\mathrm{Bor},\psi}$ and let $\varrho$ denote its universal lift. Define a lift $\rho_{R[[z]]} : G_v \rightarrow R[[z]]$ of $\rhobar$ by
	\[
	\rho_{R[[z]]}(\sigma) = g^{-1} \left(\begin{array}{cc} 1 & \\ z & 1\end{array}\right)
	\varrho(\sigma) \left(\begin{array}{rc} 1 & \\ -z & 1 \end{array}\right)g,
	\]
an let $L_{R[[z]]}$ denote the $\rho_{R[[z]]}$-stable line $g^{-1}\left(\begin{array}{cc} 1 & \\ z & 1 \end{array}\right)L_{R}^\mathrm{std}$. Note that $(\rho_{R[[z]]},L_{R[[z]]})$ determines a local $\Lambda(G_v,\eta)$-algebra morphism $\mathcal{O}_{\mathscr{L},x}^\wedge\rightarrow R[[z]]$. Given any $\CNLO$-algebra $A$, and $(\rho_A,L_A) \in \mathcal{DL}_x^{\square,\psi}(A)$, there is a unique $c\in\mathfrak{m}_A$ such that
	\[
	L_A = g^{-1}\left(\begin{array}{cc} 1 & \\ c & 1 \end{array} \right)L_A^\mathrm{std}.
	\]
We then have a unique local $\mathcal{O}$-algebra morphism $R[[z]] \rightarrow A$, which is also a morphism of $\Lambda(G_v,\eta)$-algebras, such that $(\rho_{R[[z]]},L_{R[[z]]})$ specializes to $(\rho_A,L_A)$. This shows that $\mathcal{O}_{\mathscr{L},x}^\wedge \rightarrow R[[z]]$ is an isomorphism of $\Lambda(G_v,\eta)$-algebras.
	\end{proof}
	
\begin{prop}\label{triliftringdomain}

Assume that the image of $\rho$ is either trivial or has order $p$. If $p=2$, assume further that $F_v$ contains a primitive $4$th root of unity or $[F_v:\Q_2]\ge 3$. 
\begin{enumerate}
	\item $R_{\Lambda(G_v,\eta)}^{\triangle,\psi}$ is an $\mathcal{O}$-flat domain of dimension $4+2[F_v:\Q_p]$. 
	\item Let $Z$ denote the closed subscheme of $\Spec \Lambda(G_v,\eta)$ defined by $(\chi_\eta^\mathrm{univ})^2 = \psi\epsilon_p$, and let $V$ denote its complement. The scheme
	\[
	(\Spec R_{\Lambda(G_v,\eta)}^{\triangle,\psi} \times_{\Spec \Lambda(G_v,\eta)} V)
	\otimes_\mathcal{O} \F
	\]
is integral.
\end{enumerate}

\end{prop}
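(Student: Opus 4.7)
The plan is to reduce both parts to the analysis of completions of $\mathscr{L}$ at its closed points, then pass from $\mathscr{L}$ to $\Spec R_{\Lambda(G_v,\eta)}^{\triangle,\psi}$ via its description as the scheme-theoretic image. For any closed point $x\in\mathscr{L}$, Lemma \ref{smoverborelmodp} gives $\widehat{\mathcal{O}_{\mathscr{L},x}}\cong R_{\overline{g}\rhobar\overline{g}^{-1}}^{\mathrm{Bor},\psi}[[z]]$, where $\overline{g}L_x = L_\F^{\mathrm{std}}$. Since the image of $\overline{g}\rhobar\overline{g}^{-1}$ is conjugate to that of $\rhobar$, the hypotheses of Proposition \ref{borelliftringmodp}(2),(3) are met, so this completion is an $\mathcal{O}$-flat domain of dimension $4+2[F_v:\Q_p]$ whose reduction modulo $\varpi_E$ remains a domain.

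Next, I would show $\mathscr{L}$ is integral of dimension $4+2[F_v:\Q_p]$. The closed fibre $\mathscr{L}_\F$ is the space of $\rhobar$-stable lines with quotient acting via $\chibar$; under the hypothesis on the image of $\rhobar$, this is either a single reduced point (nontrivial case) or all of $\bbP^1_\F$ (trivial case), and in either case is connected. Every connected component of $\mathscr{L}$ has closed image in the complete local base $\Spec R_{\Lambda(G_v,\eta)}^{\square,\psi}$ by properness, which necessarily meets the closed point and hence $\mathscr{L}_\F$, so connectedness of $\mathscr{L}_\F$ forces connectedness of $\mathscr{L}$. If $\mathscr{L}$ were reducible, the intersection of two distinct components would be a nonempty closed subscheme whose image in the base likewise contains the closed point, producing a closed point of $\mathscr{L}$ lying on both components---contradicting the domain property of the completion at that point. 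Thus $\mathscr{L}$ is irreducible, and the local structure also forces reducedness, so $\mathscr{L}$ is integral of the claimed dimension.

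For the transfer to $R^\triangle$, note that the scheme-theoretic image of a reduced irreducible scheme under a proper morphism is integral, so $R^\triangle := R_{\Lambda(G_v,\eta)}^{\triangle,\psi}$ is a domain. By Lemma \ref{opensetiso}, $\Spec R^\triangle\times_{\Lambda(G_v,\eta)} V$ is isomorphic to the open $\mathscr{L}_V \subseteq \mathscr{L}$, which is nonempty since each $\chi \in V$ is realized by the split lift $\psi\epsilon_p\chi^{-1}\oplus\chi$ (with its line of quotient $\chi$). Since any nonempty open of an irreducible Noetherian scheme has the same dimension, $\dim R^\triangle = \dim\mathscr{L} = 4+2[F_v:\Q_p]$, and $\mathcal{O}$-flatness follows since the characteristic zero fibre of the domain $R^\triangle$ is nonzero. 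For part (2), the same local-and-connectedness argument applied to $\mathscr{L}\otimes_\mathcal{O}\F$, using the mod-$\varpi_E$ clause of the local structure, shows $\mathscr{L}\otimes_\mathcal{O}\F$ is integral; restriction to the nonempty open $V\otimes_\mathcal{O}\F$ then preserves integrality.

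The main technical hurdle is the irreducibility step for $\mathscr{L}$ when $\rhobar$ is trivial, so that $\mathscr{L}_\F = \bbP^1_\F$ is one-dimensional rather than a point: here it is precisely the \emph{domain} statement of Proposition \ref{borelliftringmodp}(3), and not just the dimension count, that makes the local-irreducibility argument go through and forces a unique global component. A minor subtlety for part (2) is checking that $V\otimes_\mathcal{O}\F$ is nonempty, which reduces to observing that $\chibar^2 = \overline{\psi\epsilon_p}$ does not cut out all of $\Spec\Lambda(G_v,\eta)\otimes_\mathcal{O}\F$.
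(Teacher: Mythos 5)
Your approach mirrors the paper's: compute the completed local rings of $\mathscr{L}$ via Lemmas \ref{smoverborelmodp} and \ref{borelliftringmodp}, use connectedness of the closed fibre plus local integrality to conclude $\mathscr{L}$ (and $\mathscr{L}\otimes_\mathcal{O}\F$) is integral, then transfer to $R^{\triangle,\psi}_{\Lambda(G_v,\eta)}$ as the scheme-theoretic image, and finally use Lemma \ref{opensetiso} to compare dimensions. You fill in several steps the paper leaves implicit (the irreducibility argument from local domains plus connectedness, and the non-emptiness of $\mathscr{L}\times_{\Lambda(G_v,\eta)}V$ and of its special fibre), which is fine.

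However, there is a genuine error in your dimension step. You assert that ``any nonempty open of an irreducible Noetherian scheme has the same dimension,'' which is false: $D(x)\subset\Spec\mathcal{O}[[x]]$ is a nonempty open of an irreducible $2$-dimensional scheme, but it has dimension $1$. In fact, precisely this phenomenon occurs in the case at hand. The closed point of $\Spec\Lambda(G_v,\eta)$ lies in $Z$ (since under the hypothesis on the image of $\rhobar$, $\overline{\chi}$ is trivial and $\overline{\psi\epsilon_p}=1$), so the complement of $\mathscr{L}\times_{\Lambda(G_v,\eta)}V$ in $\mathscr{L}$ contains the entire closed fibre $\mathscr{L}_\F$, hence every closed point of $\mathscr{L}$; similarly, $\Spec R^{\triangle,\psi}_{\Lambda(G_v,\eta)}\times_{\Lambda(G_v,\eta)}V$ does not contain the closed point of $\Spec R^{\triangle,\psi}_{\Lambda(G_v,\eta)}$. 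Thus both opens actually have dimension \emph{strictly less} than their ambient schemes. The correct argument uses that $\mathscr{L}$ is projective over a $\CNLO$-algebra and $R^{\triangle,\psi}_{\Lambda(G_v,\eta)}$ is a complete local Noetherian domain (hence catenary, equidimensional, and Jacobson away from the closed point, cf.\ \ref{Jacobsonness}): in each case the complement of the open is a nonempty proper closed subset containing all closed points, and the open is dense, so its dimension is exactly one less than the ambient dimension. Since the two opens are isomorphic by \ref{opensetiso}, $\dim R^{\triangle,\psi}_{\Lambda(G_v,\eta)}-1=\dim\mathscr{L}-1$, whence $\dim R^{\triangle,\psi}_{\Lambda(G_v,\eta)}=\dim\mathscr{L}=4+2[F_v:\Q_p]$. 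Your two false equalities cancel and the final answer is correct, but as written the reasoning does not hold.
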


\begin{proof}

Note that the special fibre of $\mathscr{L}$ over $R_{\Lambda(G_v,\eta)}^{\triangle,\psi}$ is isomorphic to $\mathbb{P}^1_\F$ if $\rhobar$ is trivial, and is a point otherwise, so $\mathscr{L}$ is connected. By \ref{smoverborelmodp} and \ref{borelliftringmodp}, the completed local ring of $\mathscr{L}$ at any closed point is a domain of dimension $4+2[F_v:\Q_p]$, and its reduction modulo the maximal ideal of $\mathcal{O}$ is also a domain. We conclude that both $\mathscr{L}$ and $\mathscr{L}\otimes_\mathcal{O} \F$ are integral. This implies that $R_{\Lambda(G_v,\eta)}^{\triangle,\psi}$ is a domain as well as part (2) of the proposition, by \ref{opensetiso}. It is not hard to see that $\Spec R_{\Lambda(G_v,\eta)}^{\triangle,\psi} \times_{\Spec \Lambda(G_v,\eta)} V$ and $\mathscr{L}\times_{\Spec \Lambda(G_v,\eta)} V$ are each of codimension $1$, so $\dim R_{\Lambda(G_v,\eta)}^{\triangle,\psi} = \dim \mathscr{L} = 4+2[F_v:\Q_p]$.

\end{proof}

\subsubsection{}\label{borelliftschar0}

Fix a finite extension $E'/E$ with ring of integers $\mathcal{O}'$, and a continuous homomorphism $\varrho: G_v \rightarrow \mathrm{B}_2(E')$ with $\det \varrho = \psi\epsilon_p$. We do not assume that $\varrho$ is a lift of $\rhobar$. Define a functor $\mathcal{D}_\varrho^{\mathrm{Bor},\psi}$ on $\mathrm{Ar}_{E'}$ by letting letting $\mathcal{D}_{\varrho}^{\mathrm{Bor},\psi}(A)$ be the set of continuous morphisms $\varrho_A : G_v \rightarrow \mathrm{B}_2(A)$ that reduce to $\varrho$ modulo $\mathfrak{m}_A$ and have determinant $\psi\epsilon_p$.

\begin{prop}\label{char0borelliftring}

The functor $\mathcal{D}_\varrho^{\mathrm{Bor},\psi}$ is pro-represented by a complete, Noetherian, local $E'$-algebra $R_\varrho^{\mathrm{Bor},\psi}$. If $\varrho \ne \left(\begin{array}{cc} \chi \epsilon_p & \\ & \chi \end{array}\right)$ for some character $\chi$, then $R_\varrho^{\mathrm{Bor},\psi}$ is formally smooth over $E'$ of dimension $2+2[F_v:\Q_p]$.

\end{prop}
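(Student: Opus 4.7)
My plan is to establish pro-representability first, then identify the tangent and obstruction spaces via Galois cohomology and conclude via the local Euler characteristic formula. For pro-representability, since $G_v$ satisfies the $p$-finiteness condition, the framed lifting functor on $\mathrm{Ar}_{E'}$ of any continuous representation $G_v \to \GL_n(E')$ is pro-representable by a complete Noetherian local $E'$-algebra; this follows from a presentation-style argument as in \ref{RepOnTop} adapted to the characteristic-zero setting, or by verifying Schlessinger's criteria together with finite-dimensionality of the tangent space. Our functor $\mathcal{D}_{\varrho}^{\mathrm{Bor},\psi}$ is the closed sub-functor cut out by the vanishing of the lower-left matrix entry and the determinant condition $\det = \psi\epsilon_p$, hence is pro-represented by a complete Noetherian local $E'$-algebra $R_{\varrho}^{\mathrm{Bor},\psi}$.

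For the smoothness and dimension statement, write $\varrho = \bigl(\begin{smallmatrix}\chi_1 & * \\ & \chi_2\end{smallmatrix}\bigr)$ and let $\mathfrak{b}^0 \subset \mathfrak{gl}_2$ denote the trace-zero upper triangular matrices, viewed as a $G_v$-module via $\Ad(\varrho)$. Writing $\varrho_A(g) = (1+\epsilon Y(g))\varrho(g)$ for $A = E'[\epsilon]/(\epsilon^2)$, a direct computation shows the tangent space of $\mathcal{D}_{\varrho}^{\mathrm{Bor},\psi}$ is $Z^1(G_v, \mathfrak{b}^0)$, and since the functor parameterizes lifts rather than equivalence classes, we do not quotient by conjugation. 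Standard framed obstruction theory places obstructions in $H^2(G_v, \mathfrak{b}^0)$. By the local Euler characteristic formula,
\[
\dim_{E'} Z^1(G_v, \mathfrak{b}^0) = 2\bigl(1 + [F_v : \Q_p]\bigr) + \dim_{E'} H^2(G_v, \mathfrak{b}^0),
\]
so the proof reduces to establishing $H^2(G_v, \mathfrak{b}^0) = 0$ under our hypothesis.

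The main obstacle will be this vanishing in the case $\chi_1\chi_2^{-1} = \epsilon_p$. To attack it, I would use the short exact sequence of $G_v$-modules $0 \to E'(\chi_1\chi_2^{-1}) \to \mathfrak{b}^0 \to E' \to 0$, with trivial action on the quotient. Since $\epsilon_p$ is non-trivial on $G_v$, local Tate duality gives $H^2(G_v, E') = 0$, and the long exact sequence identifies $H^2(G_v, \mathfrak{b}^0)$ with the cokernel of the connecting map $H^1(G_v, E') \to H^2(G_v, E'(\chi_1\chi_2^{-1}))$. When $\chi_1\chi_2^{-1} \ne \epsilon_p$, local duality gives $H^2(G_v, E'(\chi_1\chi_2^{-1})) = 0$ and the vanishing is immediate. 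When $\chi_1\chi_2^{-1} = \epsilon_p$, the connecting map is cup product with the extension class of $\mathfrak{b}^0$ in $H^1(G_v, E'(\epsilon_p))$. A direct computation of $\Ad(\varrho)$ on $\mathfrak{b}^0$, tracking what happens on the torus and nilpotent pieces, shows this extension class is a non-zero scalar multiple of the cohomology class of the cocycle defining $\varrho$; the hypothesis $\varrho \ne \bigl(\begin{smallmatrix}\chi\epsilon_p & \\ & \chi\end{smallmatrix}\bigr)$ is precisely that $\varrho$ is non-split, which makes this class non-zero. By perfectness of the local Tate duality pairing $H^1(G_v, E') \times H^1(G_v, E'(\epsilon_p)) \to H^2(G_v, E'(\epsilon_p)) = E'$, cup product with any non-zero class is surjective, so $H^2(G_v, \mathfrak{b}^0) = 0$ in all cases. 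Vanishing of obstructions then forces $R_{\varrho}^{\mathrm{Bor},\psi}$ to be formally smooth over $E'$ of dimension $2 + 2[F_v : \Q_p]$.
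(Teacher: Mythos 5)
Your proposal is correct and follows the same high-level strategy as the paper (pro-representability via a presentation argument, tangent space $Z^1(G_v,\mathfrak{b}^0)$, obstructions in $H^2(G_v,\mathfrak{b}^0)$, vanishing of $H^2$ under the hypothesis), but the mechanism you use to establish the $H^2$-vanishing is genuinely different from what the paper does. The paper observes that the trace pairing on $\Ad^0$ induces a $G_v$-equivariant isomorphism $(\mathfrak{b}^0)^\ast\cong\Ad^0/\mathfrak{u}$, where $\mathfrak{u}$ is the strict upper-triangular nilpotent, so a single application of local Tate duality gives $H^2(G_v,\mathfrak{b}^0)\cong H^0(G_v,(\Ad^0/\mathfrak{u})(1))$; one then reads off directly from the two-step filtration of $(\Ad^0/\mathfrak{u})(1)$ (with graded pieces $E'(\epsilon_p)$ and $E'(\chi_1^{-1}\chi_2\epsilon_p)$) that a $G_v$-fixed vector exists exactly when $\chi_1\chi_2^{-1}=\epsilon_p$ and the extension class of $\varrho$ is a coboundary. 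You instead work with the exact sequence $0\to E'(\chi_1\chi_2^{-1})\to\mathfrak{b}^0\to E'\to 0$, interpret the connecting map $H^1(G_v,E')\to H^2(G_v,E'(\chi_1\chi_2^{-1}))$ as cup product with the extension class, identify that class (up to the non-zero scalar $-2$) with the cocycle of $\varrho$, and invoke the non-degeneracy of the Tate pairing $H^1(G_v,E')\times H^1(G_v,E'(\epsilon_p))\to E'$ to conclude surjectivity of the connecting map. Both computations exploit the filtration of $\mathfrak{b}^0$ and Tate duality, and each buys something: the paper's dualize-then-take-$H^0$ computation is shorter and avoids naming the cup product, while yours makes the role of the cohomology class of $\varrho$ transparent and isolates where the non-split hypothesis enters (it is exactly the non-vanishing of the cup-product class). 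One minor point to make explicit in a final write-up: the hypothesis $\varrho\ne\bigl(\begin{smallmatrix}\chi\epsilon_p&\\&\chi\end{smallmatrix}\bigr)$ must be read up to $\mathrm{B}_2$-conjugacy (i.e., $\varrho$ is non-split when $\chi_1\chi_2^{-1}=\epsilon_p$), otherwise the statement is false as literally written; your identification of the hypothesis with non-splitness handles this correctly, and the paper's direct $H^0$ calculation makes the same implicit reading.
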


\begin{proof}
	We prove representability in the same way as \ref{RepOnTop} and \ref{borelliftringmodp}. There is a finite index subgroup $G'$ of $G_v$ such that $\varrho(G')$ is pro-$p$. Let $G'(p)$ be the maximal pro-$p$ quotient of $G'$ and let $H$ be the kernel of the natural surjection $G'\rightarrow G'(p)$. We have that $H$ is normal in $G_v$, that $G_v/H$ is topologically finitely generated, and that for any $\varrho_A \in \mathcal{D}_\varrho^{\mathrm{Bor},\psi}$, $A$ in $\mathrm{Ar}_{E'}$, $\varrho_A$ factors through $G_v/H$. Fix a set of topological generators $\gamma_1,\ldots,\gamma_m$ for $G_v/H$. Let $F$ be the free group on $\{\gamma_1,\ldots,\gamma_m\}$ and let $F^\wedge$ denote its profinite completion. Let $K$ denote the kernel of the natural surjection $F^\wedge \rightarrow G_v/H$. Define a continuous homomorphism
	\[
	\varrho_{F^\wedge} : F^\wedge \longrightarrow
	 \mathrm{B}_2(E'[[a_1,\ldots,a_m,b_1,\ldots,b_m]])
	 \]
by
	\[
	\varrho_{F^\wedge}(\gamma_i) = \varrho(\gamma_i)\left(\begin{array}{cc} 1+a_i & b_i \\
	& (1+a_i)^{-1} \end{array}\right)
	\]
Now let $J$ denote the ideal of $E'[[a_1,\ldots,a_m,b_1,\ldots,b_m]]$ generated by the entries of the of the matrices $\varrho_{F^\wedge}(k)-1$, for $k\in K$. Set $R_\varrho^{\mathrm{Bor},\psi} = E'[[a_1,\ldots,a_m,b_1,\ldots,b_m]]/J$. The pushforward of $\varrho_{R^\wedge}$ along the surjection $E'[[a_1,\ldots,a_m,b_1,\ldots,b_m]] \rightarrow R_\varrho^{\mathrm{Bor},\psi}$ gives a continuous morphism $G_v/H \rightarrow \mathrm{B}_2(R_\varrho^{\mathrm{Bor},\psi})$, and we let 
	\[ \varrho^\mathrm{univ} : G_v \longrightarrow \mathrm{B}_2(R_\varrho^{\mathrm{Bor},\psi})
	\]
denote the composite of this morphism with the natural surjection $G_v \rightarrow G_v/H$. It is easy to see that $R_\varrho^{\mathrm{Bor},\psi}$ pro-represents $\mathcal{D}_\varrho^{\mathrm{Bor},\psi}$ with universal object $\varrho^\mathrm{univ}$.

Let $E'[\varepsilon] = E'[[x]]/(x^2)$. Let $\mathfrak{b}$ denote the subspace of upper triangular matrices in $\mathrm{M}_{2\times 2}(E')$ with $G_v$-action given by $\sigma X = \varrho(\sigma)X\varrho(\sigma)^{-1}$, and let $\mathfrak{b}^0$ denote its trace zero subspace. The tangent space of $R_\varrho^{\mathrm{Bor},\psi}$ is given by $\mathcal{D}_\varrho^{\mathrm{Bor},\psi}(E'[\varepsilon])$. Given $\varrho_{E'[\varepsilon]}\in \mathcal{D}_\varrho^{\mathrm{Bor},\psi}(E'[\varepsilon])$, write $\varrho_{E'[\varepsilon]}(\sigma)$ as
	\[ \varrho_{E'[\varepsilon]}(\sigma) = (1+\varepsilon c(\sigma))\varrho(\sigma),
	\]
for each $\sigma\in G_v$, with $c(\sigma) \in \mathfrak{b}$. It is easily checked that $\varrho_{E'[\varepsilon]}$ is a homomorphism with $\det \varrho_{E'[\varepsilon]} = \psi\epsilon_p = \det\varrho$ if and only if $c\in Z^1(G_v,\mathfrak{b}^0)$, the space of $1$-cocycles of $G_v$ with coefficients in $\mathfrak{b}^0$. This determines an isomorphism of $E'$-vector spaces $\mathcal{D}_\varrho^{\mathrm{Bor},\psi}(E'[\varepsilon]) \cong Z^1(G_v,\mathfrak{b}^0)$. Now
	\begin{align*}
	\dim Z^1(G_v,\mathfrak{b}^0) &= \dim H^1(G_v,\mathfrak{b}^0) 
		- \dim H^0(G_v,\mathfrak{b}^0) + 2\\
		& = 2[F_v:\Q_p] + 2 + H^2(G_v,\mathfrak{b}^0),
	\end{align*}
by Euler-Poincar\'{e} characteristic.

Now fix a minimal presentation
	\[ 0 \longrightarrow J \longrightarrow A \longrightarrow R_\varrho^{\mathrm{Bor},\psi}
	\longrightarrow 0,
	\]
with $A$ a powerseries over $E'$ in $\dim Z^1(G_v,\mathfrak{b}^0)$ variables. As in \cite{MazurDefGalRep}*{\S 1.6}, we will show that
	\[ \dim J/\mathfrak{m}_A J \le \dim H^2(G_v,\mathfrak{b}^0).
	\]
Let $A_n = A/\mathfrak{m}_A^n$, $R_n = R_\varrho^{\mathrm{Bor},\psi}/\mathfrak{m}_{R_\varrho^{\mathrm{Bor},\psi}}^n$, and $J_n = \ker( A_n \rightarrow R_n)$. Then for $n$ sufficiently large the natural map $J/\mathfrak{m}_A J \rightarrow J_n/\mathfrak{m}_{A_n}J_n$ is an isomorphism of $E'$ vector spaces, and it suffices to show
	\[
	\dim J_n/\mathfrak{m}_{A_n}J_n \le \dim H^2(G_v,\mathfrak{b}^0).
	\]
Choose a continuous set theoretic lifting
	\[ \tilde{\varrho} : G_v \longrightarrow \mathrm{B}_2(A_n)
	\]
such that $\det\tilde{\varrho}(\sigma) = \psi\epsilon_p(\sigma)$ for all $\sigma\in G_v$, and such that its pushforward to $R_n$ is $\varrho^\mathrm{univ}$ mod $\mathfrak{m}_{R_\varrho^{\mathrm{Bor},\psi}}^n$. To see that this is possible, note that we can take a continuous $E'$-vector space section $s: R_n \rightarrow A_n$ of the quotient $A_n \rightarrow R_n$. This determines a continuous set-theoretic section $\mathrm{B}_2(R_n) \rightarrow \mathrm{B}_2(A_n)$, and we let $\tilde{\varrho}'$ be the composite of the map $G_v \rightarrow \mathrm{B}_2(R_n)$ with this section. Since the map $\sigma \mapsto \psi\epsilon_p(\sigma)\det\tilde{\varrho}'(\sigma)^{-1}$ is continuous on $G_v$, so is the function $\tilde{\varrho} : G_v \rightarrow \mathrm{B}_2(A_n)$ defined by
	\[ \tilde{\varrho}(\sigma) = \tilde{\varrho}'(\sigma)\left(\begin{array}{cc}
		\psi\epsilon_p(\sigma)\det\tilde{\varrho}'(\sigma)^{-1} & \\ & 1 \end{array}\right),
	\]
which is the desired set theoretic lifting.
Define the continuous function
	\[ c : G_v \times G_v \longrightarrow \mathfrak{b}^0 \otimes_{E'} J_n/\mathfrak{m}_{A_n}J_n
	\]
by $c(\sigma_1,\sigma_2) = \tilde{\varrho}(\sigma_1\sigma_2)^{-1}\tilde{\varrho}(\sigma_2)^{-1}\tilde{\varrho}(\sigma_1)^{-1}$. It can be checked that $c$ is a $2$-cocyle of $G_v$ with values in $\mathfrak{b}^0\otimes_{E'} J_n/\mathfrak{m}_{A_n}J_n$, and that is image $[c]$ in $H^2(G_v,\mathfrak{b}^0\otimes_{E'}J_n/\mathfrak{m}_{A_n}J_n) \cong H^2(G_v,\mathfrak{b}^0)\otimes_{E'} J_n/\mathfrak{m}_{A_n}J_n$ does not depend on the choice of $\tilde{\varrho}$. We get a natural map
	\[ (J_n/\mathfrak{m}_{A_n}J_n)^\ast \longrightarrow H^2(G_v,\mathfrak{b}^0)
	\]
by $\lambda \mapsto (1\otimes\lambda)([c])$. We will show that this map is injective. Take non-zero $\lambda \in (J_n/\mathfrak{m}_{A_n}J_n)^\ast$ such that $(1\otimes\lambda)([c]) = 0$. Let $A_n'$ denote the quotient of $A_n$ by the kernel of $\lambda$. We have an exact sequence
	\begin{equation}\label{obstructionseqn}
	0 \longrightarrow E' \longrightarrow A_n' \longrightarrow R_n \longrightarrow 0
	\end{equation}
and the obstruction class $[c]$ vanishes for this extension. This implies that there is a continuous homomorphism
	\[ \varrho_{A_n'} : G_v \longrightarrow \mathrm{B}_2(A_n')
	\]
with determinant $\psi\epsilon_p$, and whose pushforward to $R_n$ is $\varrho^\mathrm{univ}$ modulo $\mathfrak{m}_{R_\varrho^{\mathrm{Bor},\psi}}^n$. Then there is a local $E'$-algebra morphism $R_\varrho^{\mathrm{Bor},\psi} \rightarrow A_n'$ inducing $\varrho_{A_n'}$. Since $\mathfrak{m}_{A_n'}^n = 0$, this map factors through $R_n$ and induces a section of \eqref{obstructionseqn}. But this contradicts the fact that $A_n' \rightarrow R_n$ induces an isomorphism on tangent spaces.

We deduce that there is a presentation
	\[ R_\varrho^{\mathrm{Bor},\psi} \cong E'[[x_1,\ldots,x_g]]/(f_1,\ldots,f_r)
	\]
where $g = 2+ 2[F_v:\Q_p] + \dim H^2(G_v,\mathfrak{b}^0)$ and $r \le H^2(G_v,\mathfrak{b}^0)$. In particular, we conclude that $\dim R_\varrho^{\mathrm{Bor},\psi} \ge 2+2[F_v:\Q_p]$. Let $\mathfrak{u}$ be subspace of $\Ad^0$ consisting of upper triangular unipotent matrices. Note that $\mathfrak{u}$ is $G_v$-stable and the trace pairing on $\Ad^0$ induces a $G_v$-equivariant isomorphism $(\mathfrak{b}^0)^\ast \cong \Ad^0/\mathfrak{u}$. Local Tate duality gives an isomorphism $H^2(G_v,\mathfrak{b}^0) \cong H^0(G_v,(\Ad^0/\mathfrak{u})(1))$. It is easily checked that this latter space is non-zero if and only if
	\[
	\varrho = \left(\begin{array}{cc} \epsilon_p \chi & \\ & \chi \end{array}\right)
	\]
for some character $\chi$, in which case it has dimension $1$.
	\end{proof}
	
\subsubsection{}\label{ClosedPtsOfG}

Let $\mathscr{L}$ be as in \ref{ProjRep}. Take a closed point $x \in\mathscr{L}[1/p]$ with residue field $E'$. Note that since $\mathscr{L}$ is finite type over $R_{\Lambda(G_v,\eta)}^{\square,\psi}$, part (2) of \ref{OPointsOfR} implies that $E'/E$ is finite. Let $\rho_x$ and $\chi_x$ denote the pushforward of $\rho^\mathrm{univ}$ and $\chi_\eta^\mathrm{univ}$, repsectively, to $E'$, and let $L_x$ denote the fixed $G_v$-stable line in $(E')^2$ such that $G_v$ acts on $(E')^2/L_x$ via $\chi_x$. 

Consider the set valued functor $\mathcal{DL}_x^{\square,\psi}$ on $\mathrm{Ar}_{E'}$ that sends an $\mathrm{Ar}_{E'}$-algebra $A$ to the set of pairs $(\rho_A, L_A)$, where $\rho_A$ is a lift of $\rho_x$ to $\GL_2(A)$ with determinant $\psi\epsilon_p$, and $L_A$ is a $G_v$-stable line in $A^2$ lifting $L_x$. Let $\mathcal{O}_{\mathscr{L},x}^\wedge$ denote the completion of the local ring of $\mathscr{L}$ at $x$. The natural map
	\[ \Spec \mathcal{O}_{\mathscr{L},x}^\wedge \longrightarrow \mathscr{L}
	\]
yields a lift $\rho_x^\wedge: G_v \rightarrow \GL_2(\mathcal{O}_{\mathscr{L},x}^\wedge)$ of $\rho_x$, and a $G_v$-stable line $L_x^\wedge$ lifting $L_x$.

\begin{lem}\label{complocrep}

$\mathcal{O}_{\mathscr{L},x}^\wedge$ pro-represents $\mathcal{DL}_x^{\square,\psi}$ with universal object $(\rho_x^\wedge,L_x^\wedge)$.

\end{lem}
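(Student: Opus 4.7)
The plan is to deduce the lemma from the functor-of-points construction of $\mathscr{L}$ in \ref{ProjRep} together with the characteristic-zero representability result \ref{EAlgRep}. Since $\mathscr{L}$ is a closed subscheme of $\mathbb{P}^1_\mathcal{O}\otimes_\mathcal{O} R_{\Lambda(G_v,\eta)}^{\square,\psi}$ and is of finite type over the Noetherian ring $R_{\Lambda(G_v,\eta)}^{\square,\psi}$, standard facts on completions give that for any $A$ in $\mathrm{Ar}_{E'}$ the local $E'$-algebra morphisms $\mathcal{O}_{\mathscr{L},x}^\wedge\to A$ correspond bijectively and functorially to $A$-points of $\mathscr{L}$ whose closed fibre is $x$. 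It thus suffices to identify this latter functor on $\mathrm{Ar}_{E'}$ with $\mathcal{DL}_x^{\square,\psi}$, and then to check that the identity morphism on $\mathcal{O}_{\mathscr{L},x}^\wedge$ corresponds to $(\rho_x^\wedge,L_x^\wedge)$.

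For the identification, I would unwind the definition from \ref{ProjRep}: an $A$-point of $\mathscr{L}$ is a triple $(\alpha,\beta,L_A)$ consisting of $\mathcal{O}$-algebra morphisms $\alpha:R_v^{\square,\psi}\to A$ and $\beta:\Lambda(G_v,\eta)\to A$, together with a line $L_A\subset A^2$, subject to the condition that the representation $\rho_A$ pushed forward along $\alpha$ preserves $L_A$ with quotient action given by the character $\chi_A$ pushed forward along $\beta$. Lifting $x$ means that $(\rho_A,\chi_A,L_A)$ reduces modulo $\mathfrak{m}_A$ to $(\rho_x,\chi_x,L_x)$. Forgetting $\alpha$ and $\beta$ sends this datum to $(\rho_A,L_A)\in\mathcal{DL}_x^{\square,\psi}(A)$. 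Conversely, given $(\rho_A,L_A)\in\mathcal{DL}_x^{\square,\psi}(A)$, Proposition \ref{EAlgRep} produces a unique $\mathcal{O}$-algebra morphism $\alpha:R_v^{\square,\psi}\to A$ inducing $\rho_A$ (the determinant condition is automatic from lifting $\rho_x$), and the continuous character $\chi_A:G_v\to A^\times$ giving the action of $G_v$ on $A^2/L_A$ is a lift of $\chi_x$ whose restriction to the $p$-power torsion of $G_v^{\mathrm{ab}}$ equals $\eta$ (a condition preserved under reduction). The same Baire-category argument underlying \ref{EAlgRep}, applied to the functor of character lifts pro-represented by $\Lambda(G_v,\eta)$, yields a unique $\beta:\Lambda(G_v,\eta)\to A$ inducing $\chi_A$, so $(\alpha,\beta,L_A)$ provides the inverse assignment.

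Finally, pulling back the universal data $(\rho^{\mathrm{univ}},\chi^{\mathrm{univ}}_\eta,\mathcal{L})$ along $\Spec \mathcal{O}_{\mathscr{L},x}^\wedge\to\mathscr{L}$ gives the triple $(\rho_x^\wedge,\chi_x^\wedge,L_x^\wedge)$, whose image under the forgetful map is $(\rho_x^\wedge,L_x^\wedge)$, confirming that this pair is the universal object. There is no real obstacle in the argument; the only step requiring any care is the Baire-category reduction from \ref{EAlgRep}, which applies verbatim to $\Lambda(G_v,\eta)$ since the latter is a $\CNLO$-algebra representing the relevant functor of character deformations.
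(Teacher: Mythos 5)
Your proof is correct and follows the same route as the paper's, applying \ref{EAlgRep} to $\rho_A$ and to the quotient character to build the unique map to $R_{\Lambda(G_v,\eta)}^{\square,\psi}$, lifting it to an $A$-point of $\mathscr{L}$ over $x$ via the $\rho_A$-stability of $L_A$, and then using that $A$ is Artinian to factor this through the completed local ring. One small remark: the determinant condition is not ``automatic from lifting $\rho_x$'' (a lift of $\rho_x$ could have as determinant any deformation of $\psi\epsilon_p$) but is built into the definition of $\mathcal{DL}_x^{\square,\psi}$, which is what you in fact rely on.
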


\begin{proof}

Let $B$ be an $\mathrm{Ar}_{E'}$-algebra and let $(\rho_B,L_B) \in \mathcal{DL}_x^{\square,\psi}(B)$. Denote by $\chi_B$ the character giving the $G_v$-action on $B^2/L_B$. By \ref{EAlgRep}, there are unique continuous $\mathcal{O}$-algebra morphisms $R_v^\square \rightarrow B$ and $\Lambda(G_v) \rightarrow B$ such that $\rho_B$ is the pushforward of $\rho^\mathrm{univ}$, and $\chi_B$ is the pushforward of $\chi^\mathrm{univ}$. Since the $\rho_B$ has determinant $\psi\epsilon_p$, and the restriction of $\chi_B$ to the torsion subgroup of $G_v^\mathrm{ab}$ is equal to $\eta$, we get a unique continuous $\mathcal{O}$-algebra morphism $R_{\Lambda(G_v,\eta)}^{\square,\psi} \rightarrow B$ which gives rise to $\rho_B$ and $\chi_B$. Since the line $L_B$ is stable under $\rho_B$, we obtain a unique morphism $y: \Spec B \rightarrow \mathscr{L}$ such that the image of the closed point of $\Spec B$ is $x$. As $B$ is Artinian, $y$ factors through $\Spec \mathcal{O}_{\mathscr{L},x}^\wedge \rightarrow \mathscr{L}$.
	\end{proof}
		
Let $x =(\rho_x,\chi_x,L_x)$ be a closed point of $\mathscr{L}[1/p]$, and denote by $E'$ its residue field. Take $g \in \GL_2(E')$ such that $gL_x = L_{E'}^\mathrm{std}$. Note that $g \rho_x g^{-1}$ is upper triangular, and we have the functor $\mathcal{D}_{g\rho_x g^{-1}}^{\mathrm{Bor},\psi}$ on $\mathrm{Ar}_{E'}$ as in \ref{borelliftschar0}, which is represented by $R_{g\rho_x g^{-1}}^{\mathrm{Bor},\psi}$ as in \ref{char0borelliftring}.

\begin{lem}\label{smoverborel}

There is an isomorphism $\mathcal{O}_{\mathscr{L},x}^\wedge \cong R_{g\rho_xg^{-1}}^{\mathrm{Bor},\psi}[[z]]$.

\end{lem}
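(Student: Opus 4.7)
The plan is to mimic the mod $p$ analogue Lemma \ref{smoverborelmodp} in the characteristic zero setting, using Lemma \ref{complocrep} (the characteristic zero version) to reduce the isomorphism to a functorial bijection on $\mathrm{Ar}_{E'}$-points. Set $R = R_{g\rho_x g^{-1}}^{\mathrm{Bor},\psi}$ and let $\varrho : G_v \to \mathrm{B}_2(R)$ be its universal lift (in the pro-representable sense of \ref{char0borelliftring}). Over $R[[z]]$, define
\[
\rho_{R[[z]]}(\sigma) = g^{-1}\begin{pmatrix} 1 & \\ z & 1 \end{pmatrix} \varrho(\sigma) \begin{pmatrix} 1 & \\ -z & 1 \end{pmatrix} g,
\]
and set
\[
L_{R[[z]]} = g^{-1}\begin{pmatrix} 1 & \\ z & 1 \end{pmatrix} L_{R}^{\mathrm{std}}.
\]
Since $z \in \mathfrak{m}_{R[[z]]}$, this pair reduces to $(\rho_x,L_x)$ modulo the maximal ideal, has determinant $\psi\epsilon_p$ (conjugation preserves determinant), and by construction $L_{R[[z]]}$ is $\rho_{R[[z]]}$-stable with quotient action by the character $\chi_2^{\mathrm{Bor}}$ pushed forward along $R \hookrightarrow R[[z]]$. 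Hence $(\rho_{R[[z]]}, L_{R[[z]]})$ is a compatible family of objects of $\mathcal{DL}_x^{\square,\psi}(R[[z]]/\mathfrak{m}_{R[[z]]}^n)$ and, by \ref{complocrep}, determines a continuous local $E'$-algebra morphism
\[
\varphi : \mathcal{O}_{\mathscr{L},x}^\wedge \longrightarrow R[[z]].
\]

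For the opposite direction, I will build a morphism $\psi: R[[z]] \to \mathcal{O}_{\mathscr{L},x}^\wedge$ by functoriality. For any $\mathrm{Ar}_{E'}$-algebra $A$ and any $(\rho_A, L_A) \in \mathcal{DL}_x^{\square,\psi}(A)$, the line $L_A$ lifts $L_x$, so there is a unique element $c \in \mathfrak{m}_A$ such that
\[
L_A = g^{-1}\begin{pmatrix} 1 & \\ c & 1 \end{pmatrix} L_A^{\mathrm{std}}.
\]
Then the conjugated representation
\[
\sigma \longmapsto \begin{pmatrix} 1 & \\ -c & 1 \end{pmatrix} g\, \rho_A(\sigma)\, g^{-1}\begin{pmatrix} 1 & \\ c & 1 \end{pmatrix}
\]
preserves $L_A^{\mathrm{std}}$, hence takes values in $\mathrm{B}_2(A)$, reduces to $g \rho_x g^{-1}$ modulo $\mathfrak{m}_A$, and has determinant $\psi\epsilon_p$. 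It is therefore an object of $\mathcal{D}_{g\rho_x g^{-1}}^{\mathrm{Bor},\psi}(A)$, and so yields a unique local $E'$-algebra morphism $R \to A$ by \ref{char0borelliftring}. Combining this with the assignment $z \mapsto c$ produces a unique morphism $R[[z]] \to A$. Applying this construction to the universal pair $(\rho_x^\wedge, L_x^\wedge)$ over $\mathcal{O}_{\mathscr{L},x}^\wedge$ gives the desired $\psi$.

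Finally, I verify that $\varphi$ and $\psi$ are mutually inverse. Both compositions are endomorphisms of $\mathcal{O}_{\mathscr{L},x}^\wedge$ or of $R[[z]]$ which, after reducing modulo each power of the maximal ideal, classify the same data by the uniqueness in the universal property; in the case of $\psi \circ \varphi$ the scalar $c$ recovered from $L_{R[[z]]}$ is by construction $z$, and the resulting upper-triangular conjugate equals $\varrho$, so $\psi\circ\varphi$ is the identity on $R[[z]]$, and symmetrically on the other side. The mild obstacle is just bookkeeping the conjugation/line-parametrization and checking compatibility with the $\Lambda(G_v,\eta)$-algebra structure (inherited through $\chi_2^{\mathrm{Bor}}$), but no new ideas are needed beyond those of \ref{smoverborelmodp}.
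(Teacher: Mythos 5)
Your proof is correct and takes essentially the same route as the paper: you define $(\rho_{R[[z]]}, L_{R[[z]]})$ to obtain a morphism $\mathcal{O}_{\mathscr{L},x}^\wedge\to R[[z]]$, and then show that every $(\rho_A,L_A)\in\mathcal{DL}_x^{\square,\psi}(A)$ factors uniquely through $R[[z]]$ by extracting the parameter $c$ from the line and conjugating to upper-triangular form, which is exactly the paper's argument (the paper just phrases the reverse direction as the verification of the universal property rather than as a separately named map $\psi$, and also omits the $\Lambda(G_v,\eta)$-compatibility, which is not part of the characteristic-zero statement). The only small blemish is the remark about $\psi\circ\varphi$ being the identity ``on $R[[z]]$'' — with your naming that composite lands in $\mathcal{O}_{\mathscr{L},x}^\wedge$, so you mean $\varphi\circ\psi$ there — but this does not affect the substance.
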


\begin{proof}

For ease of notation set $R = R_{g\rho_x g^{-1}}^{\mathrm{Bor},\psi}$ and let $\varrho$ denote its universal lift. Define a lift $\rho_{R[[z]]} : G_v \rightarrow R[[z]]$ of $\rho_x$ by
	\[
	\rho_{R[[z]]}(\sigma) = g^{-1}\left(\begin{array}{cc} 1 & \\ z & 1\end{array}\right)
	\varrho(\sigma) \left(\begin{array}{rc} 1 & \\ -z & 1 \end{array}\right)g,
	\]
and we let $L_{R[[z]]}$ denote the $\rho_{R[[z]]}$-stable line $g^{-1}\left(\begin{array}{cc} 1 & \\ z & 1 \end{array}\right)L_{R}^\mathrm{std}$. Note that $(\rho_{R[[z]]},L_{R[[z]]})$ determines a local $E'$-algebra morphism $\mathcal{O}_{\mathscr{L},x}^\wedge\rightarrow R[[z]]$. Given any $\mathrm{Ar}_{E'}$-algebra $A$ and $(\rho_A,L_A) \in \mathcal{DL}_x^{\square,\psi}(A)$, there is a unique $c\in\mathfrak{m}_A$ such that
	\[
	L_A = g^{-1}\left(\begin{array}{cc} 1 & \\ c & 1 \end{array} \right)L_A^\mathrm{std}.
	\]
We then have a unique local $E'$-algebra morphism $R[[z]] \rightarrow A$, such that the specialization of $(\rho_{R[[z]]},L_{R[[z]]})$ is $(\rho_A,L_A)$. This shows that $\mathcal{O}_{\mathscr{L},x}^\wedge \rightarrow R[[z]]$ is an isomorphism.
	\end{proof}

\begin{prop}\label{triliftringsmpts}

Let $x = (\rho_x,\chi_x)$ be a closed point of $\Spec R_{\Lambda(G_v,\eta)}^{\triangle,\psi}[1/p]$. If $\chi_x^2 \ne \psi,\psi\epsilon_p$, then the local ring of $R_{\Lambda(G_v,\eta)}^{\triangle,\psi}$ at $x$ is formally smooth over $E$.

\end{prop}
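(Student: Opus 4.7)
The plan is to chain together Lemmas \ref{opensetiso} and \ref{smoverborel} with Proposition \ref{char0borelliftring}; the two hypotheses $\chi_x^2 \ne \psi\epsilon_p$ and $\chi_x^2 \ne \psi$ will correspond, respectively, to landing in the isomorphism locus of \ref{opensetiso} and to avoiding the one exceptional case of \ref{char0borelliftring}.

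First I would use $\chi_x^2 \ne \psi\epsilon_p$ to observe that $x$ lies over the open subscheme $V \subseteq \Spec \Lambda(G_v,\eta)$ of \ref{opensetiso}. That lemma then supplies a unique closed point $\tilde x \in \mathscr{L}$ above $x$, with the same residue field $E'$, together with an isomorphism of completed local rings
\[
\mathcal{O}^\wedge_{\Spec R_{\Lambda(G_v,\eta)}^{\triangle,\psi},\, x} \xrightarrow{\;\sim\;} \mathcal{O}^\wedge_{\mathscr{L},\,\tilde x}.
\]
Thus it suffices to show that the right hand side is formally smooth over $E$.

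Next, after picking $g \in \GL_2(E')$ with $gL_x = L_{E'}^{\mathrm{std}}$ (so that $g\rho_x g^{-1}$ is upper triangular), Lemma \ref{smoverborel} identifies $\mathcal{O}^\wedge_{\mathscr{L},\tilde x}$ with the power series ring $R_{g\rho_x g^{-1}}^{\mathrm{Bor},\psi}[[z]]$. Because $E'/E$ is finite separable and formal smoothness is preserved by adjoining a power series variable, it is enough to know that $R_{g\rho_x g^{-1}}^{\mathrm{Bor},\psi}$ is formally smooth over $E'$. By Proposition \ref{char0borelliftring}, this holds automatically unless $g\rho_x g^{-1} = \left(\begin{smallmatrix} \chi\epsilon_p & 0 \\ 0 & \chi \end{smallmatrix}\right)$ for some character $\chi$.

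The last thing to verify is that the remaining hypothesis $\chi_x^2 \ne \psi$ precisely rules out this exceptional form. If $g\rho_x g^{-1}$ had the displayed shape, then since $\chi_x$ is by construction the character through which $G_v$ acts on $(E')^2/L_x$, we would have $\chi_x = \chi$; taking determinants and comparing with $\psi\epsilon_p$ would then force $\chi_x^2 = \chi^2 = \psi$, contradicting the hypothesis. The argument is essentially mechanical once the three prior inputs are in hand; I do not anticipate a serious obstacle, the only real content being the recognition that the two excluded values of $\chi_x^2$ in the hypothesis are exactly the two obstructions that arise from \ref{opensetiso} and \ref{char0borelliftring}.
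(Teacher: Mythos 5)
Your proof is correct and follows exactly the paper's approach; the paper compresses the same chain of ideas into a single terse sentence ("The result then follows from \ref{char0borelliftring} and \ref{smoverborel}"), whereas you spell out the verification that $\chi_x^2 \ne \psi$ rules out the exceptional case $g\rho_x g^{-1} \cong \left(\begin{smallmatrix}\chi\epsilon_p & \\ & \chi\end{smallmatrix}\right)$, which is the detail the paper leaves implicit.
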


\begin{proof}

Let $E'$ denote the residue field of $x$. Since $\chi_x^2 \ne \psi\epsilon_p$, \ref{opensetiso} implies there is a unique $E'$ point of $\mathscr{L}$, which we denote again by $x$, and an isomorphism $(R_{\Lambda(G_v,\eta)}^{\triangle,\psi})_x^\wedge \rightarrow \mathcal{O}_{\mathscr{L},x}^\wedge$. The result then follows from \ref{char0borelliftring} and \ref{smoverborel}.
	\end{proof}

\subsection{Local deformation rings away from $p$}\label{LocalDefslNotp}

In this subsection, $F_v$ is either be a finite extension of $\Q_\ell$ with $\ell \ne p$, or $F_v = \R$. We fix a continuous homomorphism
	\[ \overline{\rho} : G_v \longrightarrow \GL_2(\F) \]
and a continuous character $\psi : G_v \rightarrow \calO^\times$ such that $\det \overline{\rho} = \overline{\psi\epsilon_p}$. We let $V_\F$ denote the representation space of $\rhobar$. We let $\mathcal{D}_v^\square$ and $\mathcal{D}_v^{\square,\psi}$ denote the functor of lifts of $\rhobar$ and the subfunctor consisting of lifts with determinant $\psi\epsilon_p$, repsectively. We denote the corresponding representing objects by $R_v^\square$ and $R_v^{\square,\psi}$, respectively.

\subsubsection{}\label{lnotpsec} 
Assume that $F_v$ is a finite extension of $\Q_\ell$ with $\ell \ne p$. Let $q$ denote the cardinality of the residue field of $F_v$. Let $\rho^\mathrm{univ}$ denote the universal lift to $R^\square_v$. 

\begin{prop}\label{FixedDetlp}
Let $\overline{R}_v^{\square,\psi}$ denote the quotient of $R_v^{\square,\psi}$ by its $p$-torsion. 
\begin{enumerate}
	\item $\overline{R}_v^{\square,\psi}$ is $\calO$-flat and equidimensional of relative dimension $3$.
	\item The set of unramified lifts of $\rhobar_v$ form an irreducible component of $\Spec \overline{R}_v^{\square,\psi}$, which is formally smooth over $\calO$. We denote the quotient by the corresponding minimal prime by $R_v^{\square,\psi,\mathrm{ur}}$.
	\item If $E'/E$ is finite and $x : \overline{R}_v^{\square,\psi} \rightarrow E$ is a continuous $\mathcal{O}$-algebra morphism such that $\ker(x)$ is contained in more than one irreducible component, then letting $\rho_x$ denote the corresponding lift, $\rho_x \cong \gamma_v\epsilon_p \oplus \gamma_v$ for some character $\gamma_v$ of $G_v$.
\end{enumerate}
\end{prop}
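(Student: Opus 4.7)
The plan is to reduce most of this proposition to known structural results on $R_v^{\square,\psi}$ in the $\ell \ne p$ setting, and to analyze the unramified subfunctor directly. Throughout, $\mathcal{O}$-flatness of $\overline{R}_v^{\square,\psi}$ is immediate from its definition as the quotient by $p$-torsion.

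For part (1), the key input is Kisin's theorem \cite{KisinFinFlat} that the generic fibre $R_v^{\square,\psi}[1/p]$ is equidimensional of dimension $3$. Since $\overline{R}_v^{\square,\psi}$ is $\mathcal{O}$-flat, each of its irreducible components meets the generic fibre, so every component of $\Spec \overline{R}_v^{\square,\psi}$ has relative dimension $3$ over $\mathcal{O}$.

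For part (2), I would work out the unramified subfunctor by hand. Because $G_v/I_v$ is topologically generated by $\mathrm{Frob}_v$, an unramified lift of $\rhobar$ to a $\CNLO$-algebra $A$ is determined by a single matrix $X \in \GL_2(A)$ lifting $\rhobar(\mathrm{Frob}_v)$, subject to $\det X = (\psi\epsilon_p)(\mathrm{Frob}_v)$. This cuts out one equation from the four matrix entries, giving a functor pro-representable by a power series ring over $\mathcal{O}$ in three variables, hence formally smooth over $\mathcal{O}$ of relative dimension $3$. The corresponding quotient of $R_v^{\square,\psi}$ is a regular domain of maximal relative dimension in $\overline{R}_v^{\square,\psi}$, so it corresponds to a minimal prime, i.e., an irreducible component.

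For part (3), I plan a tangent-space and cohomological argument. The tangent space to $\Spec R_v^{\square,\psi}$ at a characteristic zero point $x$ with residue field $E'$ has $E'$-dimension $3 + \dim_{E'} H^0(G_v, \Ad^0(\rho_x)(1))$, by the local Euler--Poincar\'{e} formula (trivial for $\ell\ne p$) together with local Tate duality. If $\ker(x)$ is contained in more than one irreducible component, the completed local ring of $\overline{R}_v^{\square,\psi}$ at $x$ cannot be a regular local ring, and by part (1) its tangent space must strictly exceed $3$; hence $H^0(G_v,\Ad^0(\rho_x)(1))\ne 0$. Unpacking this condition via the inclusion $\Ad^0 \hookrightarrow \rho_x\otimes\rho_x^\vee$ forces $\rho_x$ to have Jordan--H\"{o}lder constituents $\gamma_v$ and $\gamma_v\epsilon_p$ for some character $\gamma_v$ of $G_v$. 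The main obstacle is then to exclude non-split extensions and conclude $\rho_x \cong \gamma_v\epsilon_p \oplus \gamma_v$; for this I would appeal to the explicit description of the non-unramified components of $\Spec \overline{R}_v^{\square,\psi}$ from \cite{KisinFinFlat}, which shows that non-split extensions of two such characters are smooth points of a single component and therefore cannot lie in the intersection of two distinct components.
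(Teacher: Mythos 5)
Your strategy lines up with the paper's—flatness by definition, dimension from Kisin's analysis, the unramified quotient cut out by one matrix with fixed determinant, and a tangent-space computation for part (3)—though the paper cites \cite{Kisin2adic}*{Propositions 2.5.3 and 2.5.4} rather than \cite{KisinFinFlat} for the dimension and the unramified quotient. Parts (1) and (2) of your argument are sound. Note also that the unramified quotient $R_v^{\square,\psi,\mathrm{ur}}$ is $\mathcal{O}$-flat, so $R_v^{\square,\psi} \to R_v^{\square,\psi,\mathrm{ur}}$ automatically factors through $\overline{R}_v^{\square,\psi}$; that is needed to speak of the unramified locus as a component of $\overline{R}_v^{\square,\psi}$ rather than of $R_v^{\square,\psi}$.

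Where you stall is the ``main obstacle'' in part (3): you correctly extract from $H^0(G_v, \Ad^0(\rho_x)(1)) \ne 0$ a rank-one, trace-zero $X \in \Ad^0(\rho_x)$ with $\rho_x(\sigma) X \rho_x(\sigma)^{-1} = \epsilon_p(\sigma)^{-1}X$, and hence two $\rho_x$-stable lines $L = \mathrm{im}(X)$ and $K = \ker(X)$. But the deferral to the component structure of $\overline{R}_v^{\square,\psi}$ is unnecessary and leaves a gap: the split-ness is forced already at this level. The identity $\gamma_L = \epsilon_p^{-1}\gamma_K$ rules out $K \ne L$ (that would give $\epsilon_p = 1$), so $K = L$ and $\rho_x$ is an extension of $\gamma_L\epsilon_p$ by $\gamma_L$, with the \emph{sub}representation being $\gamma_L$. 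Such extensions are classified by $H^1(G_v, E'(\epsilon_p^{-1}))$, which vanishes for $\ell \ne p$: by the local Euler characteristic formula $h^1(\epsilon_p^{-1}) = h^0(\epsilon_p^{-1}) + h^2(\epsilon_p^{-1}) = h^0(\epsilon_p^{-1}) + h^0(\epsilon_p^{2})$, and both terms vanish since $\epsilon_p^{\pm 1}$ and $\epsilon_p^{2}$ send $\Frob_v$ to $q^{\pm 1}$ and $q^2$, none of which is $1$. So the extension splits automatically, which is the content of the paper's ``it is easily checked.'' Be careful that the non-split extensions in the \emph{other} direction (sub $\gamma\epsilon_p$, quotient $\gamma$) do exist, since $H^1(G_v, E'(\epsilon_p)) \ne 0$; those points are smooth of tangent dimension $3$, as you would want, but the reason the present $\rho_x$ is not one of them is that the invariant $X$ forces the subrepresentation to be $\gamma_L$, not $\gamma_L\epsilon_p$—a point your proposal does not isolate.
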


\begin{proof}
By definition $\overline{R}_v^{\square,\psi}$ is $p$-torsion free, hence is $\calO$-flat. The dimension of $\overline{R}_v^{\square,\psi}[1/p]$ is shown to be $3$ in \cite{Kisin2adic}*{Proposition 2.5.4}. It is shown in \cite{Kisin2adic}*{Proposition 2.5.3} that there is a quotient $R_v^{\square,\psi}\rightarrow R_v^{\square,\psi,\mathrm{ur}}$, such that a lift factors through $R_v^{\square,\psi,\mathrm{ur}}$ if and only if it is unramified, and that $R_v^{\square,\psi,\mathrm{ur}}$ is formally smooth over $\calO$ of relative dimension $3$. The quotient  $R_v^{\square,\psi}\rightarrow R_v^{\square,\psi,\mathrm{ur}}$ necessarily factors through $\overline{R}_v^{\square,\psi}$. Part (3) follows from the proof of \cite{Kisin2adic}*{Proposition 2.5.4}. More specifically, in the proof of \cite{Kisin2adic}*{Proposition 2.5.4}, it is shown that the completed local ring at $x$ is smooth unless $H^2(G_v,\Ad^0(\rho_x)) \ne 0$, where $\Ad^0(\rho_x)$ denotes the trace zero subspace of the adjoint representation of $\rho_x$, and it is easily checked that if $H^2(G_v,\Ad^0(\rho_x)) \ne 0$, then $\rho_x \cong \gamma_v\epsilon_p \oplus \gamma_v$ for some character $\gamma_v$ of $G_v$.
	\end{proof}

We now turn our attention to semistable lifts. We will need the following lemma, which is a variant of a result of Snowden \cite{SnowdenDefRing}.

\begin{lem}\label{lnotpCM}
Assume $\overline{\rho}$ is the trivial representation and that $q \equiv 1 \pmod p$. Let $J$ be the ideal in $R_v^\square$ generated by the equations
	\[ \tr\rho^\mathrm{univ}(\sigma) = \epsilon_p(\sigma)+1, \hspace{0.25cm} 
	\det\rho^\mathrm{univ}(\sigma) = \epsilon_p(\sigma),
	\]
for all $\sigma\in G_v$, as well as the entries of the matrix equations
	\[ (\rho^\mathrm{univ}(\sigma)-1)(\rho^\mathrm{univ}(\tau)-1) 
	= (\epsilon_p(\sigma)-1)(\rho_v^\mathrm{univ}(\tau) - 1) 
	\]
for all $\sigma,\tau\in G_v$. Then $(R_v^\square/J_\gamma)\otimes_\calO \F$ is a domain of dimension $3$.
\end{lem}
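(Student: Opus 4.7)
My plan is to reinterpret $\bar R := (R_v^\square/J)\otimes_\calO \F$ as a moduli ring of pairs of nilpotent $2 \times 2$ matrices, following the ideas of Snowden \cite{SnowdenDefRing}. Since $\ell \ne p$ and $q \equiv 1 \pmod p$, the cyclotomic character $\epsilon_p$ is unramified on $G_v$ and reduces to $1$ mod $\varpi_E$; hence the defining relations of $J$ simplify, modulo $\varpi_E$, to $\tr \rho(\sigma) = 2$, $\det \rho(\sigma) = 1$, and $(\rho(\sigma)-1)(\rho(\tau)-1) = 0$ for all $\sigma,\tau$. Writing $\rho(\sigma) = 1 + N(\sigma)$, these become $\tr N(\sigma) = 0$, $N(\sigma)^2 = 0$, and $N(\sigma)N(\tau) = 0$; the last relation forces $(1+N(\sigma))(1+N(\tau)) = 1 + N(\sigma) + N(\tau)$, so $\rho$ is abelian and $N$ is a continuous additive homomorphism factoring through the pro-$p$ abelianization $G_v^{\mathrm{ab},p}$.

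By local class field theory $G_v^{\mathrm{ab},p} \cong \Z_p \oplus \mu_{p^s}$ with $p^s \| q-1$, and since $p = 0$ in any $\F$-algebra the torsion contributes no further constraint. Fixing topological generators $\phi$ (Frobenius) and $\sigma_0$ (tame inertia), a lift is determined by the pair $N_1 = N(\phi),\, N_2 = N(\sigma_0)$, and writing these as $N_1 = \left(\begin{smallmatrix}a&b\\c&-a\end{smallmatrix}\right)$, $N_2 = \left(\begin{smallmatrix}d&e\\f&-d\end{smallmatrix}\right)$, the conditions are precisely the vanishing of the six $2 \times 2$ minors of
\[ M \;=\; \begin{pmatrix} a & b & d & e \\ c & -a & f & -d \end{pmatrix}. \]
Hence $\bar R \cong \F[[a,b,c,d,e,f]]/I_2(M)$.

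Geometrically these minors say that the four columns of $M$ lie in a common line $L \subset A^2$---the common image and kernel of $N_1, N_2$. On the chart $L = \mathrm{span}((1,x)^\top)$ each $N_i$ takes the form $\alpha_i\left(\begin{smallmatrix}-x&1\\-x^2&x\end{smallmatrix}\right)$, defining a ring homomorphism $\varphi : \bar R \to \F[[b,e,x]]$ sending $a \mapsto -bx$, $c \mapsto -bx^2$, $d \mapsto -ex$, $f \mapsto -ex^2$ and fixing $b, e$; direct computation verifies all six minors vanish under $\varphi$. Its image $S$ is the $\F$-subalgebra generated by $b, e, bx, ex, bx^2, ex^2$; as a subring of the integral domain $\F[[b,e,x]]$ it is itself an integral domain, and since $x = (bx)/b$ lies in $\mathrm{Frac}(S)$ we have $\dim S = 3$.

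The main obstacle is to show $\varphi$ is injective, yielding $\bar R \cong S$. I plan to argue as follows. By Hochster--Eagon the generic $2 \times 4$ determinantal ring $\F[x_{ij}]/I_2$ is a Cohen--Macaulay domain of dimension $5$, and the two trace-zero linear forms $x_{11}+x_{22}$ and $x_{13}+x_{24}$ form a regular sequence on it (using the parametrization $(u,v)\mapsto uv^\top$ one checks each hyperplane section is set-theoretically irreducible and Cohen--Macaulay, hence a domain by Serre's criterion), making $\bar R$ Cohen--Macaulay of dimension $3$. Next, $\bar R[1/b] \cong \F[[a,b,c,d,e,f]][1/b]/(a^2+bc)$ is a smooth integral hypersurface of dimension $3$, so $\bar R$ has smooth points ($R_0$), and a direct inspection shows $V(I_2(M)) \cap V(b)$ has dimension at most $2$ so no minimal prime contains $b$. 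Combining set-theoretic irreducibility with Serre's criterion ($R_0 + S_1$) yields reducedness, and the uniqueness of the minimal prime then shows $\bar R$ is a three-dimensional integral domain.
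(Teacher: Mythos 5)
Your opening reduction is correct and matches the paper's: modulo $\varpi_E$ the defining equations become $\tr\rho(\sigma)=2$, $\det\rho(\sigma)=1$, $(\rho(\sigma)-1)(\rho(\tau)-1)=0$, the map $\sigma\mapsto\rho(\sigma)-1$ is an additive homomorphism factoring through the maximal abelian exponent-$p$ quotient of $G_v$ (which is $(\Z/p)^2$ since $q\equiv 1\bmod p$), and one is left with the ring classifying pairs of trace-zero, square-zero, mutually annihilating $2\times 2$ matrices. Your identification of this ring with $\F[[a,\ldots,f]]/I_2(M)$ for the $2\times 4$ matrix $M$ is also correct: I checked that your six $2\times 2$ minors are exactly the six defining equations.

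From that point your route diverges from the paper's. The paper passes to the associated graded ring, dehomogenizes by setting one of the diagonal variables to $1$, observes that the dehomogenized ideal allows elimination of two variables leaving $\F[b_1,c_1,a_2]/(1-b_1c_1)$, and invokes Zariski--Samuel to conclude that the homogeneous ideal is prime. Your approach instead realizes the ring as a linear section of the $2\times 4$ generic rank-one determinantal ring, appeals to Hochster--Eagon for Cohen--Macaulayness of the ambient determinantal ring, cuts by a regular sequence, and then establishes the domain property via Serre's criterion. The paper's argument is shorter and entirely elementary; yours connects the ring to a well-developed classical theory, which is perhaps more illuminating about \emph{why} the ring is a domain. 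Both are legitimate strategies.

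There are however two concrete gaps in your execution. First, in the parenthetical remark justifying that $x_{11}+x_{22}$, $x_{13}+x_{24}$ is a regular sequence, you write that each hyperplane section is ``set-theoretically irreducible and Cohen--Macaulay, hence a domain by Serre's criterion.'' This inference is incomplete: Serre's criterion for reducedness requires $R_0$ in addition to $S_1$, and Cohen--Macaulayness supplies only the $S$-condition. To complete the argument you must also verify that the differential of the linear form $x_{11}+x_{22}$ is nonvanishing on the tangent space of the determinantal variety at a generic (nonzero) point of the section; this is true (write $M = uv^\top$, note the tangent space is $\{u'v^\top + u(v')^\top\}$, and observe that $d(u_1v_1+u_2v_2)$ only vanishes on all of it if $u_1=u_2=v_1=v_2=0$, forcing the trace constraint to degenerate), but it is not a consequence of the two properties you cite. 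You do handle $R_0$ correctly for the final ring $\bar R$ via the locus $b\ne 0$, but the intermediate hyperplane section needs the same care. Second, the displayed isomorphism $\bar R[1/b] \cong \F[[a,b,c,d,e,f]][1/b]/(a^2+bc)$ cannot be right as written: that ring is a hypersurface of dimension $5$. After inverting $b$, the relations $c=-a^2/b$, $d=ae/b$, $f=-ad/b$ eliminate $c,d,f$ and the remaining relations become identities, so $\bar R[1/b]$ is a localization of $\F[[a,b,e]]$, which is what you presumably intend when you call it smooth of dimension $3$.
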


\begin{proof}
Set $R = (R_v^\square/J)\otimes_\mathcal{O} \F$, and let $\rho_R$ be the pushforward of $\rho^\mathrm{univ}$ to $R$. Since we have assumed $q\equiv 1 \pmod p$, modulo $\varpi_E$ the equations defining $J$ become
	\[ \tr \rho_R (\sigma) = 2 \hspace{0.25cm}\text{and}\hspace{0.25cm} \det\rho_R(\sigma) =1
	\]
for all $\sigma\in G_v$, and
	\[ (\rho_R(\sigma)-1)(\rho_R(\tau) -1) = 0
	\]
for all $\sigma,\tau\in G_v$. From this, it follows that $R$ represents the functor on $\CNL_\F$ that sends a $\CNL_\F$-algebra $A$ to the set of families of matrices $\{1+X_\sigma \in 1+\mathfrak{m}_A \mathrm{M}_{2\times 2}(A) : \sigma\in G_v\}$, such that 
	\[ \tr(1+X_\sigma) = 2, \hspace{0.25cm}\det(1+X_\sigma)=1,
	\hspace{0.25cm}X_\sigma X_\tau = 0
	\]
for all $\sigma,\tau\in G_v$. For any such family, the elements $1+X_\sigma$ commute and have order $p$ (or $0$), hence the map $G_v \mapsto 1+X_\sigma$ factors through the maximal abelian quotient of exponent $p$, which is a product of two cyclic groups of order $p$. From this, we see that $R$ represents the functor on $\CNL_\F$ that assigns to each $\CNL_\F$-algebra $A$, a pair of matrices $X,Y\in \mathfrak{m}_A\mathrm{M}_{2\times 2}(A)$ such that
	\[ \tr(1+X)=\tr(1+Y) = 2, \hspace{0.25cm}\det(1+X) = \det(1+Y)=1,
	\hspace{0.25cm}X^2=Y^2=XY=YX= 0.
	\]
	
Define a functor $F$ on the category of $\F$-algebas that assigns to an $\F$-algebra $A$ the set of pairs $(X,Y) \in \mathrm{M}_{2\times 2}(A)$ satisfying
	\[ \tr X = \tr Y = 0, \hspace{0.25cm} \det X = \det Y = 0, \hspace{0.25cm} 
	X^2 = Y^2 = XY = YX = 0
	\]
It is easy to see that $F$ is represented by $R_F = \F[a_1,b_1,c_1,a_2,b_2,c_2]/I$, where 
	\begin{equation}\label{definingideal}
	 I = (a_1^2 - b_1c_1, a_2^2 - b_2c_2, a_1a_2 + b_1c_2, a_1a_2 + b_2 c_1, a_1b_2 - a_2b_1, a_2c_1 - a_1c_2),
	\end{equation}
with universal pair $\left( \left( \begin{array}{cr} a_1 & b_1 \\ c_1 & -a_1 \end{array}\right), \left( \begin{array}{cr} a_2 & b_2 \\ c_2 & -a_2 \end{array} \right) \right)$. Let $x$ denote the $\F$-point of $\Spec R_F$ corresponding to the pair $(0,0)$. There is an isomorphism of functors on $\CNL_\F$
	\[ \Spf R \stackrel{\sim}{\longrightarrow} \Spf (R_F)_x^\wedge
	\]
given by $(1+X,1+Y) \mapsto (X,Y)$, and so we wish to show that $(R_F)_x^\wedge$ is a domain of dimension $3$. Note that
	\[ (R_F)_x^\wedge \cong \F[[a_1,b_1,c_1,a_2,b_2,c_2]]/I,
	\]
with $I$ as in \eqref{definingideal}. For any Noetherian local ring $A$, if its associated graded ring $\mathrm{gr}(A)$ is a domain, then so is $A$. Noting that
	\[ \mathrm{gr}((R_F)_x^\wedge) \cong \F[a_1,b_1,c_1,a_2,b_2,c_2]/I = R_F,
	\]
since $I$ is homogenous, we are reduced to showing $R_F$ is a domain of dimension $3$.

Note that $I$ is the homogenization in $\F[a_1,b_1,c_1,a_2,b_2,c_2]$ of the ideal
	\[ I' = (1 - b_1c_1, a_2^2 - b_2c_2, a_2 + b_1c_2, a_2 + b_2 c_1, b_2 - a_2b_1, a_2c_1 - c_2),
	\]
in $\F[b_1,c_1,a_2,b_2,c_2]$. By \cite{ZariskiSamuel}*{Chapter 7, \S 5, Theorem 17}, to show that $I$ it prime it suffices to show that $I'$ is prime. The map defined by $b_2 \mapsto a_2b_1$ and $c_2 \mapsto a_2c_1$ gives an isomorphism
	\[ \F[b_1,c_1,a_2,b_2,c_2]/I' \cong \F[b_1,c_1,a_2]/(1-b_1c_1),
	\]
which is a domain of dimension $2$. Hence, $R_F$ is a domain and has dimension $1 + \dim \F[b_1,c_1,a_2,b_2,c_2]/I' = 3$.	\end{proof}

\begin{prop}\label{SemiStlp}
Let $\gamma : G_v \rightarrow \calO^\times$ be a finitely ramified, continuous character such that $\gamma^2 = \psi$ (enlarging $\calO$ if necessary), and such that $\gamma(I_v)$ is prime to $p$. Assume $\overline{\rho}_v$ is an extension of $\overline{\gamma}$ by $\overline{\gamma\epsilon_p}$. 

There is an $\mathcal{O}$-flat reduced quotient $R_v^{\square,\gamma\text{-st}}$ of $R_v^{\square,\psi}$ of relative dimension $3$, such that if $E'/E$ is a finite extenstion, a continuous $\calO$-algebra morphism $x : R_v^{\square,\psi} \rightarrow E'$ factors through $R_v^{\square,\gamma\text{-st}}$ if and only if the corresponding lift $\rho_x$ is an extension of $\gamma$ by $\gamma\epsilon_p$. $R_v^{\square,\gamma\text{-st}}[1/p]$ is formally smooth over $E$, and $R_v^{\square,\gamma\text{-st}}\otimes_\mathcal{O} \F$ is a domain.
\end{prop}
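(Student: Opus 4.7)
I will construct $R_v^{\square,\gamma\text{-st}}$ explicitly by imposing a Steinberg relation on $R_v^{\square,\psi}$, then reduce the fibre analysis to the case $\gamma=1$ via a twist. In this reduced case the generic fibre is handled by a tangent space computation, and the special fibre reduces directly to Lemma~\ref{lnotpCM} when $q\equiv 1 \pmod p$, and is a straightforward computation otherwise.

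\emph{Construction and universal property.} Define $J_\gamma \subseteq R_v^{\square,\psi}$ to be the ideal generated by the entries of the matrices $(\rho^{\mathrm{univ}}(\sigma) - \gamma(\sigma)\epsilon_p(\sigma))(\rho^{\mathrm{univ}}(\tau) - \gamma(\tau))$ for $\sigma,\tau \in G_v$, and let $R_v^{\square,\gamma\text{-st}}$ be the maximal $\calO$-flat reduced quotient of $R_v^{\square,\psi}/J_\gamma$. For the universal property on $E'$-points: given a lift $\rho_x$ satisfying these identities, the subspace $L_x := \sum_\tau \im(\rho_x(\tau) - \gamma(\tau)) \subseteq (E')^2$ is $G_v$-stable (a direct computation using that $\rho_x(\sigma)(\rho_x(\tau)-\gamma(\tau)) = (\rho_x(\sigma\tau)-\gamma(\sigma\tau)) - \gamma(\tau)(\rho_x(\sigma)-\gamma(\sigma))$) with $G_v$ acting on it by the character $\gamma\epsilon_p$. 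The determinant condition $\det\rho_x = \gamma^2\epsilon_p$ combined with the nontriviality of $\epsilon_p$ rules out both $L_x = 0$ and $L_x = (E')^2$, so $L_x$ is a line exhibiting $\rho_x$ as an extension of $\gamma$ by $\gamma\epsilon_p$; conversely, any such extension satisfies the defining relations by direct matrix computation in an adapted basis.

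\emph{Fibres via twist; main obstacle.} The twist $\rho \mapsto \rho \otimes \gamma^{-1}$, well-defined integrally because $\gamma(I_v)$ is prime to $p$, identifies $R_v^{\square,\gamma\text{-st}}$ with the standard Steinberg deformation ring of $\rhobar \otimes \overline{\gamma}^{-1}$ with determinant $\epsilon_p$, so I henceforth assume $\gamma = 1$. For the generic fibre: $\ell \ne p$ makes the local Euler-Poincar\'e characteristic on any $p$-power $G_v$-module vanish, and local Tate duality gives that $H^2(G_v,\Ad^0)$ vanishes away from the scalar locus (which is disjoint from the Steinberg stratum), so the resulting tangent space computation gives formal smoothness of $R_v^{\square,\gamma\text{-st}}[1/p]$ of relative dimension $3$. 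For the special fibre: when $q \not\equiv 1 \pmod p$, $\rhobar$ is a direct sum of two distinct characters, the Steinberg line is uniquely determined by the $\overline{\epsilon_p}$-eigenspace, and a direct computation in the eigenbasis yields a power series ring in three variables over $\F$; when $q \equiv 1 \pmod p$, $\overline{\epsilon_p} = 1$ and rewriting the defining relation as $(\rho(\sigma)-1)(\rho(\tau)-1) = (\epsilon_p(\sigma)-1)(\rho(\tau)-1)$ shows that the reduction of $J_1$ modulo $\varpi_E$, together with the induced trace and determinant relations, coincides with the ideal $J$ of Lemma~\ref{lnotpCM}; that lemma then gives that $R_v^{\square,\text{st}}\otimes\F$ is a domain of dimension $3$. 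The principal technical step is verifying this last coincidence: one must check that the trace and determinant relations of $R_v^{\square,\psi}$ are, modulo $\varpi_E$, consequences of the Steinberg product relations together with $\det\rho^{\mathrm{univ}} = \psi\epsilon_p$, and that passing to the $\calO$-flat reduced quotient introduces no spurious relations; granted this, Lemma~\ref{lnotpCM} completes the proof.
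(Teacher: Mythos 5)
Your overall route parallels the paper's: twist to reduce to $\gamma=1$, identify the Steinberg relation (which, as you note, together with the fixed determinant and Cayley–Hamilton recovers the trace and determinant relations, so it agrees with the ideal $J$ of Lemma~\ref{lnotpCM} modulo $\varpi_E$), and feed the special fibre into Lemma~\ref{lnotpCM}. But there are two genuine gaps.

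First, your claim that when $q\not\equiv 1 \pmod p$ the residual representation is a direct sum of two distinct characters is false in general. Even though $\overline{\epsilon_p}\neq 1$ in this case, the residual extension of $\overline{\gamma}$ by $\overline{\gamma\epsilon_p}$ can be non-split: since $\ell\neq p$, the local Euler characteristic of $\F(\overline{\epsilon_p})$ vanishes and local Tate duality gives $\dim_\F H^1(G_v,\F(\overline{\epsilon_p})) = 1$. So the eigenbasis computation you sketch does not apply. What is true is that in this case (and, more generally, whenever there is a unique $\overline{\gamma\epsilon_p}$-stable line in $V_\F$) the ring $R_v^{\square,\gamma\text{-st}}$ is formally smooth over $\calO$; the paper gets this from the proof of \cite{KW2}*{Theorem 3.1}. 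After dispatching that case, the remaining case — the one requiring a new argument — is precisely $q\equiv 1\pmod p$ with $\rhobar$ twisted-trivial.

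Second, and more seriously, you explicitly flag \emph{``that passing to the $\calO$-flat reduced quotient introduces no spurious relations; granted this''} — but that is exactly the nontrivial content, and it is not granted. The paper resolves it with a dimension comparison: $R_v^{\square,1\text{-st}}$ is $\calO$-flat of relative dimension $3$, so $R_v^{\square,1\text{-st}}\otimes_\calO\F$ has dimension $3$; meanwhile Lemma~\ref{lnotpCM} gives that $(R_v^\square/J)\otimes_\calO\F$ is a domain of dimension $3$; a surjection from a finite-dimensional Noetherian local domain onto a ring of the same dimension is an isomorphism (the kernel is contained in a minimal prime, which in a domain is zero). This single step is what shows the mod $p$ fibre of the $\calO$-flat reduced quotient coincides with $(R_v^\square/J)\otimes_\calO\F$ and is therefore a domain. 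Without it, your proof does not establish the mod $p$ statement.

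Apart from these two points, you re-derive the construction, universal property (via the subspace $L_x$), and generic-fibre smoothness, where the paper instead cites \cite{KisinFinFlat}*{Corollary 2.6.7} and \cite{KW2}*{Theorem 3.1}; those portions of your sketch are reasonable.
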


\begin{proof} Except for the claim about the reduction mod $p$, this is proved in \cite{KisinFinFlat}*{Corollary 2.6.7} for $p>2$ and $\gamma$ unramified, and in \cite{KW2}*{Theorem 3.1} in the remaining cases.

If there is a unique line in $V_\F$ on which $G_v$ acs via $\overline{\gamma\epsilon_p}$, then the proof of \cite{KW2}*{Theorem 3.1} shows that $R^{\square,\gamma\text{-st}}_v$ is formally smooth over $\calO$, and so we we may assume that $\overline{\epsilon_p}$ is trivial, i.e. $q \equiv 1 \pmod p$, and that $G_v$ acts on $V_\F$ via the character $\overline{\gamma}$. Twisting lifts by $\gamma^{-1}$ yields an isomorphism of lifting functors $\mathcal{D}_{\rhobar}^\square \cong \mathcal{D}_{\rhobar\otimes\overline{\gamma}^{-1}}^\square$; hence, an isomorphism of their universal lifting rings. Part (3) of \ref{OPointsOfR} shows that this isomorphism of universal lifting rings yields an isomorphism $R_v^{\square,\gamma\text{-st}} \cong R_v^{\square,1\text{-st}}$, and so we may assume $\gamma = 1$ and $\rhobar$ is the trivial homomorphism.

Let $J$ be as in \ref{lnotpCM}. Let $X$ denote the set of points $x : R^{\square}_v \rightarrow \mathcal{O}'$, where $\calO'$ is the ring of integers in some finite extension $E'/E$, such that the induced lift $\rho_x$ is conjugate to an extension of $\epsilon_p$ by $1$. Set $J_X = \cap_{x\in X}\ker(x)$. By part (3) of \ref{OPointsOfR}, the surjection $R^\square_v \rightarrow R^{\square,\gamma\text{-st}}_v$ has kernel $J_X$. Since the equations defining $J$ hold for any $x\in X$, this surjection factors through $R_v^\square/J$. Let $\mathfrak{q}$ denote the kernel of $R^\square_v/J\rightarrow R_v^{\square,1\text{-st}}$. Note that $\mathfrak{q}$ is prime and $\varpi_E\notin\mathfrak{q}$. Since $R_v^{\square,1\text{-st}}$ is $\calO$-flat of relative dimension $3$, and $(R_v^\square/J)\otimes_\calO \F$ is a domain of dimension $3$ by \ref{lnotpCM}, the surjection
	\[ (R_v^\square/J) \otimes_\calO \F \longrightarrow R_v^{\square,1\text{-st}}
		\otimes_\calO \F\]
is an isomorphism. Then \ref{lnotpCM} implies $R_v^{\square,1\text{-st}}\otimes_\calO \F$ a domain.	\end{proof}

\subsubsection{}\label{LocalDefsInf}

Now assume that $F_v \cong \R$ and assume $\overline{\rho}$ is odd, i.e. $\det\overline{\rho}(c) = -1$ for $c$ complex conjugation.

\begin{prop}\label{OddLifts}
There is an $\mathcal{O}$-flat reduced quotient $R_v^{\square,-1}$ of $R_v^{\square}$, such that if $E'/E$ is finite, a continuous $\calO$-morphism $x : R_v^{\square} \rightarrow E'$ factors through $R_v^{\square,-1}$ if and only if $\rho_x$ is odd. $R_v^{\square,-1}$ is a complete intersection domain of dimension $3$,  $R_v^{\square,-1}[1/p]$ is formally smooth over $E$, and $R_v^{\square,-1}\otimes_\mathcal{O}\F$ is a domain.
\end{prop}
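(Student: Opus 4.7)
The plan is to give an explicit presentation of $R_v^{\square,-1}$ and read off the required properties. Since $G_v = \Gal(\C/\R)$ is generated by an involution $c$, a lift of $\rhobar$ to $A$ amounts to a matrix $g = \rho(c) \in \GL_2(A)$ reducing to $\rhobar(c)$ with $g^2 = 1$, and oddness adds the condition $\det g = -1$. By Cayley--Hamilton ($g^2 - (\tr g)g + \det g = 0$), imposing $g^2 = 1$ and $\det g = -1$ forces $\tr g = 0$, and conversely $\tr g = 0$ together with $\det g = -1$ already imply $g^2 = 1$. Writing $g = g_0 + M$ with $g_0 = [\rhobar(c)]$ the Teichm\"uller lift and $M = (\tilde x_{ij})$ a matrix of indeterminates, the quotient of $R_v^\square$ by the single relation $\det g + 1 = 0$ is therefore presented as
\[
\calO[[\tilde x_{11},\tilde x_{12},\tilde x_{21},\tilde x_{22}]]\Big/ \bigl(\tr(g_0+M),\; \det(g_0+M) + 1\bigr).
\]
Once this ring is shown to be an $\calO$-flat domain, it will coincide with the $\calO$-flat reduced quotient $R_v^{\square,-1}$, and the claimed universal property with respect to $E'$-valued points is immediate.

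The first relation has linear term $\tilde x_{11} + \tilde x_{22}$, hence is a regular parameter eliminating $\tilde x_{22}$, and presents $R_v^{\square,-1}$ as the quotient of the regular local ring $\calO[[\tilde x_{11},\tilde x_{12},\tilde x_{21}]]$ (of dimension $4$) by a single element $F$. I would then do case analysis on $\rhobar(c)$, which up to conjugation of the framing falls into one of: (a) $p \neq 2$ and $\rhobar(c) = \mathrm{diag}(1,-1)$; (b) $p = 2$ and $\rhobar(c) = I$; (c) $p = 2$ and $\rhobar(c)$ a non-trivial unipotent element. A direct computation gives
\[
F = \tilde x_{11}^2 + 2\tilde x_{11} + \tilde x_{12}\tilde x_{21} \text{ in cases (a) and (b)}, \quad F = \tilde x_{11}^2 + 2\tilde x_{11} + \tilde x_{21}(1 + \tilde x_{12}) \text{ in case (c)}.
\]
In case (a) the coefficient of $\tilde x_{11}$ in $F$ is $2$, a unit in $\calO$; in case (c) the coefficient of $\tilde x_{21}$ is $1+\tilde x_{12}$, a unit in $\calO[[\tilde x_{11},\tilde x_{12},\tilde x_{21}]]$. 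In either case Weierstrass preparation lets $F$ eliminate the corresponding variable, so $R_v^{\square,-1}$ becomes a power series ring over $\calO$ in two variables --- a regular complete intersection domain of dimension $3$, $\calO$-flat, with formally smooth generic fibre and integral special fibre.

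The substantive case, and main obstacle, is (b), where $F = \tilde x_{11}^2 + 2\tilde x_{11} + \tilde x_{12}\tilde x_{21}$ with reduction $\overline F = \tilde x_{11}^2 + \tilde x_{12}\tilde x_{21}$ modulo $\varpi_E$, an ordinary double point. I would prove irreducibility of $\overline F$ directly: being homogeneous of degree $2$, any non-trivial factorization is a product of two linear forms in $\F[\tilde x_{11},\tilde x_{12},\tilde x_{21}]$; expanding such a product and matching coefficients with $\tilde x_{11}^2 + \tilde x_{12}\tilde x_{21}$ quickly yields contradictions, even allowing $\mathrm{char}\,\F = 2$. Hence $\overline F$ is prime in the UFD $\F[[\tilde x_{11},\tilde x_{12},\tilde x_{21}]]$, and lifting (a non-unit whose reduction is irreducible is irreducible) $F$ is prime in $\calO[[\tilde x_{11},\tilde x_{12},\tilde x_{21}]]$. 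Thus $R_v^{\square,-1}$ and $R_v^{\square,-1}\otimes_\calO\F$ are both domains, $R_v^{\square,-1}$ is a complete intersection of dimension $3$, and $\calO$-flatness follows from $\varpi_E \notin (F)$. For formal smoothness of $R_v^{\square,-1}[1/p]$ over $E$, the Jacobian of $F$ is $(2\tilde x_{11} + 2, \tilde x_{21}, \tilde x_{12})$, whose simultaneous vanishing would force $(\tilde x_{11},\tilde x_{12},\tilde x_{21}) = (-1,0,0)$; but $F(-1,0,0) = -1 \neq 0$, so no closed point of $\Spec R_v^{\square,-1}[1/p]$ is singular. The essence of the difficulty is case (b): recognizing that the characteristic-$2$ quadric $\overline F$ remains an integral variety and that its unique singular point does not lift to characteristic zero.
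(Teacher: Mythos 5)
Your proof is correct and follows the same overall strategy as the paper: produce an explicit presentation of $R_v^{\square,-1}$, then check integrality of the special fibre directly. The difference is that the paper simply quotes \cite{KW2}*{Proposition 3.3} and \cite{Kisin2adic}*{Proposition 2.5.6} for the presentation, whereas you derive it from first principles via Cayley--Hamilton, observing that in $\mathcal{O}[[\tilde x_{ij}]]$ the ideal cutting out odd lifts is generated by $\tr g$ and $\det g + 1$; this is clean and self-contained.

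There is one discrepancy with the paper that you should be aware of: in the case $p=2$, $\rhobar(c)$ trivial, the paper displays the presentation as $\mathcal{O}[[x,y,z]]/(x^2+2y+yz)$, while your computation gives $\mathcal{O}[[x,y,z]]/(x^2+2x+yz)$. These are genuinely different rings, and yours is the correct one: the Jacobian of $x^2+2y+yz$ is $(2x,\,2+z,\,y)$, which vanishes at $(x,y,z)=(0,0,-2)$, a point lying on the vanishing locus and with coordinates in $\mathfrak{m}_{\mathcal{O}'}$ when $p=2$; so the paper's displayed equation would have a singular closed point in its generic fibre, contradicting the asserted formal smoothness of $R_v^{\square,-1}[1/p]$. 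Your $x^2+2x+yz$ has Jacobian $(2x+2,\,z,\,y)$ with no zero on the variety, as you check. The paper's displayed relation appears to be a typo.

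A minor point on the irreducibility step: you verify that $\overline{F}=x^2+yz$ does not factor into linear forms in the polynomial ring $\F[x,y,z]$, and then assert primality in $\F[[x,y,z]]$. The passage from the polynomial ring to the power series ring is not automatic in general; you should invoke explicitly the fact (stated in the paper's proof of Proposition~\ref{borelliftringmodp}, citing Kunz) that if $f\in K[[x_1,\dots,x_n]]$ is reducible then its lowest-degree homogeneous part is reducible in $K[x_1,\dots,x_n]$. Since $\overline F$ is homogeneous, this closes the gap.
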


\begin{proof} \cite{KW2}*{Proposition 3.3} or \cite{Kisin2adic}*{Proposition 2.5.6} shows that $R_v^{\square,-1}$ is smooth over $\mathcal{O}$ if $\rhobar$ is non-trivial, and that if $\rhobar$ is trivial then
	\[ R_v^{\square,-1} \cong \mathcal{O}[[x,y,z]]/(x^2+2y+yz),
	\]
which is a complete intersubsection domain of dimension $3$. Lastly, $x^2 + yz$ is irreducible in $\F[[x,y,z]]$, so $R_v^{\square,-1}\otimes_\mathcal{O} \F$ is a domain.
	\end{proof}

\subsection{Global deformations}\label{GlobalDefs}

Let $F$ be a totally real field and let $S$ denote a finite set of places of $F$ containing all the infinite places as well as all the places above $p$. Denote by $F_{S}$ the maximal Galois extension of $F$ in $\Qbar$, unramified outside of $S$, and let $G_{F,S} = \Gal(F_{S}/F)$. Fix an absolutely irreducible, continuous representation
	\[ \overline{\rho} : G_{F,S} \longrightarrow \GL_2(\F), \]
and a continuous character $\psi : G_{F,S} \rightarrow \calO^\times$ such that $\overline{\psi\epsilon_p} = \det\overline{\rho}$. For each $v\in S$, let $F_v$ denote the completion of $F$ at $v$, and let $G_v = \Gal(\Qbar_\ell/F_v)$ where $\ell$ the residual characteristic of $v$, or $\infty$. Let $Q$ be some (possibly empty) set of places of $F$ disjoint from $S$.

Throughout this subsection, all completed tensor products are taken over $\mathcal{O}$ unless noted otherwise.

Let $R_{F,S\cup Q}$, respectively $R_{F,S\cup Q}^\psi$, denote the universal deformation ring, respectively universal deformation ring with determinant $\psi\epsilon_p$, of the $G_{F,S\cup Q}$-representation $\rhobar$. For each $v\in S$, let $R_v^{\square,\psi}$ denote the universal framed deformation ring for lifts of $\rhobar|_{G_v}$ with determinant $\psi\epsilon_p|_{G_v}$. Consider the set valued functor on $\mathrm{CNL}_\mathcal{O}$ that sends an object $A$ to $(V_A,\{\beta_v\}_{v\in S})$, where $V_A$ is a $G_{F,S\cup Q}$-deformation of $\rhobar$ to $A$, such that the determinant of $V_A|_{G_v}$ is equal to $\psi\epsilon_p|_{G_v}$ for all $v\in S$, and $\beta_v$ is a lift of $\overline{\beta}$ for each $v\in S$. This functor is representable and we denote the representing object by $R^\square_{F,S\cup Q}$. The subfunctor consisting of the tuples $(V_A,\{\beta_v\}_{v\in S})$ such that $\det V_A = \psi\epsilon_p$ is also representable and we denote the representing object by $R^{\square,\psi}_{F,S\cup Q}$. The forgetful functor $(V_A,\{\beta_v\}_{v\in S})\mapsto V_A$ gives a canonical maps $R_{F,S\cup Q}\rightarrow R^{\square}_{F,S\cup Q}$ and $R^\psi_{F,S\cup Q}\rightarrow R^{\square,\psi}_{F,S\cup Q}$, and in the latter case it is formally smooth of relative dimension $4|S|-1$, cf. \cite{KW2}*{Proposition 4.1}. Note that $R_{F,S\cup Q}^\psi$ is a quotient of $R_{F,S }$ and there is a canonical isomorphism $R_{F, S \cup Q}^{\square,\psi} \cong R_{F,S\cup Q}^\psi \otimes_{R_{F,S\cup Q}} R_{F, S\cup Q}^{\square}$.
		
The identity map $R_{F,S\cup Q}^{\square}\rightarrow R_{F,S\cup Q}^{\square}$ gives a universal object $(V^\mathrm{univ},\{\beta_v^\mathrm{univ}\}_{v\in S})$. For each $v\in S$, $(V^\mathrm{univ}|_{G_v},\beta_v^\mathrm{univ})$ determines a lift of $\rhobar|_{G_v}$ with determinant $\psi\epsilon_p|_{G_v}$, so we have a canonical morphism $R_v^{\square,\psi}\rightarrow R^{\square}_{F,S\cup Q}$. Letting $R^{\square,\psi}_S$ denote the completed tensor product $\hat{\otimes}_{v\in S} R^{\square,\psi}_v$, $R_{F,S\cup Q}^{\square}$ is canonically a $R^{\square,\psi}_S$ algebra. This also give an $R^{\square,\psi}_S$-algebra structure to $R_{F,S\cup Q}^{\square,\psi}$.

\subsubsection{}\label{GlobalDefPres}

Let $\Ad$ denote the space $\mathrm{M}_{2\times 2}(\F)$ with the adjoint $G_{F,S}$-action, and let $\Ad^0$ denote its trace zero subspace. In what follows we will use the following notation. Given a topological group $G$ such that $\Hom_{cts}(G,\F)$ is finite, and a  a finite $\F[G]$-module $M$, we denote by $M^\ast$ the $\F$-linear dual of $M$ with the induced $G$-action. For any $i\ge 0$, we denote by $h^i(G_{F,S},M)$ the $\F$-dimension of the cohomology group $H^i(G,M)$. If $G = G_{F,S}$, and $W$ is a finite set of places of $F$, we let $H_W^i(G_{F,S},M)$ denote the kernel of the restriction map
	\[ H^i(G_{F,S},M) \longrightarrow \prod_{v\in W} H^i(G_v,M) \]
and let $h_W^i(G_{F,S},M)$ denote its $\F$-dimension.

\begin{prop}\label{PresOverLoc}
There is a presentation
	\[ R_{F,S}^{\square,\psi} \cong
		 R_S^{\square,\psi}[[x_1,\ldots,x_g]]/(f_1,\ldots,f_r) \]
with
	\[ g-r \ge |S| - 1 - h^0(G_{F,S},(\Ad^0)^\ast(1)). \]
\end{prop}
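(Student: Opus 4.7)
The proof is a Mazur-style tangent-obstruction analysis carried out relative to $R_S^{\square,\psi}$, followed by a Poitou-Tate simplification. First I would take a minimal $R_S^{\square,\psi}$-algebra presentation of $R_{F,S}^{\square,\psi}$: by definition of ``minimal'', $g$ is the $\F$-dimension of the relative cotangent space $\mathfrak{m}/(\mathfrak{m}^2 + \mathfrak{m}_{R_S^{\square,\psi}} R_{F,S}^{\square,\psi})$, with $\mathfrak{m} = \mathfrak{m}_{R_{F,S}^{\square,\psi}}$, while $r$ is the minimal number of generators of the kernel ideal.

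The relative tangent space is computed cohomologically. A tangent vector of $R_{F,S}^{\square,\psi}$ is a framed deformation $(V_\varepsilon, \{\beta_v^\varepsilon\})$ of $\rhobar$ with determinant $\psi\epsilon_p$ to $\F[\varepsilon]/(\varepsilon^2)$: concretely a pair $(c, \{X_v\}_{v\in S}) \in Z^1(G_{F,S}, \Ad^0) \times \bigoplus_{v\in S} \Ad(\F)$ modulo the diagonal strict-equivalence action of $\Ad(\F)$. The comparison map to $\bigoplus_{v\in S} Z^1(G_v, \Ad^0)$, which is the tangent space of $R_S^{\square,\psi}$, sends $(c, \{X_v\})$ to $(c|_{G_v} + d_{X_v})_v$, and a direct kernel computation expresses $g$ in terms of $h^1_S(G_{F,S}, \Ad^0)$ together with global/local $h^0$'s of $\Ad^0$ and the cardinality $|S|$. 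For $r$, I would apply Mazur's obstruction calculus relatively (cf.\ \cite{MazurDefGalRep}*{\S 1.6}): given a small square-zero surjection $B \twoheadrightarrow A$ in $\CNLO$ and a framed deformation over $A$ whose image in $R_S^{\square,\psi}(A)$ is assumed to extend to $B$, the obstruction to a global lift lies in a relative $H^2$, bounded by $h^2_S(G_{F,S}, \Ad^0)$ together with an appropriate local correction; by Poitou-Tate global duality $h^2_S(G_{F,S}, \Ad^0) = h^1_S(G_{F,S}, (\Ad^0)^\ast(1))$.

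Combining the expression for $g$ with the bound on $r$, applying the Greenberg-Wiles product formula with the zero local conditions at $S$ to both $\Ad^0$ and $(\Ad^0)^\ast(1)$, and inserting global Tate duality together with the Euler-Poincar\'e characteristics (contributing $-3[F_v:\Q_p]$ at $v | p$, zero at finite $v \nmid p$, and $h^0(G_v, \Ad^0) - 3$ at each archimedean place, which equals $-2$ at each real place by oddness of $\rhobar$), all of the $[F:\Q]$-dependent terms cancel and one arrives at the desired inequality $g - r \ge |S| - 1 - h^0(G_{F,S}, (\Ad^0)^\ast(1))$.

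The main obstacle is the bookkeeping in this cancellation: one must verify that the framing contributions at the tangent-space level and the local corrections at the obstruction level combine to produce exactly the coefficient $|S|$ and the shift $-1$, without leaving a stray multiple of $[F:\Q]$ or other ramification data. Since only an inequality is required, some slack is permissible, and the argument then follows the well-trodden pattern of such presentation results over the local deformation ring (cf.\ \cite{KisinFinFlat} and the analogous computation in \cite{KW2}).
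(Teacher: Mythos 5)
Your proposal follows the same broad path as the paper --- a relative presentation over the local framed lifting ring controlled by a tangent/obstruction analysis, followed by duality and Euler--Poincar\'e bookkeeping --- so the approaches are essentially the same. Two remarks are worth making. First, where you re-derive the presentation bound from scratch via Mazur's obstruction calculus, the paper instead cites \cite{KisinModof2}*{Proposition 4.1.4}, which gives $g-r\ge \dim_\F\coker\phi-\dim_\F\ker\phi-h^2_S(G_{F,S},\Ad^0)$ with $\phi$ the map on reduced cotangent spaces. The $-\dim_\F\ker\phi$ correction is the part your sketch treats only vaguely (as a ``local correction''): this kernel is dual to the cokernel of the restriction of tangent spaces, i.e.\ local framed tangent directions that are not hit globally, and if one simply sets $g=\dim_\F\coker\phi$ and bounds $r$ by $h^2_S$ one gets a bound that is in general too strong and hence false. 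Your attempt would need to make this correction explicit to be complete, which is why the paper outsources the presentation bound to Kisin. Second, where you pass from $h^2_S$ to $h^1_S((\Ad^0)^\ast(1))$ and then plan to apply the Greenberg--Wiles formula, the paper uses the tail of the Poitou--Tate exact sequence directly to write $h^2_S(G_{F,S},\Ad^0)=h^2(G_{F,S},\Ad^0)-\sum_{v\in S}h^2(G_v,\Ad^0)+h^0(G_{F,S},(\Ad^0)^\ast(1))$, after which the archimedean contributions cancel automatically (so one never needs the explicit value $h^0(G_v,\Ad^0)=1$ at real places). Your Greenberg--Wiles route is arithmetically equivalent but involves one more step of duality and does require that explicit archimedean input; either is fine, and your identification of $\chi(G_v,\Ad^0)=0$ for finite $v\nmid p$ is in fact the correct value.
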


\begin{proof}
Let
	\[ \phi:\mathfrak{m}_{R_S^{\square,\psi}}/((\mathfrak{m}_{R_S^{\square,\psi}})^2,\varpi_E)\longrightarrow
	\mathfrak{m}_{R_{F,S}^{\square,\psi}}/((\mathfrak{m}_{R_{F,S}^{\square,\psi}})^2,\varpi_E)\]
denote the map on reduced cotangent spaces induced from $R_S^{\square,\psi}\rightarrow R_{F,S}^{\square,\psi}$. By \cite{KisinModof2}*{Proposition 4.1.4}, we have a presentation
	\[ R_{F,S}^{\square,\psi} \cong R_S^{\square,\psi}[[x_1,\ldots,x_g]]/(f_1,\ldots,f_r) \]
where $g-r \ge \dim_\F\coker\phi - \dim_\F \ker\phi - h_S^2(G_{F,S},\Ad^0)$. One can show, cf. proof of \cite{KisinModof2}*{Lemma 4.1.5},
	\[ \dim_\F\mathfrak{m}_{R_{F,S}^{\square,\psi}}/((\mathfrak{m}_{R_{F,S}^{\square,\psi}})^2,\varpi_E)
	=4|S|+h^1(G_{F,S},\Ad^0)-h^0(G_{F,S},\Ad^0)-1\]
and
	\[ \dim_\F\mathfrak{m}_{R_S^{\square,\psi}}/((\mathfrak{m}_{R_S^{\square,\psi}})^2,\varpi_E)
	=4|S|+\sum_{v\in S}(h^1(G_v,\Ad^0)-h^0(G_v,\Ad^0)-1).\]
So,
	\begin{align}\dim_\F\mathrm{coker}\phi-\dim_\F\mathrm{ker}\phi
	&=\dim_\F\mathfrak{m}_{R_{F,S}^{\square,\psi}}/((\mathfrak{m}_{R_{F,S}^{\square,\psi}})^2,\varpi_E)
	-\dim_\F\mathfrak{m}_{R_S^{\square,\psi}}/((\mathfrak{m}_{R_S^{\square,\psi}})^2,\varpi_E)\notag\\
	&=|S|-1+h^1(G_{F,S},\Ad^0)-h^0(G_{F,S},\Ad^0)\notag\\
	&\hspace{1.0cm}-\sum_{v\in S}(h^1(G_v,\Ad^0)-h^0(G_v,\Ad^0)).
	\label{tangentdims}
	\end{align}
The Poitou-Tate sequence implies
	\[h^2_S(G_{F,S},\Ad^0)
	=h^2(G_{F,S},\Ad^0)-\sum_{v\in S}h^2(G_v,\Ad^0)+h^0(G_{F,S},(\Ad^0)^\ast(1)).\]
Combining this with (\ref{tangentdims}), we have
	\begin{align} g-r & \ge \dim_\F\mathrm{coker}\phi-\dim_\F\mathrm{ker}\phi
		-h_S^2(G_{F,S},\Ad^0)\notag\\
	& \ge |S|-1-h^0(G_{F,S},(\Ad^0)^*(1))-\chi(G_{F,S},\Ad^0)
	+\sum_{v\in S}\chi(G_v,\Ad^0), 
	\label{differencedim}
	\end{align}
where $\chi(G_{F,S},\Ad^0)$ and $\chi(G_v,\Ad^0)$ denote the global and local Euler characteristics, respectively, as $\F$-vector spaces. 

For $v|\infty$ we have $h^1(G_v,\Ad^0)=h^2(G_v,\Ad^0)$ since $G_v$ is cyclic,  and so $\chi(G_v,\Ad^0)=h^0(G_v,\Ad^0)$. For $v$ finite the local Euler characteristic formula gives $\chi(G_v,\Ad^0)=1$, when $v\nmid p$, and $\chi(G_v,\Ad^0)=-3[F_v:\Q]$, when $v|p$. The global Euler characteristic formula gives $\chi(G_{F,S},\Ad^0) = -3[F:\Q]+\sum_{v|\infty}h^0(G_v,\Ad^0)$. Equation (\ref{differencedim}) then becomes
	\[ g-r \ge |S| - 1 -h^0(G_{F,S}, (\Ad^0)^\ast(1)).\]
\end{proof}

We will use the above lemma to show that a certain quotient of $R_{F,S}^\psi$ (actually a quotient of $R_{F,S}$ tensored with an certain Iwasawa algebra) has an appropriate presentation in order to apply the connectivity result \ref{Connectivity}.

\subsubsection{}\label{QuotientPres}

Let $\Sigma$ be a fixed set of places not containing any places above $p$ or $\infty$. For each $v \in \Sigma$ fix a continuous character $\gamma_v : G_v \rightarrow \mathcal{O}^\times$ such that $\gamma_v(I_v)$ is finite and prime to $p$, and such that $\gamma_v^2 = \psi|_{G_v}$ (enlarging $\mathcal{O}$ if necessary). We further assume the following.
	\begin{itemize}
	\item[-] For $v|p$, $\rhobar|_{G_v} \cong \left(\begin{array}{cc} \ast & \ast \\ & \overline{\chi}_v \end{array} \right)$, where we fix the choice $\overline{\chi}_v$ in the case that $\rhobar|_{G_v}$ is the direct sum of two disctinct characters.
	\item[-] For $v\in \Sigma$, $\overline{\rho}|_{G_v} \cong
		\left(\begin{array}{cc} \overline{\gamma}_v\overline{\epsilon_p} & \ast \\
		& \overline{\gamma}_v \end{array} \right)$.
	\item[-] For any archimedean $v$, $\psi|_{G_v} = 1$.
	\item[-] $\rhobar$ is unramified outside $\Sigma\cup\{v|p\}\cup\{v|\infty\}$.
	\end{itemize}
Fix a finite set of places $S_\mathrm{ur}$ disjoint from $\Sigma\cup\{v|p\}\cup\{v|\infty\}$ and set $S = S_{\mathrm{ur}} \cup \Sigma\cup\{v|p\}\cup\{v|\infty\}$. For each $v|p$, let $\eta_v$ be a character of the torsion subgroup of $G_v^\mathrm{ab}(p)$. Then $\eta_v$ correpsonds to a unique minimal prime of $\Lambda(G_v)= \mathcal{O}[[G_v^\mathrm{ab}(p)]]$, which we denote by $\mathfrak{q}_{\eta_v}$. We denote the quotient $\Lambda(G_v)/\mathfrak{q}_{\eta_v}$ by $\Lambda(G_v,\eta_v)$. Note that giving the tuple $(\eta_v)_{v|p}$ is equivalent to giving a character $\eta$ on the torsion subgroup of $\prod_{v|p} G_v^\mathrm{ab}(p)$. For each $v \in S$, let $\overline{R}_v^{\square,\psi}$ denote the $\CNLO$-algebra given by
	\begin{itemize}
		\item[-] $\overline{R}_v^{\square,\psi} = R_{\Lambda(G_v,\eta_v)}^{\triangle,\psi}$ as in \ref{triliftringdef}, for $v|p$;
		\item[-] $\overline{R}_v^{\square,\psi} = R_v^{\square,\gamma\text{-st}}$ as in \ref{SemiStlp}, for $v \in \Sigma$,
		\item[-] $\overline{R}_v^{\square,\psi} = R_v^{\square,-1}$ as in \ref{OddLifts}, for $v|\infty$,
		\item[-] $\overline{R}_v^{\square,\psi} = R_v^{\square,\psi,\mathrm{ur}}$ as in part (2) of \ref{FixedDetlp}, if $v\in S_{\mathrm{ur}}$.
	\end{itemize}
Let $\overline{R}_S^{\square,\psi} = \hat{\otimes}_{v\in S} \overline{R}_v^{\square,\psi}$, $\Lambda(G_p) = \hat{\otimes}_{v|p}\Lambda(G_v)$, and $\Lambda(G_p,\eta) = \hat{\otimes}_{v|p}\Lambda(G_v,\eta_v)$. Note that $\overline{R}_S^{\square,\psi}$ is a quotient of
	\[ (\hat{\otimes}_{v|p} (R_v^{\square}\hat{\otimes} \Lambda(G_v,\eta_v)))
	\hat{\otimes}_{v\in S,v \nmid p} R_v^{\square,\psi} \cong
	(\hat{\otimes}_{v\in S} R_v^{\square})\hat{\otimes} \Lambda(G_p,\eta),
	 \]
and that $\Lambda(G_p,\eta)$ represents the functor on $\CNLO$ that sends a $\CNLO$-algebra $A$ to the set of tuples $(\chi_v)_{v|p}$, where each $\chi_v$ is an $A$-valued character of $G_v$ that reduces to $\overline{\chi}_v$ modulo the maximal ideal of $A$, and whose restriction to the $p$-power part of the torsion subgroup of $G_v^\mathrm{ab}$ is eqaul to $\eta_v$.

\begin{prop}\label{locdefring}

If $p=2$, assume that for each $v|2$, either $F_v$ contains a $4$-th root of unity or $[F_v:\Q_2] \ge 3$. Assume also that $\rhobar|_{G_v}$ is either the trivial representation or that its image has order $p$ for each $v|p$. Then $\overline{R}_S^{\square,\psi}$ is an $\mathcal{O}$-flat domain of dimension $1+3|S|+[F:\Q]$.

\end{prop}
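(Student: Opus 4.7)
The plan is to apply Proposition \ref{TensorCNLO}(2) inductively to the factors of $\overline{R}_S^{\square,\psi} = \hat{\otimes}_{v \in S} \overline{R}_v^{\square,\psi}$. This reduces the proof to two verifications: that each local factor is $\mathcal{O}$-flat with geometrically integral generic fiber, and that the resulting total dimension equals $1 + 3|S| + [F:\Q]$.

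First I would record the structural input on each local factor. For $v\mid p$, the standing hypothesis on $\rhobar|_{G_v}$ and $F_v$ lets us invoke \ref{triliftringdomain} to get that $R_{\Lambda(G_v,\eta_v)}^{\triangle,\psi}$ is an $\mathcal{O}$-flat domain of dimension $4 + 2[F_v:\Q_p]$; the generic fiber is formally smooth at a dense open set of closed points by \ref{triliftringsmpts}, which combined with the standing assumption that $E$ contains all embeddings of $F_v$ yields geometric integrality of the generic fiber. For $v\in\Sigma$, \ref{SemiStlp} gives an $\mathcal{O}$-flat reduced ring with formally smooth generic fiber over $E$ and integral special fiber, hence a domain of absolute dimension $4$ with geometrically integral generic fiber. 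For $v\mid\infty$, \ref{OddLifts} gives a complete intersection domain of dimension $3$ with formally smooth, geometrically integral generic fiber. For $v\in S_\mathrm{ur}$, \ref{FixedDetlp}(2) gives $R_v^{\square,\psi,\mathrm{ur}}$ formally smooth over $\mathcal{O}$ of relative dimension $3$, hence a regular domain of absolute dimension $4$ with geometrically integral generic fiber.

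Then I would iterate \ref{TensorCNLO}(2), which at each step takes two $\mathcal{O}$-flat $\CNLO$-algebras with geometrically integral generic fibers and produces another such, to conclude after finitely many steps that $\overline{R}_S^{\square,\psi}$ is an $\mathcal{O}$-flat domain. For the dimension, I would use that for $\mathcal{O}$-flat local $\CNLO$-algebras $R_1, R_2$ one has $\dim(R_1 \hat{\otimes}_\mathcal{O} R_2) = \dim R_1 + \dim R_2 - 1$ (the generic fibers are finite-type $E$-algebras whose dimensions add, and we pick up one extra dimension from $\varpi_E$). Iterating gives
\[
\dim \overline{R}_S^{\square,\psi} \;=\; \sum_{v\in S} \dim \overline{R}_v^{\square,\psi} \;-\; (|S|-1),
\]
and substituting the individual dimensions together with $\sum_{v\mid p}[F_v:\Q_p] = [F:\Q]$ and $|\{v\mid\infty\}| = [F:\Q]$ produces $1 + 3|S| + [F:\Q]$.

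The delicate step is checking geometric integrality of the generic fiber of $R_{\Lambda(G_v,\eta_v)}^{\triangle,\psi}$ for $v\mid p$: the domain property follows from \ref{triliftringdomain}, but geometric integrality requires that the generic fiber remain irreducible after base change to $\overline{E}$. Since we have formally smooth completed local rings on a dense open subset (by \ref{triliftringsmpts}, applied in conjunction with \ref{opensetiso} and \ref{smoverborel}) and all relevant data (embeddings of $F_v$, the torsion character $\eta_v$) is $E$-rational, this should go through, but it is the most technical point of the argument.
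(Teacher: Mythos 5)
Your overall framework matches the paper's: express $\overline{R}_S^{\square,\psi}$ as a completed tensor product of the local rings, invoke Proposition \ref{TensorCNLO}(2) iteratively, and sum dimensions (your absolute-dimension identity $\dim(R_1\hat\otimes_\mathcal{O} R_2) = \dim R_1 + \dim R_2 - 1$ is exactly the relative-dimension additivity used in the paper, and your arithmetic checks out). The structural inputs on each local factor are also the same ones the paper cites.

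The point where you diverge is the one you yourself flag as delicate: establishing that $\overline{R}_v^{\square,\psi}[1/p]$ is \emph{geometrically} integral, especially for $v \mid p$. Your proposed argument --- formal smoothness on a dense open subset of the generic fiber via \ref{triliftringsmpts} together with ``all data is $E$-rational'' --- does not actually close this gap. Generic smoothness over $E$ gives geometric reducedness, but not geometric irreducibility: a finite nontrivial extension field of $E$ is smooth over $E$ and a domain, yet not geometrically integral. To rule out such behavior you would need to show that $E$ is separably closed in the fraction field of $\overline{R}_v^{\square,\psi}[1/p]$, which the $E$-rationality of the embeddings and of $\eta_v$ does not immediately provide, and which you do not address.

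The paper sidesteps this entirely with a base-change argument: for any finite $E'/E$ with ring of integers $\mathcal{O}'$, the ring $\overline{R}_v^{\square,\psi}\otimes_\mathcal{O}\mathcal{O}'$ is identified (via \ref{RepOnTop} and the functoriality of all the quotient constructions) with the corresponding lifting ring built over $\mathcal{O}'$ for $\rhobar|_{G_v}\otimes_\F\F'$, and one simply reapplies \ref{triliftringdomain}, \ref{SemiStlp}, and \ref{OddLifts} over $\mathcal{O}'$ to conclude that this is a domain. Since this holds for every finite $E'/E$, the generic fiber $\overline{R}_v^{\square,\psi}[1/p]$ is geometrically integral by definition. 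This is the more robust route: it requires no smoothness input at all and applies uniformly at all $v\in S$. You should replace your sketch at that step with this base-change argument; everything else in your proposal is correct.
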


\begin{proof}

By \ref{triliftringdomain}, \ref{SemiStlp}, \ref{OddLifts}, and part (2) of \ref{FixedDetlp}, each of the $\overline{R}_v^{\square,\psi}$ an $\mathcal{O}$-flat domain of relative dimension
	\begin{itemize}
		\item[-] $3+2[F_v:\Q_p]$ if $v|p$,
		\item[-] $3$ if $v \in \Sigma$,
		\item[-] $2$ if $v|\infty$,
		\item[-] $3$ if $v \in S_\mathrm{ur}$,
	\end{itemize}
and so $\overline{R}_S^{\square,\psi}$ is $\mathcal{O}$-flat of relative dimension
	\[ \sum_{v|p} 3+2[F_v:\Q_p] + \sum_{v\in\Sigma\cup S_{\mathrm{ur}}} 3 + \sum_{v|\infty} 2
	= 3|S|+[F:\Q_p].
	\]
To see that it is a domain, consider a finite extension $E'/E$ with ring of integers $\mathcal{O}'$ and residue field $\F'$. It follows from \ref{RepOnTop} that $R_v^\square\otimes_\mathcal{O} \mathcal{O}'$ is the universal lifting ring on $\CNL_{\mathcal{O}'}$ for the representation $\rhobar|_{G_v}\otimes_\F \F'$, and it follows easily that $\overline{R}_v^{\square,\psi}\otimes_\mathcal{O} \mathcal{O}'$ are the corresponding quotients on the category $\CNL_{\mathcal{O'}}$. Applying \ref{triliftringdomain}, \ref{SemiStlp}, and \ref{OddLifts} to $\overline{R}_v^{\square,\psi} \otimes_\mathcal{O} \mathcal{O}'$, we conclude that $\overline{R}_v^{\square,\psi}[1/p]$ is geometrically integral. Then $\overline{R}_S^{\square,\psi}$ is a domain by \ref{TensorCNLO}. 
	\end{proof}

\subsubsection{}\label{globalquotient}

Now fix a finite set of primes $Q$ disjoint from $S$. Letting $R_S^\square = \hat{\otimes}_{v\in S} R_v^\square$, define 
	\[ \overline{R}_{F,S\cup Q}^\square = R_{F,S\cup Q}^\square \hat{\otimes}_{R_S^\square}
		\overline{R}_S^{\square,\psi}
	\]
and
	\[ \overline{R}_{F,S\cup Q}^{\square,\psi} = R_{F,S\cup Q}^{\square,\psi} \hat{\otimes}_{R_S^\square}
		\overline{R}_S^{\square,\psi}.
	\]
Note that $\overline{R}_{F,S\cup Q}^\square$ and $\overline{R}_{F,S\cup Q}^{\square,\psi}$ are quotients of $R_{F,S\cup Q}^\square \hat{\otimes}\Lambda(G_p,\eta)$. We then define $\overline{R}_{F,S\cup Q}^\psi$ to be the image of $R_{F,S\cup Q}^\psi\hat{\otimes}\Lambda(G_p,\eta)$ under
	\[ R_{F,S\cup Q}^{\psi}\hat{\otimes}\Lambda(G_p,\eta) \longrightarrow 
	R_{F,S\cup Q}^{\square,\psi}\hat{\otimes}\Lambda(G_p,\eta)
	 \longrightarrow \overline{R}_{F,S\cup Q}^{\square,\psi}.
	\]
If $E'/E$ is finite with ring of integers $\mathcal{O}'$, a local $\calO$-algebra morphism $R_{F,S\cup Q}\hat{\otimes}\Lambda(G_p) \rightarrow \mathcal{O}'$ factors through $\overline{R}_{F,S\cup Q}^\psi$ if and only if the corresponding deformation $V_{\mathcal{O}'}$ and tuple of characters $(\chi_v)_{v|p}$ satisfies
\begin{itemize}
	\item[-] $\det V_{\mathcal{O}'} = \psi\epsilon_p$;
	\item[-] for each $v|p$, there is a $G_v$-stable line $L$ in $V_{\mathcal{O}'}$, and the action of $G_v$ on $V_{\mathcal{O'}}/L$ is given by $\chi_v$;
	\item[-] for each $v|p$, the restriction of $\chi_v$ to the $p$-power torsion subgroup of $G_v^\mathrm{ab}$ is equal to $\eta_v$;
	\item[-] for each $v \in \Sigma$, $V_{\mathcal{O}'}|_{G_v}$ is an extension of $\gamma_v$ by $\gamma_v\epsilon_p$;
	\item[-] for each archimedean $v$, $V_{\mathcal{O}'}|_{G_v}$ is not the trivial representation,
	\item[-] for each $v\in S_\mathrm{ur}$, $V_{\mathcal{O}'}|_{G_v}$ is unramified.
\end{itemize}
Note the second last condition is redundant, as it is implied by the first. However, we will later have to consider the $\overline{R}_S^{\square,\psi}$-map $\overline{R}_{F,S\cup Q}^\square \rightarrow \overline{R}_{F,S\cup Q}^{\square,\psi}$, and the oddness condition is not forced on $\overline{R}_{F,S\cup Q}^\square$. Also note that if we had omitted the primes $S_\mathrm{ur}$ from $S$ entirely, the resulting ring $\overline{R}_{F,S\cup Q}^\psi$ would have been the same. We include them because it will be useful later in \S \ref{LocRequalsT} to ensure that the local framed deformation ring $\overline{R}_S^{\square,\psi}$ surjects onto a certain Hecke algebra, as well as to ensure that a certain group action is free.

If $A$ is a $\CNLO$-algebra, and $x,x'\in \Spf (R_{F,S\cup Q}^{\square,\psi}\hat{\otimes}\Lambda(G_p,\eta))(A)$ are two $A$-points with the same image in $\Spf (R_{F,S\cup Q}^\psi\hat{\otimes}\Lambda(G_p,\eta))(A)$, i.e. give rise to the same deformation, then $x \in \Spf \overline{R}_{F,S\cup Q}^{\square,\psi}(A)$ if and only if $x' \in \Spf \overline{R}_{F,S\cup Q}^{\square,\psi}(A)$. It follows that $\overline{R}_{F,S\cup Q}^\psi \rightarrow \overline{R}_{F,S\cup Q}^{\square,\psi}$ is formally smooth of relative dimension $4|S| -1$. 

The following proposition will allow us to invoke \ref{Connectivity}, which is crucial to the Sinner-Wiles strategy. 

\begin{prop}\label{CMPres}
Assume that $\rhobar|_{G_v}$ is either the trivial representation or that its image has order $p$ for each $v|p$. If $p=2$, we also assume that for each $v|2$, either $F_v$ contains a $4$th root of unity or $[F_v:\Q_2] \ge 3$.  There is a presentation
	\[ \overline{R}_{F,S}^\psi \cong A/(f_1,\ldots,f_m) \]
with $A$ a domain, and $\dim A - m \ge 1 + [F:\Q] - h^0(G_{F,S},(\Ad^0)^\ast(1))$.
\end{prop}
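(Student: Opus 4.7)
The plan is to combine Proposition~\ref{PresOverLoc}, which gives a presentation of $R_{F,S}^{\square,\psi}$ over $R_S^{\square,\psi}$ with a controlled number of relations, together with Proposition~\ref{locdefring}, which under our hypotheses makes the local quotient $\overline{R}_S^{\square,\psi}$ an $\calO$-flat domain of known dimension. The presentation I want will be obtained by base-changing that of \ref{PresOverLoc} along $R_S^{\square,\psi}\twoheadrightarrow\overline{R}_S^{\square,\psi}$, and then descending from the framed to the unframed side via the formal smoothness of $\overline{R}_{F,S}^\psi\to\overline{R}_{F,S}^{\square,\psi}$ of relative dimension $4|S|-1$ recorded at the end of \ref{globalquotient}.

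First, I would fix a presentation
\[ R_{F,S}^{\square,\psi}\cong R_S^{\square,\psi}[[x_1,\ldots,x_g]]/(f_1,\ldots,f_r) \]
with $g-r\ge|S|-1-h^0(G_{F,S},(\Ad^0)^\ast(1))$ from \ref{PresOverLoc}. Since $\overline{R}_S^{\square,\psi}$ is a quotient of $R_S^{\square,\psi}$, and the fixed-determinant condition gives
\[ \overline{R}_{F,S}^{\square,\psi}=R_{F,S}^{\square,\psi}\hat{\otimes}_{R_S^\square}\overline{R}_S^{\square,\psi}=R_{F,S}^{\square,\psi}\hat{\otimes}_{R_S^{\square,\psi}}\overline{R}_S^{\square,\psi}, \]
base change yields
\[ \overline{R}_{F,S}^{\square,\psi}\cong\overline{R}_S^{\square,\psi}[[x_1,\ldots,x_g]]/(\bar f_1,\ldots,\bar f_r), \]
where $\bar f_i$ is the image of $f_i$. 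The formal smoothness of $\overline{R}_{F,S}^\psi\to\overline{R}_{F,S}^{\square,\psi}$ of relative dimension $4|S|-1$ then gives an isomorphism $\overline{R}_{F,S}^{\square,\psi}\cong\overline{R}_{F,S}^\psi[[y_1,\ldots,y_{4|S|-1}]]$, so $\overline{R}_{F,S}^\psi$ is recovered as the quotient of $\overline{R}_{F,S}^{\square,\psi}$ by the ideal generated by $y_1,\ldots,y_{4|S|-1}$. Lifting these $y_i$ to elements $\tilde y_i\in\overline{R}_S^{\square,\psi}[[x_1,\ldots,x_g]]$ gives a presentation
\[ \overline{R}_{F,S}^\psi\cong\overline{R}_S^{\square,\psi}[[x_1,\ldots,x_g]]/(\bar f_1,\ldots,\bar f_r,\tilde y_1,\ldots,\tilde y_{4|S|-1}). \]

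Now set $A=\overline{R}_S^{\square,\psi}[[x_1,\ldots,x_g]]$. Under the hypotheses of the proposition, \ref{locdefring} applies to make $\overline{R}_S^{\square,\psi}$ an $\calO$-flat domain of dimension $1+3|S|+[F:\Q]$, so $A$ is a domain of dimension $1+3|S|+[F:\Q]+g$. The presentation above has $m=r+(4|S|-1)$ relations, and a direct count gives
\[ \dim A - m = 2+[F:\Q]-|S|+(g-r)\ge 1+[F:\Q]-h^0(G_{F,S},(\Ad^0)^\ast(1)), \]
which is exactly the bound claimed. The substantive inputs (the cohomological control on the number of relations from \ref{PresOverLoc}, the integrality and dimension of $\overline{R}_S^{\square,\psi}$ from \ref{locdefring}, and the formal smoothness of framing) are already in hand; the rest is a careful bookkeeping of base change and descent, and I do not anticipate a genuine obstacle beyond verifying that the base-changed and descended presentations really have the advertised number of relations.
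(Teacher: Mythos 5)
Your proof is correct and follows essentially the same route as the paper: base-change the presentation of \ref{PresOverLoc} along $R_S^{\square,\psi}\twoheadrightarrow\overline{R}_S^{\square,\psi}$, use \ref{locdefring} to know the resulting power-series ring $A=\overline{R}_S^{\square,\psi}[[x_1,\ldots,x_g]]$ is a domain of the right dimension, and cut by $4|S|-1$ lifted coordinates $\tilde y_i$ coming from the formal smoothness of $\overline{R}_{F,S}^\psi\to\overline{R}_{F,S}^{\square,\psi}$. The only difference is cosmetic — you make explicit the step of lifting the $y_i$ and adding them to the relation ideal, which the paper phrases as a reduction ("it suffices to show\ldots"); the arithmetic in both cases is identical.
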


\begin{proof}
Since $\overline{R}_{F,S}^\psi \rightarrow \overline{R}_{F,S}^{\square,\psi}$ is formally smooth of relative dimension $4|S| -1$, $\overline{R}_{F,S}^{\square,\psi}$ is isomorphic to a power series over $\overline{R}_{F,S}^\psi$ in $4|S|-1$ variables, and it suffices to show that there is a presentation
	\[ \overline{R}_{F,S}^{\square,\psi} \cong A/(f_1,\ldots,f_k) \]
with $A$ a domain and $\dim A - k \ge 4|S| + [F:\Q] - h^0(G_{F,S}, (\Ad^0)^\ast(1))$. By \ref{locdefring}, $\overline{R}_S^{\square,\psi}$ is a domain of dimension $1+3|S|+[F:\Q]$. The presentation in \ref{PresOverLoc} yields a presentation
	\[ 
		\overline{R}_{F,S}^{\square,\psi} \cong 
		\overline{R}_S^{\square,\psi}[[x_1,\ldots,x_g]]/(f_1,\ldots,f_r)
	\]
with $g - r \ge |S| - 1 - h^0(G_{F,S},(\Ad^0)^\ast(1))$. Take $A = \overline{R}_S^{\square,\psi}[[x_1,\ldots,x_g]]$. \end{proof}

\subsubsection{}\label{GlobalDefTwists}

Let $Q$ be a set of places of $F$ disjoint from $S$. Let $F_Q^S$ denote the maximal abelian pro-$p$ extension of $F$ unramified outside of $Q$ and split at primes in $S$. Note that $F_Q^S/F$ is finite, since $S$ contains all the primes above $p$. Let $G_Q = \Gal(F_Q^S/F)$ and let $G_Q^\ast$ denote the diagonalizable $\CNLO$-group as in \ref{diaggroups}. For any $\CNLO$-algebra $A$, $G_Q^\ast(A)$ is the subgroup $\Hom(G_Q,A^\times)$ that reduce to the trivial morphism modulo $\mathfrak{m}_A$.

There is an action of $G_Q^\ast$ on $\Spf R_{F,S\cup Q}^{\square} $ given as follows. Let $A$ be a $\CNLO$-algebra, and let $V_A$ be a deformation of $V_\F$ to $A$. If $\chi \in G_Q^\ast(A)$, viewing $\chi$ as a character of $G_{F,S\cup Q}$, we have a deformation $V_A\otimes\chi$. In this way we get an action of $G_Q^\ast$ on $\Spf R_{F,S\cup Q} $. This action extends to an action on $\Spf (R_{F,S\cup Q}^\square\hat{\otimes}\Lambda(G_p,\eta))$ by $(V_A, \{\beta_v\}_{v\in S},\{\chi_v\}_{v|p}) \mapsto (V_A\otimes \chi, \{\beta_v\}_{v\in S},\{\chi_v\}_{v|p})$. For any $v \in S$, since $F_Q^S$ is split at $v$, the lift of $\rhobar|_{G_v}$ given by $V_A|_{G_v}$ and $\beta_v$ is equal to the lift given by $(V_A\otimes\chi)|_{G_v}$ and $\beta_v$. Thus, the action of $G_Q^\ast$ commutes with the morphism $\Spf (R_{F,S\cup Q}^\square\hat{\otimes}\Lambda(G_p,\eta)) \rightarrow \Spf (R_S^{\square,\psi}\hat{\otimes}\Lambda(G_p,\eta))$. Letting $\overline{R}_S^{\square,\psi}$ and $\overline{R}_{F,S\cup Q}^\square$ be as in \ref{QuotientPres}, we get an action of $G_Q^\ast$ on $\Spf \overline{R}_{F,S\cup Q}^\square$ that commutes with $\Spf \overline{R}_{F,S\cup Q}^\square \rightarrow \Spf \overline{R}_S^{\square,\psi}$. Note that the map $(V_A,\{\beta_v\}_{v \in S},\{\chi_v\}_{v|p}) \mapsto \det V_A (\psi\epsilon_p)^{-1}$ determines a morphism $\delta_Q : \Spf \overline{R}_{F,S\cup Q}^\square \rightarrow G_Q^\ast$, such that for any $\CNL_\mathcal{O}$-algebra $A$, $g\in G_Q^\ast(A)$ and $x\in \Spf\overline{R}_{F,S\cup Q}^\square(A)$, we have $\delta_Q(gx) = g^2\delta_Q(x)$. The natural surjection $\overline{R}_{F,S\cup Q}^\square \rightarrow \overline{R}_{F,S\cup Q}^{\square,\psi}$ identifies $\Spf\overline{R}_{F,S\cup Q}^{\square,\psi}$ with the closed formal sub-scheme of $\Spf\overline{R}_{F,S\cup Q}^\square$ given by $\delta_Q = 1$.

If $p=2$, we denote by $G_{Q,2}^\ast$ the $2$-torsion subgroup of $G_Q^\ast$, i.e. the diagonalizable $\CNLO$-group $(G_Q/2 G_Q)^\ast$. For $\chi \in G_{Q,2}^\ast(A)$, we have $\chi^2 = 1$, hence the action of $G_Q^\ast$ on $\Spf R_{F,S\cup Q}^\square$, respectively on $\Spf \overline{R}_{F,S\cup Q}^\square$, induces an action of $G_{Q,2}^\ast$ on $\Spf R_{F,S\cup Q}^{\square,\psi}$, respectively on $\Spf \overline{R}_{F,S\cup Q}^{\square,\psi}$.

If $p=2$ and $\overline{\rho}$ has solvable image, then there is a unique quadratic extension $L/F$ such that $\rhobar|_{G_L}$ is abelian, since in this case the image of $\rhobar$ has order twice an odd number.

\begin{lem}\label{FreeTwistAction}
Let $p=2$. Assume that if $\rhobar$ has solvable image, then some $v\in S$ does not split in $L/F$, where $L$ is the unique quadratic extension for which $\rhobar|_{G_L}$ is abelian. Then if $Q$ is a set of places disjoint from $S$, the action of $G_Q^\ast$ on $\Spf \overline{R}_{F,S\cup Q}^\square$ is free.
\end{lem}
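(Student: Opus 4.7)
My plan is to verify freeness at the level of $A$-points directly, following the criterion of \ref{CNLOgroups}. Fix $A\in\CNLO$, $\chi\in G_Q^\ast(A)$, and a point $x\in\Spf\overline{R}^\square_{F,S\cup Q}(A)$ satisfying $\chi\cdot x=x$; the goal is to conclude $\chi=1$. By construction of $\overline{R}^\square$, the point $x$ corresponds to a lift $\rho_x\colon G_{F,S\cup Q}\to\GL_2(A)$ satisfying the local conditions of \ref{globalquotient}, together with characters $\{\chi_v\}_{v\mid p}$, and the action of $\chi$ sends this data to $(\chi\rho_x,\{\chi_v\})$ with unchanged framings.

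A preliminary step is to check that the action is actually well-defined on $\Spf\overline{R}^\square_{F,S\cup Q}$, not merely on the ambient $\Spf(R^\square_{F,S\cup Q}\hat\otimes\Lambda(G_p,\eta))$. Since $F_Q^S/F$ is split at every $v\in S$ by construction, we have $\chi|_{G_v}=1$ for each such $v$. Consequently the twist $\chi\rho_x$ agrees with $\rho_x$ on every $G_v$ for $v\in S$, so each of the local conditions cutting out $\overline{R}^\square_{F,S\cup Q}$ (existence of a $G_v$-stable line with prescribed quotient character $\chi_v$ at $v\mid p$, the $\gamma_v$-semistable shape at $v\in\Sigma$, oddness at infinite places, and unramifiedness at $v\in S_\mathrm{ur}$) is preserved under the twist, and the restricted action of $G_Q^\ast$ is defined.

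Since framed deformations are by definition literal lifts (cf.\ \ref{DefDefs}), the equation $\chi\cdot x=x$ becomes the identity $\chi\rho_x=\rho_x$ of homomorphisms $G_{F,S\cup Q}\to\GL_2(A)$. Evaluating at any $g\in G_{F,S\cup Q}$ and using invertibility of $\rho_x(g)$ yields $\chi(g)\cdot I_2=I_2$, hence $\chi(g)=1$ for every $g$, i.e.\ $\chi=1$. This completes the freeness argument on the framed space. The hypothesis that some $v\in S$ does not split in $L/F$ in the dihedral case is not used in this particular verification but is maintained as a standing assumption; it becomes essential when the induced $G_{Q,2}^\ast$-action on the determinant-fixed analog $\Spf\overline{R}^{\square,\psi}_{F,S\cup Q}$ is analyzed in the Taylor-Wiles argument of \S\ref{LocRequalsT}, where quadratic dihedral twists arising in the residually CM setting must be excluded.
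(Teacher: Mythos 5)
Your approach is genuinely different from the paper's, and in a meaningful way. The paper observes that the forgetful morphism $\Spf\overline{R}^\square_{F,S\cup Q}\rightarrow\Spf R_{F,S\cup Q}$ is $G_Q^\ast$-equivariant and reduces to freeness of the action on the \emph{unframed} deformation space: there it produces, using the hypothesis on $S$ (to ensure $F_Q^S$ and $\overline{F}^{\ker\rhobar}$ are disjoint when $\rhobar$ is solvable), an element $g\in G_{F,S\cup Q}$ with $\chi(g)\neq 1$ and $\tr\rhobar(g)$ a unit, so that $\tr(\rho_A\otimes\chi)(g)\neq\tr\rho_A(g)$ and hence $V_A\otimes\chi\not\cong V_A$. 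You instead verify freeness directly on the framed space.

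One step in your writeup needs correction. The claim that ``framed deformations are by definition literal lifts'' does not accurately describe $A$-points of $\overline{R}^\square_{F,S\cup Q}$: by \ref{globalquotient} and \ref{GlobalDefs} these parametrize tuples $(V_A,\{\beta_v\}_{v\in S},\{\chi_v\}_{v\mid p})$ where $V_A$ is a \emph{deformation} (an isomorphism class) and each $\beta_v$ is a basis of $V_A$ lifting $\overline\beta$ — not a single lift. The correct justification for $\chi\cdot x=x\Rightarrow\chi\rho_x=\rho_x$ is the rigidification by frames: equality in the functor means there is a $G$-equivariant isomorphism $\psi\colon V_A\rightarrow V_A\otimes\chi$ reducing to the identity with $\psi(\beta_v)=\beta_v$ for each $v\in S$; since each $\beta_v$ is a basis of $V_A$, this forces $\psi=\mathrm{id}$, and $G$-equivariance then gives $\chi(g)\rho_x(g)=\rho_x(g)$ for all $g$, so $\chi=1$. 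You should make this explicit rather than asserting the two functors coincide.

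Once this is repaired, your observation that the hypothesis on $S$ is never invoked is correct for your argument: the frames rigidify away exactly the dihedral self-twists that could otherwise fix an unframed deformation class, which is why the paper's reduction to the unframed space \emph{does} need the hypothesis while the direct framed argument does not. This is a genuine (if small) simplification relative to the paper. However, your closing speculation that the hypothesis becomes essential for the induced $G_{Q,2}^\ast$-action on $\Spf\overline{R}^{\square,\psi}_{F,S\cup Q}$ is not right: that action is a restriction of the one you just analyzed, and its freeness follows from the lemma as stated, not from a separate argument requiring the hypothesis.
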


\begin{proof}
This is essentially \cite{KW2}*{Lemma 5.1}. It suffices to show that the action of $G_Q^\ast$ on $\Spf\overline{R}_{F,S\cup Q}$ is free. If $\rhobar$ is non-solvable, then its projective image is isomorphic to $\SL_2(\F_{2^r})$, for some $r\ge 1$. If $\rhobar$ is solvable, then by our assumption on $S$ and the fact that $\rhobar(G_L)$ has odd order, we see that the fixed field of the kernel of $\rhobar$ and $F_Q^S$ are disjoint. In either case, if $\chi$ is a non-trivial element of $G_Q^\ast(A)$, we can find $g\in G_{F,S\cup Q}$ such that $\chi(g) \ne 1$ and $\tr\rhobar(g)\ne 0$. Then, if $V_A$ is a deformation of $\rhobar$ to $A$, we have $\chi(g)\tr \rho_A(g) \ne \tr \rho_A(g)$, for any $\rho_A$ in the deformation class of $V_A$,  since $\tr \rho_A(g)$ is a unit. \end{proof}

\subsection{Deformations of dihedral representations}\label{DihedralDefs}

Fix continuous and absolutely irreducible
	\[ \overline{\rho} : G_{F,S} \rightarrow \GL_2(\F)
	\]
and let $V_\F$ denote its representation space. Let $A$ be a $\CNLO$-algebra and $V_A$ a deformation of $V_\F$. We say $V_A$ is $L$-dihedral, for a quadratic extension $L/F$, if $G_L$ acts on $V_A$ through an abelian quotient, but $G_F$ does not. We say $V_A$ is \textit{dihedral} if it is $L$-dihedral for some quadratic $L/F$. Note that $V_A$ is $L$-dihedral only if $V_\F$ is $L$-dihedral. If $A$ is a domain with field of fractions $K$, then if $V_A$ is $L$-dihedral and $V_A \otimes_A \overline{K}$ is irreducible, for $\overline{K}$ an algebraic closure of $K$, then one can show there is a character $\chi : G_L \rightarrow A^\times$ such that $V_A \cong \mathrm{Ind}_{G_L}^{G_F}\chi$. A subgroup $H$ of $\GL_2(\F)$ is called \textit{dihedral} if it's image in $\PGL_2(\F)$ is isomorphic to a dihedral group. It is easy to check that $V_\F$ is dihedral if and only if $\rhobar(G_F)$ is dihedral. From this we see that if $p=2$, there is a unique quadratic $L/F$ such that $V_\F$ is $L$-dihedral, as the order of $\rhobar(G_F)$ is twice an odd number.

\subsubsection{}\label{ZarDenseSec} We first establish some criteria for determining when the intersection of the image of a deformation with $\SL_2$ is Zariski dense.

\begin{lem}\label{CharPolyF}
Let $A$ be a $\CNLO$-algebra domain of characteristic $p$ with fraction field $K$. Note that $A$ is canonically an $\F$-algebra. An element $g\in \GL_n(A)$ has finite order eigenvalues if and only if its characteristic polynomial has coefficients in $\F$.
\end{lem}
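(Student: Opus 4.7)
The plan is to reduce both directions to the identification $A \cap \overline{\F}_p = \F$ inside a fixed algebraic closure $\bar K$ of $K$, and then to use the characterization that an element of $\bar K^\times$ has finite order if and only if it lies in $\overline{\F}_p$ (since $K$ has characteristic $p$ there are no nontrivial $p$-power roots of unity, and every root of unity of order prime to $p$ is algebraic over $\F_p$).

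First I would establish the key lemma $A \cap \overline{\F}_p = \F$. The inclusion $\F \subseteq A \cap \overline{\F}_p$ is immediate from the structure map $\calO \rightarrow A$ factoring through $\F$. For the reverse inclusion, take $x \in A$ algebraic over $\F_p$. Its image $\bar x \in A/\mathfrak{m}_A = \F$ is also algebraic, so (viewing $\bar x \in \F \subseteq A$) the element $y = x - \bar x$ lies in $\mathfrak{m}_A$ and is algebraic over $\F_p$. If $y \ne 0$, let $f(T) = T^n + a_{n-1}T^{n-1} + \cdots + a_0 \in \F_p[T]$ be its minimal polynomial, so $a_0 \ne 0$. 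The relation $a_0 = -(y^n + \cdots + a_1 y)$ places $a_0 \in \mathfrak{m}_A$, contradicting $a_0 \in \F_p^\times \subseteq A^\times$. Hence $y = 0$ and $x = \bar x \in \F$.

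For the forward direction of the lemma, suppose the eigenvalues $\lambda_1, \ldots, \lambda_n \in \bar K$ of $g$ all have finite order. Each $\lambda_i$ is then a root of unity, of order prime to $p$, so $\lambda_i \in \overline{\F}_p$. The coefficients of the characteristic polynomial $\chi_g(T) = \det(T \cdot I - g) \in A[T]$ are the elementary symmetric functions in the $\lambda_i$, hence lie in $\overline{\F}_p \cap A = \F$ by the key lemma. For the converse, if the coefficients of $\chi_g$ lie in $\F$, then $\chi_g \in \F[T]$ is a polynomial over a finite field, and its roots in $\bar K$ are algebraic over $\F$, hence lie in $\overline{\F}_p^\times$, which is a torsion group; so each $\lambda_i$ has finite order.

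There is no real obstacle here; the only point requiring thought is the identification $A \cap \overline{\F}_p = \F$, which is a standard consequence of completeness and the residue field hypothesis in $\CNLO$.
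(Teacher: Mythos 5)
Your proof is correct and follows the same approach as the paper: reduce to the identity $A \cap \overline{\F}_p = \F$ and use that roots of unity in characteristic $p$ lie in $\overline{\F}_p$. The only difference is that you spell out the verification of $A \cap \overline{\F}_p = \F$ (which the paper asserts without proof), and that verification is correct.
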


\begin{proof}
Let $\overline{K}$ be an algebraic closure of $K$, and let $\overline{\F}$ be the algebraic closure of $\F$ in $\overline{K}$. If $g$ has finite order eigenvalues in $\overline{K}^\times$, then the characteristic polynomail of $g$ has coefficients in $\overline{\F}\cap A = \F$. The other direction is clear.	\end{proof}

\begin{lem}\label{InfOrdEigs}
Let $A$ be a $\CNLO$-algebra domain of characteristic $p$ with fraction field $K$, and let $\overline{K}$ be an algebraic closure of $K$. Let $V_A$ be a deformation of $V_\F$ to $A$ such that the map $\im \rho_A \rightarrow\im \rhobar $ has non-trivial kernel. The image of $\rho_A$ contains an element with an infinite order eigenvalue in $\overline{K}^\times$.
\end{lem}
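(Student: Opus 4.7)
The plan is to argue by contrapositive: I will show that if every element of $\im \rho_A$ has only finite-order eigenvalues in $\overline{K}^\times$, then the reduction $\im \rho_A \to \im \rhobar$ is an isomorphism, contradicting the hypothesis. The starting point is Lemma \ref{CharPolyF}, which translates the assumption that every $\rho_A(g)$ has finite-order eigenvalues into the characteristic-polynomial statement $\tr \rho_A(g), \det \rho_A(g) \in \F$ for all $g \in G_{F,S}$.

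The decisive elementary observation is that since $A$ has characteristic $p$, the structure morphism $\calO \to A$ factors through $\calO/\varpi_E = \F$, so $A$ is canonically an $\F$-algebra and the composite $\F \to A \to A/\mathfrak{m}_A = \F$ is the identity; in particular $\F \cap \mathfrak{m}_A = 0$. Combined with the congruence $\tr \rho_A(g) \equiv \tr \rhobar(g) \pmod{\mathfrak{m}_A}$, the condition $\tr \rho_A(g) \in \F$ therefore upgrades to the exact identity $\tr \rho_A(g) = \tr \rhobar(g)$. Writing $\tilde\rhobar\colon G_F \xrightarrow{\rhobar} \GL_2(\F) \hookrightarrow \GL_2(A)$ for the canonical lift, we conclude that the two continuous lifts $\rho_A$ and $\tilde\rhobar$ of the absolutely irreducible representation $\rhobar$ to $\GL_2(A)$ have equal trace.

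I would then invoke the standard theorem of Carayol (also proved by Nyssen and Rouquier) that a continuous lift of an absolutely irreducible residual representation to a complete Noetherian local ring is determined up to conjugation by an element of $1 + \mathfrak{m}_A \mathrm{M}_{2\times 2}(A)$ by its trace. This produces $C \in 1 + \mathfrak{m}_A \mathrm{M}_{2\times 2}(A)$ with $\rho_A = C\tilde\rhobar C^{-1}$. Conjugation by $C$ gives a group isomorphism $\im \tilde\rhobar \xrightarrow{\sim} \im \rho_A$, and because $C \equiv I \pmod{\mathfrak{m}_A}$ this isomorphism commutes with the reduction maps onto $\im \rhobar$. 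Since the reduction $\im \tilde\rhobar \to \im \rhobar$ is tautologically an isomorphism, the same follows for $\im \rho_A \to \im \rhobar$, contradicting the hypothesis of non-trivial kernel.

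There is no substantial obstacle beyond combining Lemma \ref{CharPolyF} with this trace-uniqueness result; the entire content of the proof lies in the elementary fact that in characteristic $p$ the residue field $\F$ embeds in $A$ disjointly from $\mathfrak{m}_A$, which promotes the condition ``traces lie in $\F$'' to an exact identity of traces.
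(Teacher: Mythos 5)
Your argument is correct, but it takes a genuinely different route from the paper's. The paper proceeds directly: pick a nontrivial $g_1$ in the kernel of $\im\rho_A\to\im\rhobar$, observe that if $g_1$ had finite-order eigenvalues they would have to be $1$ (so $g_1$ is unipotent, $g_1 = I+N$ with $N\ne 0$, $N\equiv 0 \bmod\mathfrak{m}_A$), and then use irreducibility of $V_\F$ to produce $g_2\in\im\rho_A$ with $\tr(g_1g_2) = \tr g_2 + \tr(Ng_2)$ having a nonzero contribution from $\mathfrak{m}_A$, hence lying outside $\F$; Lemma \ref{CharPolyF} then gives the infinite-order eigenvalue. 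Your contrapositive runs in the opposite direction: the assumption that \emph{every} element has finite-order eigenvalues is converted, via Lemma \ref{CharPolyF} and the observation that $\F\hookrightarrow A$ splits $A\to A/\mathfrak{m}_A$ (so $\F\cap\mathfrak{m}_A = 0$), into the exact identity $\tr\rho_A = \tr\tilde\rhobar$; Carayol's uniqueness theorem then forces $\rho_A$ and the Teichm\"uller lift $\tilde\rhobar$ to be conjugate by $C\equiv I \bmod\mathfrak{m}_A$, so $\im\rho_A\to\im\rhobar$ is an isomorphism. Both steps are valid: Carayol's Theorem~1 requires only equality of traces and absolute irreducibility of the common reduction, and its proof (spanning $M_2(A)$ via Burnside plus Nakayama, then nondegeneracy of the trace pairing and Skolem--Noether) is characteristic-independent, so there is no obstruction from $p=2$.

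Two remarks. First, your approach quietly avoids a subtlety in the paper's: the paper asserts a basis of $V_A$ putting $g_1$ into upper-unipotent form, but for a non-normal local domain $A$ the kernel of $N=g_1-1$ need not be a free rank-one direct summand of $A^2$, so such an $A$-basis may not exist (the argument is easily repaired by computing $\tr(Ng_2)$ directly using nondegeneracy of the trace form, without changing basis, but as written there is a gap). Your Carayol-based route sidesteps this entirely. Second, a small attribution point: Carayol's theorem is the \emph{uniqueness} statement you need; the results of Nyssen and Rouquier cited in the paper concern \emph{existence} of a representation realizing a given pseudo-character, which is a separate matter and not what you are invoking here. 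The trade-off, of course, is that the paper's argument is elementary once the basis issue is repaired, whereas yours imports a genuine theorem; but both are correct and both legitimately rest on Lemma \ref{CharPolyF} and the elementary fact that $\F$ sits in $A$ disjointly from $\mathfrak{m}_A$.
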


\begin{proof}
Take nontrivial $g_1\in\ker(\im \rho_A \rightarrow\im \rhobar)$. If $g_1$ has finite order eigenvalues then it's eigenvalues must be $1$, and $g_1$ is unipotent. There is a basis of $V_A$ such that
\[
g_1=\left(\begin{array}{cc}1&x\\&1\end{array}\right)
\]
with $0 \ne x\in\mathfrak{m}_A$. Since $V_{\F}$ is irreducible, there is $g_2\in\im\rho_A$ such that, with respect to our fixed basis,
\[
g_2=\left(\begin{array}{cc}a&b\\c&d\end{array}\right)
\]
with $c\in A^\times$. If $\tr g_2\in\F$, then $\tr g_1g_2=cx + a+d=cx+ \tr g_2\notin\F$, since $\tr g_2 \in \F$ and $0\ne cx \in \mathfrak{m}_A$. We have thus shown that there is $g\in \im \rho_A$ with $\tr g \notin\F$. The lemma then follows from \ref{CharPolyF}. \end{proof}

\begin{lem}\label{FinIndexIrred} Let $A$ be a $\CNLO$-algebra domain and let $V_A$ be a non-dihedral deformation of $V_\F$ such that for some $\sigma\in G_F$, $\rho_A(\sigma)$ has eigenvalues whose ratio is not a root of unity. Then for any finite index subgroup $H$ of $G_F$, $\rho_A|_H$ is absolutely irreducible.
\end{lem}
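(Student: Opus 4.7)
The plan is to argue by contradiction. Suppose some finite-index subgroup $H \le G_F$ has $\rho_A|_H$ not absolutely irreducible. Replacing $H$ by its normal core $\bigcap_{g \in G_F} gHg^{-1}$ (still of finite index), I may assume $H$ is normal in $G_F$, since any absolutely $H$-irreducible representation is a fortiori absolutely irreducible after restriction to a subgroup. I then promote the residual irreducibility hypothesis to a generic one: setting $K = \mathrm{Frac}(A)$ and $V_{\overline{K}} = V_A \otimes_A \overline{K}$, the map $A[G_F] \to \End_A(V_A)$ reduces mod $\mathfrak{m}_A$ to a surjection onto $M_2(\F)$ by absolute irreducibility of $V_\F$, hence is itself surjective by Nakayama; tensoring up shows $V_{\overline{K}}$ is absolutely irreducible as a $G_F$-representation.

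With $V_{\overline{K}}$ absolutely irreducible and $H$ normal of finite index, I analyze the failure of absolute irreducibility of $V_{\overline{K}}|_H$. Since $\dim_{\overline{K}} V_{\overline{K}} = 2$, either $V_{\overline{K}}|_H$ admits a unique $H$-stable line $L$, or it splits as a sum $\chi_1 \oplus \chi_2$ of two characters of $H$. In the unique-line case, normality of $H$ forces $G_F$ to permute the $H$-stable lines, hence to preserve $L$, contradicting absolute irreducibility of $V_{\overline{K}}$ over $G_F$. In the decomposition case with $\chi_1 \ne \chi_2$, the eigenlines are again unique, and $G_F$ either preserves both (contradicting irreducibility as above) or swaps them; in the latter case the stabilizer of $\chi_1$ is an index-$2$ subgroup $H' \supseteq H$, cutting out a quadratic extension $L/F$ with $G_L = H'$ acting abelianly on $V_{\overline{K}}$. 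Because $A \hookrightarrow \overline{K}$ embeds $V_A$ in $V_{\overline{K}}$ $G_L$-equivariantly, the action of $G_L$ on $V_A$ is also abelian, so $V_A$ is $L$-dihedral, contradicting the non-dihedral hypothesis.

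The remaining case is $\chi_1 = \chi_2$, where $\rho_A(H)$ consists of scalar matrices. Then the projective image of $\rho_A(G_F)$ in $\PGL_2(\overline{K})$ is a quotient of the finite group $G_F/H$, hence finite. For any $\sigma \in G_F$, some power $\rho_A(\sigma)^n$ is scalar, say $\lambda I$, so both eigenvalues of $\rho_A(\sigma)$ are $n$-th roots of $\lambda$ and their ratio is an $n$-th root of unity. This contradicts the hypothesis on $\sigma$, completing the argument.

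The main subtlety is the dihedral case: one must be careful that the ``abelian action of $G_L$ on $V_{\overline{K}}$'' really descends to an abelian action on the lattice $V_A$ as required by the paper's definition of dihedral, which ultimately relies on $A$ being a domain (so that $V_A \hookrightarrow V_{\overline{K}}$ is injective). Everything else is a routine application of the standard Clifford-theoretic dichotomy for a normal subgroup of index finite in an absolutely irreducible $2$-dimensional representation.
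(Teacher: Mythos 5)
Your proof is correct and follows essentially the same approach as the paper: reduce to $H$ normal, analyze the $H$-stable lines of $V_{\overline{K}}$ via Clifford theory, and derive a contradiction with non-dihedrality (using absolute irreducibility of $V_{\overline{K}}$, which you rightly make explicit via Nakayama). The only difference is cosmetic: the paper collapses the case analysis at the outset by observing that any $H$-stable line must be an eigenline of $\rho_A(\sigma^n)$, which has distinct eigenvalues, so there are exactly two and the scalar case never arises; you instead treat the scalar subcase separately and rule it out by noting that a finite projective image forces every eigenvalue ratio to be a root of unity — an equivalent use of the hypothesis on $\sigma$.
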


\begin{proof}
It suffices to consider the case when $H$ is an open normal subgroup of $G_F$. Let $K$ denote the fraction field of $A$ and let $\overline{K}$ be an algebraic closure. Assume there is an $H$-stable subspace $W$ of $V_A\otimes_A{\overline{K}}$. Since $H$ is normal in $G_F$, $gW$ is also $H$ invariant for any $g\in G_F$. Take $n\ge 1$ such that $\sigma^n\in H$. Then, for each $g\in G_F$, $gW$ is an eigenspace for $\rho_A(\sigma^n)$. Since the ratio of the eigenvalues of $\rho_A(\sigma)$ is not a root of unity, the eigenvalues of $\rho_A(\sigma^n)$ are distinct. Hence, any $g\in G_F$ must permute the two one dimensional eigenspaces for $\rho_A(\sigma^n)$. So, either $W$ is $G_F$ stable or there is an index two subgroup $N$ of $G_F$ such that $W$ is $N$ stable and $G_F/N$ interchanges the two eigenspaces for $\rho_A(\sigma^n)$. In the first case $\rho_A$ is reducible over $\overline{K}$, contradicting irreducibility of $V_\F \otimes_\F \overline{\F}$, and in the second it is dihedral, contradicting our assumptions on $V_A$.
\end{proof}

\begin{prop}\label{ImZarDense}
Let $A$ be a $\CNLO$-algebra domain with fraction field $K$ of characteristic $p$ and let $V_A$ be a deformation of $V_\F$ with finite order determinant. Fix a basis of $V_A$ and let $\Gamma$ denote the image of $G_F$ in $\GL_2(A)$ with respect to this basis. If $\Gamma \rightarrow \GL_2(\F)$ has nontrivial kernel then $\Gamma\cap\SL_2(K)$ is Zariski dense in ${\SL_2}_{/K}$.
\end{prop}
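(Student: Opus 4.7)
The plan is to analyze the Zariski closure $H \subseteq {\SL_2}_{/K}$ of $\Gamma \cap \SL_2(K)$ and prove $H = {\SL_2}_{/K}$ via the classification of closed subgroups of $\SL_2$.

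First I will produce a non-torsion semisimple element in $\Gamma \cap \SL_2(K)$. Since $A$ is a $\CNLO$-domain of characteristic $p$, the torsion subgroup of $A^\times$ is finite, and since $\det \rho_A$ has finite order there is $N \geq 1$ with $\rho_A(\sigma)^N \in \SL_2(K)$ for every $\sigma \in G_F$. By Lemma \ref{InfOrdEigs}, the hypothesis that $\Gamma \rightarrow \GL_2(\F)$ has nontrivial kernel yields some $g \in \Gamma$ with an eigenvalue $\lambda \in \overline{K}^\times$ of infinite order; then $g^N \in \Gamma \cap \SL_2(K)$ has eigenvalues $\lambda^N, \lambda^{-N}$ with ratio $\lambda^{2N}$ of infinite order, so $g^N$ is semisimple with eigenvalues that are not roots of unity.

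The presence of $g^N$ forces the identity component $H^0$ to be positive-dimensional and non-unipotent (a suitable power of $g^N$ lies in $H^0$ and remains non-torsion semisimple). The classification of connected closed subgroups of ${\SL_2}_{/K}$ then leaves three possibilities: $H^0$ is a maximal torus $T$, a Borel $B$, or all of ${\SL_2}_{/K}$. Suppose for contradiction $H \neq {\SL_2}_{/K}$. Let $G' = \ker(\det \rho_A) \leq G_F$, a finite-index subgroup with $\rho_A(G') = \Gamma \cap \SL_2(K)$. If $H^0 = B$, then since $B$ is self-normalizing, $H \subseteq B(K)$ and $\rho_A(G')$ preserves a $K$-line in $K^2$. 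If $H^0 = T$, then $H \subseteq N_{\SL_2}(T)(K)$, so an index-at-most-two subgroup $G'' \leq G'$ has abelian image in $T(K)$, hence $\rho_A|_{G''}$ is reducible over $\overline{K}$. In either case, $\rho_A \otimes_A K$ is reducible on a finite-index subgroup of $G_F$.

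To obtain the contradiction I will show $V_A \otimes_A K$ is absolutely irreducible on every open subgroup of $G_F$. Since $V_\F$ is absolutely irreducible, the $\F$-subalgebra of $\mathrm{M}_{2\times 2}(\F)$ generated by $\rhobar(G_F)$ is $\mathrm{M}_{2\times 2}(\F)$; by Nakayama's lemma the $A$-subalgebra generated by $\rho_A(G_F)$ equals $\mathrm{M}_{2\times 2}(A)$, proving $V_A \otimes_A K$ is absolutely irreducible. Applying Lemma \ref{FinIndexIrred} with $\sigma = g^N$ (whose eigenvalue ratio is not a root of unity) propagates absolute irreducibility to every open subgroup, contradicting the conclusion of the previous paragraph. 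The main obstacle is the dihedral case: Lemma \ref{FinIndexIrred} requires $V_A$ to be non-dihedral, and if $V_A$ were $L$-dihedral for some quadratic $L/F$, the image would genuinely sit in the normalizer of a torus and the proposition's conclusion could fail. The author must therefore invoke or implicitly impose a non-dihedral hypothesis on $V_A$ at precisely this step.
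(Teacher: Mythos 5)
Your proposal matches the paper's argument in all essential respects. The paper's proof also applies Lemma~\ref{InfOrdEigs} to produce $g \in \im\rho_A$ with infinite-order eigenvalues, observes that the finite-order determinant forces the eigenvalue ratio to have infinite order, invokes Lemma~\ref{FinIndexIrred} to deduce absolute irreducibility of $\rho_A|_H$ on every finite-index $H\le G_F$, and then argues by contradiction: if $\Gamma^1 := \Gamma\cap\SL_2(K)$ is not Zariski dense, the identity component of $\overline{\Gamma^1}$ has dimension $\le 2$, hence is solvable, hence acts reducibly over $\overline{K}$, and a finite-index subgroup of $G_F$ maps into it because $\det\rho_A$ is finite order. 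Your reorganization differs only in surface details: the paper invokes ``solvable $\Rightarrow$ reducible over $\overline K$'' in one stroke rather than your explicit torus/Borel case split, and the Nakayama argument that $V_A\otimes_A K$ is absolutely irreducible is already subsumed in the conclusion of Lemma~\ref{FinIndexIrred} (applied to $H=G_F$), so it is correct but redundant.

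Your closing observation is also correct and well placed: Lemma~\ref{FinIndexIrred} carries a non-dihedral hypothesis on $V_A$, yet the statement of Proposition~\ref{ImZarDense} does not. The proof's phrase ``$V_A$ satisfies the assumptions of \ref{FinIndexIrred}'' implicitly presupposes that $V_A$ is non-dihedral, and this is genuinely needed: if $V_A$ is $L$-dihedral, then $\rho_A(G_L)$ is abelian of index $2$ in $\Gamma$, so $\overline{\Gamma^1}$ has dimension at most $1$, and one can easily arrange that $\Gamma\to\GL_2(\F)$ has nontrivial kernel (e.g. $\rho_A = \Ind_{G_L}^{G_F}\chi$ with $\chi$ of infinite order reducing to the character defining $V_\F$). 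Thus the proposition is false as literally stated. The omission is benign in context---every invocation in the paper (most visibly Corollary~\ref{DihedCrit}, which uses exactly the contrapositive ``small Zariski closure and nontrivial kernel $\Rightarrow$ dihedral'') supplies or concludes non-dihedrality---but the hypothesis should appear in the statement. Good catch.
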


\begin{proof}
By \ref{InfOrdEigs}, there is some $g\in\im\rho_A$ with infinite order eigenvalues. Since $\det\rho_A$ is finite order, the ratio of the eigenvalues of $g$ is not a root of unity, and $V_A$ satisfies the assumptions of \ref{FinIndexIrred}. So, for any finite index subgroup $H$ of $G_F$, $\rho_A|_H$ is absolutely irreducible.

Let $\Gamma^1=\Gamma\cap\SL_2(K)$ and assume $\Gamma^1$ is not Zariski dense in ${\SL_2}_{/K}$. Let $\overline{\Gamma^1}$ denote the Zariski closure of $\Gamma^1$ in ${\SL_2}_{/K}$ and $(\overline{\Gamma^1})^0$ its connected component at the identity. Our assumption implies $\dim(\overline{\Gamma^1})^0\le 2$, and so $(\overline{\Gamma^1})^0$ is solvable. Then $(\overline{\Gamma^1})^0$ acts reducibly on $V_A\otimes_A\overline{K}$, where $\overline{K}$ is an algebraic closure of $K$. Since the determinant of $V_A$ is finite, $\Gamma^1\cap(\overline{\Gamma^1})^0$ is finite index in $\Gamma^1$, so there is a finite index subgroup $H$ of $G_F$, such that $\rho_A|_H$ acts reducibly on $V_A\otimes_A\overline{K}$, a contradiction.
\end{proof}

We note that the assumption that $\Gamma \rightarrow \GL_2(\F)$ has nontrivial kernel is satisfied if $V_\F$ is dihedral and $V_A$ is non-dihedral.

\begin{cor}\label{DihedCrit}

Assume $V_\F$ is $L$-dihedral. Let $A$ be $\CNLO$-algebra domain of characteristic $p$, and let $V_A$ be a non-dihedral deformation of $V_\F$ with finite order determinant. Then there is $\sigma\in G_F\smallsetminus G_L$ such that $\rho_A(\sigma)$ has infinite order.

\end{cor}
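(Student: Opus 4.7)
The plan is to argue by contradiction. Suppose every $\sigma \in G_F \smallsetminus G_L$ has $\rho_A(\sigma)$ of finite order. Fix any $\tau \in G_F \smallsetminus G_L$ and set $T := \rho_A(\tau) \in \GL_2(K)$. Then for each $\sigma \in G_L$, the product $\tau\sigma$ lies in $G_F \smallsetminus G_L$, so $\rho_A(\tau\sigma) = T\rho_A(\sigma)$ has finite order, and Lemma \ref{CharPolyF} gives $\tr(T\rho_A(\sigma)) \in \F$ for every $\sigma \in G_L$.

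Next, since $V_\F$ is dihedral while $V_A$ is non-dihedral, the map $\rho_A(G_F) \to \rhobar(G_F)$ has non-trivial kernel, so Proposition \ref{ImZarDense} applies and $\Gamma^1 := \rho_A(G_F) \cap \SL_2(K)$ is Zariski dense in $\SL_{2/K}$. Setting $H := \rho_A(G_L) \cap \SL_2(K)$, the index $[\Gamma^1 : H]$ divides $[\rho_A(G_F) : \rho_A(G_L)]$, which is at most $2$; hence $\Gamma^1 = H \cup gH$ for some $g \in \Gamma^1$. If the Zariski closure $\overline{H}$ were a proper closed subgroup of $\SL_{2/K}$, both $\overline{H}$ and its translate $g\overline{H}$ would have dimension at most two, yet their union would contain $\Gamma^1$ and so equal the irreducible three-dimensional $\SL_{2/K}$, which is impossible. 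So $H$ is Zariski dense in $\SL_{2/K}$ as well.

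Finally, I would apply these conclusions to the regular function $f : \SL_{2/K} \to \mathbb{A}^1_K$ defined by $f(X) = \tr(TX)$. The first paragraph gives $f(H) \subseteq \F$, so $f^q - f = \prod_{a \in \F}(f-a)$ (with $q = |\F|$) vanishes on $H$ and therefore, by Zariski density, vanishes identically on $\SL_{2/K}$. The main point requiring care is to verify that $f^q - f$ is a \emph{nonzero} element of the coordinate ring $K[\SL_2] = K[x_1,x_2,x_3,x_4]/(x_1x_4 - x_2x_3 - 1)$. Since $T \neq 0$, the linear form $f = ax_1 + cx_2 + bx_3 + dx_4$ (where $T = \bigl(\begin{smallmatrix} a & b \\ c & d \end{smallmatrix}\bigr)$) is itself nonzero in this coordinate ring, because the defining ideal contains no nonzero polynomial of degree less than two. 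A direct evaluation, either on the diagonal torus $\{\mathrm{diag}(t,t^{-1})\}$ when $a$ or $d$ is nonzero (giving $f = at + dt^{-1}$), or on a unipotent one-parameter subgroup when $a = d = 0$ (in which case $bc \ne 0$, giving $f = by$ or $f = cy$), exhibits a $\overline{K}$-point where $f$ takes infinitely many distinct values as the parameter varies, and hence where $f^q - f$ does not vanish. This yields the desired contradiction and completes the proof.
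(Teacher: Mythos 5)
Your proof is correct. Both your argument and the paper's hinge on the same two ingredients: Proposition~\ref{ImZarDense} (Zariski density of $\Gamma\cap\SL_2(K)$, which requires the non-dihedral hypothesis on $V_A$ even though the proposition's statement omits it) and the observation from Lemma~\ref{CharPolyF} that finite-order elements have $\F$-rational trace. The difference is in how the contradiction is extracted. The paper stays in $\GL_{2/K}$: it introduces the $2$-dimensional subvariety $X = \bigcup_{(a,b)\in\F\times\F^\times}\{\tr = a,\det = b\}$, observes that $gH\subseteq X(K)$ for $g=\rho_A(\sigma_0)$ and $H=\rho_A(G_L)$, hence $\Gamma\subseteq g^{-1}X\cup X$, and concludes by dimension count that $\Gamma\cap\SL_2(K)$ cannot be dense. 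You instead pass to $\SL_{2/K}$, show $H\cap\SL_2(K)$ is itself Zariski dense (a useful intermediate step the paper avoids by working with $\Gamma$ directly), and then show that the single regular function $f=\tr(T\,\cdot\,)$ would have to satisfy $f^q=f$ identically, which fails because $f$ takes infinitely many $\overline{K}$-values. Your route requires the small extra verification that $f^q-f\neq 0$ in $K[\SL_2]$, which you handle correctly by exhibiting one-parameter subgroups on which $f$ is non-constant; the paper sidesteps this by counting dimensions of explicit subvarieties. Both are sound, and your version makes the role of the Frobenius-fixed-field structure of $\F\subset K$ more visible.
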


\begin{proof}

Let $K$ denote the fraction field of $A$. Fix a basis of $V_A$ and let $\Gamma$ be the image of $\rho_A$ in $\GL_2(A)$ with respect to this basis. For each $(a,b)\in \F\times\F^\times$, let $X_{a,b}$ be the subvariety ${\GL_2}_{/K}$ defined by $\tr g = a$ and $\det g = b$, and let $X$ be the union of the $X_{a,b}$ for each $(a,b)\in \F\times\F^\times$. Note that each $X_{a,b}$, and hence $X$, has dimension $2$. Note also that if $g\in \GL_2(A)$ is of finite order, then $g \in X(K)$, by \ref{CharPolyF}. 

Assume that $\rho_A(\sigma)$ is finite order whenever $\sigma \in G_F\smallsetminus G_L$. Let $H=\rho_A(G_L)\subseteq\Gamma$, and let $g=\rho(\sigma)$ for some $\sigma\in G_F\smallsetminus G_L$. By assumption, every element of $gH$ is finite order, so $gH\subseteq X(K)$. Then $\Gamma=H\cup gH\subseteq gX(K)\cup X(K)$, so the Zariski closure of $\Gamma$ is contained in $gX\cup X$. Since $X$ has dimension $2$, so does $gX\cup X$. Since the Zariski closure of $\Gamma$ is contained in a $2$-dimensional subvariety, $V_A$ must be dihedral, by \ref{ImZarDense}. \end{proof}

\subsubsection{}\label{DihedLocus}

Let $L/F$ be a quadratic extension such that $\overline{\rho}$ is $L$-dihedral. Write $\rhobar=\Ind_{G_L}^{G_F}\overline{\chi}$, for $\overline{\chi}:G_L\rightarrow\F^\times$, and let $\tilde{\chi}:G_L\rightarrow\mathcal{O}^\times$ denote the Teichmuller lift of $\chibar$. Let $L_S^\mathrm{ab}/L$ denote the maximal abelian pro-$p$ extension of $L$ unramified outside $S$. Set $R_{L\text{-di}}=\mathcal{O}[[\Gal(L_S^\mathrm{ab}/L)]]$ and let $\Psi:G_L\rightarrow R_{L\text{-di}}^\times$ be the canonical character. We have an $L$-dihedral deformation of $V_\F$ to $R_{L\text{-di}}$ given by $\Ind_{G_L}^{G_F}\tilde{\chi}\Psi$. It is easy to see this deformation is universal for $L$-dihedral deformations, and that there is a surjection $R_{F,S}\rightarrow R_{L\text{-di}}$, hence the locus of all $L$-dihedral points in $\Spec R_{F,S}$ is closed. As there are only finitely many quadratic extensions $L/F$ such that $\overline{\rho}$ is $L$-dihedral, the locus of all dihedral points in $\Spec R_{F,S}$ is closed. The same is then true for any quotient of $R_{F,S}$.

The following two lemmas record some properties of dihedral deformations and will be used later to ensure certain deformations are non-dihedral.

\begin{lem}\label{NotDihedDim}

Let $L/F$ be a quadratic such that $\rhobar$ is $L$-dihedral. Denote by $L_S^-$ the maximal Galois subextension of $L_S^{\mathrm{ab}}/L$ such that the nontrivial element of $\Gal(L/F)$ acts on $\Gal(L_S^-/L)$ as $-1$. Let $R_{F,S} \rightarrow A$ be a surjection with kernel containing $\varpi_E$, and let $V_A$ denote the corresponding deformation. If $\det V_A$ is the Teichmuller lift of $\det V_\F$ to $A^\times$, and $V_A$ is $L$-dihedral, then 
	\[
		\dim A \le \mathrm{rk}_{\Z_p}\Gal(L_S^-/L).
	\]
\end{lem}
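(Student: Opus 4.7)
The plan is to combine the universal $L$-dihedral deformation of \ref{DihedLocus} with the Teichm\"uller determinant hypothesis to factor the surjection $R_{F,S} \to A$ through $\F[[\Gal(L_S^-/L)]]$, whose Krull dimension is manifestly $\rk_{\Z_p}\Gal(L_S^-/L)$.

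First, since $V_A$ is $L$-dihedral, the universal property of $R_{L\text{-di}} = \calO[[H]]$, with $H = \Gal(L_S^{\mathrm{ab}}/L)$, produces a factorization $R_{F,S} \to R_{L\text{-di}} \to A$ realizing $V_A \cong \Ind_{G_L}^{G_F}(\tilde{\chi}\,\Psi_A)$ for some character $\Psi_A : H \to A^\times$ lifting the trivial character. Fix $c \in G_F$ lifting the nontrivial element $\tau \in \Gal(L/F)$, so that $\tau$ acts on $H$ by conjugation.

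Next, a direct matrix computation shows $(\Ind_{G_L}^{G_F}\eta)|_{G_L} \cong \eta \oplus \eta^\tau$ for any character $\eta$ of $G_L$, hence
$$\det V_A|_{G_L} = (\tilde{\chi}\,\tilde{\chi}^\tau)(\Psi_A\,\Psi_A^\tau).$$
Since the Teichm\"uller lift is multiplicative on characters of prime-to-$p$ order, $\widetilde{\det\rhobar}|_{G_L} = \widetilde{\chibar\,\chibar^\tau} = \tilde{\chi}\,\tilde{\chi}^\tau$, and the hypothesis $\det V_A = \widetilde{\det V_\F}$ forces $\Psi_A\,\Psi_A^\tau = 1$. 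This is precisely the condition that $\Psi_A$ is trivial on $(1+\tau)H$, or equivalently that $\Psi_A$ factor through $H/(1+\tau)H$. By the maximality defining $L_S^-$, this quotient equals $\Gal(L_S^-/L)$.

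Consequently $R_{L\text{-di}} \to A$ factors through $\calO[[\Gal(L_S^-/L)]]$, and since $\varpi_E$ lies in the kernel, further through $\F[[\Gal(L_S^-/L)]]$. Decomposing $\Gal(L_S^-/L) \cong T \times \Z_p^r$ with $T$ a finite $p$-group and $r = \rk_{\Z_p}\Gal(L_S^-/L)$, we have $\F[[\Gal(L_S^-/L)]] \cong \F[T][[X_1,\ldots,X_r]]$, which has Krull dimension $r$, and surjectivity of the map gives $\dim A \le r$. I do not anticipate serious obstacles: the argument rests on the dihedral description and a determinant calculation on $G_L$. The only subtle point, relevant when $p=2$, is that the Teichm\"uller determinant condition on $G_F \smallsetminus G_L$ imposes the additional constraint $\Psi_A(c^2) = -1$, but this can only further cut down $A$ and therefore preserves the stated upper bound.
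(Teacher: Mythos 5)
Your argument is correct and follows the same route as the paper's proof: factor $R_{F,S}\to A$ through the universal $L$-dihedral ring $R_{L\text{-di}}$, use the Teichm\"uller-determinant constraint on $G_L$ to force $\Psi_A\Psi_A^\tau=1$ (hence factoring through $\mathcal{O}[[\Gal(L_S^-/L)]]$), kill $\varpi_E$, and count Krull dimension of $\F[[\Gal(L_S^-/L)]]$. The one point you add that the paper elides is the observation about the extra constraint at $c$ when $p=2$; your remark that it can only shrink $A$ and so does not affect the upper bound is exactly right.
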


\begin{proof}

Set $R_{L\text{-di}}^-=\mathcal{O}[[\Gal(L_S^-/L)]]$ and let $\Psi^{-}:G_L\rightarrow (R_{L\text{-di}}^-)^\times$ be the canonical character. We have an $L$-dihedral deformation of $V_\F$ to $R_{L\text{-di}}^-$ given by $\Ind_{G_L}^{G_F}\tilde{\chi}\Psi$. This deformation is universal for $L$-dihedral deformations whose determinant is the Teichmuller lift of $\det V_\F$. There is a surjection $R_{F,S}\rightarrow R_{L\text{-di}}$, and our assumptions imply that the surjection $R_{F,S} \rightarrow A$ factors through
	\[
		R_{F,S}\longrightarrow R_{L\text{-di}}^-\longrightarrow\F[[\Gal(L_S^-/L)]],
	\]
from which the result follows.  \end{proof}

The following lemma is taken directly from \cite{SWcorrection}*{Lemma 2.2.1}. We include it for ease of reference later.

\begin{lem}\label{NotDihedSplit}
Let $L/F$ be quadratic such that $\rhobar$ is $L$-dihedral. Assume there is some $v|p$ in $F$ that does not split in $L$. Let $A$ be a $\CNL_\mathcal{O}$-algebra domain, and let $V_A$ be a deformation of $V_\F$ to $A$. If $V_A$ is dihedral and there are characters $\chi_1,\chi_2 : G_v \rightarrow A^\times$ such that
	\[ \tr \rho_A(\sigma) = \chi_1(\sigma) + \chi_2(\sigma)
	\]
for all $\sigma\in G_v$ (and any $\rho_A$ in the deformation class of $V_A$), then $\chi_1/\chi_2$ had order at most two.\end{lem}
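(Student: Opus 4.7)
The plan is to combine the standard fact that the character of an induced representation vanishes off the inducing subgroup with the hypothesis that $v$ does not split in $L$.

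First I would reduce to the case in which $V_A \cong \mathrm{Ind}_{G_L}^{G_F}\chi$ for some character $\chi$ of $G_L$ (after replacing $A$ by its fraction field, or by a sufficiently large extension thereof). Since $V_A$ is dihedral, it is $L'$-dihedral for some quadratic extension $L'/F$, and as noted at the start of \ref{DihedralDefs} this forces $V_\F$ to be $L'$-dihedral as well. In the applications in this paper (residually solvable image, $p=2$), $\rhobar$ determines $L$ uniquely because the image of $\rhobar$ has order twice an odd number; hence $L' = L$, and the induced form follows from the discussion at the start of \ref{DihedralDefs}.

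Next I restrict to the decomposition group at $v$. Since $v$ does not split in $L$, there is a unique place $w$ of $L$ above $v$, and $G_w$ has index two in $G_v$. Mackey's formula gives $V_A|_{G_v} \cong \mathrm{Ind}_{G_w}^{G_v}\bigl(\chi|_{G_w}\bigr)$, so in a basis adapted to the induction every element of $G_v\smallsetminus G_w$ acts by an antidiagonal matrix, and therefore $\tr\rho_A(\sigma) = 0$ for every $\sigma\in G_v\smallsetminus G_w$.

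The hypothesis $\tr\rho_A(\sigma) = \chi_1(\sigma) + \chi_2(\sigma)$ on $G_v$ combined with this vanishing gives $\chi_2 = -\chi_1$ on $G_v\smallsetminus G_w$; equivalently $\eta := \chi_1/\chi_2$ satisfies $\eta \equiv -1$ on the nontrivial coset of $G_w$ in $G_v$. Since this coset is stable under left-multiplication by $G_w$ and $\eta$ is a homomorphism, this forces $\eta \equiv 1$ on $G_w$, and therefore $\eta^2 \equiv 1$ on all of $G_v$, as desired. The main obstacle is conceptually the initial reduction to $L'=L$; once one can write $V_A$ as a genuine induction from $G_L$, the rest of the proof is a short character calculation that uses the non-splitting of $v$ in $L$ exactly once, to guarantee that there really is a nontrivial coset of $G_w$ inside $G_v$ on which to detect the vanishing of the trace.
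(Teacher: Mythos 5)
Your proof is correct, and the final calculation is clean, but it takes a genuinely different route from the paper. Both proofs begin by writing $V_A \cong \mathrm{Ind}_{G_L}^{G_F}\chi$ for a character $\chi$ of $G_L$, and both end by showing that $\chi_1/\chi_2$ is trivial on the index-two subgroup $G_w \subset G_v$. The paths in between differ. The paper argues via pseudo-representations and conjugation-invariance: semisimplifying $V_A|_{G_w}$ both ways gives $\{\chi_1|_{G_w},\chi_2|_{G_w}\} = \{\chi|_{G_w},\chi'|_{G_w}\}$, where $\chi'$ is the $\mathrm{Gal}(L/F)$-conjugate of $\chi$; then, picking $\sigma \in G_v$ generating $\mathrm{Gal}(L/F)$ and using that $\chi_1$ extends to $G_v$ (so $\chi_1^\sigma = \chi_1$), one gets $\chi_1|_{G_w} = \chi_1^\sigma|_{G_w} = \chi'|_{G_w} = \chi_2|_{G_w}$. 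Your route instead exploits the vanishing of the trace of an induced representation off the inducing subgroup: from $\tr\rho_A \equiv 0$ on $G_v\smallsetminus G_w$ and the hypothesis you get $\eta = \chi_1/\chi_2 \equiv -1$ on the nontrivial coset, and the homomorphism property then forces $\eta \equiv 1$ on $G_w$ and hence $\eta^2 \equiv 1$. The paper's argument is slightly more structural in that it leans on the fact that $\chi_1,\chi_2$ are characters of $G_v$ (hence $\sigma$-invariant), whereas yours uses the non-splitting of $v$ more concretely through the antidiagonal shape of $\rho_A$ on the non-identity coset; the two are of comparable length and both yield the result. One small remark on the reduction: you invoke uniqueness of $L$ for $p=2$ to identify the field of induction with $L$, which is fine in the applications; the paper simply reads the hypothesis "$V_A$ is dihedral" as "$V_A$ is $L$-dihedral" for the $L$ in the statement and writes down the induced form directly.
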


\begin{proof}

Let $w$ denote the unique place in $L$ above $v$. Note that $G_w$ is index two in $G_v$, and we can find $\sigma \in G_v$ such that $\sigma$ generates $\Gal(L/F)$. By the theory of pseudo-representations, we see that $(V_A|_{G_v})^\mathrm{ss} = \chi_1 \oplus \chi_2$.

By assumption, there is a character $\chi: G_L \rightarrow A^\times$ such that $V_A \cong \Ind_{G_L}^{G_F} \chi$. Note that $V_A|_{G_L} \cong \chi \oplus \chi'$, where $\chi'$ denotes the conjugate of $\chi$ by $\sigma$. Replacing $\chi$ by $\chi'$, if necessary, we can assume $\chi|_{G_w} = \chi_1|_{G_w}$ and $\chi'|_{G_w} = \chi_2|_{G_w}$. But since $\chi_1$ and $\chi_2$ are characters of $G_v$, we have
	\[ \chi_1|_{G_w} = \chi_1^\sigma|_{G_w} = \chi'|_{G_w} = \chi_2|_{G_w}, \]
so $\chi_1/\chi_2$ factors through $\Gal(L_w/F_v)$.  
	\end{proof}

\section{Modular forms}\label{ModForms}

In this section we recall Hida's theory of $p$-adic Hecke algebras in the case of a totally definite quaternion algebra over a totally real field. We recall some facts about the associated Galois representations and define the particular Hecke algebras and Hecke modules that will be used in the patching argument in \S \ref{LocRequalsT}.

In the first subsection we recall the definition of modular forms on a totally definite quaternion algebra over a totally real field and their connection with cuspidal automorphic representations of $\GL_2$. This is all standard, except that we allow the possibility of non-compact subgroups as in \cite{KW2}*{\S 7}. We do this for the following reason. If we have a fixed central character for our space of modular forms, and $f$ is a eigenform with Iwahori level at $v$, then then there are two possibilities for the Hecke eigenvalue at $v$. When $p$ is odd, the choice for the eigenvalue is uniquely determined modulo $p$, and so the local representation of $G_v$ associated to $f$ is uniquely determined by the reduction of $f$ modulo $p$. When $p=2$ this is not the case, and in order to ensure that our local deformation ring at $v$ is a domain, we only want to allow lifts of a fixed mod $p$ eigenform that have a specified Iwahori level eigenvalue.

In the next subsection we recall the definition of the (finite level) nearly-ordinary Hecke algebras and the corresponding nearly-ordinary subspace of modular forms.

In the following subsection we construct the universal nearly-ordinary Hecke algebra and a certain avatar of $p$-adic Hida families. This is done following \cite{HidaNO}, except that we deal only with the totally definite case, and so things become much simpler. We also allow certain non-compact subgroups, but this does not pose an issue, provided one assumes that the level is small enough at some fixed place to guarantee a standard neatness condition.

In the next subsection we recall properties of Galois representations associated to our quaternionic modular forms and the construction of large Galois representations with values in our universal Hecke algebra using pseudo-representations and a theorem of Nyssen and Rouquier.

The final subsection will deal with augmenting the level of our Hecke modules at auxiliary primes, necessary for the patching argument in \S \ref{LocRequalsT}. Normally, one augments the level at primes $v$ that are congruent to $1$ modulo $p$, and such that the fixed residual representation is unramified at $v$ with the Frobenius having distinct eigenvalues. This is done to ensure there are no lifts of the fixed residual representation that are Steinberg at $v$. As in \cite{SWreducible} and \cite{SWirreducible}, it will be necessary for us augment the level at places $v$ for which the residual representation does not have distinct eigenvalues. Due to this, we cannot ensure that there are no Steinberg-at-$v$ lifts. However, we do show that any such are annihilated by a particular element, cf. \ref{NoSteinberg}, which allows us to prove a control theorem for these auxiliary primes that is necessary for the patching argument in \S \ref{LocRequalsT}. 

We now introduce some notation and assumptions that will be used throughout this section. We denote by $F\subset\Qbar$ a totally real field, $\mathcal{O}_F$ its ring of integers, and $\A_F$ its ring of adeles. If $S$ is a finite set of places of $F$, we let $\A_{F,S}$ denote $\prod_{v\in S} F_v$ and $\A_F^S$ denote $\prod_{v\notin S}' F_v$. If $w$ is a rational place, we will write $\A_{F,w}$ and $\A_F^w$ instead of $\A_{F,\{v|w\}}$ and $\A_F^{\{v|w\}}$; in particular, $\A_F^\infty$ denotes the ring of finite adeles.

Recall we have fixed algebraic closures $\Qbar_p$ of $\Q_p$, as well as embeddings $\Qbar\hookrightarrow\Qbar_p$ and $\Qbar\hookrightarrow\C$. Let $J_F$ denote the set of embeddings $F\hookrightarrow\Qbar$. Via our fixed embeddings of $\Qbar$ into $\Qbar_p$ and $\C$, we view $J_F$ as the set of embeddings of $F$ into $\Qbar_p$ and $\C$, respectively. Let $E/\Q_p$ be a finite extension containing all embeddings of $F\hookrightarrow\Qbar_p$. Let $\mathcal{O}$ denote the ring of integers of $E$. Given an element $z \in F \otimes_\Q \Q_p\cong \prod_{v|p} F_v$, and $\mathbf{k}\in \Z^{J_F}$, we let $z^\mathbf{k}$ denote $\prod_{\tau\in J_F}\tau(z)^{k_\tau} \in E$.  We call a pair $\kappa = (\mathbf{k},\mathbf{w})\in\Z^{J_F}\times\Z^{J_F}$ an \textit{algebraic weight} if $k_\tau\ge 2$ for all $\tau$ and $k_\tau+2w_\tau$ is independent of $\tau\in J_F$. For each finite place $v$ of $F$, we let $\mathfrak{m}_v$ denote the maximal ideal of $\mathcal{O}_{F_v}$ and $k_v$ the residue field.

We will let $\mathcal{U}_p = (\mathcal{O}_F \otimes_\Z \Z_p)^\times \cong \prod_{v|p} \mathcal{O}_{F_v}^\times$. For any $a \ge 1$, we let
	\[ \mathcal{U}_p^a = \{ (x_v)_{v|p} \in \mathcal{U}_p :
	 x_v \equiv 1 \;\mathrm{mod}\; \mathfrak{m}_v^a \text{ for each }v|p\}.
	 \]
For $a\ge 0$ we set $\Lambda(\mathcal{U}_p^a) = \mathcal{O}[[\mathcal{U}_p^a]]$. Note that $\Lambda(\mathcal{U}_p^a)$ has dimension $1+[F:\Q]$, is local if $a\ge 1$, and is isomorphic to a power series over $\mathcal{O}$ if $a$ is sufficiently large. 

For $v$ a finite place of $F$, and $a \ge 0$, we let $\mathrm{Iw}(v^a)$, respectively $\mathrm{Iw}_1(v^a)$, denote the subgroup of $\GL_2(\mathcal{O}_{F_v})$ that are upper triangular, respectively upper triangular unipotent, modulo $\mathfrak{m}_v^a$. Given integers $b \ge a \ge 0$, wet let $\mathrm{Iw}(v^{a,b}) = \mathrm{Iw}_1(v^a) \cap \mathrm{Iw}(v^b)$.

For a topological $\mathcal{O}$-module $M$, we denote by $M^\vee$ the Pontryagin dual of $M$, i.e. $M^\vee = \Hom_\mathcal{O}(M,E/\mathcal{O})$. This assignment is functorial and $f \mapsto  f^\vee$ gives an isomorphism $\End_\mathcal{O}(M)\cong \End_\mathcal{O}(M^\vee)$. In particular, if a commutative ring $R$ acts on $M$, then it acts naturally on $M^\vee$ by $(r\phi)(m) = \phi(rm)$. Pontryagin duality is not an exact functor, but if we restrict to the subcategory of locally compact Hausdorff $\mathcal{O}$-modules and strict morphisms, i.e. i.e morphisms $f:M\rightarrow M'$ such that the induced map $M/\ker f\rightarrow \im f$ is an isomorphism of topological $\mathcal{O}$-modules, then if 
	\[ M_1 \stackrel{f}{\longrightarrow} M_1 \stackrel{g}{\longrightarrow} M_3 \]
is exact and $g(M_2)$ is closed in $M_3$,
	\[ M_3^\vee \stackrel{g^\vee}{\longrightarrow} M_2^\vee 
	\stackrel{f^\vee}{\longrightarrow} M_3^\vee \]
is also exact, cf. \cite{MilneADT}*{Proposition 0.20}. We will always be in such a situation in what follows, so we will occasionally refer to the ``exactness of Pontryagin duality" without further comment. 

\subsection{Quaternionic cusp forms}\label{QuatForms}



Let $D$ denote a quaternion algebra with centre $F$ ramified at all infinite places and split at all places above $p$. Denote by $\Sigma$ the set of finite places at which $D$ ramifies. Let $\nu_D$ denote the reduced norm of $D$. Fix a maximal order $\mathcal{O}_D$ of $D$ and, for each finite place $v$ at which $D$ is split, an isomorphism $\mathcal{O}_D\otimes_{\mathcal{O}_F}\mathcal{O}_{F_v}\cong M_2(\mathcal{O}_{F_v})$ of $\mathcal{O}_{F_v}$-algebras.  This determines an isomorphism $D_v^\times\cong\GL_2(F_v)$ sending $(\calO_D)_v^\times$ to $\GL_2(\calO_{F_v})$. Using this, we identify $(D\otimes_F \A_F^{\infty,\Sigma})^\times$ with $\GL_2(\A_F^{\infty,\Sigma})$. We also fix a locally algebraic character $\psi : F^\times \backslash (\A_F^\infty)^\times \rightarrow \calO^\times$, and for each $v \in \Sigma$, unramified characters $\gamma_v : F_v^\times \rightarrow \calO^\times$ such that $\gamma_v^2 = \psi|_{F_v^\times}$.

\subsubsection{}\label{QuatFormsDef}

Let $A$ be a topological $\mathcal{O}$-module. For each $\tau\in J_F$, we have an isomorphism $\mathcal{O}_D\otimes_{\mathcal{O}_F,\tau}\mathcal{O}\cong M_2(\mathcal{O})$. Via this isomorphism, given $\tau\in J_F$, $k_\tau\ge 2$ and $w_\tau\in\Z$, we can view
	\[\mathrm{Sym}^{k_\tau-2}A^2\otimes_\mathcal{O}{\det}^{w_\tau}\mathcal{O}^2\]
as an $\mathcal{O}_D\otimes_\Z\Z_p \cong \mathrm{M}_{2 \times 2}(\mathcal{O}_F \otimes_\Z \Z_p)$-module, which we denote by $W_{k_\tau,w_\tau}(A)$. Given an algebraic weight $\kappa=(\bold{k},\bold{w})\in \Z^{J_F}\times \Z^{J_F}$, we set $W_\kappa(A)=\otimes_{\tau\in J_F}W_{k_\tau,w_\tau}(A)$. Note $W_{\kappa}(A) \cong W_{\kappa}(\calO)\otimes_\mathcal{O}A$. Concretely we can view $W_{\kappa}(\mathcal{O})$ as the space of $\mathcal{O}$-linear combinations on the monomials
	\[ \prod_{\tau\in J_F} X_\tau^{k_\tau - 2 - j_\tau} Y_\tau^{j_\tau} \]
for $0\le j_\tau \le k_\tau-2$, where the action of $\mathcal{O}_D\otimes_\Z\Z_p\cong\mathrm{M}_{2\times 2}(\mathcal{O}_F\otimes_\Z\Z_p)$ is given by
	\[	\left(\begin{array}{cc} a_p & b_p \\ c_p & d_p \end{array}\right)
		\prod_{\tau\in J_F} X_\tau^{k_\tau - 2 - j_\tau} Y_\tau^{j_\tau} 
		= (a_pd_p-b_pc_p)^\mathbf{w}
		\prod_{\tau\in J_F} (\tau(a_p) X_\tau+\tau(c_p) Y_\tau)^{k_\tau - 2 - j_\tau} 
		(\tau(b_p) X_\tau + \tau(d_p) Y_\tau)^{j_\tau}.	\]
 
 Following \cite{KW2}, we will consider some non-compact open subgroups $U = \prod_v U_v$ of $(D\otimes_F \A_F^\infty)^\times$ in order to ensure that certain local deformation rings are domains. This is only necessary when $p=2$, which is our principal focus, but we do this for all $p$ simply to avoid cases and to treat all primes at once. To facilitate this, we extend the action of $(\mathcal{O}_D \otimes_\Z \Z_p)^\times$ on $W_\kappa(A)$ to an action of
 	\[ (D \otimes_F \A_F^{\infty,p})^\times \times (\mathcal{O}_D \otimes_\Z \Z_p)^\times
	\cong \prod_{v \nmid p} D_v \times \prod_{v|p} (\mathcal{O}_D)_v^\times \]
by letting $D_v$ act via $\gamma_v^{-1} \circ \nu_D$ if $v \in \Sigma$, and trivially if $v \notin \Sigma\cup \{w|p\}$.

Let $\Sigma' \subseteq \Sigma$. We will call an open subgroup $U=\prod_v U_v$ of $(D\otimes_F \A_F^\infty)^\times$ a $(\Sigma'\subseteq\Sigma)$-\textit{open subgroup} if
	\begin{itemize}
	\item[-] $U_v\subseteq \GL_2(\mathcal{O}_{F_v})$ for $v \notin \Sigma$,
	\item[-] $U_v = D_v$ for $v \in \Sigma'$, and
	\item[-] $U_v = (\mathcal{O}_{D})_v^\times$ for $v \in \Sigma\smallsetminus\Sigma'$. 
	\end{itemize} 
Let $\kappa$ be an algebraic weight and let $U$ be a $(\Sigma'\subset\Sigma)$-open subgroup. We let $S_{\kappa,\psi}(U,A)$ denote the space of functions
	\[
	f:D^\times\backslash(D\otimes_F\A_F^\infty)^\times\longrightarrow W_{\kappa}(A)
	\]
such that  $f(xu)=u^{-1}f(g)$ for all $u\in U$ and $f(zx)=\psi(z)f(x)$ for all $z\in(\A_F^\infty)^\times$. For this space to be nonzero, there must be a submodule of $A$ on which $\psi(z) = z_p^{2-\bold{k}-\bold{w}}$ for all $z\in U\cap(\A_F^\infty)^\times$. In the case that $\kappa=((k,\ldots,k),(0,\ldots,0))$ for some $k\ge 2$, we will also write $S_{k,\psi}(U,A)$ for $S_{\kappa,\psi}(U,A)$. Note that for $v \in \Sigma'$ we have $f(xg_v) = \gamma_v \circ \nu_D (g_v) f(x)$ for all $g_v \in D_v$.

Choose $t_1,\ldots,t_n\in (D\otimes_F\A_F^\infty)^\times$ such that $(D\otimes_F\A_F^\infty)^\times=\sqcup_{i=1}^n D^\times t_i U(\A_F^\infty)^\times$. Then the map $f\mapsto (f(t_1),\ldots,f(t_n))$ defines an isomorphism of $\mathcal{O}$-modules
	\[ S_{\kappa,\psi}(U,A) \stackrel{\sim}{\longrightarrow} \bigoplus_{i=1}^n 
	W_{\kappa}(A)^{(U(\A_F^\infty)^\times\cap t_i^{-1}D^\times t_i)/F^\times}
	\]
which is also an isomorphism of $A$-modules if $A$ is an $\mathcal{O}$-algebra. If for any $t\in (D\otimes_F\A_F^\infty)^\times$, we have
	\[ (U(\A_F^\infty)^\times\cap t^{-1}D^\times t)/F^\times = 1,
	\]
then $A\mapsto S_{\kappa,\psi}(U,A)$ is an exact functor and $S_{\kappa,\psi}(U,A)\cong S_{\kappa,\psi}(U,\mathcal{O})\otimes_\mathcal{O}A$. Note, however, that if $A$ is $\calO$-flat, in particular if $A=E$, we still have $S_{\kappa,\psi}(U,A)\cong S_{\kappa,\psi}(U,\calO)\otimes_\calO A$ without any assumption on $U$.

\subsubsection{}\label{IsotropyGropus}

We record a few lemmas regarding the structure of the isotropy groups $(U(\A_F^\infty)^\times \cap t^{-1}D^\times t)/F^\times$. Let $D^1$ denote the subgroup of $D^\times$ of elements of reduced norm $1$, and let $U' = U \cap \calO_D^\times$. Note that $U'$ is open compact. Set $V = \prod_{w < \infty} \calO_{F_w}^\times$. Then the reduced norm gives an exact sequence 
	\begin{equation}\label{rednormexact}
		0 \longrightarrow (UV \cap t^{-1} D^1 t)/\{\pm 1\} \longrightarrow 
		(U(\A_F^\infty)^\times \cap t^{-1}D^\times t)
		/F^\times \longrightarrow (((A_F^\infty)^\times)^2 V \cap F^\times)/(F^\times)^2, 
	\end{equation}
and there is an exact sequence
	\begin{equation}\label{classgroupexact}
		0 \longrightarrow \mathcal{O}_F^\times /(\mathcal{O}_F^\times)^2 \longrightarrow 
		(((A_F^\infty)^\times)^2 V \cap F^\times)/(F^\times)^2 \longrightarrow \mathrm{Cl}[2] 
		\longrightarrow 0
	\end{equation}
with $\mathrm{Cl}$ the class group of $\mathcal{O}_F$. The first two of the following three lemmas are taken directly from \cite{KW2}*{\S7}.

\begin{lem}\label{ExpIsotropy}

Let $U$ be a $(\Sigma'\subseteq\Sigma)$-open subgroup of $(D \otimes_F \A_F^\infty)^\times$. Let $w$ be a finite place not above $p$ at which $D$ is split. Let $N_w$ be the cardinality of $\GL_2(k_w)$.

The exponent of a Sylow $p$-subgroup of $(U(\A_F^\infty)^\times \cap t^{-1}D^\times t)/F^\times$ divides $4N_w$.

\end{lem}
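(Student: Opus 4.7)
The plan is to combine the two exact sequences (\ref{rednormexact}) and (\ref{classgroupexact}) with a reduction-mod-$w$ argument. First, I would observe that the rightmost term of (\ref{rednormexact}) has exponent dividing $2$: by (\ref{classgroupexact}) it is an extension of $\mathrm{Cl}[2]$ by $\mathcal{O}_F^\times/(\mathcal{O}_F^\times)^2$, both of which are of exponent $2$. Consequently the $p$-Sylow exponent of $(U(\A_F^\infty)^\times\cap t^{-1}D^\times t)/F^\times$ divides twice the $p$-Sylow exponent of the kernel $(UV\cap t^{-1}D^1 t)/\{\pm 1\}$, and it suffices to show this kernel has $p$-Sylow exponent dividing $2N_w$.

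Next I would prove that this kernel is a finite group. After conjugating by $t$, it is identified with $\Gamma/\{\pm 1\}$, where $\Gamma = U^tV\cap D^1$ with $U^t=tUt^{-1}$. Since $D$ is totally definite, $D^1$ is discrete in $(D\otimes_F\A_F)^1$, and I would check place-by-place that the local constraints cut out a compact subset: at infinity and at $v\in\Sigma'$ one has $D_v^1$ itself compact (total definiteness, respectively because $D_v$ is a local division algebra); at $v\in\Sigma\smallsetminus\Sigma'$ the intersection $U_v^tV_v\cap D_v^1$ lies in $\mathcal{O}_{D_v}^\times$, hence is compact; and at the remaining finite places, any element of $U_v^tV_v\cap \SL_2(F_v)$ is conjugate into $\SL_2(\mathcal{O}_{F_v})$ by the standard observation that if $\alpha=uz\in U_v^tV_v$ has $\det=1$ then $z^2=\det(u)^{-1}$ is an integral unit and forces $z\in\mathcal{O}_{F_v}^\times$. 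Thus $\Gamma$ is a discrete subset of a compact set, hence finite.

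The main step is bounding the $p$-Sylow exponent of $\Gamma/\{\pm 1\}$ via the auxiliary place $w$. Take $\gamma\in\Gamma$ whose image in $\Gamma/\{\pm 1\}$ has order $p^a$; then $\gamma$ has order $p^a$ or $2p^a$ in $\Gamma$, so the $p$-part of its order is at least $p^a$. Via the fixed isomorphism $(\mathcal{O}_D)_w\cong M_2(\mathcal{O}_{F_w})$ and a suitable conjugation, $\gamma_w$ lies in $\SL_2(\mathcal{O}_{F_w})$ and reduces to $\bar\gamma_w\in \SL_2(k_w)$. The kernel of $\SL_2(\mathcal{O}_{F_w})\to\SL_2(k_w)$ is $1+M_2(\mathfrak{m}_w)$, a pro-$\ell$ group for $\ell$ the residue characteristic of $w$; since $w\nmid p$ we have $\ell\ne p$, so reduction preserves the $p$-part of the order of $\gamma_w$. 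Hence $p^a$ divides the order of $\bar\gamma_w$, which in turn divides $|\GL_2(k_w)|=N_w$, giving $p^a\mid N_w$. Combining with the factor of $2$ from the quotient yields the claimed bound of $4N_w$.

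The main obstacle I anticipate is the careful bookkeeping around the conjugation by $t$ and the place-by-place verifications of compactness across the various types of places ($\Sigma'$, $\Sigma\smallsetminus\Sigma'$, primes above $p$, and the auxiliary $w$); the remainder is a formal manipulation of short exact sequences together with classical facts on discreteness of $D^1$ in its adelic completion and the pro-$\ell$ structure of principal congruence subgroups.
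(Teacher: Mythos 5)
Your proof follows the same overall strategy as the paper: use the short exact sequences (\ref{rednormexact}) and (\ref{classgroupexact}), argue that the kernel term is a finite arithmetic group, and bound its $p$-Sylow exponent by reducing modulo $\mathfrak{m}_w$ into $\GL_2(k_w)$. The one genuine difference, which is an improvement, is that the paper splits into the cases $U$ compact and $U$ non-compact (picking up an extra factor of $2$ from $U(\A_F^\infty)^\times/U'(\A_F^\infty)^\times$), whereas you observe directly that $\Gamma = U^tV\cap D^1$ is finite even when $U_v = D_v^\times$ for $v\in\Sigma'$, because the reduced-norm-$1$ condition forces the $\Sigma'$-components into the compact group $D_v^1$. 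This place-by-place compactness check is correct and eliminates the paper's dichotomy; as a side effect it actually yields the sharper bound $2N_w$.

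Two small points merit attention. First, your justification that the right-hand term of (\ref{rednormexact}) has exponent $2$ is not valid as stated: an extension of two exponent-$2$ groups need not have exponent $2$ (consider $\Z/4\Z$ as an extension of $\Z/2\Z$ by $\Z/2\Z$). The conclusion is nevertheless correct, for the more direct reason that $(((A_F^\infty)^\times)^2V\cap F^\times)/(F^\times)^2$ is a \emph{subgroup} of $F^\times/(F^\times)^2$, which is an $\F_2$-vector space. Second, your bookkeeping in the last paragraph is slightly off: you set out to prove the kernel has $p$-Sylow exponent dividing $2N_w$, but the argument you give actually proves it divides $N_w$, so that the middle term of (\ref{rednormexact}) has $p$-Sylow exponent dividing $2N_w$ rather than $4N_w$. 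Stating the final bound as $4N_w$ is of course still correct (it is just not sharp), so neither issue affects the validity of the proof of the lemma as stated.
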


\begin{proof}

First assume that $U$ is compact. Then $UV \cap t^{-1}D^1 t$ is discrete and compact, hence finite, and injects into $U_wV_w$. Note that any finite subgroup of $U_wV_w$ of exponent a power of $p$ injects into $\GL_2(k_w)$ under reduction modulo $\mathfrak{m}_w$. By \eqref{classgroupexact}, $(((A_F^\infty)^\times)^2 V \cap F^\times)/(F^\times)^2$ has exponent $2$, and so any $p$-Sylow subgroup of $(U(\A_F^\infty)^\times \cap t^{-1}D^\times t)/F^\times$ has exponent dividing $2N_w$.

Now assume that $U$ is not compact. Let $U'$ denote the maximal compact open subgroup of $U$. Since $U(\A_F^\infty)^\times /U'(\A_F^\infty)^\times$ is a finite group of exponent $2$, the same holds for the quotient of $(U(\A_F^\infty)^\times \cap t^{-1} D^\times t) / F^\times$ by $(U'(\A_F^\infty)^\times \cap t^{-1} D^\times t)/F^\times$. From what we know in the compact case, we get that the exponent of any $p$-Sylow subgroup of $(U(\A_F^\infty)^\times \cap t^{-1} D^\times t)/F^\times$ divides $4N_w$.	\end{proof}

Let $v$ be a finite place of $F$ at which $D$ is split. If $U_v \supseteq \mathrm{Iw}(v)$, then any character $\chi_v : k_v^\times \rightarrow \mathcal{O}^\times$ can be viewed as a character of $U$ by
	\[ \chi_v \left( \begin{array}{cc} a & b \\ c & d \end{array} \right) = \chi_v (a_v d_v^{-1} \;\mathrm{mod}\; 
	\mathfrak{m}_v).
	\]
Since $\chi_v$ is trivial on $U \cap (\A_F^\infty)^\times$, we can extend $\chi_v$ to a character on $U(\A_F^\infty)^\times$ by letting it be trivial on $(\A_F^\infty)^\times$.

\begin{lem}\label{CharKillIsotropy}

Let $Q$ be some fixed set of finite places of $F$ at which $D$ is split. Fix another finite place $w \notin Q$ at which $D$ is split, and let $N_w$ be the cardinality of $\GL_2(k_w)$. Let $U$ be a $(\Sigma'\subseteq\Sigma)$-open subgroup of $(D \otimes_F \A_F^\infty)^\times$ such that $U_v \supseteq \mathrm{Iw}(v)$ for every $v \in Q$. Assume that for every $v\in Q$, the order of the $p$-subgroup of $k_v^\times$ is divisible by the $p$-part of  $2p(4N_w)$.

There is a character $\chi = \prod_{v\in Q} \chi_v : \prod_{v \in Q} k_v^\times \rightarrow \mathcal{O}^\times$ such that, 
\begin{enumerate}
	\item each $\chi_v$ is non-trivial of order a power of $p$, and has order $\ge 4$ if $p=2$;
	\item viewing $\chi$ as a character of $U(\A_F^\infty)^\times$ as above, $\chi$ annihilates $(U(\A_F^\infty)^\times \cap t^{-1} D^\times t)/F^\times$ for any $t \in (D \otimes_F \A_F^\infty)^\times$.
\end{enumerate}

\end{lem}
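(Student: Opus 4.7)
My plan is to combine the exponent bound from Lemma \ref{ExpIsotropy} with a careful counting of the available characters of $k_v^\times$.

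First, I would set $p^B$ equal to the $p$-part of $4N_w$. By hypothesis, the $p$-part of $k_v^\times$, say cyclic of order $p^{e_v}$, is divisible by the $p$-part of $2p(4N_w)$; unwinding this gives $e_v \geq B+1$ when $p$ is odd, and $e_v \geq B+2$ when $p=2$. This is precisely the flexibility I will need to find characters $\chi_v$ which are both sufficiently small (trivial on the $p^B$-torsion of $k_v^\times$) and sufficiently large (non-trivial, of order $\geq 4$ if $p=2$).

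Next, for each double coset representative $t$, let $\Gamma_t = (U(\A_F^\infty)^\times \cap t^{-1}D^\times t)/F^\times$ and consider the homomorphism
\[
\Phi \colon \Gamma_t \longrightarrow \prod_{v \in Q} k_v^\times, \qquad
\begin{pmatrix} a & b \\ c & d \end{pmatrix} \longmapsto \bigl(a d^{-1} \bmod \mathfrak{m}_v\bigr)_{v \in Q}.
\]
This is well-defined on $\Gamma_t$ because the map factors through the center and through $\mathrm{Iw}_1(v) \subseteq U_v$. Since the target is abelian, $\Phi$ factors through $\Gamma_t^{\mathrm{ab}}$. An elementary argument (decomposing an element of order $p^a m$ with $\gcd(p,m)=1$ into its commuting $p$- and prime-to-$p$-power parts) shows that, for any surjection onto an abelian group, the exponent of the $p$-part of the image is bounded by the exponent of a Sylow $p$-subgroup of the source. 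By Lemma \ref{ExpIsotropy}, this exponent divides $4N_w$, so the $p$-part of the image $\Phi(\Gamma_t)$ lies in the $p^B$-torsion of $\prod_{v \in Q} k_v^\times$.

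Now I would choose, for each $v \in Q$, a character $\chi_v \colon k_v^\times \to \mathcal{O}^\times$ (enlarging $\mathcal{O}$ if necessary) of order $p$ (resp.\ $4$ if $p=2$) that is trivial on the $p^B$-torsion subgroup of $k_v^\times$. Such a character exists: the $p$-power order characters of $k_v^\times$ vanishing on the $p^B$-torsion form the Pontryagin dual of the cyclic group $k_v^\times[p^\infty]/k_v^\times[p^B]$, which has order $p^{e_v - B} \geq p$ (resp.\ $\geq 4$ when $p=2$) by the calculation in the first paragraph. Setting $\chi = \prod_{v \in Q} \chi_v$, viewed as a character of $U(\A_F^\infty)^\times$ in the manner described before the statement, I claim $\chi$ annihilates every $\Gamma_t$. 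Indeed, $\chi$ has $p$-power order, so it vanishes on the prime-to-$p$ part of $\Phi(\Gamma_t)$ automatically; and by construction each $\chi_v$ vanishes on the $p^B$-torsion of $k_v^\times$, hence on the $p$-part of $\Phi(\Gamma_t)$. Since the set of double coset representatives is finite, a single $\chi$ works for every $t$, establishing (1) and (2). The main (mild) obstacle is simply the careful bookkeeping of the exponents $e_v$ versus $B$ in both parities of $p$; the hypothesis on the $p$-part of $2p(4N_w)$ is tailored so that this bookkeeping comes out.
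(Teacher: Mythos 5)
Your proof is correct and follows essentially the same approach as the paper: both rely on Lemma \ref{ExpIsotropy} for the exponent bound $p^B$ on the $p$-Sylow of each isotropy group, and both then choose $\chi_v$ small enough to be trivial there while remaining non-trivial of order $\geq p$ (resp.\ $\geq 4$). The only cosmetic difference is that the paper realizes such a $\chi_v$ by raising a character of maximal $p$-power order to the $4N_w$-th power, whereas you select one directly as a character of $k_v^\times[p^\infty]/k_v^\times[p^B]$; the arithmetic is identical.
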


\begin{proof}

Our hypothesis implies that there is a character $\chi'  = \prod_{v\in Q} \chi_v' : \prod_{v \in Q} k_v^\times \rightarrow \mathcal{O}^\times$ of order a power of $p$, with each $\chi'_v$ of order divisible by the $p$-part of $2p(4N_w)$. We then set $\chi_v = (\chi_v')^{4N_w}$, and $\chi = \prod_{v\in Q}\chi_v$. Note that the order of $\chi$ is divisible by the $p$-part of $2p$. Since the exponent of a $p$-Sylow subgroup of $(U(\A_F^\infty)^\times \cap t^{-1}D^\times t)/F^\times$ divides the $p$-part of $4N_w$ by \ref{ExpIsotropy}, $\chi$ is trivial on $(U(\A_F^\infty)^\times \cap t^{-1} D^\times t)/F^\times$ for any $t \in (D \otimes_F \A_F^\infty)^\times$.  \end{proof}

\begin{lem}\label{SuffSmallSub}
Let $v$ be a finite place of $F$ at which $D$ is split, and let $U$ be a $(\Sigma'\subseteq\Sigma)$-open subgroup of $(D \otimes_F \A_F^\infty)^\times$. There is some $n\ge 1$ such that if $U_v \subseteq \mathrm{Iw}_1(v^n)$ then $(U(\A_F^\infty)^\times \cap t^{-1}D^\times t)/F^\times = 1$ for any $t \in (D \otimes_F \A_F^\infty)^\times$.
\end{lem}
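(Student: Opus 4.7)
The plan is to combine two ingredients: (a) a uniform finite bound $M$ on the orders of the isotropy groups
\[
	\Gamma_t := (U(\A_F^\infty)^\times\cap t^{-1}D^\times t)/F^\times
\]
as $t$ ranges over $(D\otimes_F\A_F^\infty)^\times$, and (b) the fact that for $n$ large enough, the image of $\mathrm{Iw}_1(v^n)$ in $\PGL_2(F_v)$ contains no non-trivial torsion of order $\le M$. Combining (a) and (b) immediately forces $\Gamma_t$ to be trivial once $U_v\subseteq\mathrm{Iw}_1(v^n)$.

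For (a), I would apply the argument from the proof of \ref{ExpIsotropy}. With $U' := U\cap\mathcal{O}_D^\times$ an open compact subgroup, the isotropy group at level $U'$ is finite via the exact sequences \eqref{rednormexact} and \eqref{classgroupexact}: the left-hand term is finite using compactness of $(U'V)^1$ (which relies on the totally definite hypothesis on $D$ and the fact that norm-one elements in $D_v^\times$ are compact for $v\in\Sigma'$), and the right-hand term is finite by Dirichlet's unit theorem and finiteness of the class group. Passing from $U'$ to $U$ only multiplies by a finite group of exponent $2$, so each $\Gamma_t$ is finite. Since $D^\times\backslash(D\otimes_F\A_F^\infty)^\times/U(\A_F^\infty)^\times$ is finite (as a quotient of the analogous double coset space for $U'$) and isotropies at $U$-equivalent representatives are conjugate in $U$, the orders $|\Gamma_t|$ are uniformly bounded by some $M$.

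For (b), take any $g\in\mathrm{Iw}_1(v^n)$ that is torsion of order $\le M$ modulo $F_v^\times$. Then $g^k\in F_v^\times$ for some $k\le M$, so $g$ is semisimple with characteristic polynomial $x^2-\mathrm{tr}(g)x+\det(g)$ satisfying $\mathrm{tr}(g)\equiv 2$ and $\det(g)\equiv 1\bmod\mathfrak{m}_v^n$. Hence both eigenvalues of $g$ lie in $1+\mathfrak{m}_v^{\lceil n/2\rceil}\subset\overline{F_v}$, while their ratio is a root of unity of order $\le M$. For $n$ large (depending only on $M$), the only root of unity of order $\le M$ lying in $1+\mathfrak{m}_v^{\lceil n/2\rceil}$ is $1$, so $g$ is scalar, and its image in $\PGL_2(F_v)$ is trivial.

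To conclude, fix $n$ large as in (b), and suppose $U_v\subseteq\mathrm{Iw}_1(v^n)$ and $\gamma\in\Gamma_t$. Then $(t\gamma t^{-1})_v\in U_vF_v^\times\subseteq\mathrm{Iw}_1(v^n)F_v^\times$, so its image in $\PGL_2(F_v)$ lies in the image of $\mathrm{Iw}_1(v^n)$ and is torsion of order $\le M$; by (b) it is trivial. Hence $\gamma_v\in F_v^\times$, and the injectivity of $D\hookrightarrow D_v$ combined with $D\cap F_v=F$ (centrality of $F$ in $D$) forces $\gamma\in F^\times$, i.e.\ $\gamma$ is trivial in $D^\times/F^\times$. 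The subtlety to beware is that $\bigcap_n\mathrm{Iw}_1(v^n)$ is \emph{not} trivial --- it contains the unipotent upper-triangular subgroup $N(\mathcal{O}_{F_v})$ --- so the naive ``open neighborhoods shrinking to the identity'' argument fails, and the uniform order bound on isotropy is essential in order to restrict attention to bounded-order torsion in $\PGL_2(F_v)$.
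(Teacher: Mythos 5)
Your proof is correct, modulo a small notational slip, and it takes a genuinely different route from the paper. The paper's argument fixes $n$ purely in terms of $F$: it introduces the finite set $\mu$ of roots of unity $\zeta$ with $[F(\zeta):F]\le 2$, chooses $k$ so that $\zeta+\zeta^{-1}\not\equiv\pm 2\bmod\mathfrak{m}_v^k$ for the non-trivial $\zeta\in\mu$, and takes $n\ge 2k$ with $2\notin\mathfrak{m}_v^n$. It then splits the isotropy group via the reduced-norm sequence: the norm-one part $(U'V\cap t^{-1}D^1t)/\{\pm 1\}$ is killed by a direct trace computation (any norm-one torsion element has trace $\zeta+\zeta^{-1}$ for $\zeta\in\mu$, which is forced to be $\pm 2$), and the remaining finite $2$-group is killed because a non-trivial $2$-torsion element would have trace $0$, impossible in $U_vF_v^\times$ when $2\notin\mathfrak{m}_v^n$. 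Your argument instead fixes a uniform order bound $M$ on the isotropy groups (which you should fix for the maximal $(\Sigma'\subseteq\Sigma)$-open subgroup, so that shrinking $U_v$ only makes the isotropy smaller and there is no circularity in letting $n$ depend on $M$), and then shows directly that $\mathrm{Iw}_1(v^n)$ has no non-trivial torsion of order $\le M$ in $\PGL_2(F_v)$ for $n\gg 0$, via an eigenvalue-ratio estimate; this collapses the paper's two-step reduced-norm decomposition into a single argument in $\PGL_2$, which is conceptually cleaner but makes $n$ depend on $U$ (the paper's $n$ depends only on $F$ and $v$). The observation that $\bigcap_n\mathrm{Iw}_1(v^n)$ is the unipotent radical, not the identity, is exactly the right thing to flag --- both proofs must avoid the naive shrinking argument. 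One slip: where you write $(t\gamma t^{-1})_v\in U_vF_v^\times$, you mean $\gamma_v\in U_vF_v^\times$ for a representative $\gamma\in U(\A_F^\infty)^\times\cap t^{-1}D^\times t$; the argument of step (b) should be applied to the $U_v$-component $u$ of $\gamma_v=uz$, and the conclusion $\gamma_v\in F_v^\times$ is then transported along $t$ to give $t\gamma t^{-1}\in D^\times$ central in $D_v$, hence in $F^\times$. With this wording corrected, the proof is sound.
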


\begin{proof}

Since $D^\times \backslash (D \otimes_F \A_F^\infty)^\times /U(\A_F^\infty)^\times$ is finite, it suffices to show the existence of such an $n\ge 1$ for fixed $t \in (D \otimes_F \A_F^\infty)^\times$. Let $\mu$ denote the set of all roots of unity $\zeta \in \Qbar$ such that $[F(\zeta):F]\le 2$. Take $k\ge 1$ sufficiently large such that for any $\zeta \in \mu$ with $\zeta \ne \pm 1$, we have $\zeta + \zeta^{-1} \not\equiv \pm 2 \mod \mathfrak{m}_v^k$. We take $n$ such that $n \ge 2k$ and such that $2 \notin\mathfrak{m}_v^n$.

Let $U'$ denote the maximal compact subgroup of $U$, $D^1$ denote the subgroup of $D^\times$ of elements of reduced norm $1$, and set $V = \prod_{w < \infty} \calO_{F_w}^\times$. First consider the subgroup $(U'V \cap t^{-1} D^1 t)/\{\pm 1\}$ of $(U' (\A_F^\infty)^\times \cap t^{-1}D^\times t)/F^\times$. Note  $U'V \cap t^{-1} D^1 t$ is finite, since it is compact and discrete. Take $uz \in U'V \cap t^{-1}D^1 t$ with $u \in U'$ and $z \in V$. Since $uz \in t^{-1}D^1 t$ has finite order, $\tr(uz) = \zeta + \zeta^{-1}$ for some $\zeta \in \mu$. Our assumption on $U_v$ implies $\nu_D(u_v) \in 1 + \mathfrak{m}_v^n$, and so $1 = \nu_D (u)z^2$ implies that $z_v \in \pm 1+\mathfrak{m}_v^k$, by choice of $n$. Then
	\[ \zeta + \zeta^{-1} = \tr(uz) = \tr(u_v z_v) = z_v \tr(u_v) \in \pm 2 + \mathfrak{m}_v^k, \]
which implies $\zeta = \pm 1$, by choice of $k$, and thus that $(U'V \cap t^{-1}D^1 t)/\{\pm 1\}$ is trivial. 

Since $(U'V \cap t^{-1}D^1 t)/\{\pm 1\}$ is trivial, the reduced norm induces and injection
	\[  (U'(\A_F^\infty)^\times \cap t^{-1} D^\times t)/F^\times \longrightarrow 
	(V((\A_F^\infty)^\times)^2 \cap F^\times)/(F^\times)^2,\]
and by \eqref{classgroupexact}, $(U'(\A_F^\infty)^\times \cap t^{-1}D^\times t)/F^\times$ is a finite $2$-group. Since $U(\A_F^\infty)^\times/U'(\A_F^\infty)^\times$ is a finite group of exponent $2$, $(U(\A_F^\infty)^\times \cap t^{-1}D^\times t)/F^\times$ is also a finite $2$-group. It thus suffices to show that if $g \in U(\A_F^\infty)^\times \cap t^{-1}D^\times t$ is such that $g^2 \in F^\times$, then $g \in F^\times$. If $g^2 \in F^\times$ and $g \notin F^\times$, then $\tr g = 0$. But $U(\A_F^\infty)^\times \cap t^{-1}D^\times t$ injects into $U_v F_v^\times$, and since $2 \notin \mathfrak{m}_v^n$, elements of $U_vF_v^\times$ have nonzero trace. \end{proof}

\subsubsection{}\label{OpsOnCuspForms}

Set $U' = U \cap \calO_D^\times$. For $g \in (D \otimes_F \A_F^{\infty,\Sigma})$, if $U'gU' = \sqcup_i g_iU'$, we have $UgU = \sqcup_i g_iU$. If $g\in (D\otimes_F\A_F^{\infty,p})^\times \times (\calO_D \otimes_\Z \Z_p)^\times$ there is a double coset operator
	\[ [UgU]:S_{\kappa,\psi}(U,A)\longrightarrow S_{\kappa,\psi}(V,A) \]
given by $([UgU]f)(x)=\sum_i g_{i}f(xg_i)$. If $A$ is an $E$ vector space, then this double coset operator is defined for any $g \in (D\otimes_F \A_F^\infty)^\times$.

If $V\subseteq U$ is another $(\Sigma'\subset\Sigma)$-open subgroup with $V$ normal in $U$, then the group $\Delta = U(\A_F^\infty)^\times/V(\A_F^\infty)^\times$ acts on $S_{\kappa,\psi}(V,A)$ by $(\langle \delta \rangle f)(x) = u_\delta f(xu_\delta)$, where $u_\delta$ is any lift to $U$ of $\delta\in U(\A_F^\infty)^\times/V(\A_F^\infty)^\times$. We will have several occasions to use the following lemmas.

\begin{lem}\label{freeovergroupring}

Let $U$ and $V$ be $(\Sigma'\subseteq\Sigma)$-open subgroups of $(D\otimes_F\A_F^\infty)^\times$ with $V$ normal in $U$. If for all $t\in (D \otimes_F \A_F^\infty)^\times$ we have $(U(\A_F^\infty)^\times \cap t^{-1}D^\times t)/F^\times = 1$, then $S_{\kappa,\psi}(V,\mathcal{O})$ is a free $\mathcal{O}[\Delta]$-module.
	\end{lem}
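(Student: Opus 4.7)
The plan is to refine the standard $V$-double coset decomposition of $S_{\kappa,\psi}(V,\calO)$ using the $U$-double cosets, and then write down an explicit $\calO[\Delta]$-basis.

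First, I would choose $U$-double coset representatives $t_1,\ldots,t_n$ for $(D\otimes_F\A_F^\infty)^\times$ and lifts $\{u_\delta:\delta\in\Delta\}\subseteq U(\A_F^\infty)^\times$ of the elements of $\Delta$, with $u_e=1$. I claim that $\{t_iu_\delta\}_{i,\delta}$ is a complete set of $V$-double coset representatives and that each has trivial $V$-isotropy $(V(\A_F^\infty)^\times\cap(t_iu_\delta)^{-1}D^\times(t_iu_\delta))/F^\times$. The isotropy statement is immediate from $V(\A_F^\infty)^\times\subseteq U(\A_F^\infty)^\times$ and the standing hypothesis; the coset statement follows from the fact that $V(\A_F^\infty)^\times$ is normal in $U(\A_F^\infty)^\times$ (since $V\triangleleft U$ and $(\A_F^\infty)^\times$ is central), so that $u_\delta V(\A_F^\infty)^\times u_\delta^{-1}=V(\A_F^\infty)^\times$, after which distinctness reduces directly to the hypothesis. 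The description from \ref{QuatFormsDef} then gives an $\calO$-linear isomorphism
\[
S_{\kappa,\psi}(V,\calO)\;\xrightarrow{\sim}\;\bigoplus_{i,\delta}W_\kappa(\calO),\qquad f\mapsto (f(t_iu_\delta))_{i,\delta}.
\]

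Second, for each $i$ and each element $w$ of a fixed $\calO$-basis of $W_\kappa(\calO)$, let $\phi_{i,w}\in S_{\kappa,\psi}(V,\calO)$ be the unique element with $\phi_{i,w}(t_{i'}u_\delta)=\delta_{ii'}\delta_{\delta,e}w$. Writing $u_\delta u_{\delta''}=u_{\delta\delta''}\cdot c$ for a unique $c=c(\delta,\delta'')\in V(\A_F^\infty)^\times$, and decomposing $c=v\cdot z$ with $v\in V$ and $z\in(\A_F^\infty)^\times$ (the ambiguity modulo $V\cap(\A_F^\infty)^\times$ is harmless in view of the transformation rules), a direct computation using $f(xv)=v^{-1}f(x)$ and $f(zx)=\psi(z)f(x)$ gives
\[
(\langle\delta''\rangle\phi_{i,w})(t_{i'}u_\delta)=\delta_{ii'}\,\delta_{\delta\delta'',e}\,u_{\delta''}\psi(z)v^{-1}w.
\]
The operator $u_{\delta''}\psi(z)v^{-1}$ is an automorphism of $W_\kappa(\calO)$, since $u_{\delta''}$ and $v$ lie in $U(\A_F^\infty)^\times$ and act through its projection to $(D\otimes_F\A_F^{\infty,p})^\times\times(\calO_D\otimes_\Z\Z_p)^\times$ (with the $\gamma_v^{-1}\circ\nu_D$ twist at $v\in\Sigma'$, which is still invertible), while $\psi(z)\in\calO^\times$.

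Third, I conclude by a bookkeeping argument. As $\delta''$ ranges over $\Delta$, the support of $\langle\delta''\rangle\phi_{i,w}$ in the direct-sum decomposition is exactly the single coordinate $(i,(\delta'')^{-1})$, and on that coordinate $w$ is sent to a basis of $W_\kappa(\calO)$ by an automorphism. Hence the family $\{\langle\delta''\rangle\phi_{i,w}\}_{\delta'',i,w}$ is an $\calO$-basis of $S_{\kappa,\psi}(V,\calO)$, which is precisely the statement that $\{\phi_{i,w}\}_{i,w}$ is a free $\calO[\Delta]$-basis. The main technical point, and the only real obstacle, is the careful tracking of the twist factor $u_{\delta''}\psi(z)v^{-1}$ and the verification that the potential indeterminacy in the splitting $c=vz$ is absorbed by the transformation rules; once this is in place, the freeness is essentially a permutation-of-coordinates argument with invertible diagonal blocks.
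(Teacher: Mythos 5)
Your proof is correct and takes essentially the same approach as the paper: both refine the $U$-double coset decomposition by $\Delta$ to get a $V$-double coset decomposition $\{t_iu_\delta\}$ with trivial isotropy, and then identify a free $\mathcal{O}[\Delta]$-module structure. The paper writes down the $\mathcal{O}[\Delta]$-module isomorphism $f \mapsto (\sum_\delta u_\delta f(t_i u_\delta)\otimes\delta^{-1})_i$ in one stroke, whereas you construct an explicit $\mathcal{O}[\Delta]$-basis $\{\phi_{i,w}\}$ and track the twist factor by hand; the two formulations are equivalent.
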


\begin{proof}
Choose $\{t_1,\ldots,t_n\}\subset (D\otimes_F\A_F^\infty)^\times$ such that $(D\otimes_F\A_F^\infty)^\times=\sqcup_{i=1}^n D^\times t_i U(\A_F^\infty)^\times$. Our assumption on $U$ implies we have an $\mathcal{O}$-algebra isomorphism
	\begin{align*}
		S_{\kappa,\psi}(U,\mathcal{O}) &\stackrel{\sim}{\longrightarrow} \bigoplus_{i=1}^n 
	W_{\kappa}\\
	f & \longmapsto (f(t_1),\ldots,f(t_n)).
	\end{align*}
For each $\delta \in \Delta$, choose a representative $u_\delta\in U$. Then, $(D\otimes_F\A_F^\infty)^\times=\sqcup_{i=1}^n \sqcup_{\delta \in \Delta} D^\times t_iu_\delta V(\A_F^\infty)^\times$ and there is an isomorphism of $\mathcal{O}[\Delta]$-modules
	\begin{align*}
		S_{\kappa,\psi}(V,\mathcal{O}) &\stackrel{\sim}{\longrightarrow} \bigoplus_{i=1}^n
	W_{\kappa}\otimes_\mathcal{O}\mathcal{O}[\Delta]\\
	f & \longmapsto \big(\sum_{\delta\in\Delta} u_{\delta}f(t_i)\otimes \delta^{-1}\big)_{1\le i \le n}.
	\end{align*}
From which it follows that $S_{\kappa,\psi}(V,\mathcal{O})$ is a free $\mathcal{O}[\Delta]$-module. 		\end{proof}

\begin{lem}\label{FreePontDual}

Let $U$ and $V$ be $(\Sigma'\subseteq\Sigma)$-open subgroups of $(D\otimes_F\A_F^\infty)^\times$ with $V$ normal in $U$ and $\Delta = U(\A_F^\infty)^\times/V(\A_F^\infty)^\times$ abelian. Assume that for all $t\in (D \otimes_F \A_F^\infty)^\times$, we have $(U(\A_F^\infty)^\times \cap t^{-1}D^\times t)/F^\times = 1$.

 $S_{\kappa,\psi}(V,E/\mathcal{O})^\vee$ is a free $\mathcal{O}[\Delta]$-module, and for $\iota : S_{\kappa,\psi}(U,E/\mathcal{O})\rightarrow S_{\kappa,\psi}(V,E/\mathcal{O})$ the natural inclusion, $\iota^\vee$ defines an isomorphism from the $\Delta$-coinvariants of $S_{\kappa,\psi}(V,E/\calO)^\vee$ to $S_{\kappa,\psi}(U,E/\calO)^\vee$.

\end{lem}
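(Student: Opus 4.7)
My plan is to split the lemma into its two assertions and handle them in sequence. First, I would invoke Lemma~\ref{freeovergroupring} to conclude that $N := S_{\kappa,\psi}(V,\mathcal{O})$ is a free $\mathcal{O}[\Delta]$-module. Note that the hypothesis on $U$ descends to $V$ since $V(\A_F^\infty)^\times \subseteq U(\A_F^\infty)^\times$, so the discussion preceding this lemma also shows that $A \mapsto S_{\kappa,\psi}(V,A)$ is exact, giving $S_{\kappa,\psi}(V,E/\mathcal{O}) \cong N \otimes_\mathcal{O} E/\mathcal{O}$. Since $N$ is finite free over $\mathcal{O}$, Pontryagin duality yields a natural $\mathcal{O}[\Delta]$-isomorphism $S_{\kappa,\psi}(V,E/\mathcal{O})^\vee \cong \Hom_\mathcal{O}(N,\mathcal{O})$, where the right-hand side carries the contragredient $\Delta$-action specified in the conventions of this section.

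The freeness of this dual as an $\mathcal{O}[\Delta]$-module then reduces to the case $N = \mathcal{O}[\Delta]$, where it follows from the $\mathcal{O}[\Delta]$-equivariant self-duality $\Hom_\mathcal{O}(\mathcal{O}[\Delta],\mathcal{O}) \cong \mathcal{O}[\Delta]$ given by the perfect pairing $(\sum a_\delta \delta, \sum b_\delta \delta) \mapsto \sum_\delta a_\delta b_{\delta^{-1}}$; a short check shows this pairing satisfies $\langle \gamma u, v\rangle = \langle u, \gamma v\rangle$ precisely because $\Delta$ is abelian, which is where the abelianness hypothesis of the lemma enters essentially. This handles the first assertion.

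For the second assertion, I would first verify that $S_{\kappa,\psi}(U,A) = S_{\kappa,\psi}(V,A)^\Delta$: an element $f \in S_{\kappa,\psi}(V,A)$ is fixed by every $\langle \delta\rangle$ exactly when $f(x u_\delta) = u_\delta^{-1} f(x)$ for coset representatives $u_\delta$, which together with $V$-equivariance is the $U$-equivariance. Consequently, for any $\phi \in S_{\kappa,\psi}(V,E/\mathcal{O})^\vee$ and any $\delta \in \Delta$, $\iota^\vee(\delta\phi) = \iota^\vee(\phi)$, so $\iota^\vee$ factors through a map $\alpha : S_{\kappa,\psi}(V,E/\mathcal{O})^\vee_\Delta \to S_{\kappa,\psi}(U,E/\mathcal{O})^\vee$. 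Since $\iota$ is a strict injection of discrete $\mathcal{O}$-modules and $E/\mathcal{O}$ is an injective cogenerator over $\mathcal{O}$, $\iota^\vee$, and hence $\alpha$, is surjective.

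Finally, I would match $\mathcal{O}$-ranks. Fixing double coset representatives $t_1,\ldots,t_n$ for $D^\times \backslash (D\otimes_F \A_F^\infty)^\times / U(\A_F^\infty)^\times$, the hypothesis forces $S_{\kappa,\psi}(U,E/\mathcal{O})^\vee \cong \bigoplus_{i=1}^n W_\kappa(\mathcal{O})$, while the proof of Lemma~\ref{freeovergroupring} produces an $\mathcal{O}[\Delta]$-module isomorphism $N \cong \bigoplus_{i=1}^n W_\kappa(\mathcal{O}) \otimes_\mathcal{O} \mathcal{O}[\Delta]$. Combined with the first assertion, $S_{\kappa,\psi}(V,E/\mathcal{O})^\vee_\Delta$ is then finite free over $\mathcal{O}$ of the same rank as $S_{\kappa,\psi}(U,E/\mathcal{O})^\vee$, so a surjection between them is an isomorphism. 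The only step requiring genuine care is the $\Delta$-equivariant self-duality of the group algebra where abelianness is used; the rest of the argument is bookkeeping around Pontryagin duality and the explicit double coset decompositions already in hand.
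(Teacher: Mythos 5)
Your argument is correct and follows the same core approach as the paper's proof: both hinge on \ref{freeovergroupring}, the $\mathcal{O}[\Delta]$-self-duality of the group algebra (where, as you correctly identify, abelianness of $\Delta$ enters), and the identification of $S_{\kappa,\psi}(U,A)$ with the $\Delta$-invariants of $S_{\kappa,\psi}(V,A)$. The only substantive difference is in the coinvariance claim: the paper computes the coinvariants in one stroke via the Pontryagin-duality identity $M^\vee/\mathfrak{a}_\Delta M^\vee \cong (M[\mathfrak{a}_\Delta])^\vee$ applied to $M = S_{\kappa,\psi}(V,E/\mathcal{O})$, whereas you establish surjectivity from the factorization of $\iota^\vee$ and then conclude by matching $\mathcal{O}$-ranks; both are valid, with the paper's version a touch more direct and not needing the freeness assertion as an input to the rank count.
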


\begin{proof}

Let $\mathfrak{a}_\Delta$ denote the augmentation ideal of $\mathcal{O}[\Delta]$. We have
	\begin{align*} S_{\kappa,\psi}(V,E/\mathcal{O})^\vee/\mathfrak{a}_\Delta &\cong
	\Hom_\mathcal{O}( S_{\kappa,\psi}(V,E/\mathcal{O})[\mathfrak{a}_\Delta], E/\mathcal{O})\\
	&\cong \Hom_\mathcal{O}( S_{\kappa,\psi}(U,E/\mathcal{O}), E/\mathcal{O})\\
	& = S_{\kappa,\psi}(U,E/\mathcal{O})^\vee,	\end{align*}
which is the second part of the lemma.

Give $\Hom_\mathcal{O}(\mathcal{O}[\Delta],\mathcal{O})$ the $\mathcal{O}[\Delta]$-module structure $(\delta\lambda)(r) = \lambda(\delta r)$. There is an isomorphism of $\mathcal{O}[\Delta]$-modules $\mathcal{O}[\Delta]\cong\Hom_\mathcal{O}(\mathcal{O}[\Delta],\calO)$ defined by sending $\delta\in\Delta$ to $(\delta^{-1})^\ast$, where $(\delta^{-1})^\ast$ takes value $1$ on $\delta^{-1}$ and $0$ on every other $\gamma\in\Delta$. Hence, \ref{freeovergroupring} implies that $\Hom_\calO(S_{\kappa,\psi}(V,\calO),\calO)$ is a free $\calO[\Delta]$-module. By our assumption on $U$, $S_{\kappa,\psi}(V,E/\mathcal{O})\cong S_{\kappa,\psi}(V,\mathcal{O})\otimes_\mathcal{O} E/\mathcal{O}$. Moreover, since $S_{\kappa,\psi}(V,\mathcal{O})$ is free over $\mathcal{O}$, we have an $\mathcal{O}$-module isomorphism
	\[ \Hom_\mathcal{O}(S_{\kappa,\psi}(V,\mathcal{O})\otimes_\mathcal{O} E/\mathcal{O},
		E/\mathcal{O} ) \cong \Hom_\mathcal{O}(S_{\kappa,\psi}(V,\mathcal{O}),\mathcal{O}) \]
given by $\phi\mapsto \phi\otimes 1$. This map is $\mathcal{O}[\Delta]$-equivariant, so the freeness of $\Hom_\calO(S_{\kappa,\psi}(V,\calO),\calO)$ over $\calO[\Delta]$ implies that of $S_{\kappa,\psi}(V,E/\mathcal{O})^\vee$.
	\end{proof}

\subsubsection{}\label{JLsubsubsection}

We finish this subsubsection by recalling the connection between the $\calO$-modules $S_{\kappa,\psi}(U,\calO)$ and cuspidal automorphic representations of $\GL_2(\A_F)$. We say an irreducible cuspidal automorphic representation $\pi$ of $\GL_2(\A_F)$ is \textit{regular algebraic} if there is an algebraic weight $\kappa = (\bold{k},\bold{w})$ such that for each $\tau\in I$, letting $v$ denote the corresponding infinite place coming from our fixed embedding $\Qbar\rightarrow\C$, $\pi_v$ is the discrete series representation with lowest weight $k_\tau-1$ and central character $z\mapsto \mathrm{sgn}(z)^{k_\tau} \lvert z \rvert^{2-k_\tau-2w_\tau}$. Recall that if $V$ is an open compact subgroup of $\GL_2(\A_F^\infty)$ and $g\in\GL_2(\A_F^\infty)$, there is a double coset operator $[VgV]$ on $\pi^V$ given by
	\[ [VgV]w = \sum_i g_i w \]
if $VgV = \sqcup_i g_i V$.

Set $U' = U \cap \calO_D^\times$. Assume that $\psi(z) = z_p^{2-\bold{k}-2\bold{w}}$ for all $z\in U' \cap(\A_F^\infty)^\times$. Fix an isomorphism $\Qbar_p \cong \C$ that extends our fixed embeddings $\Qbar \rightarrow \Qbar_p$ and $\Qbar \rightarrow \C$. Define a character $\psi_\C : F^\times \backslash \A_F^\times \rightarrow \C^\times$ by $\psi_\C(z) = \psi(z) z_p^{\mathbf{k} + 2\mathbf{w} -2} z_\infty^{2 - \mathbf{k} -2 \mathbf{w}}$. We also define characters $\gamma_{v,\C} : F_v^\times \rightarrow \C^\times$ for each $v \in \Sigma$ simply via the isomorphism $\overline{\Q}_p \cong \C$. Note that we can view
	\[ W_{\kappa}(\C) = \otimes_{\tau \in J_F} \Sym^{k_\tau -2} \C^2 \otimes \det{}^{w_\tau}\C^2 \]
as a representation of $D_\infty^\times = (D\otimes_F \R)^\times$. 

Let $\mathfrak{Aut}^D_{\kappa,\psi_\C,\Sigma'}$ denote the set of all irreducible automorphic representations $\pi$ of $D$ with central character $\psi_\C$ such that $\pi_\infty \cong W_\kappa(\C)^\ast$ and $\pi_v \cong \gamma_v \circ \nu_D$ for each $v \in \Sigma'$. For $\pi \in \mathfrak{Aut}^D_{\kappa,\psi_\C,\Sigma'}$ and $g \in (D \otimes_\F \A_F^\infty)^\times$, there is a double coset operator $[U'gU']$ on $\pi^{U'}$ defined in the same way as the $\GL_2$-case. Let $C^\infty_{\psi_\C,\Sigma'}(D^\times \backslash (D\otimes_F\A_F)^\times /U',\C)$ denote the space of smooth functions
	\[ \phi: D^\times \backslash (D\otimes_F\A_F)^\times /U' \longrightarrow \C \]
such that $\phi(zx) = \psi_\C(z)\phi(x)$ for all $z\in \A_F^\infty$ and $\phi(xg_v) = \gamma_{v,\C}\circ \nu_D(g_v) \phi(x)$ for all $g_v \in D_v^\times$ with $v\in \Sigma'$. Then
	\[ \bigoplus_{\mathfrak{Aut}_{\kappa,\psi_\infty}^D} \pi^{U'} = 
		\Hom_{D_\infty^\times} (W_\kappa(\C)^\ast, 
		C^\infty_{\psi_\C,\Sigma'}(D^\times \backslash (D\otimes_F\A_F)^\times /U', \C)) \]
and the map $f \mapsto (\lambda \mapsto (x \mapsto \lambda( x_\infty^{-1} x_p f(x))))$ defines an isomorphism
	\[ \mathfrak{A}: S_{\kappa,\psi}(U,E)\otimes_E \C \stackrel{\sim}{\longrightarrow} 
		\Hom_{D_\infty^\times} (W_\kappa(\C)^\ast, C^\infty_{\psi_\C}(D^\times \backslash
		(D\otimes_F\A_F)^\times /U')) \]
such that $\mathfrak{A}\circ [UgU] = [U'gU'] \circ \mathfrak{A}$ for all $g\in (D\otimes_F \A_F^\infty)^\times$. Applying the theorem of Jacquet-Langlands and Shimizu to $\mathfrak{Aut}^D_{\kappa,\psi_\C,\Sigma'}$ we obtain the following.

\begin{prop}\label{JacquetLanglands}
Define an open compact subgroup $V$ of $\GL_2(\A_F^\infty)$ by $V_v = U_v$ for $v\notin \Sigma$ and $V_v = \mathrm{Iw}(v)$ for $v\in \Sigma$. Let $\Pi_{\kappa,\psi_\C,\Sigma'}^{\Sigma}$ denote the set of all irreducible cuspidal automorphic representations $\pi$ of $\GL_2(\A_F)$ such that
\begin{itemize}
	\item[-] $\pi$ has central character $\psi_\C$,
	\item[-] $\pi$ is regular algebraic of weight $\kappa$
	\item[-] $\pi_v$ is square integrable for each $v \in \Sigma$,
	\item[-] $\pi_v \cong (\gamma_{v,\C} \circ \det) \otimes \mathrm{St}$ for each $v \in \Sigma'$, where $\mathrm{St}$ denotes the Steinberg representation.
\end{itemize}
There is a surjection of $\C$-vector spaces
	\[ \mathrm{JL} : S_{\kappa,\psi}(U,E)\otimes_E \C \longrightarrow 
		\bigoplus_{\Pi_{\kappa,\psi_\C,\Sigma'}^{\Sigma}} (\pi^\infty)^V	\]
such that
\begin{enumerate}
	\item for $g\in (D\otimes_F\A_F^{\infty,\Sigma})^\times \cong \GL_2(\A_F^{\infty,\Sigma})$, we have
	\[ \mathrm{JL} \circ [UgU] = [VgV] \circ \mathrm{JL};	\]
	\item $\mathrm{JL}$ is an isomorphism unless $\kappa = ((2,\ldots,2),\bold{w})$, in which case the kernel of $\mathrm{JL}$ consists of the functions that factor through the reduced norm.
\end{enumerate}
\end{prop}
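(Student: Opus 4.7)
The plan is to combine the isomorphism $\mathfrak{A}$ already established just above the proposition with the global Jacquet-Langlands-Shimizu correspondence between automorphic representations of $D^\times$ and of $\GL_2(\A_F)$. The whole content of the proposition is essentially a packaging of this correspondence in the language of the $\calO$-module $S_{\kappa,\psi}(U,E)$.

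First, I would start from the identification
\[ S_{\kappa,\psi}(U,E)\otimes_E \C \stackrel{\mathfrak{A}}{\longrightarrow}
\Hom_{D_\infty^\times}\!\bigl(W_\kappa(\C)^\ast,\, C^\infty_{\psi_\C,\Sigma'}(D^\times\backslash (D\otimes_F\A_F)^\times/U',\C)\bigr). \]
The right-hand side decomposes as $\bigoplus_{\pi} \pi^{U'}$, where the sum runs over irreducible automorphic representations of $D^\times$ with central character $\psi_\C$, the prescribed $\gamma_{v,\C}\circ\nu_D$-behavior at $v\in\Sigma'$, and $\pi_\infty\cong W_\kappa(\C)^\ast$ (the last condition coming from $\Hom_{D_\infty^\times}$, using that the relevant finite-dimensional $D_\infty^\times$-representation appears in $\pi_\infty$ with multiplicity one). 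This identifies the left-hand side with $\bigoplus_{\pi\in\mathfrak{Aut}^D_{\kappa,\psi_\C,\Sigma'}} \pi^{U'}$.

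Second, I would apply the global Jacquet-Langlands-Shimizu theorem: there is an injective map $\pi\mapsto\mathrm{JL}(\pi)$ from irreducible automorphic representations of $D^\times$ to irreducible automorphic representations of $\GL_2(\A_F)$ whose image consists of those that are square integrable at every place $v\in\Sigma$. The representation $\mathrm{JL}(\pi)$ is cuspidal unless $\pi$ is one-dimensional, i.e.\ factors through $\nu_D$. At each $v\notin\Sigma$ the local components of $\pi$ and $\mathrm{JL}(\pi)$ agree via the fixed isomorphism $D_v^\times\cong\GL_2(F_v)$; at $v\in\Sigma'$ the character $\gamma_v\circ\nu_D$ is matched to the twist $(\gamma_{v,\C}\circ\det)\otimes\mathrm{St}$; at infinity, the discrete series of lowest weight $k_\tau-1$ on the $\GL_2$ side corresponds to $W_{\kappa}(\C)^\ast$ on the $D^\times$ side; central characters are preserved. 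Combining with the first step, this gives a well-defined map $\mathrm{JL}:S_{\kappa,\psi}(U,E)\otimes_E\C\to\bigoplus_{\pi'\in\Pi^\Sigma_{\kappa,\psi_\C,\Sigma'}}(\pi'^\infty)^V$, surjective because every $\pi'\in\Pi^\Sigma_{\kappa,\psi_\C,\Sigma'}$ lies in the image of JL. Hecke equivariance at the split places $v\notin\Sigma$ is automatic from the identity identification of local components. The level match is clear for $v\notin\Sigma$ since $U_v'=V_v$; for $v\in\Sigma'$, both $\pi_v^{(\calO_D)_v^\times}$ and $\mathrm{JL}(\pi)_v^{\mathrm{Iw}(v)}$ are one-dimensional; for $v\in\Sigma\setminus\Sigma'$, the equality of dimensions $\dim\pi_v^{(\calO_D)_v^\times}=\dim\mathrm{JL}(\pi)_v^{\mathrm{Iw}(v)}$ is a consequence of local Jacquet-Langlands (supercuspidal and ramified-Steinberg cases contributing zero on both sides, unramified-twist-of-Steinberg contributing one on both).

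Finally, to identify the kernel: $\mathrm{JL}$ fails to be an isomorphism exactly when some $\pi\in\mathfrak{Aut}^D_{\kappa,\psi_\C,\Sigma'}$ is one-dimensional, and such $\pi$ necessarily factor through $\nu_D$, forcing $\pi_\infty$ to be one-dimensional. The constraint $\pi_\infty\cong W_\kappa(\C)^\ast$ then forces $\dim W_\kappa(\C)=1$, i.e.\ $k_\tau=2$ for every $\tau\in J_F$. Under $\mathfrak{A}$ these one-dimensional $\pi$ correspond precisely to the functions in $S_{\kappa,\psi}(U,E)$ factoring through the reduced norm, yielding the asserted kernel description. The main obstacle, or rather the main technical bookkeeping, will be the local level-matching at places in $\Sigma\setminus\Sigma'$, where one must rely on the full local Jacquet-Langlands correspondence rather than any pinned-down local structure of $\pi_v$; once that is granted the proposition follows formally from the classical global theorem.
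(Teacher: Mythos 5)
Your proposal is correct and follows exactly the route the paper takes (the paper simply states the result as a consequence of $\mathfrak{A}$ together with Jacquet--Langlands--Shimizu, without writing out a proof). Your spelling out of the local level-matching at places in $\Sigma$ and $\Sigma'$ and the identification of the kernel with functions factoring through $\nu_D$ correctly supplies the bookkeeping the paper leaves implicit.
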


\subsection{Nearly ordinary Hecke algebras}\label{HeckeAlg}


Keep the assumptions and notation of the previous subsubsection. We further assume that for each $v|p$, there is $n\ge 0$ such that $U_v \supseteq \mathrm{Iw}_1(v^n)$
		
\subsubsection{}\label{HeckeOps}

Recall that $\Sigma$ is the set of finite places at which $D$ ramifies. Let $S = \Sigma \cup \{v|p\} \cup \{v : U_v \ne (\mathcal{O}_D)_v^\times \}$. For any $v\notin S$ and uniformizer $\varpi_v$ at $v$, the double cosets
	\[ U \left(\!\begin{array}{cc} \varpi_v & \\ & 1 \end{array}\! \right) U \hspace{0.25cm}
		\text{and}\hspace{0.25cm}
	U\left( \! \begin{array}{cc} \varpi_v & \\ & \varpi_v \end{array} \! \right) U  \]
do not depend on the choice of $\varpi_v$. We define operators $T_v$ and $S_v$ on $S_{\kappa,\psi}(U,A)$ by setting
	\[T_v f = \left[U\left(\!\begin{array}{cc}\varpi_v&\\&1\end{array}\!\right)U\right]f
	 \hspace{0.25cm}\text{and}\hspace{0.25cm} 
	 S_vf = \left[U\left(\!\begin{array}{cc} \varpi_v & \\ & \varpi_v \end{array}\!\right) U \right]f,	\]
Note that $S_v$ is simply multiplication by $\psi(\varpi_v)$. If $V\subseteq U$ is another $(\Sigma'\subset\Sigma)$-open subgroup, the natural inclusion $S_{\kappa,\psi}(U,A)\rightarrow S_{\kappa,\psi}(V,A)$ is equivariant for the $T_v$ and $S_v$ such that $V_v = \GL_2(\mathcal{O}_{F_v})$.

For each $v|p$ we fix, once and for all, an element $\varpi_v \in F$ such that $\varpi_v$ is a uniformizer for $F_v$ and lies in $\calO_{F_w}^\times$ for all $w|p$ with $w\ne v$. We choose our uniformizers for $F_v$ in this way because, following Hida, we modify the usual Hecke operators at places above $p$ in order to define the nearly ordinary subspace of $S_{\kappa,\psi}(U,\calO)$. This modification will involve a multiplication by a power of $\varpi_v$, and having $\varpi_v$ belong to $F$ allows us to compare the nearly ordinary subspace of $S_{\kappa,\psi}(U,\calO)$ with nearly ordinary cuspidal automorphic representations of $\GL_2(\A_F)$.

Note first that for any $g\in\mathrm{M}_{2\times 2}(\mathcal{O}_F\otimes_\Z \Z_p)$ with $\det(g)\ne 0$, the endomorphism $\tau(\det(g))^{-w_\tau}g$ of $W_{k_\tau,w_\tau}(E)$ stabilizes the $\mathcal{O}$-lattice $W_{k_\tau,w_\tau}(\mathcal{O})$, hence defines an endomorphism of $W_{k_\tau,w_\tau}(A)$ for any $\mathcal{O}$-module $A$. For each $v|p$, we then define the operator $T_{\varpi_v}$ on $S_{\kappa,\psi}(U,A)$ by	
	\[ T_{\varpi_v} f=\varpi_v^{-\bold{w}}\left[U\left(\!\begin{array}{cc}\varpi_v&\\&1\end{array}\!\right)U\right]f,
	\]
Note that $T_{\varpi_v}$ depends on the choice of $\varpi_v$ as we are not assuming $U_v=\GL_2(\mathcal{O}_{F_v})$. If $V\subset U$ is another $(\Sigma'\subseteq\Sigma)$-open subgroup and $\mathrm{Iw}_1(v^n) \subseteq V_v \subseteq U_v \subseteq \mathrm{Iw}(v)$, for some $n\ge 1$, the natural inclusion $S_{\kappa,\psi}(U,A)\rightarrow S_{\kappa,\psi}(V,A)$ is a equivariant for $T_{\varpi_v}$.

For $y\in \mathcal{U}_p = (\mathcal{O}_F\otimes_\Z \Z_p)^\times$, we define two related operators. First we define $\langle y \rangle$ by 
	\[ \langle y\rangle f=
		\left[U\left(\!\begin{array}{cc}y&\\&1\end{array}\!\right)U\right] f. \]
We then define a normalized version $\langle y \rangle^{\mathrm{no}}$ by
	\[ \langle y \rangle^\mathrm{no} = y^{-\mathbf{w}} \langle y \rangle.\]
The point of introducing the normalized version is that later we will define an isomorphism between certain spaces of modular forms of different weights, and this isomorphism will not be $\langle y \rangle$-equivariant, but will be $\langle y \rangle^{\mathrm{no}}$-equivariant. Note that an $\calO$-subalgebra of $\End_\calO (S_{\kappa,\psi}(U,\calO))$ containing $\langle y \rangle$ also contains $\langle y \rangle^{\mathrm{no}}$ and vice-versa. If $V\subseteq U$ is another $(\Sigma'\subseteq\Sigma)$-open subgroup then the natural inclusion $S_{\kappa,\psi}(U,A)\rightarrow S_{\kappa,\psi}(V,A)$ respects the $\langle y \rangle$ and $\langle y \rangle^{\mathrm{no}}$-actions on each space.

If $A$ is a commutative $\mathcal{O}$-algebra, we denote by $\bold{T}_{\kappa,\psi}(U,A)$ the $A$-subalgebra of $\End_A(S_{\kappa,\psi}(U,A))$ generated by $T_v$ for each $v\notin S$, $T_{\varpi_v}$ for each $v|p$, and $\langle y\rangle$ (equivalently $\langle y \rangle^{\mathrm{no}}$) for each $y \in \mathcal{U}_p$. If $A$ is not a commutative $\mathcal{O}$-algebra, we denote by $\bold{T}_{\kappa,\psi}(U,A)$ the $\mathcal{O}$-subalgebra of $\End_{\mathcal{O}}(S_{\kappa,\psi}(U,A))$ generated by the aforementioned Hecke operators. If $A$ is a finite $\mathcal{O}$-module, or if $A=E/\mathcal{O}$, then $\bold{T}_{\kappa,\psi}(U,A)$ is a finite commutative $\mathcal{O}$-algebra.

\subsubsection{}\label{NearOrd}

Let $A$ be one of the following: $\mathcal{O}$, a finite quotient of $\mathcal{O}$, $E/\mathcal{O}$, or a finite submodule of $E/\mathcal{O}$. We call a maximal ideal $\mathfrak{m}$ of $\bold{T}_{\kappa,\psi}(U,A)$ \textit{nearly ordinary} if the image of $T_{\varpi_v}$ in $\bold{T}_{\kappa,\psi}(U,A)/\mathfrak{m}$ is non-zero for each $v|p$. Note that since $\bold{T}_{\kappa,\psi}(U,A)$ is finite over $\mathcal{O}$, we have a decomposition
	\[ \bold{T}_{\kappa,\psi}(U,A)=\prod_{\mathfrak{m}}\bold{T}_{\kappa,\psi}(U,A)_{\mathfrak{m}} \]
where the product runs over the set of maximal ideals of $\bold{T}_{\kappa,\psi}(U,A)$. We define the \textit{nearly ordinary Hecke algebra} (of weight $\bold{k}$, character $\psi$ and level $U$ with coefficients in $A$) by
	\[ \bold{T}_{\kappa,\psi}^{\mathrm{no}}(U,A)=\prod_{\mathfrak{m}\text{ no}}\bold{T}_{\kappa,\psi}(U,A)_\mathfrak{m}\]
where the product runs over all nearly ordinary maximal ideals of $\bold{T}_{\kappa,\psi}(U,A)$. Note that the projection 
	\[ \bold{T}_{\kappa,\psi}(U,A)\longrightarrow \bold{T}_{\kappa,\psi}^{\mathrm{no}}(U,A)\]
corresponds to an idempotent $e_\mathrm{H}$ in $\bold{T}_{\kappa,\psi}(U,A)$, and it is known as \textit{Hida's idempotent}. Letting
	\[ T_p = \prod_{v|p}T_{\varpi_v}, \]
it can be checked that
	\[ e_\mathrm{H} = \lim_{n\rightarrow\infty}T_p^{n!}.\]
We also define the nearly ordinary subspace of $S_{\kappa,\psi}(U,A)$ by
	\[ S_{\kappa,\psi}^{\mathrm{no}}(U,A) = e_\mathrm{H} S_{\kappa,\psi}(U,A).	\]
	
\subsubsection{}\label{NearOrdHilbert}

Let $\pi$ be a cuspidal automorphic representation of $\GL_2(\A_F)$ of weight $\kappa$. For each $v|p$, since $\varpi_v$ is an element of $F$, we can make sense of the operator
	\[ T_{\varpi_v} = \varpi_v^{-\mathbf{w}} \left[ V \left(\!\begin{array}{cc} \varpi_v & \\ & 1 \end{array} 
		\!\right) V \right]. \]
on $\pi^V$ for any compact open subgroup $V$, viewing $J_F$ as the set of embeddings $F \rightarrow \C$. We say $\pi$ is $p$\textit{-nearly ordinary} (or just \textit{nearly ordinary} if $p$ is clear from the context) if there is a open compact subgroup $V$ of $\GL_2(\A_F)$ and a non-zero vector $x \in \pi^V$ such that for each $v|p$, $T_{\varpi_v}$ acts on $x$ by an element of $\Qbar$ that is a unit in the ring of integers of $\Qbar_p$. Note this does in general depend on our choice of embedding $\overline{\Q} \rightarrow \overline{\Q}_p$. If $\pi$ is nearly ordinary, then for each $v|p$, $\pi_v$ is either a principal series representation $\pi(\eta_v |\;|_v^{1/2},\mu_v |\;|_v^{1/2})$ or a special representation $\sigma(\eta_v |\;|_v^{1/2}, \eta_v |\;|_v^{-1/2})$ with $\varpi_v^{-\mathbf{w}}\eta_v(\varpi_v)$ an element of $\Qbar$ that is a unit in $\Qbar_p$, cf. \cite{HidaNOGalRep}*{Corollary 2.2}. If $\pi$ is nearly ordinary and $\pi^V \ne 0$, then there is in fact a unique line in $\pi_v^{V_v}$ on which $T_{\varpi_v}$ acts via a unit in $\overline{\Q}_p$. 

Using the notation of \ref{JacquetLanglands}, if we let $\Pi_{\kappa,\psi_\C,\Sigma'}^{\Sigma,\mathrm{no}}$ denote the subset of $\Pi_{\kappa,\psi_\C,\Sigma'}^\Sigma$ consisting of nearly ordinary representations, then the map $\mathrm{JL}$ of \ref{JacquetLanglands} restricts to
	\[ \mathrm{JL} : S_{\kappa,\psi}^{\mathrm{no}}(U,E)\otimes_E \C
		\longrightarrow \bigoplus_{\pi \in \Pi_{\kappa,\psi_\C,\Sigma'}^{\Sigma,\mathrm{no}}}
		\pi^V. \]
If $U_v \subseteq \mathrm{Iw}(v)$ for each $v|p$, then any function factoring through the reduced norm of $D$ is not nearly-ordinary, and so in this case $\mathrm{JL}$ is an isomorphism on nearly ordinary subspaces. Using this we can identify $\mathbf{T}_{\kappa,\psi}^{\mathrm{no}}(U,E)$ with a subspace of endomorphism of $\oplus \, \pi^V$. 

Take $\pi \in \Pi_{\kappa,\psi_\C,\Sigma'}^{\Sigma,\mathrm{no}}$ with $\pi^V \ne 0$, and fix a nonzero vector $x = \otimes_v x_v \in \pi^V$, on which $T_{\varpi_v}$ acts via a unit in $\Qbar_p$ for each $v|p$. The line generated by $x$ is $\bold{T}_{\kappa,\psi}^{\mathrm{no}}(U,E)$-stable, and if $x' = \otimes_v x_v$ is another such vector, the $\bold{T}_{\kappa,\psi}^{\mathrm{no}}(U,E)$-eigensystem given by $x$ is equal to that of $x'$. It follows that we have an $E$-algebra injection
	\[ \mathbf{T}_{\kappa,\psi}^{\mathrm{no}}(U,E) \longrightarrow \prod_{\pi} \Qbar_p \]
where the product is taken over all $\pi \in \Pi_{\kappa,\psi_\C,\Sigma'}^{\Sigma,\mathrm{no}}$ with $\pi^V \ne 0$. In particular $\mathbf{T}_{\kappa,\psi}^{\mathrm{no}}(U,\calO)$ is reduced.

\subsection{Universal nearly ordinary Hecke algebras}\label{UnivHeckeAlg}


Keep all assumptions and notation of the previous subsection. We will further assume henceforth that $A$ is one of the following: $\mathcal{O}$, some finite quotient of $\mathcal{O}$, $E/\mathcal{O}$, or some finite submodule of $E/\mathcal{O}$. We will also assume henceforth that $U_v=\mathrm{Iw}(v)$ for all $v|p$. 

For integers $b\ge a\ge 0$ we let $U(p^{a,b})$ denote the open subgroup of $U$ given by $U(p^{a,b})_v = U_v$ if $v\nmid p$, and $U(p^{a,b})_v = U_v \cap \mathrm{Iw}(v^{a,b})$ for $v|p$. Note that $U(p^{a,a})$ is not the subgroup of $U$ consisting of elements that are upper triangular unipotent modulo $p^a$, but rather modulo $\varpi_p^a$, where $\varpi_p$ is the finite id\`{e}le with $(\varpi_p)_v = \varpi_v$ for $v|p$ and equal to $1$ elsewhere. 
		
\subsubsection{}\label{VertControlSec}

For $y\in \mathcal{U}_p$, $\left(\!\begin{array}{cc}y&\\&1\end{array}\!\right)$ normalizes $U(p^{a,b})$, so
	\[ \langle y \rangle f = \left(\!\begin{array}{cc}y&\\&1\end{array}\!\right) f.
	\]
Since we have fixed a central character $\psi$, the natural inclusion $S_{\kappa,\psi}(U(p^{0,b}),A)\rightarrow S_{\kappa,\psi}(U(p^{a,b}),A)$ identifies $S_{\kappa,\psi}(U(p^{0,b}),A)$ with the $\mathcal{U}_p$-invariants of $S_{\kappa,\psi}(U(p^{a,b}),A)$. Similarly for $S_{\kappa,\psi}^\mathrm{no}(U(p^{0,b}),A)\rightarrow S_{\kappa,\psi}^\mathrm{no}(U(p^{a,b}),A)$.

\begin{lem}\label{IwahoriAtp}
For any $a\ge 0$ and $b\ge\max\{a,1\}$, the inclusion $S_{\kappa,\psi}^\mathrm{no}(U(p^{a,b}),A)\rightarrow S_{\kappa,\psi}^\mathrm{no}(U(p^{a,b+1}),A)$ is an isomorphism.
\end{lem}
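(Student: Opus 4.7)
The inclusion is injective since $U(p^{a,b+1})\subseteq U(p^{a,b})$, so only surjectivity requires proof. By inserting a chain of intermediate subgroups differing at a single prime above $p$ at a time, the problem reduces to the case $V^{(0)}:=S_{\kappa,\psi}(U,A)$ and $V^{(1)}:=S_{\kappa,\psi}(U',A)$, where $U$ and $U'$ differ only at one place $v\mid p$ with $U_v=\mathrm{Iw}(v^{a,b})$ and $U'_v=\mathrm{Iw}(v^{a,b+1})$. Let $\iota:V^{(0)}\hookrightarrow V^{(1)}$ denote the inclusion.

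The crucial claim is that $T_{\varpi_v}(V^{(1)})\subseteq \iota(V^{(0)})$. To prove it, set $\alpha_v=\left(\begin{array}{cc}\varpi_v & 0\\ 0 & 1\end{array}\right)$. A direct computation shows that $\alpha_v^{-1}U'_v\alpha_v\cap U'_v$ consists of those $u\in U'_v$ whose upper-right entry lies in $\mathfrak{m}_v^{a+1}$, so the double coset decomposition $U'_v\alpha_v U'_v=\bigsqcup_j g_j U'_v$ admits representatives $g_j=\left(\begin{array}{cc}\varpi_v & c_j\\ 0 & 1\end{array}\right)$ with $c_j$ running through representatives of $\mathfrak{m}_v^{a+1}/\mathfrak{m}_v^{a+2}$. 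One checks directly that $U_v=L_v\cdot U'_v$, where $L_v=\left\{\left(\begin{array}{cc}1 & 0\\ x & 1\end{array}\right):x\in\mathfrak{m}_v^b\right\}$, so it suffices to prove right $L_v$-invariance of $T_{\varpi_v}f$ for $f\in V^{(1)}$. For $u=\left(\begin{array}{cc}1 & 0\\ u_3 & 1\end{array}\right)\in L_v$, matrix multiplication yields $u\,g_j=g_j\,u'_j$ with
\[
u'_j=\left(\begin{array}{cc}1-c_j u_3 & -\varpi_v^{-1}c_j^2 u_3\\ \varpi_v u_3 & 1+u_3 c_j\end{array}\right),
\]
and the hypotheses $b\ge 1$ and $b\ge a$ are exactly what is needed to place the entries of $u'_j$ into the correct powers of $\mathfrak{m}_v$ so that $u'_j\in U'_v$. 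The standard reindexing for double coset operators then gives $(T_{\varpi_v}f)(xu)=u^{-1}(T_{\varpi_v}f)(x)$, establishing the claim.

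Given the claim, since $T_p=\prod_{w\mid p}T_{\varpi_w}$ with each $T_{\varpi_w}$ preserving $V^{(1)}$, we deduce $T_p V^{(1)}\subseteq T_{\varpi_v}(V^{(1)})\subseteq \iota(V^{(0)})$. The construction $e_\mathrm{H}=\lim_n T_p^{n!}$ makes $T_p$ act invertibly on $V^{(1),\mathrm{no}}=e_\mathrm{H}V^{(1)}$, hence
\[
V^{(1),\mathrm{no}}=T_p(V^{(1),\mathrm{no}})\subseteq T_p V^{(1)}\subseteq \iota(V^{(0)}).
\]
Applying $e_\mathrm{H}$ and using that $\iota$ is Hecke equivariant yields $V^{(1),\mathrm{no}}\subseteq \iota(e_\mathrm{H} V^{(0)})=\iota(V^{(0),\mathrm{no}})$; the reverse inclusion is obvious. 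The main obstacle is the matrix computation supporting the claim, in particular verifying that the hypotheses $b\ge 1$ and $b\ge a$ are precisely what is needed for $u'_j$ to lie in $U'_v$.
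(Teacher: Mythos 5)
Your proof is correct and is essentially the paper's argument written out in coordinates: both reduce the lemma to the observation that $T_p$ (via $T_{\varpi_v}$) sends $S_{\kappa,\psi}(U(p^{a,b+1}),A)$ into the image of $S_{\kappa,\psi}(U(p^{a,b}),A)$, which the paper packages as the double-coset identity
\[
U(p^{a,b+1})\left(\begin{smallmatrix}\varpi_p & \\ & 1\end{smallmatrix}\right)U(p^{a,b+1})=U(p^{a,b})\left(\begin{smallmatrix}\varpi_p & \\ & 1\end{smallmatrix}\right)U(p^{a,b+1})
\]
for $b\ge 1$, and then both invoke the invertibility of $T_p$ on the image of $e_{\mathrm{H}}$. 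Two intermediate descriptions in your write-up are off --- the $c_j$ should run over a full set of representatives of $\mathcal{O}_{F_v}/\mathfrak{m}_v$ (not $\mathfrak{m}_v^{a+1}/\mathfrak{m}_v^{a+2}$), and the subgroup $\alpha_v^{-1}U'_v\alpha_v\cap U'_v$ is cut out by the condition that the lower-left entry lie in $\mathfrak{m}_v^{b+2}$, not by a condition on the upper-right entry --- but your computation of $u'_j=g_j^{-1}ug_j$ and the verification that $b\ge\max\{a,1\}$ forces $u'_j\in U'_v$ are valid for arbitrary $c_j\in\mathcal{O}_{F_v}$, so these slips do not affect the argument.
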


\begin{proof}

Recall that $T_p=\prod_{v|p}T_{\varpi_v}$. Then for $b\ge 1$, the action of $T_p$ on $S_{\kappa,\psi}(U(p^{a,b}),A)$ is given by the double coset operator 
	\[ \left[ U(p^{a,b}) \left(\! \begin{array}{cc} \varpi_p & \\ & 1 \end{array} \!\right) U(p^{a,b}) \right]
		\]
where $\varpi_p \in (\A_F^\infty)^\times$ is the id\`{e}le equal to $\varpi_v$ at $v|p$ and $1$ elsewhere. It easy to check that, for $b\ge 1$,
	\[	U(p^{a,b+1}) \left(\! \begin{array}{cc} \varpi_p & \\ & 1 \end{array} \!\right) U(p^{a,b+1}) =
		U(p^{a,b}) \left(\! \begin{array}{cc} \varpi_p & \\ & 1 \end{array} \!\right) U(p^{a,b+1}).
	\]
Hence $T_p S_{\kappa,\psi}(U(p^{a,b+1}),A) \subseteq S_{\kappa,\psi}(U(p^{a,b}),A)$, where we have identified $S_{\kappa,\psi}(U(p^{a,b}),A)$ with its image in $S_{\kappa,\psi}(U(p^{a,b+1}),A)$. It follows that $e_\mathrm{H} S_{\kappa,\psi}(U(p^{a,b+1}),A)=e_\mathrm{H} S_{\kappa,\psi}(U(p^{a,b}),A)$ for any $a\ge 0$ and $b\ge\max\{a,1\}$. 	\end{proof}

For $b\ge a$, the Hecke-equivariant injections $S_{\kappa,\psi}(U(p^{a,a}),A)\rightarrow S_{\kappa,\psi}(U(p^{b,b}),A)$ and $S_{\kappa,\psi}^\mathrm{no}(U(p^{a,a}),A)\rightarrow S_{\kappa,\psi}^\mathrm{no}(U(p^{b,b}),A)$ induce surjections $\bold{T}_{\kappa,\psi}(U(p^{b,b}),A)\rightarrow \bold{T}_{\kappa,\psi}(U(p^{a,a}),A)$ and $\bold{T}_{\kappa,\psi}^\mathrm{no}(U(p^{b,b}),A)\rightarrow \bold{T}_{\kappa,\psi}^\mathrm{no}(U(p^{a,a}),A)$. We define
	\[ \bold{T}_{\kappa,\psi}^\mathrm{no}(U(p^\infty),A)=
	\varprojlim_a\bold{T}_{\kappa,\psi}^\mathrm{no}(U(p^{a,a}),A), \]
and
	\[ S_{\kappa,\psi}^{\mathrm{no}}(U(p^\infty),E/\mathcal{O})=
	\varinjlim_a S_{\kappa,\psi}^{\mathrm{no}}(U(p^{a,a}),E/\mathcal{O}).\]
We have a faithful action of $\bold{T}_{\kappa,\psi}^\mathrm{no}(U(p^\infty),E/\mathcal{O})$ on $S_{\kappa,\psi}^\mathrm{no}(U(p^\infty),E/\mathcal{O})$.

If $M$ is a topological $\mathcal{O}$ module, then Pontryagin duality $M\mapsto M^\vee=\Hom_\mathcal{O}(M,E/\mathcal{O})$ induces an isomorphism $\End_\mathcal{O}(M)\cong\End_{\mathcal{O}}(M^\vee)$. Hence, we have a faithful action of $\bold{T}_{\kappa,\psi}^\mathrm{no}(U(p^{a,a}),E/\mathcal{O})$ on $S_{\kappa,\psi}^\mathrm{no}(U(p^{a,a}),E/\mathcal{O})^\vee$. Take $a_0\ge 1$ large enough so that $(U(p^{a_0,a_0})(\A_F^\infty)^\times \cap t^{-1}D^\times t)/F^\times = 1$ for all $t\in (D\otimes_F \A_F^\infty)^\times$, which is possible by \ref{SuffSmallSub}. Then for any $a\ge a_0$, $S_{\kappa,\psi}^\mathrm{no}(U(p^{a,a}),\mathcal{O})$ is a finite free $\mathcal{O}$-module and
	\[ S_{\kappa,\psi}^\mathrm{no}(U(p^{a,a}),E/\mathcal{O})\cong 
	S_{\kappa,\psi}^\mathrm{no}(U(p^{a,a}),\mathcal{O}) \otimes_\mathcal{O}E/\mathcal{O},	\]
hence there is an isomorphism
	\[ \Hom_\mathcal{O}(S_{\kappa,\psi}^\mathrm{no}(U(p^{a,a}),\mathcal{O},\mathcal{O})\cong
	\Hom_\mathcal{O}(S_{\kappa,\psi}^\mathrm{no}(U(p^{a,a}),E/\mathcal{O}),E/\mathcal{O}). \]
This isomorphism is equivariant for the action of the the operators $T_v$, with $v\notin S$, $T_{\varpi_v}$, with $v|p$, and $\langle y\rangle^\mathrm{no}$, with $y\in \mathcal{U}_p$, on each side. We then have an isomorphism, for any $a\ge a_0$, 
	\[ \bold{T}_{\kappa,\psi}^\mathrm{no}(U(p^{a,a}),\mathcal{O})\cong 
	\bold{T}_{\kappa,\psi}^\mathrm{no}(U(p^{a,a}),E/\mathcal{O}) \]
which sends $T_v$ to $T_v$, etc. And so there is a faithful action of $\bold{T}_{\kappa,\psi}^\mathrm{no}(U(p^{a,a}),\mathcal{O})$ on $S_{\kappa,\psi}^\mathrm{no}(U(p^{a,a}),E/\mathcal{O})^\vee$. We then have a faithful action of $\bold{T}_{\kappa,\psi}^{\mathrm{no}}(U(p^\infty),\mathcal{O})$ on
	\[ \varprojlim_{a\ge a_0}S_{\kappa,\psi}^\mathrm{no}(U(p^{a,a}),E/\mathcal{O})^\vee\cong
		\big(\varinjlim_{a\ge a_0}
		S_{\kappa,\psi}^\mathrm{no}(U(p^{a,a}),E/\mathcal{O})\big)^\vee = 
		S_{\kappa,\psi}^\mathrm{no}(U(p^\infty),E/\mathcal{O})^\vee.	\]

\noindent 

We view $\bold{T}_{\kappa,\psi}^\mathrm{no}(U(p^\infty),\mathcal{O})$ as a $\Lambda(\mathcal{U}_p) = \mathcal{O}[[\mathcal{U}_p]]$-algebra by letting $y\in \mathcal{U}_p$ acting via $\langle y \rangle^\mathrm{no}$. Denote by $\mathfrak{p}_{\kappa}$ the prime ideal in $\Lambda(\mathcal{U}_p)$ corresponding to the kernel of the $\mathcal{O}$-algebra homomorphism that sends $y\in \mathcal{U}_p$ to $y^{-\bold{w}}\in\mathcal{O}$. With some mild abuse of notation, we also denote by $\mathfrak{p}_\kappa$ it's pullback of to $\Lambda(\mathcal{U}_p^a) = \mathcal{O}[[\mathcal{U}_p^a]]$ for any $a\ge 0$. We then have a version of Hida's control theorem.

\begin{prop}\label{VertControl}

Let $a\ge 1$ be such that $(U(p^{a,a})(\A_F^\infty)^\times \cap t^{-1}D^\times t)/F^\times = 1$ for all $t\in (D\otimes_F \A_F^\infty)^\times$.

$S_{\kappa,\psi}^\mathrm{no}(U(p^\infty),E/\mathcal{O})^\vee$ is finite free over $\Lambda(\mathcal{U}_p^a)$ of rank equal to the $\mathcal{O}$-rank of $S_{\kappa,\psi}^\mathrm{no}(U(p^{a,a}),\mathcal{O})$. Moreover the natural surjection $S^\mathrm{no}_{\kappa,\psi}(U(p^\infty),E/\mathcal{O})^\vee \rightarrow S^\mathrm{no}_{\kappa,\psi}(U(p^{a,a}),E/\mathcal{O})^\vee$ has kernel $\mathfrak{p}_{\kappa} S^\mathrm{no}_{\kappa,\psi}(U(p^\infty),E/\mathcal{O})^\vee$.

\end{prop}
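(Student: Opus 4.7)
The plan is to establish freeness at each finite level $U(p^{b,b})$ for $b \ge a$ and then pass to the inverse limit. Fix $b \ge a$. Since $\mathrm{Iw}_1(v^b)$ is normal in $\mathrm{Iw}(v^{a,b})$ at each $v|p$, the group $U(p^{b,b})$ is normal in $U(p^{a,b})$, and the map $y \mapsto \mathrm{diag}(y,1)$ identifies $U(p^{a,b})(\A_F^\infty)^\times/U(p^{b,b})(\A_F^\infty)^\times$ with $\mathcal{U}_p^a/\mathcal{U}_p^b$. Because $U(p^{a,b}) \subseteq U(p^{a,a})$, the hypothesis on $a$ gives $(U(p^{a,b})(\A_F^\infty)^\times \cap t^{-1}D^\times t)/F^\times = 1$ for all $t$. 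Lemma \ref{FreePontDual} then shows that $S_{\kappa,\psi}(U(p^{b,b}),E/\mathcal{O})^\vee$ is a free $\mathcal{O}[\mathcal{U}_p^a/\mathcal{U}_p^b]$-module under the $\langle y \rangle$-action, with $\mathcal{U}_p^a/\mathcal{U}_p^b$-coinvariants naturally isomorphic to $S_{\kappa,\psi}(U(p^{a,b}),E/\mathcal{O})^\vee$.

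Since $\langle y\rangle$ commutes with each $T_{\varpi_v}$ for $v|p$, Hida's idempotent $e_\mathrm{H}$ is $\mathcal{O}[\mathcal{U}_p^a/\mathcal{U}_p^b]$-linear, so $S^\mathrm{no}_{\kappa,\psi}(U(p^{b,b}),E/\mathcal{O})^\vee$ is a $\mathcal{O}[\mathcal{U}_p^a/\mathcal{U}_p^b]$-direct summand of the free module above. The group ring $\mathcal{O}[\mathcal{U}_p^a/\mathcal{U}_p^b]$ is local, being the group ring over a local ring with residue characteristic $p$ of a finite abelian $p$-group, so direct summands of finite free modules are free. Combining this with Lemma \ref{IwahoriAtp} (which identifies $S^\mathrm{no}_{\kappa,\psi}(U(p^{a,b}),E/\mathcal{O})$ with $S^\mathrm{no}_{\kappa,\psi}(U(p^{a,a}),E/\mathcal{O})$ for $b\ge a$), the $\mathcal{O}[\mathcal{U}_p^a/\mathcal{U}_p^b]$-rank of $S^\mathrm{no}_{\kappa,\psi}(U(p^{b,b}),E/\mathcal{O})^\vee$ equals the $\mathcal{O}$-rank $r$ of $S^\mathrm{no}_{\kappa,\psi}(U(p^{a,a}),\mathcal{O})$, independently of $b$.

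To pass to the inverse limit, for $c\ge b \ge a$, applying Lemma \ref{FreePontDual} to the pair $U(p^{c,c})\subset U(p^{b,b})$ exhibits the transition map $S^\mathrm{no}_{\kappa,\psi}(U(p^{c,c}),E/\mathcal{O})^\vee \twoheadrightarrow S^\mathrm{no}_{\kappa,\psi}(U(p^{b,b}),E/\mathcal{O})^\vee$ as the reduction modulo the kernel of $\mathcal{O}[\mathcal{U}_p^a/\mathcal{U}_p^c]\to\mathcal{O}[\mathcal{U}_p^a/\mathcal{U}_p^b]$. Lifting a basis of $S^\mathrm{no}_{\kappa,\psi}(U(p^{a,a}),E/\mathcal{O})^\vee$ to $S^\mathrm{no}_{\kappa,\psi}(U(p^\infty),E/\mathcal{O})^\vee$ and applying Nakayama over each local ring $\mathcal{O}[\mathcal{U}_p^a/\mathcal{U}_p^b]$ produces a compatible system of isomorphisms with $\mathcal{O}[\mathcal{U}_p^a/\mathcal{U}_p^b]^r$, whose inverse limit realises $S^\mathrm{no}_{\kappa,\psi}(U(p^\infty),E/\mathcal{O})^\vee$ as $\Lambda(\mathcal{U}_p^a)^r$ under the $\langle y\rangle$-action. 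For $y\in\mathcal{U}_p^a$ the element $\mathrm{diag}(y,1)$ lies in $U(p^{a,a})$, so $\langle y\rangle$ acts trivially on $S_{\kappa,\psi}(U(p^{a,a}),\mathcal{O})$ and $\langle y\rangle^\mathrm{no}$ acts by $y^{-\mathbf{w}}$; the $\Lambda(\mathcal{U}_p^a)$-structure of the proposition therefore differs from the one above by the automorphism $[y]\mapsto y^{-\mathbf{w}}[y]$, which preserves freeness. Under this normalised structure the surjection onto $S^\mathrm{no}_{\kappa,\psi}(U(p^{a,a}),E/\mathcal{O})^\vee$ factors through $S^\mathrm{no}_{\kappa,\psi}(U(p^\infty),E/\mathcal{O})^\vee/\mathfrak{p}_\kappa\cong\mathcal{O}^r$, and since the target is also $\mathcal{O}^r$ the induced map is an isomorphism. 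The main obstacle is the bookkeeping of the inverse limit together with the twist relating the normalised and unnormalised actions; everything else reduces to clean applications of Lemmas \ref{IwahoriAtp} and \ref{FreePontDual} and Nakayama's lemma.
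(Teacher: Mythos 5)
Your proposal is correct and follows essentially the same route as the paper's proof: pass from the $\langle y\rangle^{\mathrm{no}}$-action to the $\langle y\rangle$-action via the automorphism exchanging $\mathfrak{p}_\kappa$ with the augmentation ideal, use \ref{FreePontDual} to obtain finite-level freeness before applying Hida's idempotent, invoke the fact that direct summands of free modules over a local ring are free, and use \ref{IwahoriAtp} to control the rank. The only difference is organizational — you establish global freeness first and deduce the kernel statement by a rank count, whereas the paper establishes the kernel identification first (via invariants of the direct limit) and uses it to pass from finite-level freeness to the inverse limit — but the ingredients and the substance are identical.
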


\begin{proof}

We define a different $\Lambda(\mathcal{U}_p^a)$-module structure on  $S_{\kappa,\psi}^\mathrm{no}(U(p^\infty),E/\calO)^\vee$, by letting $y \in \mathcal{U}_p$ act via $\langle y \rangle$. Note that the two $\Lambda(\mathcal{U}_p^a)$-module structures differ by an automorphism of $\Lambda(\mathcal{U}_p^a)$ that sends $\mathfrak{p}_\kappa$ to the augmentation ideal $\mathfrak{a}$ of $\Lambda(\mathcal{U}_p^a)$. It thus suffices to prove the lemma with this new $\Lambda(\mathcal{U}_p^a)$-module structure and $\mathfrak{p}_\kappa$ replaced by $\mathfrak{a}$.

The second part follows easily from \ref{IwahoriAtp},
	\begin{align*}
		S^\mathrm{no}_{\kappa,\psi}(U(p^\infty),E/\mathcal{O})^\vee / \mathfrak{a}
		S^\mathrm{no}_{\kappa,\psi}(U(p^\infty),E/\mathcal{O})^\vee & \cong
		\Hom_\mathcal{O}( S^\mathrm{no}_{\kappa,\psi}(U(p^\infty),
		E/\mathcal{O})^{\mathcal{U}_p^a} , E/\mathcal{O}) \\
		& \cong \big( \varinjlim_{b\ge a} S^\mathrm{no}_{\kappa,\psi} (U(p^{a,b}),E/\mathcal{O})
		 \big)^\vee \\ 
		& =S_{\kappa,\psi}^\mathrm{no}(U(p^{a,a}),E/\mathcal{O})^\vee.
	\end{align*}
For the first part, it suffices to show, by the second part, that for every $b\ge a$, $S^\mathrm{no}_{\kappa,\psi}(U(p^{b,b}),E/\mathcal{O})^\vee$ is a finite free $\mathcal{O}[\mathcal{U}_p^a/\mathcal{U}_p^b]$-module of rank equal to the $\mathcal{O}$-rank of $S^\mathrm{no}_{\kappa,\psi}(U(p^{a,a}),\mathcal{O})$. Since the projections
	\[ S^\mathrm{no}_{\kappa,\psi}(U(p^{b,b}),E/\mathcal{O})^\vee \longrightarrow 
		S^\mathrm{no}_{\kappa,\psi}(U(p^{a,a}),\mathcal{O})^\vee \]
are Hecke equivariant, and the direct summand of a free module over a local ring is free, it suffices to prove this without having applied Hida's idempotent. This follows from \ref{FreePontDual}. \end{proof}

\begin{cor}\label{HeckeFinite}
For any $a\ge 0$, both $S_{\kappa,\psi}^\mathrm{no}(U(p^\infty),E/\calO)^\vee$ and $\bold{T}_{\kappa,\psi}^\mathrm{no}(U(p^\infty),\mathcal{O})$ are finite over $\Lambda(\mathcal{U}_p^a)$.
\end{cor}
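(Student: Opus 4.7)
The proof plan is to deduce both finiteness statements from Proposition \ref{VertControl}, using only that the rings $\Lambda(\mathcal{U}_p^a)$ for different $a$ are all finite extensions of one another.

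First, by \ref{SuffSmallSub} applied to $U(p^{a,a})$ at, say, some place above $p$ (or more generally using that smaller $a$ gives larger level at $p$), we may fix an integer $a_0 \geq 1$ such that $(U(p^{a_0,a_0})(\A_F^\infty)^\times \cap t^{-1}D^\times t)/F^\times = 1$ for every $t \in (D \otimes_F \A_F^\infty)^\times$. Note that for any $b \geq a_0$ the same neatness property holds (shrinking the level only shrinks the isotropy). Applying Proposition \ref{VertControl} with $b = \max(a,a_0)$, we obtain that $S_{\kappa,\psi}^\mathrm{no}(U(p^\infty),E/\mathcal{O})^\vee$ is finite free over $\Lambda(\mathcal{U}_p^{b})$, in particular finite as a module over this ring.

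Next I would compare $\Lambda(\mathcal{U}_p^a)$ and $\Lambda(\mathcal{U}_p^{b})$. Since $b \geq a$, we have $\mathcal{U}_p^{b} \subseteq \mathcal{U}_p^a$ with $[\mathcal{U}_p^a : \mathcal{U}_p^{b}] < \infty$ (each $\mathcal{U}_p^j$ is open of finite index in $\mathcal{U}_p$), so the inclusion $\Lambda(\mathcal{U}_p^{b}) \hookrightarrow \Lambda(\mathcal{U}_p^a)$ makes $\Lambda(\mathcal{U}_p^a)$ finite free over $\Lambda(\mathcal{U}_p^{b})$. A set of $\Lambda(\mathcal{U}_p^{b})$-generators for $S_{\kappa,\psi}^\mathrm{no}(U(p^\infty),E/\mathcal{O})^\vee$ is \emph{a fortiori} a set of $\Lambda(\mathcal{U}_p^a)$-generators, which establishes the first finiteness claim.

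For the Hecke algebra, observe that $\bold{T}_{\kappa,\psi}^\mathrm{no}(U(p^\infty),\mathcal{O})$ is a commutative $\mathcal{O}$-subalgebra of $\End_\mathcal{O}(S_{\kappa,\psi}^\mathrm{no}(U(p^\infty),E/\mathcal{O})^\vee)$ that contains the operators $\langle y \rangle^\mathrm{no}$ giving the $\Lambda(\mathcal{U}_p)$-module structure, hence it consists of $\Lambda(\mathcal{U}_p^a)$-linear endomorphisms:
\[
	\bold{T}_{\kappa,\psi}^\mathrm{no}(U(p^\infty),\mathcal{O}) \hookrightarrow
	\End_{\Lambda(\mathcal{U}_p^a)}\bigl(S_{\kappa,\psi}^\mathrm{no}(U(p^\infty),E/\mathcal{O})^\vee\bigr).
\]
Since $S_{\kappa,\psi}^\mathrm{no}(U(p^\infty),E/\mathcal{O})^\vee$ is finite over the Noetherian ring $\Lambda(\mathcal{U}_p^a)$, evaluating an endomorphism on a finite set of generators embeds the endomorphism ring into a finite power of $S_{\kappa,\psi}^\mathrm{no}(U(p^\infty),E/\mathcal{O})^\vee$, which is itself finite over $\Lambda(\mathcal{U}_p^a)$. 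Noetherianness then gives that $\bold{T}_{\kappa,\psi}^\mathrm{no}(U(p^\infty),\mathcal{O})$ is finite over $\Lambda(\mathcal{U}_p^a)$ as well.

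There is no serious obstacle here; the only point requiring mild care is verifying Noetherianness of $\Lambda(\mathcal{U}_p^a)$ (standard for completed group rings of compact $p$-adic Lie groups, and in our case explicitly clear from the discussion preceding \ref{QuatForms} where $\Lambda(\mathcal{U}_p^a)$ is seen to be finite over a power series ring over $\mathcal{O}$) and the elementary commutative-algebra observation that finiteness is preserved under passage between two Noether rings, one of which is finite over the other.
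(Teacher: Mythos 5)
Your proof is correct and takes essentially the same route as the paper: invoke Proposition \ref{VertControl} to get finiteness of the dual Hecke module over $\Lambda(\mathcal{U}_p^a)$, then deduce finiteness of the Hecke algebra from its embedding into $\End_{\Lambda(\mathcal{U}_p^a)}$ of that finite module. The paper is merely more terse, compressing your explicit reduction to a sufficiently large level (and the transfer of finiteness between the rings $\Lambda(\mathcal{U}_p^a)$ for varying $a$) into the single line "it suffices to show this for some $a \ge 0$"; your added detail on that reduction and on the Noetherian-submodule argument is sound but not a different proof.
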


\begin{proof}

It suffices to show this for some $a\ge 0$. That it holds for $S_{\kappa,\psi}^\mathrm{no}(U(p^\infty),E/\mathcal{O})^\vee$ is part of \ref{VertControl}. Since there is an injection 
	\[	\bold{T}_{\kappa,\psi}(U(p^\infty),\mathcal{O}) \longrightarrow 
		\End_{\Lambda(\mathcal{U}_p^a)}( S_{\kappa,\psi}^\mathrm{no}(U(p^\infty),E/\mathcal{O})^\vee) \]
the same is true for $\bold{T}_{\kappa,\psi}(U(p^\infty),\mathcal{O})$.	\end{proof}

\subsubsection{}\label{WeightIndepSec}

There is a unique free rank one $\mathcal{O}$-submodule of $W_{\kappa}(\mathcal{O})$ on which the diagonal subgroup of $GL_2(\mathcal{O}_F\otimes_\Z\Z_p)$ acts via the character
	\[ \left(\!\begin{array}{cc} a_p & \\  & d_p \end{array} \!\right) \mapsto
		a_p^{\bold{w}}d_p^{\bold{k}+\bold{w}-2},	\]
which we will denote by $\chi_{\kappa}$. Fix a generator $v_l$ of this submodule. By abuse of notation we will also denote by $v_l$ its image in $W_{\kappa}(\mathcal{O}/\mathfrak{m}_\mathcal{O}^r)$ for any $r\ge 1$. Then, for $A=\mathcal{O}$ or $\mathcal{O}/\mathfrak{m}_\mathcal{O}^r$, the choice of $v_l$ gives a Borel equivariant map
	\[  W_{\kappa}(A)\longrightarrow A(\chi_{\kappa}). \]
If $A=\mathcal{O}/\mathfrak{m}_\mathcal{O}^r$ and $a\ge r$, this yields a $U(p^{a,a})$-equivariant map
	\[ \mathrm{pr}_\kappa : W_{\kappa}(A) \longrightarrow A. \]
Recall that, for $v \in \Sigma$, $U_v$ may act nontrivially on both sides of above. However, it does so via the same $\calO^\times$-valued character $\gamma^{-1} \circ \nu_D$.
	
\begin{lem}\label{KillHighWeights}
Let $a\ge r\ge 1$, and recall that $\varpi_p$ is the finite id\`{e}le defined by $(\varpi_p)_v=\varpi_v$ for $v|p$ and $(\varpi_p)_v=1$ for $v\nmid p$.
\begin{enumerate}
\item For any $v|p$ and $g\in U(p^{a,a})\left(\!\begin{array}{cc} \varpi_v & \\ & 1\end{array}\!\right) U(p^{a,a})$, we have $\varpi_v^{-\bold{w}}g_p v_l = v_l + w$ with $w\in\ker(\mathrm{pr}_\kappa)$.
\item For any $w\in \ker(\mathrm{pr}_\kappa)$ and $g\in U(p^{a,a})\left(\! \begin{array}{cc} \varpi_p^r & \\ & 1 \end{array} \!\right) U(p^{a,a})$, we have $\varpi_p^{-r\bold{w}} g_p w = 0$.
\end{enumerate}
\end{lem}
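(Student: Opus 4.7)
The plan is to reduce both parts to a direct computation on the monomial basis of $W_\kappa$ via the $U(p^{a,a})$-equivariance of $\mathrm{pr}_\kappa$. Write $g = u_1 \sigma u_2$ with $u_1, u_2 \in U(p^{a,a})$ and $\sigma$ the diagonal element $\mathrm{diag}(\varpi_v, 1)$ in part (1) (resp.\ $\mathrm{diag}(\varpi_p^r, 1)$ in part (2)). Since $a \ge r$, the lower-left and diagonal-deviation entries of $(u_i)_p$ have images in $\mathfrak{m}_\mathcal{O}^a \subseteq \mathfrak{m}_\mathcal{O}^r$ under every $\tau \in J_F$, so modulo $\mathfrak{m}_\mathcal{O}^r$ each $(u_i)_p$ acts on $W_\kappa(A)$ as a unipotent upper triangular element; in particular $(u_i)_p$ preserves $\ker(\mathrm{pr}_\kappa)$, and $(u_i)_p v_l \equiv v_l \pmod{\ker(\mathrm{pr}_\kappa)}$. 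It thus suffices to analyze $\varpi_v^{-\mathbf{w}}\sigma_p$ (resp.\ $\varpi_p^{-r\mathbf{w}}\sigma_p$) on a monomial $M = \prod_\tau X_\tau^{k_\tau-2-j_\tau} Y_\tau^{j_\tau}$ and then propagate the conclusion back through $(u_1)_p$ and $(u_2)_p$.

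A direct application of the $\GL_2$-action formula shows that $\sigma_p$ scales $M$ by $\prod_{\tau \in J_{F_v}} \tau(\varpi_v)^{w_\tau + (k_\tau - 2 - j_\tau)}$ in part (1), and by $\prod_\tau \tau(\varpi_{v_\tau})^{r(w_\tau + (k_\tau - 2 - j_\tau))}$ in part (2), where $v_\tau$ denotes the $p$-adic place of $\tau$. Interpreting the normalization $\varpi_v^{-\mathbf{w}}$ consistently with the lattice-preservation property established just before \ref{NearOrd} (so that it contributes only factors $\tau(\varpi_v)^{-w_\tau}$ for $\tau \in J_{F_v}$), and likewise for $\varpi_p^{-r\mathbf{w}}$, the normalization cancels the $\tau(\varpi_{v_\tau})^{w_\tau}$ determinant factors, leaving $\varpi_v^{-\mathbf{w}}\sigma_p M = \prod_{\tau \in J_{F_v}} \tau(\varpi_v)^{k_\tau-2-j_\tau} \cdot M$ in part (1) and $\varpi_p^{-r\mathbf{w}}\sigma_p M = \prod_\tau \tau(\varpi_{v_\tau})^{r(k_\tau-2-j_\tau)} \cdot M$ in part (2).

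For part (1), the scalar equals $1$ when $M = v_l$ (every $j_\tau = k_\tau - 2$), so $\varpi_v^{-\mathbf{w}}\sigma_p v_l = v_l$; for any other monomial the scalar lies in $\mathcal{O}$, so $\varpi_v^{-\mathbf{w}}\sigma_p M$ remains in $\ker(\mathrm{pr}_\kappa)$. Combining with the analysis of $u_1, u_2$, one obtains $\varpi_v^{-\mathbf{w}} g_p v_l \in v_l + \ker(\mathrm{pr}_\kappa)$. For part (2), any monomial $M \in \ker(\mathrm{pr}_\kappa)$ has some $\tau_0$ with $k_{\tau_0} - 2 - j_{\tau_0} \ge 1$, so the scalar $\prod_\tau \tau(\varpi_{v_\tau})^{r(k_\tau-2-j_\tau)}$ has $\mathcal{O}$-valuation at least $r \cdot e(E/\tau_0(F_{v_{\tau_0}})) \ge r$, hence vanishes in $A = \mathcal{O}/\mathfrak{m}_\mathcal{O}^r$; applying $(u_1)_p$ to this zero gives $\varpi_p^{-r\mathbf{w}} g_p w = 0$. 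The main (mild) obstacle is simply the bookkeeping for the id\`ele-versus-diagonal interpretation of $\varpi_v^{\mathbf{w}}$ in the normalization factor; once that is settled, the computation is elementary.
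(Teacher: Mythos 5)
Your proposal is correct and follows essentially the same approach as the paper's proof: a direct computation on the monomial basis of $W_\kappa$, using that $(u_i)_p$ for $u_i\in U(p^{a,a})$ act upper-triangular unipotently modulo $\mathfrak{m}_\mathcal{O}^r$ (so they preserve $\ker(\mathrm{pr}_\kappa)$ and fix $v_l$ modulo it), and that the diagonal part $\sigma_p$ scales each monomial. The paper collapses $g_p$ to a single matrix $\left(\begin{smallmatrix}\alpha & x\\ & 1\end{smallmatrix}\right)$ modulo $\mathfrak{m}_\mathcal{O}^r$ and computes once, whereas you keep $g = u_1\sigma u_2$ and argue factor-by-factor; the two organizations are interchangeable.

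One bookkeeping slip is worth flagging: with the convention fixed at the start of \S\ref{ModForms}, the normalization $\varpi_v^{-\mathbf{w}}$ is $\prod_{\tau\in J_F}\tau(\varpi_v)^{-w_\tau}$, a product over \emph{all} of $J_F$, not only $J_{F_v}$; likewise the $\det^{\mathbf{w}}$ factor coming from $\sigma_p$ is $\prod_{\tau\in J_F}\tau(\varpi_v)^{w_\tau}$. Because $\varpi_v$ was chosen to be a unit in $\mathcal{O}_{F_w}$ for $w|p$, $w\ne v$, the factors you omit, namely $\prod_{\tau\notin J_{F_v}}\tau(\varpi_v)^{\,\cdot\,}$, are units; they vanish from the scalar and from the normalization in tandem, leaving the true combined scalar $\prod_{\tau\in J_F}\tau(\varpi_v)^{k_\tau-2-j_\tau}$, which differs from your $\prod_{\tau\in J_{F_v}}\tau(\varpi_v)^{k_\tau-2-j_\tau}$ by a unit that is $1$ on $v_l$. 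So the argument is sound, but the intermediate formulas should carry the full product over $J_F$.
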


\begin{proof}

Let $\alpha$ be either $\varpi_v$ or $\varpi_p^r$. For $u\in U(p^{a,a})$, $u_p$ acts on $W_{\kappa}(\mathcal{O}/\mathfrak{m}_\mathcal{O}^r)$ through its image modulo $\mathfrak{m}_\mathcal{O}^r$, which is upper triangular unipotent. Hence, for any 
	\[ g\in U(p^{a,a})\left(\!\begin{array}{cc} \alpha & \\ & 1\end{array}\!\right) U(p^{a.a}) \]
we can assume
	\[ g = \left(\! \begin{array}{cc} \alpha & x \\ & 1 \end{array} \!\right)	\]
for some $x\in \A_F^\infty$ with $x_p \in \mathcal{O}_F \otimes_\Z \Z_p$.

Recall we have an isomorphism of $W_{\kappa}(\mathcal{O}/\mathfrak{m}_\mathcal{O}^r)$ with the space of $\mathcal{O}/\mathfrak{m}_\mathcal{O}^r$-linear combinations on the monomials
	\begin{equation}\label{Monomials}
	 \prod_{\tau \in J_F} X_\tau^{k_\tau - 2 - j_\tau}Y_\tau^{j_\tau}, \end{equation}
with each $0\le j_\tau \le k_\tau-2$, on which $\GL_2(\mathcal{O}\otimes_\Z\Z_p)$ acts by
	\[ \left(\! \begin{array}{cc} a_p & b_p \\ c_p & d_p \end{array} \!\right)
		\prod_{\tau \in J_F} X_\tau^{k_\tau - 2 - j_\tau}Y_\tau^{j_\tau}
		= (a_pd_p - b_p c_p)^{\bold{w}}
		\prod_{\tau \in J_F} (\tau(a_p) X_\tau + \tau(c_p) Y_\tau)^{k_\tau - 2 - j_\tau}
		(\tau(b_p) X_\tau + \tau(d_p) Y_\tau)^{j_\tau}.	\]
We see that $\ker(\mathrm{pr}_\kappa)$ is the span of all monomials \eqref{Monomials} with some $j_\tau < k_\tau - 2$ and we may assume $v_l=\prod_{J_F} Y_\tau^{k_\tau-2}$

For $\alpha = \varpi_v$ we have
	\[ \varpi_v^{-\bold{w}}\left(\!\begin{array}{cc} \varpi_v & x \\ & 1\end{array}\!\right)
	\prod_{\tau \in J_F} Y_\tau^{k_\tau - 2} = \prod_{\tau \in J_F} (\tau(x_p) X_\tau + Y_\tau)^{k_\tau-2}
	=v_l + w \]
with $w\in\ker(\mathrm{pr}_\kappa)$.

For $\alpha = \varpi_p^r$, since $\tau(\varpi_p^r)\in \mathfrak{m}_\mathcal{O}^r$ for any $\tau \in J_F$,
	\[ \varpi_p^{-r\bold{w}} \left(\! \begin{array}{cc} \varpi_p^r & x \\ & 1 \end{array} \!\right)
		\prod_{\tau \in J_F} X_\tau^{k_\tau - 2 - j_\tau}Y_\tau^{j_\tau} = 
		\prod_{\tau \in J_F} (\tau(\varpi_p^r) X_\tau)^{k_\tau - 2 - j_\tau}
		(\tau(x_p)X_\tau + Y_\tau)^{j_\tau} = 0,	\]
if some $j_\tau < k_\tau - 2$.		\end{proof}

If $a\ge r$, the map $\mathrm{pr}_\kappa : W_{\kappa}(\mathcal{O}/\mathfrak{m}_\mathcal{O}^r)\rightarrow \mathcal{O}/\mathfrak{m}_\mathcal{O}^r$ induces a morphism of $\mathcal{O}$-modules
	\[ \mathrm{pr}_\kappa^\ast : S_{\kappa,\psi}(U(p^{a,a}),\mathcal{O}/\mathfrak{m}_\mathcal{O}^r) \longrightarrow
		S_{2,\psi}(U(p^{a,a}),\mathcal{O}/\mathfrak{m}_\mathcal{O}^r). \]
	
\begin{prop}\label{WeightChangeFin}
For any algebraic weight $\kappa$ and $a\ge r\ge 1$, $\mathrm{pr}_\kappa^\ast$ is equivariant for all $T_v$ with $v\notin S$, $T_{\varpi_v}$ with $v|p$, and $\langle y \rangle^\mathrm{no}$ with $y\in \mathcal{U}_p$. Moreover $\mathrm{pr}_\kappa^\ast$ induces an isomorphism
	\[ S_{\kappa,\psi}^\mathrm{no}(U(p^{a,a}),\mathcal{O}/\mathfrak{m}_\mathcal{O}^r) \stackrel{\sim}{\longrightarrow}
		S_{2,\psi}^\mathrm{no}(U(p^{a,a}),\mathcal{O}/\mathfrak{m}_\mathcal{O}^r). \]
\end{prop}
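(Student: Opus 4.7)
The plan is to handle the equivariance first, and then to establish the isomorphism of nearly ordinary parts via a short exact sequence of coefficients combined with Lemma \ref{KillHighWeights}(2).

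Equivariance for $T_v$ with $v\notin S$ is immediate: such $v$ do not lie above $p$, so the relevant matrices act trivially on $W_\kappa(\mathcal{O}/\mathfrak{m}_\mathcal{O}^r)$. For $\langle y\rangle^{\mathrm{no}}$ with $y\in\mathcal{U}_p$, one computes directly that $\left(\begin{smallmatrix} y & \\ & 1\end{smallmatrix}\right) v_l = y^{\mathbf{w}} v_l$, so $\mathrm{pr}_\kappa \circ \left(\begin{smallmatrix} y & \\ & 1\end{smallmatrix}\right) = y^{\mathbf{w}}\,\mathrm{pr}_\kappa$; the factor $y^{-\mathbf{w}}$ built into $\langle y\rangle^{\mathrm{no}}$ cancels this, while $\left(\begin{smallmatrix} y & \\ & 1\end{smallmatrix}\right)$ acts trivially on $W_2(\mathcal{O}/\mathfrak{m}_\mathcal{O}^r)$, yielding equivariance. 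For $T_{\varpi_v}$ with $v|p$, the computation in the proof of Lemma \ref{KillHighWeights}(1) extends: any $g \in U(p^{a,a})\left(\begin{smallmatrix}\varpi_v & \\ & 1\end{smallmatrix}\right) U(p^{a,a})$ may be taken upper triangular modulo $\mathfrak{m}_\mathcal{O}^r$, and then the explicit action on monomials shows that $\varpi_v^{-\mathbf{w}} g_p$ preserves $\ker(\mathrm{pr}_\kappa)$ while sending $v_l$ to $v_l$ plus an element of $\ker(\mathrm{pr}_\kappa)$. Hence $\mathrm{pr}_\kappa \circ (\varpi_v^{-\mathbf{w}} g_p) = \mathrm{pr}_\kappa$, and summing over coset representatives gives equivariance of $\mathrm{pr}_\kappa^\ast$ for $T_{\varpi_v}$.

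For the isomorphism, regard the three modules in the short exact sequence $0\to K_\kappa\to W_\kappa(\mathcal{O}/\mathfrak{m}_\mathcal{O}^r)\to \mathcal{O}/\mathfrak{m}_\mathcal{O}^r\to 0$, with $K_\kappa=\ker(\mathrm{pr}_\kappa)$, as $\mathcal{O}[U(p^{a,a})(\A_F^\infty)^\times]$-modules with compatible $\psi$-twisted central character (valid since $a\ge r$). Choosing representatives $t_i$ with $(D\otimes_F\A_F^\infty)^\times=\sqcup_i D^\times t_i U(p^{a,a})(\A_F^\infty)^\times$, the functor $M\mapsto \{f: D^\times\backslash(D\otimes_F\A_F^\infty)^\times \to M : f(xu)=u^{-1}f(x)\}$ is identified with $\bigoplus_i(-)^{\Gamma_i}$ for the finite isotropy groups $\Gamma_i=(U(p^{a,a})(\A_F^\infty)^\times\cap t_i^{-1}D^\times t_i)/F^\times$. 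Taking the long exact sequence of $\Gamma_i$-cohomology and summing yields a Hecke-equivariant four-term exact sequence
\begin{equation*}
0 \to \bigoplus_i K_\kappa^{\Gamma_i} \to S_{\kappa,\psi}(U(p^{a,a}),\mathcal{O}/\mathfrak{m}_\mathcal{O}^r) \xrightarrow{\mathrm{pr}_\kappa^\ast} S_{2,\psi}(U(p^{a,a}),\mathcal{O}/\mathfrak{m}_\mathcal{O}^r) \to \bigoplus_i H^1(\Gamma_i, K_\kappa).
\end{equation*}
A standard computation in the Iwahori-Hecke algebra identifies $T_p^r = \varpi_p^{-r\mathbf{w}}[U(p^{a,a})\left(\begin{smallmatrix} \varpi_p^r & \\ & 1\end{smallmatrix}\right)U(p^{a,a})]$, and Lemma \ref{KillHighWeights}(2) asserts that $\varpi_p^{-r\mathbf{w}} g_p$ is the zero map on $K_\kappa$ for every $g$ in that double coset. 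Consequently $T_p^r$ acts as zero on the standard cochain complex $C^\ast(\Gamma_i, K_\kappa)$ computing the $\Gamma_i$-cohomology, hence on $H^j(\Gamma_i, K_\kappa)$ for every $j\ge 0$. Thus Hida's idempotent $e_H = \lim T_p^{n!}$ annihilates both the leftmost and rightmost terms above, and applying $e_H$ to the four-term sequence produces the desired isomorphism on nearly ordinary subspaces.

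The most delicate point I expect is verifying that $T_p^r$ genuinely acts as zero on each $H^1(\Gamma_i, K_\kappa)$, which requires spelling out the Hecke correspondence on cochains in the presence of potentially nontrivial isotropy and confirming that it factors through the pointwise coefficient action $\varpi_p^{-r\mathbf{w}}g_p$, which vanishes on $K_\kappa$. Once this cochain-level vanishing is in hand, the remainder of the argument follows formally from the behaviour of Hida's idempotent on the four-term exact sequence.
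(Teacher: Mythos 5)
Your equivariance argument is essentially the paper's own, but your proof of the isomorphism takes a genuinely different route worth comparing. The paper constructs an explicit section $s_\kappa : S_{2,\psi}(U(p^{a,a}),\mathcal{O}/\mathfrak{m}_\mathcal{O}^r) \to S_{\kappa,\psi}(U(p^{a,a}),\mathcal{O}/\mathfrak{m}_\mathcal{O}^r)$ in the opposite direction, by averaging $f \mapsto \sum_i \varpi_p^{-r\mathbf{w}} g_i f(xg_i)v_l$ over coset representatives for the $T_p^r$-double coset, checks that this is independent of the choice of $g_i$, and shows directly that $\mathrm{pr}_\kappa^* \circ s_\kappa = s_\kappa \circ \mathrm{pr}_\kappa^* = T_p^r$, from which the isomorphism on nearly-ordinary parts follows because $e_\mathrm{H}$ is a power of $T_p$ on these finite modules. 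You instead run the long exact sequence in isotropy-group cohomology for the coefficient short exact sequence $0 \to K_\kappa \to W_\kappa \to \mathcal{O}/\mathfrak{m}_\mathcal{O}^r(\chi_\kappa) \to 0$, identify $\ker(\mathrm{pr}_\kappa^*)$ and $\mathrm{coker}(\mathrm{pr}_\kappa^*)$ inside $\bigoplus_i K_\kappa^{\Gamma_i}$ and $\bigoplus_i H^1(\Gamma_i,K_\kappa)$, and use Lemma \ref{KillHighWeights}(2) to kill $T_p^r$ (hence $e_\mathrm{H}$) on both. Both arguments pivot on the same lemma; the trade-off is that the paper's direct approach requires a well-definedness check for $s_\kappa$, while your derived-functor approach requires verifying Hecke equivariance of the connecting map and that the double coset operator on $H^1(\Gamma_i, K_\kappa)$ indeed factors through the vanishing coefficient action. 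You correctly flag this last point as the delicate step; it does hold, because the normalized element $\varpi_p^{-r\mathbf{w}}g_p$ preserves the short exact sequence of coefficient modules (being formally upper triangular) and annihilates $K_\kappa$, so the induced operator on derived functors is zero, but this needs to be spelled out rather than left as formal. Both routes are valid.
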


\begin{proof}

It is easy to see that $\mathrm{pr}_\kappa$ is equivariant for $T_v$ with $v\notin S$ and $\langle y \rangle^\mathrm{no}$ for $y\in \mathcal{U}_p$. For $v|p$, writing
	\[ U(p^{a,a})\left(\!\begin{array}{cc} \varpi_v & \\ & 1 \end{array} \!\right) U(p^{a,a}) 
		= \bigsqcup_i g_i U(p^{a,a}) \]
we have, by part (1) of \ref{KillHighWeights},
	\[ \mathrm{pr}_\kappa (\varpi_v^{-\bold{w}}\sum_i g_if(xg_i)) 
		=\sum_i \Big(\prod_{v\in \Sigma'} \gamma_v^{-1}\circ \nu_D(g_{i,v})\Big)
		 \mathrm{pr}_\kappa (f(x g_i)) = \sum_i g_i \mathrm{pr}_\kappa(f(xg_i)).
	\]
So, $\mathrm{pr}_\kappa^\ast$ is equivariant for $T_{\varpi_v}$. 

The equivariance of $\mathrm{pr}_\kappa^\ast$ implies that it induces a morphism
	\[ S_{\kappa,\psi}^\mathrm{no}(U(p^{a,a}),\mathcal{O}/\mathfrak{m}_\mathcal{O}^r) \longrightarrow
		S_{2,\psi}^\mathrm{no}(U(p^{a,a}),\mathcal{O}/\mathfrak{m}_\mathcal{O}^r).	\]
Take $f \in S_{2,\psi}(U(p^{a,a}),\calO/\mathfrak{m}_\mathcal{O}^r)$. Write
	\[ U(p^{a,a}) \left(\!\begin{array}{cc} \varpi_p^r & \\ & 1 \end{array}\!\right) U(p^{a,a}) = 
		\bigsqcup_i g_i U(p^{a,a}), \]
and define a function $s_\kappa(f): D^\times \backslash (D\otimes_F\A_F^\infty)^\times \rightarrow W_\kappa(\mathcal{O}/\mathfrak{m}_\mathcal{O}^r)$ by
	\[ s_\kappa(f)(x) = \sum_i \varpi_p^{-r\bold{w}}g_if(xg_i)v_l. \]
We first show that $s(f)$ is independent of the choice of $\{g_i\}$. Indeed, choosing $u_i\in U(p^{a,a})$, we have
	\begin{equation}\label{IndOfgi1}
	 \sum_i \varpi_p^{-r\bold{w}} g_i u_i f(xg_iu_i)v_l 
	= \sum_i \varpi_p^{-r\bold{w}} g_i u_{i,p} f(xg_i)v_l \end{equation}
since $f\in S_{2,\psi}(U(p^{a,a}),\mathcal{O}/\mathfrak{m}_\mathcal{O}^r)$ implies $f(xg_iu_i) = (\prod_{v\in\Sigma'} \gamma_v \circ \nu_D(u_{i,v}))f(xg_i)$. For each $i$, since $u_{i,p}$ is upper-triangular unipotent mod $\varpi_p^a$, we have $u_{i,p}v_l = v_l + w_i$, with $w_i\in\ker(\mathrm{pr}_\kappa)$. Then \eqref{IndOfgi1} becomes
	\[ \sum_i \varpi_p^{-r\bold{w}} g_if(xg_i)v_l 
	+\sum_i \varpi_p^{-r\bold{w}} g_if(xg_i)w_i = 
	\sum_i \varpi_p^{-r\bold{w}} g_if(xg_i)v_l,\]
by part (2) of \ref{KillHighWeights}. So, $s(f)$ is independent of the choice of $\{g_i\}$.

Now we check $s_\kappa(f)\in S_{\kappa,\psi}(U(p^{a,a}),\mathcal{O}/\mathfrak{m}_\mathcal{O}^r)$. Set $F=s_\kappa(f)$. The fact that $F(xz)=\psi(z)F(x)$ for $z\in (\A_F^\infty)^\times$ is immediate. Take $u\in U(p^{a,a})$. Now
	\begin{equation}\label{UInv} 
	(uF)(x) = u \big( \sum_i \varpi_p^{-r\bold{w}} g_i f(xug_i) v_l \big) = 
		\sum_i \varpi_p^{-r\bold{w}} u g_i f(xug_i) v_l.	\end{equation}
For each $i$, we can write $ug_i = g_ju_j$, and \eqref{UInv} becomes
	\[ (uF)(x) = \sum_j \varpi_p^{-r\bold{w}} g_{j}u_{j} f(xg_ju_j)v_l = F(x),\]
since $s_\kappa$ is does not depend on the double coset decomposition.

For $f\in S_{2,\psi}(U(p^{a,a}),\mathcal{O}/\mathfrak{m}_\mathcal{O}^r)$, since $T_p^r = \prod_{v|p}T_{\varpi_v}^r$, part (1) of \ref{KillHighWeights} implies that
	\[ (\mathrm{pr}_\kappa\circ s_\kappa)(f) = T_p^r f.\]
Conversely, given $F\in S_{\kappa,\psi}(U(p^{a,a}),\mathcal{O}/\mathfrak{m}_\mathcal{O}^r)$, and writing $F(x) = f(x)v_l + F'(x)$ with $F'$ a function taking values in $\ker(\mathrm{pr}_\kappa)$, part (2) of \ref{KillHighWeights} implies that
	\[ ((s_\kappa \circ \mathrm{pr}_\kappa)(F))(x) = \sum_i \varpi_p^{-r\bold{w}} g_i
		f(xg_i)v_l =  \sum_i \varpi_p^{-r\bold{w}} g_i(f(xg_i)v_l + F'(xg_i)) 
		= (T_p^rF)(x). \]

If $f\in S_{2,\psi}^\mathrm{no}(U(p^{a,a}),\mathcal{O}/\mathfrak{m}_\mathcal{O}^r)$, then $s_\kappa(f)\in S_{\kappa,\psi}^\mathrm{no}(U(p^{a,a}),\mathcal{O}/\mathfrak{m}_\mathcal{O}^r)$. Indeed, since each space is finite, there is some $n\ge r$ such that $e_H = T_p^n$ on each space. Then
	\[ T_p^n s_\kappa(f) = (s_\kappa\circ\mathrm{pr}_\kappa)(T_p^{n-r} s_\kappa(f)) = 
	s_\kappa(T_p^{n-r}(\mathrm{pr}_\kappa\circ s_\kappa)(f)) = s_\kappa (T_p^n f)
	= s_\kappa(f).\]
Lastly, $\mathrm{pr}_\kappa$ and $s_\kappa$ restrict to morphisms between the nearly ordinary subspaces whose composites are automorphisms; so, $\mathrm{pr}_\kappa$ and $s_\kappa$ are isomorphisms on the nearly ordinary subspaces.		\end{proof}

Since $\mathfrak{m}_\mathcal{O}^{-r}/\mathcal{O}\cong\mathcal{O}/\mathfrak{m}_\mathcal{O}^r$, the following corollary follows from \ref{WeightChangeFin} upon taking direct limits and Pontryagin duals.

\begin{cor}\label{WeightChangeInf}
For any algebraic weight $\kappa$, there is an $\mathcal{O}$-module isomorphism 
	\[ S_{\kappa,\psi}^\mathrm{no}(U(p^\infty),E/\mathcal{O})^\vee\cong
		S_{2,\psi}^\mathrm{no}(U(p^\infty),E/\mathcal{O})^\vee,	\]
equivariant for all $T_v$ with $v\notin S$, $T_{\varpi_v}$ with $v|p$, and $\langle y \rangle^\mathrm{no}$ with $y\in \mathcal{U}_p$.

\end{cor}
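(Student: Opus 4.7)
The plan is to deduce this directly from Proposition \ref{WeightChangeFin} by a naturality-plus-limit argument, exactly as hinted in the lead-in to the corollary.

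First, I would organize the finite-level isomorphisms into a directed system. Proposition \ref{WeightChangeFin} gives, for each pair $(a,r)$ with $a\ge r\ge 1$, a Hecke-equivariant isomorphism
\[ \mathrm{pr}_\kappa^\ast : S_{\kappa,\psi}^\mathrm{no}(U(p^{a,a}),\mathcal{O}/\mathfrak{m}_\mathcal{O}^r) \stackrel{\sim}{\longrightarrow} S_{2,\psi}^\mathrm{no}(U(p^{a,a}),\mathcal{O}/\mathfrak{m}_\mathcal{O}^r). \]
Using the identification $\mathcal{O}/\mathfrak{m}_\mathcal{O}^r \cong \mathfrak{m}_\mathcal{O}^{-r}/\mathcal{O}$ via $x\mapsto \varpi_E^{-r}x$, the natural inclusions $\mathfrak{m}_\mathcal{O}^{-r}/\mathcal{O} \hookrightarrow \mathfrak{m}_\mathcal{O}^{-r-1}/\mathcal{O}$ correspond to the transition maps $\mathcal{O}/\mathfrak{m}_\mathcal{O}^r \to \mathcal{O}/\mathfrak{m}_\mathcal{O}^{r+1}$ given by multiplication by $\varpi_E$. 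I would then check that $\mathrm{pr}_\kappa$ is compatible with these transition maps on coefficients (which is immediate from the formula defining $\mathrm{pr}_\kappa$ as a Borel-equivariant quotient by $\ker\mathrm{pr}_\kappa$, tensored up) and with the level inclusions $U(p^{a+1,a+1})\subseteq U(p^{a,a})$ (again immediate from functoriality of $S_{\kappa,\psi}(-,A)$). Thus $\mathrm{pr}_\kappa^\ast$ defines a morphism of directed systems indexed by $\{(a,r): a\ge r\ge 1\}$.

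Next, I would pass to the direct limit. Because $W_\kappa(\calO)$ is a finite free $\calO$-module, $A\mapsto W_\kappa(A)$ commutes with filtered colimits, and taking $H$-invariants for a (discrete) group $H$ on a filtered diagram of discrete modules commutes with those colimits (it is a finite limit, which commutes with filtered colimits in abelian categories). Hence $S_{\kappa,\psi}(U(p^{a,a}),-)$ commutes with filtered colimits, and in particular $\varinjlim_r S_{\kappa,\psi}(U(p^{a,a}), \calO/\mathfrak{m}_\calO^r) = S_{\kappa,\psi}(U(p^{a,a}), E/\calO)$. Moreover, Hida's idempotent $e_\mathrm{H}$ commutes with filtered colimits (it acts as $T_p^{n!}$ for $n$ large, which stabilizes on each element). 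Taking the colimit over the cofinal diagonal subset $a=r\to\infty$ (and using the definition $S^\mathrm{no}_{\kappa,\psi}(U(p^\infty),E/\calO) = \varinjlim_a S^\mathrm{no}_{\kappa,\psi}(U(p^{a,a}),E/\calO)$), I get a Hecke-equivariant isomorphism
\[ S_{\kappa,\psi}^\mathrm{no}(U(p^\infty),E/\mathcal{O}) \stackrel{\sim}{\longrightarrow} S_{2,\psi}^\mathrm{no}(U(p^\infty),E/\mathcal{O}). \]

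Finally, I would apply Pontryagin duality. Since $(-)^\vee = \Hom_\mathcal{O}(-,E/\mathcal{O})$ converts filtered direct limits of discrete torsion modules into inverse limits and sends equivariant maps to equivariant maps (under the action $(T\phi)(m)=\phi(Tm)$), the above isomorphism dualizes to the claimed $\calO$-linear, Hecke-equivariant isomorphism
\[ S_{\kappa,\psi}^\mathrm{no}(U(p^\infty),E/\mathcal{O})^\vee \cong S_{2,\psi}^\mathrm{no}(U(p^\infty),E/\mathcal{O})^\vee. \]
There is no real obstacle here: the content is entirely in \ref{WeightChangeFin}, and the only thing to verify with some care is the naturality of $\mathrm{pr}_\kappa^\ast$ with respect to changing $a$ and $r$, which is straightforward from the explicit description of $\mathrm{pr}_\kappa$.
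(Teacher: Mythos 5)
Your proposal is correct and is essentially the paper's own argument: the paper gives exactly one line of proof, namely that the corollary "follows from \ref{WeightChangeFin} upon taking direct limits and Pontryagin duals" using $\mathfrak{m}_\mathcal{O}^{-r}/\mathcal{O}\cong\mathcal{O}/\mathfrak{m}_\mathcal{O}^r$. You have simply filled in the routine compatibility checks (naturality of $\mathrm{pr}_\kappa^\ast$ in the level and coefficient parameters, the diagonal cofinality of $a=r$, commutation of $e_\mathrm{H}$ with filtered colimits) that the paper leaves implicit.
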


\subsubsection{}\label{InfDefs} Henceforth we denote $S_{2,\psi}(U(p^\infty),E/\mathcal{O})^\vee$ by $S_\psi(U)$. By \ref{HeckeFinite}, $S_\psi(U)$ is a finite $\Lambda(\mathcal{U}_p)$-module that is free over $\Lambda(\mathcal{U}_p^a)$ for sufficiently large $a\ge 1$. We let $\bold{T}_\psi(U)$ denote the $\Lambda(\mathcal{U}_p)$-subalgebra of $\End_{\Lambda(\mathcal{U}_p)}(S_\psi(U))$ generated by $T_v$ for all $v\notin S$ and $T_{\varpi_v}$ for $v|p$. Note that $\bold{T}_\psi(U)$ is finite a finite $\Lambda(\mathcal{U}_p)$-algebra, and is reduced. The following corollary is immediate from \ref{WeightChangeInf}.

\begin{cor}\label{WeightChangeHecke}
Let $\kappa = (\bold{k},\bold{w})$ be an algebraic weight such that $U\cap(\A_F^\infty)^\times$ acts on $W_\kappa(\calO)$ via $\psi^{-1}$. We have an $\Lambda(\mathcal{U}_p)$-algebra isomorphism
	\[ \bold{T}_{\kappa,\psi}^\mathrm{no}(U(p^\infty),\mathcal{O})\cong 
		\bold{T}_\psi(U)	\]
identifying the $T_v$ for $v\notin S$, and the $T_{\varpi_v}$ for $v|p$
\end{cor}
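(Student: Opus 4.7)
The plan is to deduce this corollary directly from Corollary \ref{WeightChangeInf}, which provides the essential input. That corollary gives an $\calO$-module isomorphism
\[
S_{\kappa,\psi}^\mathrm{no}(U(p^\infty),E/\calO)^\vee \stackrel{\sim}{\longrightarrow} S_{2,\psi}^\mathrm{no}(U(p^\infty),E/\calO)^\vee = S_\psi(U)
\]
intertwining the actions of $T_v$ for $v\notin S$, $T_{\varpi_v}$ for $v|p$, and $\langle y\rangle^\mathrm{no}$ for $y\in \mathcal{U}_p$. First, I would recall from the discussion surrounding \ref{VertControl} and from \ref{InfDefs} that both Hecke algebras act faithfully on their respective Pontryagin dual spaces; this realizes $\bold{T}_{\kappa,\psi}^\mathrm{no}(U(p^\infty),\calO)$ and $\bold{T}_\psi(U)$ as subrings of $\End_\calO$ of the respective modules, each generated as an $\calO$-subalgebra by the images of the corresponding Hecke operators.

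Via the isomorphism of \ref{WeightChangeInf}, these two endomorphism rings can be compared: the isomorphism induces a ring isomorphism between the subalgebras generated by the $T_v$, the $T_{\varpi_v}$, and the $\langle y\rangle^\mathrm{no}$ on each side, sending each generator to its counterpart. Hence we obtain an $\calO$-algebra isomorphism $\bold{T}_{\kappa,\psi}^\mathrm{no}(U(p^\infty),\calO)\cong \bold{T}_\psi(U)$ that identifies the named Hecke operators.

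It remains to promote this to a $\Lambda(\mathcal{U}_p)$-algebra isomorphism. The $\Lambda(\mathcal{U}_p)$-structure on $\bold{T}_\psi(U)$ is by definition the one in which $y\in\mathcal{U}_p$ acts by $\langle y\rangle^\mathrm{no}$, and exactly the same structure was used on $\bold{T}_{\kappa,\psi}^\mathrm{no}(U(p^\infty),\calO)$ in \ref{InfDefs}. Since the module isomorphism from \ref{WeightChangeInf} intertwines the $\langle y\rangle^\mathrm{no}$ operators, the induced ring isomorphism is $\Lambda(\mathcal{U}_p)$-linear. There is no substantial obstacle here; the essential content was dispensed with in \ref{WeightChangeFin} (and thereby \ref{WeightChangeInf}), and the present corollary is simply the reformulation on the Hecke-algebra side, using faithfulness of the actions on the dual spaces.
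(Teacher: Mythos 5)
Your proposal is correct and takes essentially the same approach as the paper: the paper gives no explicit proof, simply stating the corollary "is immediate from \ref{WeightChangeInf}," and your argument spells out precisely that deduction (faithfulness of the two Hecke algebras on the Pontryagin-dual modules, transport of generators via the equivariant module isomorphism, and $\Lambda(\mathcal{U}_p)$-linearity because the $\Lambda(\mathcal{U}_p)$-structures on both sides are the $\langle y\rangle^{\mathrm{no}}$-action, which the isomorphism intertwines).
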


Recall that if $\kappa$ is an algebraic weight, then for any $a\ge 1$ we denote by $\mathfrak{p}_\kappa$ the kernel of the $\mathcal{O}$-algebra morphism $\Lambda(\mathcal{U}_p^a)\rightarrow\mathcal{O}$ corresponding to the character $y\mapsto y^{-\bold{w}}$ of $\mathcal{U}_p^a$.

\begin{cor}\label{HorControl}
Let $\kappa=(\bold{k},\bold{w})$ be an algebraic weight such that $U\cap(\A_F^\infty)^\times$ acts on $W_\kappa(\calO)$ via $\psi^{-1}$. Let $a\ge 1$ be such that $(U(p^{a,a})(\A_F^\infty)^\times \cap t^{-1}D^\times t)/F^\times = 1$ for all $t \in (D \otimes_F \A_F^\infty)^\times$. 

The isomorphism $\bold{T}_\psi(U)\cong \bold{T}_{\kappa,\psi}(U(p^\infty),\mathcal{O})$ of \ref{WeightChangeHecke} combined with the natural projection $\bold{T}_{\kappa,\psi}(U(p^\infty),\mathcal{O})\rightarrow \bold{T}_{\kappa,\psi}(U(p^{a,a}),\mathcal{O})$ has kernel equal to the radical of $\mathfrak{p}_\kappa \bold{T}_\psi(U)$.
\end{cor}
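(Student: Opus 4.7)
The plan is to denote the projection by $\phi\colon \bold{T}_\psi(U) \to \bold{T}_{\kappa,\psi}^{\mathrm{no}}(U(p^{a,a}),\mathcal{O})$ and establish the two inclusions $\sqrt{\mathfrak{p}_\kappa \bold{T}_\psi(U)} \subseteq \ker(\phi)$ and $\ker(\phi) \subseteq \sqrt{\mathfrak{p}_\kappa \bold{T}_\psi(U)}$ separately. First I would combine the weight-change isomorphism of \ref{WeightChangeInf} with the control theorem \ref{VertControl} to obtain a Hecke-equivariant isomorphism
\[ S_\psi(U)/\mathfrak{p}_\kappa S_\psi(U) \;\cong\; S_{\kappa,\psi}^{\mathrm{no}}(U(p^{a,a}),E/\mathcal{O})^\vee, \]
through which the $\bold{T}_\psi(U)$-action on the left-hand side factors via $\phi$. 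Since $\bold{T}_{\kappa,\psi}^{\mathrm{no}}(U(p^{a,a}),\mathcal{O})$ acts faithfully on the right-hand side (as noted just above \ref{VertControl}), the kernel of $\phi$ is exactly the ideal of $T \in \bold{T}_\psi(U)$ satisfying $T\cdot S_\psi(U) \subseteq \mathfrak{p}_\kappa S_\psi(U)$.

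With this description the inclusion $\sqrt{\mathfrak{p}_\kappa \bold{T}_\psi(U)} \subseteq \ker(\phi)$ is immediate: $\mathfrak{p}_\kappa \bold{T}_\psi(U)$ clearly lies in $\ker(\phi)$, and $\ker(\phi)$ is radical because the target Hecke algebra $\bold{T}_{\kappa,\psi}^{\mathrm{no}}(U(p^{a,a}),\mathcal{O})$ is reduced, as recorded in \ref{NearOrdHilbert}.

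For the reverse inclusion I would argue with Cayley--Hamilton. By \ref{VertControl}, $S_\psi(U)$ is free of some finite rank $d$ over $\Lambda(\mathcal{U}_p^a)$, so each $T \in \bold{T}_\psi(U)$ has a well-defined monic characteristic polynomial $\chi_T(X) \in \Lambda(\mathcal{U}_p^a)[X]$ of degree $d$. For $T \in \ker(\phi)$, the induced endomorphism of the base change $S_\psi(U)/\mathfrak{p}_\kappa S_\psi(U)$ is zero, so $\chi_T(X) \equiv X^d \pmod{\mathfrak{p}_\kappa}$, i.e., every non-leading coefficient of $\chi_T$ lies in $\mathfrak{p}_\kappa$. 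Cayley--Hamilton gives $\chi_T(T) = 0$ as an endomorphism of $S_\psi(U)$, and the faithfulness of the action of $\bold{T}_\psi(U)$ on $S_\psi(U)$ transports this identity into $\bold{T}_\psi(U)$ itself; rearranging yields $T^d \in \mathfrak{p}_\kappa \bold{T}_\psi(U)$, whence $T \in \sqrt{\mathfrak{p}_\kappa \bold{T}_\psi(U)}$.

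There is no serious obstacle here: the substance is entirely contained in the control theorem \ref{VertControl} and the weight-independence isomorphism \ref{WeightChangeInf}, and the rest is a formal consequence of faithfulness together with Cayley--Hamilton. The only small point to keep in mind is that the $\Lambda(\mathcal{U}_p^a)$-structure over which $S_\psi(U)$ is free (the one coming from the normalized operators $\langle y\rangle^{\mathrm{no}}$) is precisely the one with respect to which $\mathfrak{p}_\kappa$ is defined, so no rescaling is required.
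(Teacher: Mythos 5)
Your proof is correct, and it diverges from the paper's argument at the hard inclusion $\ker(\phi)\subseteq\sqrt{\mathfrak{p}_\kappa\bold{T}_\psi(U)}$. The paper proceeds through supports: since $\ker(\phi)$ is the annihilator of $S_{\kappa,\psi}^\mathrm{no}(U(p^{a,a}),E/\mathcal{O})^\vee\cong S_\psi(U)/\mathfrak{p}_\kappa S_\psi(U)$, and $\ker(\phi)$ is radical, the desired inclusion is equivalent to showing that every prime $\mathfrak{p}\supseteq\mathfrak{p}_\kappa\bold{T}_\psi(U)$ lies in the support of this module; the paper verifies this by Nakayama, using $S_\psi(U)_\mathfrak{p}\neq 0$ (faithfulness) and $\mathfrak{p}_\kappa\subseteq\mathfrak{p}$. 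You instead take a single $T\in\ker(\phi)$, observe that the characteristic polynomial of $T$ on the free $\Lambda(\mathcal{U}_p^a)$-module $S_\psi(U)$ reduces to $X^d$ modulo $\mathfrak{p}_\kappa$, and conclude $T^d\in\mathfrak{p}_\kappa\bold{T}_\psi(U)$ by Cayley--Hamilton; faithfulness is not even needed at this step since $\bold{T}_\psi(U)$ is by definition a subalgebra of $\End_{\Lambda(\mathcal{U}_p)}(S_\psi(U))$, so $\chi_T(T)=0$ as an endomorphism is already an identity in $\bold{T}_\psi(U)$. The two routes use the same inputs (\ref{WeightChangeInf}, \ref{VertControl}, reducedness of the finite-level Hecke algebra) but different commutative-algebra tools; yours gives the slightly more quantitative statement that the nilpotency exponent of $\ker(\phi)/\mathfrak{p}_\kappa\bold{T}_\psi(U)$ is controlled by $\mathrm{rank}_{\Lambda(\mathcal{U}_p^a)}S_\psi(U)$, at the cost of being pointwise rather than a one-line Nakayama argument on primes. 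Your caveat about the $\langle y\rangle^{\mathrm{no}}$-normalization matching the definition of $\mathfrak{p}_\kappa$ is exactly the right thing to flag.
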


\begin{proof} 
Combining \ref{WeightChangeHecke} with \ref{VertControl} yields an action of $\bold{T}_\psi(U)/\mathfrak{p}_\kappa\bold{T}_\psi(U)$ on $S_{\kappa,\psi}^\mathrm{no}(U(p^{a,a}),E/\mathcal{O})^\vee$ that factors through $\bold{T}_{\kappa,\psi}(U(p^{a,a}),\mathcal{O})$. Since $\bold{T}_{\kappa,\psi}(U(p^{a,a}),\mathcal{O})$ is reduced, it suffices now to show that any $\mathfrak{p} \in \Spec\bold{T}_\psi(U)$ containing $\mathfrak{p}_\kappa \bold{T}_\psi(U)$ is in the support of $S_{\kappa,\psi}^\mathrm{no}(U(p^{a,a}),E/\mathcal{O})^\vee$. But this follows from \ref{WeightChangeInf}, \ref{VertControl}, and Nakayama's Lemma, since
	\[ S_{\kappa,\psi}^\mathrm{no}(U(p^{a,a}),E/\mathcal{O})^\vee_\mathfrak{p} = 
	S_\psi(U)_\mathfrak{p}/\mathfrak{p}_\kappa S_\psi(U)_\mathfrak{p}.
	\]
\end{proof}

We say that a prime $\mathfrak{p}$ of $\bold{T}_\psi(U)$ is an \textit{arithmetic prime} if there is some $a\ge 1$ and some algebraic weight $\kappa$ such that $\mathfrak{p} \cap \Lambda(\mathcal{U}_p^a) = \mathfrak{p}_\kappa$.

\begin{cor}\label{ArPrimeZarDense}
	For any irreducible component $C$ of $\Spec \mathbf{T}_\psi(U)$, the set of arithmetic primes in $C$ is Zariski dense.
\end{cor}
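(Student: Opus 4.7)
The plan is as follows. Let $C$ be an irreducible component of $\Spec \mathbf{T}_\psi(U)$, cut out by a minimal prime $\mathfrak{q}$, and set $T = \mathbf{T}_\psi(U)/\mathfrak{q}$, a domain. Choose $a \ge 1$ large enough that $\mathcal{U}_p^a$ is torsion-free, so that $\Lambda := \Lambda(\mathcal{U}_p^a)$ is a power series ring over $\mathcal{O}$ in $[F:\Q]$ variables (a regular local domain of dimension $1 + [F:\Q]$), and also large enough that $(U(p^{a,a})(\A_F^\infty)^\times \cap t^{-1}D^\times t)/F^\times = 1$ for all $t$ (using \ref{SuffSmallSub}), so that \ref{VertControl} applies and $S_\psi(U)$ is finite free over $\Lambda$.

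First I would show that $\Lambda \hookrightarrow T$, i.e.\ $\mathfrak{q} \cap \Lambda = (0)$. Since $\mathbf{T}_\psi(U)$ acts faithfully on $S_\psi(U)$, the natural map $\mathbf{T}_\psi(U) \hookrightarrow \End_\Lambda(S_\psi(U))$ is injective into a $\Lambda$-free module; hence $\mathbf{T}_\psi(U)$ is $\Lambda$-torsion free. In particular, no nonzero element of $\Lambda$ can be a zero-divisor in $\mathbf{T}_\psi(U)$, so it avoids every minimal prime. Combined with the fact that $\mathbf{T}_\psi(U)$ is finite over $\Lambda$ (by \ref{HeckeFinite}), this shows $T$ is a finite extension domain of $\Lambda$, and the induced map $\pi \colon \Spec T \to \Spec \Lambda$ is a finite surjection (by Lying Over) between irreducible Noetherian schemes of the same dimension.

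Next I would verify that the arithmetic primes $\mathfrak{p}_\kappa \subset \Lambda$ are Zariski dense in $\Spec \Lambda$. Any $\mathbf{w} \in \Z^{J_F}$ arises from an algebraic weight $(\mathbf{k},\mathbf{w})$ by choosing $k_\tau = c - 2w_\tau$ for $c$ sufficiently large, so the $\mathfrak{p}_\kappa$ are precisely the kernels of the evaluation maps $\Lambda \to \mathcal{O}$ attached to the characters $\chi_\mathbf{w} \colon y \mapsto y^{-\mathbf{w}}$ of $\mathcal{U}_p^a$. The density of such algebraic characters in the weight space $\Spec \Lambda$ is standard: a nonzero element of $\calO[[T_1,\dots,T_{[F:\Q]}]]$ cannot vanish at $(T_i = y_i^{-\mathbf{w}} - 1)$ for all $\mathbf{w} \in \Z^{J_F}$, which follows by an inductive Weierstrass preparation argument using that each coordinate $y_i^{-w}$ traces out an infinite subset of $1+\mathfrak{m}_\mathcal{O}$.

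Finally, I would combine these inputs via the general principle that a finite surjection between irreducible Noetherian schemes of equal dimension pulls back Zariski dense subsets to Zariski dense subsets: if $Z \subseteq \Spec T$ is a closed subset containing $\pi^{-1}\{\mathfrak{p}_\kappa\}$, then $\pi(Z)$ is closed (as $\pi$ is finite, hence proper), contains every $\mathfrak{p}_\kappa$, and so equals $\Spec \Lambda$ by density; then $\dim Z = \dim \pi(Z) = \dim \Spec \Lambda = \dim \Spec T$, forcing $Z = \Spec T$ by irreducibility. Any $\mathfrak{p} \in \pi^{-1}\{\mathfrak{p}_\kappa\}$ lifts to a prime of $\mathbf{T}_\psi(U)$ containing $\mathfrak{q}$ and contracting to $\mathfrak{p}_\kappa$ in $\Lambda$, hence is an arithmetic prime lying in $C$; so the set of arithmetic primes in $C$ is Zariski dense. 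The only subtlety that might require care is the density step for the characters $\chi_\mathbf{w}$, but this is a well-known feature of Hida's weight space.
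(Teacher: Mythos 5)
Your proof is correct and takes essentially the same route as the paper's (very terse) proof, which simply cites density of arithmetic points in the Iwasawa algebra and finiteness of $\mathbf{T}_\psi(U)$ over $\Lambda(\mathcal{U}_p)$ from \ref{HeckeFinite}. What you supply that the paper leaves implicit is the $\Lambda(\mathcal{U}_p^a)$-torsion-freeness of $\mathbf{T}_\psi(U)$ (via faithfulness of the action on the free module $S_\psi(U)$), which is exactly what guarantees $\mathfrak{q}\cap\Lambda(\mathcal{U}_p^a)=0$ and hence that every irreducible component surjects onto the weight space — the crux of passing density through the finite map.
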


\begin{proof}
	It is easy to see that the set of primes $\mathfrak{p} \in \Spec \Lambda(\mathcal{U}_p)$ such that $\mathfrak{p} \cap \Lambda(\mathcal{U}_p^a) = \mathfrak{p}_\kappa$ for some $a$ and some $\kappa$, is Zariski dense in $\Lambda(\mathcal{U}_p)$. The result now follows from the fact that $\Spec \bold{T}_\psi(U)$ is finite over $\Spec \Lambda(\mathcal{U}_p)$ by \ref{HeckeFinite}.
\end{proof}

\subsection{Modular Galois representations}\label{ModGalReps}

We keep all assumptions and notations of the previous subsubsection. In particular $\kappa$ is an algebraic weight, $U$ is a $(\Sigma'\subseteq\Sigma)$-open subgroup of $(D \otimes_F \A_F)^\times$, and $S$ denotes the finite set of places containing $\Sigma$, all primes at which $U_v$ is not maximal compact, as well as all places above $p$ and $\infty$.

\subsubsection{}\label{SmallGalRepHilbert}

Let $\pi$ be a regular algebraic cuspidal automorphic representation of $\GL_2(\A_F)$. There is an absolutely irreducible representation
	\[ \rho_\pi : G_F \longrightarrow \GL_2(\Qbar_p)
	\]
such that $\rho_\pi|_{G_v}$ and $\pi_v$ satisfy the (suitably normalized) local Langlands correspondence for every place $v$ of $F$. The existence of such a $\rho_\pi$ was shown in \cite{TaylorHilbert} building on \cite{WilesOrdinary} and \cite{CarayolHilbert}, and an alternate construction is given in \cite{BlasiusRogawskiHilbert}. The compatibility with the local Langlands correspondence was shown for places away from $p$ in \cite{CarayolHilbert} and \cite{TaylorHilbert}, and for places above $p$ in \cite{BlasiusRogawskiHilbert}, \cite{SaitoHilbert} and \cite{SkinnerHilbert}.

In the case that $\pi$ is $p$-nearly ordinary, we can say more. Let $(\bold{k},\bold{w})$ denote the weight of $\pi$. Take an open compact subgroup $V \subset \GL_2(\A_F^\infty)$, with $V_v \subseteq \mathrm{Iw}(v)$ for each $v|p$, such that there is $0 \ne x \in \pi^V$ with $T_{\varpi_v}x = \alpha_vx$, where $\alpha_v \in \Qbar$ is a $p$-adic unit under out fixed embedding $\Qbar \rightarrow \Qbar_p$. Then the line generated by $x$ is also stable under $\langle y \rangle$ for every $y \in \mathcal{U}_p$, and the eigenvalues are algebraic. Letting $\chi_v'$ denote the resulting $\overline{\Q}$-valued character, we define a character $\chi_v : F_v^\times \rightarrow \Qbar_p^\times$ by $\chi_v(y) = \chi_v'(y)y^{-\bold{w}}$ and $\chi_v(\varpi_v) = \alpha_v$. It is shown in \cite{WilesOrdinary} and \cite{HidaNOGalRep} that
	\[ 
	\rho_\pi|_{G_v} \cong \left( \begin{array}{cc} \ast & \ast \\ & \chi_v \end{array} \right).
	\]
	
\subsubsection{}\label{SmallGalRepQuat}

Let $f \in S_{\kappa,\psi}^\mathrm{no}(U,\mathcal{O})$, be an eigenfunction for $\mathbf{T}_{\kappa,\psi}^\mathrm{no}(U,\mathcal{O})$, and let
	\[ 
	\lambda_f : \bold{T}_{\kappa,\psi}^\mathrm{no}(U,\calO) \longrightarrow \Qbar_p
	\]
denote the resulting homomorphism. The image of $f$ under the Jacquet-Langlands-Shimizu correspondence, c.f. \ref{JacquetLanglands}, generates an irreducible $\pi_f$. Let $\rho_f = \rho_{\pi_f}$, with $\rho_{\pi_f}$ as in \ref{SmallGalRepHilbert}. The discussion in \ref{SmallGalRepHilbert} applied to
	\[
	\rho_f : G_F \longrightarrow \GL_2(\A_F)
	\]
implies
	\begin{enumerate}
		\item $\rho_f$ is unramified outside $S$ and $\tr \rho_f(\Frob_v) = \lambda_f(T_v)$ for each $v \notin S$;
		\item $\det \rho_f = \psi\epsilon_p$, in particular $\det \rho_f (c) = -1$ for any choice $c$ of complex conjugation;
		\item for every $v \in \Sigma$, $\rho_f|_{G_v} \cong \left( \begin{array}{cc}
			\theta_v\epsilon_p & \ast \\ & \theta_v \end{array} \right)$ with $\theta_v$ an unramified character, and if $v \in \Sigma'$, $\theta_v = \gamma_v$;
		\item for every $v|p$, $\rho_f|_{G_v} \cong \left( \begin{array}{cc} \ast & \ast \\
			&\chi_v \end{array} \right)$, where, via the isomorphism of local class field theory,
			$\chi_v(\varpi_v) = \lambda_f(T_{\varpi_v})$, and $\chi_v(y) =
			 \lambda_f(\langle y \rangle^\mathrm{no})$ for all $y \in \mathcal{O}_{F_v}^\times$.
	\end{enumerate}

\subsubsection{}\label{BigGalRepSec}

We call an ideal $\mathfrak{a}$ of $\bold{T}_{\kappa,\psi}^\mathrm{no}(U,\mathcal{O})$ or $\bold{T}_\psi(U)$ \textit{Eisenstein} if there is an abelian extension $L/F$ such that for all but finitely many finite places $v$ of $F$ that split completely in $L$, we have $T_v-2\in\mathfrak{a}$.

Let $f$ be an eigenform for $\bold{T}_{\kappa,\psi}^{\mathrm{no}}(U,\mathcal{O})$, and let
	\[ 
	\lambda_f : \bold{T}_{\kappa,\psi}^\mathrm{no}(U,\mathcal{O})
		\longrightarrow \overline{\Q}_p,
	\]
denote the corresponding $\mathcal{O}$-algebra morphism. The kernel of $\lambda_f$ is contained in a unique maximal ideal $\mathfrak{m}$. Choosing a $\overline{\Z}_p$-lattice for $\rho_f$ and reducing modulo the maximal ideal of $\overline{\Z}_p$, we obtain a representation
	\[
		\overline{\rho}_\mathfrak{m} : G_{F,S} \longrightarrow \GL_2(\overline{\F}).
	\]
If $\mathfrak{m}$ is non-Eisenstein, $\overline{\rho}_\mathfrak{m}$ is irreducible and (up to isomorphism) $\overline{\rho}_\mathfrak{m}$ does not depend on the choice of $\overline{\Z}_p$-lattice in the representation space of $\rho_f$, nor on the choice of $f$.

Given the non-Eisenstein maximal ideal $\mathfrak{m}$ of $\mathbf{T}_\psi(U)$ and $v|p$, we define a character
	\[ \chi_{v,\mathfrak{m}}^\mathrm{univ} : G_v \longrightarrow 
	\mathbf{T}_\psi(U)_\mathfrak{m}^\times
	\]
by composing the isomorphism of class field theory with the character of $F_v^\times$ that sends $\varpi_v$ to $T_{\varpi_v}$, and on $\mathcal{O}_{F_v}^\times$ is equal to the canonical character
	\[ \mathcal{O}_{F_v}^\times \longrightarrow \Lambda(\mathcal{U}_p)^\times 
	\longrightarrow \mathbf{T}_\psi(U)_\mathfrak{m}^\times.
	\]

\begin{prop}\label{BigGalRep}
Let $\mathfrak{m}$ be a non-Eisenstein maximal ideal of $\bold{T}_\psi(U)$. There exists a continuous representation
	\[ \rho_{U,\frakm}:G_{F,S} \longrightarrow \GL_2(\bold{T}_\psi(U)_\frakm) \]
such that
\begin{enumerate}
	\item for any finite place $v\notin S$, $\tr \rho_{U,\frakm}(\Frob_v) = T_v$.
\end{enumerate}
Moreover, this representation satisfies
\begin{enumerate}
	\item[(2)] $\det \rho_{U,\frakm} = \psi\epsilon_p$,
	\item[(3)] for $v|p$ and $\sigma\in G_v$, $\tr\rho(\sigma) = \psi\epsilon_p(\chi_{v,\mathfrak{m}}^\mathrm{univ})^{-1}(\sigma)+\chi_{v,\mathfrak{m}}^\mathrm{univ}(\sigma)$.
\end{enumerate}
\end{prop}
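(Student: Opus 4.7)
The plan is to construct $\rho_{U,\mathfrak{m}}$ by interpolating the Galois representations attached to classical nearly ordinary eigenforms via a pseudo-representation, then applying the theorem of Nyssen--Rouquier to obtain a genuine representation from the pseudo-representation using the residual irreducibility provided by the non-Eisenstein hypothesis.

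First I would set up the local system of classical Galois representations. By \ref{ArPrimeZarDense}, the set of arithmetic primes $\mathfrak{p} \in \Spec \mathbf{T}_\psi(U)_\mathfrak{m}$ is Zariski dense. For each such $\mathfrak{p}$, the isomorphism \ref{WeightChangeHecke} identifies the residue field at $\mathfrak{p}$ with the eigenvalue field of a classical nearly ordinary eigenform $f_\mathfrak{p} \in S_{\kappa,\psi}^\mathrm{no}(U(p^{a,a}),\overline{\Q}_p)$ for some algebraic weight $\kappa$ and some $a\ge 1$. By \ref{SmallGalRepQuat}, this eigenform has an associated Galois representation $\rho_{f_\mathfrak{p}} : G_{F,S} \to \GL_2(\overline{\Q}_p)$ satisfying $\tr \rho_{f_\mathfrak{p}}(\Frob_v) \equiv T_v \pmod{\mathfrak{p}}$ for $v \notin S$, $\det \rho_{f_\mathfrak{p}} = \psi\epsilon_p$, and ordinarity at each $v|p$ with quotient character matching the reduction of $\chi_{v,\mathfrak{m}}^\mathrm{univ}$ modulo $\mathfrak{p}$ (since $\chi_{v,\mathfrak{m}}^\mathrm{univ}$ was built precisely from $T_{\varpi_v}$ and the Iwasawa action on $S_\psi(U)$).

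Next I would assemble a pseudo-representation $T : G_{F,S} \to \mathbf{T}_\psi(U)_\mathfrak{m}$ of dimension $2$. The ring $\mathbf{T}_\psi(U)_\mathfrak{m}$ is reduced (being a localization of a subalgebra of $\prod_\mathfrak{p} \mathbf{T}_{\kappa_\mathfrak{p},\psi}^\mathrm{no}(U(p^{a_\mathfrak{p},a_\mathfrak{p}}),\calO)$ as $\mathfrak{p}$ ranges over arithmetic primes, each factor being reduced by \ref{NearOrdHilbert}), so the natural map $\mathbf{T}_\psi(U)_\mathfrak{m} \hookrightarrow \prod_\mathfrak{p} \mathbf{T}_\psi(U)_\mathfrak{m}/\mathfrak{p}$ is injective. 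Using Chebotarev together with the trace identities $\tr \rho_{f_\mathfrak{p}}(\Frob_v) \equiv T_v \pmod{\mathfrak{p}}$ and Baire category (or compactness of $G_{F,S}$), one checks that the continuous function $\sigma \mapsto (\tr \rho_{f_\mathfrak{p}}(\sigma))_\mathfrak{p}$ lands in the closed subring $\mathbf{T}_\psi(U)_\mathfrak{m}$, giving a continuous pseudo-representation. The identities defining a $2$-dimensional pseudo-representation hold on the Zariski dense set of arithmetic primes, hence hold identically in $\mathbf{T}_\psi(U)_\mathfrak{m}$.

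To produce $\rho_{U,\mathfrak{m}}$ from $T$, I would invoke the theorem of Nyssen and Rouquier: since $\mathfrak{m}$ is non-Eisenstein, the residual representation $\overline{\rho}_\mathfrak{m}$ is absolutely irreducible, so $T$ lifts uniquely to a continuous homomorphism $\rho_{U,\mathfrak{m}} : G_{F,S} \to \GL_2(\mathbf{T}_\psi(U)_\mathfrak{m})$ with the prescribed trace. Property (1) is then immediate. Properties (2) and (3) both follow by the same density argument: each side is a continuous map $G_{F,S} \to \mathbf{T}_\psi(U)_\mathfrak{m}$ (resp. $G_v \to \mathbf{T}_\psi(U)_\mathfrak{m}$), the two sides agree modulo every arithmetic prime $\mathfrak{p}$ (by the corresponding statements for $\rho_{f_\mathfrak{p}}$ in \ref{SmallGalRepQuat} combined with the explicit description of $\chi_{v,\mathfrak{m}}^\mathrm{univ}$), and the arithmetic primes cut out the zero ideal in the reduced ring $\mathbf{T}_\psi(U)_\mathfrak{m}$.

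The main obstacle is the verification of property (3): one must recognize that the specialization of $\chi_{v,\mathfrak{m}}^\mathrm{univ}$ at an arithmetic prime $\mathfrak{p}$ agrees with the ordinarity quotient character $\chi_v$ of $\rho_{f_\mathfrak{p}}|_{G_v}$ described in \ref{SmallGalRepQuat}. This comes down to matching, for each $v|p$, the Hecke eigenvalue $\lambda_{f_\mathfrak{p}}(T_{\varpi_v})$ with $\chi_v(\varpi_v)$ and the diamond action $\lambda_{f_\mathfrak{p}}(\langle y\rangle^\mathrm{no})$ with $\chi_v(y)$ for $y\in \calO_{F_v}^\times$, which is exactly the content of property (4) in \ref{SmallGalRepQuat}. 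Once this compatibility is confirmed, interpolation delivers (3).
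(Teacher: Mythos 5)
Your proposal is correct and follows essentially the same route as the paper's proof: build a two-dimensional pseudo-representation valued in $\mathbf{T}_\psi(U)_\mathfrak{m}$ from the classical Galois representations, invoke Nyssen--Rouquier (using the non-Eisenstein hypothesis to get residual absolute irreducibility), and then deduce (2) and (3) from Zariski density of arithmetic primes together with reducedness of the Hecke algebra. The one organizational difference is how the pseudo-representation is assembled: the paper works at each finite level $U(p^{b,b})$, where $\mathbf{T}_{\kappa,\psi}^\mathrm{no}(U(p^{b,b}),\calO)_\mathfrak{m}$ is a finite $\calO$-algebra injecting into a finite product of $\Qbar_p$'s, gets a pseudo-representation $r_b$ there, and then takes $r = \varprojlim_b r_b$; you instead pass directly to the product over all arithmetic primes of the big Hecke algebra and argue the trace function lands in (and is continuous to) the closed subring $\mathbf{T}_\psi(U)_\mathfrak{m}$. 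Your version works, but requires the additional observation that $\mathbf{T}_\psi(U)_\mathfrak{m}$, being compact, maps homeomorphically onto a closed subset of $\prod_\mathfrak{p}\mathbf{T}_\psi(U)_\mathfrak{m}/\mathfrak{p}$; the paper's finite-level reduction makes continuity automatic. One small omission: the paper also cites a theorem of Carayol for the uniqueness of the resulting representation, which you assert without reference.
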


\begin{proof}

Take an algebraic weight $\kappa$ and $a \ge 1$ such that $U\cap (\A_F^\infty)^\times$ acts on $W_\kappa(\calO)$ by $\psi^{-1}$ and such that for any $t \in (D \otimes_F \A_F^\infty)^\times$, $(U(p^{a,a})(\A_F^\infty)^\times \cap t^{-1}D^\times t)/F^\times = 1$. Then $\mathfrak{m}$ is the pullback of a non-Eisenstein maximal ideal of $\bold{T}_{\kappa,\psi}^\mathrm{no}(U(p^{a,a}),\calO)$. Let
	\[ 
	\lambda : \bold{T}_{\kappa,\psi}^\mathrm{no}(U(p^{a,a}),\calO)_\frakm \longrightarrow \Qbar_p 
	\]
by an $\mathcal{O}$-algebra morphism corresponding to an eigenform $f$, and let $\rho_f$ denote the corresponding representation as in \ref{SmallGalRepQuat}. Since $\tr \rho_\lambda(\Frob_v) = \lambda(T_v)$, for every $v\notin S$, the injection
	\[  \bold{T}_{\kappa,\psi}^\mathrm{no}(U(p^{a,a}),\calO)_\frakm \longrightarrow 
		\prod_{f} \Qbar_p,
	\]
c.f. \ref{NearOrdHilbert}, implies there is a is a pseudo-representation
	\[ r_a : G_{F,S} \longrightarrow \bold{T}_{\kappa,\psi}(U(p^{a,a}),\calO)_\frakm \]
with $r_a(\Frob_v) = T_v$ for every $v \notin S$. We also get a pseudo-representation
	\[
		r_b : G_{F,S} \longrightarrow \bold{T}_{\kappa,\psi}(U(p^{b,b}),\mathcal{O})_\mathfrak{m}
	\]
for every $b \ge a$, such that
	\[ 
	\xymatrix{ G_{F,S} \ar[r]^-{r_b} \ar[rd]_-{r_a} & \bold{T}_{\kappa,\psi}^\mathrm{no}
		(U(p^{b,b}),\calO)_\frakm \ar[d] \\
		&  \bold{T}_{\kappa,\psi}^\mathrm{no}(U(p^{a,a}),\calO)_\frakm } 
	\]
commutes. We then get a pseudo representation
	\[ 
	r = \varprojlim_{b\ge a} r_b : G_{F,S} \longrightarrow \bold{T}_{\psi}(U)_\frakm,
	\]
such that $r(\Frob_v) = T_v$ for any $v\notin S$. Since $r$ modulo $\frakm$ is the trace of an absolutely irreducible representation, namely $\rhobar_\mathfrak{m}$, a theorem of Nyssen and Rouqier \cite{NyssenPseudo}, \cite{RouquierPseudo}, implies that $r$ is the trace of a representation
	\[ \rho_{U,\frakm} : G_{F,S} \longrightarrow
	 \GL_2(\bold{T}_\psi(U)_\frakm), \]
and a theorem of Carayol \cite{CarayolAnneauLocal} implies this representation is unique. To see (2) and (3), note that the specialization of $\rho_{U,\frakm}$ at any arithmetic prime satisfies the corresponding properties, hence so does $\rho_{U,\frakm}$ by Zariski density of arithmetic primes, cf. \ref{ArPrimeZarDense}, and reducedness of $\mathbf{T}_\psi(U)_\mathfrak{m}$.
\end{proof}

Recall that $\bold{T}_\psi(U)$ is generated over $\Lambda(\mathcal{U}_p)$ be the operators $T_v$ for all $v\notin S$, and $T_{\varpi_v}$ for $v|p$, cf. \ref{InfDefs}. The following corollary shows that, after localizing $\bold{T}_\psi(U)$ at $\mathfrak{m}$, it suffices to use finitely many of the $T_v$ with $v$ outside of any finite set of places $S'\supseteq S$. It will be used in \S \ref{LocRequalsT}, cf. \ref{thesetS}.

\begin{cor}\label{Heckefingencor}

Let $S'$ be any finite set of places of $F$ containing $S$. Then there exist finite places $v_1,\ldots,v_k \notin S'$ such that $\mathbf{T}_\psi(U)_\mathfrak{m} = \Lambda(\mathcal{U}_p^1)[T_{v_1},\ldots,T_{v_k}][T_{\varpi_v}]_{v|p}$.

\end{cor}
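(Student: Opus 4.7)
The plan is as follows. Set $T := \mathbf{T}_\psi(U)_\mathfrak{m}$, and let $A \subseteq T$ denote the $\Lambda(\mathcal{U}_p^1)$-subalgebra generated by $\{T_v : v \notin S'\}\cup\{T_{\varpi_v} : v|p\}$. I will show $A = T$. Once this is established, since $T$ is a finitely generated $\Lambda(\mathcal{U}_p^1)$-module by \ref{HeckeFinite} (applied with $a = 1$, using that $\Lambda(\mathcal{U}_p)$ is finite over $\Lambda(\mathcal{U}_p^1)$), a finite set of module generators can be chosen, each a polynomial in the given Hecke operators involving only finitely many of the $T_v$; the $v$'s that appear give the $v_1,\ldots,v_k$ asserted in the corollary.

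The crucial observation is that $A$ is automatically closed in $T$. Indeed, $\Lambda(\mathcal{U}_p^1)$ is a complete Noetherian local ring and $T$ is finite as a $\Lambda(\mathcal{U}_p^1)$-module, so every $\Lambda(\mathcal{U}_p^1)$-submodule of $T$ is itself finitely generated, and hence closed in the $\mathfrak{m}_T$-adic topology of $T$. The representation $\rho_{U,\mathfrak{m}}: G_{F,S} \to \GL_2(T)$ provided by \ref{BigGalRep} is continuous, with $\tr\rho_{U,\mathfrak{m}}(\Frob_v) = T_v$ for every $v \notin S$. By Chebotarev density, the Frobenius elements $\{\Frob_v : v \notin S'\}$ are dense in $G_{F,S}$, so continuity of the trace together with closedness of $A$ gives $\tr\rho_{U,\mathfrak{m}}(\sigma) \in A$ for every $\sigma \in G_{F,S}$. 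In particular $T_w = \tr\rho_{U,\mathfrak{m}}(\Frob_w) \in A$ for every $w \in S' \smallsetminus S$, so $A$ contains all of the $T_v$ ($v \notin S$) and $T_{\varpi_v}$ ($v|p$).

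Since $\mathbf{T}_\psi(U)$ is generated as a $\Lambda(\mathcal{U}_p)$-algebra by these same Hecke operators, to conclude $A = T$ it remains to show that the image of $\Lambda(\mathcal{U}_p)$ in $T$ lies in $A$. The quotient $\mathcal{U}_p/\mathcal{U}_p^1 \cong \prod_{v|p} k_v^\times$ is finite of some order $n$ prime to $p$. For any $h \in \mathcal{U}_p$ lifting an element of this quotient, $\langle h \rangle^n = 1$ in $T$; after enlarging $\mathcal{O}$ to contain the $n$-th roots of unity (which we may do freely), the polynomial $X^n - 1$ splits over $\mathcal{O}$ with distinct reductions modulo $\mathfrak{m}_\mathcal{O}$, and Hensel's lemma applied in the complete local ring $T$ identifies $\langle h \rangle$ with the unique lift in $\mathcal{O}^\times$ of its image in $T/\mathfrak{m}_T$. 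Hence the image of $\Lambda(\mathcal{U}_p)$ in $T$ coincides with the image of $\Lambda(\mathcal{U}_p^1)$, which is contained in $A$, and so $A = T$. The main substantive step is the closedness observation that upgrades the density of Frobenii into an actual containment; the remaining ingredients, notably the reduction of the diamond operators on $\mathcal{U}_p/\mathcal{U}_p^1$ to roots of unity via Hensel, are routine.
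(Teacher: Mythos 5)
Your proof is correct and follows essentially the same approach as the paper's: reduce the diamond operators on the prime-to-$p$ part of $\mathcal{U}_p$ to scalars in $\mathcal{O}$, use closedness of the subalgebra (automatic from $\Lambda(\mathcal{U}_p^1)$-finiteness and completeness) to upgrade Chebotarev density into an actual containment of traces, and then invoke finiteness to extract a finite list of places outside $S'$. One small slip in the diamond-operator step: for an arbitrary lift $h \in \mathcal{U}_p$ of an element of $\mathcal{U}_p/\mathcal{U}_p^1$, one has $\langle h\rangle^n = \langle h^n\rangle$, which is an element of $\Lambda(\mathcal{U}_p^1)$ but generally not $1$; you should take $h$ in the prime-to-$p$ torsion subgroup $\mu$ of $\mathcal{U}_p$ (the Teichm\"{u}ller section of $\mathcal{U}_p/\mathcal{U}_p^1$), for which $h^n = 1$ holds and the Hensel argument goes through.
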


\begin{proof}

Let $\mu$ denote the prime to $p$-torsion subgroup of $\mathcal{U}_p$. By definition, $\mathbf{T}_\psi(U)$ is generated over $\Lambda(\mathcal{U}_p^1)$ by the the operators $T_v$ for $v \notin S$, $T_{\varpi_v}$ for $v|p$, as well as $\langle y \rangle$ for $y\in \mu$. As we have assumed $E$ contains all embeddings $F_v \rightarrow \Qbar_p$, the projection $\mathbf{T}_\psi(U) \rightarrow \mathbf{T}_\psi(U)_\mathfrak{m}$ sends each $\langle y \rangle$ with $y\in \mu$, to elements of $\mathcal{O}$. The corollary then follow from \ref{HeckeFinite}, \ref{BigGalRep}, and Chebotarev density.
	\end{proof}

\subsubsection{}\label{BigGalRepLocal}

Let $\mathfrak{m}$ be a non-Eisenstein maximal ideal of $\bold{T}_\psi(U)$, and denote by 
	\[
		\overline{\rho}_\mathfrak{m} : G_{F,S} \longrightarrow \GL_2(\overline{\F})
	\]
the corresponding $\overline{F}$ representation. For $v|p$, let $G_v^\mathrm{ab}(p)$ be the maximal abelian pro-$p$ quotient of $G_v$. Let $\Lambda(G_v) = \mathcal{O}[[G_v^\mathrm{ab}(p)]]$, and let $\Lambda(G_p) = \hat{\otimes}_{v|p} \Lambda(G_v)$. Via local class field theory, we identify $\mathcal{U}_p^1$ with $\prod_{v|p} I_v^\mathrm{ab}(p)$, where $I_v^\mathrm{ab}(p)$ is the inertia subgroup of $G_v^\mathrm{ab}(2)$. Then $\Lambda(\mathcal{U}_p^1)$ is a subalgebra of $\Lambda(G_p)$, and $\Lambda(G_p)$ is isomorphic to a power series over $\Lambda(\mathcal{U}_p^1)$ in $|\{v|p\}|$ variables. Recall that for each $v|p$, we have a $\mathbf{T}_\psi(U)_\mathfrak{m}$-valued character $\chi_{v,\mathfrak{m}}^\mathrm{univ}$ of $G_v$ that sends $\varpi_v$ to $T_{\varpi_v}$. Hence, the $\Lambda(\mathcal{U}_p^1)$-algebra structure on $\mathbf{T}_\psi(U)_\mathfrak{m}$ extends to a $\Lambda(G_p)$-algebra structure. Fix an $\mathcal{O}$-valued character $\eta = (\eta_v)_{v|p}$ of the torsion subgroup of $\mathcal{U}_p^1$ (equivalently a character of the torsion subgroup of $\prod_{v|p}G_v^\mathrm{ab}(p)$). The character $\eta$ determines minimal primes of $\Lambda(\mathcal{U}_p^1)$ and $\Lambda(G_p)$, each of which denote by $\mathfrak{q}_\eta$. We set $\Lambda(\mathcal{U}_p^1,\eta) = \Lambda(\mathcal{U}_p^1)/\mathfrak{q}_\eta$, $\Lambda(G_p,\eta) = \Lambda(G_p)/\mathfrak{q}_\eta$ and $\mathbf{T}_\psi(U,\eta)_\mathfrak{m} = \mathbf{T}_\psi(U)_\mathfrak{m}/\mathfrak{q}_\eta$.

After enlarging $\mathcal{O}$, if necessary, we may assume that all eigenvalues of $\overline{\rho}_\mathfrak{m}$ are defined over $\F$. Let $R_{F,S}$ denote the universal deformation ring for $G_{F,S}$-deformations of $\overline{\rho}_\mathfrak{m}$, as in \ref{GlobalDefs}. By \ref{BigGalRep}, there is a local $\calO$-algebra morphism $R_{F,S} \rightarrow \bold{T}_\psi(U)_\mathfrak{m}$. We then get a local $\Lambda(G_p)$-algebra morphism $R_{F,S}\hat{\otimes}\Lambda(G_p) \rightarrow \mathbf{T}_\psi(U)_\mathfrak{m}$, which is surjective by \ref{Heckefingencor}. Let $\overline{R}_{F,S}^\psi$ denote the quotient of $R_{F,S}\hat{\otimes}\Lambda(G_p)$ defined in \ref{globalquotient} (with $S_\mathrm{ur} = \emptyset$). Recall that for a finite extension $E'/E$ with ring of integers $\mathcal{O}'$, a local $\calO$-algebra morphism $R_{F,S} \hat{\otimes}\Lambda(G_p) \rightarrow \mathcal{O}'$ factors through $\overline{R}_{F,S}^\psi$ if and only if the corresponding deformation $V_{\mathcal{O}'}$ and characters $(\chi_v)_{v|p}$ satisfy the following
\begin{itemize}
	\item[-] $\det V_{\mathcal{O}'} = \psi\epsilon_p$;
	\item[-] for each $v|p$, there is a $G_v$-stable line $L$ in $V_{\mathcal{O}'}$ such that $G_v$ acts on $V_{\mathcal{O}'}/L$ via $\chi_v$;
	\item[-] for each $v|p$, the restriction of $\chi_v$ to the torsion subgroup of $G_v^\mathrm{ab}(p)$ is equal to $\eta_v$;
	\item[-] for each $v \in \Sigma$, $V_{\mathcal{O}'}|_{G_v}$ is an extension of $\gamma_v$ by $\gamma_v\epsilon_p$.
\end{itemize}
		
\begin{prop}\label{BigGalLoc}
	Let $\mathfrak{m}$ and $\overline{R}_{F,S}^\psi$ be as above and assume $\Sigma' = \Sigma$. The $\Lambda(G_p)$-algebra morphism $R_{F,S}\hat{\otimes}\Lambda(G_p)\rightarrow\bold{T}_\psi(U,\eta)_\frakm$ factors through $\overline{R}_{F,S}^\psi$.
\end{prop}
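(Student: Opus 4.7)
The plan is to check the defining local conditions of $\overline{R}_{F,S}^\psi$ one at a time by combining Zariski density of arithmetic primes in the Hecke algebra with the known local-global compatibility properties of the Galois representations attached to nearly ordinary Hilbert modular forms, as recalled in \ref{SmallGalRepHilbert} and \ref{SmallGalRepQuat}. First I would observe that $\bold{T}_\psi(U,\eta)_\frakm$ is reduced. Indeed, $\bold{T}_\psi(U)$ is reduced by the last paragraph of \ref{NearOrdHilbert}, and after possibly enlarging $E$, the finite torsion part of $G_p^{\mathrm{ab}}(p)$ yields a decomposition $\Lambda(G_p) \cong \prod_{\eta'} \Lambda(G_p,\eta')$ and hence a decomposition of $\bold{T}_\psi(U)_\frakm$ into a finite product of reduced pieces, one of which is $\bold{T}_\psi(U,\eta)_\frakm$. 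Moreover, by \ref{ArPrimeZarDense} the arithmetic primes of $\bold{T}_\psi(U)$ lying over $\mathfrak{q}_\eta$ are Zariski dense in $\Spec \bold{T}_\psi(U,\eta)_\frakm$.

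Next I would use the paragraph following the definition of $\overline{R}_{F,S\cup Q}^\psi$ in \ref{globalquotient}, which says that the factoring through $\overline{R}_{F,S}^\psi$ is independent of the choice of framing. So I would lift $\rho_{U,\frakm}$ to a framed map $R_{F,S}^{\square,\psi}\hat{\otimes}\Lambda(G_p) \rightarrow \bold{T}_\psi(U,\eta)_\frakm$ by choosing any lift of the standard basis at each place $v \in S$, and reduce to checking that this framed map factors through $\overline{R}_{F,S}^{\square,\psi} = R_{F,S}^{\square,\psi}\hat\otimes_{R_S^\square} \overline{R}_S^{\square,\psi}$. By the definition of $\overline{R}_S^{\square,\psi} = \hat{\otimes}_{v\in S}\overline{R}_v^{\square,\psi}$, this in turn reduces to checking, for each $v\in S$, that the induced local map $R_v^{\square,\psi}\hat\otimes\Lambda(G_v,\eta_v) \to \bold{T}_\psi(U,\eta)_\frakm$ factors through $\overline{R}_v^{\square,\psi}$.

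For each $v \in S$, the target $\overline{R}_v^{\square,\psi}$ is the $\mathcal{O}$-flat reduced quotient cutting out triangular ordinary lifts (for $v|p$, cf. \ref{triliftringdef}, \ref{TriLiftRing}), $\gamma$-Steinberg lifts (for $v\in\Sigma$, cf. \ref{SemiStlp}), or odd lifts (for $v|\infty$, cf. \ref{OddLifts}). By part (3) of \ref{OPointsOfR} applied to $\overline{R}_v^{\square,\psi}$, it suffices to verify the corresponding local condition after composing with each $\mathcal{O}'$-point of $\bold{T}_\psi(U,\eta)_\frakm$ arising from an arithmetic prime lying over $\mathfrak{q}_\eta$. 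Such an arithmetic prime corresponds, via the isomorphism of \ref{WeightChangeHecke} and the control theorem \ref{VertControl}, to a nearly ordinary cuspidal eigenform $f$, and so gives a Galois representation $\rho_f$ as in \ref{SmallGalRepQuat}. For $v\in \Sigma = \Sigma'$, point (3) of \ref{SmallGalRepQuat} (ultimately coming from Jacquet-Langlands \ref{JacquetLanglands} together with local-global compatibility) says $\rho_f|_{G_v}$ is an extension of $\gamma_v$ by $\gamma_v\epsilon_p$. For $v|\infty$, oddness is automatic from $\det \rho_f = \psi \epsilon_p$. For $v|p$, point (4) of \ref{SmallGalRepQuat}, combined with the construction of $\chi_{v,\mathfrak{m}}^\mathrm{univ}$ in \ref{BigGalRepLocal} via $T_{\varpi_v}$ and the normalized diamond operators, shows that $\rho_f|_{G_v}$ contains a $G_v$-stable line on which the quotient character matches the specialization of $\chi_{v,\mathfrak{m}}^\mathrm{univ}$; since we have quotiented by $\mathfrak{q}_\eta$, this character restricts to $\eta_v$ on the torsion of $G_v^\mathrm{ab}(p)$.

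The main obstacle is purely bookkeeping rather than a conceptual difficulty: ensuring that all the normalization conventions (the twist by $\varpi_v^{-\bold{w}}$ in the definition of $T_{\varpi_v}$, the shift between $\langle y \rangle$ and $\langle y \rangle^\mathrm{no}$, and the class field theoretic normalization) line up correctly so that the character appearing in the $G_v$-stable quotient of $\rho_f|_{G_v}$ is precisely the specialization of $\chi_{v,\mathfrak{m}}^\mathrm{univ}$ and not some twist of it. This is exactly what was arranged in \ref{SmallGalRepQuat}(4) and \ref{BigGalRepLocal}, so once the bookkeeping is done the proposition follows.
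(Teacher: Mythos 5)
Your proposal follows the same strategy as the paper's proof: reduce to checking at arithmetic primes by combining Zariski density (\ref{ArPrimeZarDense}) with reducedness of $\mathbf{T}_\psi(U,\eta)_\mathfrak{m}$, and then verify the local conditions at an arithmetic prime via the local-global compatibility of $\rho_f$ recorded in \ref{SmallGalRepQuat}. The paper's proof is a three-line version of exactly this. The extra detour you take through the framed deformation ring $\overline{R}_{F,S}^{\square,\psi}$ and the local pieces $\overline{R}_v^{\square,\psi}$ is not needed (the conditions cutting out $\overline{R}_{F,S}^\psi$ are already stated deformation-theoretically in \ref{globalquotient}, so one can check them directly at an arithmetic specialization), and the citation of part (3) of \ref{OPointsOfR} is superfluous for the same reason: once you know $\mathbf{T}_\psi(U,\eta)_\mathfrak{m}$ is reduced and arithmetic primes are dense, the map to $\mathbf{T}_\psi(U,\eta)_\mathfrak{m}$ is determined by its arithmetic specializations, and that alone gives the reduction. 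Neither of these is a gap, just extra steps.

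One genuine slip: your justification of the reducedness of $\mathbf{T}_\psi(U,\eta)_\mathfrak{m}$ via a ``decomposition $\Lambda(G_p) \cong \prod_{\eta'}\Lambda(G_p,\eta')$'' is false. $\Lambda(G_p)$ is a complete local ring, so it cannot decompose as a nontrivial product; concretely $\mathcal{O}[\mu_{2^s}]$ is local and never splits as a product of $\mathcal{O}$'s when $s\ge 1$ (which is always the case for $p=2$). The decomposition only holds after inverting $p$. That said, the paper itself simply asserts the reducedness of $\mathbf{T}_\psi(U,\eta)_\mathfrak{m}$ in its proof without justification, so your conclusion matches the paper even though the argument for it does not hold.
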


\begin{proof}

Let $\mathfrak{p}$ be an arithmetic prime of $\bold{T}_\psi(U,\eta)_\mathfrak{m}$. The pushforward of the representation in \ref{BigGalRep} to $\bold{T}_\psi(U,\eta)_\mathfrak{m}/\mathfrak{p}$ is an integral model for some $\rho_f$ as in \ref{SmallGalRepQuat}. By \ref{SmallGalRepQuat}, the map
	\[
	R_{F,S}\hat{\otimes}\Lambda(G_p) \longrightarrow \bold{T}_\psi(U,\eta)_\mathfrak{m} 
	\longrightarrow \bold{T}_\psi(U,\eta)_\mathfrak{m}/\mathfrak{p}
	\]
factors through $\overline{R}_{F,S}^\psi$. The result now follows from the Zariski density of arithmetic primes, c.f. \ref{ArPrimeZarDense}, and the fact that $\bold{T}_\psi(U,\eta)_\mathfrak{m}$ is reduced.
\end{proof}

\subsection{Auxiliary primes and freeness}\label{ModAuxPrimes}

In the patching argument of \S \ref{LocRequalsT} it is important to deepen the level $U$ at certain auxiliary primes. In this subsection we establish a number of lemmas regarding the relationship betweent our spaces of modular forms at these deeper levels and at the original level.

We keep the notations and assumptions of the previous subsections. In particular, $D$ is a totally definite quaternion algebra with centre $F$, $U$ is a $(\Sigma'\subseteq\Sigma)$-open subgroup of $(D \otimes_F \A_F^\infty)^\times$, $\psi : F^\times \backslash (\A_F^\infty)^\times \rightarrow \mathcal{O}^\times$ is a continuous character such that $\psi(z) = z_p^{2-\bold{k}-2\bold{w}}$ on $U\cap (\A_F^\infty)^\times$ for some algebraic weight $(\bold{k},\bold{w})$, and $S$ denotes the finite set of places at which either $D$ is ramified, $U_v \ne \GL_2(\mathcal{O}_{F_v})$, $v|p$, or $v|\infty$.

\subsubsection{}\label{DegenMap}

Fix a finite place $w \notin S$, and let $U'$ be the open subgroup of $U$ such that $U'_v = U_v$ if $v\ne w$ and $U_w = \mathrm{Iw}(w)$. Given $a\ge 0 $, we define a map
	\begin{align*}
		\xi^a : S_{2,\psi}(U(p^{a,a}),E/\calO)^2 &\longrightarrow 
			S_{2,\psi}(U'(p^{a,a}),E/\calO) \\
			(f,g) &\longmapsto f + \left(\begin{array}{cc} 1 & \\ & \varpi_w \end{array}\right)g
	\end{align*}
This map is equivariant for all Hecke operators outside $w$, and induces a map on the nearly ordinary subspaces, which we also denote by $\xi^a$. We then set
	\[ \xi = \varinjlim_{a\ge 1} \xi^a : 
		S_{2,\psi}^{\mathrm{no}}(U(p^\infty),E/\mathcal{O})^2 \longrightarrow 
		S_{2,\psi}^{\mathrm{no}}(U'(p^\infty),E/\mathcal{O})	\]
and $\xi^\vee : S_\psi(U')\rightarrow S_\psi(U)^2$ is its Pontryagin dual. These are both maps of $\Lambda(\mathcal{U}_p)$-modules and respect the action of $T_v$ for $v \notin S\cup \{w\}$, and $T_{\varpi_v}$ for $v|p$.

\begin{lem}\label{DegenAtNonEis}

Let $\frakm$ be a non-Eisenstein maximal ideal of $\bold{T}_\psi(U)$ and denote its pullback to $\bold{T}_\psi(U')$ also by $\frakm$. The localization of $\xi^\vee$ at $\frakm$, $S_\psi(U')_\frakm\rightarrow S_\psi(U)^2_\frakm$, is surjective.

\end{lem}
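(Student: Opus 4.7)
My plan is to translate the surjectivity statement via Pontryagin duality into an injectivity statement on $\mathfrak{m}$-torsion, and then derive a contradiction from the non-Eisenstein hypothesis for any purported non-trivial kernel element.

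First, by \ref{VertControl}, both $S_\psi(U)$ and $S_\psi(U')$ are finite $\Lambda(\mathcal{U}_p^a)$-modules for $a$ sufficiently large, and $\mathbf{T}_\psi(U)$ is finite semi-local over $\Lambda(\mathcal{U}_p^a)$. Pontryagin duality therefore exchanges localization at $\mathfrak{m}$ with restriction to the $\mathfrak{m}$-primary subspace of the discrete duals. Applying Nakayama's lemma to the cokernel of $\xi^\vee_\mathfrak{m}$, which is a finite module over the local ring $\mathbf{T}_\psi(U)_\mathfrak{m}$, surjectivity is equivalent to injectivity of the dualized map on the $\mathfrak{m}$-torsion subspace
\[
V_\mathfrak{m} := S^{\mathrm{no}}_{2,\psi}(U(p^\infty),E/\mathcal{O})[\mathfrak{m}],
\]
a finite-dimensional $\mathbf{T}_\psi(U)/\mathfrak{m}$-vector space.

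Next, suppose $(f, g) \in V_\mathfrak{m}^2$ satisfies $\xi(f, g) = f + w_\pi \cdot g = 0$ in the analogous space $V'_\mathfrak{m}$ at level $U'$, where $w_\pi$ denotes the element $\mathrm{diag}(1, \varpi_w)$ at $w$. Writing the relation as $f(x) = -g(xw_\pi)$ and substituting $x \mapsto xu$ for $u \in \GL_2(\mathcal{O}_{F_w})$, using the right $\GL_2(\mathcal{O}_{F_w})$-invariance of $f$, yields that $g$ is right-invariant at $w$ by $w_\pi^{-1}\GL_2(\mathcal{O}_{F_w})w_\pi$. Combined with its own $\GL_2(\mathcal{O}_{F_w})$-invariance, $g$ is then invariant by the subgroup of $\GL_2(F_w)$ they jointly generate; a short computation identifies this subgroup as $\{h \in \GL_2(F_w) : \det h \in \mathcal{O}_{F_w}^\times\}$, which contains $\SL_2(F_w)$.

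Since $D$ is split at $w \notin \Sigma$ and $D^1_w = \SL_2(F_w)$ is non-compact, strong approximation for the simply-connected group $D^1$, combined with the newly-established invariance, forces $g$ to factor through the reduced norm $\nu_D$ up to the central character $\psi$. A direct computation of the action of $T_v$ for $v \notin S$ on such a $g$ yields eigenvalues of the form $(q_v + 1) h(\varpi_v)$ for a Hecke character $h : \mathbb{A}_F^\times/F^\times \to \mathcal{O}^\times$, giving an Eisenstein eigensystem in the sense of \ref{BigGalRepSec}. This contradicts the non-Eisensteinness of $\mathfrak{m}$, forcing $g = 0$ and hence $f = 0$ as well. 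The main subtlety will be the application of strong approximation: one must coordinate the invariance at $w$ with the level structure $U^w$ at other places and with the central character $\psi$, in order to rigorously conclude that $g$ factors through $\nu_D$. Once this reduction is in place, the identification of the eigensystem as Eisenstein is a standard computation with explicit coset representatives of the $T_v$ operators.
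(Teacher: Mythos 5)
Your proposal follows essentially the same route as the paper's own proof: dualize to reduce surjectivity to an injectivity statement, show any element of the kernel of $\xi$ acquires $\SL_2(F_w)$-invariance at $w$, invoke strong approximation for the simply connected group of norm-one elements to conclude the function factors through the reduced norm, and then contradict the non-Eisenstein hypothesis on $\mathfrak{m}$. The only cosmetic difference is that the paper tracks $f$ (rather than $g$) through the invariance argument and works directly with the full kernel at finite level in $\mathfrak{m}_\mathcal{O}^{-r}/\mathcal{O}$-coefficients, concluding the kernel lands in the reduced-norm subspace which dies upon localizing at $\mathfrak{m}$, whereas you package the same reduction via Nakayama and the $\mathfrak{m}$-torsion subspace; the two bookkeeping choices are equivalent and both rely on the same strong-approximation and Eisenstein-eigensystem observations, which both you and the paper leave at a similar level of detail.
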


\begin{proof}

It suffices to show that
	\[
		\xi : S_{2,\psi}^\mathrm{no}(U(p^\infty),E/\mathcal{O})_\mathfrak{m}^2 
		\longrightarrow
		S_{2,\psi}^\mathrm{no}(U'(p^\infty),E/\mathcal{O})_\mathfrak{m}
	\]
is injective. For this it suffices to show that for any $a \ge r \ge 1$, that
	\[
	\xi^a : S_{2,\psi}^\mathrm{no}(U(p^{a,a}),\mathfrak{m}_\mathcal{O}^{-r}/\mathcal{O})_\mathfrak{m}^2
	\longrightarrow
	S_{2,\psi}^\mathrm{no}(U'(p^{a,a}),\mathfrak{m}_\mathcal{O}^{-r}/\mathcal{O})_\mathfrak{m}
	\]
is injective. If $(f,g)$ belongs to the kernel of
	\[
	\xi^a : S_{2,\psi}(U(p^{a,a}),\mathfrak{m}_\mathcal{O}^{-r}/\mathcal{O})^2
	\longrightarrow
	S_{2,\psi}(U'(p^{a,a}),\mathfrak{m}_\mathcal{O}^{-r}/\mathcal{O}),
	\]
then $f$ is invariant under $U(p^{a,a})\SL_2(F_w)$. Letting $(D \otimes_F \A_F^\infty)^1$ denote the subgroup of elements of reduced norm $1$, strong approximation implies that $f$ is invariant under $(D \otimes_F \A_F^\infty)^1$, hence $f$ factors through the reduced norm and is not in the support of $\mathfrak{m}$, since $\mathfrak{m}$ is non-Eisenstein. \end{proof}

For the remainder of this subsection fix $\mathfrak{p}\in\Spec\bold{T}_\psi(U)$ contained in a non-Eisenstein maximal ideal $\mathfrak{m}$, and denote by $\rho_\mathfrak{p}$ the $G_{F,S}$-representation into $\GL_2(\bold{T}_\psi(U)_\mathfrak{m}/\mathfrak{p})$ induced from \ref{BigGalRep}.  Note that this implies $\rho_\frakp$ is unramified at $w$. Denote again by $\frakp$ and $\frakm$ the pullbacks of $\frakp$ and $\frakm$ to $\bold{T}_\psi(U')$.
	
\begin{lem}\label{NoSteinberg}
Let $\sigma_w\in G_w$ be some lift of $\Frob_w$. Let
	\[ \rho_{U',\frakm}:G_F\longrightarrow\GL_2(\bold{T}_\psi(U')_\mathfrak{m}) \]
be as in \ref{BigGalRep}, and let $y = (\tr\rho_{U',\frakm}(\sigma_w))^2-\psi(\varpi_w)(1+\mathrm{Nm}(w))^2$.

We have $y(\ker \xi^\vee) = 0$. Moreover, if $p\in\frakp$, $\mathrm{Nm}(v) \equiv 1 \pmod p$, and $\rho_\mathfrak{p}(\Frob_w)$ has distinct eigenvalues, then $y \notin \mathfrak{p}$.
\end{lem}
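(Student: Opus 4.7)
Proof plan: Part 2 is a direct computation. Since $w\notin S$, the representation $\rho_\mathfrak{p}$ is unramified at $w$, so $\rho_\mathfrak{p}(\sigma_w)=\rho_\mathfrak{p}(\Frob_w)$. Letting $\alpha,\beta$ be its eigenvalues in some extension of the fraction field of $\mathbf{T}_\psi(U)_\mathfrak{m}/\mathfrak{p}$, one has $\alpha+\beta=\tr\rho_\mathfrak{p}(\sigma_w)$ and $\alpha\beta=\det\rho_\mathfrak{p}(\sigma_w)=\psi(\varpi_w)\mathrm{Nm}(w)$. Reducing the defining expression for $y$ modulo $\mathfrak{p}$ and using $p\in\mathfrak{p}$ together with $\mathrm{Nm}(w)\equiv 1\pmod{p}$, one obtains
\[
y \equiv (\alpha+\beta)^2 - \alpha\beta\mathrm{Nm}(w)^{-1}(1+\mathrm{Nm}(w))^2 \equiv (\alpha+\beta)^2 - 4\alpha\beta = (\alpha-\beta)^2 \pmod{\mathfrak{p}},
\]
which is nonzero precisely because $\alpha\ne\beta$.

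For part 1, I would pass through Pontryagin duality: the map $\xi^\vee$ satisfies $\ker\xi^\vee\cong(\coker\xi)^\vee$ as $\mathbf{T}_\psi(U')$-modules, where $\coker\xi$ is the cokernel of $\xi$ in $S_{2,\psi}^{\mathrm{no}}(U'(p^\infty),E/\mathcal{O})$. Thus it suffices to show that $y$ annihilates $(\coker\xi)_\mathfrak{m}$. This is a cofinitely generated module over the ring $\mathbf{T}_\psi(U')_\mathfrak{m}$, which is reduced (it embeds into a product of fields via the arithmetic specializations, cf.\ \ref{NearOrdHilbert}). Since its annihilator is an ideal of a reduced ring, and arithmetic primes are Zariski dense in every irreducible component of $\Spec\mathbf{T}_\psi(U')_\mathfrak{m}$ by \ref{ArPrimeZarDense}, the inclusion $y\in\mathrm{Ann}((\coker\xi)_\mathfrak{m})$ will follow once $y$ is shown to lie in every arithmetic prime $\mathfrak{p}$ in the support of $(\coker\xi)_\mathfrak{m}$.

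For such $\mathfrak{p}$ of arithmetic weight $\kappa$, horizontal control (\ref{HorControl}) together with Jacquet--Langlands (\ref{JacquetLanglands}) identifies the contribution of $\mathfrak{p}$ to $\coker\xi$ with the cokernel of the classical degeneracy map $S_{\kappa,\psi}^{\mathrm{no}}(U(p^{a,a}),E)^2\to S_{\kappa,\psi}^{\mathrm{no}}(U'(p^{a,a}),E)$ after tensoring with a splitting field; via the comparison with cuspidal automorphic representations, this cokernel is spanned by eigenforms $F$ for which $\pi_{F,w}$ is twist-of-Steinberg---unramified principal series $\pi_{F,w}$ have one-dimensional spherical line and two-dimensional Iwahori-fixed line which $\xi$ matches, while supercuspidal $\pi_{F,w}$ contribute nothing at Iwahori level. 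For such Steinberg $\pi_{F,w}\cong\sigma(\eta|\cdot|^{1/2},\eta|\cdot|^{-1/2})$ with $\eta$ unramified, \ref{SmallGalRepQuat} yields $\rho_F|_{G_w}\cong\left(\begin{smallmatrix}\eta\epsilon_p & \ast \\ & \eta\end{smallmatrix}\right)$, so $\tr\rho_F(\sigma_w)=\eta(\varpi_w)(1+\mathrm{Nm}(w))$, and the determinant condition $\det\rho_F|_{G_w}=\psi\epsilon_p$ forces $\eta(\varpi_w)^2=\psi(\varpi_w)$. Squaring gives $(\tr\rho_F(\sigma_w))^2=\psi(\varpi_w)(1+\mathrm{Nm}(w))^2$, i.e.\ $y\in\mathfrak{p}$.

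The main obstacle is the integral character of the statement: the Steinberg--new vanishing of $y$ is essentially a characteristic-zero fact on individual eigenforms, and one must transfer it to the cokernel as a torsion-laden integral module. This is handled by combining reducedness of $\mathbf{T}_\psi(U')_\mathfrak{m}$ (so annihilators are radical) with the Zariski density of arithmetic primes in Hida components, and by ensuring that Pontryagin duality correctly interchanges the annihilator of $\coker\xi$ with the annihilator of $\ker\xi^\vee$.
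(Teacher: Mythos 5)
Your computation in the second part is correct and essentially matches the paper's (stated as the contrapositive). The first part, however, contains a genuine gap.

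The problem is the step where you deduce $y\in\mathrm{Ann}((\coker\xi)_\mathfrak{m})$ from the density of arithmetic primes and the reducedness of $\mathbf{T}_\psi(U')_\mathfrak{m}$. You justify this by asserting that ``reducedness of $\mathbf{T}_\psi(U')_\mathfrak{m}$ (so annihilators are radical)'' handles the transfer from characteristic zero to the integral cokernel. But annihilators of modules over a reduced ring need not be radical: take $R$ a DVR (reduced) and $M=R/\varpi^2 R$; then $\mathrm{Ann}(M)=(\varpi^2)$, which is not radical. Showing $y\in\mathfrak{p}$ for all $\mathfrak{p}$ in the support of $\ker\xi^\vee$ only establishes $y\in\sqrt{\mathrm{Ann}(\ker\xi^\vee)}$, i.e.\ that some power $y^n$ annihilates, not $y$ itself. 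The lemma claims the on-the-nose statement $y(\ker\xi^\vee)=0$, and your route cannot deliver that.

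The paper's proof sidesteps this entirely. Rather than working with primes in the support, it first shows that $y$ \emph{annihilates} $\coker(\xi_{\kappa,\overline{\Q}_p}^a)$ outright: this cokernel is a $\overline{\Q}_p$-vector space spanned by the new-at-$w$ eigenforms $f_i$, and the computation you give for Steinberg-type eigenforms shows $y$ kills each basis vector, hence the whole space. Annihilation over $\overline{\Q}_p$ descends to $\coker(\xi_{\kappa,E}^a)$ by faithful flatness; then one uses the Hecke-equivariant surjection $\coker(\xi_{\kappa,E}^a)\twoheadrightarrow\coker(\xi_{\kappa,E/\mathcal{O}}^a)$ (obtained from the surjection of the underlying modular-form spaces via the snake lemma) to conclude $y$ annihilates the torsion cokernel, and finally the weight-change isomorphism \ref{WeightChangeFin} and the direct limit over $a$ give $y(\coker\xi)=0$, whence $y(\ker\xi^\vee)=0$ by Pontryagin duality. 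Your identification of the new-at-$w$ contribution and the Steinberg local computation are exactly the right ingredients; the fix is to use them to prove annihilation of the characteristic-zero cokernel directly and then propagate that annihilation through the comparison surjections, rather than invoking a radical-ideal argument that does not hold.
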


\begin{proof}

Take $a\ge 1$ and an algebraic weight $\kappa$ such that 
\begin{itemize}
	\item[-] $(U(p^{a,a})(\A_F^\infty)^\times \cap t^{-1} D^\times t)/F^\times = 1$ for all $t \in (D\otimes_F \A_F^\infty)^\times$;
	\item[-] the action of $U(p^{a,a})\cap(\A_F^\infty)^\times$ on $W_\kappa(\calO)$ is given by $\psi^{-1}$;
	\item[-] $\frakm$ is the pullback of a maximal ideal of $\bold{T}_{\kappa,\psi}(U(p^{a,a}),\calO)$ under the projection of \ref{HorControl}.
\end{itemize}
For $A$ an $\calO$-module, let
	\begin{align*}
		\xi_{\kappa,A}^a:S_{\kappa,\psi}^\mathrm{no}(U(p^{a,a}),A)_\frakm^2&\longrightarrow
		 S_{\kappa,\psi}^\mathrm{no}(U'(p^{a,a}),A)_\frakm\\
		 (f,g) & \longmapsto f + \left(\begin{array}{cc} 1 & \\ & \varpi_w \end{array}\right)g.
	\end{align*}
We know that $\coker(\xi_{\kappa,\Qbar_p}^a)$ has a basis $\{f_i\}$ consisting of eigenforms which are new at $v$. Letting $\rho_{f_i}$ denote the Galois representation associated to $f_i$, local-global compatibility, cf. \cite{CarayolHilbert}, implies that
	\[ \rho_{f_i}|_{G_w} \cong \left( \begin{array}{cc} 
		\epsilon_p\chi_i & \ast \\ & \chi_i \end{array} \right)	\]
where $\chi_i$ is an unramified character of $G_w$ such that $\chi_i^2 = \psi|_{G_w}$. The eigenform $f_i$ defines an $\calO$-algebra morphism $\bold{T}_{\kappa,\psi}^\mathrm{no}(U'(p^{a.a}),\calO)_\frakm \rightarrow \Qbar_p$ such that, precomposing with the projection $\bold{T}_\psi(U')\rightarrow\bold{T}_{\kappa,\psi}^\mathrm{no}(U'(p^{a,a}),\calO)_\frakm$,
	\[ \xymatrix{
		G_F \ar[r]^-{\rho_{U',\frakm}} \ar[dr]_{\rho_{f_i}} & \GL_2(\bold{T}_\psi(U')_\frakm) \ar[d]
			\\	& \GL_2(\Qbar_p) }	\]
commutes. In particular, the image of $(\tr\rho_{U',\frakm}(\sigma_w))^2$ under this map is $\psi(\varpi_w)(1+\mathrm{Nm}(w))^2$, and $y(\coker(\xi_{\kappa,\Qbar_p}^a)) = 0$. Hence, $y(\coker(\xi_{\kappa,E}^a)) = 0$.

If $V$ is any $(\Sigma'\subseteq\Sigma)$-open subgroup with $(V(\A_F^\infty)^\times \cap t^{-1} D^\times t)/F^\times = 1$ for all $t \in (D\otimes_F \A_F^\infty)^\times$, we have $S_{\kappa,\psi}(V,E) \cong S_{\kappa,\psi}(V,\calO)\otimes_\calO E$ and $S_{\kappa,\psi}(V,E/\calO) \cong S_{\kappa,\psi}(V,\calO)\otimes_\calO E/\calO$, and so there is a natrual surjection $S_{\kappa,\psi}(V,E)\rightarrow S_{\kappa,\psi}(V,E/\calO)$. This yields a Hecke-equivariant commutative diagram
	\[ \xymatrix{
		S_{\kappa,\psi}^\mathrm{no}(U(p^{a,a}),E)_\frakm^2 \ar[r]^{\xi_{\kappa,E}^a} \ar[d]
		& S_{\kappa,\psi}^\mathrm{no}(U'(p^{a,a}),E)_\frakm \ar[r] \ar[d] & 
		\coker(\xi_{\kappa,E}^a) \ar[r] \ar[d] & 0 \\
		S_{\kappa,\psi}^\mathrm{no}(U(p^{a,a}),E/\calO)_\frakm^2 
		\ar[r]^{\xi_{\kappa,E/\calO}^a}
		& S_{\kappa,\psi}^\mathrm{no}(U'(p^{a,a}),E/\calO)_\frakm \ar[r] & 
		\coker(\xi_{\kappa,E/\calO}^a) \ar[r]
		& 0 }	\]
with exact rows. Since the first two vertical maps are surjections, so is the third and we deduce that $y(\coker(\xi_{\kappa,E/\calO}^a))=0$.

Then, $y(\coker(\varinjlim_{a}\xi_{\kappa,E/\calO}^a))=0$. Noting that $\xi_{\kappa,E/\calO}^a = \varinjlim_{r\ge 1} \xi_{\kappa,\mathfrak{m}_\calO^{-r}/\calO}^a$ and using the ismorphism of \ref{WeightChangeFin}, $\xi = \varinjlim_{a} \xi_{\kappa,E/\calO}^a$. In particular, $y(\coker(\xi))=0$. By exactness of Pontryagin duality, $y(\ker(\xi^\vee))=0$.

Assume $p\in\frakp$, $\Nm(w) \equiv 1 \pmod p$, and $y\in\frakp$. Then, since $\rho_{U,\mathfrak{m}}$ is unramified at $w$ and $\tr\rho_{U,\mathfrak{m}}(\sigma) = T_w$, our assumptions imply $T_w^2- 4\psi(\varpi_w)\in\frakp$. The characteristic polynomial of $\rho_\mathfrak{p}(\Frob_w)$ is $X^2 - T_wX+\psi(\Frob_w)\Nm(v) = X^2-T_wX+\psi(\varpi_w)$, which does not have distinct roots if $T_w^2  = 4\psi(\varpi_w)$ modulo $\mathfrak{p}$.  \end{proof}

\subsubsection{}\label{AuxPrimesModular}

Let $Q$ be a finite set of primes of $F$ disjoint from $S$ such that $\mathrm{Nm}(w) \equiv 1 \pmod p$ for each $w\in Q$. For each $w\in Q$, let $k_w$ denote the residue field of $F_w$ and let $\Delta_w$ be the maximal $p$-power quotient of $k_w^\times$. Set $\Delta_Q = \prod_{w\in Q}\Delta_w$. Define an open subgroup $U'$ of $U$ by $U'_v = U_v$ if $v \notin Q$, and $U_w' = \mathrm{Iw}(w)$ for $w \in Q$. We then define an open subgroup $U_Q$ of $U'$ by
	\[	U_Q = \left\{ \left( \begin{array}{cc} a & b \\c & d \end{array} \right) \in U': 
			a_wd_w^{-1} \mapsto 1 \text{ in } \Delta_w \text{ for each } w\in Q \right\}	
	\]

\begin{lem}\label{AuxPrimeControl}

Let $Q$ be as above. Let $a\ge 1$ be such that $(U(p^{a,a})(\A_F^\infty)^\times \cap t^{-1} D^\times t)/F^\times = 1$ for all $t \in (D\otimes_F \A_F^\infty)^\times$, and such that there is an algebraic weight $\kappa$ with $U(p^{a,a})\cap(\A_F^\infty)^\times$ acting on $W_\kappa(\calO)$ via $\psi^{-1}$.

$S_\psi(U_Q)$ is a free $\Lambda(\mathcal{U}_p^a)[\Delta_Q]$-module and the natural surjection $S_\psi(U_Q)\rightarrow S_\psi(U')$ has kernel $\mathfrak{a}_Q S_\psi(U_Q)$, where $\mathfrak{a}_Q$ is the $\Delta_Q$-augmentation ideal of $\Lambda(\mathcal{U}_p^a)[\Delta_Q]$. In particular the $\Lambda(\mathcal{U}_p^a)[\Delta_Q]$-rank of $S_\psi(U_Q)$ is eqaul to the $\Lambda(\mathcal{U}_p^a)$-rank of $S_\psi(U')$.

\end{lem}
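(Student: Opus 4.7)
The plan is to apply Lemma \ref{FreePontDual} at each finite level $b \ge a$ with the pair $(V,U) = (U_Q(p^{b,b}), U'(p^{a,b}))$, then project to the nearly ordinary subspace, and finally pass to the inverse limit.

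First, I would check that $U_Q(p^{b,b})$ is normal in $U'(p^{a,b})$ with abelian quotient (modulo the adelic center) given by $\Delta := \Delta_Q \times \mathcal{U}_p^a/\mathcal{U}_p^b$. Since $Q$ is disjoint from the places above $p$, the contributions at the two kinds of places decouple: at each $w \in Q$ the contribution is $\mathrm{Iw}(w)/U_{Q,w} = \Delta_w$, the map $ad^{-1} \bmod \mathfrak{m}_w$ being insensitive to scalars; at each $v | p$, the quotient $\mathrm{Iw}(v^{a,b})/\mathrm{Iw}(v^{b,b}) \cong (\mathcal{U}_v^a/\mathcal{U}_v^b)^2$ via the top-left and bottom-right entries modulo $\mathfrak{m}_v^b$, and quotienting further by the diagonal image of $(\A_F^\infty)^\times$ collapses this factor to $\mathcal{U}_v^a/\mathcal{U}_v^b$. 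Since $\mathrm{Iw}(v^{a,b}) \subseteq \mathrm{Iw}(v^{a,a})$ for $b \ge a$, one has $U'(p^{a,b}) \subseteq U(p^{a,a})$, so the isotropy hypothesis on $U(p^{a,a})$ descends to $U'(p^{a,b})$ as required by \ref{FreePontDual}.

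Lemma \ref{FreePontDual} then yields that $S_{2,\psi}(U_Q(p^{b,b}), E/\mathcal{O})^\vee$ is a free $\mathcal{O}[\Delta]$-module whose $\Delta$-coinvariants are $S_{2,\psi}(U'(p^{a,b}), E/\mathcal{O})^\vee$. Hida's idempotent $e_H$ is $\mathcal{O}[\Delta]$-linear because the operators $T_{\varpi_v}$ for $v | p$ commute with both the double coset operators at $w \in Q$ and each $\langle y \rangle$ with $y \in \mathcal{U}_p^a$; since $\mathcal{O}[\Delta]$ is local ($\Delta$ being a finite $p$-group), the direct summand $S_{2,\psi}^{\mathrm{no}}(U_Q(p^{b,b}), E/\mathcal{O})^\vee$ is again free over $\mathcal{O}[\Delta]$. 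By \ref{IwahoriAtp} its $\Delta$-coinvariants are $S_{2,\psi}^{\mathrm{no}}(U'(p^{a,a}), E/\mathcal{O})^\vee$, free of some $\mathcal{O}$-rank $n$ independent of $b$.

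To pass to the inverse limit, I would choose $\mathcal{O}[\Delta]$-bases compatibly in $b$ by Nakayama: each transition map is a surjection whose kernel lies in the maximal ideal of the local ring $\mathcal{O}[\Delta]$, so a basis at level $b$ lifts to $n$ generators at level $b+1$ which must form a basis by rank. The projective limit then gives
\[ S_\psi(U_Q) \cong \varprojlim_b \mathcal{O}[\Delta]^n = \Lambda(\mathcal{U}_p^a)[\Delta_Q]^n, \]
and the rank $n$ equals the $\Lambda(\mathcal{U}_p^a)$-rank of $S_\psi(U')$ by \ref{VertControl}. Finally, by exactness of Pontryagin duality on this directed system, the $\Delta_Q$-invariants of $S_{2,\psi}^{\mathrm{no}}(U_Q(p^\infty), E/\mathcal{O})$---which, since $U'/U_Q = \Delta_Q$, coincide with $S_{2,\psi}^{\mathrm{no}}(U'(p^\infty), E/\mathcal{O})$---dualize to $S_\psi(U_Q)/\mathfrak{a}_Q S_\psi(U_Q) \cong S_\psi(U')$, as required. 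The main delicate points are the quotient computation at places above $p$ after modding out by the adelic center, and the compatibility of bases used to lift finite-level freeness to freeness over $\Lambda(\mathcal{U}_p^a)[\Delta_Q]$.
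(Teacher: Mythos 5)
Your proof is correct and takes essentially the same route as the paper: apply Lemma~\ref{FreePontDual} at each finite level, project to the nearly ordinary subspace using Hida's idempotent, and pass to the projective limit. One point where your write-up is actually \emph{more} careful than the paper's is the choice of intermediate group. The paper phrases the finite-level input as an application of \ref{FreePontDual} to $U_Q(p^{b,b}) \subset U'(p^{a,a})$, but the quotient $U'(p^{a,a})(\A_F^\infty)^\times/U_Q(p^{b,b})(\A_F^\infty)^\times$ is not abelian in general: conjugation by a matrix with lower-left entry in $\mathfrak{m}_v^a \smallsetminus \mathfrak{m}_v^b$ does not preserve $\mathrm{Iw}_1(v^b)$ once $b > 2a$, so the normality and abelian-quotient hypotheses of \ref{FreePontDual} fail for that pair. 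Your choice $U_Q(p^{b,b}) \subset U'(p^{a,b})$ fixes this — the extra constraint $c \equiv 0 \pmod{\mathfrak{m}_v^b}$ on the lower-left entry makes $\mathrm{Iw}_1(v^b)$ normal in $\mathrm{Iw}(v^{a,b})$ with quotient $(\mathcal{U}_v^a/\mathcal{U}_v^b)^2$, which collapses to $\mathcal{U}_v^a/\mathcal{U}_v^b$ after quotienting by the image of the adelic center — and then \ref{IwahoriAtp} on the nearly ordinary subspace identifies the $\Delta$-coinvariants with $S_{2,\psi}^{\mathrm{no}}(U'(p^{a,a}),E/\mathcal{O})^\vee$ exactly as you say. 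Your Nakayama-lifting argument for promoting finite-level freeness to $\Lambda(\mathcal{U}_p^a)[\Delta_Q]$-freeness in the limit is also correct and makes explicit a step the paper leaves implicit.
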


\begin{proof}

Let $\kappa$ and $a\ge 1$ be as in the statement of the lemma. Take $b\ge a$. Applying \ref{FreePontDual} to the groups $U'(p^{a,a})\subseteq U'(p^{b,b})$, $U'(p^{a,a})\subset U_Q(p^{b,b})$ and $U'(p^{b,b})\subset U_Q(p^{b,b})$, we deduce that $S_{\kappa,\psi}(U'(p^{b,b}),E/\calO)^\vee$ and $S_{\kappa,\psi}(U_Q(p^{b,b}),E/\calO)^\vee$ are free over $\calO[\mathcal{U}_a/\mathcal{U}_b]$ and $\calO[\mathcal{U}_a/\mathcal{U}_b][\Delta_Q]$, respectively and that the natural surjection
	\[ S_{\kappa,\psi}(U_Q(p^{b,b}),E/\calO)^\vee \longrightarrow
		S_{\kappa,\psi}(U'(p^{b,b}),E/\calO)^\vee	\]
induces an isomorphism of $S_{\kappa,\psi}(U'(p^{b,b}),E/\calO)^\vee$ with the $\Delta_Q$ coinvariants of $S_{\kappa,\psi}(U_Q(p^{b,b}),E/\calO)^\vee$. Applying Hida's idempotent and passing to the limit over $b\ge a$ gives the result.	\end{proof}

\begin{lem}\label{AuxPrimeTrace}
Let $w\in Q$, and let $\sigma_w$ be a generator of the $p$-part of the tame inertia subgroup of $I_w$. Note that under $I_w \rightarrow \mathcal{O}_{F_w}^\times \rightarrow k_w^\times \rightarrow \Delta_w$, given by class field theory, $\sigma_w$ is mapped to a generator $\delta_w$ of $\Delta_w$. 

If $\mathfrak{m}$ is a non-Eisenstein maximal ideal of $\mathbf{T}_\psi(U_Q)$, and $\rho_{U_Q,\mathfrak{m}}$ denotes the representation in \ref{BigGalRep}, then $\tr \rho_{U_Q,\mathfrak{m}}(\sigma_w) = \delta_w + \delta_w^{-1}$.
\end{lem}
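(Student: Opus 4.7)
The plan is to reduce, by Zariski density of arithmetic primes and reducedness of $\mathbf{T}_\psi(U_Q)_\mathfrak{m}$ (exactly as in the proof of \ref{BigGalRep} and \ref{BigGalLoc}), to checking the identity after specialization at each arithmetic prime $\mathfrak{p}$, where classical local--global compatibility applies. Note that $\delta_w$ acts on $S_\psi(U_Q)$ via the diamond operator $\langle\delta_w\rangle$ from \ref{AuxPrimeControl}, and this action commutes with $\mathbf{T}_\psi(U_Q)_\mathfrak{m}$, so the equation makes sense on the faithful module $S_\psi(U_Q)_\mathfrak{m}$; by Corollary~\ref{ArPrimeZarDense} the intersection $\bigcap_\mathfrak{p} \mathfrak{p}$ over arithmetic $\mathfrak{p}$ in $\mathbf{T}_\psi(U_Q)_\mathfrak{m}$ is zero, which reduces the task to the specialization at each such $\mathfrak{p}$.

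Next I fix an arithmetic prime $\mathfrak{p}$ of weight $\kappa$ lying in a level $U_Q(p^{a,a})$ (via \ref{WeightChangeHecke} and \ref{HorControl}). The specialization produces a classical nearly ordinary eigenform $f \in S_{\kappa,\psi}^{\mathrm{no}}(U_Q(p^{a,a}), \overline{\Q}_p)$ together with its Galois representation $\rho_f$ as in \ref{SmallGalRepQuat}. The diamond action yields a character $\chi_w : \Delta_w \to \overline{\Q}_p^\times$ with $\langle\delta\rangle f = \chi_w(\delta) f$ for $\delta\in\Delta_w$, and by construction the image of $\delta_w$ in $\mathbf{T}_\psi(U_Q)_\mathfrak{m}/\mathfrak{p}$ is precisely $\chi_w(\delta_w)$.

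Then I would invoke local--global compatibility at $w$ (Carayol, as recalled in \ref{SmallGalRepHilbert}). Because $\pi_f^{U_Q(p^{a,a})}\ne 0$ with diamond-at-$w$ acting through the nontrivial or trivial character $\chi_w$, the local factor $\pi_{f,w}$ is either a principal series $\pi(\mu_1,\mu_2)$ with both $\mu_i$ tamely ramified satisfying $(\mu_1\mu_2^{-1})|_{\calO_{F_w}^\times}$ factoring through $\chi_w$ (up to swapping), or a twist of Steinberg (in which case $\chi_w$ is trivial and the statement reduces to $\tr\rho_f(\sigma_w)=2$, which holds since then $\rho_f|_{G_w}$ is an extension of an unramified character by an unramified twist of $\epsilon_p$). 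In the principal-series case, local Langlands identifies $\rho_f|_{G_w}$ with characters $\widetilde{\mu}_1, \widetilde{\mu}_2$ on the diagonal, where $\widetilde{\mu}_i$ corresponds to $\mu_i$ via class field theory with our fixed normalization. Since $w\notin S$, both $\psi$ and $\epsilon_p$ are unramified at $w$, so $\det \rho_f|_{I_w}$ is trivial; combined with $(\mu_1\mu_2^{-1})|_{\calO_{F_w}^\times}$ matching $\chi_w$ after an appropriate ordering, this forces $\widetilde{\mu}_1(\sigma_w) = \chi_w(\delta_w)$ and $\widetilde{\mu}_2(\sigma_w) = \chi_w(\delta_w)^{-1}$. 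Hence $\tr\rho_f(\sigma_w) = \chi_w(\delta_w) + \chi_w(\delta_w)^{-1}$, matching the image of $\delta_w + \delta_w^{-1}$ modulo $\mathfrak{p}$.

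The main source of potential trouble is not the Galois-theoretic content but the bookkeeping of normalizations: we must verify that the class field theory convention (uniformizers $\leftrightarrow$ arithmetic Frobenii) and the local Langlands normalization (including any half-integer twist) match the convention by which $\mathcal{U}_p^a\hookrightarrow \mathbf{T}_\psi(U_Q)_\mathfrak{m}$ via $\langle - \rangle^{\mathrm{no}}$, so that the eigenvalue $\chi_w(\delta_w)$ lines up with $\widetilde{\mu}_1(\sigma_w)$ rather than some character differing by $\epsilon_p$. Once this is pinned down (the relevant conventions having already been fixed in the introduction and in \ref{BigGalRep}), the Zariski-density step concludes the proof.
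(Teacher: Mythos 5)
Your proof is correct and takes essentially the same route as the paper's: reduce, via Zariski density of arithmetic primes (\ref{ArPrimeZarDense}) and reducedness of $\mathbf{T}_\psi(U_Q)_\mathfrak{m}$, to a classical specialization, and then invoke local--global compatibility at $w$; the paper's proof simply notes that the arithmetic specialization is non-supercuspidal at $w$ by the definition of $(U_Q)_w$ and cites Carayol. Your closing concern about normalizations is in fact moot here: the target $\delta_w + \delta_w^{-1}$ is symmetric under $\delta_w \leftrightarrow \delta_w^{-1}$, so the ordering of the two characters is irrelevant, and since $w\notin S$ the cyclotomic character $\epsilon_p$ is unramified at $w$ and hence trivial on $\sigma_w\in I_w$, so any twist-by-$\epsilon_p$ ambiguity in the local Langlands normalization vanishes upon evaluating at $\sigma_w$.
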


\begin{proof}

Let $\lambda : \mathbf{T}_\psi(U_Q)_\mathfrak{m} \rightarrow \Qbar_p$ denote an arithmetic point. By the definition of $(U_Q)_v$, the automorphic representation associated to $\lambda$ via Jacquet-Langlands is not cuspidal at $w$. Local global compatibility then shows that $\tr\rho_\lambda(\sigma_w) = \lambda(\delta_w + \delta_w^{-1})$. The result now follows from Zariski density of arithmetic points and the fact that $\mathbf{T}_\psi(U_Q)$ is reduced.	\end{proof}

Finally, we will need a lemma describing certain twists of $S_\psi(U)$ by characters of order $2$, as in \cite{KW2}*{\S 7.5}. Let $F_Q^S$ be the maximal $p$-power order abelian extension of $F$ that is unramified outside $Q$ and split at all primes in $S$. Let $G_Q^\ast(\mathcal{O})$ be the set of characters $\Gal(F_Q^S/F) \rightarrow \mathcal{O}^\times$ that reduce to the trivial character modulo $\mathfrak{m}_\mathcal{O}$. Since $S$ contains all infinite places and $F_Q^S$ is split at all places in $S$, we can view any $\chi \in G_Q^\ast(\mathcal{O})$ as a character of $(\mathbb{A}_F^\infty)^\times$. The following lemma is a slight variant of \cite{KW2}*{Proposition 7.6}.

\begin{lem}\label{modulartwists}
Assume $p=2$ and let $G_{Q,2}^\ast(\mathcal{O})$ be the $2$-torsion of $G_Q^\ast(\mathcal{O})$. There is an action $\phi \mapsto \phi_\chi$ of $G_{Q,2}^\ast(\mathcal{O})$ on $S_\psi(U_Q)$ such that 
\begin{itemize}
	\item[-] for any $v \notin S\cup Q$, $T_v \phi_\chi = \chi(\varpi_v)(T_v \phi)_\chi$,
	\item[-] for any $v|2$, $T_{\varpi_v} \phi_\chi = \chi(\varpi_v)(T_{\varpi_v} \phi)_\chi$,
	\item[-] for any $y \in \mathcal{U}_p$, $\langle y \rangle^\mathrm{no} \phi_\chi = \chi(y)(\langle y \rangle^\mathrm{no} \phi)_\chi$.
\end{itemize}
\end{lem}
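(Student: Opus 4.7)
The idea is to define the twist by $(\chi\circ\nu_D)$, where $\nu_D$ is the reduced norm of $D$. Since $F_Q^S/F$ is split at all places in $S$ (which contains the archimedean places), class field theory lets us regard $\chi$ as a continuous character $\chi:F^\times\backslash (\A_F^\infty)^\times\to\mathcal{O}^\times$, unramified outside $Q$, trivial at $v\in S$, and satisfying $\chi^2=1$. For $A$ equal to $E/\mathcal{O}$ or a finite submodule thereof, and for each $a\ge 1$, define a map on $S_{2,\psi}(U_Q(p^{a,a}),A)$ by
\[
f_\chi(x) \;=\; \chi(\nu_D(x))\,f(x).
\]
The plan is to check that this is well-defined, to compute its interaction with the Hecke operators, and then to transfer the construction to the Pontryagin dual $S_\psi(U_Q)$.

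The first step is well-definedness. The central character condition holds because $\chi(\nu_D(zx))=\chi(z^2)\chi(\nu_D(x))=\chi(\nu_D(x))$ using $\chi^2=1$. The nontrivial point is that $\chi\circ\nu_D$ is trivial on $U_Q(p^{a,a})$; this is checked place by place. At $v\in S$ and at $v|p$ the character $\chi_v$ is trivial, so nothing to check. At $v\notin S\cup Q$ the character $\chi_v$ is unramified, so trivial on $\nu_D(\GL_2(\mathcal{O}_{F_v}))=\mathcal{O}_{F_v}^\times$. At $v\in Q$, since $\chi_v$ has $p$-power order and principal units form a pro-$\ell$ group with $\ell\ne p$, $\chi_v$ factors through $k_v^\times$, and because $\chi$ has order dividing $2$, through the quotient $\Delta_w/2\Delta_w$. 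For $u_v=\bigl(\begin{smallmatrix}a_v&b_v\\c_v&d_v\end{smallmatrix}\bigr)\in (U_Q)_v$ we have $c_v\in\mathfrak{m}_w$, so $\nu_D(u_v)\equiv a_vd_v\pmod{1+\mathfrak{m}_w}$; combined with $a_vd_v^{-1}\mapsto 1$ in $\Delta_w$, this gives $\chi_v(\nu_D(u_v))=\chi_v(d_v)^2=1$.

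The second step is Hecke equivariance. Decompose the relevant double coset as $U_Q(p^{a,a})gU_Q(p^{a,a})=\sqcup_i g_iU_Q(p^{a,a})$. Each $g_i$ differs from $g$ by an element of $U_Q(p^{a,a})$ on either side, so $\chi(\nu_D(g_i))=\chi(\nu_D(g))$ by the previous step, and
\[
\bigl([U_QgU_Q]f_\chi\bigr)(x)=\chi(\nu_D(x))\chi(\nu_D(g))\sum_i g_if(xg_i)=\chi(\nu_D(g))\bigl([U_QgU_Q]f\bigr)_\chi(x).
\]
Applied to $g=\bigl(\begin{smallmatrix}\varpi_v&\\&1\end{smallmatrix}\bigr)$ for $v\notin S\cup Q$ or $v|p$, this yields the stated identities (noting that $\chi(\varpi_v)=1$ when $v\in S$, so the $v|p$ identity is automatic). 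For the normalized diamond operator, since we are in parallel weight two, $\mathbf{w}=0$ and $\langle y\rangle^{\mathrm{no}}=\langle y\rangle$ acts on $W_2(A)=A$ simply by $(\langle y\rangle f)(x)=f\bigl(x\bigl(\begin{smallmatrix}y&\\&1\end{smallmatrix}\bigr)\bigr)$, and a direct computation gives $\langle y\rangle f_\chi=\chi(y)(\langle y\rangle f)_\chi$; again $\chi(y)=1$ for $y\in\mathcal{U}_p$ since $\chi$ is trivial at $v|p$.

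Finally, the constructions are compatible under the inclusions $U_Q(p^{a,a})\subseteq U_Q(p^{b,b})$ for $b\ge a$, so they pass to the direct limit on $S_{2,\psi}(U_Q(p^\infty),E/\mathcal{O})$. Dualizing, we put $\phi_\chi(f)=\phi(f_\chi)$ for $\phi\in S_\psi(U_Q)=S_{2,\psi}(U_Q(p^\infty),E/\mathcal{O})^\vee$; the dual Hecke action then satisfies $T\phi_\chi=\chi(a)^{-1}(T\phi)_\chi=\chi(a)(T\phi)_\chi$ (using $\chi^2=1$), as claimed. The only step requiring care is the local computation at $v\in Q$ in the well-definedness check—this is the place where the definition of $U_Q$ via $\Delta_w$ and the hypothesis $\chi^2=1$ conspire in an essential way.
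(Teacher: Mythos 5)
Your proof is correct and takes essentially the same route as the paper: twist by $\chi\circ\nu_D$, check $U_Q$-invariance by reducing to places in $Q$ and using the $\Delta_w$-condition together with $\chi^2=1$, and deduce Hecke equivariance from the fact that $\chi\circ\nu_D$ is constant on double cosets. Your place-by-place well-definedness check is a bit more granular than the paper's (which groups the argument by noting that $\chi$ is trivial at $S$, unramified outside $Q$, and therefore reduces at once to $Q$), but both come down to the same local computation at $v\in Q$.

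One small point you pass over too quickly: $S_\psi(U_Q)$ is the Pontryagin dual of the \emph{nearly ordinary} direct limit $S_{2,\psi}^{\mathrm{no}}(U_Q(p^\infty),E/\mathcal{O})$, not of the full space, so before dualizing you must check that the twist action preserves the nearly ordinary subspace, i.e.\ commutes with Hida's idempotent $e_{\mathrm H}=\lim T_p^{n!}$. The paper does this by noting $\chi^2=1$ forces $T_p^2f_\chi=(T_p^2f)_\chi$; you have in fact observed the stronger fact $\chi(\varpi_v)=1$ for $v\mid p$ (since $F_Q^S$ is split at $S\supseteq\{v\mid p\}$), which already gives $T_pf_\chi=(T_pf)_\chi$, hence commutation with $e_{\mathrm H}$. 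The ingredient is there in your argument — it just isn't connected to the step where you pass from the full space to the nearly ordinary one, so you should make that link explicit.
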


\begin{proof} Take $a\ge r\ge 1$. For $\chi \in G_{Q,2}^\ast(\mathcal{O})$ and $f \in S_{2,\psi}(U_Q(p^{a,a}),\mathcal{O}/\mathfrak{m}_\mathcal{O}^r)$, we define
	\[ f_\chi : D^\times \backslash (D \otimes_F \A_F^\infty)^\times / U_Q(p^{a,a}) \longrightarrow \mathcal{O}/\mathfrak{m}_\mathcal{O}^r \]
by $f_\chi(g) = f(g)\chi(\nu_D(g))$, where $\nu_D$ is the reduced norm of $D$. Note that for any $z\in (\A_F^\infty)^\times$, $\chi(\nu_D(z)) = \chi(z)^2 = 1$, since $\chi$ has order $2$. Since $F_S^Q$ is unramified outside $Q$ and split at all places in $S$, for any $u \in U_Q(p^{a,a})$,
	\[ \chi(\nu_D(g)) = \prod_{w\in Q}\chi(\nu_D(u_w)).
	\]
For each $w \in Q$ and $u_w \in (U_Q(p^{a,a}))_w$ , the definition of $(U_Q)_w$ implies that the image of $\nu_D(u_w) = \det(u_w)$ in $k_w^\times$ is a square. Since $\chi$ has order two and the places in $Q$ have odd residual characteristic, we get
	\[ \chi(\nu_D(g)) = \prod_{w\in Q}\chi(\det(u_w)) = 1.
	\]
So, $f_\chi \in S_{2,\psi}(U(p^{a,a}),\mathcal{O}/\mathfrak{m}_\mathcal{O}^r)$. 

The definition of the operators $T_v$, $T_{\varpi_v}$, and $\langle y \rangle^\mathrm{no}$ as (normalized) double coset operators together with the fact that for any $h \in U_Q(p^{a,a})gU_Q(p^{a,a})$, we have $\chi(\nu_D(h)) = \chi(\nu_D(g))$, imply
\begin{enumerate}
	\item[(i)] for any $v \notin S\cup Q$, $T_v f_\chi = \chi(\varpi_v)(T_v f)_\chi$,
	\item[(ii)] for any $v|2$, $T_{\varpi_v} f_\chi = \chi(\varpi_v)(T_{\varpi_v} f)_\chi$,
	\item[(iii)] for any $y \in \mathcal{U}_p$, $\langle y \rangle^\mathrm{no} f_\chi = \chi(y)(\langle y \rangle^\mathrm{no} f)_\chi$.
\end{enumerate}
Since any $\chi \in G_{Q,2}^\ast(\mathcal{O})$ has order at most 2, (ii) gives $T_p^2 f_\chi = (T_p^2 f)_\chi$. So, the $G_{Q,2}^\ast(\mathcal{O})$-action commutes with Hida's idempotent and we have an induced action on 
	\[S_{2,\psi}^\mathrm{no}(U_Q(p^{a,a}),\mathcal{O}/\mathfrak{m}_\mathcal{O}^r)
		\cong S_{2,\psi}^\mathrm{no}(U_Q(p^{a,a}),\mathfrak{m}_\mathcal{O}^{-r}/\mathcal{O}),
	\]
and on
	\[ S_{2,\psi}^\mathrm{no}(U_Q(p^\infty),E/\mathcal{O}) = \varinjlim_r\varinjlim_a
	S_{2,\psi}^\mathrm{no}(U_Q(p^{a,a}),\mathfrak{m}_\mathcal{O}^{-r}/\mathcal{O}).
	\]
Letting $G_{Q,2}^\ast(\mathcal{O})$ act on
	\[ S_\psi(U_Q) = S_{2,\psi}^\mathrm{no}(U_Q(p^\infty),E/\mathcal{O})^\vee
	\]
by $\phi \mapsto \phi_\chi$, where $\phi_\chi$ is the function $\phi_\chi(f) = \phi(f_\chi)$ gives the result.	\end{proof}

\section{Galois Cohomology and Auxiliary Primes}\label{AuxPrimesSec}

Crucial to the patching method is the existence of so called Taylor-Wiles primes or auxiliary primes. The proof of their existence is the main result of this subsection.

The first subsection uses some of the lemmas proved in \S \ref{DihedralDefs} together with a result of Pink to prove that certain non-dihedral deformations to characteristic $p$ local fields have open image (up to finite index subfields). Using this, we then (mostly) compute the cohomology of the image acting on the adjoint representation. The results in the first subsection allows us to do this by explicit cocycle computation.

In the second subsection, we use the result from the previous one to show the existence of auxiliary primes analogous to those in \cite{KW2}*{Lemma 5.10}. As in \cite{SWreducible}*{\S 6}, some care has to be taken. In particular, it is not sufficient to compute the cohomology with coefficients in our local field, we must do the computations integrally, and we must make sure that the size of the torsion subgroups do not depend on the auxiliary primes chosen. This is due to the fact that when performing the patching in \S \ref{LocRequalsT}, we must consider finite quotients of our universal deformation ring and Hecke modules. In order to ensure that the limits of the resulting projective systems have the correct rank, we must ensure that the alluded to torsion subgroups do not grow. It is because of this that we must be careful to control ensure all error terms stay bounded in this subsection.

We recall and introduce some notation and assumptions that will be used throughout this section. $F\subset \Qbar$ is a totally real number field and $G_F=\Gal(\Qbar/F)$. For any extension $M/F$ inside $\Qbar$, let $G_M=\Gal(\Qbar/M)$. Let $K$ be a characteristic $2$ local field with ring of integers $A$ and residue field $\F$. Let $q$ denote the cardinality of $\F$. Let $\mathfrak{m}$ denote the maximal ideal of $A$ and let $\varpi$ be a fixed choice of uniformizer. Fix an algebraic closure $\overline{K}$ of $K$.

We fix a continuous $\rho:G_F\rightarrow\GL_2(A)$ satisfying:
\begin{enumerate}\item[A1] $\rho$ unramified outside a finite set of places $S$;
\item[A2] $\rho$ is not dihedral and $\im \rho \rightarrow \GL_2(\F)$ has nontrivial kernel;
\item[A3] $\det\rho$ is finite order;
\item[A4] the image of $\rho$ contains a non-trivial unipotent element.
\item[A5] if $\rhobar$ is $L$-dihedral, there is some $\tau_0\in G_F\smallsetminus G_L$ such that $\rho(\tau_0)$ has distinct infinite order $A$-rational eigenvalues.
\end{enumerate}

Let $V$ denote the free rank two $A$ module on which $G$ acts via $\rho$. Let $\Ad$ denote the space of endomorphisms of $V$ with the adjoint action of $G_F$, and $Z$ its centre. For any $A$-algebra $R$ (in particular $K$, $\overline{K}$, $\F$) we set $V_R=V\otimes_A R$, $\Ad_R=\Ad\otimes_A R$, and $Z_R=Z\otimes_A R$. For $m\ge 1$ we also write, for notational convenience, $\Ad_m$ and $Z_m$ for $\Ad_{A/\mathfrak{m}^m}=\Ad/\mathfrak{m}^m\Ad$ and $Z_{A/\mathfrak{m}^m}=Z/\mathfrak{m}^mZ$, respectively.

\subsection{The image}\label{Image}

The main result of this subsection is to establish an openness result, \ref{OpenSub}, on the image of a representation $\rho$ satisfying our assumptions A1-A4, and then to compute $H^1(\im \rho, \Ad)$. Set $\mathcal{G}=\im \rho$ and $\mathcal{G}^1=\mathcal{G}\cap\SL_2(V)$. By \ref{ImZarDense}, we know that $\mathcal{G}^1$ is Zariski dense in $\SL_{2/K}$.


\begin{lem}\label{PinkResult}
Let $\Gamma$ be a compact subgroup of $\SL_2(K)$, Zariski dense in ${\SL_2}_{/K}$. Then there is a subfield $K_0$ of $K$ with $K/K_0$ finite and a quaternion algebra $D$ over $K_0$, split over $K$, such that if $D^1$ denotes the algebraic group over $K_0$ defined by the norm one elements of $D$, there is an isomorphism $\tilde{\varphi}:D^1\times_{K_0}K\overset\sim\to{\SL_2}_{/K}$ with $\Gamma\subseteq\tilde{\varphi}(D^1(K_0))$ and such that both $\tilde{\varphi}^{-1}(\Gamma)$ and $\tilde{\varphi}^{-1}([\Gamma,\Gamma])$ are open in $D^1(K_0)$.
\end{lem}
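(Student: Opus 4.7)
The plan is to deduce this directly from the main theorem of Pink's paper \cite{PinkCompactSubs} on compact Zariski-dense subgroups of semisimple algebraic groups over local fields, applied to the connected, simply connected, absolutely simple group $\SL_{2/K}$. Pink's theorem produces, for any such compact $\Gamma \subseteq \SL_2(K)$ that is Zariski dense in $\SL_{2/K}$, a closed local subfield $K_0 \subseteq K$ with $K/K_0$ finite, a connected adjoint semisimple $K_0$-group $H$, an isogeny of $K_0$-groups from a simply connected cover $\widetilde{H} \to H$, and a continuous $K$-isomorphism $\widetilde{H} \times_{K_0} K \overset{\sim}{\to} \SL_{2/K}$, with the property that (a conjugate of) $\Gamma$ is an open subgroup of the image of $\widetilde{H}(K_0)$ in $\SL_2(K)$. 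The field $K_0$ is characterized as the subfield generated by the traces of the adjoint representation of $\Gamma$, and its finiteness over $K$ comes from the compactness of $\Gamma$.

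The first step will be to identify the $K_0$-forms of $\SL_2$ that can arise. Since $\SL_{2/K}$ is simply connected of type $A_1$, the simply connected cover $\widetilde{H}$ is itself the $K_0$-form of $\SL_2$; by the classification of $K_0$-forms of $\SL_2$ these are precisely the groups $D^1$ of reduced-norm-one elements of a quaternion algebra $D/K_0$, and the condition that $D^1 \times_{K_0} K \cong \SL_{2/K}$ is equivalent to $D$ being split by $K$. Setting $\tilde\varphi$ to be Pink's isomorphism, we obtain the inclusion $\Gamma \subseteq \tilde\varphi(D^1(K_0))$ with $\tilde\varphi^{-1}(\Gamma)$ open in $D^1(K_0)$.

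It remains to verify that $\tilde\varphi^{-1}([\Gamma,\Gamma])$ is also open in $D^1(K_0)$. Since $\tilde\varphi^{-1}(\Gamma)$ is open in the locally compact group $D^1(K_0)$ and $\Gamma$ is Zariski dense in $\SL_{2/K}$, its image in $D^1 \times_{K_0} K \cong \SL_{2/K}$ under $\tilde\varphi^{-1}$ is Zariski dense; by standard descent this implies $\tilde\varphi^{-1}(\Gamma)$ is Zariski dense in $D^1_{/K_0}$. Now $D^1$ is absolutely simple and simply connected, hence perfect as an algebraic group, so its derived group equals $D^1$ itself. Applying a result of Pink (also in \cite{PinkCompactSubs}, or the analogous classical statement for semisimple $p$-adic Lie groups, e.g. following from the fact that the Lie algebra $\mathfrak{sl}_2$ is perfect and an open pro-$p$ subgroup of $D^1(K_0)$ is generated topologically by its commutator subgroup together with Zariski density), the commutator subgroup of an open Zariski-dense subgroup of $D^1(K_0)$ is again open. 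This gives openness of $\tilde\varphi^{-1}([\Gamma,\Gamma])$, completing the proof.

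The only real subtlety is the final openness statement for $[\Gamma,\Gamma]$: we need to know that commutators in an open Zariski-dense subgroup of a simply connected simple $K_0$-group exhaust an open neighborhood of the identity. This is where I would lean on the explicit form of Pink's statement — the theorem in \cite{PinkCompactSubs} is phrased precisely to give openness for both $\Gamma$ and the closure of its derived subgroup simultaneously, so the main task is really just to translate his abstract conclusion into the quaternion-algebraic language above.
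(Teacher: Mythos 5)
Your overall route — apply Pink's structure theorem, identify the resulting group as $D^1$ for a quaternion algebra, and read off the openness statements — is the same as the paper's, but there are two genuine gaps in how you invoke Pink.

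First, Pink's Theorem 0.2 is stated for compact Zariski-dense subgroups of $G^{\mathrm{ad}}(F)$ with $G^{\mathrm{ad}}$ \emph{adjoint}. You cannot apply it directly to $\Gamma \subseteq \SL_2(K)$; the paper first passes to the image of $\Gamma$ in $\PGL_2(K)$. Second, and more importantly, Pink's output is an \emph{isogeny} $\varphi: H \times_{K_0} K \to \PGL_{2/K}$ with nowhere-vanishing derivative, not an isomorphism, and the claim that the associated map $\tilde\varphi$ on simply connected covers is an isomorphism $\tilde{H}\times_{K_0}K \cong \SL_{2/K}$ requires an argument. You state it as though Pink already produces an isomorphism, which silently skips this. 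The paper's argument is: since $\PGL_2$ has no nonstandard isogenies (no special isogenies exist in type $A_1$), and the derivative is nonvanishing (ruling out Frobenius factors, which matters here since $K$ has characteristic $2$), $\varphi$ is a central isogeny between adjoint groups, hence an isomorphism. Without this step you have not actually identified $\tilde H$ as a $K_0$-form of $\SL_2$.

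Your handling of the openness of $\tilde\varphi^{-1}([\Gamma,\Gamma])$ also inverts the cleaner logic and leans on a false auxiliary claim. Pink's theorem \emph{directly} gives that $\overline{[\Gamma,\Gamma]}$ is the image of an open compact subgroup of $\tilde H(K_0)$; openness of $\tilde\varphi^{-1}(\Gamma)$ is then immediate since it contains that open set. You instead try to argue the other direction, and in your parenthetical you justify it by appealing to $\mathfrak{sl}_2$ being perfect — but $\mathfrak{sl}_2$ is \emph{not} perfect in characteristic $2$: one checks $[\mathfrak{sl}_2,\mathfrak{sl}_2]$ is one-dimensional (spanned by $h$), since $[h,e]=2e=0$ and $[h,f]=-2f=0$. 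You do eventually fall back on Pink's statement, which is correct, but the offered alternative justification would be wrong in the case this paper actually cares about.
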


\begin{proof}

Note that the openness of $\tilde{\varphi}^{-1}([\Gamma,\Gamma])$ implies that of $\tilde{\varphi}^{-1}(\Gamma)$.

Applying \cite{PinkCompactSubs}*{Theorem 0.2} to the image of $\Gamma$ in $\PGL_2(K)$, there is a finite index subfield $K_0$ of $K$, an absolutely simple adjoint group $H$ over $K_0$, and an isogeny $\varphi:H\times_{K_0}K\rightarrow{\PGL_2}_{/K}$ with nonvanishing derivative such that $\Gamma\subseteq\varphi(H(K_0))$ and the associated isogeny $\tilde{\varphi}:\tilde{H}\times_{K_0}K\rightarrow{\SL_2}_{/K}$ of simply connected covers  maps an open subgroup of $\tilde{H}(K_0)$ onto $[\Gamma,\Gamma]$.

Since $\PGL_2$ does not admit nonstandard isogenies and the derivative of $\varphi$ is nonzero, $\varphi$ is a central isogeny. Since $H$ is adjoint, $\varphi$ and $\tilde{\varphi}$ are isomorphisms. As all $K_0$ forms of ${\SL_2}_{/K}$ are inner, $\tilde{H}$ is the algebraic group defined by the norm one elements of some quaternion algebra $D$ defined over $K_0$ that splits over $K$. \end{proof}


\begin{prop}\label{OpenSub}
There is a finite index subfield $K_0\subseteq K$ and and $g\in\GL_2(K)$ such that both $g\mathcal{G}^1g^{-1}$ and $g[\mathcal{G}^1,\mathcal{G}^1]g^{-1}$ are open in $\SL_2(K_0)$.
\end{prop}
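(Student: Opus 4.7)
The plan is to apply Lemma \ref{PinkResult} to $\Gamma = \mathcal{G}^1$, which is Zariski dense in ${\SL_2}_{/K}$ by \ref{ImZarDense}, thereby obtaining a finite index subfield $K_0 \subseteq K$, a quaternion algebra $D$ over $K_0$ split by $K$, and an isomorphism $\tilde{\varphi} : D^1 \times_{K_0} K \xrightarrow{\sim} {\SL_2}_{/K}$ such that $\mathcal{G}^1 \subseteq \tilde{\varphi}(D^1(K_0))$ with $\tilde{\varphi}^{-1}(\mathcal{G}^1)$ and $\tilde{\varphi}^{-1}([\mathcal{G}^1, \mathcal{G}^1])$ open in $D^1(K_0)$. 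The remaining task is to identify $D$ with $\mathrm{M}_{2 \times 2}(K_0)$ so that $\tilde{\varphi}$ becomes an automorphism of ${\SL_2}_{/K}$, and then to realize this automorphism as conjugation by an element of $\GL_2(K)$.

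To show $D$ is split, I will use assumption A4. Let $u \in \mathcal{G} = \im \rho$ be a non-trivial unipotent element; since $\det u = 1$, we have $u \in \mathcal{G}^1$. The element $\tilde{\varphi}^{-1}(u)$ lies in $D^1(K_0)$ and is non-trivial and unipotent in $D^1(K) \cong {\SL_2}_{/K}$, since unipotence is a geometric property preserved by the isomorphism $\tilde{\varphi}$. However, if $D$ were a division algebra, then for any $x \in D^\times \smallsetminus K_0$ the subring $K_0[x]$ would be a quadratic field extension of $K_0$; combined with $x$ having reduced norm one and $x \ne \pm 1$, the minimal polynomial of $x$ would be $X^2 - tX + 1$ with $t \ne \pm 2$, in particular not $(X-1)^2$, contradicting that $\tilde{\varphi}^{-1}(u)$ is unipotent and non-trivial. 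Hence $D \cong \mathrm{M}_{2\times 2}(K_0)$ and $D^1 \cong {\SL_2}_{/K_0}$.

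With $D$ split, $\tilde{\varphi}$ becomes an automorphism of ${\SL_2}_{/K}$ and is therefore inner (as $\Aut({\SL_2}) = \PGL_2$ in this situation): there is $h \in \GL_2(K)$ with $\tilde{\varphi}(x) = hxh^{-1}$ for all $x$. Setting $g = h^{-1}$ yields
\[ g \mathcal{G}^1 g^{-1} = \tilde{\varphi}^{-1}(\mathcal{G}^1) \subseteq \SL_2(K_0) \]
and
\[ g [\mathcal{G}^1, \mathcal{G}^1] g^{-1} = \tilde{\varphi}^{-1}([\mathcal{G}^1, \mathcal{G}^1]) \subseteq \SL_2(K_0), \]
both of which are open by the conclusion of \ref{PinkResult}.

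The main (albeit minor) obstacle is verifying that unipotency of $\tilde{\varphi}^{-1}(u)$ in $D^1(K_0)$ is incompatible with $D$ being a division algebra; everything else is an essentially formal consequence of Pink's theorem combined with assumption A4. No heavy cohomological or deformation-theoretic machinery is needed here, and I expect this proof to occupy only a few lines in the paper.
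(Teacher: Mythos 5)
Your proposal is correct and follows exactly the same route as the paper: invoke Pink via \ref{PinkResult}, use assumption A4 to deduce that $D$ is split (since a division algebra over $K_0$ contains no non-trivial unipotent elements), and then use the absence of outer automorphisms of $\SL_2$ to realize $\tilde{\varphi}$ as conjugation by some $g \in \GL_2(K)$. The paper leaves the splitness-of-$D$ step terse, while you spell it out; just note that since $K$ has characteristic $2$ the condition $t \ne \pm 2$ in your minimal-polynomial argument should read $t \ne 0$, and the contradiction is that the unipotent forces $t = 0$, making $X^2 - tX + 1 = (X+1)^2$ reducible.
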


\begin{proof}
Since $\mathcal{G}^1$ is Zariski dense in ${\SL_2}_{/K}$ by \ref{ImZarDense}, we can apply \ref{PinkResult}. Let $D$ and $\tilde{\varphi}$ be as in \ref{PinkResult}. By assumption A4, $\mathcal{G}^1$ contains a non-trivial unipotent element, so $D$ splits over $K_0$. The lemma now follows from the fact that ${\SL_2}_{/K}$ does not have outer automorphisms. \end{proof}


\subsubsection{}\label{CohomIm}

Let $K_0$ be as in Proposition \ref{OpenSub}. Denote by $A_0$ it's ring of integers, $\mathfrak{m}_0$ its maximal ideal and $\varpi_0$ a choice of uniformizer. Let $e$ be the ramification index of $K/K_0$. The main goal of this subsection is to describe the cohomology group $H^1(\Gamma,\Ad)$ for $\Gamma$ an open subgroup of $\mathcal{G}=\im(\rho)$. We first record an easy lemma.


\begin{lem}\label{AdModZFixed}
Let $\Gamma$ be a Zariski dense compact subgroup of $\SL_2(A)$. Then $(\Ad/Z)^\Gamma=\{0\}$.
\end{lem}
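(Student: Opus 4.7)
The plan is to reduce the statement to a purely algebraic assertion about $\SL_{2/K}$-representations by using Zariski density. Let $X \in \Ad = \mathrm{M}_{2\times 2}(A)$ be a lift of an element of $(\Ad/Z)^\Gamma$, so that $gXg^{-1} - X \in Z$ for every $g \in \Gamma$. The condition ``$Y \in Z$'' for $Y \in \Ad$ is cut out by three polynomial equations (the two off-diagonal entries vanish, and the two diagonal entries agree). Applied to $Y = gXg^{-1} - X$, these give three regular functions on the affine $K$-variety $\SL_{2/K}$ that vanish identically on the subset $\Gamma$.

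Since $\Gamma$ is Zariski dense in $\SL_{2/K}$ by hypothesis, these three functions vanish on all of $\SL_2(\overline{K})$. Hence $gXg^{-1} - X \in Z_{\overline K}$ for every $g \in \SL_2(\overline K)$.

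To conclude $X \in Z$, I would specialize to explicit unipotent elements. Write $X = \left(\begin{smallmatrix} a & b \\ c & d \end{smallmatrix}\right)$. Taking $g = \left(\begin{smallmatrix} 1 & t \\ 0 & 1 \end{smallmatrix}\right)$ for $t \in \overline K$, an immediate computation (recalling that $K$ has characteristic two) gives
\[ gXg^{-1} - X = \begin{pmatrix} tc & t(a+d) + t^2 c \\ 0 & tc \end{pmatrix}. \]
The diagonal entries already agree, so the condition $gXg^{-1} - X \in Z$ is that the $(1,2)$-entry $t(a+d) + t^2 c$ vanishes for all $t \in \overline{K}$, forcing $c = 0$ and $a = d$. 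By the symmetric computation with $g = \left(\begin{smallmatrix} 1 & 0 \\ t & 1 \end{smallmatrix}\right)$ we obtain $b = 0$ as well. Thus $X = a \cdot I \in Z$, so the original class in $\Ad/Z$ is zero.

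There is no real obstacle here: the only delicate point is that Zariski density is density in $\SL_{2/K}$ as a $K$-algebraic group (rather than mere topological density in $\SL_2(A)$ for the $\varpi$-adic topology), which is exactly what is used to promote the vanishing of the three defining functions from $\Gamma$ to all of $\SL_2(\overline K)$. The characteristic two computation is then completely routine.
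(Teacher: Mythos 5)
Your proof is correct, and it takes a genuinely different (and arguably cleaner) route than the paper's. The paper argues abstractly: it forms the $\Gamma$-stable filtration $0 \subset Z_K \subseteq W = KX + Z_K$ inside $\Ad_K$, observes that the image of $\Gamma$ in $\Aut(W)$ is solvable because the graded pieces are one-dimensional, and then argues that the kernel $N$ of $\Gamma \to \Aut(W)$ must be Zariski dense (else $\Gamma$ would be virtually solvable, contradicting density in $\SL_{2/K}$), so that $X$ commutes with a Zariski dense subgroup and is scalar by Schur. Your approach skips the solvability and normal-closure considerations entirely: you note that the scalar condition on $gXg^{-1}-X$ is Zariski closed in $g$, push it from $\Gamma$ to all of $\SL_2(\overline K)$ by density, and then kill the entries of $X$ by plugging in the one-parameter unipotent subgroups. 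Both arguments are sound; yours is more self-contained and computational, while the paper's structural argument would adapt more readily to larger matrix size or a different ambient group, where there may be no such convenient family of unipotents to test against. One minor cosmetic remark: the fact that the diagonal entries of $gXg^{-1}-X$ automatically agree in your computation is a characteristic-two artifact (in general the diagonal entries are $\pm tc$, which would force $c=0$ by itself), but since the section fixes $p=2$ your invocation of that hypothesis is appropriate and the argument is complete as written.
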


\begin{proof}

Take $X\in\Ad$ such that the image of $X$ in $\Ad/Z$ is $\Gamma$-invariant. Let $W=KX+Z_K\subset \Ad_K$, and let $N$ be the kernel of $\Gamma\rightarrow\Aut(W)$. Then $X$ is an endomorphism of $V_K$ that commutes with the action of $N$. Since $0\subset Z\subseteq W$ is a $\Gamma$-stable filtration of $W$ and $\dim_K W\le 2$, $\Gamma/N$ is solvable. This together with the Zariski density of $\Gamma$ implies $N$ is Zariski dense; hence, $X\in Z$.  \end{proof}

Ideally one would want a proposition similar to \cite{SWreducible}*{Lemma 6.9}, which in our context would be to prove that $H^1(\Gamma,\Ad)$ is finite (they are actually more precise and consider not only the splitting field of $\im\rho$, but also ajoined all $p$-power roots of unity). This is not true in our case because of the presence of the centre in $\Ad$ and the fact that if $\rhobar$ is dihedral, the image of $\rho$ is pro-solvable. For example, if $\rhobar$ is dihedral and $L$ denotes the unique field from which $\rhobar$ is induced (it is unique since $p=2$), then we have an $A$-module of rank one inside $H^1(\mathcal{G},Z)$ given by the surjection $\mathcal{G}\rightarrow\Gal(L/F)$ composed with the map sending the nontrivial element of $\Gal(L/F)$ to any nonzero element in $Z$. It seems likely that the natural map $H^1(\mathcal{G},Z)\rightarrow H^1(\mathcal{G},\Ad)$, which is injective by \ref{AdModZFixed}, is surjective. We do not prove this, but we describe the cokernel (if it exists) in enough detail for our purposes in \S\ref{AuxPrimesSubSec}.

\begin{lem}\label{ImCohom}
Let $\Gamma$ be an open subgroup of $\mathcal{G}$. The $A$ rank of $\coker(H^1(\Gamma,Z)\rightarrow H^1(\Gamma,\Ad))$ is at most one. Moreover, if it is one there is a positive integer $N_0$ depending only on $\Gamma$ such that if $\gamma\in H^1(\Gamma,\Ad)$ maps to a non-torsion element of this cokernel, there is a cocycle $\kappa:\Gamma\rightarrow\Ad$ representing $\varpi^{N_0}\gamma$ such that for infinitely many $g\in\Gamma$ with distinct $A$-rational eigenvalues, $\kappa(g)\in Z\smallsetminus\{0\}$
\end{lem}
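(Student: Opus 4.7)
The plan is to use the short exact sequence $0\to Z\to \Ad\to \Ad/Z\to 0$, which yields an injection
\[ \coker\bigl(H^1(\Gamma,Z)\to H^1(\Gamma,\Ad)\bigr)\hookrightarrow H^1(\Gamma,\Ad/Z). \]
Since $Z$ has $A$-rank $1$, the first assertion reduces to showing $\dim_K H^1(\Gamma,\Ad_K/Z_K)\le 1$.

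First I would reduce to $\Gamma^1 := \Gamma\cap \SL_2(K)$. By assumption A3, $\det\rho$ has finite order, so $\Gamma/\Gamma^1$ is a finite subgroup of $A^\times$; any finite-order element of $A^\times$ lies in the multiplicative group of the residue field $\F$, which has odd order $q-1$. Thus $|\Gamma/\Gamma^1|$ is a unit in $A$, and inflation--restriction gives
\[ H^1(\Gamma,\Ad_K/Z_K)\cong H^1(\Gamma^1,\Ad_K/Z_K)^{\Gamma/\Gamma^1}. \]
By Proposition \ref{OpenSub}, after conjugating $\rho$ by a fixed $g\in\GL_2(K)$, $\Gamma^1$ is an open subgroup of $\SL_2(K_0)$ for some finite-index subfield $K_0\subseteq K$. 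This conjugation gives a $\Gamma^1$-stable $K_0$-structure $V_{K_0}\subset V_K$, under which $\Ad_K/Z_K$ is identified with $\mathfrak{pgl}_2(K_0)\otimes_{K_0}K$, so it suffices to bound $\dim_{K_0}H^1(\Gamma^1,\mathfrak{pgl}_2(K_0))\le 1$.

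In characteristic $2$, the scalar matrices are contained in $\mathfrak{sl}_2$, and the trace induces a $\Gamma^1$-equivariant short exact sequence $0\to\mathfrak{psl}_2(K_0)\to\mathfrak{pgl}_2(K_0)\to K_0\to 0$. The Zariski density of $\Gamma^1$ in $\SL_2(K_0)$ (together with the argument of Lemma \ref{AdModZFixed} applied over $K_0$) forces $(\mathfrak{psl}_2(K_0))^{\Gamma^1}=0$; an explicit cocycle calculation on a pro-$2$ congruence subgroup of $\SL_2(A_0)$ contained in $\Gamma^1$, combined with Hochschild--Serre, then gives $H^1(\Gamma^1,\mathfrak{psl}_2(K_0))=0$. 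Hence $H^1(\Gamma^1,\mathfrak{pgl}_2(K_0))\hookrightarrow \Hom_{\mathrm{cts}}(\Gamma^1,K_0)$, and the openness of $[\mathcal{G}^1,\mathcal{G}^1]$ in $\SL_2(K_0)$ (Proposition \ref{OpenSub}), combined with the structure of abelianizations of open subgroups of $\SL_2(K_0)$ in characteristic $2$ (the sole residual degree of freedom coming from the failure of topological perfectness on the scalar direction), bounds this last space by dimension $1$ modulo torsion, yielding the required bound.

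For part (b), fix a cocycle $c:\Gamma\to\Ad$ representing a class $\gamma$ whose image in the cokernel is non-torsion. The analysis of the previous step identifies the image of $\gamma$ in $H^1(\Gamma,\Ad_K/Z_K)$ with a nonzero scalar multiple of the class associated to a fixed non-trivial continuous homomorphism $\lambda:\Gamma\to K_0$; after multiplying $c$ by a uniformly bounded $\varpi^{N_0}$ (absorbing all integrality losses from the integral long exact sequence above, which depend only on $\Gamma$), $\lambda$ takes values in $A_0$ and $c$ takes values in $\Ad$. Using (A5) in the residually dihedral case and the openness of $\Gamma^1$ in $\SL_2(K_0)$ more generally, there are infinitely many $g\in \Gamma$ with distinct $A$-rational eigenvalues $\alpha,\beta$ satisfying $\alpha/\beta-1\in A^\times$. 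For each such $g$, I would kill the off-diagonal (non-centralizer) eigencomponents of $c(g)$ by subtracting an appropriate coboundary, at a uniform cost in $\varpi$-valuation bounded by $N_0$; the vanishing of the $\mathfrak{psl}_2$-piece established above then forces the centralizer component of the modified cocycle $\kappa$ to lie in $Z$, and the non-triviality of $\lambda(g)$ on the chosen set guarantees $\kappa(g)\ne 0$. The main obstacle is obtaining uniformity of $N_0$ independent of the eigenvalue gap $\alpha/\beta-1$, which is controlled by working throughout on a fixed pro-$2$ congruence subgroup where the relevant gaps have uniformly bounded $\varpi$-adic valuation.
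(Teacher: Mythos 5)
Your reduction to $\Gamma^1=\Gamma\cap\SL_2(K)$ via the odd-order quotient $\Gamma/\Gamma^1$, and the subsequent passage to $\SL_2(K_0)$ via Proposition \ref{OpenSub}, are both sound and parallel the paper's opening move. The short exact sequence $0\to\mathfrak{psl}_2(K_0)\to\mathfrak{pgl}_2(K_0)\to K_0\to 0$ is also correct in characteristic $2$ (the quotient map being trace). The problem is the pivotal unproved claim $H^1(\Gamma^1,\mathfrak{psl}_2(K_0))=0$, which appears to be both false and the opposite of what the paper's computation actually exhibits.

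Concretely: in characteristic $2$, $H=\mathrm{diag}(1,-1)=I$, so $[\mathfrak{sl}_2,\mathfrak{sl}_2]=Z$ and the adjoint action of $\mathfrak{sl}_2$ on $\mathfrak{psl}_2=\mathfrak{sl}_2/Z$ is \emph{trivial}. Consequently, for a deep principal congruence subgroup $J$ the conjugation action on $\mathfrak{psl}_2$ is nearly trivial, and $H^1(J,\mathfrak{psl}_2(K_0))$ is comparable to $\Hom(J^{\mathrm{ab}},\mathfrak{psl}_2(K_0))$, which is far from zero. The paper's explicit construction (Steps 1--3 of Lemma \ref{TheCocycle}) makes this visible: after normalization the cocycle satisfies $\kappa(t(\alpha))=a_\alpha I$ and $\kappa(u^\pm(x))$ upper/lower triangular with \emph{equal} diagonal entries, so $\tr\kappa\equiv 0$ on $J_k$. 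The cocycle mod $Z$ therefore takes values in $\ker(\tr)=\mathfrak{psl}_2$, and the $b$-parameter that carries the rank-one degree of freedom produces a \emph{non-coboundary} class in $H^1(J_{3k},\mathfrak{psl}_2)$ whenever $b\neq 0$ (the function $x\mapsto b_x$ computed in Step~3, vanishing for one parity of $\val_0(x)$ and not the other, is not of the linear form $x\mapsto x\beta$). So the rank-one piece lives in the $\mathfrak{psl}_2$ direction, not the trace direction as your argument needs.

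This inverts your picture and also breaks part (b). Since $\tr\kappa|_{J_k}=0$, the image of the cokernel in $\Hom(\Gamma^1,K_0)$ under your trace map $\lambda$ factors through the finite quotient $\Gamma^1/J_k$ and is therefore torsion; if $H^1(\mathfrak{psl}_2)$ really vanished, you would be proving the cokernel has rank $0$, which makes the second half of the lemma and its downstream use in \ref{Typeb} vacuous. Worse, in characteristic $2$ one has $Z\subset\ker(\tr)$ (since $\tr(zI)=2z=0$), so $\lambda(g)$ can never certify $\kappa(g)\in Z\smallsetminus\{0\}$: a scalar value of $\kappa(g)$ has zero trace, and your claim that ``the non-triviality of $\lambda(g)$ guarantees $\kappa(g)\neq 0$'' conflates $Z$ with the trace quotient in a way that is correct in odd characteristic but fails here. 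The paper instead detects the non-central contribution via the off-diagonal coordinate $b$ and then converts it into a nonzero central value using the Demu\v{s}kin-style relation in Step~4; that mechanism has no analogue in your approach.
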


\begin{proof}
Identify $\Ad$ with $\mathrm{M}_{2\times 2}(A)$ using our fixed basis. Let $g\in\GL_2(K)$ be as in \ref{OpenSub}. Set $\Gamma'=g\Gamma g^{-1}$ and $\Ad'=g\Ad\subset \mathrm{M}_{2\times 2}(K)$.  We have isomorphisms $H^1(\Gamma',Z)\cong H^1(\Gamma,Z)$ and $H^1(\Gamma',\Ad')\cong H^1(\Gamma,\Ad)$ compatible with the maps $H^1(\Gamma,Z)\rightarrow H^1(\Gamma,\Ad)$ and $H^1(\Gamma',Z)\rightarrow H^1(\Gamma',\Ad')$. So, it suffices to prove the lemma for $H^1(\Gamma',Z)$ and $H^1(\Gamma',\Ad')$. Since $\Ad'$ is open compact in $\mathrm{M}_{2\times 2}(K)$, there is $l\ge 0$ such that $\varpi^l \mathrm{M}_{2\times 2}(A)\subseteq\Ad'\subseteq\varpi^{-l} \mathrm{M}_{2\times 2}(A)$. For $k\ge 1$, let $J_k$ be the principal congruence subgroup of level $k$ in $\SL_2(A_0)$, i.e. $J_k=(I+\varpi_0^k\mathrm{M}_{2\times 2}(A_0))\cap\SL_2(K_0)$. By \ref{OpenSub}, $\Gamma'$ contains an open subgroup of the form $J_k$ for some $k$. Set $N_0=3ke+4l$. We will show the following.

\begin{lem}\label{TheCocycle}
For any $\gamma\in H^1(\Gamma',\Ad')$, there is a cocycle $\kappa$ representing $\varpi^{N_0}\gamma$ such that, writing
	\[ \kappa\left(\begin{array}{cc} 1 & \varpi_0^{k+1} \\ & 1 \end{array} \right) = 
	\left( \begin{array}{cc} \ast & b \\ \ast & \ast \end{array} \right),
	\]
we have
 
\begin{enumerate}
\item letting $\kappa':\Gamma'\rightarrow\Ad'/Z$ denote the cocycle obtained by composing $\kappa$ with the projection $\Ad'\rightarrow\Ad'/Z$, the restriction $\kappa'|_{J_{3k}}$ is uniquely determined by $b$;
\item if $b=0$, then $\kappa(g) \in Z$ for all $g\in J_{3k}$;
\item if $b\ne 0$, then there are infinitely many diagonal $g\in J_k$ with $\kappa(g)\in Z\smallsetminus\{0\}$.
\end{enumerate}
\end{lem}

Granting \ref{TheCocycle} for now, we finish the proof of \ref{ImCohom}. Note that (1) of \ref{TheCocycle} implies that the cokernel of $H^1(\Gamma',Z)\rightarrow H^1(J_{3k},\Ad')$ has $A$-rank at most one. To see that this implies the same for $H^1(\Gamma',Z)\rightarrow H^1(\Gamma',\Ad')$, it suffices to show the kernels are the same. Let $N$ be an open normal subgroup of $\Gamma'$ contained in $J_{3k}$. Since $N$ is open in $\Gamma'$ it is Zariski dense in $\SL_{2/K}$, so is $g^{-1}Ng$, where $g\in\GL_2(K)$ is as above. Then \ref{AdModZFixed} implies $(\Ad'/Z)^N=(\Ad/Z)^{g^{-1}Ng}=\{0\}$. By inflation-restriction, this implies $H^1(\Gamma',\Ad'/Z)\rightarrow H^1(N,\Ad'/Z)$ is injective. Since the map $H^1(\Gamma',\Ad')\rightarrow H^1(N,\Ad'/Z)$ factors as
\[
H^1(\Gamma',\Ad')\longrightarrow H^1(\Gamma',\Ad'/Z)\longrightarrow H^1(J_{3k},\Ad'/Z)
\longrightarrow H^1(N,\Ad'/Z),
\]
we have
\begin{align*}
\ker(H^1(\Gamma',\Ad')\longrightarrow H^1(J_{3k},\Ad'/Z))&=\ker(H^1(\Gamma',\Ad')
\longrightarrow H^1(\Gamma',\Ad'/Z))\\
&=H^1(\Gamma',Z).
\end{align*}
The final statement of \ref{ImCohom} then follows from (2) and (3) of \ref{TheCocycle}. \end{proof}

The remainder of this subsection will be devoted to the proof of \ref{TheCocycle}, which will mostly comprise of somewhat laborious cocycle computations using relations between elements in $\SL_2(K_0)$. The proof of \ref{TheCocycle} will consist of the following four steps. 
\begin{enumerate}
	\item[\textit{Step 1.}] Show that for any $\gamma\in H^1(\Gamma',\Ad')$, there is a cocycle $\kappa$ representing $\varpi^{N_0}\gamma$ such that
\begin{enumerate}
\item[(i)] for any $\alpha \in 1+\mathfrak{m}_0^k$,
	\[ \kappa\left(\!\begin{array}{cc} \alpha & \\ & \alpha^{-1}\end{array}\!\right) = \left(\!\begin{array}{cc} a_\alpha & \\ & d_\alpha \end{array}\!\right) \]
for some $a_\alpha,d_\alpha$;
\item[(ii)] for any $x\in \mathfrak{m}_0^k$,
	\[ \kappa\left(\!\begin{array}{cc} 1 & x \\ & 1\end{array}\!\right) = \left(\!\begin{array}{cc} a_x & b_x \\ & a_x \end{array}\!\right) \;\text{and}\; \kappa\left(\!\begin{array}{cc} 1 & \\ x & 1 \end{array}\!\right) = \left(\!\begin{array}{cc} d_x & \\ c_x & d_x \end{array}\!\right), \]
for some $a_x,b_x,c_x,d_x$; moreover $b_{\varpi_0^k} = 0$.
\end{enumerate}

\item[\textit{Step 2.}] Show that the $a_\alpha$ and $d_\alpha$ in Step 1.(i) are equal, and that the $b_x$ and $c_x$ in Step 1.(ii) are equal.

\item[\textit{Step 3.}] Step 2 implies that $\kappa|_{J_k}$ mod $Z$ is uniquely determined by the function $x\mapsto b_x$. We then show that $\kappa|_{J_{3k}}$ is uniquely determined by the value $b_{\varpi_0^{k+1}}$.

\item[\textit{Step 4.}] Lastly, we show that if $b_{\varpi_0^{k+1}}\ne 0$, then
	\[ \kappa\left(\!\begin{array}{cc} 1+\varpi_0^n & \\ & (1+\varpi_0^n)^{-1}\end{array}\!\right) \in
	Z \smallsetminus \{0\},
	\]
for all sufficiently large, odd $n$.
\end{enumerate}

Before proceeding, we introduce some notation. For $\alpha\in 1+\mathfrak{m}_0^k$ and $x\in \mathfrak{m}_0^k$ set
\[t(\alpha)=\left(\begin{array}{cc}\alpha&\\&\alpha^{-1}\end{array}\right),\; u^+(x)=\left(\begin{array}{cc}1&x\\&1\end{array}\right),\;u^-(x)=\left(\begin{array}{cc}1&\\x&1\end{array}\right),\]
and define the subgroups
\[
T_k=\{t(\alpha)\in J_k\},\;U^+_k=\{u^+(x)\in J_k\},\;U^-_k=\{u^-(x)\in J_k\}.
\]

\textit{Step 1.} Fix $\gamma\in H^1(\Gamma',\Ad')$, with $\kappa_1:\Gamma'\rightarrow\Ad'$ a cocycle representing $\gamma$. Take $t(\alpha),t(\beta)\in T_k$ and write

\[\kappa_1(t(\alpha))=\left(\begin{array}{cc}a_\alpha&b_\alpha\\ c_\alpha&d_\alpha\end{array}\right)\text{ and }\kappa_1(t(\beta))=\left(\begin{array}{cc}a_\beta&b_\beta\\ c_\beta& d_\beta\end{array}\right).\]
Then using $t(\alpha)t(\beta)=t(\beta)t(\alpha)$ and the cocycle relation, we find that

\begin{equation}\label{bcequalzero}
(\alpha^2-1)b_\beta = (\beta^2-1)b_\alpha\text{ and }(\alpha^{-2}-1)c_\beta=(\beta^{-2}-1)c_\alpha.
\end{equation}
Now assume $\beta\in(1+\mathfrak{m}_0^k)\smallsetminus(1+\mathfrak{m}_0^{k+1})$. Since $\Ad'\subseteq\varpi^{-l}\mathrm{M}_{2\times 2}(A)$ we have $b_\beta,c_\beta\in\mathfrak{m}^{-l}$. Then, letting
\[
X=\left(\begin{array}{cc}&\frac{\varpi^{2ke+2l}}{1-\beta^2}b_\beta\\
\frac{\varpi^{2ke+2l}}{1-\beta^{-2}}c_\beta\end{array}\right),
\]
we have $X\in\varpi^l\mathrm{M}_{2\times 2}(A)\subseteq\Ad'$ and we can define a cocycle $\kappa_2:\Gamma'\rightarrow\Ad'$ by
\[
\kappa_2(g)=\varpi^{2ke+2l}\kappa_1(g)-gX+X.
\]
This cocycle represents the cohomology class $\varpi^{2ke+2l}\gamma$ and $\kappa_2(t(\beta))$ is diagonal. Using \eqref{bcequalzero} we see that

\begin{equation}\label{Tindiag}
\kappa_2(t(\alpha))\in\left\{\left(\begin{array}{cc}a&\\&d\end{array}\right)\right\}
\end{equation}
for all $t(\alpha)\in T_k$.

Before proceeding we prove a sub-lemma.

\begin{lem}\label{OrderTwo}
Let $\kappa':J_k\rightarrow\Ad'$ be a 1-cocyle and let $g=\left(\begin{array}{cc}w&x\\y&w\end{array}\right)\in J_k$ have order two. Then
\[\kappa'(g)=\left(\begin{array}{cc}a&b\\c&a\end{array}\right)\]
with $xc=yb$.
\end{lem}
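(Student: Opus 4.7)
The plan is to extract both assertions from a single application of the cocycle relation to the identity $g \cdot g = 1$. First, evaluating the cocycle relation on $1 = 1 \cdot 1$ gives $\kappa'(1) = 2\kappa'(1)$, so $\kappa'(1) = 0$; then evaluating on $g^2 = 1$ yields
\[
0 = \kappa'(g^2) = \kappa'(g) + g\kappa'(g)g^{-1}
\]
in $\Ad'$. The key observation is that $K$ (and hence $\Ad'$) has characteristic $2$, so this relation is equivalent to $g\kappa'(g)g^{-1} = \kappa'(g)$, i.e.\ $\kappa'(g)$ commutes with $g$.

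The next step is purely computational: write $\kappa'(g) = \left(\begin{smallmatrix} a & b \\ c & d \end{smallmatrix}\right)$ and expand $g\kappa'(g) - \kappa'(g)g = 0$ entry by entry. The $(1,1)$ and $(2,2)$ entries both collapse (in characteristic $2$) to the single equation $xc = yb$, which is the second claim of the lemma. The $(1,2)$ and $(2,1)$ entries give $x(d-a) = 0$ and $y(d-a) = 0$ respectively.

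To conclude $a = d$ it suffices to show that $x$ or $y$ is nonzero. If both vanished, $g$ would be the scalar matrix $wI$, and combining $g^2 = 1$ with $\det g = 1$ would give $w^2 = 1$, hence $w = 1$ in characteristic $2$, contradicting $g$ having order exactly two. Thus one of $x,y$ is nonzero, forcing $a = d$, and the lemma follows. There is no substantive obstacle: the whole argument is a short explicit computation, and the only place the characteristic-$2$ hypothesis is used in an essential way is to convert the plus sign in the cocycle identity into the commutation relation $g\kappa'(g) = \kappa'(g)g$.
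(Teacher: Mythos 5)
Your proof is correct and takes a genuinely cleaner computational route than the paper's, though both start from the same point: in characteristic $2$ the cocycle relation applied to $g^2=1$ gives $g\kappa'(g)g^{-1}=\kappa'(g)$. The paper then uses $g^{-1}=g$ to write this as $g\kappa'(g)g=\kappa'(g)$ and expands; the resulting four equations are quadratic in $w,x,y$, and extracting the conclusion requires the $\SL_2$ identity $w^2 = 1+xy$ together with a case split on whether $x=0$ (and also uses $w\ne 0$, which comes from $g\in J_k$, to divide). You instead read the relation as the commutator identity $g\kappa'(g)=\kappa'(g)g$ and expand $g\kappa'(g)-\kappa'(g)g$ directly. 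Its entries are linear in $w,x,y$, and $xc=yb$ (from the diagonal entries) and $x(a-d)=y(a-d)=0$ (from the off-diagonal entries) drop out at once; you then only need one of $x,y$ to be nonzero, which follows exactly as you say from $\det g = 1$, $g^2=1$, and the Frobenius identity $w^2=1\Rightarrow w=1$. What your route buys is the avoidance of the quadratic bookkeeping and the dependence on $w\ne 0$. A still more structural phrasing of the same idea, for what it is worth: the commutation relation says $\kappa'(g)$ lies in the centralizer of the non-scalar matrix $g$ in $\mathrm{M}_{2\times 2}(K)$, which is $K\cdot I + K\cdot g$, and this immediately forces the diagonal entries to be equal and $(b,c)$ proportional to $(x,y)$.
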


\begin{proof}
First note that our assumptions on $g$ and $J_k$ impliy $w$ and at least one of $x,y$ are non-zero. Since $g$ has order two, the cocycle relation implies $g\kappa'(g)=\kappa'(g)$. This yields equations

\begin{align}w^2a+wyb+wxc+xyd&=a\label{eqna}\\
wxa+w^2b+x^2c+wxd&=b\label{eqnb}\\
wya+y^2b+w^2c+wyd&=c\label{eqnc}\\
xya+wyb+wxc+w^2d&=d\label{eqnd}
\end{align}
Using $w^2=1+xy$, equations \eqref{eqna} and \eqref{eqnd} both become

\begin{equation}\label{eqn1} xy(a+d)+w(xc+yb)=0,\end{equation}
equation \eqref{eqnb} becomes

\begin{equation}\label{eqn2} wx(a+d)+x(xc+yb)=0,\end{equation}
and equation \eqref{eqnc} becomes

\begin{equation}\label{eqn3} wy(a+d)+y(xc+yb)=0.\end{equation}
If $x=0$, then \eqref{eqn1} implies $b=0$ and \eqref{eqn3} implies $a=d$. If $x\ne0$, then \eqref{eqn2} implies $xc+by = w(a+d)$. Substituting this into \eqref{eqn1} we have $(w^2+xy)(a+d)=0$, i.e. $a=d$. Equation \eqref{eqn1} then implies $xc=yb$.

\end{proof}

By \ref{OrderTwo}, we can write

\[
\kappa_2(u^+(\varpi_0^k))=\left(\begin{array}{cc}a&b\\&a\end{array}\right).
\]
Let $u=\varpi^e\varpi_0^{-1}\in A^\times$. Since $\Ad'\subseteq\varpi^{-l}\mathrm{M}_{2\times 2}(A)$, $b\in\mathfrak{m}^{-l}$. So, letting
\[
X=\left(\begin{array}{cc}u^k\varpi^{2l}b&\\&0\end{array}\right),
\]
we have $X\in\varpi^l\mathrm{M}_{2\times 2}(A)\subseteq\Ad'$. We can then define a cocycle $\kappa:\Gamma\rightarrow\Ad'$ by
\[
\kappa(g)=\varpi^{ke+2l}\kappa_2(g)-gX+X.
\]
The cocycle $\kappa$ represents the cohomology class $\varpi^{3ke+4l}\gamma=\varpi^{N_0}\gamma$ and an easy computation shows $\kappa(u^+(\varpi_0^k))\in Z$. Note that since every $t(\alpha)\in T_k$ commutes with $X$, $\kappa(t(\alpha))=\varpi^{ke+2l}\kappa_2(t(\alpha))$ is diagonal for every $t(\alpha)\in T_k$, by \eqref{Tindiag}. Applying \ref{OrderTwo} to $\kappa$, we have
\begin{equation}\label{Utriang}
\kappa(u^+(x))=\left(\begin{array}{cc}a_x&b_x\\&a_x\end{array}\right)\quad\text{and}
\quad\kappa(u^-(x))=\left(\begin{array}{cc}d_x&\\c_x&d_x\end{array}\right),
\end{equation}
with $b_{\varpi_0^k}=0$. This completes Step 1.

\textit{Step 2.} We show that for any $\alpha\in1+\mathfrak{m}_0^k$, writing

\[
\kappa(t(\alpha))=\left(\begin{array}{cc}a_\alpha&\\&d_\alpha\end{array}\right),
\]
we have $a_\alpha=d_\alpha$, and that for any $x\in\mathfrak{m}_0^k$, writing $\kappa(u^+(x))$ and $\kappa(u^-(x))$ as in \eqref{Utriang}, we have $b_x=c_x$. This will be shown simultaneously by setting $\alpha=1+x$ and considering the relation $u^+(x)u^-(x)=t(\alpha)u^-(x)u^+(x)t(\alpha)$. 

Let $g=u^+(x)u^-(x)=t(\alpha)u^-(x)u^+(x)t(\alpha)$. Applying the cocycle relation to $u^+(x)u^-(x)$ we have

\[\kappa(g)=\left(\begin{array}{cc} a_x+d_x+xc_x&b_x+x^2c_x\\c_x&a_x+d_x+xc_x\end{array}\right)\]
Applying the cocycle relation to $t(\alpha)u^-(x)u^+(x)t(\alpha)$ we have

\[\kappa(g)=\left(\begin{array}{cc} a_x+d_x+xb_x+x^2(a_\alpha+d_\alpha)&
\alpha^2b_x+\alpha^2x(a_\alpha+d_\alpha)\\
\alpha^{-2}(c_x+x^2b_x) + x(a_\alpha+d_\alpha)&
a_x + d_x + xb_x + x^2(a_\alpha+d_\alpha)
\end{array}\right).\]
We may assume $x \ne 0$; so, comparing top left entries,

\begin{equation}\label{ceqn}
c_x=b_x+x(a_\alpha+d_\alpha).
\end{equation}
Comparing top right entries and using \eqref{ceqn}

\begin{align*}b_x+x^2c_x&=\alpha^2(b_x+x(a_\alpha+d_\alpha))\\
&=\alpha^2c_x=(1+x^2)c_x.
\end{align*}
Hence, $b_x=c_x$, and \eqref{ceqn} gives $a_\alpha=d_\alpha$. This completes Step 2.

We use the following notation in what follows. For $\alpha\in1+\mathfrak{m}_0^k$ and $x\in\mathfrak{m}_0^k$ write

\begin{equation}\label{notation}
\kappa(t(\alpha))=\left(\begin{array}{cc}a_\alpha&\\&a_\alpha\end{array}\right),\quad
\kappa(u^+(x))=\left(\begin{array}{cc}a_x&b_x\\&a_x\end{array}\right),\quad
\kappa(u^-(x))=\left(\begin{array}{cc}d_x&\\b_x&d_x\end{array}\right).
\end{equation}

\textit{Step 3.} Since $J_k=U_k^-T_kU_k^+$, $\kappa$ is uniquely determined modulo $Z$ by the values $b_x$, for $x\in \mathfrak{m}_0^k$. We will show that the values $b_x$ for $x\in\mathfrak{m}_0^{3k}$ are uniquely determined by $b_{\varpi_0^{k+1}}$.

First we establish some easy relations. For any $x\in\mathfrak{m}_0^k$, $U^+_k$ act trivially on $\kappa(u^+(x))$ and $U^-_k$ acts trivially on $\kappa(u^-(x))$. Hence, for any $x,y\in\mathfrak{m}_0^k$ we have

\begin{equation}\label{xplusy}
b_{x+y}=b_x+b_y,\quad a_{x+y}=a_x+a_y,\quad d_{x+y}=d_x+d_y.
\end{equation}
Let $\alpha\in1+\mathfrak{m}_0^k$ and $x\in\mathfrak{m}_0^k$. Using $u^+(\alpha^2x)=t(\alpha)u^+(x)t(\alpha)^{-1}$ and the cocycle relation we find

\[\kappa(u^+(\alpha^2x))=\left(\begin{array}{cc}a_x&\alpha^2b_x\\&a_x\end{array}\right),
\]
and similarly for $u^-(\alpha^2x)$. So

\begin{equation}\label{alphasquaredx}
b_{\alpha^2x}=\alpha^2b_x,\quad a_{\alpha^2x}=a_x,\quad d_{\alpha^2x}=d_x.
\end{equation}
Now, if $y\in\mathfrak{m}_0^k$, setting $\alpha=1+y$ and applyng \eqref{xplusy} and \eqref{alphasquaredx} to $u^+(y^2x)=u^+(x)u^+(\alpha^2x)$ we have

\begin{equation}\label{ysquaredx}
b_{y^2x}=y^2b_x,\quad a_{y^2x}=0,\quad d_{y^2x}=0.
\end{equation}

Let $\F_0$ denote the resdiue field of $A_0$. Take $z\in\F_0$ and $n\ge 3k$. Let $\sqrt{z}$ denote the unique element of $\F_0$ such that $(\sqrt{z})^2=z$. If $k$ and $n$ have the same parity, then setting $y =\sqrt{z}\varpi_0^{\frac{n-k}{2}}\in\mathfrak{m}_0^k$ in \eqref{ysquaredx} gives 

\[b_{z\varpi_0^n}=z\varpi_0^{n-k}b_{\varpi_0^k}=0.
\]
If $k$ and $n$ have different parity then, setting $y=\sqrt{z}\varpi_0^{\frac{n-k-1}{2}}\in\mathfrak{m}_0^k$ in \eqref{ysquaredx} gives

\[b_{z\varpi_0^n}=z\varpi_0^{n-k-1}b_{\varpi_0^{k+1}}.
\]
Combining these two equations with \eqref{xplusy} we know the value of $b_x$ for any $x\in\mathfrak{m}_{0}^{3k}$ of the form $x=z_{3k}\varpi_0^{3k}+\cdots z_n\varpi_0^n$ with $z_{3k},\ldots,z_n\in\F_0$. For arbitrary $x\in\mathfrak{m}_0^{3k}$ the value $b_x$ is then determined by the continuity of $\kappa$. This copmletes Step 3.

\textit{Step 4.} It remains to show that if $b_{\varpi_0^{k+1}}\ne 0$, then
	\[ \kappa\left(\!\begin{array}{cc} 1+\varpi_0^n & \\ & (1+\varpi_0^n)^{-1}\end{array}\!\right) \in
	Z \smallsetminus \{0\},
	\]
for all sufficiently large, odd $n$.

Take $x,y\in\mathfrak{m}^k_{0}$ and set $\alpha=1+xy$. We have the relation 

\[u^+(y)u^-(x)=u^-(\alpha^{-1}x)u^+(\alpha y)t(\alpha).
\]
The cocycle relation gives

\[\kappa(u^+(y)u^-(x))=\left(\begin{array}{cc}a_y+d_x+yb_x&b_y+y^2b_x\\
b_x&a_y+d_x+yb_x\end{array}\right),
\]
and

\[\kappa(u^-(\alpha^{-1}x)u^+(\alpha y)t(\alpha))=\left(\begin{array}{cc}
a_\alpha+a_{\alpha y}+d_{\alpha^{-1}x}+\alpha^{-1}xb_{\alpha y} & b_{\alpha y}\\
\alpha^{-2}x^2b_{\alpha y}+b_{\alpha^{-1}x} & 
a_\alpha+a_{\alpha y}+d_{\alpha^{-1}x}+\alpha^{-1}xb_{\alpha y}\end{array}\right),
\]
which, by comparing the diagonal entries, gives

\begin{equation}\label{diagsequal}
a_y+d_x+yb_x = a_\alpha+a_{\alpha y}+d_{\alpha^{-1}x}+\alpha^{-1}xb_{\alpha y}.
\end{equation}
Using \eqref{xplusy}, \eqref{alphasquaredx} and \eqref{ysquaredx}, we see

\[b_{\alpha y}=b_{y+y^2x}=b_y+y^2b_x,\quad a_{\alpha y}=a_{y+y^2x}=a_y,\quad
d_{\alpha^{-1}x}=d_{\alpha x}=d_{x+y^2x}=d_x.
\]
So, \eqref{diagsequal} becomes

\begin{align}
a_\alpha &= yb_x -\alpha^{-1}x(b_y+y^2b_x)\notag\\
&=\alpha^{-1}(yb_x-xb_y).\label{aalpha}
\end{align}
Let $n$ be an odd integer such that $n-k-1\ge 3k$. Since $n-k-1$ has the same parity as $k$ we know $b_{\varpi_0^{n-k-1}}=0$, and, putting $x=\varpi_0^{k+1}$ and $y=\varpi_0^{n-k-1}$ in \eqref{aalpha}, we have
\[
a_{1+\varpi_0^n}=(1+\varpi_0^n)^{-1}\varpi_0^{n-k-1}b_{\varpi_0^{k+1}},
\]
which is zero if and only if $b_{\varpi_0^{k+1}}$ is. In particular, if $b_{\varpi_0^{k+1}}\ne 0$ then there are infinitely many $n$ such that $\kappa(t(1+\varpi_0^n))\in Z\smallsetminus\{0\}$. This completes Step 4, and the proof of \ref{TheCocycle}. \hfill\qed


\subsection{Auxiliary Primes}\label{AuxPrimesSubSec}

We now use the results in the previous subsection to prove the existence of auxiliary primes similar to \cite{KW2}*{Lemma 5.10}.

\subsubsection{}\label{KummerExt}

For each $n\ge 1$, let $F_n=F(\mu_{2^n})$. Let $\Q(\mu_{2^n})^+$ denote the maximal totally real subfield of $\Q(\mu_{2^n})$ and let $n_0$ be the largest integer $n\ge 2$ such that $\Q(\mu_{2^n})^+\subseteq F$. For each $n\ge1$ let $\zeta_n$ be a primitive $2^n$-th root of unity such that $\zeta_{n+1}^2=\zeta_n$. Let $x_n=\frac{1}{2}(\zeta_n+\zeta_n^{-1})$ and $y_n=\frac{1}{2}(x^n+1)$. For $n>n_0$ let $\tilde{F}_n=F_n(y_{n_0}^{1/2^n})$, $\tilde{y}_n\in F^\times/(F^\times)^{2^n}$ denote the image of $y_{n_0}$, and $\omega_n\in H^1(G_F,\mu_{2^n})$ be the image of $\tilde{y}_n$ under the Kummer map.


\begin{lem}\label{KummerExt}
Let $n>n_0$. We have
\begin{enumerate}
\item $F_{n_0}(y_{n_0}^{1/4})$ is a dihedral extension of $F$ of degree $8$;
\item the extension $\tilde{F}_n/F_n$ is cyclic of degree $2^{n-1}$, and its cyclic subextension of degree $2$ is $F_n(y_{n_0}^{1/4})$;
\item $\omega_n\in H^1(G_{F,S},\mu_{2^n})$, and its order is divisible by $2^{n-1}$;
\item any quadratic subextension of $\tilde{F}_n/F$ is contained in $F_n$;
\end{enumerate}
\end{lem}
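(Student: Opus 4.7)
The plan is to exploit the half-angle identity $x_{n_0+1}^2 = (x_{n_0}+1)/2 = y_{n_0}$, giving $y_{n_0}^{1/2} = \pm x_{n_0+1} \in F_{n_0+1}^+$, together with the observation that $F_{n_0} = F(i)$: the inclusion $F(i)\subseteq F_{n_0}$ comes from $\zeta_{n_0}^{2^{n_0-2}}=\zeta_2=i$, and both have degree $2$ over $F$ (since $\zeta_{n_0}$ satisfies the quadratic $T^2-2x_{n_0}T+1$ and is non-real). For (1), this identifies $F_{n_0}(y_{n_0}^{1/4})$ with the splitting field of $T^4-y_{n_0}$ over $F$. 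The irreducibility of this polynomial would follow from the standard criterion: $y_{n_0}\notin(F^\times)^2$ (else $x_{n_0+1}\in F$, contradicting the maximality of $n_0$) and $-4y_{n_0}\notin (F^\times)^4$ (since $y_{n_0}$ is totally positive), giving a dihedral Galois group of order $8$. For (3), the same argument shows $T^{2^n}-y_{n_0}$ is irreducible over $F$. By the injectivity of the Kummer map $F^\times/(F^\times)^{2^n}\hookrightarrow H^1(G_F,\mu_{2^n})$, the order of $\omega_n$ equals that of $y_{n_0}$ in $F^\times/(F^\times)^{2^n}$; since $y_{n_0}$ is a unit outside primes above $2$, the class $\omega_n$ is unramified outside $S$. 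The order is divisible by $2^{n-1}$: if $y_{n_0}^{2^{n-2}}\in (F^\times)^{2^n}$ then $y_{n_0}=\zeta w^4$ with $\zeta\in\mu_{2^{n-2}}(F)=\{\pm 1\}$, and both $\zeta=1$ (which would force $x_{n_0+1}\in F$) and $\zeta=-1$ (contradicting total positivity) lead to a contradiction.

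For (2), Kummer theory (using $\mu_{2^n}\subseteq F_n$) identifies $\Gal(\tilde F_n/F_n)$ with the cyclic subgroup of $F_n^\times/(F_n^\times)^{2^n}$ generated by $y_{n_0}$. Since $y_{n_0}=x_{n_0+1}^2$ is a square in $F_n$, the order divides $2^{n-1}$. For equality I would show $y_{n_0}^{2^{n-2}}\notin(F_n^\times)^{2^n}$: otherwise $y_{n_0}=\zeta w^4$ with $\zeta\in\mu_{2^{n-2}}\subseteq F_n$ and $w\in F_n$, and choosing $\zeta^{1/4}\in\mu_{2^n}\subseteq F_n$ yields $y_{n_0}^{1/4}\in F_n$; then $F(y_{n_0}^{1/4})\subseteq F_n$ and hence the Galois closure $F_{n_0}(y_{n_0}^{1/4})\subseteq F_n$, contradicting the fact from (1) that $F_{n_0}(y_{n_0}^{1/4})/F$ is non-abelian while $F_n/F$ is abelian. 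For the quadratic subextension: $(y_{n_0}^{1/4})^2 = x_{n_0+1}\in F_n$ gives $[F_n(y_{n_0}^{1/4}):F_n]\le 2$, and the same dihedral-versus-abelian argument rules out equality.

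Part (4) is the main obstacle. From (2) we have $1\to\mu_{2^{n-1}}\to\Gal(\tilde F_n/F)\to\Gal(F_n/F)\to 1$ with cyclotomic action. Since $\Gal(F_n/F)$ is abelian, the commutator subgroup of $\Gal(\tilde F_n/F)$ lies inside $\mu_{2^{n-1}}$; I would show it equals all of $\mu_{2^{n-1}}$ by producing one commutator that is an odd power of a generator. Then $\Gal(\tilde F_n/F)^{\mathrm{ab}} = \Gal(F_n/F)$ and every quadratic subextension of $\tilde F_n/F$ factors through $F_n$, as desired. Concretely, set $\alpha = y_{n_0}^{1/2^n}$, let $\tau$ generate $\Gal(\tilde F_n/F_n)$ via $\tau(\alpha)=\zeta_{n-1}\alpha$, and for each $\sigma \in \Gal(F_n/F)$ fix a lift $\tilde\sigma$ with $\tilde\sigma(\alpha)=\xi_\sigma\alpha$, so $\xi_\sigma^{2^{n-1}}=\sigma(x_{n_0+1})/x_{n_0+1}$. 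Take $c$ to be complex conjugation (fixing the real $x_{n_0+1}$, so $\xi_c\in\mu_{2^{n-1}}$, say $\xi_c=\zeta_n^{2a}$) and $\sigma_0\in\Gal(F_n/F)$ with $\sigma_0(\zeta_n) = \zeta_n^{1+2^{n_0}}$ (then $\sigma_0(\zeta_{n_0+1})=-\zeta_{n_0+1}$, so $\sigma_0(x_{n_0+1})=-x_{n_0+1}$, forcing $\xi_{\sigma_0}=\zeta_n^j$ with $j$ odd). A direct computation using $c(\zeta_n)=\zeta_n^{-1}$ then gives $[\tilde c,\tilde\sigma_0]=\tau^{-(j+a\cdot 2^{n_0})}$, and the essential numerical point is that $j$ odd and $a\cdot 2^{n_0}$ even make this exponent odd, so $\tau$ lies in the commutator subgroup.
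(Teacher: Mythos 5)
Your argument is correct, but for part (4) you take a roundabout route that the paper avoids. The paper simply cites parts (1)--(3) from Khare--Wintenberger; you supply complete proofs via the half-angle identity $x_{n_0+1}^2 = y_{n_0}$, the identification $F_{n_0} = F(i)$, the standard irreducibility criterion for $T^4 - y_{n_0}$, and order computations in $F^\times/(F^\times)^{2^n}$ and $F_n^\times/(F_n^\times)^{2^n}$, all of which check out. The substantive difference is in (4). The paper deduces it in three lines from (1) and (2): if $M/F$ is a quadratic subextension of $\tilde{F}_n/F$, then $[MF_n : F_n]\le 2$, so by (2) either $MF_n = F_n$ or $MF_n = F_n(y_{n_0}^{1/4})$; the latter is impossible because $MF_n/F$ is abelian (as $\Gal(MF_n/F)\hookrightarrow\Gal(M/F)\times\Gal(F_n/F)$) whereas it would contain the non-abelian $F_{n_0}(y_{n_0}^{1/4})/F$ from part (1). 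You instead compute the commutator subgroup of $\Gal(\tilde{F}_n/F)$ explicitly from the extension $1\to\mu_{2^{n-1}}\to\Gal(\tilde{F}_n/F)\to\Gal(F_n/F)\to 1$, exhibiting a commutator $[\tilde c,\tilde\sigma_0]=\tau^{-(j+a\cdot 2^{n_0})}$ with odd exponent. I verified this cocycle calculation (the commutator scales $\alpha=y_{n_0}^{1/2^n}$ by $c(\xi_{\sigma_0})\xi_{\sigma_0}^{-1}\cdot\xi_c\,\sigma_0(\xi_c)^{-1}=\xi_{\sigma_0}^{-2}\xi_c^{-2^{n_0}}$, and the parity argument is sound since $n_0\ge 2$); it even yields the stronger conclusion that $F_n$ is exactly the maximal abelian subextension of $\tilde{F}_n/F$, not merely that it absorbs the quadratic ones. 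But this is considerably more labor than the problem needs, and --- since you already invoke the same dihedral-versus-abelian contradiction to close part (2) --- you had the paper's short argument in hand.
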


\begin{proof} Parts (1), (2) and (3) are proved in \cite{KW2}*{Lemmas 5.8, 5.9}. Part (4) is a consequence of the first two. Indeed, if $M/F$ is a quadratic subextension of $\tilde{F}_n/F$, we have either $MF_n=F_n$ or $MF_n=F_n(y_{n_0}^{1/4})$, by part 2. Since $MF_n/F$ is abelian, part (1) implies $MF_n=F_n$. \end{proof}

For any $n\ge 1$, let $S_n$, resp. $\tilde{S}_n$, denote the places above $S$ in $F_n$, resp. $\tilde{F}_n$.


\begin{lem}\label{KerFn}
For any $n> n_0$,
\[\ker(H^1(G_{F,S},\Ad)\rightarrow H^1(G_{\tilde{F}_n,\tilde{S}_n},\Ad))
=\ker(H^1(G_{F,S},\Ad)\rightarrow H^1(G_{{F}_{n_0+1},{S}_{n_0+1}},\Ad))\cong A^2.
\]
\end{lem}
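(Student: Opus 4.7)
The plan is to apply inflation–restriction for each of the two Galois extensions and identify both kernels with the same $\Hom$-group. Let $H\subseteq G_{F,S}$ denote either $G_{\tilde{F}_n,\tilde{S}_n}$ or $G_{F_{n_0+1},S_{n_0+1}}$; both are open subgroups. The image $\rho(H)\cap\SL_2(A)$ is of finite index in $\mathcal{G}^1$, which is Zariski dense in $\SL_{2/K}$ by \ref{ImZarDense} and \ref{OpenSub}, so $\rho(H)\cap\SL_2(A)$ itself is Zariski dense (a finite-index subgroup of a Zariski dense subset of an irreducible variety stays Zariski dense). Lemma \ref{AdModZFixed} then yields $(\Ad/Z)^H=0$, and taking $H$-invariants of $0\to Z\to\Ad\to\Ad/Z\to 0$ gives $\Ad^H=Z$. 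Since $G_F$ acts trivially on $Z\cong A$, inflation–restriction identifies each of the two kernels with $\Hom(\Gal(M/F),A)$ for $M=\tilde{F}_n$, respectively $M=F_{n_0+1}$.

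The next step is to count quadratic subextensions, which control these $\Hom$-groups. Because $(A,+)$ has characteristic $2$, every homomorphism $\Gal(M/F)\to A$ factors through the maximal elementary $2$-quotient $\Gal(M/F)^{\mathrm{ab}}/2$, and $\Hom(\Gal(M/F),A)\cong A^r$ where $r$ is its $\F_2$-dimension; equivalently $2^r-1$ is the number of quadratic subextensions of $M/F$. For $M=F_{n_0+1}$, the maximality of $n_0$ gives $x_{n_0+1}\notin F$, producing a totally real quadratic subextension $F(x_{n_0+1})$, while $F(\zeta_{n_0})/F$ provides a non-totally-real one; these two distinct quadratic subextensions together with $[F_{n_0+1}:F]=4$ force $\Gal(F_{n_0+1}/F)\cong(\Z/2)^2$, so $r=2$. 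For $M=\tilde{F}_n$, part (4) of \ref{KummerExt} puts every quadratic subextension inside $F_n$, and a short cyclotomic calculation (using $F\cap\Q(\mu_{2^n})^+=\Q(\mu_{2^{n_0}})^+$, itself a consequence of the maximality of $n_0$) gives $\Gal(F_n/F)\cong\Z/2^{n-n_0}\times\Z/2$, so every quadratic subextension of $F_n/F$ is already contained in $F_{n_0+1}$. Hence $\tilde{F}_n/F$ admits exactly three quadratic subextensions, so again $r=2$.

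Finally, because $F\subseteq F_{n_0+1}\subseteq\tilde{F}_n$, the second kernel is automatically contained in the first, and under the identifications above this inclusion is the inflation map $\Hom(\Gal(F_{n_0+1}/F),A)\hookrightarrow\Hom(\Gal(\tilde{F}_n/F),A)$. The previous step shows that every character $\Gal(\tilde{F}_n/F)\to A$ (necessarily factoring through the elementary $2$-quotient) descends to $\Gal(F_{n_0+1}/F)$, so the inflation is also surjective. This gives the asserted equality of the two kernels and the value $A^2$.

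The hard part will be the counting in the middle paragraph: even though $|\Gal(\tilde{F}_n/F)|$ grows like $2^{2n-n_0}$, its elementary abelian $2$-quotient must have rank exactly $2$, independent of $n$. This hinges on part (4) of \ref{KummerExt} (a Kummer-theoretic input, confining quadratic subextensions of $\tilde{F}_n$ to $F_n$) together with the cyclotomic rank computation $\Gal(F_n/F)\cong\Z/2^{n-n_0}\times\Z/2$—both essentially relying on the maximality of $n_0$.
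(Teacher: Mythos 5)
Your proof is correct and uses the same essential ingredients as the paper: inflation--restriction, the identity $\Ad^{G_M}=Z$ coming from \ref{AdModZFixed}, part (4) of \ref{KummerExt}, and the computation $\Gal(F_n/F)\cong\Z/2\times\Z/2^{n-n_0}$. The only organizational difference is where inflation--restriction is invoked. The paper first reduces the $\tilde{F}_n$-kernel to the $F_n$-kernel: it shows a class $\gamma$ in the kernel over $\tilde{F}_n$ restricts to an element of $H^1(\Gal(\tilde{F}_n/F_n),Z)$, deduces that $\gamma$ dies in $H^1(G_{F,S},\Ad/Z)$ (via injectivity of restriction to $G_{F_n}$), so $\gamma$ comes from $H^1(G_{F,S},Z)=\Hom(G_{F,S}^{\mathrm{ab}},Z)$, and only then applies part (4) of \ref{KummerExt} to descend to triviality on $G_{F_n}$. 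You instead apply inflation--restriction directly over $F$ to identify each kernel with $\Hom(\Gal(M/F),Z)$, and then invoke part (4) of \ref{KummerExt} (together with the cyclotomic rank count) to bound the elementary $2$-quotient of $\Gal(\tilde{F}_n/F)$ by $(\Z/2)^2$. Both routes work; yours is slightly more direct, while the paper's stepwise reduction through $F_n$ has the minor advantage of not needing to analyze the abelianization of the full non-abelian group $\Gal(\tilde{F}_n/F)$ explicitly.
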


\begin{proof}

Let $\gamma\in\ker(H^1(G_{F,S},\Ad)\rightarrow H^1(G_{\tilde{F}_n,\tilde{S}_n},\Ad))$, and let $\gamma_{F_n}$ denote the image of $\gamma$ in $H^1(G_{F_n,S_n},\Ad)$. Since $\tilde{F}_n/F_n$ is Galois and $\Ad^{G_{\tilde{F}_n}}=Z$, we see that $\gamma_{F_n}\in H^1(\Gal(\tilde{F}_n/F_n),Z)$ by inflation-restriction. Since the map $H^1(\Gal(\tilde{F}_n/F_n),Z)\rightarrow H^1(G_{F_n},\Ad)$ factors through $H^1(G_{F_n},Z)$, $\gamma_{F_n}$ maps to zero in $H^1(G_{F_n,S_n},\Ad/Z)$. The commutative diagram

\[\xymatrix{
H^1(G_{F,S},Z) \ar[r] \ar[d] & H^1(G_{F,S},\Ad) \ar[r] \ar[d] & H^1(G_{F,S},\Ad/Z) \ar[d] \\
H^1(G_{F_n,S_n},Z) \ar[r] & H^1(G_{F_n,S_n},\Ad) \ar[r]& H^1(G_{F_n,S_n},\Ad/Z),}
\]
implies that the image of $\gamma$ in $H^1(G_{F,S},\Ad/Z)$ restricts to zero in $H^1(G_{F_n,S_n},\Ad/Z)$. By \ref{AdModZFixed}, $(\Ad/Z)^{G_{F_n}} = 0$, and so the inflation restriction exact sequence implies $H^1(G_{F,S},\Ad/Z)\rightarrow H^1(G_{F_n,S_n},\Ad/Z)$ is injective and the image of $\gamma$ in $H^1(G_{F,S},\Ad/Z)$ is zero. Hence, $\gamma$ is in the image of $H^1(G_{F,S},Z)$. Then $\gamma$ is a homomorphism from $G_{F,S}^\mathrm{ab}$ to $Z$ that is trivial on $G_{\tilde{F}_n,\tilde{S}_n}$. Part (4) of \ref{KummerExt} implies that $\gamma$ is trivial on $G_{F_n,S_n}$, and we have
\[\ker(H^1(G_{F,S},\Ad)\rightarrow H^1(G_{\tilde{F}_n,\tilde{S}_n},\Ad))
=\ker(H^1(G_{F,S},\Ad)\rightarrow H^1(G_{{F}_n,{S}_n})).
\]

For any $n> n_0$, $\Gal(F_n/F)$ is the product of $\Z/2\Z$ and a cyclic 2-group. Then
\begin{align*}
\ker(H^1(G_{F,S},\Ad)\rightarrow H^1(G_{{F}_n,{S}_n}))&=\Hom(\Gal(F_n/F),Z)\\
&=\Hom(\Gal(F_{n_0+1}/F),Z)\\
&=\ker(H^1(G_{F,S},\Ad)\rightarrow H^1(G_{{F}_{n_0+1},{S}_{n_0+1}})),
\end{align*}
and $\Hom(\Gal(F_{n_0+1}/F),Z)\cong A^2$.
\end{proof}

\subsubsection{}\label{n1Def}

We know, by \ref{PinkResult}, that $[\mathcal{G},\mathcal{G}]$ is open in $\mathcal{G}$. By \ref{PinkResult} again, we see that the commutator subgroup of the commutator subgroup of $\mathcal{G}$, i.e $[[\mathcal{G},\mathcal{G}],[\mathcal{G},\mathcal{G}]]$, is open in $[\mathcal{G},\mathcal{G}]$, hence also in $\mathcal{G}$. Since $\tilde{F}_n/F_n$ and $F_n/F$ are both abelian extension, it follows that $\rho|_{G_{\tilde{F}_n}}$ contains $[[\mathcal{G},\mathcal{G}],[\mathcal{G},\mathcal{G}]]$ for all $n\ge 1$. This implies that there is $n_1$, which we can assume is greater than $n_0$, and a finite extension $M/F$ such that if $F_\rho$ denotes the subfield of $\Qbar$ fixed by $\ker\rho$, we have $F_\rho\cap\tilde{F}_n=M$ and $\rho(G_{\tilde{F}_n})=\rho(G_M)$ for all $n\ge n_1$. Set $\Gamma=\rho(G_M)$. Since $\Gamma$ is open in $\mathcal{G}$, by \ref{OpenSub}, there is a finite index subfield $K_0$ of $K$ such that some conjugate of $\Gamma$ is open in $\SL_2(K_0)$. Let $A_0$ denote the ring of integers of $K_0$ and let $\varpi_0$ be a choice of uniformizer. For the remainder of this subsection we fix $k\ge 1$ such that $g\Gamma g^{-1}$ contains $\SL_2(A_0)\cap(I+\varpi_0^k M_2(A_0))$, for some $g \in \GL_2(K)$. Let $e$ be the ramification index of $K/K_0$ and write $\mathfrak{m}_0=\varpi_0A_0$.


\begin{lem}\label{SubofAd}
Let $B$ be a $\Gamma$-stable subgroup of $\Ad$ (not necessarily an $A$-module). Let $m\ge 1$ be such that $B\not\subseteq\varpi^m\Ad$. There is a non-negative integer $N_0$, independent of $B$, such that some $X\in B$ satisfies either
\begin{itemize}
\item[-] $\val(\tr X)<m+N_0$ or
\item[-] $X=zI+Y$ with $\val(z)<m+N_0$ and $Y\in\varpi^{\val(z)+1}\Ad$.
\end{itemize}
\end{lem}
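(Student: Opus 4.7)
The plan is to leverage Proposition \ref{OpenSub}: after conjugating by a fixed $g\in\GL_2(K)$, the group $\Gamma$ contains the principal congruence subgroup of level $k$ in $\SL_2(A_0)$. Let $e$ be the ramification index of $K/K_0$ and fix $l\ge 0$ so that $\varpi^l\Ad\subseteq g\Ad g^{-1}\subseteq\varpi^{-l}\Ad$. The bound $N_0$ will be a small linear combination of $k$, $e$, and $l$; in particular it depends only on $\Gamma$, not on $B$.

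Given $X\in B$ with $X\notin\varpi^m\Ad$, if $\val(\tr X)<m+N_0$ then the first alternative is satisfied by $X$ itself. Otherwise $\val(\tr X)\ge m+N_0$, and I would work in the basis adapted to $g\Gamma g^{-1}\subseteq\SL_2(A_0)$. Set $X''=gXg^{-1}$ and $v=\min_{i,j}\val((X'')_{ij})$, so $v\le m+l-1$. Because $\val((X'')_{11}+(X'')_{22})=\val(\tr X)$ strictly exceeds $v$, in characteristic $2$ the two diagonal entries either both have valuation $v$ (with equal leading terms) or both have valuation strictly greater than $v$. If in addition $\val((X'')_{12}),\val((X'')_{21})>v$, then the decomposition $X''=(X'')_{11}\cdot I+(X''-(X'')_{11}\cdot I)$ writes $X''$ as a scalar of valuation $v$ plus an element of $\varpi^{v+1}\cdot g\Ad g^{-1}$; pulling back by $g$ gives the second alternative for $X$, once $N_0\ge 2l$.

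The interesting case is when some off-diagonal entry, say $(X'')_{21}$, has valuation $v$. I would then conjugate by $\gamma''=\bigl(\begin{smallmatrix}1 & \varpi_0^k\\ 0 & 1\end{smallmatrix}\bigr)\in g\Gamma g^{-1}$ and subtract, obtaining (in characteristic $2$)
\[
\gamma''X''(\gamma'')^{-1}-X''=\begin{pmatrix}\varpi_0^k(X'')_{21} & \varpi_0^k\tr X + \varpi_0^{2k}(X'')_{21}\\ 0 & \varpi_0^k(X'')_{21}\end{pmatrix},
\]
which is the sum of the scalar $\varpi_0^k(X'')_{21}\cdot I$ of valuation $ke+v$ and a strictly upper triangular piece of valuation at least $\min(ke+\val(\tr X),\,2ke+v)\ge ke+v+1$ (using $\val(\tr X)\ge m+N_0>v$ and $ke\ge 1$). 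This difference lies in $B$, and conjugating back by $g^{-1}$ produces an element of $B$ satisfying the second alternative; the scalar's valuation is $ke+v$, which is $<m+N_0$ as soon as $N_0\ge ke+2l$. The symmetric case where $(X'')_{12}$ has valuation $v$ is handled with $\bigl(\begin{smallmatrix}1 & 0\\ \varpi_0^k & 1\end{smallmatrix}\bigr)$ in place of $\gamma''$.

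The hard part will be the bookkeeping needed to absorb the bounded basis-change distortion $l$ into the final $N_0$, and to exploit the characteristic $2$ identity $X_{11}+X_{22}=\tr X$ so that in the interesting case the scalar and non-scalar parts of the conjugation difference wind up at strictly different filtration levels. No essential new ingredients beyond \ref{OpenSub} are required.
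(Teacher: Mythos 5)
Your overall strategy mirrors the paper's: conjugate by elements of the principal congruence subgroup $J_k \subset g\Gamma g^{-1}$ and take differences to make the matrix closer to scalar. But there is a genuine gap in how you track depth inside $\Ad$ versus depth inside $M_{2\times 2}(A)$, and it affects both branches of your case division.

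The problem is the distortion factor $l$, where $\varpi^l M_{2\times 2}(A) \subseteq g\Ad g^{-1} \subseteq \varpi^{-l} M_{2\times 2}(A)$. Your entry-wise computations naturally produce membership statements of the form $Y'' \in \varpi^{j} M_{2\times 2}(A)$, but the conclusion of the lemma demands $Y \in \varpi^{\val(z)+1}\Ad$. Passing from the former to the latter (by writing $Y = g^{-1}Y''g$) costs up to $l$ levels: one only gets $Y \in \varpi^{j-l}\Ad$. In the ``both off-diagonals small'' case you assert $Y'' \in \varpi^{v+1}\cdot g\Ad g^{-1}$, but what you have actually shown is $Y'' \in \varpi^{v+1}M_{2\times 2}(A) \subseteq \varpi^{v+1-l}\cdot g\Ad g^{-1}$; since you set $\val(z) = v$, you need $\varpi^{v+1}\Ad$, and you are off by $l$. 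In the ``interesting'' case, the scalar has valuation $ke+v$ and the nilpotent tail has valuation (in $M_{2\times 2}(A)$) bounded below by $\min(ke + \val(\tr X), 2ke+v)$; after dividing by the distortion $l$, the second term requires $ke \ge l + 1$, which is not guaranteed by the choice of $k$ in \ref{n1Def}. Increasing $N_0$ cannot repair this: the constraint $Y \in \varpi^{\val(z)+1}\Ad$ is tied to $\val(z)$, not to $N_0$.

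The paper sidesteps both difficulties in a way that is worth noting. In the easy case it does not split on the relative threshold $v$ but on the absolute threshold $m+l$ for the off-diagonals; this ensures $Y' \in \varpi^{m+l}M_{2\times 2}(A) \subseteq \varpi^m\Ad'$ and hence $Y \in \varpi^m\Ad$, while simultaneously forcing $\val(z) < m$ from $X \notin \varpi^m\Ad$, so $\varpi^m\Ad \subseteq \varpi^{\val(z)+1}\Ad$ automatically. In the hard case the paper iterates four conjugation-and-subtraction steps to produce a \emph{pure scalar} $\varpi_0^{4k}b' I$; with $Y = 0$ the constraint $Y \in \varpi^{\val(z)+1}\Ad$ is vacuous and no bookkeeping with $l$ is needed. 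Your one-shot unipotent conjugation stops halfway: it separates the scalar from the nilpotent tail but does not kill the tail, and the tail is too large once conjugated back through $g^{-1}$.
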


\begin{proof}

Take $g\in\GL_2(K)$ such that $\Gamma'=g\Gamma g^{-1}$ contains $\SL_2(A_0)\cap(I+\varpi_0^k \mathrm{M}_{2\times 2}(A_0))$, with $k$ as above. Set $\Ad'=g\Ad\subset \mathrm{M}_{2\times 2}(K)$ and $B'=gB$. Then $B'$ is a $\Gamma'$ stable subgroup of $\Ad'$. There is some non-negative integer $l$ such that $\varpi^l\mathrm{M}_{2\times 2}(A)\subseteq\Ad'\subseteq\varpi^{-l}\mathrm{M}_{2\times 2}(A)$. Set $N_0=l+4ke$.

Since $B\not\subseteq\varpi^m\Ad$, there  is $X\in B$ such that, writing
\[
X=\left(\begin{array}{cc}a&b\\c&d\end{array}\right),
\]
one of $a,b,c,d$ is not in $\mathfrak{m}^m$. If $\val(\tr X)<m+N_0$ we are done, so we assume otherwise. Write
\[
X'=gX=\left(\begin{array}{cc}a'&b'\\c'&d'\end{array}\right).
\]
We know that $\val(a'+d')=\val(\tr X')=\val(\tr X)\ge m+N_0>m+l$. Then, if both $b',c'\in\mathfrak{m}^{m+l}$ we can write $X'=a'I+Y'$ with $Y'\in\varpi^{m+l}\mathrm{M}_{2\times 2}(A)\subseteq\varpi^m\Ad'$. Then $X=a'I+g^{-1}Y'$ and $g^{-1}Y'\in\varpi^m\Ad$. Since $X\notin\varpi^m\Ad$ we must have $a'\notin\mathfrak{m}^m$ and we can take $z=a'$ and $Y=g^{-1}Y'$. 

We are left with the case that one of $b',c'$ does not belong to $\mathfrak{m}^{m+l}$. Assume $b\notin\mathfrak{m}^{m+l}$. Since $B'$ is $\Gamma'$ stable, $B'$ contains
\begin{align}
\left(\begin{array}{cc}1+\varpi_0^k&\\&(1+\varpi_0^k)^{-1}\end{array}\right)
\left(\begin{array}{cc}a'&b'\\c'&d'\end{array}\right)
 \left(\begin{array}{cc}(1+\varpi_0^k)^{-1}&\\&1+\varpi_0^k\end{array}\right)-
\left(\begin{array}{cc}a'&b'\\c'&d'\end{array}\right)\notag\\
=\left(\begin{array}{cc}&\varpi_0^{2k}b'\\((1+\varpi_0^{2k})^{-1}-1)c'&\end{array}\right).
\label{B'1}
\end{align}
So, $B'$ also contains
\begin{align*}
\left(\begin{array}{cc}1&\\\varpi_0^k&1\end{array}\right)
\left(\begin{array}{cc}&\varpi_0^{2k}b'\\((1+\varpi_0^{2k})^{-1}-1)c'&\end{array}\right)
\left(\begin{array}{cc}1&\\\varpi_0^k&1\end{array}\right)-
\left(\begin{array}{cc}&\varpi_0^{2k}b'\\((1+\varpi_0^{2k})^{-1}-1)c'&\end{array}\right)\\
=\left(\begin{array}{cc}\varpi_0^{3k}b'&\\\varpi_0^{4k}b'&\varpi_0^{3k}b'\end{array}\right),
\end{align*}
as well as
\begin{align}
\left(\begin{array}{cc}(1+\varpi_0^k)^{-1}&\\&1+\varpi_0^k\end{array}\right)
\left(\begin{array}{cc}\varpi_0^{3k}b'&\\\varpi_0^{4k}b'&\varpi_0^{3k}b'\end{array}\right)
\left(\begin{array}{cc}1+\varpi_0^k&\\&(1+\varpi_0^k)^{-1}\end{array}\right)\notag\\
=\left(\begin{array}{cc}\varpi_0^{3k}b'&\\(\varpi_0^{4k}+\varpi_0^{6k})b'&\varpi_0^{3k}b'\end{array}\right).\label{B'2}
\end{align}
Then, using \eqref{B'1} again, $B'$ contains
\begin{align}
\left(\begin{array}{cc}1&\\\varpi_0^k+\varpi_0^{2k}&1\end{array}\right)
\left(\begin{array}{cc}&\varpi_0^{2k}b'\\((1+\varpi_0^{2k})^{-1}-1)c'&\end{array}\right)
\left(\begin{array}{cc}1&\\\varpi_0^k+\varpi_0^{2k}&1\end{array}\right)-
\left(\begin{array}{cc}&\varpi_0^{2k}b'\\((1+\varpi_0^{2k})^{-1}-1)c'&\end{array}\right)\notag\\
=\left(\begin{array}{cc}(\varpi_0^{3k}+\varpi_0^{4k})b'&\\(\varpi_0^{4k}+\varpi_0^{6k})b'&(\varpi_0^{3k}+\varpi_0^{4k})b'\end{array}\right).\label{B'3}
\end{align}
Subtracting \eqref{B'2} from \eqref{B'3} we have that $B'$ contains
\[
\left(\begin{array}{cc}\varpi_0^{4k}b'&\\&\varpi_0^{4k}b'\end{array}\right).
\]
Taking $z=\varpi_0^{4k}b'$, $g^{-1}(zI)=zI\in B$ and $\val(z)=\val(b')+4ke<m+l+4ke=m+N_0$. The case of $b'\in\mathfrak{m}^{m+l}$ but $c'\notin\mathfrak{m}^{m+l}$ is similar.
	\end{proof}


\begin{lem}\label{Centre}
Let $g\in\GL_2(V)$ have distinct $A$ rational eigenvalues $\alpha,\beta$ and set $w=\val(\alpha-\beta)$. For $z\in A$ non-zero, $Y\in\varpi^{\val(z)+w+1}\Ad$, and $m\ge 0$, if
\[
zI+Y\in (g-1)\Ad+\varpi^m\Ad
\]
then $m\le \val(z)+w$.
\end{lem}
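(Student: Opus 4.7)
The plan is to detect the class of $zI+Y$ modulo $(g-1)\Ad + \varpi^m\Ad$ by pairing against a suitable trace functional that kills $(g-1)\Ad$. For any $Y_0$ in the centralizer $\Ad^g$, the functional $X \mapsto \tr(Y_0 X)$ vanishes on $(g-1)\Ad$, since conjugation preserves trace and $gY_0 = Y_0 g$ give $\tr(Y_0(gXg^{-1} - X)) = \tr((g^{-1}Y_0 g)X) - \tr(Y_0 X) = 0$. In residue characteristic $2$ the naive choice $Y_0 = I$ is useless because $\tr(I) = 2 = 0$, but $g$ itself lies in $\Ad^g$ with $\tr(g) = \alpha + \beta$, and this will turn out to carry precisely the right information.

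Applying the functional $X \mapsto \tr(gX)$ to the hypothesised equality $zI + Y = (g-1)X_1 + \varpi^m X_2$ with $X_1, X_2 \in \Ad$ therefore yields
\[ z(\alpha + \beta) + \tr(gY) \;=\; \varpi^m \tr(gX_2). \]
The right-hand side plainly lies in $\varpi^m A$. For the left-hand side, observe that $K$ has characteristic $2$, so $\alpha + \beta = \alpha - \beta$ and hence $\val(z(\alpha+\beta)) = \val(z) + w$; meanwhile, because $g \in \Ad$ and $Y \in \varpi^{\val(z)+w+1}\Ad$, we have $\tr(gY) \in \varpi^{\val(z)+w+1} A$. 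The ultrametric inequality then pins the valuation of the left-hand side at exactly $\val(z) + w$, and comparing with the right-hand side gives $m \le \val(z) + w$.

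There is no serious obstacle: the argument is very short, and the key input is simply the characteristic-$2$ identity $\alpha + \beta = \alpha - \beta$, which lets $g$ itself (rather than $I$) serve as an element of $\Ad^g$ whose trace has the correct valuation $w$ for extracting the scalar $z$ from the constant matrix $zI$.
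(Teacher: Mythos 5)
Your proof is correct, and it takes a genuinely different route from the paper's. The paper chooses a basis putting $g$ in upper-triangular form $\left(\begin{smallmatrix}\alpha & x \\ & \beta\end{smallmatrix}\right)$, computes $(g-1)X = gXg^{-1}-X$ entrywise, and extracts valuation bounds from the lower-left and upper-left entries separately. You instead pair against the trace functional $X\mapsto\tr(gX)$, which is coordinate-free and kills $(g-1)\Ad$ outright by the cyclic property of trace; the only computation left is $\val(\tr(g(zI+Y)))=\val(z)+w$, forced by the ultrametric inequality once you note that in characteristic $2$ one has $\tr(g)=\alpha+\beta=\alpha-\beta$, so $\val(z\tr(g))=\val(z)+w$ strictly dominates $\val(\tr(gY))\ge\val(z)+w+1$. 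Both arguments hinge on the same characteristic-$2$ phenomenon, namely that $\tr(g)$ has valuation $w$ (which is what makes $g$, rather than the useless $I$ with $\tr(I)=0$, the right element of $\Ad^g$ to pair against), but your version avoids the choice of eigenbasis and the entrywise bookkeeping; the paper's version is closer in spirit to the explicit computations of \ref{SubofAd} and \ref{AuxPrimes} that surround it. One small thing worth saying explicitly (you implicitly use it): the hypothesis $z\neq 0$ guarantees the left side $z(\alpha+\beta)+\tr(gY)$ is nonzero, so from $z(\alpha+\beta)+\tr(gY)=\varpi^m\tr(gX_2)$ you may conclude $m\le\val(z)+w$ without worrying about the degenerate case $\tr(gX_2)=0$.
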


\begin{proof}
Let $V_\alpha$ denote the $\alpha$ eigenspace for $g$, $e_\alpha$ a generator of $V_\alpha$ and $e_\beta$ be any element of $V$ mapping to a generator of $V/V_\alpha$. Write $ge_\beta=\beta e_\beta+x e_\alpha$ with $x\in A$. Note a splitting of 
\[
0\longrightarrow V_\alpha\rightarrow V\longrightarrow V/V_\alpha\longrightarrow 0,
\]
exists if and only if $\val(x)\ge\val(\alpha+\beta)$ and so we cannot, in general, assume $x=0$. Identify $\Ad$ with $\mathrm{M}_{2\times 2}(A)$ using the basis $\{e_\alpha,e_\beta\}$ of $V$. 

Assume there is $X\in\Ad$ such that $zI+Y\in (g-1)X+\varpi^m\Ad$. Set $j=\min\{m,\val(z)+w+1\}$. Then $zI-(g-1)X\in\varpi^j\Ad$. Writing 
\[
X=\left(\begin{array}{cc}a&b\\c&d\end{array}\right)
\]
we have 
\begin{align*}
(g-1)X &=\left(\begin{array}{cc}\alpha&x\\&\beta\end{array}\right)
\left(\begin{array}{cc}a&b\\c&d\end{array}\right)
\left(\begin{array}{cc}\alpha^{-1}&\alpha^{-1}\beta^{-1}x\\&\beta^{-1}\end{array}\right)
-\left(\begin{array}{cc}a&b\\c&d\end{array}\right)\\
&=\left(\begin{array}{cc} \alpha^{-1}xc&\ast\\(\alpha^{-1}\beta-1)c&\alpha^{-1}xc\end{array}\right).
\end{align*}
Then $zI-(g-1)X\in\varpi^j\Ad$ implies $(\alpha^{-1}\beta-1)c\in\mathfrak{m}^j$, so $\val(c)\ge j-w$. Then the upper left entry implies $z-\alpha^{-1}xc\in\mathfrak{m}^j$, so $\val(z)\ge j-w$. But, $j\le\val(z)+w$ implies, by definition of $j$, that $j=m$ and $m\le \val(z)+w$. \end{proof}

We record another lemma that will be useful below.

\begin{lem}\label{HensLemLem}

Let $\sigma_0 \in G_F$ be such that be such that $\rho(\sigma_0)$ has distinct $A$-rational eigenvalues $\alpha_0$ and $\beta_0$, and $\det \rho(\sigma_0) = 1$. Set $w = \val(\tr \rho(\sigma))$. If $\sigma \in G_F$ satisfies $\det \rho(\sigma) = 1$ and $\tr\rho(\sigma) - \tr\rho(\sigma_0) \in \mathfrak{m}^{2w+1}$, then $\rho(\sigma)$ has distinct $A$-rational eigenvalues and, denoting them by $\alpha$ and $\beta$, $\val(\alpha-\beta) = \val(\alpha_0-\beta_0)$.

\end{lem}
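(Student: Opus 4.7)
The plan is to read off everything directly from the characteristic polynomial of $\rho(\sigma)$ and apply Hensel's lemma to produce $A$-rational roots close to $\alpha_0$. Throughout I rely on the fact that $K$ has characteristic $2$, so that sign changes are invisible and in particular $\alpha+\beta = \alpha-\beta$ for any elements.

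First I would set up the characteristic polynomials. Let $t_0 = \tr\rho(\sigma_0)$ and $t = \tr\rho(\sigma)$. Since $\det\rho(\sigma_0) = \det\rho(\sigma) = 1$, the characteristic polynomials are $f_0(X) = X^2 + t_0 X + 1$ and $f(X) = X^2 + tX + 1$ (using characteristic $2$). The hypotheses tell us $\alpha_0+\beta_0 = t_0$ with $\alpha_0, \beta_0 \in A$ distinct, so $\val(\alpha_0-\beta_0) = \val(\alpha_0+\beta_0) = \val(t_0) = w$. Moreover, from $t - t_0 \in \mathfrak{m}^{2w+1} \subseteq \mathfrak{m}^{w+1}$ together with $\val(t_0) = w$, I conclude $\val(t) = w$.

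Next I would apply Hensel's lemma to $f$ at the approximate root $\alpha_0$. Since $f_0(\alpha_0) = 0$, a direct computation gives
\[ f(\alpha_0) \;=\; f(\alpha_0) - f_0(\alpha_0) \;=\; (t-t_0)\alpha_0, \]
so $\val(f(\alpha_0)) \geq 2w + 1 + \val(\alpha_0) \geq 2w+1$. On the other hand, in characteristic $2$ the derivative is $f'(X) = 2X + t = t$, hence $\val(f'(\alpha_0)) = \val(t) = w$. The inequality $\val(f(\alpha_0)) > 2\val(f'(\alpha_0))$ is therefore satisfied, and Hensel's lemma produces an element $\alpha \in A$ with $f(\alpha) = 0$ and $\alpha \equiv \alpha_0 \bmod{\mathfrak{m}^{w+1}}$. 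Setting $\beta = t + \alpha$, the factorization $f(X) = (X-\alpha)(X-\beta)$ in $A[X]$ exhibits the two $A$-rational eigenvalues of $\rho(\sigma)$.

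Finally, for the valuation of the difference: again by characteristic $2$, $\alpha - \beta = \alpha + \beta = t$, so $\val(\alpha - \beta) = \val(t) = w = \val(\alpha_0 - \beta_0)$. In particular $\alpha \neq \beta$, proving distinctness.

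There is no real obstacle here: the only thing to notice is that working in characteristic $2$ is actually a convenience, because the derivative of $f$ is the constant $t$ and the difference of the two roots equals their sum, so the valuation of $\alpha-\beta$ is read off immediately from $\val(t)$. The only arithmetic input is the standard form of Hensel's lemma, which applies since the hypothesis $t - t_0 \in \mathfrak{m}^{2w+1}$ is precisely the strength needed to beat $2\val(f'(\alpha_0)) = 2w$.
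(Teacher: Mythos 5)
Your proof is correct and follows essentially the same route as the paper: both compute $f(\alpha_0) = \alpha_0(\tr\rho(\sigma_0) - \tr\rho(\sigma))$, note that in characteristic $2$ the derivative $f'$ is just the trace, verify the Hensel condition $\val(f(\alpha_0)) > 2\val(f'(\alpha_0)) = 2w$, and then read off $\val(\alpha - \beta) = \val(\tr\rho(\sigma))$ from the characteristic-$2$ identity $\alpha - \beta = \alpha + \beta$. Your version spells out a few steps the paper leaves implicit (the ultrametric argument that $\val(t) = \val(t_0)$, and $\alpha_0 \in A^\times$ via $\alpha_0\beta_0 = \det\rho(\sigma_0) = 1$), but the argument is the same.
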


\begin{proof}

Let $f(t)$ be the characteristic polynomial of $\rho(\sigma)$, and note that $\val(\tr\rho(\sigma)) = w$. Since $\tr\rho(\sigma)-\tr\rho(\sigma_0) \in\mathfrak{m}^{2w}$,  we have
	\[ f(\alpha_0)= \alpha_0^2 - \tr\rho(\sigma)\alpha_0 + 1 = \alpha_0(\tr\rho(\sigma_0)-\tr\rho(\sigma)) \in\mathfrak{m}^{2w+1} =  (\tr\rho(\sigma))^2\mathfrak{m}= f'(\alpha_0)^2\mathfrak{m}.
		\]
Then $f(t)$ splits over $A$ by Hensel's Lemma, cf. \cite{BourbakiComAlg}*{Chapter III, \S 4, n${}^\circ 5$, Corollary 1 to Theorem 2}. The fact that $\val(\alpha-\beta) = \val(\alpha_0-\beta_0)$ follows from the fact that, since the characteristic is two, $\alpha-\beta = \tr \rho(\sigma)$ and $\alpha_0-\beta_0 = \tr \rho(\sigma_0)$.
	\end{proof}

\subsubsection{}\label{fixedsubmods}

We introduce some notation that will be used for the remainder of this section. Let $W$ be a finitely generated $A$ module. We denote by $W_\mathrm{tor}$ the torsion submodule of $W$ and by $W_\mathrm{free}$ the free $A$-module $W/W_\mathrm{tor}$.

We now fix certain submodules of $H^1(G_{F,S},\Ad)$ that will be used in the proof of auxiliary primes below. Let $F_\rho$ denote subfield of $\Qbar$ fixed by $\ker\rho$ and $M=F_\rho\cap \tilde{F}_{n_1}$. Recall that $n_1$ was chosen so that $F_\rho\cap\tilde{F}_n=M$ for any $n\ge n_1$. Let 
\[
h=\rk_AH^1(G_{F,S},\Ad)-\rk_A H^1(\Gal(\tilde{F}_{n_1}/F),Z)=\rk_A H^1(G_{F,S},\Ad)-2,
\]
and
\[
h_0=\rk_A(\im(H^1(G_{F,S},\Ad)\rightarrow H^1(G_M,\Ad))\cap H^1(\Gal(F_\rho/M),\Ad)).
\]
Set $\Gamma=\rho(G_M)$. We know by \ref{ImCohom} that the $A$-rank of the cokernel of the map $H^1(\Gamma,Z)\rightarrow H^1(\Gamma,\Ad)$ is at most one. For the remainder of this subsection we will assume that it is one and that there is some $\gamma\in H^1(G_{F,S},\Ad)$ whose image in $H^1(G_M,\Ad)$ lands in $H^1(\Gal(F_\rho/M),\Ad)=H^1(\Gamma,\Ad)$ and maps to a non-torsion element in $\coker(H^1(\Gamma,Z)\rightarrow H^1(\Gamma,\Ad))$. The case when every element of $H^1(G_{F,S},\Ad)$ that lands in $H^1(\Gamma,\Ad)$ maps to a torsion element of $\coker(H^1(\Gamma,Z)\rightarrow H^1(\Gamma,\Ad))$, which includes the case when the cokernel is torsion, is easier as there is one fewer ``type" of cohomology class to consider below (in particular one does not need case (b) of \ref{OpenSet}) and it will be obvious to the reader how to adjust the arguments.


We fix $W_1\subset\cdots\subset W_h$ of $H^1(G_{F,S},\Ad)$ such that
\begin{enumerate}
\item $W_i$ is free of rank $i$ for each $i$;
\item $W_h$ intersects the image of $H^1(\Gal(\tilde{F}_{n_1}/F),Z)$ trivially;
\item the image $W_{h_0}$ in $H^1(G_M,\Ad)$ is contained in $H^1(\Gal(F_\rho/M),\Ad)=H^1(\Gamma,\Ad)$;
\item the image of $W_{h_0-1}$ in $H^1(G_M,\Ad)$ is contained in $H^1(\Gal(F_\rho/M),Z)=H^1(\Gamma,Z)$.
\end{enumerate}
We note
\begin{itemize}

\item[-] for all $n\ge n_1$, the map $W_h\rightarrow \im(H^1(G_{F,S},\Ad)\rightarrow H^1(G_{\tilde{F}_n,\tilde{S}_n},\Ad))_\mathrm{free}$ is injective with finite cokernel by (1), (2), and \ref{KerFn};

\item[-] for all $n\ge n_1$, the image of $W_{h_0}$ under $H^1(G_{F,S},\Ad)\rightarrow H^1(G_{F_\rho\tilde{F}_n},\Ad)$ zero by (3);

\item[-] for all $n\ge n_1$, the map $W_h/W_{h_0}\rightarrow \im(H^1(G_{F,S},\Ad)\rightarrow H^1(G_{F_\rho\tilde{F}_n},\Ad))$ is injective by (2), (3) and the definition of $h_0$.

\end{itemize}

There are three different types of cohomology classes we will need to consider based on whether an element $\gamma\in W_h$ does not belongs to $W_{h_0}$, belongs to $W_{h_0}$ but not to $W_{h_0-1}$, or belongs to $W_{h_0-1}$. The main tool for guaranteeing the existence of auxiliary primes is the following lemma.


\begin{lem}\label{OpenSet}

There are non-negative integers $w$ and $N$ such that if $\gamma\in W_h$ and $s\ge 0$ are such that one of the following hold
\begin{enumerate}
\item[(a)] $\gamma\in W_{h_0-1}$ but $\gamma\notin\varpi^s W_{h_0-1}$,
\item[(b)] $\gamma\in W_{h_0}$ but $\gamma\notin\varpi^s W_{h_0}+W_{h_0-1}$,
\item[(c)] $\gamma\notin\varpi^s W_h+W_{h_0}$,
\end{enumerate}
 then for any $n\ge n_1$ there is a nonempty open set $U\subset G_{\tilde{F}_n}$ such that
\begin{enumerate}
\item for every $\sigma\in U$, $\rho(\sigma)$ has distinct $A$-rational eigenvalues $\alpha,\beta$ with $\val(\alpha-\beta)\le w$, and
\item for any cocycle $\kappa$ representing $\gamma$, $\sigma\in U$, and $m\ge 1$, if
\[
\varpi^j\kappa(\sigma)\in (\sigma-1)\Ad+\varpi^m\Ad
\]
then $j> m-s-N$.
\end{enumerate}
\end{lem}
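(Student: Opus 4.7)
I would construct the open set $U$ as a translate $\sigma_0 H_0$ of a small open subgroup $H_0 \subseteq G_{\tilde F_n}$, choosing the base point $\sigma_0$ differently in each of the three cases. First I would fix once and for all a $\tau_0 \in G_{\tilde F_{n_1}}$ with $\rho(\tau_0)$ having distinct $A$-rational eigenvalues: in the residually dihedral case this uses A5 (replacing the element of $G_F \smallsetminus G_L$ by a power lying in $G_{\tilde F_{n_1}}$), while in the non-dihedral case it follows from \ref{OpenSub} and the abundance of diagonalisable elements in an open subgroup of $\SL_2(K_0)$. Set $w = \val(\tr\rho(\tau_0))$. By continuity of $\rho$ and $\kappa$ and by \ref{HensLemLem}, $H_0$ can be chosen small enough that for every $\sigma \in \sigma_0 H_0$ (with $\sigma_0 \in \tau_0 G_{F_\rho\tilde F_n}$) both $\rho(\sigma)$ has distinct $A$-rational eigenvalues of valuation difference $\le w$, and the submodule $(\sigma-1)\Ad + \varpi^m\Ad$ together with the class of $\kappa(\sigma)$ in $Q := \Ad/((\sigma-1)\Ad + \varpi^m\Ad)$ are independent of $\sigma$. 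Thus property (2) reduces to a single calculation at the base point $\sigma_0$.

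The residual freedom is the choice of $\sigma_0 \in \tau_0 G_{F_\rho\tilde F_n}$. Writing $\sigma_0 = \tau_0 \sigma_1$, the cocycle relation and the trivial action of $\sigma_1$ yield $\kappa(\sigma_0) = \kappa(\tau_0) + \tau_0\varphi(\sigma_1)$, where $\varphi = \kappa|_{G_{F_\rho\tilde F_n}}$ is a homomorphism whose image $B \subseteq \Ad$ is $\Gamma$-stable (stability follows from the cocycle relation for conjugation by $G_{\tilde F_n}$). Thus $\kappa(\sigma_0)$ ranges over the coset $\kappa(\tau_0) + \tau_0 B$ as $\sigma_1$ varies. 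In Case (c), the hypothesis $\gamma \notin \varpi^s W_h + W_{h_0}$, combined with the fact that the natural map $W_h/W_{h_0} \to H^1(G_{F_\rho\tilde F_n},\Ad) = \Hom(G_{F_\rho\tilde F_n},\Ad)$ is injective with cokernel of $\varpi$-exponent bounded independently of $n \ge n_1$ (since $\rho(G_{\tilde F_n}) = \Gamma$ and $F_\rho \cap \tilde F_n = M$ are independent of $n$), forces $B \not\subseteq \varpi^{s-c_1}\Ad$ for a fixed constant $c_1$. Then \ref{SubofAd} produces $X \in B$ that is either trace-large or centre-large, and using that $\tr((\tau_0-1)\Ad) = 0$ in the trace-large case and \ref{Centre} in the centre-large case, I would conclude that choosing $\sigma_1$ with $\varphi(\sigma_1) = X$ gives $\kappa(\sigma_0)$ of $\varpi$-order exceeding $m-s-N$ in $Q$, for an $N$ absorbing $c_1$, $N_0$ from \ref{SubofAd}, and $w$.

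Cases (a) and (b) require a refined choice of $\tau_0$. In Case (b), the hypothesis $\gamma \notin \varpi^s W_{h_0} + W_{h_0-1}$ combined with the bounded cokernel of $W_{h_0}/W_{h_0-1} \hookrightarrow \coker(H^1(\Gamma,Z) \to H^1(\Gamma,\Ad))$ forces the image of $\gamma$ in this cokernel to be non-torsion of bounded $\varpi$-divisibility in $s$, so that after replacing the cocycle by one representing $\varpi^{N_0}\gamma|_\Gamma$ provided by \ref{ImCohom}, I would choose $\tau_0$ to be a lift to $G_{\tilde F_n}$ of one of the infinitely many $g \in \Gamma$ with $\kappa(g) \in Z \smallsetminus \{0\}$ and distinct $A$-rational eigenvalues guaranteed by that lemma. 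The centrality of $\kappa(\tau_0)$ combined with \ref{Centre} then bounds its $\varpi$-order in $Q$ in terms of $s$. Case (a) is similar but simpler: after a coboundary modification $\gamma$ restricts to a homomorphism $G_M \to Z$, and the hypothesis $\gamma \notin \varpi^s W_{h_0-1}$ gives an element $\tau_0$ on which this homomorphism takes a value in $Z$ of $\varpi$-valuation at most $s$, so again \ref{Centre} applies.

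The main obstacle will be Case (b): the cocycle replacement from \ref{ImCohom} is for cocycles on $\Gamma$, not on $G_{F,S}$, so one must carefully reconstruct a global cocycle that restricts to the modified one on (a lift of) $\Gamma$, tracking the discrepancy into the constant $N$. A secondary subtlety is the uniformity of the constants $c_1, w, N_0, N$ in $n \ge n_1$; this holds because for such $n$ the relevant data ($\Gamma, \tau_0$, and the cokernels of inclusion into $\Hom$ and cohomology) depends only on $n_1$ and the ambient setup, not on $n$.
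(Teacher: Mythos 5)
Your high-level architecture matches the paper's: the paper also splits into three sublemmas (\ref{Typea}, \ref{Typeb}, \ref{Typec}), fixes a base point $\sigma_0 \in G_{\tilde F_n}$ with the desired eigenvalue properties, defines $U$ by congruence conditions near $\sigma_0$, and then drives the estimate in (2) through \ref{Centre} (or a trace argument) after reducing any cocycle to a preferred representative. The use of \ref{ImCohom} in case (b), \ref{SubofAd} in case (c), \ref{HensLemLem} for part (1), and the element $\tau_0$ supplied by assumption A5 all reappear in the paper's argument, so the ingredients are right.

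There are, however, two genuine gaps. First, the "vary $\sigma_1\in G_{F_\rho\tilde F_n}$" mechanism that drives your case (c) is structurally unavailable in cases (a) and (b): when $\gamma\in W_{h_0}$ the restriction of $\gamma$ to $G_M$ lies in $H^1(\Gal(F_\rho/M),\Ad)$ by construction of $W_{h_0}$, and since $\Ad$ has trivial $G_{F_\rho}$-action any cocycle representing $\gamma$ vanishes identically on $G_{F_\rho}$, hence on $G_{F_\rho\tilde F_n}$. So $\kappa(\tau_0\sigma_1)=\kappa(\tau_0)$ for every $\sigma_1$, and you cannot trade between the eigenvalue condition on the base point and the size of $\kappa$ at it. You must directly produce a single $\sigma_0\in G_{\tilde F_n}$ that simultaneously has distinct $A$-rational eigenvalues of controlled valuation and a large value of $\kappa$. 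In case (a) with $\rhobar$ $L$-dihedral this forces a sub-case analysis: if some $\tau\in G_{L\tilde F_n}$ has $\kappa_0(\tau)$ large one raises it to an odd power to land in $\ker\rhobar$ and adds it to any element with distinct residual eigenvalues; if every $\tau\in G_{L\tilde F_n}$ has $\kappa_0(\tau)$ small then $L\not\subseteq\tilde F_n$, every element mapping nontrivially to $\Gal(L\tilde F_n/\tilde F_n)$ has $\kappa_0$ large, and this is exactly where the element $\tau_0$ from A5 (with infinite-order distinct $A$-rational eigenvalues, hence valuation bounded by $w$) is invoked. Your description of case (a) as "simpler" omits this and leaves the existence of the required $\sigma_0$ unjustified.

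Second, in case (b) you have correctly identified the cocycle-transfer issue but left it open; the paper resolves it, and the resolution also involves a step you did not anticipate. Writing $\gamma=\varpi^{s'}\gamma'+\gamma''$ with $\gamma''\in W_{h_0-1}$ and $s'<s$, the class $\varpi^{l}\gamma$ restricted to $\Gamma$ differs from $\varpi^{s'+l+N_1}\delta$ by an element $\delta'\in H^1(\Gamma,Z)=\Hom(\Gamma^{\mathrm{ab}},Z)$. Killing the $\delta'$ term is what forces the paper to require $\rho(\sigma_0)\in[\Gamma,\Gamma]$ (using openness of $[\Gamma,\Gamma]$ via \ref{PinkResult}); otherwise the homomorphism $\delta'$ contributes an uncontrolled term to $\kappa(\sigma_0)$. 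Your choice of $\tau_0$ as a lift of a $g\in\Gamma$ from \ref{ImCohom} does not impose $g\in[\Gamma,\Gamma]$ and therefore does not account for $\delta'$. Once $\sigma_0$ is in $[\Gamma,\Gamma]$ the comparison of cocycles is a bookkeeping exercise with coboundaries absorbed into $(\sigma_0-1)\Ad$, which resolves the obstacle you flagged. Finally, a minor point: the open set $U$ must be independent of $m$, so the claim that $\Ad/((\sigma-1)\Ad+\varpi^m\Ad)$ and the class of $\kappa(\sigma)$ therein are "independent of $\sigma$" for $\sigma\in\sigma_0 H_0$ cannot be arranged uniformly in $m$ with a single $H_0$; the paper avoids this by defining $U$ by an $m$-independent congruence on $\kappa_0(\sigma)$ and then verifying (2) for each $m$ separately using \ref{Centre}.
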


\begin{proof}

We prove the three different cases (a), (b) and (c) separately in the next three sublemmas.


\begin{lem}\label{Typea}
There are non-negative integer $w_a$ and $N_a$ such that if $\gamma\in W_h$ and $s\ge 0$ are such that
\begin{enumerate}
\item[(c)] $\gamma\in W_{h_0-1}$ but $\gamma\notin\varpi^s W_{h_0-1}$,
\end{enumerate}
 then for any $n\ge n_1$ there is a nonempty open set $U\subset G_{\tilde{F}_n}$ such that
\begin{enumerate}
\item for every $\sigma\in U$, $\rho(\sigma)$ has distinct $A$-rational eigenvalues $\alpha,\beta$ with $\val(\alpha-\beta)\le w_a$, and
\item for any cocycle $\kappa$ representing $\gamma$, $\sigma\in U$, and $m\ge 1$, if
\[
\varpi^j\kappa(\sigma)\in (\sigma-1)\Ad+\varpi^m\Ad
\]
then $j> m-s-N_a$. 
\end{enumerate}
\end{lem}

\begin{proof}

If $\rhobar$ is $L$-dihedral, fix $\tau_0$ as in assumption A5 made at the very beginning of the section, i.e. $\tau_0 \in G_F \smallsetminus G_L$ and $\rho(\tau_0)$ has distinct infinite order $A$-rational eigenvalues, and set $w_a=\val(\tr\rho(\sigma))$. If $\rhobar$ is not $L$-dihedral, set $w_a = 0$.

The image of $W_{h_0-1}$ in $H^1(G_M,\Ad)$ is contained in $H^1(\Gamma,Z)$, and we identify it with it's image. Since $H^1(\Gamma,Z)$ is a separated $A$-module (in fact it is finitely generated), there is $N_1\ge 1$ such that for any $\delta\in W_{h_0-1}$ with $\delta\notin\varpi W_{h_0-1}$, $\delta\notin\varpi^{N_1}H^1(\Gamma,Z)$. et $N_a=N_1+w_a$.

Since $W_{h_0-1}\subseteq H^1(\Gamma,Z)$, there is a cocyle $\kappa_0$ representing $\gamma$ such that $\kappa_0$ is given by a continuous homomorphism $G_M\rightarrow\Gamma\rightarrow Z$. Let $n\ge n_1$. We will show below that there is some $\sigma_0\in G_{\tilde{F}_{n}}$ such that
\begin{enumerate}\item[(i)] $\rho(\sigma_0)$ has distinct $A$-rational eigenvalues and $\val(\tr\rho(\sigma))\le w_a$;
\item[(ii)] $\det\rho(\sigma_0)=1$;
\item[(iii)] $\kappa_0(\sigma_0)=zI\in Z$ with $z\notin\mathfrak{m}^{s+N_1}$.
\end{enumerate}
Granting the existence of such $\sigma_0$, we can define our open set $U$. Let $U$ be the non-empty open subset of $G_{\tilde{F}_n}$ consisting of elements $\sigma$ such that
\begin{itemize}
\item[-] $\tr\rho(\sigma)-\tr\rho(\sigma_0)\in\mathfrak{m}^{2w_a+1}$;
\item [-]$\det\rho(\sigma)=1$;
\item [-]$\kappa_0(\sigma)-\kappa_0(\sigma_0)\in\varpi^{s+N_a}\Ad$.
\end{itemize}

We first show that any element of the set $U$ satisfies the conclusion of the lemma. Part (1) of the lemma follows from \ref{HensLemLem}. To see that part (2) is satisfied, let $\kappa$ be any cocycle representing $\gamma$. Then 
\[
\varpi^j\kappa(\sigma)\in (\sigma-1)\Ad+\varpi^m\Ad
\]
implies
\[
\varpi^j\kappa_0(\sigma)\in (\sigma-1)\Ad+\varpi^m\Ad
\]
as $\kappa(\sigma)-\kappa_0(\sigma)\in (\sigma-1)\Ad$. Then, since $\kappa_0(\sigma)=zI+Y$ with $\val(z)<s+N_1$ and $Y\in\varpi^{\val(z)+w_a+1}\Ad$, \ref{Centre} gives $j> m-s-N_a$.

We now show the existence of $\sigma_0\in G_{\tilde{F}_n}$ satisfying (i), (ii) and (iii) above. By choice of $N_1$, $\gamma\notin\varpi^{s+N_1}H^1(\Gamma,Z)$ and so there is some $\tau\in G_{\tilde{F}_n}$ such that $\kappa_0(\tau)\notin\varpi^{s+N_1}Z$. 

First assume that $\rhobar$ is non-dihedral. Then by Dickson's classification of subgroups of $\PGL_2(\F)$, we must have $\rhobar(G_F) \supseteq \SL_2(\F')$ with $|\F'|\ge 4$. Since this group is simple, $\rhobar(G_{\tilde{F}_n}) \supseteq \SL_2(\F')$. Since $\Hom(\rhobar(G_{\tilde{F}_n}),Z) = 0$, we can find $\tau \in \ker(\rhobar|_{G_{\tilde{F}_n}})$ such that $\kappa_0(\tau)\notin\varpi^{s+N_1}Z$. Fix some $\sigma\in G_{\tilde{F}_n}$ such that $\rhobar(\sigma)$ has distinct eigenvalues and determinant $1$. If $\kappa_0(\sigma)\notin\varpi^{s+N_1}Z$ then we set $\sigma_0=\sigma$. Otherwise, we set $\sigma_0=\sigma\tau$. Note that $\rhobar(\sigma_0)=\rhobar(\sigma)$, so $\rho(\sigma_0)$ satisfies (i) and (ii), and $\kappa_0(\sigma_0)=\kappa_0(\sigma)+\kappa_0(\tau)\notin\varpi^{s+N_1}Z$, so $\sigma_0$ satisfies (iii).

Now we assume $\rhobar$ is dihedral and let $L$ denote the unique quadratic extension of $F$ for which $\rhobar$ is $L$-dihedral. Note that $\rhobar(G_L)$ has odd order. We first assume that there is some $\tau\in G_{L\tilde{F}_n}$ such that $\kappa_0(\tau)\notin\varpi^{s+N_1}Z$. Then, by replacing $\tau$ by $\tau^j$ for $j$ odd and sufficiently large, we can assume that $\tau\in\ker(\rhobar)$. Let $\sigma$ be any element of $G_{\tilde{F}_n}$ such that $\rhobar(\sigma)$ has distinct eigenvalues and determinant one. If $\kappa_0(\sigma)\notin\varpi^{s+N_1}Z$ then we set $\sigma_0=\sigma$. Otherwise, we set $\sigma_0=\sigma\tau$. Note that $\rhobar(\sigma_0)=\rhobar(\sigma)$, so $\rho(\sigma_0)$ satisfies (i) and (ii) and $\kappa_0(\sigma_0)=\kappa_0(\sigma)+\kappa_0(\tau)\notin\varpi^{s+N_1}Z$, so $\sigma_0$ satisfies (iii).

Now assume that all $\sigma\in G_{L\tilde{F}_n}$ have $\kappa_0(\sigma)\in\varpi^{s+N_1}Z$. In particular, this implies $L\not\subseteq\tilde{F}_n$. Let $\tau$ be such that $\kappa_0(\tau)\notin\varpi^{s+N_1}$. Then $\tau$ maps to the non-trivial element of $\Gal(L\tilde{F}_n/\tilde{F}_n)$. Let $\tau'\in G_{\tilde{F}_n}$ be any other element mapping to the non-trivial element of $\Gal(L\tilde{F}_n/\tilde{F}_n)$. By assumption $\kappa(\tau'\tau^{-1})\in\varpi^{s+N_1}Z$, so $\kappa(\tau')\notin\varpi^{s+N_a}Z$. Hence we may assume $\tau=\tau_0$, where $\tau_0$ is the element fixed at the beginning of the proof. Then $\sigma_0=\tau_0$ satisfies (i),(ii) and (iii) above. \end{proof}


\begin{lem}\label{Typeb}
There are non-negative integers $w_b$ and $N_b$ such that if $\gamma\in W_h$ and $s\ge 0$ are such that
\begin{enumerate}
\item[(b)] $\gamma\in W_{h_0}$ but $\gamma\notin\varpi^s W_{h_0}+W_{h_0-1}$,
\end{enumerate}
 then for any $n\ge n_1$ there is a nonempty open set $U\subset G_{\tilde{F}_n}$ such that
\begin{enumerate}
\item for every $\sigma\in U$, $\rho(\sigma)$ has distinct $A$-rational eigenvalues $\alpha,\beta$ with $\val(\alpha-\beta)\le w _b$, and
\item for any cocycle $\kappa$ representing $\gamma$, $\sigma\in U$, and $m\ge 1$, if
\[
\varpi^j\kappa(\sigma)\in (\sigma-1)\Ad+\varpi^m\Ad
\]
then $j> m-s-N_b$.
\end{enumerate}
\end{lem}

\begin{proof}

Fix $\delta\in H^1(\Gamma,\Ad)$ such that $\delta$ is a generator for $\coker(H^1(\Gamma,Z)\rightarrow H^1(\Gamma,\Ad))_\mathrm{free}$. By \ref{ImCohom}, there is $N_0 \ge 1$ and a cocycle $\kappa_0$ representing $\varpi^{N_0}\delta$ such that there are infinitely many $g\in\Gamma$ that have distinct $A$-rational eigenvalues with $\kappa_0(g)\in Z\smallsetminus\{0\}$. As $[\Gamma,\Gamma]$ is open in $\Gamma$, there is some $\sigma_0\in G_{\tilde{F}_n}$ such that 
\begin{enumerate}
\item[(i)] $\rho(\sigma_0)\in[\Gamma,\Gamma]$,
\item[(ii)] $\rho(\sigma_0)$ has distinct $A$-rational eigenvalues,
\item[(iii)] $\kappa_0(\sigma_0)\in Z$ but $\kappa(\sigma_0)\ne 0$.
\end{enumerate}
We now set $w_b=\val(\tr\rho(\sigma_0))$. Fix an element $\gamma'$ of $W_{h_0}$ that maps to a generator of $W_{h_0}/W_{h_0-1}$ and let $N_1\ge 0$ be such that $\gamma'-\varpi^{N_1}\delta$ maps to a torsion element in $\coker(H^1(\Gamma,Z)\rightarrow H^1(\Gamma,\Ad))$. Writing $\kappa_0(\sigma_0)=zI$ we set $N_b=N_1+\val(z)+w_b+1$.

Take $n\ge n_1$ and let $U$ be the non-empty open subset of $G_{\tilde{F}_n}$ consisting of elements $\sigma$ such that
\begin{itemize}
\item[-] $\tr\rho(\sigma)-\tr\rho(\sigma_0)\in\mathfrak{m}^{2w_b+1}$;
\item[-] $\rho(\sigma) \in [\Gamma,\Gamma]$;
\item[-] $\kappa_0(\sigma)-\kappa_0(\sigma_0)\in\varpi^{N_b}\Ad$.
\end{itemize}
We now check that every $\sigma\in U$ satisfies (1) and (2) of the Lemma.

Part (1) follows from \ref{HensLemLem} (note $\rho(\sigma)\in [\Gamma,\Gamma]$ implies $\det\rho(\sigma)=1$). Now let $\kappa$ be a cocycle representing $\gamma$ and let $\sigma\in U$. Let $m,j\ge 1$ and be such that 
\begin{equation}\label{TypebAssump1}
\varpi^j\kappa(\sigma)\in (\sigma-1)\Ad+\varpi^m\Ad.
\end{equation}
We have $\gamma=\varpi^{s'}\gamma'+\gamma''$, where $\gamma'$ is as above, $\gamma''\in W_{h_0-1}$ and $s'<s$. Let $l\ge 0$ be such that $\mathfrak{m}^l$ is the annihilator of $\coker(H^1(\Gamma,Z)\rightarrow H^1(\Gamma,\Ad))_\mathrm{tor}$. Then, by choice of $l$ and $N_1$ above,
\[
\varpi^l\gamma=\varpi^{s'+l}\gamma'+\varpi^l\gamma''=\varpi^{s'+l+N_1}\delta+\delta'
\]
with $\delta'\in H^1(\Gamma,Z)=\Hom(\Gamma^\mathrm{ab},Z)$. Equation \eqref{TypebAssump1} implies
\begin{equation}\label{TypebNewEq}
\varpi^{j+l+N_0}\kappa(\sigma)\in (\sigma-1)\Ad+\varpi^{m+l+N_0}\Ad.
\end{equation}
Now $\varpi^{j+l+N_0}\kappa$ represents the cohomology class $\varpi^{j+l+N_0}\gamma=\varpi^{j+s'+l+N_1+N_0}\delta+\varpi^{j+N_0}\delta'$. Since $\kappa_0$ is a cocyle representing $\delta$ and $\delta'(\sigma)=0$, as $\sigma\in[\Gamma,\Gamma]$, there is $X\in\Ad$ such that
\[
\varpi^{j+l+N_0}\kappa(\sigma)=\varpi^{j+s'+l+N_1+N_0}\kappa_0(\sigma)+(\sigma-1)X.
\]
This together with \eqref{TypebNewEq} yields
\begin{equation}\label{TypebEq}
\varpi^{j+s'+l+N_1+N_0}\kappa_0(\sigma)\in (\sigma-1)\Ad+\varpi^{m+l+N_0}\Ad.
\end{equation}
Writing $\kappa_0(\sigma)=zI+Y$ with $Y\in\varpi^{N_b}\Ad \subseteq \varpi^{\val(z)+1}\Ad$, \ref{Centre} and \eqref{TypebEq} imply
\[
m+l+N_0 \le j+s'+l+N_1+N_0+\val(z)+w_b
\]
hence
\[
j\ge m-s'-(N_1+\val(z)+w_b)>m-s-N_b,
\]
which is (2) of the lemma. \end{proof}


\begin{lem}\label{Typec}
There are non-negative integers $w_c$ and $N_c$ such that if $\gamma\in W_h$ and $s\ge 0$ are such that
\begin{enumerate}
\item[(c)] $\gamma\notin\varpi^s W_h+W_{h_0}$
\end{enumerate}
 then for any $n\ge n_1$ there is a nonempty open set $U\subset G_{\tilde{F}_n}$ such that
\begin{enumerate}
\item for every $\sigma\in U$, $\rho(\sigma)$ has distinct $A$-rational eigenvalues $\alpha,\beta$ with $\val(\alpha-\beta)\le w_c$, and
\item for any cocycle $\kappa$ representing $\gamma$, $\sigma\in U$, and $m\ge 1$, if
\[
\varpi^j\kappa(\sigma)\in (\sigma-1)\Ad+\varpi^m\Ad
\]
then $j> m-s-N_c$.
\end{enumerate}
\end{lem}

\begin{proof}

Since, for all $n\ge n_1$, $W_h/W_{h_0}$ injects into $H^1(G_{F_\rho\tilde{F}_n},\Ad)^{\Gal(F_\rho\tilde{F}_n/\tilde{F}_n)}$ and $\rho$ identifies $\Gal(F_\rho\tilde{F}_n/\tilde{F}_n)$ with $\Gamma$, we have, setting $\tilde{F}_\infty=\cup_{n\ge 1}\tilde{F}_n$, an injection
\[
W/W_{h_0}\longrightarrow H^1(G_{E\tilde{F}_\infty},\Ad)^\Gamma=
\Hom_\Gamma(G_{E\tilde{F}_\infty},\Ad),
\]
and we identify $W_h/W_{h_0}$ with it's image under this map. Since $\Hom_\Gamma(G_{E\tilde{F}_\infty},\Ad)$ is separated, there is $N_1\ge 0$ such that $\delta\notin\varpi W_h/W_{h_0}$ implies $\delta\notin\varpi^{N_1}H^1(G_{E\tilde{F}_\infty},\Ad)$. We set $N_c=N_0+N_1$ where $N_0$ is as in \ref{SubofAd}. We set $w_c=0$.

Since, for any two cocyles $\kappa$ and $\kappa'$ representing $\gamma$, $\kappa(\sigma)$ and $\kappa'(\sigma)$ differ by an element of $(\sigma-1)\Ad$, it suffices to show (2) holds for one particular choice of cocycle representing $\gamma$. We will show below that there is some $\sigma_0\in G_{\tilde{F}_\infty}$ and a cocycle $\kappa$ representing $\gamma$ such that
\begin{enumerate}
\item[(i)] $\rhobar(\sigma_0)$ has distinct eigenvalues and
\item[(ii)] either $\tr\kappa(\sigma_0)\notin\mathfrak{m}^{s+N_c}$ or $\kappa(\sigma_0)=zI+Y$ with $z\notin\mathfrak{m}^{s+N_c}$ and $Y\in\varpi^{\val(z)+1}\Ad$.
\end{enumerate}
Granting the existence of such a $\sigma_0$ and $\kappa$ we can define $U$ to be the non-empty open subset of $G_{\tilde{F}_n}$ consisting of $\sigma$ such that 
\begin{itemize}
\item[-] $\rhobar(\sigma)=\rhobar(\sigma_0)$ and 
\item[-] $\kappa(\sigma)-\kappa(\sigma_0)\in\varpi^{s+N_c}\Ad$.
\end{itemize}
Any $\rho(\sigma)$ with $\sigma\in U$ has distinct eigenvalues mod $\mathfrak{m}$, so $\sigma$ satisfies (1) by Hensel's Lemma. To see that the elements of $U$ satisfy (2) of the lemma first consider the case that $\tr\kappa(\sigma_0)\notin\mathfrak{m}^{s+N_c}$. Then, if $\sigma\in U$ we have $\tr\kappa(\sigma)\notin\mathfrak{m}^{s+N_c}$ and, since $\tr X=0$ for any $X\in(\sigma-1)\Ad$, $\varpi^j\kappa(\sigma)\in (\sigma-1)\Ad+\varpi^m\Ad$ implies $\varpi^j\tr\kappa(\sigma)\ge m$ so $j>m-s-N_c$. Now assume $\kappa(\sigma_0)=zI+Y$ with $z\notin\mathfrak{m}^{s+N_c}$ and $Y\in\varpi^{\val(z)+1}\Ad$. Then $\kappa(\sigma)=zI+Y'$ with $Y'\in\varpi^{\val(z)+1}\Ad$. The eigenvalues of $\rho(\sigma)$ are distinct mod $\mathfrak{m}$, so \ref{Centre} implies that if $\varpi^j\kappa(\sigma)\in (\sigma-1)\Ad+\varpi^m\Ad$, then $m\le j+\val(z)$, so $j> m-s-N_c$. 

It remains to show there exists some $\sigma_0\in G_{\tilde{F}_\infty}$ and some cocycle representing $\gamma$ satisfying (i) and (ii) above. First let $\kappa$ be any cocycle representing $\gamma$. Since $\tilde{F}_\infty/F$ is a pro-2 extension, there is $\sigma\in G_{\tilde{F}_\infty}$ such that $\rhobar(\sigma)$ has distinct eigenvalues. By Hensel's Lemma, $\rho(\sigma)$ has distinct $A$-rational eigenvalues and we denote them by $\alpha$ and $\beta$. Since $\alpha$ and $\beta$ are distinct mod $\mathfrak{m}$, we can find an eigenbasis of $V$ for $\rho(\sigma)$ and we identify $\Ad$ with $\mathrm{M}_{2\times 2}(A)$ using this basis. Write
\[
\kappa(\sigma)=\left(\!\begin{array}{cc}a&b\\c&d\end{array}\!\right).
\]
Since $\alpha/\beta-1$ and $\beta/\alpha-1$ are units, we can adjust $\kappa$ by the coboundary
\[
g\mapsto g\left(\begin{array}{cc}&(\frac{\alpha}{\beta}-1)^{-1}b\\ (\frac{\beta}{\alpha}-1)^{-1}c&
\end{array}\right)g^{-1} - 
\left(\begin{array}{cc}&(\frac{\alpha}{\beta}-1)^{-1}b\\ (\frac{\beta}{\alpha}-1)^{-1}c&
\end{array}\right),
\]
and assume $b=c=0$. If at least one of $a$, $d$ is not in $\mathfrak{m}^{s+N_c}$, then we have either $\tr\kappa(\sigma)=a+d\notin\mathfrak{m}^{s+N_c}$ or
\[
\kappa(\sigma)=\left(\begin{array}{cc}a&\\&a\end{array}\right)+\left(\begin{array}{cc}0&\\&a+d
\end{array}\right)
\]
with $a\notin\mathfrak{m}^{s+N_c}$ and $\val(a+d)>\val(a)$. In either case we take $\sigma_0=\sigma$. 

Now assume that both $a,d\in\mathfrak{m}^{s+N_c}$, i.e. $\kappa(\sigma)\in\varpi^{s+N_a}\Ad$. By the choice of $N_0$ and $s$ we know that the restriction of $\gamma$ to $H^1(G_{E\tilde{F}_\infty},\Ad)^{\Gal(E\tilde{F}_\infty/F)}=\Hom_\Gamma(G_{E\tilde{F}_\infty},\Ad)$ does not belong to $\varpi^{s+N_1}\Hom_\Gamma(G_{E\tilde{F}_\infty},\Ad)$. So $\kappa(G_{E\tilde{F}_\infty})$ is a $\Gamma$ stable subgroup of $\Ad$ which is not contained in $\varpi^{s+N_1}\Ad$. By \ref{SubofAd}, there is some $\tau\in G_{E\tilde{F}_\infty}$ such that either $\tr\kappa(\tau)\notin\mathfrak{m}^{s+N_0+N_1}=\mathfrak{m}^{s+N_c}$, or $\kappa(\tau)=zI+Y$ with $z\notin\mathfrak{m}^{s+N_0+N_1}=\mathfrak{m}^{s+N_c}$ and $Y\in\varpi^{\val(z)+1}\Ad$. Set $\sigma_0=\tau\sigma$. Since $\rho(\tau\sigma)=\rho(\sigma)$, $\sigma_0$ satisfies (i). Since $\kappa(\sigma_0)=\kappa(\tau)+\kappa(\sigma)$ and $\kappa(\sigma)\in\varpi^{s+N_c}\Ad$, $\sigma_0$ satisfies (ii) by choice of $\tau$. \end{proof}

Setting $w=\max\{w_a,w_b,w_c\}$ and $N=\max\{N_a,N_b,N_c\}$, \ref{OpenSet} follows from \ref{Typea}, \ref{Typeb}, and \ref{Typec}. 
	\end{proof}

\subsubsection{}\label{auxprimedisucssion}

We now apply \ref{OpenSet} to find our sets of auxiliary primes. Some care has to be taken in choosing these primes. We will need to consider Selmer groups with coefficients in $\Ad_m$, and we need to ensure that the dual Selmer groups have size $q^{2m}$, asymptotically in $m$. To this end, given a cohomology class $\gamma\in H^1(G_{F,S},\Ad)$, we not only need to find a prime $v$ of $F$ such that the image of $\gamma$ is non-torsion in $H^1(G_v,\Ad)$, but we need to ensure that it does not lie too ``deep" in $H^1(G_v,\Ad)$. The way we make sure the cohomology class does not lie too ``deep" in $H^1(G_v,\Ad)$ is by using property (2) of \ref{OpenSet}. There is a complication that arises here. In order to use property (2) effectively we need to make sure the value of $s$ in the assumption of the lemma stays bounded. 

Let us elaborate here. We remarked above that we require the dual Selmer groups with coefficients in $\Ad_m$ to have size asymptotic to $q^{2m}$. But, we also need to ensure that the error term in the asymptotic is bounded in a way that does not depend on the choice of primes. The way one usually constructs auxiliary primes, and the way we will do it here, is inductively. One first chooses a cohomology class $\gamma\in H^1(G_{F,S},\Ad)$ and then finds a prime $v_1$ that kills $\gamma$. Then we take $\gamma_2$ that lives in the dual Selmer group for the Selmer structure given by the single prime $\{v_1\}$ and find a prime $\{v_2\}$ that kills $\gamma_2$, etc. The problem is that if the value of $s$ for which $\gamma_2$ satisfies (a), (b), or (c) of \ref{OpenSet} depends on $v_1$, then the ``depth" for which $\gamma_2$ lies in the local cohomology group will depend on $v_1$. This will cause the error term in the asymptotic to depend on the choice of the auxiliary primes. We must be careful to avoid this in the proof of the following lemma.


\begin{lem}\label{ThePrimes}

There are non-negative integers $w$ and $N$ such that for each $n\ge n_1$ there is a set of primes $Q_n$ of $F$, of cardinality $h=\mathrm{rank}_A H^1(G_{F,S},\Ad)-2$, satisfying
\begin{enumerate}
\item for each $v\in Q_n$, $\rho$ is unramified at $v$ and $\rho(\Frob_v)$ has distinct $A$-rational eigenvalues $\alpha_v,\beta_v$ with  $\val(\alpha_v-\beta_v)\le w$;
\item each $v\in Q_n$ splits in the extension $\tilde{F}_n/F$;
\item if the image of $\gamma\in W_h$ under the map
\[
W_h\longrightarrow H^1(G_{F,S},\Ad)\longrightarrow\prod_{v\in Q_n}H^1(G_v,\Ad)\longrightarrow\prod_{v\in Q_n}H^1(G_v,\Ad)_\mathrm{free}
\]
lies in $\varpi^{(2^h-1)N}\prod_{v\in Q_n}H^1(G_v,\Ad)_\mathrm{free}$, then $\gamma\in \varpi W_h$.
\end{enumerate}

\end{lem}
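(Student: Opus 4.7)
The plan is to take $w$ and $N$ to be the non-negative integers produced by Lemma \ref{OpenSet} and to build $Q_n$ one prime at a time. Fix $n \ge n_1$. At stage $i$ we will have chosen primes $v_1, \ldots, v_i$ already satisfying (1) and (2), a free $A$-direct summand $W^{(i)} \subseteq W_h$ of rank $h - i$ parametrising the classes still ``invisible'' at the chosen primes, and a depth bound $D_i$ such that any $\gamma \in W_h$ with $\gamma \notin W^{(i)} + \varpi W_h$ has image at some $v_j$ not lying in $\varpi^{D_i} H^1(G_{v_j}, \Ad)_{\mathrm{free}}$. We start from $W^{(0)} = W_h$ and $D_0 = 0$.

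To produce $v_{i+1}$, choose any $\gamma \in W^{(i)} \setminus \varpi W^{(i)}$. The filtration $W_{h_0 - 1} \subseteq W_{h_0} \subseteq W_h$ places $\gamma$ into exactly one of the cases (a), (b), (c) of Lemma \ref{OpenSet} with $s$ at most $D_i + 1$. The lemma then produces a nonempty open subset $U \subseteq G_{\tilde{F}_n}$ on which (1) holds pointwise, and at which the image of $\gamma$ in the local cohomology of any $\sigma \in U$ has depth at most $D_i + N$. Chebotarev density applied to a finite Galois quotient of $G_{F, S \cup \{v_1, \ldots, v_i\}}$ large enough to encode the extension $\tilde{F}_n$, enough of $\rho$ to detect membership in $U$ via eigenvalue valuations, and a truncation of a cocycle representative of $\gamma$, then supplies infinitely many primes $v$ of $F$ with $\Frob_v \in U$. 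Any such prime is a valid choice of $v_{i+1}$: property (1) follows from Lemma \ref{OpenSet}(1), and property (2) holds automatically because $U \subseteq G_{\tilde{F}_n}$.

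The subtle point is controlling $D_i$. Imposing the new depth condition at $v_{i+1}$ cuts $W^{(i)}$ down by one $A$-rank to produce $W^{(i+1)}$, but a class in $W^{(i)}$ lying just outside $W^{(i+1)} + \varpi W_h$ may have $s$-invariant as large as the new bound, forcing the recursion $D_{i+1} = 2 D_i + N$. With $D_0 = 0$ this gives $D_i = (2^i - 1)N$; after $h$ steps $W^{(h)} = 0$ and $D_h = (2^h - 1)N$, which is precisely the exponent appearing in (3). The hardest part, as foreshadowed in the discussion preceding the lemma, is ensuring that this recursion is independent of the specific primes chosen; properties (1) and (2) of Lemma \ref{OpenSet} are stated with \emph{uniform} constants $w$ and $N$ precisely for this reason, which lets the argument close cleanly at $D_h = (2^h - 1)N$.
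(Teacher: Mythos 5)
Your proposal follows the paper's overall strategy — inductively build the primes $Q_n$ one at a time, apply Lemma \ref{OpenSet} together with Chebotarev at each stage, and track an exponentially growing depth bound $D_i = (2^i-1)N$ — and your recursion $D_{i+1} = 2D_i + N$ is arithmetically equivalent to the paper's increment $D_{i+1} = D_i + 2^iN$. However, there is a real gap in your bookkeeping that the paper's argument is designed precisely to resolve.

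You introduce at each stage a ``free $A$-direct summand $W^{(i)} \subseteq W_h$ of rank $h - i$ parametrising the classes still invisible at the chosen primes,'' and then select $\gamma \in W^{(i)} \smallsetminus \varpi W^{(i)}$. But you never justify why such a direct summand exists with the stated properties, and it is not automatic: the submodule of $W_h$ mapping deeply into $\prod_{j\le i} H^1(G_{v_j},\Ad)_{\mathrm{free}}$ has no reason to be a direct summand, nor to have the correct rank. The paper instead works with the increasing chain $W_1 \subset \cdots \subset W_h$ and the inductive hypothesis $(\mathrm{IND}_i)$ in the contrapositive form (if $\gamma\in W_i$ maps deeply at all of $v_1,\ldots,v_i$ then $\gamma\in\varpi W_i$), which sidesteps the need to construct a complement. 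The key step you omit entirely is the change-of-basis argument: when there exists a ``bad'' $\gamma \in W_{i+1}\smallsetminus\varpi W_{i+1}$ that is deep at all previous primes, the paper replaces the basis $\{\gamma_1,\ldots,\gamma_{i+1}\}$ by one containing $\gamma$ and re-verifies the inductive hypothesis for the new basis. This is the step that ties the ``depth at primes'' to the $s$-invariant of Lemma \ref{OpenSet}, and without it the assertion that your chosen $\gamma$ satisfies one of (a), (b), (c) with $s \le D_i$ (not $D_i+1$, by the way — the paper's bound is $\mathrm{val}(a_{i+1}) < (2^i-1)N$) is unjustified. Your observation that the uniformity of $w$ and $N$ in Lemma \ref{OpenSet} is what lets the recursion close is correct but insufficient by itself; the bookkeeping of which cocycle class to feed into Lemma \ref{OpenSet} and why its $s$-invariant is controlled is the heart of the proof.
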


\begin{proof}

Let $w$ and $N$ be as in \ref{OpenSet}. Fix elements $\gamma_1,\ldots,\gamma_r\in W_h$ such that for each $1\le i\le h$, $\{\gamma_1,\ldots,\gamma_i\}$ is a basis for $W_i$. We will inductively construct a set of primes $\{v_1,\ldots,v_i\}$, for $1\le i\le r$, of $F$ such that each $v_j$ satisfies (1) and (2) above as well as the following:
\begin{enumerate}
\item[(IND${}_i$)] if the image of $\gamma\in W_i$ under the map
\[
W_i\longrightarrow H^1(G_{F,S},\Ad)\longrightarrow\prod_{j=1}^i H^1(G_{v_j},\Ad)\longrightarrow\prod_{j=1}^i H^1(G_{v_j},\Ad)_\mathrm{free}
\]
lies in $\varpi^{(2^i-1)N}\prod_{j=1}^i H^1(G_{v_j},\Ad)_\mathrm{free}$, then $\gamma\in \varpi W_i$.
\end{enumerate}
Taking $Q_n=\{v_1,\ldots,v_r\}$ establishes the lemma. In what follows, given primes $v_1,\ldots, v_i$ of $F$, we will denote the map 
\[
H^1(G_{F,S},\Ad)\longrightarrow\prod_{j=1}^i H^1(G_{v_j},\Ad)\longrightarrow\prod_{j=1}^i H^1(G_{v_j},\Ad)_\mathrm{free}
\]
by  $\mathrm{res}_i$.

First take $i=1$. Then $W_1=A\gamma_1$ and $\gamma_1$ satisfies either (a) or (b) of \ref{OpenSet} (depending on whether $r_0=1$ or $r_0>1$) with $s=0$. Hence, there is an open subset $U$ of $G_{\tilde{F}_n,\tilde{S}_n}$ such that 
\begin{enumerate}
\item[(a)] for every $\sigma\in U$, $\rho(\sigma)$ has distinct $A$-rational eigenvalues $\alpha,\beta$ with $\val(\alpha-\beta)\le w$, and
\item[(b)]  for any cocycle $\kappa$ representing $\gamma_1$ and $\sigma\in U$,
\[
\varpi^j\kappa(\sigma)\in (\sigma-1)\Ad+\varpi^m\Ad.
\]
implies $j>m-N$.
\end{enumerate}
Viewing $U$ as a subset of $G_{F,S}$ and applying Chebotarev density we obtain a prime $v_1$ of $F$ satisfying (1) and (2) of the lemma. To see that (IND${}_1$) holds, take $\gamma=a\gamma_1\in W_1$ such that $\mathrm{res_1}(\gamma)\in \varpi^{N}H^1(G_{v_1},\Ad)_\mathrm{free}$. Take $l\ge 0$ such that $\varpi^l$ annihilates $H^1(G_{v_1},\Ad)_\mathrm{tor}$. Then the image of $\varpi^l\gamma$ in $H^1(G_{v_1},\Ad)$ lies in $\varpi^{l+N}H^1(G_{v_1},\Ad)$. So, for any choice of cocycle $\kappa$ representing $\gamma_1$, we have
\[
a\varpi^l\kappa(\Frob_v)\in (\Frob_v-1)\Ad+\varpi^{l+N}\Ad,
\]
and (b) implies $\val(a)+l> l$, i.e. $\gamma\in \varpi W_1$, which is (IND${}_1$).

Now assume, for $1\le i < r$, we have primes $v_1,\ldots, v_i$ of $F$ satisfying (1) and (2) of the proposition as well as (IND${}_i$). If there is no $\gamma\in W_{i+1}\smallsetminus\varpi W_{i+1}$  such that
\[
\mathrm{res}_{i}(\gamma)\in\varpi^{(2^{i+1}-1)N}\prod_{j=1}^iH^1(G_{v_j},\Ad)_\mathrm{free},
\]
then (IND${}_{i+1}$) is automatically satisfied for any choice of $v_{i+1}$. In this case we can apply \ref{OpenSet} to $\gamma_{i+1}$ with $s=0$, and we obtain a non-empty open set $U$ of $G_{\tilde{F}_n,\tilde{S}_n}$ to which we can apply Chebotarev density as in the $i=1$ case to we get a prime $v_{i+1}$ of $F$ satisfying (1) and (2) of the lemma.

Now assume there is some $\gamma\in W_{i+1}\smallsetminus\varpi W_{i+1}$ such that
\[
\mathrm{res}_{i}(\gamma)\in\varpi^{(2^{i+1}-1)N}\prod_{j=1}^iH^1(G_{v_j},\Ad)_\mathrm{free}.
\]
The idea is now to replace $\{\gamma_1,\ldots,\gamma_{i+1}\}$ with a basis for $W_{i+1}$ that includes $\gamma$ and then apply \ref{OpenSet} to $\gamma$.

Write $\gamma=a_1\gamma_1+\cdots+a_{i+1}\gamma_{i+1}$. Since $\gamma\notin\varpi W_{i+1}$, there is at least one $1\le j\le i+1$ such that $a_j$ is a unit. Let $j_0$ be the largest such index. Then $\{\gamma_1,\ldots,\gamma_{j_0-1},\gamma,\gamma_{j_0+1},\ldots,\gamma_{i+1}\}$ is a basis for $W_{i+1}$. We first show that (IND${}_i$) also holds with $W_i$ replaced by the $A$-span of $\{\gamma_1,\ldots,\gamma_{j_0-1},\gamma_{j_0+1},\ldots,\gamma_{i+1}\}$. Let $\gamma'=b_1\gamma_1+\cdots+b_{i+1}\gamma_{i+1}$ with $b_{j_0}=0$. We'll show that if
\[
\mathrm{res}_i(\gamma')\in \varpi^{(2^i-1)N}\prod_{j=1}^i H^1(G_{v_j},\Ad)_\mathrm{free},
\]
then $b_j\in\mathfrak{m}$ for all $1\le j\le i+1$.

First assume that $\val(b_{i+1})\le\val(a_{i+1})$. Then $\gamma-\frac{a_{i+1}}{b_{i+1}}\gamma'\in W_i$, and when written in terms of the basis $\{\gamma_1,\ldots,\gamma_i\}$, the $j_0$ coefficient is $a_{j_0}-\frac{a_{i+1}}{b_{i+1}}b_{j_0}=a_{j_0}$, a unit. Thus, $\gamma-\frac{a_{i+1}}{b_{i+1}}\gamma'\notin\varpi W_i$. But
\begin{equation}\label{ResAdd}
\mathrm{res}_i\left(\gamma-\frac{a_{i+1}}{b_{i+1}}\gamma'\right)=
\mathrm{res}_i(\gamma)-\frac{a_{i+1}}{b_{i+1}}\mathrm{res}_i(\gamma')\in
\varpi^{(2^i-1)N}\prod_{j=1}^i H^1(G_{v_j},\Ad)_\mathrm{free},
\end{equation}
contradicting (IND${}_i$). So we must have $\val(b_{i+1})>\val(a_{i+1})$. Then $\gamma'-\frac{b_{i+1}}{a_{i+1}}\gamma\in W_i$, and similar to \eqref{ResAdd}, we see that
\[
\mathrm{res}_i\left(\gamma'-\frac{b_{i+1}}{a_{i+1}}\gamma\right)\in
\varpi^{(2^i-1)N}\prod_{j=1}^i H^1(G_{v_j},\Ad)_\mathrm{free}.
\]
By (IND${}_i$), we have $\gamma'-\frac{b_{i+1}}{a_{i+1}}\gamma\in\varpi W_i$. Now $\gamma'-\frac{b_{i+1}}{a_{i+1}}\gamma=(b_1-\frac{b_{i+1}}{a_{i+1}}a_1)\gamma_1+
\cdots (b_i-\frac{b_{i+1}}{a_{i+1}}a_i)\gamma_i,$ so $b_j-\frac{b_{i+1}}{a_{i+1}}a_j\in\mathfrak{m}$ for each $1\le j\le i$. Then $\val(b_{i+1})>\val(a_{i+1})$ implies $b_j\in\mathfrak{m}$ for all $1\le j\le i+1$, which is what we wanted to show.

We let $\{\delta_1,\ldots,\delta_{i+1}\}=\{\gamma_1,\ldots,\gamma_{j_0-1},\gamma,\gamma_{j_0+1},\ldots,\gamma_{i+1}\}$, ordered so that $\delta_{i+1}=\gamma$. By the above claim, if
\begin{equation}\label{NewInd}
\mathrm{res}_i(b_1\delta_1+\cdots+b_i\delta_i)\in\varpi^{(2^i-1)N}\prod_{j=1}^i
H^1(G_{v_j},\Ad)_\mathrm{free},\text{   then   }\val(b_j)\ge 1\text{ for all }1\le j\le i.
\end{equation}
We wish to apply \ref{OpenSet} to $\delta_{i+1}$, but first we need a little more information. In particular we need to know the value of $s$ for which $\delta_{i+1}$ satisfies either (a), (b) or (c) of \ref{OpenSet}. Recall we have written $\delta_{i+1}=a_1\gamma_1+\cdots+a_{i+1}\gamma_{i+1}$. We have $\val(a_{i+1})<(2^i-1)N$. Indeed, if $a_{i+1}$ is not a unit then, $j_0\le i$ and (IND${}_i$) implies
\[
\mathrm{res}_i(a_1\gamma_1+\cdots+a_i\gamma_i)\notin\varpi^{(2^i-1)N}\prod_{j=1}^i
H^1(G_{v_j},\Ad)_\mathrm{free}.\]
Then
\[
\mathrm{res}_i(a_1\gamma_1+\cdots+a_{i+1}\gamma_{i+1})\in
\varpi^{(2^i-1)N}\prod_{j=1}^iH^1(G_{v_j},\Ad)_\mathrm{free},
\]
gives
\[
\mathrm{res}_i(a_{i+1}\gamma_{i+1})\notin
\varpi^{(2^i-1)N}\prod_{j=1}^iH^1(G_{v_j},\Ad)_\mathrm{free},
\]
hence $\val(a_{i+1})<(2^i-1)N$.

By the way $\gamma_1,\ldots,\gamma_r$ were chosen, the above claim implies $\delta_{i+1}$ satisfies one of (a), (b) or (c) of \ref{OpenSet} with $s= (2^i-1)N$. Let $U$ be the open subset of $G_{\tilde{F}_n,\tilde{S}_n}$ given by applying \ref{OpenSet} to $\delta_{i+1}$. Applying Chebotarev density to $U$ we get a prime $v_{i+1}$ of $F$ that, as in explained in the case $i=1$, satisfies (1) and (2) of the lemma and such that for any cocycle $\kappa$ representing $\delta_{i+1}$, if
\[
\varpi^j\kappa(\Frob_{i+1})\in (\Frob_{i+1}-1)\Ad+\varpi^m\Ad
\]
then $j>m-(2^i-1)N-N=m-2^iN$. As explained in the $i=1$ case, this implies that the image of $\delta_{i+1}$ in $H^1(G_{v_{i+1}},\Ad)_\mathrm{free}$ does not belong to $\varpi^{2^iN}H^1(G_{v_{i+1}},\Ad)_\mathrm{free}$.

It remains to show (IND${}_{i+1}$) holds. Take $\delta\in W_{i+1}$ and assume that
\[
\mathrm{res}_{i+1}(\delta)\in\varpi^{(2^{i+1}-1)N}\prod_{j=1}^{i+1}H^1(G_{v_j},\Ad)_\mathrm{free}.
\]
Write $\delta=b_1\delta_1+\cdots+b_{i+1}\delta_{i+1}$. We will show $\val(b_j)\ge 1$ for all $1\le j\le i+1$. Since
\[
\mathrm{res}_i(\delta_{i+1})\in\varpi^{(2^{i+1}-1)N}\prod_{j=1}^i
H^1(G_{v_j},\Ad)_\mathrm{free}
\]
we have
\[
\mathrm{res}_i(b_1\delta_1+\cdots+b_i\delta_i)\in\varpi^{(2^{i+1}-1)N}\prod_{j=1}^i
H^1(G_{v_j},\Ad)_\mathrm{free}.
\]
By \eqref{NewInd}, we know that $\val(b_j)>(2^{i+1}-1)N-(2^i-1)N= 2^iN$ for each $1\le j\le i$. But then the image of $\varpi^{(2^i-1)N}(b_1\delta_1+\cdots+b_i\delta_i)$ in $H^1(G_{v_{i+1}},\Ad)_\mathrm{free}$ lands in $\varpi^{(2^{i+1}-1)N}H^1(G_{v_{i+1}},\Ad)_\mathrm{free}$. So, $\varpi^{(2^i-1)N}b_{i+1}\delta_{i+1}$ also maps to $\varpi^{(2^{i+1}-1)N}H^1(G_{v_{i+1}},\Ad)_\mathrm{free}$. Since the image of $\delta_{i+1}$ in $H^1(G_{v_{i+1}},\Ad)_\mathrm{free}$ does not belong to $\varpi^{2^iN}H^1(G_{v_{i+1}},\Ad)_\mathrm{free}$, we must have 
\[
(2^i-1)N+\val(b_{i+1})>(2^{i+1}-1)N-2^iN,
\]
which implies $\val(b_{i+1})\ge 1$. This establishes (IND${}_{i+1}$). \end{proof}

\subsubsection{}\label{cohomassympnotation}

If $V\subseteq W$ are finite sets of primes in $F$ and $M$ is an $A$-module with a continuous $G_F$ action which is unramified outside $W$, we denote by $H^1_V(G_{F,W},M)$ the subgroup of $H^1(G_{F,W},M)$ consisting of elements whose image in $\prod_{v\in V} H^1(G_v,M)$ under the restriction map is trivial.

We introduce some notation as in \cite{SWreducible}. For each $n,m\ge 1$, let $C_{n,m}$ and $D_{n,m}$ be positive integers. We write
\[
C_{n,m}\asymp D_{n,m}
\]
if there are constants $0<a<b$ such that
\[
a<\frac{C_{n,m}}{D_{n,m}}<b
\]
for all $n,m\ge 1$.


\begin{prop}\label{AuxPrimes}

For each $n\ge n_1$, there is a set of primes $Q_n$ of $F$, disjoint from $S$ and of cardinality $h=\mathrm{rank}_A H^1(G_{F,S},\Ad)-2$, such that
\begin{enumerate}
\item for $v\in Q_n$, $\rho$ is unramified at $v$, $\rho(\Frob_v)$ has distinct $A$-rational eigenvalues, and $\mathrm{val}(\tr \rho(\Frob_v)) < w$, with $w$ not depending on $n$ or $m$;
\item $v$ splits in the extension $\tilde{F}_n/F$;
\item for each $v\in Q_n$, $|H^0(G_v,\Ad_m)|\asymp q^{2m}$;
\item $|H_{Q_n}^1(G_{F,S\cup Q_n},\Ad_m)|\asymp q^{2m};$
\item letting $F_{Q_n}^S$ denote the maximal abelian extension of $F$ of degree a power of $2$ which is unramified outside $Q_n$ and split at primes in $S$, $G_n=\Gal(F_{Q_n}^S/F)$, we have $G_n/2^{n-2}G_n\cong(\Z/2^{n-2}\Z)^t$, with $t=2-|S|+|Q_n|$.
\end{enumerate}
\end{prop}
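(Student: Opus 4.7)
The plan is to take $Q_n$ to be the set of primes produced by \ref{ThePrimes} and verify the five conclusions. Conditions (1) and (2) follow immediately from parts (1) and (2) of \ref{ThePrimes}; in (1) we use that in characteristic $2$ we have $\tr \rho(\Frob_v) = \alpha_v - \beta_v$, so $\val(\tr \rho(\Frob_v)) \le w$ with $w$ independent of $n$, and disjointness from $S$ is built into the Chebotarev argument of \ref{ThePrimes}. For (3), fix a basis of $V$ in which $\rho(\Frob_v)$ is upper triangular with diagonal entries $\alpha_v, \beta_v$ and compute the centralizer directly, as in the proof of \ref{Centre}: since $\val(\alpha_v - \beta_v) \le w$ uniformly, the centralizer is free of rank $2$ over $A$ modulo torsion bounded in terms of $w$, so $|H^0(G_v, \Ad_m)| \asymp q^{2m}$.

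The heart of the argument is (4), which I would prove via the Greenberg--Wiles formula
\[
\frac{|H^1_{Q_n}(G_{F, S \cup Q_n}, \Ad_m)|}{|H^1_S(G_{F, S \cup Q_n}, \Ad_m(1))|} = \frac{|H^0(G_F, \Ad_m)|}{|H^0(G_F, \Ad_m(1))|} \prod_{v \in S} \frac{|H^1(G_v, \Ad_m)|}{|H^0(G_v, \Ad_m)|} \prod_{v \in Q_n} \frac{1}{|H^0(G_v, \Ad_m)|}.
\]
The right-hand side has an explicit asymptotic: the $v \in Q_n$ contributions give $q^{-2hm}$ by (3), the $v \in S$ contributions are handled by the local Euler--Poincar\'e formula, and the global $H^0$ terms are eventually stable. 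Thus (4) reduces to showing $|H^1_S(G_{F, S \cup Q_n}, \Ad_m(1))|$ is bounded uniformly in $m$ and $n$. Under local Tate duality, elements of this dual Selmer group annihilate the image of $H^1(G_{F, S}, \Ad_m) \to \prod_{v \in Q_n} H^1(G_v, \Ad_m)$; property (3) of \ref{ThePrimes}, combined with \ref{KerFn} to identify the free part of $H^1(G_{F, S}, \Ad)$ with $W_h$ plus a rank-$2$ constant piece, forces this image to be of bounded index in the free part of the product, yielding the required bound.

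For (5), this is a standard Kummer-theoretic computation. Class field theory identifies $\Hom(G_n / 2^{n-2} G_n, \Z/2^{n-2}\Z)$ with the Selmer group in $H^1(G_{F, S \cup Q_n}, \Z/2^{n-2}\Z)$ cut out by trivial local conditions at $S$ and unramified conditions at $Q_n$. Each $v \in Q_n$ splits in $F(\mu_{2^n})/F$ by (2), so $\mu_{2^{n-2}} \subset F_v$ and a direct Kummer computation gives a free $\Z/2^{n-2}\Z$-summand of rank one at each $v \in Q_n$. Applying the Wiles formula, together with the vanishing of the relevant dual Selmer group for $\mu_{2^{n-2}}$ (which again follows from the splitting of the $v \in Q_n$), gives the rank $t = 2 - |S| + |Q_n|$; since the local Kummer groups involved are free over $\Z/2^{n-2}\Z$, so is $G_n / 2^{n-2} G_n$.

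The main obstacle is (4): property (3) of \ref{ThePrimes} is only a numerical injectivity at depth $\varpi^{(2^h - 1)N}$, and converting it into the uniform bound on $|H^1_S(G_{F, S \cup Q_n}, \Ad_m(1))|$ required by the argument will require careful Poitou--Tate bookkeeping that tracks the torsion in $H^1(G_v, \Ad)$ at each auxiliary prime as well as the interaction between the global inflation--restriction sequence and local duality.
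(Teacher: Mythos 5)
Your choice of $Q_n$ from \ref{ThePrimes} and your treatment of (1), (2), (3) match the paper. But (4) and (5) diverge from the paper's arguments and each contains a genuine gap.

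For (4) the paper does \emph{not} invoke the Greenberg--Wiles formula. It simply notes that $H^1_{Q_n}(G_{F,S\cup Q_n},\Ad_m)$ is the kernel of $H^1(G_{F,S},\Ad_m)\to\prod_{v\in Q_n}H^1(G_v,\Ad_m)$, so its size is $|H^1(G_{F,S},\Ad_m)|$ divided by the size of the image. The numerator is $\asymp q^{(h+2)m}$; property (3) of \ref{ThePrimes} gives $|\mathrm{im}|\ge q^{h(m-(2^h-1)N)}$; and the splitting of $Q_n$ in $\tilde F_n$ provides an injection $(A/\mathfrak m^m)^2\hookrightarrow H^1_{Q_n}(\cdots)$ for the lower bound. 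Your plan instead routes everything through the Wiles formula and asserts the conclusion reduces to showing the dual Selmer group $H^1_S(G_{F,S\cup Q_n},\Ad_m^\ast(1))$ is \emph{bounded} uniformly in $m,n$. That reduction is not established: the Wiles formula's right-hand side involves the factors $|H^1(G_v,\Ad_m)|/|H^0(G_v,\Ad_m)|$ for $v\in S$, whose sizes grow with $m$ (by the local Euler characteristic they contain a factor $|H^2(G_v,\Ad_m)|$, and for $v\mid 2$ also $|\Ad_m|^{[F_v:\Q_2]}$), and you would need a delicate Poitou--Tate identity to show that the aggregate exponent is exactly $2$, not merely ``handled by'' the Euler--Poincar\'e formula. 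You flag this as the main obstacle, which is fair, but it is an essential step you have not carried out; the paper's direct kernel/image computation avoids it entirely.

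For (5) your argument has an error, not just a gap. You assert that the dual Selmer group for $\mu_{2^{n-2}}$ vanishes, deducing this from the splitting of the $v\in Q_n$. It does not vanish: the Kummer class $\omega_n$ of \ref{KummerExt} lies in $H^1(G_{F,S},\mu_{2^n})$, has order divisible by $2^{n-1}$, and restricts trivially at each $v\in Q_n$ \emph{precisely because} $Q_n$ splits in the Kummer extension $\tilde F_n=F_n(y_{n_0}^{1/2^n})$. So its reduction mod $2^{n-2}$ gives a nonzero element of the dual Selmer group. Controlling this class, rather than showing the group is zero, is exactly the content of the KW2 argument the paper invokes (using part (3) of \ref{KummerExt}), which reduces (5) to computing $\dim_{\F}H^1_{Q_n}(G_{F,S\cup Q_n},\F)=2$; that in turn is proved using the same $\tilde F_n$-splitting lower bound and the $W_h$-injectivity from \ref{ThePrimes}. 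Your ``direct Kummer computation'' skips the point where $\tilde F_n$ (as opposed to just $F_n$) actually enters, and with a vanishing dual Selmer the Wiles formula would not even yield the stated $t=2-|S|+|Q_n|$.
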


\begin{proof}

We let $Q_n$ be the set of primes given by \ref{ThePrimes}. Then (1) and (2) of the proposition are given by (1) and (2) of \ref{ThePrimes}. In particular, the bound on the valuation of the trace follows from (1) of \ref{ThePrimes}, since the characteristic is $2$.

Take $v\in Q_n$ and let $\alpha$ and $\beta$ denote the eigenvalues of $\rho(\Frob_v)$. Take $g\in\GL_2(K)$ such that
\[
g\rho(\Frob_v)g^{-1}=\left(\begin{array}{cc}\alpha&\\&\beta\end{array}\right).
\]
Identify $\Ad$ with $\mathrm{M}_{2\times 2}(A)$ using our fixed basis of $V$, and set $\Ad'=g\Ad\subset \mathrm{M}_{2\times 2}(K)$. Letting $\Frob_v$ act on $\Ad'$ via $g\rho(\Frob_v)g^{-1}$, and setting $\Ad'_m=\Ad'/\varpi^m\Ad'$, we see that $H^0(G_v,\Ad_m)\cong H^0(G_v,\Ad'_m)$. There is some $l\ge 0$ such that
\begin{equation}\label{Containment}
\varpi^l\Ad\subseteq \Ad'\subseteq \varpi^{-l}\Ad.
\end{equation}
For $\left(\begin{array}{cc}a&b\\c&d\end{array}\right)\in \mathrm{M}_{2\times 2}(K)$, 
\begin{equation}\label{FrobConj}
\left(\begin{array}{cc}\alpha&\\&\beta\end{array}\right)
\left(\begin{array}{cc}a&b\\c&d\end{array}\right)
\left(\begin{array}{cc}\alpha^{-1}&\\&\beta^{-1}\end{array}\right)
=\left(\begin{array}{cc}a&\alpha\beta^{-1}b\\ \alpha^{-1}\beta c&d\end{array}\right).
\end{equation}
This and $\Ad'\supseteq\varpi^l\Ad$ imply
\[
\left\{\left(\begin{array}{cc}a&\\&d\end{array}\right)+\varpi^m\Ad':a,d\in\mathfrak{m}^l
\right\}\big/\varpi^m\Ad'\subseteq H^0(G_v,\Ad'_m).
\]
Now $\varpi^m\Ad'\subseteq\varpi^{m-l}\Ad$ then implies
\begin{align*}
&\left|\left\{\left(\begin{array}{cc}a&\\&d\end{array}\right)+\varpi^m\Ad':a,d\in\mathfrak{m}^l
\right\}\big/\varpi^m\Ad'\right|\\
&\ge
\left|\left\{\left(\begin{array}{cc}a&\\&d\end{array}\right)+\varpi^{m-l}\Ad:a,d\in\mathfrak{m}^l
\right\}\big/\varpi^{m-l}\Ad\right|=q^{2m-4l},
\end{align*}
and
\begin{equation}\label{LowerBound}
|H^0(G_v,\Ad_m)|= |H^0(G_v,\Ad'_m)|\ge q^{2m-4l}.
\end{equation}
To get a lower bound, recall that by (1) of \ref{ThePrimes}, there is an integer $w$ that does not depend on $v$ such that $\val(\alpha\beta^{-1}-1)=\val(\alpha^{-1}\beta-1)\le w$. Using this, together with \eqref{FrobConj} and \eqref{Containment}, we have
\[
H^0(G_v,\Ad'_m)\subseteq\left\{\left(\begin{array}{cc}a&b\\c&d\end{array}\right)+\varpi^m\Ad':
a,d\in\mathfrak{m}^{-l}\;\text{and}\;b,c\in\mathfrak{m}^{m-l-w}\right\}\big/\varpi^m\Ad'.
\]
Now $\varpi^m\Ad'\supseteq\varpi^{m+l}\Ad$ then implies
\begin{align*}
&\left| \left\{\left(\begin{array}{cc}a&b\\c&d\end{array}\right)+\varpi^m\Ad':
a,d\in\mathfrak{m}^{-l}\;\text{and}\;b,c\in\mathfrak{m}^{m-l-w}\right\}\big/\varpi^m\Ad'\right|\\
&\le \left|\left\{\left(\begin{array}{cc}a&b\\c&d\end{array}\right)+\varpi^{m+l}\Ad:
a,d\in\mathfrak{m}^{-l}\;\text{and}\;b,c\in\mathfrak{m}^{m-l-w}\right\}\big/\varpi^{m+l}\Ad\right|
=q^{2m+8l+2w},
\end{align*}
hence
\begin{equation}\label{UpperBound}
|H^0(G_v,\Ad_m)|= |H^0(G_v,\Ad'_m)|\le q^{2m+8l+2w}.
\end{equation}
Since $l$ and $w$ do not depend on $n$ or $m$, \eqref{UpperBound} and \eqref{LowerBound} imply (3) of the proposition.

We now check (4) of the proposition. Since $\rho$ is unramified at each $v\in Q_n$, the injection 
\[
H^1_{Q_n}(G_{F,S\cup Q_n},\Ad)\longrightarrow H^1(G_{F,S\cup Q_n},\Ad)
\]
factors through $H^1(G_{F,S},\Ad)$. Similarly with $\Ad_m$ in place of $\Ad$. From the exact sequence
\begin{equation}\label{MultExact}
0\longrightarrow\Ad\xrightarrow{\varpi^m}\Ad\longrightarrow\Ad_m\longrightarrow 0
\end{equation}
we have
\[
0\longrightarrow H^1(G_{F,S},\Ad)/\varpi^m\longrightarrow H^1(G_{F,S},\Ad_m)
\longrightarrow H^2(G_{F,S},\Ad).
\]
Since the size of the torsion subgroups of $H^i(G_{F,S},\Ad)$ do not depend on $n$ or on $m$, we have
\begin{equation}\label{H1asymp}
|H^1(G_{F,S},\Ad_m)|\asymp |H^1(G_{F,S},\Ad)/\varpi^m|
\asymp | (A/\mathfrak{m}^m)^{h+2}|=q^{m(h+2)}.
\end{equation}
Consider our fixed submodule $W_h$ of $H^1(G_{F,S},\Ad)$. Say $\gamma\in W_h$ is such that it maps to $\varpi^m\prod_{v\in Q_n}H^1(G_v,\Ad)$. Then $\gamma$ maps to $\varpi^m\prod_{v\in Q_n}H^1(G_v,\Ad)_\mathrm{free}$. Writing $\gamma=\varpi^j\gamma'$ with $\gamma'\notin\varpi W_h$, part (3) of \ref{ThePrimes} implies $j> m-(2^h-1)N$. It follows that
\begin{equation}\label{WImSize}
|\im(W_h\rightarrow \prod_{v\in Q_n}H^1(G_v,\Ad)/\varpi^m)|\ge q^{h(m-(2^h-1)N)}.
\end{equation} 
Applying local cohomology to \eqref{MultExact} we have an injection
\[
0\longrightarrow \prod_{v\in Q_n}H^1(G_v,\Ad)/\varpi^m\longrightarrow\prod_{v\in Q_n}H^1(G_v,\Ad_m).
\]
Combining this with \eqref{WImSize} and the commutativity of
\[\xymatrix{
W_h \ar@{^{(}->}[r] & H^1(G_{F,S},\Ad) \ar[r]\ar[d] & \prod_{v\in Q_n} H^1(G_v,\Ad) \ar[d]\\
& H^1(G_{F,S},\Ad_m) \ar[r] & \prod_{v\in Q_n} H^1(G_v,\Ad_m)
}\]
we conclude
\begin{equation}\label{ImSize}
|\im(H^1(G_{F,S},\Ad_m)\rightarrow\prod_{v\in Q_n}H^1(G_v,\Ad_m))|\ge q^{h(m-(2^h-1)N)}.
\end{equation}
But, since each $v\in Q_n$ splits in $\tilde{F}_n$, we also have injections
\[
(A/\mathfrak{m}^m)^2\cong H^1(\Gal(\tilde{F}_n/F),Z_m)\longrightarrow H^1(\Gal(\tilde{F}_n/F),
(\Ad_m)^{G_{\tilde{F}_n}})\longrightarrow H^1_{Q_n}(G_{F,S\cup Q_n},
\Ad_m),
\]
so
\[
|H^1_{Q_n}(G_{F,S\cup Q_n},\Ad_m)|\ge q^{2m}.
\]
This combined with \eqref{H1asymp} and \eqref{ImSize} imply
\[
|H^1_{Q_n}(G_{F,S\cup Q_n},\Ad_m)|\asymp q^{2m},
\]
which is (4) of the proposition.

It remains to show (5). Using (3) of \ref{KummerExt}, it is shown in \cite{KW2} that (5) holds with
\[
t=\dim_{\F}H^1_{Q_n}(G_{F,S\cup Q_n},\F)-|S|+|Q_n|
\]
and so we only have to show $\dim_{\F}H^1_{Q_n}(G_{F,S\cup Q_n},\F)=2$. If $M$ is any $A$-module on which $G_F$ acts trivially, $H^1_{Q_n}(G_{F,S\cup Q_n},M)$ is the group of continuous homomorphisms from $\Gal(F_S^{Q_n}/F)$ to $M$, where $F_S^{Q_n}$ is the maximal abelian Galois extension of $F$ of exponent $2$, unramified outside $S$ and split at the primes in $Q_n$. Hence,
\[
\dim_\F H^1_{Q_n}(G_{F,S\cup Q_n},\F)=\rk_A H^1_{Q_n}(G_{F,S\cup Q_n},Z).
\]
 We have a series of injections
\[
H^1(\Gal(\tilde{F}_n/F),Z)\longrightarrow H^1_{Q_n}(G_{F,S\cup Q_n},Z)
\longrightarrow H^1_{Q_n}(G_{F,S\cup Q_n},\Ad),
\]
where the last inclusion comes from the fact that $(\Ad/Z)^{G_F}=\{0\}$. Since $\rk_A H^1(\Gal(\tilde{F}_n/F),Z)=2$, $\rk_AH^1_{Q_n}(G_{F,S\cup Q_n},Z)\ge 2$. Part (3) of \ref{ThePrimes} implies $W_h$ intersects $H^1_{Q_n}(G_{F,S\cup Q_n},\Ad)$ trivially, so $\rk_A H^1_{Q_n}(G_{F,S\cup Q_n},\Ad)\le \rk_A H^1(G_{F,S},\Ad)-\rk_A W_h=2$. Part (5) of the proposition now follows.
\end{proof}

\section{Promodularity}\label{LocRequalsT}

The purpose of this section is to prove a certain $R^\mathrm{red} = \mathbf{T}$ theorem, where $R$ is a quotient of a universal deformation ring tensored with an Iwasawa algebra as in \S \ref{GlobalDefs}, and $\mathbf{T}$ is a quotient of the universal nearly ordinary Hecke algebra as in \S \ref{UnivHeckeAlg} by a minimal prime of the Iwasawa algebra.

In the first subsection, we state assumptions on our field and residual representation, recall notation and properties of the deformation rings, Hecke algebras, and Hecke modules, and we then state the localized ``$R=T$" theorem, cf. \ref{LocRredT}. An important technical point is \ref{localdefringnormal}, where we prove normality of certain localizations of our local deformation ring. This will be important in the patching argument because we will have to perform a completion after localizing at a dimension one prime. The normality implies that the completed local deformation ring is still a domain. Without this, it does not seem clear how to show the completed Hecke module is supported on the whole deformation ring.

In the next subsection, we state some reductions, introduce the auxiliary level data, and recall its relevant properties.

In the following subsection, we perform the patching argument to prove the localized ``R=T" theorem. The patching is carried out in a similar way as \cite{KW2}*{Proposition 9.3}, except that we must control ``error terms" generated from the fact that our auxiliary data is associated to a dimension one primes ideal, as opposed to the maximal ideal. After performing the patching we localize and complete the limiting objects at our fixed dimension one prime ideal. It is worth pointing out that we must perform the patching first and then the localization and completion second. This is due to the fact that the backbone of the patching argument is the pigeonhole principle, i.e. one has infinitely many finite objects, so a projective system can be extracted. The remainder of the argument is then still quite similar to \cite{KW2}*{Proposition 9.3}, due to the fact that we can ensure the completed local deformation ring remains a domain.

In the last subsection, we complete the proof of $R^\mathrm{red} = \mathbf{T}$ using the localized version together with our connectivity result \ref{Connectivity}. The argument is almost exactly the same as that of \cite{SWirreducible}*{Proposition 4.1}. 

Throughout this section we take $p=2$.

\subsection{Notation and statement of the localized $R^\mathrm{red}=\mathbf{T}$ theorem}\label{NotationAndStatement}

\subsubsection{}\label{ResRepAssumptions}

Recall $F \subset \overline{\Q}$ denotes a totally real number field and $G_F = \Gal(\overline{\Q}/F)$. We assume that $[F:\Q]$ is even and that for each $v|2$, either $F_v$ contains a $4$-th root of unity or $[F_v:\Q_2]\ge 3$. For each place $v$ of $F$, we let $G_v = \Gal(\overline{F_v}/F)$. Let $E$ be a finite extension of $\Q_2$ with ring of integers $\mathcal{O}$ and residue field $\F$. We assume that for any $v|2$, the image of each embedding $F_v \rightarrow \overline{\Q}_2$ is contained in $E$. In what follows, all completed tensor products will be taken over $\mathcal{O}$ unless otherwise noted.

Fix an absolutely irreducible continuous representation
	\[
	\overline{\rho} : G_F \longrightarrow \GL_2(\F).
	\]
We assume that all eigenvalues of elements of $\overline{\rho}(G_F)$ lie in $\F$. We assume that for all $v|2$, $\rhobar|_{G_v}$ is trivial or has order $2$.
	
We fix a continuous character $\psi : F^\times \backslash (\A_F^\infty)^\times \rightarrow \calO^\times$ such that
	\begin{itemize}
		\item[-] $\psi$ is totally even and  unramified outside $\{v|2\}$;
		\item[-] on some open subgroup of $(\A_F^\infty)^\times$, $\psi(z) = \mathrm{Nm}_{F/\Q}(z_2)^{1-w}$ for some $w \in \Z$;
		\item[-] $\overline{\psi\epsilon_2} = \det\overline{\rho}$.
	\end{itemize}
	
Fix a finite set of finite places $\Sigma$ of $F$ of even cardinality not containing any places above $p$. For each $v \in \Sigma$, we fix unramified characters $\gamma_v : \Gal(\overline{F_v}/F_v) \rightarrow \mathcal{O}^\times$, and we assume
 	\begin{itemize}
	\item[-] for each $\in \Sigma$, $\overline{\rho}|_{G_v} \cong \left(\begin{array}{cc} \overline{\gamma_v} \overline{\epsilon_2}
	& \ast \\ & \overline{\gamma_v} \end{array} \right)$;
	\item[-] for each $v \in \Sigma$, $\gamma_v^2 = \psi|_{G_v}$;
	\item[-] $\overline{\rho}$ is unramified outside of $\Sigma\cup\{v|2\}\cup \{v|\infty\}$.
	\end{itemize}

We fix a finite place $v_0$ of $F$ disjoint from $\Sigma\cup\{v|2\}\cup\{v|\infty\}$. This place will be used to ensure a certain neatness property below.

\subsubsection{}\label{HeckeAssumptions}

Let $D$ denote the quaternion algebra with centre $F$, ramified at all archimedean places as well as all the places in $\Sigma$. Fix a maximal order $\mathcal{O}_D$ of $D$ and an algebraic weight $\kappa = (\mathbf{k},\mathbf{w})$ for $F$. Let $U$ be the open subgroup of $(D \otimes_F \A_F^\infty)^\times$ given by 
\begin{itemize}
	\item[-] $U_v = \mathrm{Iw}_1(v)$ for $v|2$;
	\item[-] $U_v = D_v^\times$ for $v \in \Sigma$;
	\item[-] $U_v = \GL_2(\mathcal{O}_{F_v})$ for $v$ not above $2$ and not in $\Sigma$.
\end{itemize}
We choose an open subgroup $U_0$ of $U$ by letting $(U_0)_v = U_v$ for $v \ne v_0$ and letting $(U_0)_{v_0} = \mathrm{Iw}_1(v_0^n)$ with $n$ sufficiently large so that $(U_0(\A_F^\infty)^\times \cap t^{-1}D t)/F^\times = 1$ for every $t \in (D \otimes_F \A_F^\infty)^\times$, cf. \ref{SuffSmallSub}.

We let $U$ act on $W_\kappa(\mathcal{O})$ as in \S\ref{QuatForms}. In particular, for $v\in \Sigma$, $D_v^\times$ acts on $W_\kappa(\mathcal{O})$ as $\gamma_v^{-1} \circ \nu_D$, where $\nu_D$ is the reduced norm of $D$. We assume that $U\cap (\A_F^\infty)^\times$ acts on $W_\kappa(\mathcal{O})$ via $\psi^{-1}$, and let $S_{\kappa,\psi}^\mathrm{no}(U,\mathcal{O})$ denote the corresponding nearly ordinary space of quaternionic modular forms, cf. \S\ref{QuatForms} and \S\ref{HeckeAlg}. We let $\mathbf{T}_{\kappa,\psi}^\mathrm{no}(U,\mathcal{O})$ denote the nearly ordinary Hecke algebra as in \S\ref{HeckeAlg}, and $\mathbf{T}_\psi(U)$ the universal nearly ordinary Hecke algebra as in \S\ref{UnivHeckeAlg}. We also let $S_\psi(U)$ be the universal family of nearly ordinary modular forms as in \S\ref{UnivHeckeAlg}, i.e. $S_\psi(U) = (\varinjlim_a S_{2,\psi}(U(p^{a,a}),E/\mathcal{O}))^\vee$. We have similar algebras and modules for $U_0$ in place of $U$.

Say we have a finite set of places $Q$ of $F$ disjoint from $S_0$. Note that $\mathrm{Nm}(v) \equiv 1 \pmod 2$ for each $v \in Q$. For each $v \in Q$, let $\Delta_v$ be the maximal $2$-power quotient of $k_v^\times$. We define $U_Q$ to be the open subgroup of $U_0$ given by $(U_Q)_v = (U_0)_v$ if $v \notin Q$, and for $v \in Q$
	\[
	(U_{Q})_v = \left\{ \left(\begin{array}{cc} a & b \\ c & d \end{array} \right) \in \mathrm{Iw}(v) :
	ad^{-1} \mapsto 1 \in \Delta_v \right\}.
	\]
We then define $S_{\kappa,\psi}^\mathrm{no}(U_Q,\mathcal{O})$, $\mathbf{T}_{\kappa,\psi}(U_Q,\mathcal{O})$, $\mathbf{T}_\psi(U_Q)$, and $S_\psi(U_Q)$ as before. Recall that for $V$ any of $U$, $U_0$, or $U_Q$, $\mathbf{T}_\psi(V)$ is a $\Lambda(\mathcal{U}_2^1)$-algebra, and the natural maps between them are $\Lambda(\mathcal{U}_2^1)$-algebra morphisms, where $\Lambda(\mathcal{U}_2^1) = \mathcal{O}[[\mathcal{U}_2^1]]$ with $\mathcal{U}_2^1 = \ker(\prod_{v|2} \mathcal{O}_{F_v}^\times \rightarrow \prod_{v|2} (\mathcal{O}_{F_v}/\varpi_v\mathcal{O}_{F_v})^\times)$.

We assume that there is some eigenform $f \in S_{\kappa,\psi}^\mathrm{no}(U,\mathcal{O})$ such that $\overline{\rho_f} \cong \overline{\rho}$, with $\rho_f$ the Galois representation as in \S\ref{ModGalReps}. Denote by $\mathfrak{m}$ the corresponding maximal ideal of $\mathbf{T}_{\kappa,\psi}^\mathrm{no}(U,\mathcal{O})$, and again denote by $\mathfrak{m}$ its pullback to any of $\mathbf{T}_\psi(U)$, $\mathbf{T}_\psi(U_0)$, $\mathbf{T}_\psi(U_Q)$. 

Recall we have $\Lambda(I_v)=\mathcal{O}[[I_v^\mathrm{ab}(2)]]$, where $I_v^\mathrm{ab}(2)$ is the inertia subgroup of $G_v^\mathrm{ab}(2)$, the maximal pro-$2$ quotient of the abelianization of $G_v$. We set $\Lambda(I_2) = \hat{\otimes}_{v|2} \Lambda(I_v)$. Local class field theory gives an isomorphism $\Lambda(I_2) \cong \Lambda(\mathcal{U}_2^1)$. Also recall, $\Lambda(G_2) = \hat{\otimes}_{v|2}\Lambda(G_v)$, where $\Lambda(G_v) = \mathcal{O}[[G_v^\mathrm{ab}(p)]]$. For $V$ any of $U$, $U_0$, or $U_Q$ as above, the $\Lambda(\mathcal{U}_2^1)$-algebra structure on $\mathbf{T}_\psi(U)_\mathfrak{m}$ extends to a $\Lambda(G_2)$-algebra structure, cf. \ref{BigGalRepLocal}. 

Let $\mu$ denote the torsion subgroup of $\mathcal{U}_2^1$. Since $E$ contains all embedding $F_v \rightarrow \Qbar_2$ for all $v|2$, the minimal primes of $\Lambda(\mathcal{U}_2^1)$ and $\Lambda(G_2)$ are in one to one correspondence with $\mathcal{O}^\times$ valued characters of $\mu$. We let $\eta$ be the character of $\mu$ given by our fixed eigenform $f$ above, and denote by $\mathfrak{q}_\eta$ the corresponding minimal primes of $\Lambda(\mathcal{U}_2^1)$ and $\Lambda(G_2)$. Set $\Lambda(\mathcal{U}_2^1,\eta) = \Lambda(\mathcal{U}_2^1)/\mathfrak{q}_\eta$ and $\Lambda(G_2,\eta) = \Lambda(G_2)/\mathfrak{q}_\eta$. Note that $\Lambda(\mathcal{U}_2^1,\eta)$ and $\Lambda(G_2,\eta)$ are isomorphic to power series rings over $\mathcal{O}$ in $[F:\Q]$ and $|\{v|2\}|+[F:\Q]$ variables, respectively. 

\subsubsection{}\label{thesetS}

We now specify some finite places of $F$ at which our deformation problem will be unramified. This may seem redundant, but is important for two reasons. The first is that below we will chose a dimension one characteristic $2$ prime ideal of the Hecke algebra and it will be important that (after a choice of framing) the local deformation ring surjects onto the Hecke algebra modulo the prime ideal in order to compare tangent spaces of the local and global deformation rings localized at our fixed prime ideal. The second is to guarantee the freeness of a certain group action, cf. \ref{FreeTwistAction}, which is necessary for the $2$-adic patching method. To these ends, first choose a finite set of places $\{v_1,\ldots,v_k\}$ disjoint from $\Sigma\cup \{v|2\}\cup\{v|\infty\}\cup\{v_0\}$, such that $\mathbf{T}_\psi(U,\eta)_\mathfrak{m} \cong \Lambda(\mathcal{U}_2^1)[T_{v_1},\ldots,T_{v_k}][T_{\varpi_v}]_{v|2}$, cf. \ref{Heckefingencor}. By enlarging $\{v_1,\ldots,v_k\}$ if necessary we can assume that if $\rhobar$ is dihedral, and $L$ denotes the unique quadratic extension of $F$ for which $\rhobar|_{G_L}$ is abelian, there is some $v_i \in \{v_1,\ldots,v_k\}$ that is inert in $L$. We set $S_\mathrm{ur} = \{v_1,\ldots,v_k\}$ and
	\[ S = \Sigma\cup\{v|2\}\cup\{v|\infty\}\cup S_\mathrm{ur}
	\]
We then set $S_0 = S\cup \{v_0\}$. 

\subsubsection{}\label{DefTheoryAssumptions}

Let $Q$ be a (possibly empty) set of places of $F$ disjoint from $S$. We let $R_{F,S\cup Q}^\psi$ denote the universal deformation ring for $G_{F,S\cup Q}$-deformations of $\overline{\rho}$ with determinant $\psi\epsilon_2$. We let $R_{F,S\cup Q}^{\square,\psi}$ denote the universal framed deformation ring for framed $G_{F,S\cup Q}$-deformations of $\overline{\rho}$ with determinant $\psi\epsilon_2$ and frames at places in $S$, cf. \S\ref{GlobalDefs}. We let $R_{F,S\cup Q}^\square$ denote the universal framed deformation ring for $G_{F,S\cup Q}$-deformations of $\overline{\rho}$ with frames at $S$, and with determinant $\psi\epsilon_2|_{G_v}$ for each $v \in S$, but not fixed globally, c.f. \S\ref{GlobalDefs}. We similarly define $R_{F,S_0\cup Q}^\psi$, $R_{F,S_0\cup Q}^{\square,\psi}$, and $R_{F,S_0 \cup Q}^\square$.

For each $v \in S_0$, we let $R_v^{\square,\psi}$ denote the universal lifting ring for lifts of $\overline{\rho}|_{G_v}$ with determinant $\psi\epsilon_2$. Set $R_S^{\square,\psi} = \hat{\otimes}_{v\in S} R_v^{\square,\psi}$ and $R_{S_0}^{\square,\psi} = \hat{\otimes}_{v\in S_0} R_v^{\square,\psi}$. For each $v\in S_0$, we define quotients of $\overline{R}_v^{\square,\psi}$ of $R_v^{\square,\psi}$ if $v\nmid p$, and of $R_v^{\square,\psi}\hat{\otimes}\Lambda(G_v,\eta_v)$ for $v|2$, as follows.
\begin{itemize}
	\item[-]  For $v|2$, $\overline{R}_v^{\square,\psi} = R_{\Lambda(G_v,\eta_v)}^{\triangle,\psi}$ with $R_{\Lambda(G_v,\eta_v)}^{\triangle,\psi}$ as in \ref{triliftringdef}. Since we are assuming $\rhobar|_{G_v}$ is either trivial or has order $2$ image, and that $F_v$ contains a $4$-th root of unity or satisfies $[F_v:\Q_2] \ge 3$, we have
	\begin{itemize}
		\item[-]  By \ref{triliftringdomain}, $\overline{R}_v^{\square,\psi}$ is a domain of relative $\mathcal{O}$-dimension $3+2[F_v:\Q_p]$. Moreover, if $Z_v$ denotes the closed subscheme of $\Spec \Lambda(G_v,\eta_v)$ defined by $(\chi_{\eta_v}^\mathrm{univ})^2 = \psi\epsilon_2$, and $V_v$ denotes its complement, 
		\[ (\Spec \overline{R}_v^{\square,\psi}\times_{\Spec \Lambda(G_v,\eta_v)} V_v)
		\otimes_\mathcal{O} \F
		\]
		 is integral.
		\item[-] By \ref{triliftringsmpts}, if $x = (\rho_x,\chi_x)$ is a closed point of $\Spec\overline{R}_v^{\square,\psi}[1/p]$ such that $\chi_x^2 \ne \psi$ or $\psi\epsilon_2$, then $\overline{R}_v^{\square,\psi}$ is formally smooth over $E$ at $x$.
			\end{itemize}
	\item[-] For $v\in \Sigma$, $\overline{R}_v^{\square,\psi} = R_v^{\square,\gamma\text{-st}}$ as in \ref{SemiStlp}. It is a domain of relative $\mathcal{O}$-dimension $3$, $\overline{R}_v^{\square,\psi}[1/p]$ is formally smooth over $E$, and $\overline{R}_v^{\square,\psi}\otimes_\mathcal{O} \F$ is a domain.
	\item[-] For $v|\infty$, $\overline{R}_v^{\square,\psi} = R_v^{\square,-1}$ as in \ref{OddLifts}. It is a domain of relative $\mathcal{O}$-dimension $2$, $\overline{R}_v^{\square,\psi}[1/p]$ is formally smooth over $E$, and $\overline{R}_v^{\square,\psi}\otimes_\mathcal{O} \F$ is a domain.
	\item[-] For $v\in S_\mathrm{ur}$, $\overline{R}_v^{\square,\psi} = R_v^{\square,\psi,\mathrm{ur}}$ as in (2) of \ref{FixedDetlp}. It is formally smooth over $\mathcal{O}$ of relative dimension $3$.
	\item[-] $\overline{R}_{v_0}^{\square,\psi}$ is as in \ref{FixedDetlp}. It is equidimensional and $\mathcal{O}$-flat of relative $\mathcal{O}$-dimension $3$. There is a minimal prime $\mathfrak{q}_\mathrm{ur}$ such that a lift factors through $\overline{R}_{v_0}^{\square,\psi}/\mathfrak{q}_\mathrm{ur}$ if and only if it is unramified. The quotient $\overline{R}_{v_0}^{\square,\psi}/\mathfrak{q}_\mathrm{ur}$ is formally smooth over $\mathcal{O}$.
\end{itemize}

We set $\overline{R}_S^{\square,\psi} = \hat{\otimes}_{v\in S} \overline{R}_v^{\square,\psi}$ and $\overline{R}_{S_0}^{\square,\psi} = \hat{\otimes}_{v\in S_0} \overline{R}_v^{\square,\psi}$

\begin{lem}\label{locdefringdim}

$\overline{R}_{S_0}^{\square,\psi}$ is $\mathcal{O}$-flat and equidimensional of relative dimension $3|S_0|+[F:\Q_p]$.
Any minimal prime of $\overline{R}_{S_0}^{\square,\psi}$ is of the form $\mathfrak{q}\overline{R}_{S_0}^{\square,\psi}$ with $\mathfrak{q}$ a minimal prime of $\overline{R}_{v_0}^{\square,\psi}$.

\end{lem}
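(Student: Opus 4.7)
The plan is to reduce the lemma to \ref{locdefring} by peeling off the factor at $v_0$ and invoking the commutative algebra of completed tensor products developed in \S\ref{ComAlg}. Writing
\[
\overline{R}_{S_0}^{\square,\psi} = \overline{R}_S^{\square,\psi} \,\hat{\otimes}_{\mathcal{O}}\, \overline{R}_{v_0}^{\square,\psi},
\]
$\mathcal{O}$-flatness is immediate from \ref{CompTensorProd} once one knows flatness of the two factors. By \ref{locdefring}, the first factor is an $\mathcal{O}$-flat domain of relative dimension $3|S|+[F:\Q_p]$, whose generic fibre, as is verified in the proof of \ref{locdefring} via \ref{TensorCNLO}(2), is even geometrically integral (after enlarging $\mathcal{O}$ if necessary). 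By \ref{FixedDetlp}, the second factor is $\mathcal{O}$-flat and equidimensional of relative dimension $3$.

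Next I plan to verify the hypothesis of \ref{TensorCNLO}(3) for the above decomposition. The only minimal prime of $\overline{R}_S^{\square,\psi}$ is zero, for which the hypothesis is already known. For $\overline{R}_{v_0}^{\square,\psi}$, each minimal prime $\mathfrak{q}$ avoids $\varpi_E$ by $\mathcal{O}$-flatness, so $\overline{R}_{v_0}^{\square,\psi}/\mathfrak{q}$ is itself an $\mathcal{O}$-flat domain of relative dimension $3$. That its generic fibre is geometrically integral follows from the analysis in the proof of \cite{Kisin2adic}*{Proposition 2.5.4}: each irreducible component of $\overline{R}_{v_0}^{\square,\psi}[1/p]$ is generically formally smooth over $E$ (the failure locus being the set of points with $\rho_x \cong \gamma_v\epsilon_p \oplus \gamma_v$, by part (3) of \ref{FixedDetlp}), and so becomes geometrically integral over $E$ after possibly enlarging $\mathcal{O}$.

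With the hypothesis verified, \ref{TensorCNLO}(3) gives directly that every minimal prime of $\overline{R}_{S_0}^{\square,\psi}$ is of the form $\overline{R}_S^{\square,\psi}\,\hat{\otimes}\,\mathfrak{q} = \mathfrak{q}\,\overline{R}_{S_0}^{\square,\psi}$ with $\mathfrak{q}$ a minimal prime of $\overline{R}_{v_0}^{\square,\psi}$, which is the second assertion of the lemma. For each such $\mathfrak{q}$, \ref{TensorCNLO}(2) shows that the quotient $\overline{R}_S^{\square,\psi}\,\hat{\otimes}\,(\overline{R}_{v_0}^{\square,\psi}/\mathfrak{q})$ is an $\mathcal{O}$-flat domain of relative dimension $(3|S|+[F:\Q_p])+3 = 3|S_0|+[F:\Q_p]$. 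Since all minimal-prime quotients share this dimension, $\overline{R}_{S_0}^{\square,\psi}$ is equidimensional of the asserted dimension. The only subtle point is ensuring geometric integrality, rather than mere integrality, of each irreducible component of $\overline{R}_{v_0}^{\square,\psi}[1/p]$; this is where one potentially has to enlarge $\mathcal{O}$, and this is harmless for our purposes.
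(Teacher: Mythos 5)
Your proposal is correct and proceeds by essentially the same route as the paper: decompose $\overline{R}_{S_0}^{\square,\psi}$ as a completed tensor product, apply \ref{TensorCNLO} to deduce both the form of the minimal primes and equidimensionality, with the only genuinely two-component factor being $\overline{R}_{v_0}^{\square,\psi}$. The paper's proof does this in one step across all $v\in S_0$ rather than first invoking \ref{locdefring} for $\overline{R}_S^{\square,\psi}$, and it cites part~(4) of \ref{TensorCNLO} for the minimal-prime claim where part~(3) is what's actually needed (a slip you correctly avoid); you also spell out the geometric-integrality hypothesis for the minimal-prime quotients of $\overline{R}_{v_0}^{\square,\psi}$, which the paper leaves implicit.
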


\begin{proof}

By \ref{triliftringdomain}, \ref{FixedDetlp}, \ref{SemiStlp}, and \ref{OddLifts}, and (3) of \ref{TensorCNLO}, $\overline{R}_{S_0}^{\square,\psi}$ is $\mathcal{O}$-flat of relative dimension
	\[ \sum_{v|2} 3+2[F_v:\Q_p]  + \sum_{v|\infty} 2\
	+ \sum_{v\in\Sigma\cup S_{\mathrm{ur}}\cup\{v_0\}} 3
	= 3|S_0|+[F:\Q_p],
	\]
and the claim regarding minimal primes follows from part (4) of \ref{TensorCNLO}.
	\end{proof}

In particular, for $\mathfrak{q}_\mathrm{ur}$ the minimal prime of $R_{v_0}^{\square,\psi}$ corresponding to unramified lifts, \ref{locdefringdim} implies that $\mathfrak{q}_\mathrm{ur}$ generates a minimal prime of $\overline{R}_{S_0}^{\square,\psi}$ which we again denote by $\mathfrak{q}_\mathrm{ur}$.

Let $\chi_{\eta_v}^\mathrm{univ} : G_v \rightarrow \Lambda(G_v,\eta_v)^\times$ denote the universal $\Lambda(G_v,\eta_v)$-valued character.

\begin{lem}\label{localdefringnormal}

Let $Z_1$ denote the closed subscheme of $\Spec \Lambda(G_2,\eta)$ defined by $(\chi_{\eta_v}^\mathrm{univ})^2 = \psi$ for some $v|2$, and $Z_2$ the closed subscheme defined by $(\chi_{\eta_v}^\mathrm{univ})^2 = \psi\epsilon_2$ for some $v|2$. Let $U$ denote the complement of $Z_1\cup Z_2$, and let $f: \Spec \overline{R}_{S_0}^{\square,\psi}/\mathfrak{q}_\mathrm{ur} \rightarrow \Spec \Lambda(G_2,\eta)$ denote the natural morphism. If $\mathfrak{p} \in f^{-1}(U)$, then $(\overline{R}_{S_0}^{\square,\psi}/\mathfrak{q}_\mathrm{ur})_\mathfrak{p}$ is normal.

\end{lem}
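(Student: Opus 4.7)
The plan is to verify Serre's criterion $(R_1)+(S_2)$ for the local ring $R_\mathfrak{p}$, where $R := \overline{R}_{S_0}^{\square,\psi}/\mathfrak{q}_\mathrm{ur}$. Since $\overline{R}_{v_0}^{\square,\psi}/\mathfrak{q}_\mathrm{ur}$ is formally smooth over $\calO$ by \ref{FixedDetlp}(2), it is a power series ring over $\calO$, and therefore $R$ is a power series ring over $\overline{R}_S^{\square,\psi}$. Since normality is preserved under power series extensions, I reduce to checking normality of $\overline{R}_S^{\square,\psi}$ at the image of $\mathfrak{p}$, which I again denote $\mathfrak{p}$.

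For $(S_2)$, the plan is to show $\overline{R}_S^{\square,\psi}$ is Cohen--Macaulay. Each local factor is a complete intersection: for $v|2$ this follows from \ref{smoverborelmodp} combined with \ref{borelliftringmodp}(2); for $v\in\Sigma$, from the presentation in the proof of \ref{SemiStlp}; for $v|\infty$, from \ref{OddLifts}; and for $v\in S_\mathrm{ur}$, the ring is formally smooth. The completed tensor product of $\calO$-flat complete intersections over $\calO$ is again a complete intersection (with relative dimension matching \ref{locdefringdim}), hence Cohen--Macaulay, so $(S_2)$ holds everywhere on $\Spec\overline{R}_S^{\square,\psi}$.

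For $(R_1)$ at $\mathfrak{p}$, I treat separately the height-one primes $\mathfrak{q}\subseteq\mathfrak{p}$ according to whether they contain $\varpi_E$. If $\varpi_E\notin\mathfrak{q}$, then $R_\mathfrak{q}$ is a local ring of $\overline{R}_S^{\square,\psi}[1/p]$. By \ref{triliftringsmpts} (for $v|2$, using that $f(\mathfrak{p})\in U$), \ref{SemiStlp} (for $v\in\Sigma$), \ref{OddLifts} (for $v|\infty$), and \ref{FixedDetlp}(2) (for $v\in S_\mathrm{ur}$), every closed point of $\overline{R}_v^{\square,\psi}[1/p]$ lying over $U$ is formally smooth over $E$; iterating \ref{TensorCNLO}(1) propagates this to $\overline{R}_S^{\square,\psi}[1/p]$. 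Since that ring is Jacobson by \ref{Jacobsonness} and the non-regular locus of a Noetherian excellent scheme is closed, every point of the preimage of $U$ in $\Spec\overline{R}_S^{\square,\psi}[1/p]$ is regular, handling this case.

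If $\varpi_E\in\mathfrak{q}$, then $\mathfrak{q}$ is minimal over $(\varpi_E)$ in $R_\mathfrak{p}$. I claim that $(\overline{R}_S^{\square,\psi}\otimes_\calO\F)_\mathfrak{p}$ is a domain; granting this, $\mathfrak{q}R_\mathfrak{q}=(\varpi_E)R_\mathfrak{q}$ is principal and generated by a non-zero-divisor, making $R_\mathfrak{q}$ a DVR. To establish the claim, I combine integrality of $\overline{R}_v^{\square,\psi}\otimes_\calO\F$ over $V_v$ from \ref{triliftringdomain}(2) for $v|2$ (since the image of $\mathfrak{p}$ lies in $U\subset V_v$), integrality of the mod-$p$ fiber from \ref{SemiStlp} and \ref{OddLifts} for the remaining finite and archimedean places, and formal smoothness for $v\in S_\mathrm{ur}$; after enlarging $\calO$ to ensure geometric integrality over $\F$, iterating the $\F$-analogue of \ref{TensorCNLO}(2) (or equivalently using \ref{TensorCNLO}(4) for reducedness together with an irreducibility argument) yields integrality of $\overline{R}_S^{\square,\psi}\otimes_\calO\F$ in a neighborhood of $\mathfrak{p}$. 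The main obstacle I anticipate is precisely this last propagation: \ref{triliftringdomain}(2) only provides integrality after restriction to the open subscheme $V_v$ of $\Spec\Lambda(G_v,\eta_v)$, so the argument must be carried out localized at $\mathfrak{p}$ rather than applied globally.
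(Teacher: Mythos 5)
Your overall strategy of verifying Serre's criterion $(R_1)+(S_2)$ matches the paper, and your generic-fibre $(R_1)$ argument via formal smoothness (using \ref{triliftringsmpts}, \ref{TensorCNLO}(1), and Jacobsonness plus closedness of the non-regular locus) is essentially the paper's. However, there are two real gaps elsewhere.

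\textbf{The $(S_2)$ step.} You assert that each $\overline{R}_v^{\square,\psi}$ for $v|2$ is a complete intersection, citing \ref{smoverborelmodp} and \ref{borelliftringmodp}(2). But those lemmas describe the \emph{completed local rings of $\mathscr{L}$}, not of $\overline{R}_v^{\square,\psi}=R_{\Lambda(G_v,\eta_v)}^{\triangle,\psi}$, which is only the scheme-theoretic image of $\mathscr{L}$. By \ref{opensetiso} the two agree over $V_v$, but not globally; when $\rhobar|_{G_v}$ is trivial, $\mathscr{L}\to\Spec R^{\triangle,\psi}$ contracts a $\mathbb{P}^1$ in the special fibre, and such a contraction need not preserve the CI (or even CM) property. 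So your claim of CM-ness of $\overline{R}_S^{\square,\psi}$ is unjustified as stated, and would at best hold after restricting to $f^{-1}(U)$ and then arguing that the completed tensor product of $\mathcal{O}$-flat CI factors over $\mathcal{O}$ stays CI — a step you do not supply. The paper avoids all of this: it gets $\depth R_\mathfrak{p}\ge 2$ directly from the fact that $(R/\varpi_E R)_\mathfrak{p}$ is reduced of dimension $\ge 1$, hence its maximal ideal contains a non-zero-divisor. This needs only reducedness, which is supplied by \ref{TensorCNLO}(4).

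\textbf{The special-fibre $(R_1)$ step.} You ask for $(\overline{R}_S^{\square,\psi}\otimes_\calO\F)_\mathfrak{p}$ to be a \emph{domain}, but none of the cited results establish integrality of the mod-$p$ completed tensor product — \ref{TensorCNLO}(2) is a statement about the generic fibre, and you flag yourself that an ``$\F$-analogue'' would be needed. Fortunately the domain hypothesis is stronger than necessary: if $(R/\varpi_E R)_\mathfrak{p}$ is merely \emph{reduced}, then any height-one $\mathfrak{q}\subseteq\mathfrak{p}$ containing $\varpi_E$ is a minimal prime of $(\varpi_E)$, so $(R/\varpi_E R)_\mathfrak{q}$ is a field and $R_\mathfrak{q}$ is a DVR. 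Reducedness is exactly what \ref{TensorCNLO}(4) gives, combined with \ref{triliftringdomain}(2) for $v|2$ (which places the support of the nilradical of $\overline{R}_v^{\square,\psi}\otimes_\calO\F$ inside $Z_v$) and \ref{SemiStlp}, \ref{OddLifts}, \ref{FixedDetlp}(2) for the remaining places; this is the paper's route. Switching your ``domain'' claim to ``reduced'' and invoking \ref{TensorCNLO}(4) closes this gap and also — as explained above — disposes of $(S_2)$ at the same time, eliminating the need for the CI/CM argument entirely.
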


\begin{proof}

For ease of notation, we set $R = \overline{R}_{S_0}^{\square,\psi}/\mathfrak{q}_\mathrm{ur}$, $R_v = \overline{R}_v^{\square,\psi}$ for $v\in S$, and $R_{v_0} = R_{v_0}^{\square,\psi}/\mathfrak{q}_\mathrm{ur}$. Note that $R \cong \hat{\otimes}_{v\in S_0} R_v$. We check Serre's conditions $(S_2)$ and $(R_1)$. Since $f^{-1}(U)$ is open it suffices to check 
\begin{enumerate}
	\item[(a)] for any $\mathfrak{p} \in f^{-1}(U)$, we have $\depth R_\mathfrak{p} \ge\min\{i,\mathrm{ht}\mathfrak{p}\}$ and
	\item[(b)] for any $\mathfrak{p} \in f^{-1}(U)$ with $\mathrm{ht}\mathfrak{p} = 1$, $R_\mathfrak{p}$ is regular.
\end{enumerate}

Take $\mathfrak{p} \in f^{-1}(U)$. First assume that $2 \notin \mathfrak{p}$. Since $R[1/2]$ is Jacobson, we can find a finite extension $E'/E$ and an $E'$ valued point $x: R[1/2] \rightarrow E'$ whose kernel lies in $f^{-1}(U)$ and contains $\mathfrak{p}$. Write $x = (x_v)$ for $x_v : R_v \rightarrow E'$, for each $v \in S_0$. Part (2) of \ref{FixedDetlp}, \ref{SemiStlp}, and \ref{OddLifts}, imply that $R_v$ is smooth over $E$ for each $v \in S_0$ with $v\nmid 2$. For $v|2$, since the kernel of $x$ lies in $f^{-1}(U)$, \ref{triliftringsmpts} implies that $R_v$ is smooth over $E$ at $x_v$.  Part (1) of \ref{TensorCNLO} shows that $R[1/2]$ is smooth over $E$ at $x$. Since $\mathfrak{p}$ is contained in the kernel of $x$, $\mathfrak{p}$ is regular. This establishes both (a) and (b) for primes in $f^{-1}(U)[1/2]$.

Now take $\mathfrak{p} \in f^{-1}(U)$ with $2 \in \mathfrak{p}$. Let $\varpi_E$ denote a uniformizer of $E$. Let $N$ denote the nilradical of $R/\varpi_E R$, and for each $v\in S_0$, let $N_v$ denote the nilradical of $R_v/\varpi_E R_v$. Part (2) of \ref{FixedDetlp}, \ref{SemiStlp}, and \ref{OddLifts}, imply that $N_v=0$ for each $v \in S_0$ with $v\nmid 2$. For $v|2$, \ref{triliftringdomain} implies that the support of $N_v$ in $\Spec \Lambda(G_v,\eta_v)$ is contained in the closed subscheme defined by $(\chi_{\eta_v}^\mathrm{univ})^2 = \psi\epsilon_2$. Then part (4) of \ref{TensorCNLO} implies that the support of $N$ is contained in $f^{-1}(Z_2)$, so $R_\mathfrak{p}/\varpi_E R_\mathfrak{p} \cong (R/\varpi_E R)_\mathfrak{p}$ is reduced. If $\mathrm{ht}\mathfrak{p} = 1$, then $R_\mathfrak{p}/\varpi_E R_\mathfrak{p}$ is a field, and $\mathfrak{p}$ is regular. If $\mathrm{ht}\mathfrak{p} >1$, then $R_\mathfrak{p}/\varpi_E R_\mathfrak{p}$ reduced and of dimension $\ge 1$ implies the existence of a non-zero divisor in its maximal ideal; hence, $\depth R_\mathfrak{p} \ge 2$.
	\end{proof}

As in \ref{QuotientPres}, we define quotients 
\begin{itemize}
	\item[-] $R_{F,S\cup Q}^{\square,\psi}\hat{\otimes}\Lambda(G_2,\eta) \rightarrow \overline{R}_{F,S\cup Q}^{\square,\psi}$,
	\item[-] $R_{F,S_0\cup Q}^{\square,\psi}\hat{\otimes}\Lambda(G_2,\eta) \rightarrow \overline{R}_{F,S_0\cup Q}^{\square,\psi}$, and
	\item[-]$R_{F,S_0\cup Q}^\square\hat{\otimes}\Lambda(G_2,\eta) \rightarrow \overline{R}_{F,S_0\cup Q}^\square$
\end{itemize}
 by letting 
 \begin{itemize}
 	\item[-] $\overline{R}_{F,S\cup Q}^{\square,\psi} = \overline{R}_S^{\square,\psi} \otimes_{R_S^{\square,\psi}} R_{F,S\cup Q}^{\square,\psi}$,
	\item[-] $\overline{R}_{F,S_0\cup Q}^{\square,\psi} = \overline{R}_{S_0}^{\square,\psi} \otimes_{R_{S_0}^{\square,\psi}} R_{F,S_0\cup Q}^{\square,\psi}$, and 
	\item[-] $\overline{R}_{F,S_0\cup Q}^\square = \overline{R}_{S_0}^{\square,\psi} \otimes_{R_{S_0}^{\square,\psi}} R_{F,S_0\cup Q}^\square$. 
\end{itemize}
We define a quotient $R_{F,S\cup Q}^\psi \hat{\otimes}\Lambda(G_2,\eta) \rightarrow \overline{R}_{F,S\cup Q}^\psi$ by letting $\overline{R}_{F,S\cup Q}^\psi$ be the image of $R_{F,S\cup Q}^\psi\hat{\otimes}\Lambda(G_2,\eta)$ under the natural map
	\[ R_{F,S\cup Q}^\psi\hat{\otimes}\Lambda(G_2,\eta)
	 \longrightarrow R_{F,S \cup Q}^{\square,\psi} \longrightarrow 
	\overline{R}_{F,S \cup Q}^{\square,\psi},
	\]
and similarly with $S_0$ in place of $S$. Note that if $E'/E$ is finite with ring of integers $\mathcal{O}_{E'}$, a local $\mathcal{O}$-algebra morphism $\overline{R}_{F,S_0}^{\square,\psi}\rightarrow \mathcal{O}_{E'}$ has kernel lying over $\mathfrak{q}_\mathrm{ur}\in \Spec R_{S_0}^{\square,\psi}$ if and only if the induced morphism $\overline{R}_{F,S_0}^\psi \rightarrow \mathcal{O}_{E'}$ factors through $\overline{R}_{F,S}^\psi$.

By \ref{BigGalRepLocal}, the existence of $f$ in \ref{HeckeAssumptions} yields surjective morphisms $\overline{R}_{F,S}^\psi \rightarrow \mathbf{T}_\psi(U,\eta)_\mathfrak{m}$, $\overline{R}_{F,S_0}^\psi \rightarrow \mathbf{T}_\psi(U_0,\eta)_\mathfrak{m}$, and $\overline{R}_{F,S_0\cup Q}^\psi \rightarrow \mathbf{T}_\psi(U_Q,\eta)_\mathfrak{m}$. These are all morphisms of $\Lambda(G_2,\eta)$-algebras, and the natural diagrams all commute.

If $R$ is either of $\overline{R}_{F,S\cup Q}^\psi$ or $\overline{R}_{F,S_0\cup Q}^\psi$, $A$ is a $\mathrm{CNL}_\mathcal{O}$-algebra, and $x \in \Spf R(A)$, we let $\rho_x$ denote the pushforward of  the universal deformation by $x$. If $\mathfrak{p} \in \Spec R$, we denote by $\rho_\mathfrak{p}$ the pushforward of the universal deformation by $R \rightarrow R/\mathfrak{p}$.

\subsubsection{}\label{SWPrimeDef}

As in \cite{SWreducible} and \cite{SWirreducible}, we say a prime $\mathfrak{p}\in\Spec\overline{R}^\psi_{F,S}$ is \textit{pro-modular} if it is the inverse image of a prime of $\textbf{T}_\psi(U,\eta)_\mathfrak{m}$. We say a closed subset of $\Spec\overline{R}^\psi_{F,S}$ is \textit{pro-modular} if every prime in it is pro-modular. Note that an irreducible component is pro-modular if and only if its corresponding minimal prime is pro-modular. We say a prime $\mathfrak{p}\in\Spec\overline{R}^\psi_{F,S}$ is \textit{nice} if
\begin{enumerate}
\item[(a)] $\mathfrak{p}$ is pro-modular;
\item[(b)] $\mathfrak{p}$ is dimension one and containing $2$;
\item[(c)] $\rho_\mathfrak{p}$ is absolutely irreducible and non-dihedral;
\item[(d)] for each $v|2$, the image of $\mathfrak{p}$ in $\Spec\Lambda(G_v,\eta_v)$ does not lie in the closed subscheme defined by $(\chi_{\eta_v}^\mathrm{univ})^2 = \psi\epsilon_2$;
\item[(e)] the image of $\rho_\mathfrak{p}$ contains a non-trivial unipotent element.
\end{enumerate}

\noindent We are now in a position to state the the localized ``$R=T$" theorem.

\begin{prop}\label{LocRredT} 

With the notation and assumptions as above, if $\mathfrak{p} \in \Spec \overline{R}_{F,S}^\psi$ is a nice prime, then every prime of $\overline{R}_{F,S}^\psi$ contained in $\mathfrak{p}$ is pro-modular.
\end{prop}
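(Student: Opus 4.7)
The plan is to mimic the Taylor-Wiles patching of \cite{KW2}*{Proposition 9.3}, but with the twist of \cite{SWreducible}*{\S 5}: instead of mapping the formal variables into the maximal ideal of the patched ring, we map them into the pullback of $\mathfrak{p}$. Then after localizing and completing at $\mathfrak{p}$, the patched map becomes a surjection to which we can apply a dimension argument. Before patching, we observe that the representation $\rho_\mathfrak{p} : G_F \to \GL_2(A)$ attached to $\mathfrak{p}$ (where $A$ is the integral closure of the image of $\overline{R}_{F,S}^\psi/\mathfrak{p}$ in the fraction field) lands in a two-dimensional $\GL_2$ over a characteristic $2$ local field by (b), is absolutely irreducible and non-dihedral by (c), has finite-order determinant since $\det \rho_\mathfrak{p}$ is a twist of $\epsilon_2$ by the finite-order character $\psi$, and has non-trivial unipotent image by (e). Using assumption A5 is guaranteed when $\overline{\rho}$ is dihedral because some $v \in S_{\mathrm{ur}}$ is inert in $L/F$ (cf.\ \ref{thesetS}), giving a $\sigma$ with the required eigenvalue properties after applying \ref{DihedCrit}. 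Thus $\rho_\mathfrak{p}$ satisfies the hypotheses of \S\ref{AuxPrimesSec}, and for each $n \ge n_1$ we obtain from \ref{AuxPrimes} a set $Q_n$ of Taylor-Wiles primes of cardinality $h = \rank_A H^1(G_{F,S}, \Ad) - 2$ satisfying the crucial size bounds $|H^1_{Q_n}(G_{F,S\cup Q_n}, \Ad_m)| \asymp q^{2m}$ and $|H^0(G_v,\Ad_m)| \asymp q^{2m}$ for each $v \in Q_n$, uniformly in $n$ and $m$.

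For each $n$, augment the level to $U_{Q_n}$ (cf.\ \ref{AuxPrimesModular}), obtaining a $\Lambda(\mathcal{U}_2^1)[\Delta_{Q_n}]$-module $S_\psi(U_{Q_n})_\mathfrak{m}$ that is free of rank $d$ equal to the $\Lambda(\mathcal{U}_2^1)$-rank of $S_\psi(U_0)_\mathfrak{m}$ by \ref{AuxPrimeControl}, and whose $\Delta_{Q_n}$-coinvariants recover $S_\psi(U_0)_\mathfrak{m}$. Correspondingly the ring $\overline{R}_{F,S_0\cup Q_n}^\psi$ admits, via the presentation of \ref{CMPres} together with the cohomology bounds from \ref{AuxPrimes}, a surjection from a power series ring $\overline{R}_{S_0}^{\square,\psi}[[x_1,\ldots,x_k]]$, with $k$ chosen so that $k$ variables is enough to cover all the $Q_n$ simultaneously (this $k$ equals $h$, the rank of $H^1$). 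The key technical modification à la Skinner-Wiles is to arrange these maps so that the images of $x_1,\ldots, x_k$ lie in the pullback of $\mathfrak{p}$; this is possible because the relevant cokernels controlling how the auxiliary variables sit are finite of bounded size, thanks to the uniform bounds on the torsion of local cohomology in \ref{AuxPrimes}. The freeness of the $G_{Q_n,2}^\ast$-action on $\Spf \overline{R}_{F,S_0 \cup Q_n}^\square$ from \ref{FreeTwistAction} (applicable because of our choice of $S_{\mathrm{ur}}$ in \ref{thesetS}) provides the group action on the patched object that \cite{KW2}*{\S2.6} requires in characteristic $2$.

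Assembling these truncated group-chunk actions and morphisms via a diagonal pigeonhole extraction as in \cite{KW2}*{Proposition 2.7} (see \ref{goupchunkaction}), we obtain in the limit an $\overline{R}_{S_0}^{\square,\psi}$-algebra $R_\infty$ with a surjection
\[
R_\infty \longrightarrow \overline{R}_{F,S_0}^{\square,\psi}
\]
sending $x_1,\ldots,x_k$ into the pullback of $\mathfrak{p}$, together with a finite $R_\infty$-module $M_\infty$ whose reduction modulo the ideal generated by the $x_i$ and the $S_0$-framing variables recovers a framed version of $S_\psi(U_0,\eta)_\mathfrak{m}$. Now localize and complete at the preimage $\widetilde{\mathfrak{p}}$ of $\mathfrak{p}$ in $R_\infty$. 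By condition (d) of a nice prime, the image of $\mathfrak{p}$ in $\Spec \Lambda(G_2,\eta)$ avoids the subscheme $(\chi^{\mathrm{univ}}_{\eta_v})^2 = \psi\epsilon_2$, so the image of $\widetilde{\mathfrak{p}}$ in $\Spec \overline{R}_{S_0}^{\square,\psi}/\mathfrak{q}_{\mathrm{ur}}$ lands in the locus of \ref{localdefringnormal}; hence the completion of this localization is a normal domain, which forces the completion $(R_\infty)_{\widetilde{\mathfrak{p}}}^\wedge$ to be a domain as well (since $R_\infty$ is formally smooth over the local ring by the patching).

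A dimension count: the relative dimension of $\overline{R}_{S_0}^{\square,\psi}/\mathfrak{q}_{\mathrm{ur}}$ is $[F:\Q]+3|S_0|$ by \ref{locdefringdim}, the Iwasawa variables on the Hecke side contribute $[F:\Q] + |\{v|2\}|$, and the framing and auxiliary variables match up exactly (as they did in the compact Taylor-Wiles argument of \cite{KW2}), so that the patched Hecke module $M_\infty$ has rank matching the dimension of the patched ring after completion at $\widetilde{\mathfrak{p}}$. Since the completed ring is a domain, $M_\infty$ is faithful, and the surjection $R_\infty \to \overline{R}_{F,S_0}^{\square,\psi}$ becomes an isomorphism after localization and completion at the pullback of $\mathfrak{p}$. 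This yields an isomorphism $(\overline{R}_{F,S_0}^\psi)_{\mathfrak{p}_0}^\wedge \cong (\mathbf{T}_\psi(U_0,\eta)_\mathfrak{m})_{\mathfrak{p}_0}^\wedge$ for $\mathfrak{p}_0$ the pullback of $\mathfrak{p}$ to $\overline{R}_{F,S_0}^\psi$; descending from $S_0$ to $S$ via the $v_0$-auxiliary-level comparisons of \ref{DegenAtNonEis} and \ref{NoSteinberg}, using that $y$ in \ref{NoSteinberg} is a unit at $\mathfrak{p}$ (by (c) applied to $\Frob_{v_0}$), we obtain the desired isomorphism after localizing and completing at $\mathfrak{p}$. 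Any prime of $\overline{R}_{F,S}^\psi$ contained in $\mathfrak{p}$ survives in this completion and therefore is the pullback of a prime of the Hecke algebra, i.e.\ is pro-modular. The hardest step is the uniformity in $n$ of the torsion bounds built into \ref{AuxPrimes}: without them, the error terms in the presentation of $\overline{R}_{F,S_0 \cup Q_n}^{\square,\psi}$ as a quotient of $\overline{R}_{S_0}^{\square,\psi}[[x_1,\ldots, x_k]]$ would grow with $n$ and spoil the projective limit.
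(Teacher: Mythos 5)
Your proposal captures the correct high-level strategy, which matches the paper's: patch \`a la \cite{KW2}*{Proposition 9.3} with the Skinner--Wiles modification of mapping the formal variables into the pullback of $\mathfrak{p}$ rather than the maximal ideal, then localize and complete at $\mathfrak{p}$, invoke \ref{localdefringnormal} to control the completed local deformation ring, and conclude by a dimension argument. However, there are genuine gaps in the final step and some errors in the supporting details.

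The central gap is your claim that ``Since the completed ring is a domain, $M_\infty$ is faithful.'' The completed ring $(R_\infty)_{\mathfrak{p}_\infty}^\wedge$ is not a domain: it is a quotient of $(R_\mathrm{loc})_{\mathfrak{p}_\mathrm{loc}}^\wedge[[x_1,\ldots,x_k]]$, and $R_\mathrm{loc}$ has several irreducible components coming from $R_{v_0}^{\square,\psi}$ (see \ref{locdefringdim}). What \ref{localdefringnormal} actually buys is that the specific component corresponding to $\mathfrak{q}_\mathrm{ur}$ remains irreducible after localization and completion, i.e.\ $\mathfrak{q}_\mathrm{ur}(R_\mathrm{loc})_{\mathfrak{p}_\mathrm{loc}}^\wedge$ is still a minimal prime. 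The dimension argument then shows only that the support of $(M_\infty)_{\mathfrak{p}_\infty}$ is a union of irreducible components; to conclude it contains the $\mathfrak{q}_\mathrm{ur}$-component, one first needs \ref{UniqueComp} (that some prime lying over $\mathfrak{q}_\mathrm{ur}$ is in the support) and then the $p=2$-specific argument that the support is stable under the free action of $\mathfrak{T}_2$. Since $R_\infty$ is a $\mathfrak{T}_2$-torsor over $R_\infty^\mathrm{inv}$, and there may be several minimal primes of $(R_\infty)_{\mathfrak{p}_\infty}$ above $\mathfrak{q}_\mathrm{ur}(R_\infty^\mathrm{inv})_{\mathfrak{p}_\infty^\mathrm{inv}}$, the torsor acts transitively on them and the compatible $\mathfrak{T}_2(\mathcal{O})$-action on $M_\infty$ (part (7) of \ref{Rinf}, coming from \ref{modulartwists}) guarantees the support contains all of them. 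You mention the group action but never use it to close the argument; without it, the proof does not go through.

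A few smaller points. The number $k$ of auxiliary variables is $1+2h$ (cf.\ \ref{genoverloc}), not $h$; the off-by-one-plus-$2h$ comes from framing and the center. The $S_\mathrm{ur}$ place inert in $L$ is needed only for the freeness of the $G_Q^\ast$-action (\ref{FreeTwistAction}), not for assumption A5, which is instead established via \ref{DihedCrit} and the observation that since $2\in\mathfrak{p}$ the determinant is finite order; moreover, the eigenvalues of $\rho_\mathfrak{p}(\tau_0)$ may only be rational over a quadratic ramified extension $A''$, a subtlety you omit. Finally, \ref{NoSteinberg} controls cokernels at the Taylor--Wiles primes $Q_n$ (entering in \ref{MnProperties}) and has nothing to do with $v_0$; the descent from $S_0$ to $S$ is handled separately via the Zariski density of arithmetic primes and the properness of the relevant maps, as in the paper's preliminary reductions.
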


\subsection{The Setup}\label{the Setup}

The proof of \ref{LocRredT} will be carried out in a number of steps. Fix a nice prime $\mathfrak{p}$ and let $A =  \overline{R}_{F,S}^\psi/\mathfrak{p}$.

\subsubsection{}\label{StoS0}

We have a commutative diagram
	\[
	\xymatrix{
	\overline{R}_{F,S_0}^\psi \ar[r] \ar[d] & \mathbf{T}_\psi(U_0,\eta)_\mathfrak{m} 
	\ar[d]\\
	\overline{R}_{F,S}^\psi \ar[r] & \mathbf{T}_\psi(U,\eta)_\mathfrak{m} }
	\]
with all arrows surjective. Pull back $\mathfrak{p}$ to a prime $\mathfrak{p}_0$ of $\mathbf{T}_\psi(U_0,\eta)_\mathfrak{m}$, and denote again by $\mathfrak{p}_0$ its pullback to $\overline{R}_{F,S_0}^\psi$. Let $X$ denote the Zariski closure in $\Spec \overline{R}_{F,S_0}^\psi$ of the set of points $x \in \Spf\overline{R}_{F,S_0}^\psi(\mathcal{O}_{E'})$ whose corresponding deformations are unramified at $v_0$, as $E'$ ranges over all finite extensions of $E$. Note that $X$ is the image of $\Spec (\overline{R}_{F,S_0}^{\square,\psi}/\mathfrak{q}_\mathrm{ur})$ under $\Spec \overline{R}_{F,S_0}^{\square,\psi} \rightarrow \Spec\overline{R}_{F,S_0}^\psi$. From this it follows that $X$ is also equal to the image of $\Spec \overline{R}_{F,S}^\psi \rightarrow \Spec \overline{R}_{F,S_0}^\psi$. Consider the commutative diagram
	\[
	\xymatrix{
		\Spec \mathbf{T}(U,\eta)_\mathfrak{m} \ar[r] \ar[d] &
		\Spec \overline{R}_{F,S}^\psi \ar[d]\\
		\Spec \mathbf{T}(U_0,\eta)_\mathfrak{m}  \ar[r] &
		\Spec \overline{R}_{F,S_0}^\psi .	}
	\]
Let $\mathfrak{q}$ be a minimal prime of $\mathbf{T}_\psi(U_0,\eta)_\mathfrak{m}$ whose image in $\Spec \overline{R}_{F,S_0}^\psi$ lies in $X$. We know that any arithmetic prime contained in $\mathfrak{q}$ is in the image of $\Spec \mathbf{T}_\psi(U,\eta)_\mathfrak{m}$. By Zariski density of arithmetic primes, cf. \ref{ArPrimeZarDense}, $\mathfrak{q}$ must be in the image of $\Spec\mathbf{T}_\psi(U,\eta)_\mathfrak{m}$. It thus suffices to prove that any element of $X \cap \Spec (\overline{R}_{F,S_0}^\psi)_{\mathfrak{p}_0} \subset \Spec \overline{R}_{F,S_0}^\psi$ is in the image of $\Spec \mathbf{T}(U_0,\eta)_{\mathfrak{p}_0}$. 

Recall $\overline{R}_{F,S_0}^{\square,\psi}$ is isomorphic to a power series ring over $\overline{R}_{F,S_0}^\psi$ in $4|S_0|-1$ variables, cf. \ref{globalquotient}. Set $j = 4|S_0|-1$, and choose a presentation $\overline{R}_{F,S_0}^{\square,\psi} \cong \overline{R}_{F,S_0}^\psi[[y_1,\ldots,y_j]]$. Let $\overline{R}_{F,S_0}^{\square,\psi} \rightarrow \overline{R}_{F,S_0}^\psi$ be the map sending each $y_i$ to zero. Set $\mathbf{T}_\psi^\square(U_0,\eta)_\mathfrak{m} =  \overline{R}_{F,S_0}^{\square,\psi} \hat{\otimes}_{\overline{R}_{F,S_0}^\psi} \mathbf{T}_\psi(U_0,\eta)_\mathfrak{m}$. We have a map $\mathbf{T}_\psi^\square(U_0,\eta)_\mathfrak{m} \rightarrow \mathbf{T}_\psi(U_0,\eta)_\mathfrak{m}$, by sending each $y_i$ to zero. Pull back $\mathfrak{p}_0$ to ideals of $\mathbf{T}_\psi^\square(U_0,\eta)_\mathfrak{m}$ and $\overline{R}_{F,S}^{\square,\psi}$, and denote each again by $\mathfrak{p}_0$. In order to show that any element of $X \cap \Spec (\overline{R}_{F,S_0}^\psi)_{\mathfrak{p}_0} \subset \Spec \overline{R}_{F,S_0}^\psi$ is in the image of $\Spec \mathbf{T}(U_0,\eta)_{\mathfrak{p}_0}$, it suffices to show that any element of $\Spec(\overline{R}_{F,S_0}^{\square,\psi})_{\mathfrak{p}_0}$ lying over $\mathfrak{q}_\mathrm{ur} \in \Spec R_{S_0}^{\square,\psi}$ is in the image of $\Spec \mathbf{T}_\psi^\square(U_0,\eta)_{\mathfrak{p}_0} \rightarrow \Spec(\overline{R}_{F,S_0}^{\square,\psi})_{\mathfrak{p}_0}$.
	
Finally, recall that we have a faithful $\mathbf{T}_\psi(U_0,\eta)_\mathfrak{m}$-module $S_\psi(U_0,\eta)_\mathfrak{m}$. Set $S_\psi^\square(U_0,\eta)_\mathfrak{m} =  \overline{R}_{F,S_0}^{\square,\psi} \hat{\otimes}_{\overline{R}_{F,S_0}^\psi} S_\psi(U_0,\eta)_\mathfrak{m}$; this is a faithful $\mathbf{T}_\psi^\square(U_0,\eta)_\mathfrak{m}$-module. To prove \ref{LocRredT}, it suffices to show that if $\mathfrak{q} \in \Spec (\overline{R}_{F,S_0}^{\square,\psi})_{\mathfrak{p}_0}$ lies over $\mathfrak{q}_\mathrm{ur} \in \Spec \overline{R}_{S_0}^{\square,\psi}$, then $\mathfrak{q}$ is in the support of $S_\psi^\square(U_0,\eta)_\mathfrak{m}$. This is what we will prove.

\subsubsection{}\label{somereductions}

For ease of notation, set $\Lambda = \Lambda(\mathcal{U}_p^1,\eta)=\Lambda(I_p,\eta)$, $R_\mathrm{loc} =  \overline{R}_{S_0}^{\square,\psi}$, $R_0' = \overline{R}_{F,S_0}^\square $, $R_0 =  \overline{R}_{F,S_0}^{\square,\psi}$, and $M_0 = S_\psi^\square(U_0,\eta)_\mathfrak{m} $. Also set $B = \Lambda[[y_1,\ldots,y_j]]$. Let $\mathfrak{p}_\Lambda$ denote the pullback of $\mathfrak{p}$ to $\Lambda$. Note that $R_0'$ and $R_0$ are $B$-algebras, and the surjection $R_0'\rightarrow R_0$ is a $B$-algebra morphism. We denote by $\mathfrak{p}_0'$ the pullback of $\mathfrak{p}$ to $R_0'$, and $\mathfrak{p}_\mathrm{loc}$ its pullback to $R_\mathrm{loc}$. By our assumption on $(U_0)_{v_0}$, \ref{VertControl} and \ref{WeightChangeInf} imply that $S_\psi(U,\eta)_\mathfrak{m}$ is finite free over $\Lambda$, thus $M_0$ is finite free of the same rank over $B$.

\begin{lem}\label{UniqueComp}
There is a prime $\mathfrak{q}\in \Spec R_0$ contained in $\mathfrak{p}_0$ and in the support of $M_0$ such that the irreducible component of $\Spec R_\mathrm{loc}$ determined by $\mathfrak{q}_\mathrm{ur}$ is the unique irreducible component containing the image of $\mathfrak{q}$.
\end{lem}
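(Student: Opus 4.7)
The plan is to take $\mathfrak{q}$ as the pullback to $R_0$ of a minimal prime $\tilde{\mathfrak{q}}$ of $\mathbf{T}_\psi(U,\eta)_\mathfrak{m}$ contained in $\mathfrak{p}$.  Because the level $U$ is maximal compact at $v_0$, the associated Galois representation is unramified at $v_0$, so the image of $\mathfrak{q}$ in $\Spec \overline{R}_{v_0}^{\square,\psi}$ will automatically land on the unramified component $V(\mathfrak{q}_\mathrm{ur})$ of part (2) of \ref{FixedDetlp}.  To obtain \emph{uniqueness} of the ambient component, I would produce a characteristic zero specialization of $\tilde{\mathfrak{q}}$ whose local representation at $v_0$ avoids the exceptional split form $\gamma_v \epsilon_p \oplus \gamma_v$: part (3) of \ref{FixedDetlp} then confines its image to the unramified component alone, and the containment propagates the same conclusion back to $\mathfrak{q}$.

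In detail, pick $\tilde{\mathfrak{q}} \subseteq \mathfrak{p}$ a minimal prime of $\mathbf{T}_\psi(U,\eta)_\mathfrak{m}$ and let $\mathfrak{q} \in \Spec R_0$ be its pullback along the chain of surjections
\[
R_0 \twoheadrightarrow \mathbf{T}_\psi^\square(U_0,\eta)_\mathfrak{m} \twoheadrightarrow \mathbf{T}_\psi(U_0,\eta)_\mathfrak{m} \twoheadrightarrow \mathbf{T}_\psi(U,\eta)_\mathfrak{m}.
\]
The containment $\mathfrak{q} \subseteq \mathfrak{p}_0$ is immediate since $\mathfrak{p}_0$ is defined as the pullback of $\mathfrak{p}$ along the same chain, and $\mathfrak{q}$ lies in $\mathrm{Supp}_{R_0} M_0$ by the faithfulness of $M_0$ over $\mathbf{T}_\psi^\square(U_0,\eta)_\mathfrak{m}$.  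By \ref{BigGalRep}, $\rho_\mathfrak{q}$ is unramified at $v_0$.  Part (3) of \ref{FixedDetlp} says that any $\overline{\Q}_2$-point of $\Spec \overline{R}_{v_0}^{\square,\psi}$ sitting on two distinct components has $\rho \cong \gamma_v\epsilon_p \oplus \gamma_v$, equivalently satisfies $T_{v_0}^2 = \psi(\varpi_{v_0})(1+\Nm(v_0))^2$.  If this equation held on every minimal prime of $\mathbf{T}_\psi(U,\eta)_\mathfrak{m}$ contained in $\mathfrak{p}$, it would hold modulo $\mathfrak{p}$, and reducing modulo $\varpi_E$ would force the residual identity $\tr \overline{\rho}(\Frob_{v_0})^2 = \overline{\psi}(\varpi_{v_0})(1+\Nm(v_0))^2$ in $\F$.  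I would therefore strengthen the choice of the auxiliary place $v_0$ in \ref{HeckeAssumptions}, whose only prior constraint was neatness, to also avoid this residual identity; by Chebotarev applied to $\overline{\rho}$ the avoidance condition cuts out a positive density set of primes and is compatible with neatness.  With such $v_0$ fixed, one can pick $\tilde{\mathfrak{q}} \subseteq \mathfrak{p}$ so that $T_{v_0}^2 - \psi(\varpi_{v_0})(1+\Nm(v_0))^2 \notin \tilde{\mathfrak{q}}$, and by the Zariski density of arithmetic primes on $V(\tilde{\mathfrak{q}})$ (\ref{ArPrimeZarDense}) there is an arithmetic $\mathfrak{q}_a \supseteq \tilde{\mathfrak{q}}$ for which the same non-vanishing persists.

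The classical $E'$-point attached to $\mathfrak{q}_a$ then has local representation not of the exceptional form, so by (3) of \ref{FixedDetlp} its image in $\Spec \overline{R}_{v_0}^{\square,\psi}$ lies on only the unramified component.  Letting $\mathfrak{q}_a'$ denote the pullback of $\mathfrak{q}_a$ to $R_0$, the inclusion $\mathfrak{q} \subseteq \mathfrak{q}_a'$ implies that any minimal prime of $\overline{R}_{v_0}^{\square,\psi}$ contained in $\mathfrak{q} \cap \overline{R}_{v_0}^{\square,\psi}$ is contained in $\mathfrak{q}_a' \cap \overline{R}_{v_0}^{\square,\psi}$, hence equals $\mathfrak{q}_\mathrm{ur}$.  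Combining with \ref{locdefringdim}, which identifies the minimal primes of $R_\mathrm{loc}$ with those of $\overline{R}_{v_0}^{\square,\psi}$ since every other local factor is a domain, the image of $\mathfrak{q}$ in $\Spec R_\mathrm{loc}$ lies on the unique irreducible component determined by $\mathfrak{q}_\mathrm{ur}$.  The main obstacle is precisely the Chebotarev choice of $v_0$ to defeat the residual identity while preserving the neatness condition; it is the only point where additional freedom in choosing the auxiliary prime is essential.
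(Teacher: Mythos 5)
Your proposal is correct, but the route to uniqueness of the ambient component is genuinely different from the paper's. The paper applies \ref{FixedDetlp}(3) to an arithmetic specialization $\ker\lambda_f$ containing $\mathfrak{q}$ and then closes the argument \emph{automorphically}: if the image of $\ker\lambda_f$ lay on two components, $\rho_f|_{G_{v_0}} \cong \gamma\epsilon_2\oplus\gamma$ would hold, and by local–global compatibility the local component $\pi_{f,v_0}$ of the corresponding cuspidal representation would be one-dimensional, contradicting genericity at $v_0$ (a consequence of cuspidality on $\GL_2$). This works for \emph{any} choice of $v_0$, so the only constraint the paper imposes on $v_0$ is neatness. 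You instead argue on the Galois side: you observe that the obstruction is the Hecke identity $T_{v_0}^2 = \psi(\varpi_{v_0})(1+\Nm(v_0))^2$, note that if it held on every minimal prime in $\mathfrak{p}$ it would descend to $\mathfrak{p}$ and then to the residual relation $\tr\overline{\rho}(\Frob_{v_0}) = 0$ (since $1+\Nm(v_0) \equiv 0$ mod $2$), and then you strengthen the choice of $v_0$ by Chebotarev so that this trace is nonzero. Both routes are valid and the two auxiliary choices are compatible; the trade-off is that your argument requires an added (harmless but strictly unnecessary) Chebotarev constraint on $v_0$ and a density argument on $V(\tilde{\mathfrak{q}})$, whereas the paper's argument is shorter and needs no modification to the setup. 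Your final step, passing uniqueness from $\mathfrak{q}_a'$ to $\mathfrak{q}$ via the inclusion $\mathfrak{q}\subseteq\mathfrak{q}_a'$ and \ref{locdefringdim}, matches the paper's.
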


\begin{proof}

By \ref{locdefringdim}, $\Spec R_\mathrm{loc} \rightarrow \Spec \overline{R}_{v_0}^{\square,\psi}$ induces a bijection on irreducible components. Hence, it suffices to show the existence of $\mathfrak{q}\in \Spec R_0$ contained in $\mathfrak{p}_0$ and in the support of $M_0$ such that the irreducible component of $\Spec \overline{R}_{v_0}^{\square,\psi}$ determined by $\mathfrak{q}_\mathrm{ur}$ is the unique irreducible component containing the image of $\mathfrak{q}$ under $\Spec R_0 \rightarrow \Spec \overline{R}_{v_0}^{\square,\psi}$.

Let $\mathfrak{q}$ be a minimal prime of $\mathbf{T}_\psi^\square(U)_\mathfrak{m}$ contained in $\mathfrak{p}$, and write $\mathfrak{q}$ again for its pullback to $R_0$. Choose an arithmetic $\mathcal{O}$-morphism $\lambda_f : \mathbf{T}_\psi^\square(U) \rightarrow \Qbar_p$ whose kernel contains $\mathfrak{q}$. Since $\rho_f$ is unramified at $v_0$, we know that the image of $\ker \lambda_f$ in $\Spec \overline{R}_{v_0}^{\square,\psi}$ lies in the irreducible component determined by $\mathfrak{q}_\mathrm{ur}$. If it was contained in more than one irreducible component, \ref{FixedDetlp} shows that  $\rho_f|_{G_{v_0}} \cong \gamma\epsilon_2 \oplus \gamma$, for some unramified character $\gamma$ of $G_v$. But using local-global compatibility, this contradicts the fact that the automorphic representation $\pi_f$ of $\GL_2(\A_F)$ corresponding to $f$ via the Jacquet-Langlands-Shimizu correspondence is generic at $v_0$. Since the irreducible component of $\overline{R}_{v_0}^{\square,\psi}$ determined by $\mathfrak{q}_\mathrm{ur}$ is the unique one containing $\ker x$, it is also the unique one containing $\mathfrak{q}$.		\end{proof}

\subsubsection{}\label{nthLevelData}

Note that the map $R_0 \rightarrow A$ determines a fixed tuple $(V_A, \{\chi_v\}_{v|2},\{\beta_{A,v}\}_{v\in S_0})$, with $V_A$ a deformation of $\overline{\rho}$ to $A$, $\chi_v$ an $A$ valued character of $G_v$ for each $v|2$, and $\beta_{A,v}$ a basis for $V_A$ reducing to our fixed basis of $\overline{\rho}$ for each $v\in S_0$.

We need to use the results of \S\ref{AuxPrimesSec}. Let $K$ be the fraction field of $A$, and let $A'$ be the integral closure of $A$ in $K$. If we view $\rho_{\mathfrak{p}}$ as taking values in $\GL_2(A')$, then all the assumptions of \S \ref{AuxPrimesSec} are satisfied (with $\rho = \rho_{\mathfrak{p}}$ and $A'$ in place of $A$), except possibly for the assumption A5, i.e. that in the case where $\rhobar$ is $L$-dihedral, there is some $\tau_0 \in G_F\smallsetminus G_L$ such that  $\rho_\mathfrak{p}(\tau_0)$ has distinct infinite order $A'$-rational eigenvalues (note that (d) of the definition of nice primes ensures that A2 holds even in the case that $\rhobar$ is not dihedral). Assuming $\overline{\rho}$ is dihedral, \ref{DihedCrit} implies there is some $\tau_0 \in G_F$ such that $\overline{\rho}(\tau_0)$ has order $2$, but $\rho_\mathfrak{p}(\tau_0)$ has infinite order. Since $2\in\mathfrak{p}$, $\det\rho_\mathfrak{p}$ is finite, and unipotent elements of $\rho_\mathfrak{p}(G_F)$ have finite order, we deduce that $\rho_\mathfrak{p}(\tau_0)$ has distinct infinite order eigenvalues. Since $\rhobar(\tau_0)$ does not have distinct eigenvalues, the eigenvalues of $\rho_\mathfrak{p}(\tau_0)$ lie in the ring of integers $A''$ of a quadratic ramified extension $K''/K$. The representation $\rho_\mathfrak{p} \otimes_A A''$ then satisfies the assumptions of \S\ref{AuxPrimesSec}. Applying the results of that section, we fix $n_1 \ge 1$ as in \ref{n1Def}, and for each $n \ge n_1$ we fix a finite set of finite places $Q_n$ as in \ref{AuxPrimes}. Recall that $|Q_n|$ has cardinality independent of $n$, and we denote it by $h$. For each $n \ge 1$, in the notation of \ref{HeckeAssumptions} and \ref{DefTheoryAssumptions}, we set
	\[
	R_n' = \overline{R}_{F,S_0\cup Q_n}^\square 
	\hspace{0.5cm}\text{and}\hspace{0.5cm}
	R_n = \overline{R}_{F,S_0\cup Q_n}^{\square,\psi} .
	\]
Fix a choice for the framing of $\overline{R}_{F,S_0\cup Q_n}^\square$ over $\overline{R}_{F,S_0\cup Q_n}$ compatible with the surjection $\overline{R}_{F,S_0\cup Q_n}^\square \rightarrow \overline{R}_{F,S_0}^\square$. This then gives a $B$-algebra structure to $R_n'$ and $R_n$ and we have a commutative diagram in $\CNL_\mathcal{O}$ 
	\[\xymatrix{
		R_\mathrm{loc} \ar[dr] \ar[r] & R_n' \ar[d] \ar[r] & R_n \ar[d] \\
		 & R_0' \ar[r] & R_0}
	\]
such that each is a morphism of $\Lambda$-algebras, and each of $R'_n \rightarrow R_n$, $R_n' \rightarrow R_0'$ and $R_n \rightarrow R_0$ are surjective morphisms of $B$-algebras. Let $\mathfrak{p}_n$ and $\mathfrak{p}_n'$ denote the pullbacks of $\mathfrak{p}$ to $R_n$ and $R_n'$, respectively.

\begin{lem}\label{genoverloc}
\begin{enumerate} \item Let $k = 1+2h$. For any $n\ge n_1$, there an injection of $A$-modules
	\[ A^k \longrightarrow \mathfrak{p}_n'/((\mathfrak{p}_n')^2 + \mathfrak{p}_\mathrm{loc})	\]
whose cokernel is finite of size bounded independent of $n$.
\item The minimal number of generators of the maximal ideal of $R_n$ is bounded independently of $n$.
\end{enumerate}
\end{lem}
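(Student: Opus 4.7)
My strategy is to identify both modules in question with Galois-cohomological objects built out of $\mathrm{Ad}(\rho_\mathfrak{p})$, and then to read off the required bounds from the estimates in \ref{AuxPrimes}.

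For part (1), I would first argue, as in the proof of \ref{PresOverLoc}, that the $A$-linear dual of $\mathfrak{p}_n'/((\mathfrak{p}_n')^2 + \mathfrak{p}_{\mathrm{loc}})$ is naturally the $A$-module of pairs $(c, (X_v)_{v \in S_0})$ where $c \in Z^1(G_{F, S \cup Q_n}, \mathrm{Ad}(\rho_\mathfrak{p}))$ is a continuous cocycle and each $X_v \in \mathrm{Ad}(\rho_\mathfrak{p})$ satisfies $c|_{G_v} = (\sigma \mapsto (\sigma - 1) X_v)$; no global determinant constraint appears since we work with $R_n'$ rather than $R_n$. The generic $A$-rank of this module is then computed by the same Euler-characteristic and Poitou--Tate manipulations as in \ref{PresOverLoc}, but carried out over $A$ instead of $\F$. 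The global Selmer contribution is the group $H^1_{Q_n}(G_{F, S \cup Q_n}, \mathrm{Ad}(\rho_\mathfrak{p}))$, of generic $A$-rank $2$ by \ref{AuxPrimes}(4); each $v \in Q_n$ contributes local rank $2$ by local Tate duality and \ref{AuxPrimes}(3); the framing terms at $S_0$ cancel against local $H^0$-contributions up to the single unit contributed by the non-fixed global determinant in $R_n'$; and the dual-Selmer term $H^0(G_{F, S \cup Q_n}, (\mathrm{Ad}^0(\rho_\mathfrak{p}))^*(1))$ vanishes because $\rho_\mathfrak{p}$ is absolutely irreducible and non-dihedral (by the nice-prime hypothesis). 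The net generic $A$-rank is $1 + 2h = k$. Lifting $k$ generically-independent elements yields the injection $A^k \hookrightarrow \mathfrak{p}_n'/((\mathfrak{p}_n')^2 + \mathfrak{p}_{\mathrm{loc}})$, and its cokernel is annihilated by the torsion parts of the cohomology groups above. The key point for uniformity is that the ``$\asymp q^{2m}$'' estimates in \ref{AuxPrimes} were proved with implicit constants independent of $n$; this uniformity was precisely the technical reason for the bookkeeping in \ref{ThePrimes} and \ref{auxprimedisucssion}.

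For part (2), I would bound the minimal number of generators of $\mathfrak{m}_{R_n}$ by the number of generators of $\mathfrak{m}_{R_{\mathrm{loc}}}$ (which is independent of $n$) plus $\dim_\F \mathfrak{m}_{R_n}/(\mathfrak{m}_{R_{\mathrm{loc}}} R_n + \mathfrak{m}_{R_n}^2)$. The latter is the $\F$-dimension of the relative tangent space at the maximal ideal, computable by a Selmer group $H^1_\mathcal{L}(G_{F, S \cup Q_n}, \mathrm{Ad}^0(\rhobar))$ with trivial local conditions at $S_0$ and no restriction at $Q_n$. By Poitou--Tate it is bounded by a fixed quantity depending only on $\rhobar$ and $S$ plus $\sum_{v \in Q_n} \dim_\F H^1(G_v, \mathrm{Ad}^0(\rhobar))$. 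Since $|Q_n| = h$ is constant and by \ref{AuxPrimes}(1) the eigenvalue separations $\val(\alpha_v - \beta_v)$ are bounded by the constant $w$, the quantities $h^0(G_v, \mathrm{Ad}^0(\rhobar))$, and hence $h^1(G_v, \mathrm{Ad}^0(\rhobar))$ by the local Euler-characteristic formula, are bounded uniformly in $v \in Q_n$ and $n$. This yields the required uniform bound.

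The main obstacle is the bookkeeping in Step~1 of part (1): tracking the contribution of framings at $S_0$, the non-fixed global determinant of $R_n'$, and the local Euler characteristics in order to show that the generic $A$-rank is exactly $1 + 2h$ and not merely at least $1 + 2h$, and then separating this generic rank from the bounded torsion so as to control the cokernel uniformly in $n$.
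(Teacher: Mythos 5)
Your overall strategy — identify the tangent space with a Selmer-type group for $\mathrm{Ad}(\rho_\mathfrak{p})$ and then invoke the uniform estimates from \ref{AuxPrimes} — is in the right spirit, but the mechanism you propose for controlling the cokernel is not actually carried through, and this is precisely the delicate point. You propose to compute the generic $A$-rank via the $A$-linear dual of $\mathfrak{t}_n := \mathfrak{p}_n'/((\mathfrak{p}_n')^2+\mathfrak{p}_\mathrm{loc})$, and then to say that ``the cokernel is annihilated by the torsion parts of the cohomology groups above.'' But $\mathrm{Hom}_A(\mathfrak{t}_n, A)$ only sees the free quotient of $\mathfrak{t}_n$ — it is blind to torsion — so computing the $K$-rank of a cocycle group does not by itself bound the cokernel of $A^k\to\mathfrak{t}_n$, and no precise link between ``torsion of $H^i$'' and ``size of $\mathrm{coker}$'' is supplied. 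The paper instead computes $\lvert\mathrm{Hom}_A(\mathfrak{t}_n, A''/\mathfrak{m}_{A''}^m)\rvert$ for \emph{all} $m$ uniformly in $n$, identifies this with a count of framed deformations to $A''[\varepsilon]/(\varepsilon^2,\varpi_{K''}^m\varepsilon)$, and then applies the Greenberg--Wiles formula to $\mathrm{Ad}_m = \mathrm{Ad}/\mathfrak{m}_{A''}^m$ (using self-duality of $\mathrm{Ad}_m$ in characteristic $2$) to obtain $\asymp q^{mk}$ via \ref{AuxPrimes}(3) and (4). That single uniform-in-$(n,m)$ estimate simultaneously yields the rank $k$ and the uniform bound on the cokernel; it is the replacement for your missing torsion-control step.

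Two further points. First, you never pass from $A$ to its integral closure $A'$ and then to the ramified quadratic extension $A''$ over which $\rho_\mathfrak{p}(\tau_0)$ has $A''$-rational eigenvalues — but the entire machinery of \S\ref{AuxPrimesSec}, and in particular the $\asymp q^{2m}$ estimates in \ref{AuxPrimes}(3)–(4), was set up for $\mathrm{Ad}_m = \mathrm{Ad}/\mathfrak{m}_{A''}^m\mathrm{Ad}$, so without this reduction the estimates do not directly apply; the paper spends roughly half its proof verifying that $\mathfrak{t}_n\hookrightarrow\mathfrak{t}_n\otimes_A A'$ and that the cokernel contributions of $A'/A$ and $A''/A'$ are bounded independently of $n$. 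Second, the term $H^0(G_{F,S\cup Q_n},(\mathrm{Ad}^0)^*(1))$ you introduce does not arise in the Greenberg--Wiles computation used here: the relevant dual Selmer group is $H^1_{Q_n}(G_{F,S_0\cup Q_n},\mathrm{Ad}_m)$, which is \emph{not} zero — it contributes $\asymp q^{2m}$ (rank $2$), and the net $+1$ in $k=1+2h$ comes from the ratio $\lvert H^1_{Q_n}\rvert/\lvert H^0(G_F,\mathrm{Ad}_m)\rvert \asymp q^{2m}/q^m$, not from a vanishing obstruction term. Your proof of part (2) is fine.
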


\begin{proof}

We first prove (1). Recall $K$ is the fraction field of $A$. Let $\mathfrak{t}_n = \mathfrak{p}_n'/((\mathfrak{p}_n')^2 + \mathfrak{p}_\mathrm{loc})$. Take $x_1,\ldots,x_{k_n} \in \mathfrak{t}_n$ such that the images of $x_1,\ldots,x_{k_n}$ in $\mathfrak{t}_n/\mathfrak{m}_A$ are linearly independent and such that $x_1,\ldots,x_{k_n}$ form a basis for $\mathfrak{t}_n\otimes_A K$. This gives a map $\varphi : A^{k_n} \rightarrow \mathfrak{t}_n$. We will show that $k_n=k$ and $\coker(\varphi))$ is finite of size bounded independently of $n$.

Recall $A'$ denotes the integral closure of $A$ in $K$. Set $\mathfrak{t}_n' = \mathfrak{t}_n \otimes_A A'$.  I claim that $\mathfrak{t}_n \rightarrow \mathfrak{t}_n'$ is an injection. As both are finitely generated over $A$, it suffices to show injectivity modulo $\mathfrak{m}_A$, by Nakayama's Lemma. This follows from the fact that the composite
	\[ \mathfrak{t}_n/\mathfrak{m}_A \longrightarrow \mathfrak{t}_n'/\mathfrak{m}_A
	\longrightarrow \mathfrak{t}_n'/\mathfrak{m}_{A'} \]
is the isomorphism $\mathfrak{t}_n \otimes_A \F \cong (\mathfrak{t}_n \otimes_A A') \otimes_{A'} \F$.

Identifying $x_1,\ldots,x_{k_n}$ with their images in $\mathfrak{t}_n'$, we have the commutative diagram
	\[
	\xymatrix{
		0 \ar[r] & Ax_1+\cdots Ax_{k_n} \ar[r]^-\varphi \ar[d] & \mathfrak{t}_n \ar[r] \ar[d] & \coker(\varphi)
		\ar[r] \ar[d]^f & 0 \\
		0 \ar[r] & A'x_1+\cdots+A'x_{k_n} \ar[r]^-{\varphi'} & \mathfrak{t}_n' \ar[r] &
		\coker(\varphi') \ar[r] & 0 }
	\]
and the snake lemma gives an injection $\ker(f) \rightarrow (A'/A)^{k_n}$. If $k_n=k$, then $(A'/A)^{k_n}$ is finite of size bounded independently of $n$. So, it suffices to show the $A'$-rank of $\mathfrak{t}_n'$ is $k$, and that $\coker(\varphi')$ is finite of size bounded independently of $n$.

Recall that $A''$ is the ring of integers in a quadratic ramified extension $K''/K$ so that $\rho_\mathfrak{p}\otimes_A A''$ satisfies the assumptions of \S\ref{AuxPrimesSec}. Set $\mathfrak{t}_n'' = \mathfrak{t}_n' \otimes_{A'} A'' = \mathfrak{t}_n \otimes_A A''$. Since $A''$ is free of rank $2$ over $A'$, it suffices to show that the $A''$-rank of $\mathfrak{t}_n''$ is $k$, and that the cokernel of $\varphi'' : A''x_1+\cdots A''x_{k_n} \rightarrow \mathfrak{t}_n''$ is finite of order bounded independently of $n$.

We recall some notation from \ref{cohomassympnotation}. For each $n,m\ge 1$, given positive $C_{n,m}$ and $D_{n,m}$, we write
\[
C_{n,m}\asymp D_{n,m}
\]
if there are constants $0<a<b$ such that
\[
a<\frac{C_{n,m}}{D_{n,m}}<b
\]
for all $n,m\ge 1$. If $M$ is a finite abelian group with a continuous $G_{F,S_0\cup Q_n}$-action and $V \subset S_0\cup Q_n$, we let $H_V^i(G_{F,S_0\cup Q_n}, M)$ denote the subgroup of $H_V^i(G_{F,S_0\cup Q_n}, M)$ consisting of elements whose restriction to $H^i(G_v,M)$ is trivial for each $v \in V$. Let $\Ad$ denote the set of $2\times 2$ matrices over $A''$ with the adjoint $G_{F,S_0\cup Q_n}$-action. Set $\Ad_m = \Ad/\mathfrak{m}_{A''}^m\Ad$. Let $q = |\F|$. 

Since $\mathfrak{t}_n''$ is a finitely generated $A''$-module, we can write $\mathfrak{t}_n'' \cong (A'')^{k_n} \oplus T_n$, with $T_n$ finite. By our choice of $x_1,\ldots,x_{k_n}$, $\coker(\varphi'') \cong T_n$. Since $T_n$ is finite, $\Hom_{A''}(T_n,K''/A'') \cong T_n$, and 
	\[
	\Hom_{A''}(\mathfrak{t}_n'',K''/A'') \cong (K''/A'')^{k_n} \times T_n.
	\]
It then suffices to show that $\Hom_{A''}(\mathfrak{t}_n'',A''/\mathfrak{m}_{A''}^m) \asymp (q^m)^k$.
Let $\varpi_{K''}$ be a uniformizer for $K''$. We have
	\[
	\Hom_{A''}(\mathfrak{t}_n'',A''/\mathfrak{m}_{A''}^m) \cong
	\Hom_A (\mathfrak{t}_n, A''/\mathfrak{m}_{A''}^m) 
	 \]
and this latter space is identified with the set of $\phi \in \Hom_{R_\mathrm{loc}}(R_n', A''[\varepsilon]/(\varepsilon^2,\varpi_{K''}\varepsilon))$ such that
	\[
	R_n' \stackrel{\phi}{\longrightarrow} A''[\varepsilon]/(\varepsilon^2,\varpi_{K''}^m\varepsilon) \stackrel{\varepsilon\mapsto 0}{\longrightarrow} A''
	\]
is the map $R_n' \rightarrow A \rightarrow A''$. This Hom set is then identified with tuples $(V,\{\beta_v\}_{v\in S_0})$, where 
	\begin{itemize}
	\item[-] $V$ is a $G_{F,S_0\cup Q_n}$-deformation to $A''[\varepsilon]/(\varepsilon^2,\varpi_{K''}^m\varepsilon)$ that lifts the $A''$-deformation $V_A \otimes_A A''$, where $V_A$ is our fixed deformation corresponding to $\mathfrak{p}$;
	\item[-] each $\beta_v$ is a basis of $V$ lifting the basis of $V_A \otimes_A A''$ determined by our fixed basis $\beta_{A,v}$ of $V_A$,
	\end{itemize}
such that 
	\begin{itemize}
	\item[-] for each $v\in S_0$, the lift determined by $V|_{G_v}$ and the basis $\beta_v$ is equal to the lift given by $V_A|_{G_v} \otimes_A A''[\varepsilon]/(\varepsilon^2,\varpi_{K''}^m\varepsilon)$ and $\beta_{A,v}$.
	\end{itemize}
Note that there is no need to specify the characters of $G_v$ as they are determined by the fact that our morphisms are $R_\mathrm{loc}$-morphisms. The set of such tuples surjects onto the set of deformations lifting $V_A \otimes_A A''$ with fixed restriction to $G_v$ for each $v \in S_0$. A standard argument shows that this space of deformations is isomorphic to $H^1_{S_0}(G_{F,S_0\cup Q_n},\Ad_m)$. The fibre over any given deformation $V$ is given by sets of basis $\{\beta_v\}_{v\in S_0}$ reducing to our fixed set of bases modulo $\varepsilon$, such that the lift given by $V|_{G_v}$ and $\beta_v$ is independent of the element in the fibre, up to equivalence by automorphisms of $V$ reducing to the identity modulo $\varepsilon$ that commute with the $G_{F,S_0\cup Q_n}$-action.

Putting this all together, we have
	\begin{equation}\label{firstselmer}
	|\Hom_A(\mathfrak{t}_n,A''/\mathfrak{m}_{A''}^m)| = \frac{
	|H^1_{S_0}(G_{F,S_0\cup Q_n},\Ad_m)|\prod_{v\in S_0}|H^0(G_v,\Ad_m)|}
	{|H^0(G_F,\Ad_m)|}.
	\end{equation}
Since the trace pairing on $\Ad_m$ is perfect and the characteristic of $K''$ is $2$, the Pontryagin dual of $\Ad_m$ is $G_{F,S_0\cup Q_n}$-isomorphic to itself, and the dual Selmer group to $H^1_{S_0}(G_{F,S_0\cup Q_n},\Ad_m)$ is $H^1_{Q_n}(G_{F,S_0\cup Q_n},\Ad_m)$. Then, the Greenberg-Wiles formula, c.f. \cite{DDTFermat}*{Theorem 2.19}, together with \eqref{firstselmer} yields
	\[
	|\Hom_A(\mathfrak{t}_n,A''/\mathfrak{m}_{A''}^m)| = \frac{
	|H^1_{Q_n}(G_{F,S_0\cup Q_n},\Ad_m)|}{|H^0(G_F,\Ad_m)|}
	\prod_{v\in Q_n}\frac{|H^1(G_v,\Ad_m)|}{|H^0(G_v,\Ad_m)|}.
	\]
Local Tate duality and Euler-Poincar\'{e} characteristic give
	\begin{equation}\label{afterWilesform}
	|\Hom_A(\mathfrak{t}_n,A''/\mathfrak{m}_{A''}^m)| = \frac{
	|H^1_{Q_n}(G_{F,S_0\cup Q_n},\Ad_m)|}{|H^0(G_F,\Ad_m)|}
	\prod_{v\in Q_n} |H^0(G_v,\Ad_m)|.
	\end{equation}
Since $V_A\otimes_A A''$ is absolutely irreducible, $|H^0(G_F,\Ad_m)|\asymp q^m$. Then parts (3) and (4) of \ref{AuxPrimes}  with \eqref{afterWilesform} imply
	\[
	|\Hom_A(\mathfrak{t}_n,A''/\mathfrak{m}_{A''}^m)| \asymp (q^m)^{1+2|Q_n|} = q^{mk}
	\]
which is what was required to prove.

To see (2), it suffices to show the minimal number of generators of $R_n'$ over $R_\mathrm{loc}$ is bounded independently of $n$. Letting $\Ad_\F$ denote the space of $2\times 2$ matrices over $\F$ with the adjoint $G_{F,S_0\cup Q_n}$-action, a similar argument to above shows that the minimal number of generators is bounded above by 
	\[ \dim_\F H^1_{Q_n}(G_{F,S_0\cup Q_n},\Ad_\F) - \dim_\F H^0(G_F,\Ad_\F) +
	\sum_{v\in Q_n} \dim_\F H^0(G_v,\Ad_\F) \le \dim_\F H^1(G_{F,S_0},\Ad_\F) -1 +4h.
	\]	\end{proof}

\subsubsection{}\label{PatchingTwists}

We let $F_{Q_n}^{S_0}$ denote the maximal abelian $p$-extension unramified outside $Q_n$ and split at all primes in $S_0$. Set $G_n = \Gal(F_{Q_n}^{S_0}/F)$. Part (5) of \ref{AuxPrimes} shows that $G_n/2^{n-2}G_n \cong (\Z/2^{n-2}\Z)^t$ with $t = 2 - |S_0| + |Q_n|$. Let $G_n^\ast$ denote the diagonalizable $\mathcal{O}$-group associated to $G_n$ as in \ref{GlobalDefTwists}. Recall that an element of $G_n^\ast(A)$ is a character $\chi : G_n \rightarrow A^\times$ reducing to the trivial character modulo the maximal ideal of $A$.

By \ref{FreeTwistAction} and our assumptions on $S$, cf. \ref{thesetS}, there is a free action of $G_n^\ast$ on $\Spf R_n'$, and the morphism $\Spf R_n' \rightarrow \Spf R_\mathrm{loc}$ is constant on orbits. We also have an action of $G_{n,2}^\ast$, the $2$-torsion of $G_n^\ast$, on $\Spf R_n$, and a function $\delta_{Q_n} : \Spf R_n' \rightarrow G_n^\ast$ such that that $R_n'\rightarrow R_n$ identifies $\Spf R_n$ with the closed sub-formal-scheme of $\Spf R_n'$ defined by $\delta_{Q_n}=1$, and for any $\CNL_\mathcal{O}$-algebra $A$, $\alpha\in G_n^\ast(A)$, and $x \in \Spf R_n'(A)$, we have $\delta_{Q_n}(\alpha x) = \alpha^2\delta_{Q_n}(x)$, cf. \ref{GlobalDefTwists}. We also note that for any $\CNL_\mathcal{O}$-algebra $A$, the stucture map $\mathcal{O} \rightarrow A$ gives a group homomorphism $G_n^\ast(\mathcal{O}) \rightarrow G_n^\ast(A)$. In this way the finite group $G_n^\ast(\mathcal{O})$ acts on $\Spf R_n'$; hence, also on $R_n'$. Similarly, $G_{n,2}^\ast(\mathcal{O})$ acts on $R_n$.

\subsubsection{}\label{PatchingModules}

For each $n\ge n_1$, let $S_\psi(U_{Q_n},\eta)$ be as in \ref{HeckeAssumptions}. We set $M_n = \overline{R}_{F,S_0\cup Q_n}^{\square,\psi} \otimes_{\overline{R}_{F,S_0\cup Q_n}^\psi} S_\psi(U_{Q_n},\eta)_\mathfrak{m}$. Note that $M_n$ is an $R_n$-module and we have a surjection $M_n \rightarrow M_0^{2^h}$ of $R_n$-modules induced by the degeneracy map in \ref{DegenMap}. Here, $R_n$ acts on $M_0$ via the surjection $R_n \rightarrow R_0$.

Write $Q_n = \{v_1,\ldots,v_h\}$ and define two power series rings $B[[s_1,\ldots,s_h]]$ and $B[[t_1,\ldots,t_h]]$.  We view $B[[t_1,\ldots,t_h]]$ as a subring of $B[[s_1,\ldots,s_h]]$ by $t_i \mapsto (1+s_i)+(1+s_i)^{-1}-2$. For each $v_i\in Q_n$, fix a generator $\sigma_i$ of the $p$-part of the tame inertia group of $F_{v_i}$. The map $I_{v_i} \rightarrow \mathcal{O}_{F_{v_i}}^\times \rightarrow k_{v_i}^\times \rightarrow \Delta_{v_i}$ given by class field theory sends $\sigma_i$ to a generator $\delta_i$ of $\Delta_{v_i}$. Let $V$ be tautological deformation to $\overline{R}_{F,S_0\cup Q_n}$. We define a local $B$-algebra morphism $B[[t_1,\ldots,t_h]] \rightarrow R_n$ by sending $2+t_i$ to the trace of $\sigma_i$ acting on $V$. We also define a $B[[s_1,\ldots,s_h]]$-module structure on $M_n$ by letting $s_i$ act via the action of $\delta_i$ on $S_\psi(U_{Q_n})_\mathfrak{m}$. By \ref{AuxPrimeTrace}, the two $B[[t_1,\ldots,t_h]]$-module structures on $M_n$ given by $B[[s_1,\ldots,s_h]]$ and $R_n$ coincide. Note that under the surjection $R_n \rightarrow R_0$, each $t_i$ is mapped to zero.

We remark that the power series ring $B[[t_1,\ldots,t_h]]$ is introduced because we may not be able to define a $B[[s_1,\ldots,s_h]]$-algebra structure on $R_n$. Although $\rhobar$ is unramified at each $v \in Q_n$, its Frobenius eigenvalues may not be distinct, so the local lift to $R_n$ may not be split and we may not have a morphism $\Delta_v \rightarrow R_n^\times$. This is why we introduce this subring $B[[t_1,\ldots,t_h]]$ of ``traces".

\begin{lem}\label{MnProperties}

\begin{enumerate}
	\item There is $s\ge 1$, independent of $n$, such that letting $\mathfrak{b}_n$ denote the annihilator of $M_n$ in $B[[s_1,\ldots,s_h]]$, 
		\[ \mathfrak{b}_n \subseteq ((1+s_1)^{2^n}-1,\ldots,(1+s_h)^{2^n}-1)
		\]
		and $M_n$ is free over $B[[s_1,\ldots,s_h]]/\mathfrak{b}_n$ of rank $s$.
	\item $(s_1,\ldots,s_h)M_n \subseteq \ker (M_n \rightarrow M_0^{2^h})$. 
	\item There is $\lambda_M \in \Lambda$, $\lambda_M \notin \mathfrak{p}_{\Lambda}$ and independent of $n$ such that letting $N_n = \ker(M_n/(s_1,\ldots,s_h)M_n \rightarrow M_0^{2^h})$, $\lambda_M N_n\subseteq \mathfrak{p}_nN_n$.
	\item There is an action of the finite group $G_{n,2}^\ast(\mathcal{O})$ on $M_n$ such that for $\alpha\in G_{n,2}^\ast(\mathcal{O})$, $r \in R_n$, and $m\in M_n$, $r(\alpha m) = \alpha((\alpha r)m)$.
	\item Letting $G_{n,2}^\ast(\mathcal{O})$ act on $B[[s_1,\ldots,s_h]]$ by $\chi s_i = \chi(\delta_i)(1+s_i)-1$, we also have $s_i(\alpha m) = \alpha((\alpha s_i)m)$.
\end{enumerate}
\end{lem}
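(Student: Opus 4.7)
For part (1), the plan is to apply \ref{AuxPrimeControl} with $a=1$, which is valid because $U_0$ was chosen so that $(U_0(\A_F^\infty)^\times \cap t^{-1}Dt)/F^\times = 1$ for every $t$, and because $U_0\cap(\A_F^\infty)^\times$ acts on $W_\kappa(\mathcal{O})$ via $\psi^{-1}$. This yields that $S_\psi(U_{Q_n})$ is free over $\Lambda(\mathcal{U}_2^1)[\Delta_{Q_n}]$ of the same rank as $S_\psi(U_0)$ over $\Lambda(\mathcal{U}_2^1)$. Reducing modulo $\mathfrak{q}_\eta$ and localizing at $\mathfrak{m}$---observing that $\Lambda[\Delta_{Q_n}]$ is local with residue field $\F$ since $\Delta_{Q_n}$ is a $2$-group and $\mathrm{char}\,\F=2$, so a unique maximal ideal of $\Lambda[\Delta_{Q_n}]$ lies above $\mathfrak{m}_\Lambda$---gives that $S_\psi(U_{Q_n},\eta)_\mathfrak{m}$ is free over $\Lambda[\Delta_{Q_n}]$ of rank $s$ equal to the $\Lambda$-rank of $S_\psi(U_0,\eta)_\mathfrak{m}$. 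Using the choice of framing to write $\overline{R}^{\square,\psi}_{F,S_0\cup Q_n}$ as a power series ring over $\overline{R}^\psi_{F,S_0\cup Q_n}$ in $j$ variables, one identifies $M_n \cong S_\psi(U_{Q_n},\eta)_\mathfrak{m}\,\hat{\otimes}_\Lambda B$, which is free of rank $s$ over $B\otimes_\Lambda\Lambda[\Delta_{Q_n}] \cong B[[s_1,\ldots,s_h]]/\mathfrak{b}_n$ with $\mathfrak{b}_n = ((1+s_i)^{|\Delta_{v_i}|}-1)_{i=1}^h$ under $1+s_i\leftrightarrow\delta_i$. Since each $v_i\in Q_n$ splits in $F(\mu_{2^n})$ by part (2) of \ref{AuxPrimes}, $2^n$ divides $|\Delta_{v_i}|$, so $(1+s_i)^{2^n}-1$ divides $(1+s_i)^{|\Delta_{v_i}|}-1$ and $\mathfrak{b}_n\subseteq((1+s_i)^{2^n}-1)_{i=1}^h$.

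For part (2), the map $M_n\to M_0^{2^h}$ arises from iterating the degeneracy map of \ref{DegenMap} at each $w\in Q_n$. It factors through the $\Delta_{Q_n}$-coinvariants $M_n/(s_1,\ldots,s_h)M_n$, which by the second assertion of \ref{AuxPrimeControl} corresponds to modular forms at the intermediate level $U'_{Q_n}$ with $(U'_{Q_n})_w = \mathrm{Iw}(w)$ for $w\in Q_n$; since $s_i = \delta_i-1$ acts trivially on any space with trivial $\Delta_{v_i}$-action, the inclusion $(s_1,\ldots,s_h)M_n\subseteq\ker(M_n\to M_0^{2^h})$ follows.

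Part (3) is the main obstacle. After the identification above, $N_n$ is the extension of scalars to $R_n$ of $\ker(S_\psi(U'_{Q_n},\eta)_\mathfrak{m} \to S_\psi(U_0,\eta)_\mathfrak{m}^{2^h})$. Iterating \ref{NoSteinberg} over the $h$ primes of $Q_n$ (using \ref{DegenAtNonEis} for surjectivity in each step), this kernel is annihilated by $Y_n = \prod_{w\in Q_n} y_w$ with $y_w = (\tr\rho_{*,\mathfrak{m}}(\sigma_w))^2 - \psi(\varpi_w)(1+\mathrm{Nm}(w))^2$. The hypotheses of \ref{NoSteinberg} hold at each $w\in Q_n$: $2\in\mathfrak{p}$ because $\mathfrak{p}$ is nice, $\mathrm{Nm}(w)\equiv 1\pmod 2$ because $w$ splits in $F(\mu_{2^n})$, and $\rho_\mathfrak{p}(\Frob_w)$ has distinct eigenvalues by part (1) of \ref{AuxPrimes}. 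Hence each $y_w\notin\mathfrak{p}$, so the image $\bar Y_n\in A = R_n/\mathfrak{p}_n$ is nonzero. Any lift $\tilde Y_n\in R_n$ annihilates $N_n$ since the $R_n$-action factors through the Hecke algebra. Because $A$ is finite over $\Lambda/\mathfrak{p}_\Lambda$ (as $\mathbf{T}_\psi(U,\eta)_\mathfrak{m}$ is finite over $\Lambda$ by \ref{HeckeFinite}), Cayley--Hamilton applied to multiplication by $\bar Y_n$ on $A$ gives a monic polynomial relation over $\Lambda/\mathfrak{p}_\Lambda$ with nonzero constant term (using that $A$ is a domain), and lifting this constant term to $\Lambda$ produces an element of $\Lambda\setminus\mathfrak{p}_\Lambda$ that annihilates $N_n$ modulo $\mathfrak{p}_n N_n$. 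The hard part is to obtain a \emph{single} $\lambda_M$ working for all $n$: this requires noting that $|Q_n|=h$ is fixed, that the characteristic polynomial of $\bar Y_n$ has degree bounded by $[\mathrm{Frac}\,A:\mathrm{Frac}(\Lambda/\mathfrak{p}_\Lambda)]$ independently of $n$, and then extracting an element of $\Lambda\setminus\mathfrak{p}_\Lambda$ lying in the intersection over $n$ of the annihilators of $N_n/\mathfrak{p}_n N_n$ in the image of $\Lambda/\mathfrak{p}_\Lambda$ in $A$; this is where the uniform control of torsion afforded by \S\ref{AuxPrimesSec} is essential.

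For parts (4) and (5), the plan is to use the twisting action of \ref{modulartwists}: $G_{n,2}^\ast(\mathcal{O})$ acts on $S_\psi(U_{Q_n},\eta)_\mathfrak{m}$ by $\phi\mapsto\phi_\alpha$, and this action preserves $\mathfrak{q}_\eta$ and $\mathfrak{m}$ because $\alpha$ reduces to the trivial character. On the deformation side, $G_{n,2}^\ast$ acts on $\Spf R_n$ by the restriction of the twisting action of $G_n^\ast$ from \ref{GlobalDefTwists} to its $2$-torsion. Under the surjection $R_n\to\mathbf{T}_\psi(U_{Q_n},\eta)_\mathfrak{m}$, the two actions intertwine: $\alpha$ sends $T_v$ to $\alpha(\varpi_v)T_v$ on the Hecke side, matching the formula $\tr(\rho\otimes\alpha)(\Frob_v) = \alpha(\varpi_v)\tr\rho(\Frob_v)$ on the deformation side. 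One then defines the action on $M_n = R_n\,\hat{\otimes}_{\overline{R}^\psi_{F,S_0\cup Q_n}}\,S_\psi(U_{Q_n},\eta)_\mathfrak{m}$ diagonally by $\alpha(r\otimes\phi) = \alpha(r)\otimes\phi_\alpha$; the compatibility of the two $\overline{R}^\psi$-actions via Hecke operators shows this is well-defined on the tensor product. The identities in (4) and (5) then follow from $\alpha^2 = 1$ together with the formula $\delta_i\phi_\chi = \chi(\delta_i)(\delta_i\phi)_\chi$ of \ref{modulartwists}, under the correspondence $\chi s_i = \chi(\delta_i)(1+s_i) - 1$ dictating the $G_{n,2}^\ast(\mathcal{O})$-action on $B[[s_1,\ldots,s_h]]$.
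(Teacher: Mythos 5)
Your treatment of parts (1), (2), (4), and (5) matches the paper's: these are indeed quick consequences of \ref{AuxPrimeControl} together with the neatness built into the choice of $v_0$ (for (1) and (2)) and of \ref{modulartwists} (for (4) and (5)). Your elaboration of why $\mathfrak{b}_n$ lies in $((1+s_i)^{2^n}-1)_i$ via splitting in $\tilde{F}_n \supseteq F(\mu_{2^n})$ is also correct and consistent with the intent.

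Part (3) is where you have a genuine gap. You correctly identify that one should iterate \ref{NoSteinberg} (using \ref{DegenAtNonEis}) over the $h$ primes of $Q_n$ to produce a product $Y_n = \prod_{w \in Q_n} y_w$ annihilating $N_n$, and you correctly check the hypotheses of \ref{NoSteinberg}. But then you propose a Cayley--Hamilton argument to produce, for each $n$, a constant term $c_n \in \Lambda/\mathfrak{p}_\Lambda$ lying in $\bar{Y}_n A$, and you explicitly flag that extracting a \emph{single} $\lambda_M$ working for all $n$ is ``the hard part'' without actually resolving it. This is precisely where the paper's argument diverges and is decisive. Part (1) of \ref{AuxPrimes} --- which you cite only for the distinct-eigenvalues condition --- also asserts that $\mathrm{val}_K(\tr \rho_\mathfrak{p}(\Frob_v)) < w$ with $w$ \emph{independent of $n$}. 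This uniform valuation bound was built into \S\ref{AuxPrimesSec} exactly for this purpose: it gives $\mathrm{val}_K(\bar{Y}_n) < 2wh$ uniformly in $n$, so one may simply choose $\lambda_M \in \Lambda$ once and for all to be any element whose image in $A$ is nonzero of valuation at least $2wh$; then $\bar{\lambda}_M$ is divisible by $\bar{Y}_n$ (in the integral closure, and hence kills the $A$-module $N_n/\mathfrak{p}_n N_n$) for every $n$ simultaneously. The Cayley--Hamilton detour is not needed and, as presented, does not close the loop.
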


\begin{proof}
Parts (1) and (2) follow immediately from \ref{AuxPrimeControl} and our assumption on $v_0$. Parts (4) and (5) follow from \ref{modulartwists}.

We now show (3). By (1) of \ref{AuxPrimes}, there is $w$ independent of $n$ such that for any $v\in Q_n$, $\val_K(\rho_\mathfrak{p}(\Frob_v)) < w$, where $\val_K$ denotes the valuation of the fraction field $K$ of $A$, normalized to give a uniformizer valuation $1$. Let $\lambda_M \in \Lambda$ be any element whose image in $A$ is nonzero and has valuation at least $2w$. By \ref{NoSteinberg}, there is $y_v \in R_n$ lifting $(\tr \rho_\mathfrak{p}(\Frob_v))^2$ such that $y_vN_n = 0$. In particular $y_v(N_n/\mathfrak{p}_nN_n) = 0$. Since the valuation of $y_v$ modulo $\mathfrak{p}_n$ is bounded above by $2w$, we have $\lambda_M(N_n/\mathfrak{p}_n N_n) = 0$.	\end{proof}

\subsection{Patching and proof of \ref{LocRredT}}\label{Patching}

Denote by $\mathfrak{T} $ the formal $\CNLO$-torus $(\Z^t)^\ast$, where $t = 2 - |S_0| + |Q_n|$, cf. \ref{PatchingTwists}. For any $n\ge 1$, we let $\mathfrak{T}_{2^n}$ denote the $2^n$-torsion subgroup of $\mathfrak{T}$.

\begin{lem}\label{Rinf}

Let $k = 1+4h = 1+4|S_0|$ as in \ref{genoverloc}. Let $\CNL_{\Lambda}$ denote the full subcategory of $\CNL_\mathcal{O}$ consisting of $\Lambda$-algebras. We have 
\begin{itemize}
	\item[-] $\CNL_{\Lambda}$ objects $R_\infty'$, $R_\infty$, a power series $R_\mathrm{loc}[[x_1,\ldots,x_k]]$, and an $R_\infty \times B[[s_1,\ldots,s_h]]$-module $M_\infty$;
	\item[-] $\CNL_{\Lambda}$ morphisms $R_\mathrm{loc}[[x_1,\ldots,x_k]] \rightarrow R_\infty' \rightarrow R_\infty$, $R_\infty' \rightarrow R_0'$, $R_\infty \rightarrow R_0$, and $B[[t_1,\ldots,t_h]] \rightarrow R_\infty$, and a morphism of $R_\infty \times B[[s_1,\ldots,s_h]]$-modules $M_\infty \rightarrow M_0^{2^h}$;
\end{itemize}
such that the following hold.
\begin{enumerate}
	\item The diagrams
	\[
		\xymatrix{
		R_\mathrm{loc}[[x_1,\ldots,x_k]] \ar[r] & R_\infty' \ar[d] \ar[r] & R_\infty \ar[d] \\
		 R_\mathrm{loc} \ar[u] \ar[r] & R_0' \ar[r] & R_0}
		\hspace{1.0cm}\text{and}\hspace{1.0cm}
		\xymatrix{
		B[[t_1,\ldots,t_h]] \ar[r] & R_\infty \ar[d] \\
		B \ar[u] \ar[r] & R_0}
		\]
	both commute and the two $B[[t_1,\ldots,t_h]]$-module structures on $M_\infty$ (coming from $R_\infty$ and $B[[s_1,\ldots,s_h]]$) coincide.
	\item The morphisms $R_\infty' \rightarrow R_\infty$, $R_\infty' \rightarrow R_0'$, $R_\infty \rightarrow R_0$, and $M_\infty \rightarrow M_0^{2^h}$ are all surjections.
	\item $(t_1,\ldots, t_h)R_\infty \in \ker(R_\infty\rightarrow R_0)$ and $(s_1,\ldots,s_h)M_\infty \in \ker(M_\infty\rightarrow M_0^{2^h})$.
	\item Letting $N_\infty = \ker(M_\infty/(s_1,\ldots,s_h) \rightarrow M_0^{2^h})$ and letting $\mathfrak{p}_\infty$ be the pullback of $\mathfrak{p}_0$ to $R_\infty$, $N_\infty/\mathfrak{p}_\infty N_\infty$ is a torsion $A$-module.
	\item Letting $\mathfrak{p}_\infty'$ be the pullback of $\mathfrak{p}_0'$ to $R_\infty'$, $(\mathfrak{p}_\mathrm{loc},x_1,\ldots,x_k)R_\infty'\subseteq\mathfrak{p}_\infty'$ and $\mathfrak{p}_\infty'/((\mathfrak{p}_\infty')^2+(\mathfrak{p}_\mathrm{loc},x_1,\ldots,x_k)R_\infty')$ is a torsion $A$-module.
	\item There is a free action of $\mathfrak{T}$ on $\Spf R_\infty'$ and a map $\delta_\infty:\Spf R_\infty'\rightarrow\mathfrak{T}$ such that
		\begin{itemize}
		\item[--] the morphism $\Spf R_\infty'\rightarrow\Spf R_\mathrm{loc}$ is constant on orbits of $\mathfrak{T}$,
		\item[--] the closed immersion $\Spf R_\infty\rightarrow\Spf R_\infty'$ identifies $\Spf R_\infty$ with the closed subfunctor of points $x$ with $\delta_\infty(x) =1$ and
		\item[--] for any object $A$ in $\CNLO$, $x\in\Spf R_\infty'(A)$ and $\alpha\in\mathfrak{T}(A)$, we have $\delta_\infty(\alpha x)=\alpha^2\delta_\infty(x)$.
		\end{itemize}
	\item The subfunctor $\Spf R_\infty\rightarrow\Spf R_\infty'$ is stable under the action of $\mathfrak{T}_2$, and there is an action of $\mathfrak{T}_2(\mathcal{O})$ on $M_\infty$ satisfying the following compatibility condition: for any $m\in M_\infty$, $r\in R_\infty$, and $\alpha\in \mathfrak{T}_2(\mathcal{O})$ we have $r(\alpha m)=\alpha((\alpha r)m)$.
\end{enumerate}
\end{lem}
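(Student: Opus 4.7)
The strategy is to adapt the $2$-adic Taylor--Wiles patching of \cite{KW2}*{Proposition 9.3}, modified in the spirit of \cite{SWreducible}*{\S 5} so that the auxiliary formal variables $x_1, \ldots, x_k$ are sent into the pullback $\mathfrak{p}_n'$ of the nice prime $\mathfrak{p}$ rather than into the maximal ideal of $R_n'$. The essential inputs are \ref{genoverloc} and \ref{MnProperties}. The former provides, for each $n$, a choice of $k$ elements of $\mathfrak{p}_n'$ whose images in $\mathfrak{p}_n'/((\mathfrak{p}_n')^2+\mathfrak{p}_\mathrm{loc})$ span an $A$-submodule of rank $k$ with cokernel of $A$-length bounded independently of $n$, and in addition bounds the minimal number of generators of $R_n'$ over $R_\mathrm{loc}$. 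The latter asserts analogous uniform bounds for the rank of $M_n$ as a $B[[s_1,\ldots,s_h]]/\mathfrak{b}_n$-module and for an element $\lambda_M \notin \mathfrak{p}_\Lambda$ that annihilates $N_n/\mathfrak{p}_nN_n$ for every $n$.

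For each $n \geq n_1$ I lift such a collection to elements $y_1^{(n)}, \ldots, y_k^{(n)} \in \mathfrak{p}_n'$ and define a continuous $\CNL_{\Lambda}$-morphism $\phi_n : R_\mathrm{loc}[[x_1,\ldots,x_k]] \to R_n'$ by $x_i \mapsto y_i^{(n)}$. Thanks to the uniform bound on the number of generators of $R_n'$ over $R_\mathrm{loc}$, the truncations $(R_n')^{[m]}$, $R_n^{[m]}$ (as $\CNLO^{[m]}$-algebras) and $M_n^{[m]}$ (as finite modules) each fall into only finitely many isomorphism classes as $n$ varies. A Cantor diagonal argument then extracts a subsequence $\{n_j\}$ along which, for every $m$, all of the data --- the morphisms $\phi_{n_j}^{[m]}$, the $B$-algebra structures on $(R_{n_j}')^{[m]}$ and $R_{n_j}^{[m]}$, the map $B[[t_1,\ldots,t_h]] \to R_{n_j}^{[m]}$, the $B[[s_1,\ldots,s_h]]$-action on $M_{n_j}^{[m]}$, the surjections to $(R_0')^{[m]}$, $R_0^{[m]}$, $(M_0^{2^h})^{[m]}$, and the group chunk action of $\mathfrak{T}_{2^{n_j - 2}}^\ast \subseteq G_{n_j}^\ast$ provided by \ref{PatchingTwists} --- stabilises. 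Define $R_\infty'$, $R_\infty$, and $M_\infty$ as the inverse limits over $m$ of these stabilised truncations; they are Noetherian by the uniform generator bound.

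The free action of $\mathfrak{T}$ on $\Spf R_\infty'$ and the morphism $\delta_\infty$ with $\delta_\infty(\alpha x) = \alpha^2 \delta_\infty(x)$ required in (6) are produced by \ref{goupchunkaction} applied to the compatible system of free group chunk actions of $\mathfrak{T}_{2^{n_j - 2}}^\ast$ together with the finite-level maps $\delta_{Q_{n_j}}$; the closed immersion $\Spf R_\infty \hookrightarrow \Spf R_\infty'$ is identified with $\{\delta_\infty = 1\}$ because this holds on each truncation. The $\mathfrak{T}_2(\mathcal{O})$-action on $M_\infty$ required in (7) descends from the $G_{n_j,2}^\ast(\mathcal{O})$-actions of \ref{MnProperties}(4)--(5), and the stated compatibility with the $R_\infty$- and $B[[s_1,\ldots,s_h]]$-actions passes to the limit. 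Compatibility (1) and the inclusions in (3) are immediate from the corresponding statements at finite level, while (4) follows from \ref{MnProperties}(3): the uniform $\lambda_M \notin \mathfrak{p}_\Lambda$ annihilates every $N_{n_j}/\mathfrak{p}_{n_j} N_{n_j}$, hence also $N_\infty/\mathfrak{p}_\infty N_\infty$.

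The main obstacle is (5), which is what will eventually drive the dimension count after localising and completing at $\mathfrak{p}$. We must prove not merely $(\mathfrak{p}_\mathrm{loc}, x_1, \ldots, x_k)R_\infty' \subseteq \mathfrak{p}_\infty'$ --- which is built into the construction --- but that the quotient $\mathfrak{p}_\infty'/((\mathfrak{p}_\infty')^2 + (\mathfrak{p}_\mathrm{loc}, x_1, \ldots, x_k))$ is $A$-torsion. For this we need the cokernels appearing in \ref{genoverloc}(1) to have $A$-length bounded by a constant independent of $n$, so that after truncating and diagonalising the corresponding finite-level quotients remain of bounded $A$-length and hence torsion in the limit. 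This is precisely where the delicate torsion bookkeeping in \S\ref{AuxPrimesSec} intervenes: the point of carrying out the cohomology calculations there integrally (and tracking, not merely the free $A$-rank, but the torsion submodules of $H^0(G_v, \Ad_m)$ and $H^1_{Q_n}(G_{F, S \cup Q_n}, \Ad_m)$) is exactly to produce such a uniform bound.
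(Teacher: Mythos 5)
Your proposal correctly captures the overall architecture of the paper's proof, which is indeed a hybrid of \cite{KW2}*{Proposition 9.3} and \cite{SWreducible}*{\S 5}: map $R_\mathrm{loc}[[x_1,\ldots,x_k]]\rightarrow R_n'$ sending the $x_i$ into $\mathfrak{p}_n'$ rather than $\mathfrak{m}_{R_n'}$, truncate, diagonalize by pigeonhole, pass to the inverse limit, and glue the group actions via \ref{goupchunkaction}. The structural role you assign to \ref{genoverloc} and \ref{MnProperties} is exactly right: the generator bound gives Noetherianity; the uniform $\lambda_R$ annihilating the cokernel in \ref{genoverloc}(1) gives (5); the uniform $\lambda_M$ from \ref{MnProperties}(3) gives (4); the free $\mathfrak{T}_{2^m}$-chunks and the maps $\delta_{Q_n}$ give (6); and the $G_{n,2}^\ast(\mathcal{O})$-actions from \ref{MnProperties}(4)--(5) give (7). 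Your final paragraph also correctly diagnoses why the torsion bookkeeping in \S\ref{AuxPrimesSec} exists.

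The one genuine gap is your truncation scheme. You propose diagonalizing the $\CNLO^{[m]}$-truncations $(R_n')^{[m]}$, $R_n^{[m]}$, $M_n^{[m]}$ --- i.e.\ quotients by $\mathfrak{m}^m$ --- together with the $B[[s_1,\ldots,s_h]]$-action on $M_n^{[m]}$. But $\mathfrak{m}_{R_n}^m M_n$ need not be a $B[[s_1,\ldots,s_h]]$-submodule, so $M_n^{[m]}$ need not carry a $B[[s_1,\ldots,s_h]]$-structure at all, and more seriously would not be finite \emph{free} over a fixed Artinian quotient of $B[[s_1,\ldots,s_h]]$. The latter freeness is what must survive to $M_\infty$ to make the depth computation in the proof of \ref{RinfFaithful} go through. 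The paper instead truncates $M_n$ by the ideal $\mathfrak{c}_m = (\mathfrak{m}_\Lambda^m,y_1^{2^m},\ldots,y_j^{2^m},(1+s_1)^{2^m}-1,\ldots,(1+s_h)^{2^m}-1) \subset B[[s_1,\ldots,s_h]]$, sets $\mathfrak{d}_m = \mathfrak{c}_m\cap B[[t_1,\ldots,t_h]]$, and truncates $R_n$ by $\mathfrak{d}_mR_n + \mathfrak{m}_{R_n}^{(r_m)}$ where $\mathfrak{m}^{(r)}$ is the ideal generated by $r$-th powers and $r_m = sm(h+j)2^m$. The point of the choice of $r_m$ and $\mathfrak{m}^{(r_m)}$ (rather than $\mathfrak{m}^{r_m}$) is that, by the argument of \cite{KisinFinFlat}*{Proposition 3.3.1}, $M_n/\mathfrak{c}_mM_n$ really is a module over $R_n/(\mathfrak{d}_mR_n+\mathfrak{m}_{R_n}^{(r_m)})$, so the finite-level datum $(D_m',D_m,L_m)$ carries all the module and algebra structures compatibly, with $L_m$ finite free over $B[[s_1,\ldots,s_h]]/\mathfrak{c}_m$ of rank $s$ by \ref{MnProperties}(1). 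This is the precise content of the paper's notion of a ``patching datum of level $m$''; without it, the limit $M_\infty$ would not automatically come equipped with the finite free $B[[s_1,\ldots,s_h]]$-module structure that the proof requires.
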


\begin{proof}

The proof is similar to the contsruction in \cite{KW2}*{Proposition 9.3}.

For a $\CNLO$-algebra $A$ with maximal ideal $\mathfrak{m}_A$ and $r\ge 1$, we let $\mathfrak{m}_A^{(r)}$ denote the ideal of $A$ generated by elements of $\mathfrak{m}_A$ that are $r$\textsuperscript{th} powers. Note that if $\mathfrak{m}_A$ can be generated by $g$ elements, then $\mathfrak{m}_A^{gr}\subseteq\mathfrak{m}_A^{(r)}$. Let $s\ge 1$ be as in (1) of \ref{MnProperties}. For each $m\ge 1$, set $r_m=sm(h+j)2^m$ and
	\[
	\mathfrak{c}_m=(\mathfrak{m}_\Lambda^m,y_1^{2^m},\ldots,y_j^{2^m},
	(1+s_1)^{2^m}-1,\ldots,(1+s_h)^{2^m}-1)\subset B[[s_1,\ldots,s_h]].
	\]
Let $\mathfrak{d}_m = \mathfrak{c}_m \cap B[[t_1,\ldots,t_h]]$. As in the proof of \cite{KisinFinFlat}*{Proposition 3.3.1} we can show that for any $m\ge 1$, $M_0/\mathfrak{c}_mM_0$ is an $R_0/(\mathfrak{d}_mR_0+\mathfrak{m}_{R_0}^{(r_m)})$-module. Similarly, for any $n\ge n_1$ and $m\ge 1$, we can show that $M_n/\mathfrak{c}_mM_n$ is an $R_n/(\mathfrak{d}_mR_n+\mathfrak{m}_{R_n}^{(r_m)})$-module.

Fix $\lambda_R \in \Lambda$ with non-zero image in $A$ such that $\lambda_R$ annihilates the cokernel of the map in (1) of \ref{genoverloc} for all $n\ge n_1$. Also let $\lambda_M$ be as in (3) of \ref{MnProperties}. 

Let $G_n'=G_n/2^{n-2}G_n$. As in \cite{KW2}, we fix a surjection $\Z^t\rightarrow G_n$ for each $n\ge n_0$. By \ref{PatchingTwists}, this induces an isomorphism $\Z/2^{n-2}\Z\cong G_n'$ and a closed embedding $G_n^\ast\rightarrow\mathfrak{T}$ which identifies $G_n'^\ast$ with $\mathfrak{T}_{2^{n-2}}$.

Let $g$ by such that the maximal ideal of $R_n$ is genererated by $\le g$ elements for all $n\ge n_1$, cf. (2) of \ref{genoverloc}.
	
We let $\CNL_{R_\mathrm{loc}}$ denote the full subcategory of $\CNL_\mathcal{O}$ consisting of $R_\mathrm{loc}$-algebras. We recall some notation of \ref{groupchunks}. We let $\CNL_\mathcal{O}^{[d]}$ and $\CNL_\mathcal{O}^{[d]}$ be the full subcategories of $\CNL_\mathcal{O}$ and $\CNL_{R_\mathrm{loc}}$, respectively, consisting of objects $A$ such that $\mathfrak{m}_A^d = 0$. Given a $\CNLO$-algebra $A$, we let $A^{[d]} = A/\mathfrak{m}_A^d$. Given $X = \Spf A$ in $\CNLO^\mathrm{op}$, we let $X^{[d]} = \Spf A^{[d]}$. For $m\ge 1$, a patching datum of level $m$, denoted $(D_m',D_m,L_m)$, consists of the following.
\begin{enumerate}
	\item[(a)] A surjective $\CNL_{R_\mathrm{loc}}$ morphism
		\[ D_m\longrightarrow R_0/(\mathfrak{d}_mR_0 + \mathfrak{m}_{R_0}^{(r_m)}) \]
		which is also a $B[[t_1,\ldots,t_h]]$-algebra morphism, such that $\mathfrak{m}_{D_m}^{(r_m)}=(0)$.
	\item[(b)] A object $D_m'$ of $\CNL_{R_\mathrm{loc}}^{[gr_m]}$ such that $\mathfrak{m}_{D_m'}$ can be generated by $g$ elements, a surjective $\CNL_{R_\mathrm{loc}}^{[gr_m]}$ morphism
		\[ D_m'\longrightarrow D_m, \]
	a free action of $\mathfrak{T}_{2^m}^{[gr_m]}$ on $D_m'$ such that the map $\Spf D_m'\rightarrow\Spf R_\mathrm{loc}$ is constant on orbits of $\mathfrak{T}_{2^m}^{[gr_m]}$, and a morphism $\delta_m:\Spf D_m'\rightarrow\mathfrak{T}$, such that $\delta_m(\alpha x)=\alpha^2\delta_m(x)$ for any $A$ in $\CNL_O^{[gr_m]}$, $x\in \Spf D_m'(A)$ and $\alpha\in\mathfrak{T}_{2^m}(A)$. Let $D_m''$ be the object in $\CNLO^{[gr_m]}$ representing the closed subfunctor of $\Spf D_m'$ given by points $x$ with $\delta_m(x)=1$. We further demand that the surjection $D_m'\rightarrow D_m$ factors through $D_m'\rightarrow D_m''$ and that the kernel of $D_m''\rightarrow D_m$ is contained in $\mathfrak{m}_{D_m''}^m$.
	\item[(c)] A $D_m \times B[[s_1,\ldots,s_h]]$-module $L_m$, which is finite free of rank $s$ over $B[[s_1,\ldots,s_h]]/\mathfrak{c}_m$ and a surjection of $D_m$-modules $L_m\rightarrow M_0^{2^h}/\mathfrak{c}_m M_0^{2^h}$ (the $D_m$-module structure on $M_0^{2^h}/\mathfrak{c}_m M_0^{2^h}$ is via the surjection $D_m\rightarrow R_0/(\mathfrak{d}_m+\mathfrak{m}_{R_0}^{(r_m)})$ in (a)), such that letting $\mathfrak{a}_m$ denote the inverse image in $D_m$ of the image of $\mathfrak{p}_0$ in $R_0/(\mathfrak{d}_mR_0+\mathfrak{m}_{R_0}^{(r_m)})$,
		\[ \lambda_M \ker(L_m /(s_1,\ldots,s_h) \rightarrow M_0^{2^h}/\mathfrak{c}_m M_0^{2^h}) \subseteq
		\mathfrak{a}_m \ker(L_m /(s_1,\ldots,s_h) \rightarrow M_0^{2^h}/\mathfrak{c}_m M_0^{2^h}).
		\]
	\item[(d)] A $\CNL_{R_\mathrm{loc}}$ morphism
		\[ R_\mathrm{loc}[[x_1,\ldots,x_k]]\longrightarrow D_m' \]
	such that, letting $\mathfrak{a}_m'$ denote the inverse image in $D_m'$ of the image of $\mathfrak{p}_0$ in $R_0/(\mathfrak{d}_mR_0+\mathfrak{m}_{R_0}^{(r_m)})$, we have $(\mathfrak{p}_\mathrm{loc},x_1,\ldots,x_k)D_m'\subseteq\mathfrak{a}_m'$ and
		\[ \lambda_R(\mathfrak{a}_m'/((\mathfrak{a}_m')^2+(\mathfrak{p}_\mathrm{loc},x_1,\ldots,x_k)D_m'))\subseteq\mathfrak{m}_{D_m'}^m.
		\]
\end{enumerate}

We define a morphism $(D'_m\!{}^{1},D_m^1,L_m^1)\rightarrow(D_m'\!{}^2,D_m^2,L_m^2)$ of patching data of level $m$ to be surjective $\CNL_{R_\mathrm{loc}}$ morphisms $D_m'\!{}^1\rightarrow D_m'\!{}^2$ and $D_m^1\rightarrow D_m^2$, and a surjection of $D_m^1$-modules $L_m^1\rightarrow L_m^2$, such that
	\begin{itemize}
	\item[--] $D_m^1\rightarrow D_m^2$ is a $B[[t_1,\ldots,t_h]]$-algebra morphism and is compatible with the surjections to $R_0/(\mathfrak{d}_m+\mathfrak{m}_{R_0}^{(r_m)})$ of (a);
	\item[--] $D_m'\!{}^1\rightarrow D_m'\!{}^2$ is compatible with the $\mathfrak{T}_{2^m}^{[gr_m]}$-action and with the morphisms $\delta_m^i:\Spf D_m'\!{}^i\rightarrow\mathfrak{T}$ from (b), and
	\[\xymatrix{ D_m'\!{}^1 \ar[r] \ar[d] & D_m^1\ar[d] \\ D_m'\!{}^2 \ar[r] & D_m^2 } \]
	commutes;
	\item[--] $D_m'\!{}^1\rightarrow D_m'\!{}^2$ is compatible with the morphisms $R_\mathrm{loc}[[x_1,\ldots,x_k]]\rightarrow D_m'\!{}^i$ from (d).
	\end{itemize}

Fix $n\ge n_1$ and $n_1\le m\le n$. We now show how to define a patching datum $(D_{m,n}',D_{m,n},L_{m,n})$ of level $m$, from $R_{n+2}'$, $R_{n+2}$, and $M_{n+2}$.

Set $D_{m,n}'=R'_{n+2}/\mathfrak{m}_{R'_{n+2}}^{gr_m}$, $D_{m,n}=R_{n+2}/(\mathfrak{d}_mR_{n+2}+\mathfrak{m}_{R_{n+2}}^{(r_m)})$, and $L_{m,n}=M_{n+2}/\mathfrak{c}_mM_{n+2}$. The surjection $R_n\rightarrow R_0$ is an $R_\mathrm{loc} \times B[[t_1,\ldots,t_h]]$-algebra morphism, so we get a morphism $D_{m,n}\rightarrow R_0/(\mathfrak{d}_mR_0+\mathfrak{m}_{R_0}^{(r_m)})$ satisfying the properties of (a) in the definition of a patching datum of level $m$.

As noted above, $L_{m,n}$ is a $D_{m,n}$-module and the surjection $M_n\rightarrow M_0^{2^h}$ of $R_n$-modules induces a surjection $L_{m,n}\rightarrow M_0^{2^h}/\mathfrak{c}_m M_0^{2^h}$ of $D_{m,n}$-modules. That $L_{m,n}$ satisfies the required properties follows from \ref{MnProperties}.

We now show part (b). By choice of $g$, $\mathfrak{m}_{D_{m,n}'}$ can be generated by $g$ elements. As noted above, we have $\mathfrak{m}_{R_n'}^{gr_m}\subseteq\mathfrak{m}_{R_n'}^{(r_m)}$, so the surjection $R_n'\rightarrow R_n$ induces a surjection $D_{m,n}'\rightarrow D_{m,n}$. From
	\[\mathfrak{T}_{2^m}\longrightarrow \mathfrak{T}_{2^n}\cong 
	G_{n+2}'^\ast\longrightarrow G_{n+2}^\ast, \]
the free action of $G_{n+2}^\ast$ on $\Spf R_{n+2}'$ yields a free action of $\mathfrak{T}_{2^m}$ on $\Spf R_{n+2}'$. This gives rise to a free group action chunk in $\CNLO^{[gr_m]}$, cf. \ref{groupchunks}, of $\mathfrak{T}_{2^m}^{[gr_m]}$ on $\Spf D_{m,n}'$. Now let $\delta:\Spf R_{n+2}'\rightarrow G_{n+2}^\ast$ be as in \ref{PatchingTwists}. With our fixed immersion $G_n^\ast\rightarrow\mathfrak{T}$ we define $\delta_m$ to be the composite
	\[\Spf D_{m,n}'\longrightarrow\Spf R_{n+2}'\longrightarrow G_{n+2}^\ast
	\longrightarrow\mathfrak{T}.\]
The fact that $\delta_m(\alpha x)=\alpha^2\delta_m(x)$ for any $A$ in $\CNL_O^{[gr_m]}$, $x\in \Spf D_{m,n}'(A)$ and $\alpha\in\mathfrak{T}_{2^m}(A)$ now follows from \ref{PatchingTwists}. Let $D_{m,n}''$ be the object in $\CNLO^{[gr_m]}$ representing the closed subfunctor of $\Spf D_{m,n}'$ given by points $x$ with $\delta_m(x)=1$. The surjection $D_{m,n}'\rightarrow D_{m,n}$ factors through $D_{m,n}'\rightarrow D_{m,n}''$. We also see that, since $\mathfrak{d}_mR_{n+2}'+\mathfrak{m}_{R_{n+2}'}^{(r_m)}\subseteq\mathfrak{m}_{R_{n+2}'}^m$, the kernel of $D_{m,n}''\rightarrow D_{m,n}$ is contained in $\mathfrak{m}_{D_{m,n}''}^m$. We have verified all the conditions of (b).

To realize part (d), first note that $\mathfrak{a}_m$ is the image in $D_{m,n}'$ of
	\[ \mathfrak{p}_{n+2}'+\mathfrak{d}_mR_{n+2}'+\mathfrak{m}_{R_{n+2}'}^{(r_m)}
	\subseteq \mathfrak{p}_{n+2}'+\mathfrak{m}_{R_{n+2}'}^m. \]
By \ref{genoverloc}, we can choose a $\CNL_{R_\mathrm{loc}}$ morphism
	\[ R_\mathrm{loc}[[x_1,\ldots,x_k]] \longrightarrow R_{n+2}' \]
such that $(\mathfrak{p}_\mathrm{loc},x_1,\ldots,x_k)R_{n+2}' \subseteq \mathfrak{p}_{n+2}'$, and such that $\mathfrak{p}_{n+2}'/((\mathfrak{p}_{n+2}')^2+(\mathfrak{p}_\mathrm{loc},x_1,\ldots,x_k)R_{n+2}')$ is annihilated by $\lambda_R$. This then induces the desired morphism of part (d).

For any $n> n_1$ and $n_1\le m<n$ we have natural surjections $D_{m+1,n}'\rightarrow D_{m,n}'$, $D_{m+1,n}\rightarrow D_{m,n}$ and $L_{m+1,n}\rightarrow L_{m,n}$ that induce an isomorphism
	\[ (D_{m+1,n}'/\mathfrak{m}_{D_{m+1,n}'}^{gr_m},D_{m+1,n}/(\mathfrak{d}_mD_{m+1,n}+
	\mathfrak{m}_{D_{m+1,n}}^{(r_m)}),L_{m+1,n}/\mathfrak{c}_mL_{m+1,n})\cong
	(D_{m,n}',D_{m,n},L_{m,n})\]
of patching data of level $m$. Since, for each $m\ge 1$ there are only finitely many isomorphism classes of patching data of level $m$, after extracting a subsequence of $(n)_{n\ge n_1}$ we can assume that $(D_{m,n}',D_{m,n},L_{m,n})\cong (D_{m,m}',D_{m,m},L_{m,m})$ for all $n\ge m$ and denoting this common isomorphism class by $(D_m',D_m,L_m)$, we get a projective system in $m\ge n_1$. Set $R_\infty'=\varprojlim D_m'$, $R_\infty=\varprojlim D_m$ and $M_\infty = \varprojlim L_m$. The fact that for each $m \ge n_1$ the maximal ideal of $D_m'$ can be generated by $g$ objects ensures that both $R_\infty'$ and $R_\infty$ are Noetherian.

Just as in \cite{KW2}*{Proposition 9.3} we get a commutative diagram in $\CNL_{R_\mathrm{loc}}$
	\[\xymatrix{ R_\infty' \ar[r] \ar[d] & R_\infty \ar[d] \\ R_0' \ar[r] &R_0 }\]
with $R_\infty\rightarrow R_0$ a $B[[t_1,\ldots,t_h]]$-algebra morphism, and an  $R_\infty \times B[[s_1,\ldots,s_h]]$-module morphism $M_\infty\rightarrow M_0^{2^h}$ satisfying parts (1), (2), (3), and (6) of the lemma. Parts (1), (2), and (3) all follow directly from the analogous statements at finite level. To see (6), first note that the free action of $\mathfrak{T}$ on $\Spf R_\infty'$ follows from \ref{goupchunkaction} and the fact that the group action chunks of $\mathfrak{T}_{2^m}^{[gr_m]}$ on $\Spf D_{m,n}'$ are free. The morphism $\delta_\infty$ is defined by the limit of the $\delta_m$. From the corresponding properties of $\delta_m$, it is immediate that the morphism $\Spf R_\infty'\rightarrow\Spf R_\mathrm{loc}$ is constant on orbits of $\mathfrak{T}$, and that for any object $A$ in $\CNLO$, $x\in\Spf R_\infty'(A)$ and $\alpha\in\mathfrak{T}(A)$, we have $\delta_\infty(\alpha x)=\alpha^2\delta_\infty(x)$. It remains to see that the closed immersion $\Spf R_\infty\rightarrow\Spf R_\infty'$ identifies $\Spf R_\infty$ with the closed subfunctor of points $x$ with $\delta_\infty(x) =1$. Note that at finite level, the subfunctor defined by $\delta_m = 1$, is represented by $D_m''$ and there is a surjection $D_m'' \rightarrow D_m$ with kernel contained in $\mathfrak{m}_{D_m''}^m$. Then the closed the closed subfunctor defined by $\delta_\infty = 1$ is $\varprojlim D_m'' \cong \varprojlim D_m = R_\infty$.

We now show (4) and (5). Let $\mathfrak{a}_m$ denote the ideal of $D_m$ as in (c) of patching datum. Because $\varprojlim$ is exact on systems of finite objects, we have
	\[ N_\infty/\mathfrak{c}_m N_\infty \cong \ker( L_m /(s_1,\ldots,s_h) \rightarrow M_0^{2^h}/\mathfrak{c}_m M_0^{2^h}).
	\]
Then, since $\mathfrak{p}_\infty = \varprojlim \mathfrak{a}_m$, we have $	\lambda_M N_\infty \subseteq \mathfrak{p}_\infty N_\infty$, which implies $N_\infty/\mathfrak{p}_\infty N_\infty$ is a torsion  $A$ module since  $\lambda_M$ has non-zero image in $A$.

The morphisms $R_\mathrm{loc}[[x_1,\ldots,x_k]]\rightarrow D_m'$ for each $m\ge 1$ yield a $\CNL_{R_\mathrm{loc}}$ morphism $R_\mathrm{loc}[[x_1,\ldots,x_k]]\rightarrow R_\infty'$. Let $\mathfrak{a}_m'$ be the ideal of $D_m$ as in part (d) of the definition of a patching datum of level $m$. Since $\mathfrak{p}_\infty'=\varprojlim\mathfrak{a}_m'$ and
	\[\lambda_R(\mathfrak{a}_m'/((\mathfrak{a}_m')^2 +(\mathfrak{p}_\mathrm{loc},x_1,\ldots,x_k)D_m'))\subseteq
	\mathfrak{m}_{D_m'}^m, \]
we have
	\[ \lambda_R(\mathfrak{p}_\infty'/((\mathfrak{p}_\infty')^2 +(\mathfrak{p}_\mathrm{loc},x_1,\ldots,x_k)R_\infty'))=(0),\]
which implies part (2) of the lemma since $\lambda_R$ has non-zero image in $A$.

It remains to show part (7) of the lemma. The fact that $\Spf R_\infty$ is stable under the action of $\mathfrak{T}_2$ follows immediately from part (6). Note that for $n> 2$, the isomorphism $G_n'^\ast\cong\mathfrak{T}_{p^{n-2}}$ induces an isomorphism $G_{n,2}^\ast\cong\mathfrak{T}_2$. If $m\ge n$ the action of $\mathfrak{T}_2(\mathcal{O})$ on $B[[s_1,\ldots,s_h]]$ as in \ref{MnProperties} stabilizes the ideal $\mathfrak{c}_m$ and the compatible actions of $\mathfrak{T}_2(\mathcal{O})$ on $R_{n+2}$ and $M_{n+2}$ descend to compatible actions on $D_{m,n}$ and $L_{m,n}$. Also note that for $\alpha\in\mathfrak{T}_2(\mathcal{O})$, if we let $\alpha'\in\mathfrak{T}_2^{[gr_m]}(\mathcal{O}/\mathfrak{m}_{\mathcal{O}}^{gr_m})$ denote the corresponding truncated group element, the diagram
	\[\xymatrix{ D_m' \ar[r] \ar[d]^{\alpha'} & D_m \ar[d]^\alpha \\ D_m' \ar[r] & D_m}\]
commutes. It follows that, after taking limits, the action of $\mathfrak{T}_2(\mathcal{O})$ on $M_\infty$ is compatible with the action of $\mathfrak{T}_2(\mathcal{O})$ on $R_\infty$. This establishes (7) and the lemma is proven. 
	\end{proof}

By part (1) of \ref{diaggroupaction}, we have a $\CNLO$-algebra $R_\infty^{\inv}$ that represents the orbits for the action of $\mathfrak{T}$ on $\Spf R_\infty'$,  and the morphism $R_\infty^{\inv}\rightarrow R_\infty'$ is formally smooth of relative dimension $t$. Note that as $\Spf R_\infty'\rightarrow\Spf R_\mathrm{loc}$ is constant on $\mathfrak{T}$-orbits, we have a $\CNLO$ morphism $R_\mathrm{loc}\rightarrow R^{\inv}_\infty$ and $R_\infty^{\inv}\rightarrow R_\infty'$ is a morphism of $R_\mathrm{loc}$-algebras. Part (2) of \ref{diaggroupaction} then shows that the map $R_\infty^{\inv}\rightarrow R_\infty$ makes $R_\infty$ a torsor over $R_\infty^{\inv}$ with group $\mathfrak{T}_2$. Let $\mathfrak{p}_\infty^{\inv}$ denote pullback to $R_\infty^\mathrm{inv}$ of $\mathfrak{p}_0$. Note that $R_\infty$ is finite over $R_\infty^{\inv}$, so $\dim R_\infty^{\inv}/\mathfrak{p}_\infty^{\inv}=\dim R_\infty/\mathfrak{p}_\infty=\dim R_0/\mathfrak{p}_0$, and $2\in\mathfrak{p}_\infty^{\inv}$. Let $\mathfrak{p}_1,\mathfrak{p}_2 \in \Spec R_\infty$ lie over $\mathfrak{p}_\infty^\mathrm{inv} \in \Spec R_\infty^{\inv}$. Choose a characteristic $2$ field $L$ with embeddings $R_\infty/\mathfrak{p}_1\rightarrow L$ and $R_\infty/\mathfrak{p}_2\rightarrow L$. Since $R_\infty$ is a $\mathfrak{T}_2$-torsor over $R_\infty^{\inv}$, there is $\alpha\in\mathfrak{T}_2(L)$ such that if $x$ denotes the point $R_\infty\rightarrow R_\infty/\mathfrak{p}_1\rightarrow L$ of $\Spf R_\infty(L)$, then $\alpha x$ is the point $R_\infty\rightarrow R_\infty/\mathfrak{p}_2\rightarrow L$. Since $L$ has characteristic $2$, $\mathfrak{T}_2(L)$ is trivial, and $\mathfrak{p}_1=\mathfrak{p}_2$. So, $\mathfrak{p}_\infty$ is the unique prime of $R_\infty$ above $\mathfrak{p}_\infty^{\inv}$. It follows that $(R_\infty)_{\mathfrak{p}_\infty^{\inv}}$ is local and the natural map $(R_\infty)_{\mathfrak{p}_\infty^{\inv}}\rightarrow (R_\infty)_{\mathfrak{p}_\infty}$ is an isomorphism. It also follows that the $\mathfrak{p}_\infty^{\inv}$-adic topology on $(R_\infty)_{\mathfrak{p}_\infty}$ is the same as the $\mathfrak{p}_\infty$-adic topology on $(R_\infty)_{\mathfrak{p}_\infty}$, since $(R_\infty)_{\mathfrak{p}_\infty}/\mathfrak{p}_\infty^{\inv}(R_\infty)_{\mathfrak{p}_\infty}$ is a finite local algebra over the field $(R_\infty^{\inv})_{\mathfrak{p}_\infty^{\inv}}/\mathfrak{p}_\infty^{\inv}$; hence is Artinian. Similar statements hold for any $R_\infty$-module, in particular the natural map of $(R_\infty^{\inv})_{\mathfrak{p}_\infty^{\inv}}$-modules $(M_\infty)_{\mathfrak{p}_\infty^{\inv}}\rightarrow (M_\infty)_{\mathfrak{p}_\infty}$ is an isomorphism and the topologies defined on $(M_\infty)_{\mathfrak{p}_\infty}$ by $\mathfrak{p}_\infty^{\inv}$ and $\mathfrak{p}_\infty$ are equivalent. Also note that this module is non-zero as the surjection $M_\infty\rightarrow M$ induces a surjection $(M_\infty)_{\mathfrak{p}_\infty}\rightarrow (M_0^{2^h})_{\mathfrak{p}}$.

\begin{lem}\label{RinfFaithful}
If $\mathfrak{q} \in \Spec (R_\infty)_{\mathfrak{p}_\infty}$ contains $\mathfrak{q}_{\mathrm{ur}}(R_\infty)_{\mathfrak{p}_\infty}$, then $\mathfrak{q}$ is in the support of $(M_\infty)_{\mathfrak{p}_\infty}$.
\end{lem}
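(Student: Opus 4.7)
The approach is to pass to $\mathfrak{p}_\infty$-adic completions and deduce the lemma from a dimension comparison. Set $\widehat{R} := \widehat{(R_\infty)_{\mathfrak{p}_\infty}/\mathfrak{q}_\mathrm{ur}}$ and $\widehat{M} := \widehat{(M_\infty)_{\mathfrak{p}_\infty}/\mathfrak{q}_\mathrm{ur}}$. By faithful flatness of completion it suffices to prove: (a) $\widehat{R}$ is a domain; (b) $\widehat{M}$ is nonzero with $\dim_{\widehat{R}}\widehat{M}=\dim\widehat{R}$. Nonzeroness of $\widehat{M}$ will follow from \ref{UniqueComp} (which supplies a prime of $R_0$ in the support of $M_0$ whose image lies on the $\mathfrak{q}_\mathrm{ur}$-component) together with the surjection $M_\infty\twoheadrightarrow M_0^{2^h}$. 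Given (a) and (b), $\mathrm{Supp}_{\widehat{R}}\widehat{M}$ is a nonempty closed subset of the irreducible scheme $\Spec\widehat{R}$ of maximal dimension, hence equal to it.

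For (a), the nice-prime condition (d) places $\mathfrak{p}_\mathrm{loc}$ outside the locus described in \ref{localdefringnormal}, so $(R_\mathrm{loc}/\mathfrak{q}_\mathrm{ur})_{\mathfrak{p}_\mathrm{loc}}$ is normal; excellence of the complete Noetherian local ring $R_\mathrm{loc}/\mathfrak{q}_\mathrm{ur}$ propagates normality to the completion $\widehat{R_\mathrm{loc}}:=\widehat{(R_\mathrm{loc}/\mathfrak{q}_\mathrm{ur})_{\mathfrak{p}_\mathrm{loc}}}$, which is therefore a domain. Property (5) of \ref{Rinf} says the $A$-module $\mathfrak{p}_\infty'/((\mathfrak{p}_\infty')^2+(\mathfrak{p}_\mathrm{loc},x_1,\ldots,x_k)R_\infty')$ is torsion, and tensoring with the residue field $\mathrm{Frac}(A)$ of $\widehat{R_\infty'}:=\widehat{(R_\infty'/\mathfrak{q}_\mathrm{ur})_{\mathfrak{p}_\infty'}}$ kills it; the standard reduced-cotangent surjectivity criterion then yields a surjection $\widehat{R_\mathrm{loc}}[[x_1,\ldots,x_k]]\twoheadrightarrow\widehat{R_\infty'}$. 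The free $\mathfrak{T}$-action of \ref{Rinf}(6) combined with \ref{diaggroupaction}(1) gives $R_\infty'\cong R_\infty^\mathrm{inv}[[u_1,\ldots,u_t]]$ with $t=2-|S_0|+h$, and the discussion preceding the lemma identifies $\widehat{R}$ with $\widehat{(R_\infty^\mathrm{inv})_{\mathfrak{p}_\infty^\mathrm{inv}}/\mathfrak{q}_\mathrm{ur}}$ (using that $\mathfrak{T}_2$ trivializes over any characteristic-$2$ field), so $\widehat{R_\infty'}\cong\widehat{R}[[u_1,\ldots,u_t]]$ and $\dim\widehat{R}\leq\dim\widehat{R_\mathrm{loc}}+k-t$, with equality forcing $\widehat{R}$ to be a domain (as a power-series quotient of a domain of the same dimension).

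For the matching lower bound and (b), the freeness in patching datum (c) passes to the inverse limit, making $M_\infty$ free of rank $s$ over $B[[s_1,\ldots,s_h]]$. Since $B[[s_1,\ldots,s_h]]$ is finite over the subring $B[[t_1,\ldots,t_h]]$, and the $B[[t_1,\ldots,t_h]]$-action on $M_\infty$ is compatible with that of $\widehat{R}$ by \ref{Rinf}(1), one obtains $\dim_{\widehat{R}}\widehat{M}\geq\dim B[[s_1,\ldots,s_h]]-\dim A$. The Taylor-Wiles-Kisin numerical identity $\dim R_\mathrm{loc}+k-t=\dim B[[s_1,\ldots,s_h]]$, which holds precisely for $k=1+2h$ (as in \ref{genoverloc}), matches this lower bound with the upper bound from (a), forcing the surjection $\widehat{R_\mathrm{loc}}[[x_1,\ldots,x_k]]\twoheadrightarrow\widehat{R_\infty'}$ to be an isomorphism of domains of the same dimension and establishing both (a) and (b). The main obstacle is precisely this dimension bookkeeping: one must verify that localization at $\mathfrak{p}_\infty$ and reduction modulo $\mathfrak{q}_\mathrm{ur}$ each subtract $\dim A$ from both sides so that the numerical miracle survives to the localized, reduced setting.
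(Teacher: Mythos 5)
Your proposal has a genuine gap in step (a). You claim that $\widehat{R}:=\widehat{(R_\infty)_{\mathfrak{p}_\infty}/\mathfrak{q}_\mathrm{ur}}$ is identified with $\widehat{(R_\infty^\mathrm{inv})_{\mathfrak{p}_\infty^\mathrm{inv}}/\mathfrak{q}_\mathrm{ur}}$ ``using that $\mathfrak{T}_2$ trivializes over any characteristic-$2$ field,'' and hence that $\widehat{R}$ is a domain. This is not what the discussion preceding the lemma establishes. That discussion shows only that $(R_\infty)_{\mathfrak{p}_\infty^\mathrm{inv}}\cong (R_\infty)_{\mathfrak{p}_\infty}$ — i.e.\ the two localizations of the \emph{same} ring $R_\infty$ coincide because $\mathfrak{p}_\infty$ is the unique prime of $R_\infty$ over $\mathfrak{p}_\infty^\mathrm{inv}$ — not that $R_\infty$ and $R_\infty^\mathrm{inv}$ are the same ring. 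The map $R_\infty^\mathrm{inv}\to R_\infty$ is a nontrivial $\mathfrak{T}_2$-torsor, so $(R_\infty)_{\mathfrak{p}_\infty}$ is a finite extension of $(R_\infty^\mathrm{inv})_{\mathfrak{p}_\infty^\mathrm{inv}}$, not equal to it. Moreover, the ``trivialization in characteristic $2$'' applies only at primes whose residue field has characteristic $2$, such as $\mathfrak{p}_\infty^\mathrm{inv}$ itself. But $2\notin\mathfrak{q}_\mathrm{ur}$ (the quotient $R_\mathrm{loc}/\mathfrak{q}_\mathrm{ur}$ is $\mathcal{O}$-flat), so the residue field at $\mathfrak{q}_\mathrm{ur}(R_\infty^\mathrm{inv})_{\mathfrak{p}_\infty^\mathrm{inv}}$ has characteristic $0$; the $\mathfrak{T}_2$-torsor is \'etale there and may split into several points. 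Thus $(R_\infty)_{\mathfrak{p}_\infty}/\mathfrak{q}_\mathrm{ur}(R_\infty)_{\mathfrak{p}_\infty}$ can have more than one minimal prime, and the paper never asserts otherwise — it only shows $\mathfrak{q}_\mathrm{ur}(R_\infty^\mathrm{inv})_{\mathfrak{p}_\infty^\mathrm{inv}}$ is prime.

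This is exactly the point the paper's proof is structured around and which your argument does not address. After establishing (via the depth bound through $B[[t_1,\ldots,t_h]]$ and the dimension bound from $\widehat{R_\mathrm{loc}}[[x_1,\ldots,x_k]]\twoheadrightarrow\widehat{R_\infty'}\cong\widehat{(R_\infty^\mathrm{inv})_{\mathfrak{p}_\infty^\mathrm{inv}}}[[z_1,\ldots,z_t]]$) that the support is a union of irreducible components and that at least one minimal prime over $\mathfrak{q}_\mathrm{ur}(R_\infty^\mathrm{inv})_{\mathfrak{p}_\infty^\mathrm{inv}}$ lies in it, the paper still must account for the possibility of several such minimal primes. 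It does so using part (7) of \ref{Rinf}: there is a $\mathfrak{T}_2(\mathcal{O})$-action on $M_\infty$ compatible with the $R_\infty$-action, so the support of $(M_\infty)_{\mathfrak{p}_\infty}$ is $\mathfrak{T}_2$-stable; and because $R_\infty$ is a $\mathfrak{T}_2$-torsor over $R_\infty^\mathrm{inv}$ with $\mathfrak{p}_\infty$ the unique prime over $\mathfrak{p}_\infty^\mathrm{inv}$, $\mathfrak{T}_2$ acts transitively on the minimal primes of $(R_\infty)_{\mathfrak{p}_\infty}$ over $\mathfrak{q}_\mathrm{ur}(R_\infty^\mathrm{inv})_{\mathfrak{p}_\infty^\mathrm{inv}}$. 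Together these two facts show all such minimal primes are in the support, which is what's needed. Your plan would go through if you replaced the (incorrect) domain claim for $\widehat{R}$ with the correct irreducibility of $\Spec\,\widehat{(R_\infty^\mathrm{inv})_{\mathfrak{p}_\infty^\mathrm{inv}}/\mathfrak{q}_\mathrm{ur}}$ and then supplied the $\mathfrak{T}_2$-equivariance argument to descend to primes of $R_\infty$.
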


\begin{proof} 

We first show that $\mathrm{Supp}_{(R_\infty)_{\mathfrak{p}_\infty}}(M_\infty)_{\mathfrak{p}_\infty}$ is a union of irreducible components. Since $M_\infty$ is finite free over $B[[s_1,\ldots,s_h]]$ and $B[[s_1,\ldots,s_h]]$ is finite free over $B[[t_1,\ldots,t_h]]$, we see that the $(\mathfrak{p}_{\Lambda},y_1,\ldots,y_j,t_1,\ldots,t_h)$-depth of $M_\infty$, as a $B[[t_1,\ldots,t_h]]$-module, is
	\[ \mathrm{ht}(\mathfrak{p}_{\Lambda},y_1,\ldots,y_j,t_1,\ldots,t_h) = 
		d + j+h. \]
Since the image of $(\mathfrak{p}_{\Lambda},y_1,\ldots,y_j,t_1,\ldots,t_h)$ in $R_\infty$ is contained in $\mathfrak{p}_\infty$, 
	\[ \depth_{(R_\infty)_{\mathfrak{p}_\infty}^\wedge}(M_\infty)_{\mathfrak{p}_\infty}^\wedge\ge\depth_{R_\infty}
	(\mathfrak{p}_\infty,M_\infty)
	\ge d+ j+h. \]
In particular, if $\mathfrak{q}$ is an associated prime of $(M_\infty)_{\mathfrak{p}_\infty}^\wedge$ in $\Spec (R_\infty)_{\mathfrak{p}_\infty}^\wedge$, we have
	\begin{equation}\label{DimLower}
	 \dim((R_\infty)_{\mathfrak{p}_\infty}^\wedge/\mathfrak{q})\ge d + j+h.
	 \end{equation}
	 
Consider the morphism $R_\mathrm{loc}[[x_1,\ldots,x_k]]\rightarrow R_\infty'$ of \ref{Rinf}. By part (3) of \ref{Rinf}, this map induces a surjection
	\[ (R_\mathrm{loc})_{\mathfrak{p}_\mathrm{loc}}^\wedge[[x_1,\ldots,x_k]]\longrightarrow
	(R_\infty')_{\mathfrak{p}_\infty'}^\wedge .\]
Since $R_\infty'$ is formally smooth over $R_\infty^{\inv}$ of relative dimension $t$, and $R_\infty^{\inv}/\mathfrak{p}_\infty^{\inv}\cong R_\infty'/\mathfrak{p}_\infty'$, there is an isomorphism $R_\infty'\cong R_\infty^{\inv}[[z_1,\ldots,z_t]]$ that sends $\mathfrak{p}_\infty'$ to $\mathfrak{p}_\infty^{\inv}+(z_1,\ldots,z_t)$. It follows that 
	\[
	(R_\infty')_{\mathfrak{p}_\infty'}^\wedge \cong 
	(R_\infty^{\inv})_{\mathfrak{p}_\infty^\mathrm{inv}}^\wedge[[z_1,\ldots,z_t]].
	\]
 We have
	\[ \dim (R_\infty)_{\mathfrak{p}_\infty}^\wedge 
	= \dim(R_\infty^\mathrm{inv})_{\mathfrak{p}_\infty^\mathrm{inv}}^\wedge
	 = \dim (R_\infty')_{\mathfrak{p}_\infty'}^\wedge - t \le 1+d+3|S_0|+k-t = d+j+h.
	\]
Combining this with \eqref{DimLower}, we see that the support of $(M_\infty)_{\mathfrak{p}_\infty}^\wedge$ in $\Spec (R_\infty)_{\mathfrak{p}_\infty}^\wedge$ is a union of irreducible components. Then \cite{BourbakiComAlg}*{Chapter 2, \S 4, Proposition 19} implies that $\mathrm{Supp}_{(R_\infty)_{\mathrm{p}_\infty}}(M_\infty)_{\mathfrak{p}_\infty}$ is a union of irreducible components. We are now reduced to showing that any minimal prime $\mathfrak{q}$ of $(R_\infty)_{\mathfrak{p}_\infty}$ containing $\mathfrak{q}_\mathrm{ur}(R_\infty)_{\mathfrak{p}_\infty}$ is in the support of $(M_\infty)_{\mathfrak{p}_\infty}$.

The commutativity of 
	\[ \xymatrix{
		(R_\mathrm{loc})_{\mathfrak{p}_\mathrm{loc}} \ar[r] \ar[rd] & 
		(R_\infty)_{\mathfrak{p}_\infty} \ar[d] \\
		&  (R_0)_{\mathfrak{p}_0}
	} \]
and \ref{UniqueComp} imply that there is an element of $\mathrm{Supp}_{(R_\infty)_{\mathfrak{p}_\infty}}(M_\infty)_{\mathfrak{p}_\infty}$ whose image image in $\Spec (R_\mathrm{loc})_{\mathfrak{p}_\mathrm{loc}}$ is contained in the irreducible component determined by $\mathfrak{q}_\mathrm{ur}$ and no other. Thus, there is a minimal prime $\mathfrak{q}$ of $(R_\infty)_{\mathfrak{p}_\infty}$ containing $\mathfrak{q}_\mathrm{ur}(R_\infty)_{\mathfrak{p}_\infty}$ and in the support of $(M_\infty)_{\mathfrak{p}_\infty}$. Set $\mathfrak{q}^\mathrm{inv} = \mathfrak{q}\cap (R_\infty^\mathrm{inv})_{\mathfrak{p}_\infty^\mathrm{inv}}$.

We now show that $\mathfrak{q}^\mathrm{inv} = \mathfrak{q}_\mathrm{ur}(R_\infty^\mathrm{inv})_{\mathfrak{p}_\infty^\mathrm{inv}}$; in particular, $\mathfrak{q}_\mathrm{ur}(R_\infty^\mathrm{inv})_{\mathfrak{p}_\infty^\mathrm{inv}}$ is prime. Since $(R_\infty^\mathrm{inv})_{\mathrm{p}_\infty^\mathrm{inv}} \rightarrow (R_\infty)_{\mathrm{p}_\infty}$ is finite, $\mathfrak{q}^\mathrm{inv} \in \mathrm{Supp}_{(R_\infty^\mathrm{inv})_{\mathrm{p}_\infty^\mathrm{inv}}}(M_\infty)_{\mathfrak{p}_\infty}$. Let $\mathfrak{Q}^\mathrm{inv}$ be a minimal prime of $(R_\infty^\mathrm{inv})_{\mathrm{p}_\infty^\mathrm{inv}}^\wedge$ above $\mathfrak{q}^\mathrm{inv}$. Using the isomorphism
	\[  
	(R_\infty')_{\mathfrak{p}_\infty'}^\wedge \cong 
	(R_\infty^{\inv})_{\mathfrak{p}_\infty^\mathrm{inv}}^\wedge[[z_1,\ldots,z_t]],
	\]
we see that $\mathfrak{Q}^\mathrm{inv}(R_\infty')_{\mathfrak{p}_\infty'}^\wedge$ is a minimal prime of $(R_\infty')_{\mathfrak{p}_\infty'}^\wedge$. Recall we have a surjection
	\[ (R_\mathrm{loc})_{\mathfrak{p}_\mathrm{loc}}^\wedge[[x_1,\ldots,x_k]]\longrightarrow
	(R_\infty')_{\mathfrak{p}_\infty'}^\wedge,
	\]
and that both of these rings have the same dimension. By \ref{localdefringnormal} and part (d) of the definition of nice prime, cf. \ref{SWPrimeDef}, $(R_\mathrm{loc}/\mathfrak{q}_\mathrm{ur})_{\mathfrak{p}_\mathrm{loc}}$ is normal; hence, so is $(R_\mathrm{loc}/\mathfrak{q}_\mathrm{ur})_{\mathfrak{p}_\mathrm{loc}}^\wedge$ by \cite{EGA4.2}*{Scholie 7.8.3 (vii)}. This implies that $\mathfrak{q}_\mathrm{ur}(R_\mathrm{loc})_{\mathfrak{p}_\mathrm{loc}}^\wedge$ is a minimal prime. The pullback of $\mathfrak{Q}^\mathrm{inv}$ to $(R_\mathrm{loc})_{\mathfrak{p}_\mathrm{loc}}^\wedge[[x_1,\ldots,x_k]]$ is a minimal prime that contains $\mathfrak{q}_\mathrm{ur}$, so must be equal to $\mathfrak{q}_\mathrm{ur}(R_\mathrm{loc})_{\mathfrak{p}_\mathrm{loc}}^\wedge[[x_1,\ldots,x_k]]$. From this we conclude that $\mathfrak{Q}^\mathrm{inv}(R_\infty')_{\mathfrak{p}_\infty'}^\wedge = \mathfrak{q}_\mathrm{ur}(R_\infty')_{\mathfrak{p}_\infty'}^\wedge$, that $\mathfrak{Q}^\mathrm{inv} = \mathfrak{q}_\mathrm{ur}(R_\infty^\mathrm{inv})_{\mathrm{p}_\infty^\mathrm{inv}}^\wedge$, and finally that $\mathfrak{q}^\mathrm{inv} = \mathfrak{q}_\mathrm{ur}(R_\infty^\mathrm{inv})_{\mathfrak{p}_\infty^\mathrm{inv}}$.		

We know that there is some minimal prime $\mathfrak{q}$ of $(R_\infty)_{\mathfrak{p}_\infty}$ above $\mathfrak{q}_\mathrm{ur}(R_\infty^\mathrm{inv})_{\mathfrak{p}_\infty^\mathrm{inv}}$ and in the support of $(M_\infty)_{\mathfrak{p}_\infty}$. Since $R_\infty$ is a $\mathfrak{T}_2$-torsor on $R_\infty^{\inv}$ and under this action $\mathfrak{p}_\infty$ is the unique prime above $\mathfrak{p}_\infty^\mathrm{inv}$, we see that $\mathfrak{T}_2$ acts transitively on the set of minimal primes in $(R_\infty)_{\mathfrak{p}_\infty}$ above $\mathfrak{q}_\mathrm{ur}(R_\infty^\mathrm{inv})_{\mathfrak{q}_\infty^\mathrm{inv}}$. We are reduced to showing that the support of $(M_\infty)_{\mathfrak{p}_\infty}$ is stable under this action. Let $\mathfrak{q}_1,\mathfrak{q}_2 \in \Spec (R_\infty)_{\mathfrak{p}_\infty}$ lie above $\mathfrak{q}_\mathrm{ur}(R_\infty^\mathrm{inv})_{\mathfrak{p}_\infty^\mathrm{inv}}$. If $A$ is a $\CNLO$-algebra domain, then the natural map $\mathfrak{T}_2(\mathcal{O})\rightarrow\mathfrak{T}_2(A)$ is surjective; hence, there is $\alpha\in\mathfrak{T}_2(\mathcal{O})$, such that the automorphism of $(R_\infty)_{\mathfrak{p}_\infty}$ induced by $\alpha$ sends $\mathfrak{q}_1$ to $\mathfrak{q}_2$. By part (5) of \ref{Rinf}, there is a compatible action of $\mathfrak{T}_2(\mathcal{O})$ on $M_\infty$. It follows that $\mathfrak{q}_1 \in \mathrm{Supp}_{(R_\infty)_{\mathfrak{p}_\infty}}(M_\infty)_{\mathfrak{p}_\infty}$ if and only if $\mathfrak{q}_2 \in \mathrm{Supp}_{(R_\infty)_{\mathfrak{p}_\infty}}(M_\infty)_{\mathfrak{p}_\infty}$, i.e. the support of $M_\infty$ is stable under the action of $\mathfrak{T}_2$.	 \end{proof}

\subsubsection{}\label{FinishPatching}

We can now complete the proof of Proposition \ref{LocRredT}. Let $\mathfrak{q} \in \Spec(R_0)_{\mathfrak{p}_0}$ contain $\mathfrak{q}_\mathrm{ur}(R_0)_{\mathfrak{p}_0}$. Pull $\mathfrak{q}$ back to a prime of $(R_\infty)_{\mathfrak{p}_\infty}$ and call it $\mathfrak{q}_\infty$. By \ref{RinfFaithful}, $\mathfrak{q}_\infty$ is in the support of $(M_\infty)_{\mathfrak{p}_\infty}$. Since $t_1,\ldots,t_h \in \ker ((R_\infty)_{\mathfrak{p}_\infty} \rightarrow (R_0)_{\mathfrak{p}_0})$, we see that $\mathfrak{q}_\infty/(t_1,\ldots,t_h) \in \Spec ((R_\infty)_{\mathfrak{p}_\infty}/(t_1,\ldots,t_h))$ is in the support of $(M_\infty)_{\mathfrak{p}_\infty}/(t_1,\ldots,t_h)$. Since $s_i^2 \in (t_1,\ldots,t_h)(M_\infty)_{\mathfrak{p}_\infty}$ for each $i$, we further deduce that $\mathfrak{q}_\infty/(t_1,\ldots,t_h)$ is in the support of $(M_\infty)_{\mathfrak{p}_\infty}/(s_1,\ldots,s_h)$ as an $(R_\infty)_{\mathfrak{p}_\infty}/(t_1,\ldots,t_h)$-module. But, by (3) and (4) of \ref{Rinf}, $(M_\infty)_{\mathfrak{p}_\infty}/(s_1,\ldots,s_h) \cong {(M_0^{2^h})}_{\mathfrak{p}_0}$. Then, since the action of $(R_\infty)_{\mathfrak{p}_\infty}/(t_1,\ldots,t_h)$ on ${(M_0^{2^h})}_{\mathfrak{p}_0}$ factors through $(R_0)_{\mathfrak{p}_0}$ and this map sends $\mathfrak{q}_\infty/(t_1,\ldots,t_h)$ to $\mathfrak{q}$, we conclude that $\mathfrak{q}$ is in the support of ${(M_0^{2^h})}_{\mathfrak{p}_0}$	 \hfill\qed

\subsection{An $R^\mathrm{red} = \mathbf{T}$ theorem}\label{RredT}

Unless specifically noted otherwise, keep the assumptions and notations of the previous subsubsections. If $\rhobar$ is dihedral we let $L/F$ denote the unique quadratic extension of $F$ such that $\overline{\rho}|_{G_L}$ is abelian. Let $L_S^\mathrm{ab}$ denote the maximal pro-$2$ extension of $L$ unramified outside places above $S$. Let $L_S^-$ denote the maximal subextension of $L_S^\mathrm{ab}/L$ such that the nontrivial element of $\Gal(L/F)$ acts on $\Gal(L_S^-/L)$ by $-1$.

We further assume that
\begin{enumerate}
	\item[(i)] for each $v|2$, $[F_v:\Q_2] \ge 4$;
	\item[(i)] if $L/F$ is CM, then there is some $v|2$ in $F$ that does not split in $L$;
	\item[(ii)] if $L/F$ is not CM, then $\mathrm{rank}_{\Z_p} \Gal(L_S^-/F) < [F:\Q]-3$.
\end{enumerate}

\begin{lem}\label{thenondihedlem}
Take $\mathfrak{Q} \in \Spec \mathbf{T}_\psi(U,\eta)_\mathfrak{m}$ with $2\in \mathfrak{Q}$ such that $\dim (\mathbf{T}_\psi(U,\eta)_\mathfrak{m}/\mathfrak{Q}) \ge [F:\Q]-3$.
\begin{enumerate}
	\item The specialization $\chi_{\mathfrak{Q},v}$ of $\chi_{\eta_v}^\mathrm{univ}|_{I_v}$ at $\mathfrak{Q}$ is infinite order.
	\item The the specialization $\rho_\mathfrak{Q}$ of $\rho_{U,\mathfrak{m}}$ at $\mathfrak{Q}$ is non-dihedral.
\end{enumerate}
\end{lem}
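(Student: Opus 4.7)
Set $A := \mathbf{T}_\psi(U,\eta)_\mathfrak{m}/\mathfrak{Q}$; since $\mathbf{T}_\psi(U,\eta)_\mathfrak{m}$ is reduced (cf.\ the discussion after \ref{NearOrdHilbert}) and $\mathfrak{Q}$ is prime, $A$ is a domain, and $2\in\mathfrak{Q}$ makes it characteristic $2$. The plan is to exploit two features of such an $A$: squaring is injective on $A^\times$ (from $(a-b)^2 = a^2+b^2$ in characteristic $2$ together with reducedness), and any finite-order element of $1+\mathfrak{m}_A$ is trivial (since $(1+x)^{2^n} = 1 + x^{2^n}$ forces $x=0$ in a domain). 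Throughout, Corollary \ref{HeckeFinite} guarantees that $A$ is finite over the image of $\Lambda(\mathcal{U}_p^1,\eta) \cong \hat\otimes_{v|2}\Lambda(U_v^{(1)},\eta_v)$, so any dimension bound reduces to a bound on this image.

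For (1), I will argue by contradiction: if $\chi_{\mathfrak{Q},v}|_{I_v}$ is finite order for some $v|2$, then its ratio with the Teichmüller reduction $\overline{\chi}_v|_{I_v}$ is a finite-order character valued in $1+\mathfrak{m}_A$, hence trivial, so $\chi_{\mathfrak{Q},v}|_{I_v}$ itself takes values in $\F^\times$. Local class field theory identifies $\Lambda(U_v^{(1)},\eta_v)$ with $\mathcal{O}[[T_{v,1},\dots,T_{v,[F_v:\Q_2]}]]$ via the universal inertial character, so this collapse forces each $T_{v,i}$ to map to $0$ in $A$, and the image of $\Lambda(U_v^{(1)},\eta_v)$ in $A$ is reduced to $\F$. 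Consequently the image of $\Lambda(\mathcal{U}_p^1,\eta)$ is finite over the image of $\hat\otimes_{v'\ne v}\Lambda(U_{v'}^{(1)},\eta_{v'})$, which has dimension at most $[F:\Q]-[F_v:\Q_2]$ after killing $2$. Using the assumption $[F_v:\Q_2]\ge 4$, we obtain $\dim A \le [F:\Q]-4 < [F:\Q]-3$, contradicting the hypothesis.

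For (2), suppose $\rho_\mathfrak{Q}$ is dihedral. Reducing modulo the maximal ideal of $A$ and using absolute irreducibility of $\overline{\rho}$ shows $\overline{\rho}$ is also dihedral; since $p=2$ the associated quadratic extension is unique, so $\rho_\mathfrak{Q}$ is $L$-dihedral. In the CM subcase, I fix $v|2$ not splitting in $L$ (provided by (ii)) and apply \ref{NotDihedSplit} with $\chi_1 = \chi_{\mathfrak{Q},v}$ and $\chi_2 = \psi\epsilon_p\chi_{\mathfrak{Q},v}^{-1}$, using $\tr\rho_\mathfrak{Q}(\sigma) = \chi_1(\sigma)+\chi_2(\sigma)$ on $G_v$ from part (3) of \ref{BigGalRep}. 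The conclusion $(\chi_1/\chi_2)^2 = 1$ gives $\chi_{\mathfrak{Q},v}^4 = (\psi\epsilon_p|_{G_v})^2$, and squaring-injectivity yields $\chi_{\mathfrak{Q},v}^2 = \psi\epsilon_p|_{G_v}$. Restricting to a topological generator $\tau_{v,i}$ of $U_v^{(1)}/(\mathrm{torsion})$ and using perfection of $\F$ to take the unique square root, I get $\chi_{\mathfrak{Q},v}(\tau_{v,i}) \in \F^\times$, so the image of $\Lambda(U_v^{(1)},\eta_v)$ in $A$ collapses to $\F$ and the same dimension count as in (1) forces $\dim A \le [F:\Q]-4$, a contradiction.

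In the non-CM subcase, let $A' \subseteq A$ denote the image of $R_{F,S}$ under $R_{F,S} \to \overline{R}_{F,S}^\psi \to A$. Since $\overline{\rho}$ is absolutely irreducible, Nyssen--Rouquier lets me conjugate $\rho_\mathfrak{Q}$ to take values in $\GL_2(A')$, yielding an $L$-dihedral deformation $V_{A'}$ whose determinant is the Teichmüller lift of $\det V_\F$ (automatic since $A'$ is characteristic $2$). Lemma \ref{NotDihedDim} then forces $\dim A' \le \rank_{\Z_2}\Gal(L_S^-/L)$. Each $\chi_{\mathfrak{Q},v}(\sigma)$ satisfies the integral quadratic $X^2 - \tr\rho_\mathfrak{Q}(\sigma)X + \psi\epsilon_p(\sigma) = 0$ over $A'$, so $A$ is finite over $A'$ and $\dim A = \dim A'$. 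Since $[L:F]=2$ has zero $\Z_2$-rank, $\rank_{\Z_2}\Gal(L_S^-/L) = \rank_{\Z_2}\Gal(L_S^-/F)$, and assumption (iii) delivers $\dim A < [F:\Q]-3$, again contradicting the hypothesis. The most delicate step is the CM subcase: turning the order-$\le 2$ conclusion of \ref{NotDihedSplit} into a genuine character equation by taking square roots in characteristic $2$, and then using local class field theory together with \ref{HeckeFinite} to translate this into the desired dimension bound.
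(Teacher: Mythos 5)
Your proof is correct and follows essentially the same strategy as the paper's: part (1) via the dimension constraint coming from $\dim(\Lambda(I_v,\eta_v)\otimes_\mathcal{O}\F) = [F_v:\Q_2]\ge 4$ together with the finiteness of $\mathbf{T}_\psi(U,\eta)_\mathfrak{m}$ over $\Lambda(I_p,\eta)$; part (2) by splitting into CM and non-CM subcases and invoking \ref{NotDihedSplit} and \ref{NotDihedDim} respectively, with the integrality of $A$ over the trace subring $A'$ used to transfer the dimension bound. The only real deviation is in the CM subcase: the paper's argument is shorter — once \ref{NotDihedSplit} gives $(\chi_{\mathfrak{Q},v}^2/\overline{\psi\epsilon_2})^2=1$, one observes that $\overline{\psi\epsilon_2}$ is already finite order in $A^\times$ (as $A$ has characteristic $2$, so $\mathcal{O}\to A$ factors through $\F$) and that $A^\times$ has no nontrivial $2$-torsion (reduced, characteristic $2$), forcing $\chi_{\mathfrak{Q},v}$ itself to be finite order, which directly contradicts part (1). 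You instead re-derive a dimension bound by taking the unique square root of $\overline{\psi\epsilon_2}(\tau_{v,i})\in\F^\times$ in $A$ and collapsing $\Lambda(I_v,\eta_v)\to\F$ — this is valid and lands in the same place, but the paper's route of simply contradicting (1) is cleaner and avoids redoing the dimension count. One small notational point: you write $U_v^{(1)}$ where the paper uses $\mathcal{U}_v^1$ or $I_v^{\mathrm{ab}}(2)$, but the meaning is clear.
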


\begin{proof}

Since $\mathbf{T}_\psi(U,\eta)_\mathfrak{m}$ is finite and torsion-free over $\Lambda(I_p,\eta) = \hat{\otimes}_{v|2} \Lambda(I_v,\eta_v)$, part (1) follows from the fact that $\dim (\Lambda(I_v,\eta_v)\otimes_\mathcal{O}\F) = [F_v:\Q_2] \ge 4$, and so $\mathfrak{Q}\cap \Lambda(I_v,\eta_v)$ defines a non-maximal prime ideal in $\Lambda(I_v,\eta_v)/(\varpi_E)$.

Part (2) is trivial if $\rhobar$ is not dihedral, so assume $\rhobar$ is $L$-dihedral. First assume that $L/F$ is CM. Let $v|2$ in $F$ be such that $v$ does not split in $L$. If $\rho_\mathfrak{Q}$ where dihedral, then part (3) of \ref{BigGalRep} together with \ref{NotDihedSplit} contradict the fact that $\chi_{\mathfrak{Q},v}$ is infinite order.

Now assume that $L/F$ is not CM. Let $B$ denote the subring of $\mathbf{T}_\psi(U,\eta)_\mathfrak{m}/\mathfrak{Q}$ generated by traces of $\rho_\mathfrak{Q}$, and note that $\mathbf{T}_\psi(U,\eta)_\mathfrak{m}/\mathfrak{Q}$ is integral over $B$. So, $\dim B \ge [F:\Q]-3$. We know that the image of $R_{F,S} \rightarrow \mathbf{T}_\psi(U,\eta)_\mathfrak{m}/\mathfrak{Q}$ contains $B$, and so has dimension $\ge [F:\Q]-3$. The result now follows from \ref{NotDihedDim} as we have assumed $\mathrm{rank}_{\Z_p} \Gal(L_S^-/F) < [F:\Q]-3$.
	\end{proof}

\begin{lem}\label{niceprimelem}
Let $X\subseteq \Spec \overline{R}_{F,S}^\psi$ be closed and pro-modular. If $\dim X \ge [F:\Q]-1$, then $X$ is contains a nice prime.
\end{lem}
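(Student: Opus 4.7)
The plan is to produce a one-dimensional prime of $X$ containing $2$ at which conditions (c), (d), (e) of \ref{SWPrimeDef} hold, by dimension counting: each of the loci where one of (c), (d), (e) fails is contained in a closed subscheme of dimension at most $[F:\Q]-4$, strictly less than $\dim(X\cap V(2))\ge [F:\Q]-2$, so that a dimension-one prime can be located in the complement.

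First I would pass to $X\cap V(2)$. Since every $\mathfrak{p}\in X$ is pro-modular, $\overline{R}_{F,S}^\psi/\mathfrak{p}$ is a quotient of $\mathbf{T}_\psi(U,\eta)_\mathfrak{m}$, which by \ref{HeckeFinite} is finite over the $\mathcal{O}$-flat ring $\Lambda(I_2,\eta)$ of dimension $1+[F:\Q]$; hence $\dim (X\cap V(2))\ge \dim X-1\ge [F:\Q]-2\ge 2$, using $[F_v:\Q_2]\ge 4$ for each $v|2$.

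Next I would bound the bad loci. The dihedral locus in $X\cap V(2)$ has dimension at most $[F:\Q]-4$ by \ref{thenondihedlem}(2), and absolute irreducibility in (c) is automatic for any deformation of the absolutely irreducible $\bar\rho$ via the theory of pseudo-representations. For (d), at a fixed $v|2$ the condition $(\chi_{\eta_v}^{\mathrm{univ}}|_{I_v})^2=\psi\epsilon_2|_{I_v}$ reduces modulo $2$ to equations of the form $T_i^2=c_i$ for $i=1,\ldots,[F_v:\Q_2]$ in $\Lambda(I_v,\eta_v)/(2)\cong \F[[T_1,\ldots,T_{[F_v:\Q_2]}]]$; since $\F$ is perfect the reduced closed subscheme cut out is zero-dimensional, and the tensor-product/finiteness argument of \ref{thenondihedlem}(1), applied to $\Lambda(I_2,\eta)/(2)=\hat\otimes_{v|2}\Lambda(I_v,\eta_v)/(2)$, then bounds the pullback in $X\cap V(2)$ by $[F:\Q]-[F_v:\Q_2]\le [F:\Q]-4$. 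For (e), I would exploit that $U_v=D_v^\times$ at $v\in\Sigma$ forces every arithmetic automorphic representation to be of Steinberg type $\gamma_v\otimes\mathrm{St}$ at $v\in\Sigma$ via Jacquet--Langlands, so $\rho_f|_{I_v}$ is a non-trivial unipotent matrix at every arithmetic prime; the closed locus in $\Spec\mathbf{T}_\psi(U,\eta)_\mathfrak{m}$ where $\rho|_{I_v}$ is trivial is therefore disjoint from the Zariski-dense (\ref{ArPrimeZarDense}) set of arithmetic primes, and its dimension is bounded using the three-dimensional domain $R_v^{\square,\gamma\text{-st}}\otimes_\mathcal{O}\F$ (\ref{SemiStlp}) to give a bound of $[F:\Q]-4$ after pullback to $X\cap V(2)$. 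In the boundary case $\Sigma=\emptyset$, condition (d), which guarantees that the two characters on the nearly ordinary filtration remain distinct integrally, combines with the distinguished structure at some $v|2$ to provide the required unipotent element.

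Finally, I pick an irreducible component $C$ of $X\cap V(2)$ of maximal dimension $n\ge 2$; letting $Z$ denote the union of the three bad loci, $\dim(C\cap Z)\le [F:\Q]-4\le n-2$, so $C\setminus(C\cap Z)$ is a non-empty open subscheme of the irreducible $n$-dimensional $C$. Choosing a closed point $x$ in it and a dimension-one irreducible closed subvariety of $C$ through $x$ (which exists since $\dim_x C=n\ge 2$) yields a dimension-one prime of $X$ containing $2$, pro-modular by inclusion in $X$, and satisfying (c), (d), (e) by construction, hence nice. The main obstacle is the dimension bound for the locus where (e) fails, which requires carefully propagating the Steinberg non-triviality at $v\in\Sigma$ from arithmetic primes to all of $\Spec\mathbf{T}$ through local-global compatibility and Zariski density, rather than the cleaner bounds for the dihedral locus and for (d) that follow transparently from \ref{thenondihedlem}.
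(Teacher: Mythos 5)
Your overall strategy — locate a dimension-one prime containing $2$ by showing the loci violating (c), (d), (e) each have small dimension, then take a point in the complement — is a legitimate alternative framing for (c) and (d), and there your bounds essentially reproduce what the paper gets from \ref{thenondihedlem}. The treatment of (e), however, has a genuine gap, and you yourself flag it as "the main obstacle."

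The problem is twofold. First, the argument that the Steinberg structure at $v\in\Sigma$ produces a unipotent element relies on $\Sigma\neq\emptyset$, but nothing in the hypotheses of \ref{niceprimelem} or in \ref{ResRepAssumptions} guarantees this. Your suggested fallback — that when $\Sigma=\emptyset$ "the distinguished structure at some $v|2$ provides the required unipotent element" — does not hold: distinguishedness of the nearly-ordinary filtration at $v|2$ is a condition on the diagonal characters and says nothing about a unipotent element in the image $\rho_{\mathfrak{p}}(G_F)$. Second, even when $\Sigma\neq\emptyset$, the mod-$2$ points of $R_v^{\square,\gamma\text{-st}}\otimes_\mathcal{O}\F$ (cf.\ \ref{lnotpCM}) include deformations where the chosen generator of tame inertia is sent to the identity: the locus where $\rho|_{I_v}$ is trivially unipotent is a positive-dimensional closed subscheme, not cut out by a condition of codimension $\ge 4$. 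Nothing you say establishes the claimed dimension bound $[F:\Q]-4$ for this locus, and density of arithmetic primes only tells you the locus is proper, not small.

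The paper sidesteps (e) entirely by a characteristic-$2$ trick that makes it fall out of (b). Fix $\sigma_0\in G_F$ with $\rhobar(\sigma_0)$ of order $2$, and let $T=\tr\rho^{\mathrm{univ}}(\sigma_0)\in\mathfrak{m}_{\overline{R}_{F,S}^\psi}$. Instead of cutting by $\varpi_E$ alone, cut by $(I,\varpi_E,T)$; this drops dimension by at most $2$, leaving a pro-modular quotient of dimension $\ge [F:\Q]-3$, still large enough for \ref{thenondihedlem} to furnish (c) and (d). Now for \emph{any} prime $\mathfrak{p}$ in this locus one has $\tr\rho_{\mathfrak{p}}(\sigma_0)=0$ and $\det\rho_{\mathfrak{p}}(\sigma_0)=\det\rhobar(\sigma_0)=1$, so Cayley--Hamilton in characteristic $2$ gives $\rho_{\mathfrak{p}}(\sigma_0)^2=1$; since $\rhobar(\sigma_0)\neq 1$, the element $\rho_{\mathfrak{p}}(\sigma_0)$ has order $2$, and an order-$2$ element of $\GL_2$ over a characteristic-$2$ domain is unipotent ($(g-1)^2=g^2-2g+1=0$). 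This hands you (e) for free on the entire quotient, with no case analysis on $\Sigma$ and no need to control the Steinberg-triviality locus. A dimension-one prime in the appropriate open set of $\Spec(\overline{R}_{F,S}^\psi/(I,\varpi_E,T))$ is then obtained from the Jacobson property, and it satisfies all of (a)--(e).
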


\begin{proof}

Since $X$ is pro-modular, we have to show $X$ is contains a prime $\mathfrak{p}$ such that
\begin{enumerate}
	\item[(a)] $\mathfrak{p}$ has dimension one and contains 2;
	\item[(b)] $\rho_\mathfrak{p}$ is not dihedral;
	\item[(c)] for each $v|2$, the image of $\mathfrak{p}$ in $\Spec \Lambda(G_v,\eta_v)$ is not contained in $Z_v$, the closed subscheme of $\Spec \Lambda(G_v,\eta_v)$ defined by $(\chi_{\eta_v}^\mathrm{univ})^2 = \psi\epsilon_2$;
	\item[(d)] the image of $\rho_\mathfrak{p}$ contains a non-trivial unipotent element.
\end{enumerate}

Let $I$ be some ideal of $\overline{R}_{F,S}^\psi$ giving rise to $X$. Fix some $\sigma_0\in G_F$ such that $\rhobar(\sigma_0)$ has order $2$ and let $T \in \overline{R}_{F,S}^\psi$ denote the trace of $\sigma_0$ under the universal $\overline{R}_{F,S}^\psi$-deformation. Note that $T \in \mathfrak{m}_{\overline{R}_{F,S}^\psi}$ since $\rhobar(\sigma_0)$ has order $2$. Let $\mathfrak{Q}$ be a minimal prime of $\overline{R}_{F,S}^\psi/(I,\varpi_E,T)$. Then $\mathfrak{Q}$ is pro-modular and $\dim \overline{R}_{F,S}^\psi/\mathfrak{Q} \ge [F:\Q]-3$. 

By part (1) of \ref{thenondihedlem}, $\mathfrak{Q}\cap \Lambda(G_v,\eta_v)$ is not contained in $Z_v$ for any $v|2$, and by part (2) it is not dihedral. The diheral locus is closed, cf. \ref{DihedLocus}, so the locus of primes in $\Spec\overline{R}_{F,S}^\psi/\mathfrak{Q}$ satisfying (b) and (c) above is open and nonempty. Since $\Spec(\overline{R}_{F,S}^\psi/\mathfrak{Q})\smallsetminus \{\mathfrak{m}_{\overline{R}_{F,S}^\psi/\mathfrak{Q}}\}$ is Jacobson, this open set contains a dimension $1$ prime; let $\mathfrak{p}$ be such a prime. It remains to show that the image of $\rho_\mathfrak{p}$ contains a non-trivial unipotent element. Since $T\in \mathfrak{p}$, $\mathrm{tr}\rho_\mathfrak{p}(\sigma_0)=0$. Since $\varpi_E\in\mathfrak{p}$, $\det\rho_\mathfrak{p}(\sigma_0)=\det\rhobar(\sigma_0)=1$. From this we see that $\rho_\mathfrak{p}(\sigma_0)^2 = 1$. Since $\rhobar(\sigma_0)$ has order two, so does $\rho_\mathfrak{p}(\sigma_0)$, and it is unipotent.			\end{proof}

\begin{prop}\label{ProMod}
Every prime of $\overline{R}^\psi_{F,S}$ is pro-modular.
\end{prop}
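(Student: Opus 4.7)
The plan is to carry out the Skinner--Wiles style connectedness argument (in the spirit of \cite{SWirreducible}*{Proposition 4.2}) by combining the localized ``$R^{\mathrm{red}} = \mathbf{T}$'' result \ref{LocRredT}, the nice-prime production mechanism \ref{niceprimelem}, and Grothendieck's connectedness theorem in the form of \ref{Connectivity}, applied to the presentation furnished by \ref{CMPres}. First, I would establish existence of at least one pro-modular irreducible component. The scheme-theoretic image of $\Spec \mathbf{T}_\psi(U,\eta)_\mathfrak{m}$ in $\Spec\overline{R}^\psi_{F,S}$ is by definition pro-modular; because $\mathbf{T}_\psi(U,\eta)_\mathfrak{m}$ is finite and torsion-free over the Iwasawa algebra $\Lambda(\mathcal{U}_2^1,\eta) \cong \mathcal{O}[[x_1,\ldots,x_{[F:\Q]}]]$, this image has dimension at least $1+[F:\Q]$, which dominates the threshold $[F:\Q]-1$ required by \ref{niceprimelem}. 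That lemma therefore produces a nice prime $\mathfrak{p}_0$ lying inside this image, and \ref{LocRredT} immediately promotes every prime below $\mathfrak{p}_0$ to a pro-modular one; in particular some minimal prime of $\overline{R}^\psi_{F,S}$ is pro-modular, so some irreducible component $C_\star$ is pro-modular.

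Next I would invoke connectedness to propagate pro-modularity across the whole spectrum. By \ref{CMPres}, write $\overline{R}^\psi_{F,S} \cong A/(f_1,\ldots,f_m)$ with $A$ a complete Noetherian local domain and
\[
\dim A - m \;\ge\; 1 + [F:\Q] - h^0\bigl(G_{F,S},(\Ad^0)^\ast(1)\bigr).
\]
A short cohomological computation, using absolute irreducibility of $\rhobar$ together with the standing assumptions (namely condition~(ii) in the CM case, and condition~(iii) on $\mathrm{rank}_{\Z_p}\Gal(L_S^-/L)$ in the non-CM case, which together force the relevant fixed space to be small), yields $h^0(G_{F,S},(\Ad^0)^\ast(1)) \le 1$. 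Applying \ref{Connectivity} to $A$ and the relations $f_1,\ldots,f_m$, one deduces that $\Spec \overline{R}^\psi_{F,S}$ is $([F:\Q]-1)$-connected. By the combinatorial reformulation of $k$-connectedness recalled in \ref{ConnectivitySubSec}, for any irreducible component $C'$ of $\Spec \overline{R}^\psi_{F,S}$ there is a chain of irreducible components
\[
C_\star = C_0,\; C_1,\; \ldots,\; C_n = C'
\]
with $\dim(C_i \cap C_{i+1}) \ge [F:\Q]-1$ for every $0 \le i < n$.

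The core of the proof is the inductive step: if $C_i$ is pro-modular, then so is $C_{i+1}$. Indeed, $C_i \cap C_{i+1}$ is a closed pro-modular subset of $\Spec \overline{R}^\psi_{F,S}$ (every prime of $C_i$ is pro-modular by hypothesis, hence every prime of the smaller set $C_i \cap C_{i+1}$ is as well), and its dimension is at least $[F:\Q]-1$, so \ref{niceprimelem} supplies a nice prime $\mathfrak{p} \in C_i \cap C_{i+1}$. Since the minimal prime of $C_{i+1}$ is contained in $\mathfrak{p}$, \ref{LocRredT} applied to $\mathfrak{p}$ forces this minimal prime to be pro-modular, i.e. $C_{i+1}$ is pro-modular. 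Starting from the pro-modular base case $C_0 = C_\star$, induction gives pro-modularity of $C' = C_n$. Since $C'$ was arbitrary, every irreducible component of $\Spec \overline{R}^\psi_{F,S}$ is pro-modular, hence every minimal prime is pro-modular, and therefore every prime is pro-modular.

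The main obstacle, and the only place where the specific hypotheses (i)--(iii) are used in an essential way, is ensuring that every intersection $C_i \cap C_{i+1}$ in the chain has dimension at least $[F:\Q]-1$ \emph{and} that \ref{niceprimelem} actually produces a nice prime there (as opposed to merely a modular prime with a dihedral specialization or a degenerate local behavior at some $v\mid 2$). This is precisely what the CM/non-CM dichotomy is engineered to handle: in the CM case condition~(ii) prevents an entire component from landing in the dihedral locus, while in the non-CM case condition~(iii) makes the dihedral locus of small enough codimension that it cannot swallow any of the $C_i \cap C_{i+1}$; both alternatives are enforced already inside the proof of \ref{niceprimelem} via \ref{thenondihedlem}, so no further argument is needed here.
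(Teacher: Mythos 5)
Your proof is correct and follows the paper's own approach essentially step for step: seed a pro-modular irreducible component via the Hecke algebra and \ref{niceprimelem}, deduce $([F:\Q]-1)$-connectedness from \ref{CMPres} together with \ref{Connectivity}, and then propagate pro-modularity along the resulting chain of components by alternately invoking \ref{niceprimelem} on the intersections $C_i\cap C_{i+1}$ and \ref{LocRredT} at the nice primes produced. One small misattribution worth noting: the bound $h^0(G_{F,S},(\Ad^0)^\ast(1))\le 1$ does not rely on the CM or non-CM hypotheses as you suggest — in the paper it follows from absolute irreducibility of $\rhobar$ alone via the identification $(\Ad^0)^\ast(1)\cong\Ad/Z$ in characteristic $2$; the CM/non-CM hypotheses enter the argument only through \ref{thenondihedlem} inside \ref{niceprimelem}, as you in fact correctly observe in your final paragraph.
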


\begin{proof}

This proof is essentially the same as \cite{SWirreducible}*{Proposition 4.1}. By applying \ref{niceprimelem} to any minimal prime of $\mathbf{T}_\psi(U,\eta)_\mathfrak{m}$, which has dimension $1+[F:\Q]$, we see that $\overline{R}_{F,S}^\psi$ contains a nice prime. By \ref{LocRredT}, any irreducible component  of $\Spec \overline{R}_{F,S}^\psi$ containing $\mathfrak{p}$ is pro-modular. Fix one such irreducible component $C$. Let $C'$ be any other irreducible component of $\Spec \overline{R}_{F,S}^\psi$. 

By \ref{CMPres}, there is a presentation $\overline{R}_{F,S}^\psi \cong A/(f_1,\ldots,f_r)$, with $A$ a complete local domain and $\dim A - r \ge 1+[F:\Q] - \dim_\F H^0(G_F,(\Ad_\F^0)^\ast(1))$. Since $\rhobar$ is absolutely irreducible, $\dim_\F H^0(G_F,(\Ad_\F^0)^\ast(1))=\dim_\F H^0(G_F,\Ad_\F/Z) \le 1$. Then \ref{Connectivity} implies that $\overline{R}_{F,S}^\psi$ is $[F:\Q]-1$-connected, so  there is a sequence of irreducible components
	\[ C = C_0, C_1,\ldots, C_n = C'\]
such that $\dim (C_i\cap C_{i+1}) \ge [F:\Q]-1$ for each $0\le i < n$. By \ref{niceprimelem}, $C_0\cap C_1$ contains a nice prime, and \ref{LocRredT} implies $C_1$ is pro-modular. Continuing in this way, we deduce that $C_i$ is pro-modular for each $0\le i\le n$; in particular, $C_n=C'$ is pro-modular. 	\end{proof}

\section{The Main Theorem}\label{MainThms}

We are now in a position to prove the main theorem. Before doing so, in the first subsection we recall some congruences proved in \cite{KW2} and \cite{KisinFinFlat} that are necessary to be able to satisfy the assumptions of the $R^\mathrm{red} = \mathbf{T}$ theorem. We also prove a small lemma that shows the existence of ordinary lifts in the residually dihedral case. This is an application of a result of Wiles that allows one to insert a $p$-ordinary Hilbert modular form of parallel weight $1$ into a $p$-adic family.

In the second subsection we prove the main theorem. This is mostly routine, except that we must use some known cases of Leopoldt's conjecture in our base changes, to ensure the assumptions of \ref{ProMod} are met.

In the last subsection we show how the main theorem implies the corollary from the introduction on modularity of (certain) elliptic curves, and give some examples of elliptic curves satisfying the hypotheses.

\subsection{Congruences}\label{Congruences}

For simplicity we fix an isomorphism $\overline{\Q}_p \cong \C$ throughout this subsection. 

\subsubsection{}\label{congassumptions}

We fix a continuous absolutely irreducible
	\[ \overline{\rho} : G_F \longrightarrow \GL_2(\overline{\F}) \]
such that for each $v|p$, $\overline{\rho}|_{G_v}$ is reducible. Write
	\[ \overline{\rho}_{G_v} \cong \left(\begin{array}{cc} \overline{\chi}_v' & \ast \\
	& \overline{\chi}_v \end{array}\right).
	\]
Let $\overline{\chi}$ denote the tuple $\overline{\chi} = (\overline{\chi}_v)_{v|p}$. If we are given any finite field extension $F'/F$ we will still denote by $\overline{\chi}$ the tuple $(\overline{\chi}_v|_{G_w})_{w|v,v|p}$.

Given a cuspical automorphic representaion $\pi$ of $\GL_2(\A_F)$, we say that $\pi$ \textit{lifts} $\overline{\rho}$ if, letting $\rho_\pi$ be the representation as in \ref{SmallGalRepHilbert}, there is a $G_F$-stable lattice in the representation space of $\rho_\pi$ whose reduction is equal to $\overline{\rho}$ (after extension of scalars, if necessary). If $\pi$ is a $p$-nearly ordinary lift of $\rhobar$, we say that it is $\overline{\chi}$-\textit{good lift} if for each $v|p$ we have
	\[ 
	\rho_\pi|_{G_v} \cong \left( \begin{array}{cc} \ast & \ast \\ & \chi_v \end{array} \right)
	\]
with $\chi_v$ a lift of $\overline{\chi}_v$. Similarly, if $f$ is an eigenfunction in some $S_{\kappa,\psi}^\mathrm{no}(U,\mathcal{O})$ as in \ref{NearOrd}, and $\pi_f$ denotes the cuspidal automorphic representation of $\GL_2(\A_F)$ associated to it by \ref{JacquetLanglands}, we say $f$ \textit{lifts} $\overline{\rho}$ if $\pi_f$ does, and that $f$ is a $\overline{\chi}$-\textit{good lift} of $\overline{\rho}$ if $\pi_f$ is. We keep track of $\overline{\chi}$-good lifts in what follows. This isn't necessary for our applications to modularity of Galois representations (we will perform a base change to assume that $\overline{\rho}|_{G_v}$ is either trivial or unipotent for each $v|p$), but we do so because it entails relatively little extra work.

\begin{lem}\label{OrdinaryLift}

If $\overline{\rho}$ is dihedral, then it has a $\chi$-good $p$-nearly ordinary regular algebraic cuspidal lift.

\end{lem}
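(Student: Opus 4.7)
The plan is to produce a parallel weight one, $\overline{\chi}$-good, $p$-nearly ordinary holomorphic Hilbert modular cuspidal eigenform lifting $\rhobar$, and then invoke the theorem of Wiles \cite{WilesOrdinary} to insert it into a Hida family; any classical arithmetic specialization of parallel weight $\ge 2$ then yields the desired $\pi$.

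Since $\rhobar$ is dihedral, write $\rhobar \cong \Ind_{G_L}^{G_F}\overline{\chi}_L$ for a quadratic extension $L/F$ and a character $\overline{\chi}_L : G_L \rightarrow \overline{\F}^\times$; let $c$ denote the nontrivial element of $\Gal(L/F)$. Absolute irreducibility of $\rhobar$ forces $\overline{\chi}_L \ne \overline{\chi}_L^c$, and for each $v|p$ not split in $L$, the assumed reducibility of $\rhobar|_{G_v}$ forces $\overline{\chi}_L|_{G_w}$ to factor through the norm $N_{L_w/F_v}$, where $w$ is the unique place of $L$ above $v$. I would first construct a finite-order lift $\chi_L : G_L \rightarrow \overline{\Z}_p^\times$ of $\overline{\chi}_L$ so that the induced representation $\rho_1 := \Ind_{G_L}^{G_F}\chi_L$ is nearly ordinary and $\overline{\chi}$-good at each $v|p$; that is, $\rho_1|_{G_v} \cong \left(\begin{smallmatrix} * & * \\ & \chi_v^{(1)} \end{smallmatrix}\right)$ with $\chi_v^{(1)}$ unramified and reducing to $\overline{\chi}_v$. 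For split $v$ this is arranged by taking the Teichm\"{u}ller lift at the place of $L$ corresponding to $\overline{\chi}_v$; for non-split $v$, by choosing $\chi_L|_{G_w}$ to factor through $N_{L_w/F_v}$ so that the appropriate summand of the induction $\Ind_{G_w}^{G_v}\chi_L|_{G_w}$ is unramified and lifts $\overline{\chi}_v$. A global character with these prescribed local behaviors exists by class field theory.

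The representation $\rho_1$ is then totally odd (it lifts the totally odd $\rhobar$), has finite image, and is absolutely irreducible (because $\chi_L \ne \chi_L^c$). The classical theta series / automorphic induction construction attaches to $\rho_1$ a parallel weight one holomorphic Hilbert modular cuspidal eigenform $f_1$ with $\rho_{f_1} \cong \rho_1$, and the unramifiedness of $\chi_v^{(1)}$ at each $v|p$ translates, via local-global compatibility for $f_1$, into the $U_{\varpi_v}$-eigenvalue of a suitable $p$-stabilization of $f_1$ being a $p$-adic unit; thus $f_1$ is nearly ordinary at every place above $p$.

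Finally, Wiles' theorem \cite{WilesOrdinary} inserts $f_1$ into a Hida family of $p$-nearly ordinary Hilbert modular cuspidal eigenforms. Any classical specialization at parallel weight $(k,\ldots,k)$ with $k \ge 2$ yields a regular algebraic, $p$-nearly ordinary cuspidal automorphic representation $\pi$ such that $\rho_\pi$ reduces to $\rhobar$, and the unramified quotient character of $\rho_\pi|_{G_v}$ still reduces to $\overline{\chi}_v$ for every $v|p$; hence $\pi$ is $\overline{\chi}$-good. The main obstacle is the construction of the global finite-order lift $\chi_L$ simultaneously achieving the prescribed near-ordinarity and $\overline{\chi}$-good shape at every place above $p$, especially at non-split places in $L/F$, and verifying that the resulting weight one eigenform admits the $p$-stabilization required by Wiles' theorem.
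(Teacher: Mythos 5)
Your overall plan (automorphic induction of a finite-order lift of the inducing character, then Wiles' theorem to move into a Hida family and specialize at weight $\ge 2$) is the same as the paper's. But there is a genuine gap in the assertion that ``$\rho_1$ is then totally odd (it lifts the totally odd $\rhobar$).'' This inference fails precisely in the case $p=2$, which is the main case of interest in this paper: in characteristic $2$ the condition $\det\rhobar(c)=-1$ is vacuous, and indeed $\rhobar$ can be \emph{trivial} at some complex conjugations $c$. In that situation the naive induced lift $\rho_1 = \Ind_{G_L}^{G_F}\chi_L$ of a Teichm\"{u}ller-type character need not be totally odd (one can have $\det\rho_1(c)=+1$ at the problematic archimedean places), in which case the classical construction does not yield a holomorphic weight-one Hilbert modular form. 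The paper handles exactly this: it isolates the set of real embeddings where $\rhobar$ is trivial, observes these split in $L/F$, and (by a trick of Serre) uses class field theory to produce an auxiliary character $\xi$ of $G_L$ of order $2$ or $4$, trivial modulo $2$ and trivial at places above $2$, such that $\xi(c)\xi'(c)=-1$ at each offending $c$; replacing $\chi_L$ by $\chi_L\xi$ then forces total oddness without disturbing the residual representation or the local structure at $2$.

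A secondary, more cosmetic issue: the paper first reduces to the case where each $\overline{\chi}_v$ is unramified by twisting $\rhobar$ by a finite-order global character $\theta$ (chosen via \cite{ArtinTate}) agreeing with $\chi_v$ on $\mathcal{O}_{F_v}^\times$, producing the lift for the twist and twisting back. You instead try to prescribe $\chi_L$ locally at $v|p$ so that the unramified quotient character lifts $\overline{\chi}_v$ directly; this requires $\overline{\chi}_v$ to be unramified, which you do not justify, and you should either perform the analogous twisting reduction or explain why ramified $\overline{\chi}_v$ can also be accommodated by a suitable nearly-ordinary $p$-stabilization with the right nebentypus.

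Otherwise the structure (finite-order dihedral lift $\rightarrow$ weight-one eigenform with the correct $U_{\varpi_v}$-stabilization $\rightarrow$ Hida family $\rightarrow$ weight $k\ge 2$ specialization) matches the paper's argument, and your observation about the behavior of $\overline{\chi}_L$ at non-split $v|p$ is consistent with the paper's remark that for non-split $v$ one must have $p=2$ and $\overline{\chi}_v=\overline{\chi}_v'$.
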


\begin{proof}

Let $\chi_v$ be the Teichm\"{u}ller lift of $\overline{\chi}_v$, and view it as taking values in $\Qbar^\times$. It suffices to prove the lemma in the case that $\overline{\chi}_v$ is unramified for each $v|p$. Otherwise, we take a finite order character $\theta : F^\times \backslash \A_F^\times \rightarrow \Qbar^\times$ such that $\theta|_{\mathcal{O}_{F_v}^\times} = \chi_v|_{\mathcal{O}_{F_v}^\times}$ for each $v|p$, cf. \cite{ArtinTate}*{\S 10, Theorem 5}. Then letting $\pi$ denote the resulting lift of $\overline{\rho} \otimes \overline{\theta}^{-1}$, the twist $\pi \otimes \theta$ will be the desired $\overline{\chi}$-good lift of $\overline{\rho}$.

Note that the reducibility of $\rhobar|_{G_v}$ together with the assumption that $\rhobar$ is dihedral implies that $\rhobar|_{G_v}$ is split or  $\overline{\chi}_v = \overline{\chi}_v'$ and $p=2$. If $p$ is odd, we let $\chi_v'$ be the Teichm\'{u}ller lift of $\overline{\chi}_v'$ for each $v|p$. If $p=2$, we let $\chi_v'$ be the Teichm\'{u}ller lift of $\overline{\chi}_v'$ for each $v|2$ such that $\rhobar|_{G_v}$ is split. If $p=2$ and $\rhobar|_{G_v}$ is nonsplit, define $\chi_v'$ as follows. Let $L/F$ denote the unique quadratic extension such that $\rhobar|_{G_L}$ is abelian, and let $w$ denote the unique prime above $v$ in $L$. Since $\chi_v$ has odd order, the fixed field of its kernel is disjoint from $L_w/F_v$, and we let $\chi_v'$ be the product of $\chi_v$ with the nontrivial character of $\Gal(L_w/F_v)$. In all the above cases, we view $\chi_v'$ as taking values in $\Qbar^\times$.

We first construct a totally odd dihedral representation $\rho_0 : G_F \rightarrow \GL_2(\Qbar)$ lifting $\overline{\rho}$. Write $\overline{\rho} = \Ind_{G_F}^{G_L}\overline{\chi}$ for a quadratic extension $L/F$ and $\overline{\chi} : G_L \rightarrow \F^\times$, enlarging $\F$ if necessary. Let $\chi: G_L \rightarrow \Qbar^\times$ denote the Teichmuller lift of $\chi$. If $\overline{\rho}(c) \ne 1$ for any choice $c$ of complex conjugation, we set $\rho_1 = \Ind_{G_L}^{G_F} \chi$. If there is some choice of complex conjugation at which $\overline{\rho}$ is trivial, we use a trick of Serre. Note that in this case $p = 2$. Let $\{\tau_1,\ldots,\tau_k\}$ denote the set of embeddings $F\hookrightarrow \R$ where $\overline{\rho}$ is trivial. Then each $\tau_i$ splits in $L/F$, and we write $\sigma_i$ and $\sigma_i'$ for the two embeddings above $\tau_i$. By \cite{ArtinTate}*{\S 10, Theorem 5}, there is a character $\xi: G_L \rightarrow \Qbar^\times$ of order either two or four, such that $\xi$ nontrivial at each $\{\sigma_1,\ldots,\sigma_k\}$ and trivial at each $\{\sigma_1',\ldots,\sigma_k'\}$ as well as at every place above $2$. Note that if $\xi'$ denotes the conjugate of $\xi$ by $\Gal(L/F)$, we have $\xi(c)\xi'(c) = -1$ if $c$ is the complex conjugation corresponding to any $\tau\in \{\tau_1,\ldots,\tau_k\}$. We set $\rho_1 = \Ind_{G_L}^{G_F} \chi\xi$. Since $\xi$ has order two or four, it is trivial mod $2$, and $\rho_1$ is a lift of $\overline{\rho}$. By choice of $\xi$, the lift $\rho_1$ is totally odd.  Also, because $\xi$ is trivial at any place above $2$ we have
	\[ \rho_1|_{G_v} \cong \left(\begin{array}{cc} \chi_v' & \\ & \chi_v \end{array} \right)
	\]
for $\chi_v'$ and $\chi_v$ as above.

A classical construction yields a cuspidal Hilbert modular newform $f_1$ of weight $((1,\ldots,1),(0,\ldots,0))$, such that $\rho_{f_1} \cong \rho_1$. Let $\pi_{f_1}$ denote the corresponding automorphic representation. For each $v|p$, the local representation $(\pi_{f_1})_v$ is the principal series $\pi(\chi_v',\chi_v)$. If $\chi_v'$ is unramified then the double coset operator
	\[ \left[\GL_2(\mathcal{O}_{F_v}) \left(\begin{array}{cc} \varpi_v & \\ & 1 \end{array}\right)
	\GL_2(\mathcal{O}_{F_v}) \right]
	\]
acts on $\pi(\chi_v',\chi_v)^{\GL_2(\mathcal{O}_{F_v})}$ via $\chi_v'(\varpi_v)+\chi_v(\varpi_v)$. In this case we replace $f_1$ with the $v$-stabilized eigenform on which the double coset operator
	\[ \left[ \mathrm{Iw}(v) \left(\begin{array}{cc} \varpi_v & \\ & 1 \end{array}\right) 
	\mathrm{Iw}(v) \right]
	\]
acts via $\chi_v(\varpi_v)$. We may then assume that $T_{\varpi_v} f_1 = \chi_v(\varpi_v)f_1$ for each $v|p$. Then \cite{WilesOrdinary}*{Theorem 3} allows us to insert $f_1$ in an ordinary $p$-adic analytic family and letting $f_k$ denote a classical specialization at some parallel weight $k\ge 2$, for each $v|p$ we have $T_{\varpi_v} f_k = \alpha_v f_k$ for some $\alpha_k$ congruent to $\overline{\chi}_v(\varpi_v)$. The automorphic representation generated by this $f_k$ is then a $\overline{\chi}$-good lift of $\overline{\rho}$.
	\end{proof}

\subsubsection{}\label{thecongruences}

Let $\kappa = (\mathbf{k},\mathbf{w})$ be an algebraic weight and let $\psi : F^\times \backslash \A_F^\times \rightarrow \mathcal{O}^\times$ be a continuous character such that $\psi(z) = z_p^{2-\mathbf{k}-2\mathbf{w}}$ on some open subgroup of $\A_F^\times$. We denote by $\psi_\C$ the character $\psi_\C : F^\times \backslash \A_F^\times \rightarrow \C^\times$ given by $\psi_\C(z) = \psi(z)z_p^{\mathbf{k}+2\mathbf{w}-2}z_\infty^{2-\mathbf{k}-2\mathbf{w}}$, using our fixed isomorphism $\C \cong \overline{\Q}_p$. Conversely, given a character $\psi_\C : F^\times \backslash \A_F^\times \rightarrow \C^\times$ such that $\psi_\C(z) = z_\infty^{2-\mathbf{k}-2\mathbf{w}}$ on some open subgroup of $\A_F^\times$, we let $\psi$ denote the character $\psi : F^\times \backslash \A_F^\times \rightarrow \mathcal{O}^\times$ (enlarging $\mathcal{O}$ if necessary) given by $\psi(z) = \psi_\C(z)z_\infty^{\mathbf{k}+2\mathbf{w}-2}z_p^{2-\mathbf{k}+2\mathbf{w}}$.

In what follows we can always ensure that the base changes performed are disjoint from any fixed finite extension $K/F$ as follows. Let $K'$ denote the normal closure of $K$. Let $\{v\}$ be a finite set of places of $F$, unramified in $K'$ and such that every conjugacy class in $\Gal(K'/F)$ contains one such $\mathrm{Frob}_v$. Note that this set can always be chosen to be disjoint from any given finite set of places of $F$. We then demand that our extension $F'/F$ splits at all places in $\{v\}$. We will not repeat this argument in each of the lemmas below.

We will call an extension $F'/F$ a $p$-\textit{split allowable base change} if 
	\begin{itemize}
		\item[-] $F'/F$ is finite of even degree, solvable, and totally real;
		\item[-] $F'/F$ is disjoint from $\overline{F}^{\ker \overline{\rho}}$;
		\item[-] any $v|p$ in $F$ splits completely in $F'$.
	\end{itemize}
For a given quadratic extension $L/F$, we say that an extension $F'/F$ is an $L$-\textit{allowable base change} if 
	\begin{itemize}
		\item[-] $F'/F$ is finite of even degree, solvable, and totally real;
		\item[-] $F'/F$ is disjoint from $\overline{F}^{\ker \overline{\rho}}$;
		\item[-] if $\rhobar|_{G_L}$ is abelian, and $v|p$ in $F$ does not split in $L$, then no $w|v$ in $F'$ splits in $F'L$.
	\end{itemize}
Clearly, for any quadratic $L/F$, a $p$-split allowable base change is an $L$-allowable base change. In order to show that our automorphic representations have the desired central character, we are forced to perform base changes that are (possibly) ramified at places above $p$. However, in order to apply the results of \S \ref{RredT} in the case that $\overline{\rho}$ is induced from a quadratic CM extension $L/F$, we need to ensure that $\overline{\rho}|_{G_{F'}}$ still satisfies the assumptions of that section; in particular, that there is some $v'|2$ in $F'$ such that $\overline{\rho}|_{G_{v'}}$ is nonslplit. This is why we introduce the notion of an $L$-allowable base change.

\begin{lem}\label{WeightChange}

Let $\pi$ be a regular algebraic cuspidal automorphic representation of $\GL_2(\A_F)$ of weight $\kappa$ and central character $\psi_\C$. Assume that $\pi$ is $p$-nearly ordinary and a $\overline{\chi}$-good lift of $\overline{\rho}$. Let $\kappa' = (\mathbf{k}',\mathbf{w}')$ be another algebraic weight and let $\psi'_\C : F^\times \backslash \A_F^\times \rightarrow \C^\times$ be such that $\psi'_\C(z) =  z_\infty^{2-\mathbf{k}'-2\mathbf{w}'}$ on some open subgroup of $\A_F^\times$.  Let $L/F$ be some fixed quadratic extension.

Assume that $\psi \cong \psi'$ modulo the maximal ideal of $\mathcal{O}$. Then, there is an $L$-allowable base change $F'/F$ and a $p$-nearly ordinary regular algebraic cuspidal automorphic representation $\pi'$ of $\GL_2(\A_{F'})$ such that
\begin{itemize}
	\item[-] the central character of $\pi'$ is $\mathrm{Nm}_{F'/F}\circ \psi_\C'$;
	\item[-] if $w$ is an archimedean place of $F'$ that extends the embedding $\tau : F \hookrightarrow \R$, then $\pi'_w$ is discrete series of lowest weight $k_\tau'-1$ and central character $z_v \mapsto \mathrm{sgn}(z_v)^{k_\tau'} \lvert z_v \rvert^{2-k_\tau'-2w_\tau'}$;
	\item[-] for $v|p$, we have $(\pi'_v)^{\mathrm{Iw}_1(v)} \ne 0$;
	\item[-] $\pi'$ is a $\overline{\chi}$-good lift of $\rhobar$.
\end{itemize}
\end{lem}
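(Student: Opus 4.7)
The plan is to combine Hida's control theorem for weights (already established in the paper) with a suitable twist that adjusts the central character, performing solvable base change to realize the twist.

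First I would transport to the quaternionic setting. Pick a totally definite quaternion algebra $D$ over $F$ split at every place above $p$ and ramified at a set $\Sigma$ of finite places large enough that, by Jacquet--Langlands--Shimizu (\ref{JacquetLanglands}), the representation $\pi$ corresponds to an eigenform $f \in S^{\mathrm{no}}_{\kappa,\psi}(U,\calO)$ for an appropriate $(\Sigma'\subseteq\Sigma)$-open subgroup $U$, with $U_v = \mathrm{Iw}(v)$ at $v\mid p$. The eigenform $f$ is classified by an $\calO$-algebra map $\lambda_f:\mathbf{T}^{\mathrm{no}}_{\kappa,\psi}(U,\calO)_\mathfrak{m}\rightarrow\Qbar_p$ with $\mathfrak{m}$ non-Eisenstein (since $\rhobar$ is irreducible), and by the weight-change isomorphism \ref{WeightChangeHecke} combined with Zariski density of arithmetic primes \ref{ArPrimeZarDense}, the minimal prime of $\mathbf{T}_\psi(U)_\mathfrak{m}$ below $\ker\lambda_f$ contains an arithmetic prime of weight $\kappa'$. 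This produces an eigenform $g$ of weight $\kappa'$ in the same Hida family; its Galois representation is still a $\bar\chi$-good lift of $\rhobar$, because the character $\chi_v^\mathrm{univ}$ on $G_v$ is uniform in the family (cf.\ \ref{BigGalRep}), so the specialization to $g$ reduces to $\bar\chi_v$ on each $v\mid p$. However, the central character of $g$ is not $\psi'$ but some $\psi^\sharp$ with $\psi^\sharp \equiv \psi \equiv \psi'\pmod\varpi$ and infinity type matching $\kappa'$.

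Next I would arrange a twist to convert $\psi^\sharp$ into the norm-pullback of $\psi'$. Consider $\eta:=\psi'/\psi^\sharp:F^\times\backslash\A_F^\times\rightarrow 1+\varpi\calO$; this is a continuous Hecke character with trivial reduction and whose $\infty$- and $p$-types cancel (both $\psi^\sharp$ and $\psi'$ have the same infinity type at every archimedean place, namely $z_\infty^{2-\bold{k}'-2\bold{w}'}$ on an open subgroup, and the same $p$-type on an open subgroup of $\mathcal{U}_p$ up to the torsion component on which they agree). In general $\eta$ need not admit a square root on $F$, but since $\eta$ is pro-$p$ valued, the character $\eta\circ\mathrm{Nm}_{F'/F}$ will admit a square root $\xi$ over any totally real solvable extension $F'/F$ of even degree in which every class of $\mathrm{Cl}(F)[2]$ becomes trivial and every archimedean place splits appropriately; I would construct such $F'/F$ by an explicit class field theoretic argument (adjoining square roots of suitable idelic elements), arranging simultaneously via Chebotarev that $F'/F$ is disjoint from $\overline{F}^{\ker\rhobar}$, split at a chosen finite place capturing the $L$-allowability condition (namely, split at a place $v\mid p$ that is inert in $L$ when $\rhobar$ is $L$-dihedral with $L/F$ CM), and split at all places where we need to preserve the local structure of $g$. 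Replacing $g$ by its solvable base change $g_{F'}$ (which exists by Arthur--Clozel and remains nearly ordinary and $\bar\chi$-good since places above $p$ are controlled) and twisting by $\xi$, we obtain an eigenform $g_{F'}\otimes\xi$ on a quaternion algebra over $F'$ whose associated automorphic representation $\pi'$ has central character $\mathrm{Nm}_{F'/F}\circ\psi'_\C$.

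The main obstacle will be the middle step: ensuring simultaneously that $F'/F$ is $L$-allowable in the technical sense of \ref{congassumptions}, that $\eta\circ\mathrm{Nm}_{F'/F}$ becomes a square with a root $\xi$ whose reduction is trivial, and that the base change $g_{F'}$ remains cuspidal, nearly ordinary at every place above $p$ in $F'$, and $\bar\chi$-good (i.e.\ the local quotient character still reduces to $\bar\chi_v\vert_{G_w}$ for each $w\mid v$). Each of these constraints can be met by imposing splitting conditions at finitely many auxiliary primes in the Chebotarev construction of $F'$, so the argument should close once the compatibility of the twist $\xi$ with the $\bar\chi$-goodness at places above $p$ is verified (this uses that $\xi\equiv 1\pmod\varpi$, so the twisted local quotient character has the same reduction).
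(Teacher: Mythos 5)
Your proposal diverges from the paper at the very first step, and this is where a genuine gap occurs. You assert that the Hida family $\mathbf{T}_\psi(U)_\mathfrak{m}$ contains an arithmetic prime of weight $\kappa'$, and hence an eigenform $g$ of weight $\kappa'$ with some central character $\psi^\sharp$ whose infinity type matches $\kappa'$. But the Hecke algebra $\mathbf{T}_\psi(U)$ is constructed with the $\calO^\times$-valued character $\psi$ \emph{fixed}, and the weight-change isomorphism \ref{WeightChangeHecke} only applies to algebraic weights $\kappa''=(\mathbf{k}'',\mathbf{w}'')$ for which $U\cap(\A_F^\infty)^\times$ acts on $W_{\kappa''}(\calO)$ via $\psi^{-1}$. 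Concretely, this forces $\mathbf{k}''+2\mathbf{w}''$ to equal the constant determined by the $p$-type of $\psi$, i.e.\ $\mathbf{k}+2\mathbf{w}$. The lemma does \emph{not} assume $\mathbf{k}'+2\mathbf{w}' = \mathbf{k}+2\mathbf{w}$ (the congruence $\psi\equiv\psi'\pmod{\varpi}$ imposes no constraint on these integers, since for $p=2$ every exponent is congruent modulo $p-1=1$), and in the applications (\ref{SpecifiedLift}, \ref{finalthm}) they are generically distinct. So the arithmetic prime of weight $\kappa'$ that you want to pluck out of $\mathbf{T}_\psi(U)_\mathfrak{m}$ need not exist, and the $\psi^\sharp$ you describe cannot simultaneously have finite-order part $\psi$ (forced by the fixed-$\psi$ Hida algebra) and the $\kappa'$-infinity type (needed for the infinity types to cancel in $\eta$).

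This gap then propagates into the twisting step: with your $g$ correctly accounted for, $\eta=\psi'_\C/\psi^\sharp_\C$ reduces on $(\A_F^\infty)^\times$ to $\psi'/\psi$, which has algebraic $p$-type $z_p^{(\mathbf{k}+2\mathbf{w})-(\mathbf{k}'+2\mathbf{w}')}$. A square root $\xi$ would need algebraic $p$-type with exponent $(\mathbf{k}+2\mathbf{w}-\mathbf{k}'-2\mathbf{w}')/2$, which need not be an integer, so $\xi$ is not a Hecke character one can twist by and the argument does not close. The paper sidesteps both issues: it first performs a totally ramified base change at the places above $p$ so that $\psi'\circ\mathrm{Nm}_{F'/F}$ becomes the algebraic character prescribed by $\kappa'$ on $(\mathcal{U}_p')^1$ (while simultaneously guaranteeing $L$-allowability by keeping $F'_w$ disjoint from $L_v$), and then applies the mod-$\varpi$ weight-change isomorphism \ref{WeightChangeFin} together with $\psi\equiv\psi'\pmod{\varpi}$ to directly manufacture an eigenform of weight $\kappa'$ and character $\psi'$ over $F'$. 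No twist is needed, and the central-character discrepancy is eliminated by the base change rather than absorbed by a square root.
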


\begin{proof}

Take some $v|p$. If $v$ does not split in $L$, we let $L_v$ be the completion of $L$ at the unique prime above $v$. Let $\mathcal{U}_v^1$ be the maximal pro-$p$ subgroup of $\mathcal{O}_{F_v}^\times$, and recall that $J_{F_v}$ denotes the set of embeddings $F_v \hookrightarrow E$. Let $\phi_v$ be the finite order $\mathcal{O}$-valued character of $\mathcal{U}_v^1$ given by 
	\[ \phi_v(z_v) = \psi'(z_v)\prod_{\tau \in J_{F_v}} z_v^{2-k_\tau' - 2w_\tau'}.
	\]
We want a finite totally ramified extension $F_v'/F_v$ of order a power of $p$ such that $\phi_v\circ\mathrm{Nm}_{F_v'/F_v}$ is trivial. However, if $v$ does not split in $L$, we further require this extension is disjoint from $L_v/F_v$. If $p$ is odd or if $L_v/F_v$ is unramified, this is automatic. In the case that $L_v/F_v$ is ramified, there is a choice of uniformizer $\varpi_v$ in $F_v$ such that $\varpi_v$ is not a norm from $L_v$. By class field theory, the subgroup of $F_v^\times$ generated by the kernel of $\phi_v$, the group of prime-to-$p$ roots of unity, and the element $\varpi_v$ then gives the desired extension. We fix such an $F_v'/F_v$ for each $v|p$. We now let $F'/F$ be a finite solvable, totally real extension of even degree such that for any $v|p$ in $F$ and $w|v$ in $F'$, the completion of $F'$ at $w$ is our fixed extension $F_v'/F_v$. Note that by choice of the $F_v'$, we have
\begin{itemize}
	\item[-] $\psi'( z_p)\circ\mathrm{Nm}_{F'/F} = z_p^{2-\mathbf{k}'-2\mathbf{w}'}$ for every $z_p \in (\mathcal{U}_p')^1$, where $(\mathcal{U}_p')^1$ is the maximal pro-$p$ subgroup of $(\mathcal{O}_{F'} \otimes_\Z \Z_p)^\times$;
	\item[-] if $\rhobar|_{G_L}$ is abelian and $v|p$ in $F$ does not split in $L$, then no $w|v$ in $F'$ splits in $F'L$.
\end{itemize}

We again denote by $\kappa$ and $\kappa'$ the weights of $F'$ obtained from $\kappa$ and $\kappa'$ in the obvious way. We also again write $\psi_\C$ and $\psi_\C'$ instead of $\psi_\C \circ \mathrm{Nm}_{F'/F}$ and $\psi_\C'\circ \mathrm{Nm}_{F'/F}$. We again denote by $\pi$ the base change of $\pi$ to $F'$.

Let $D$ be the quaternion algebra over $F'$ ramified at all infinite places and split at all finite places. Since $\pi_v$ is either principal series or Steinberg at each $v|p$, there is some $a\ge 1$ such that $\pi_v^{\mathrm{Iw}_1(v^a)} \ne 0$ for any $v|p$ in $F'$. Let $U \subseteq \GL_2(\mathcal{O}_{F'}\otimes_\Z \widehat{\Z})$ be a compact open subgroup such that $U_v = \mathrm{Iw}_1(v^a)$ for all $v|p$, and small enough such that $\pi^U \ne 0$ and 
	\begin{equation}\label{adneatagain}
	(U(\A_{F'}^\infty)^\times \cap t^{-1} D^\times t)/(F')^\times = 1
	\end{equation}
for every $t \in (D\otimes_{F'} \A_{F'}^\infty)^\times$.

Using the Jacquet-Langlands-Shimizu correspondence, cf. \ref{JacquetLanglands}, we transfer $\pi$ to a Hecke eigenform in $S_{\kappa,\psi}^\mathrm{no}(U,\mathcal{O})$ (enlarging $\mathcal{O}$ if necessary). Since $U$ satisfies \eqref{adneatagain}, we have
	\[ S_{\kappa,\psi}^\mathrm{no}(U,\mathcal{O}) \otimes_\mathcal{O} \F
		\cong S_{\kappa,\psi}^\mathrm{no}(U,\F), \]
and similarly for $S_{\kappa',\psi'}^\mathrm{no}(U,\mathcal{O})$. Since $U = U(p^{1,1})$ and $\psi \cong \psi'$ modulo the maximal ideal of $\mathcal{O}$, \ref{WeightChangeFin} gives Hecke equivariant isomorphisms
	\[ S_{\kappa,\psi}^\mathrm{no}(U,\F) \cong S_{2,\psi}^\mathrm{no}(U,\F)
		\cong S_{\kappa',\psi'}^\mathrm{no}(U,\F) .
	\]
This implies the existence of a Hecke eigenform $g$ in $S_{\kappa',\psi'}^\mathrm{no}(U,\mathcal{O})$ that is a $\overline{\chi}$-good lift of $\rhobar$. The fact that the lift is $\overline{\chi}$-good follows form the fact that the above isomorphism is equivariant for $T_{\varpi_v}$ for each $v|p$, and for $\langle y \rangle^\mathrm{no}$ for each $y \in (\mathcal{O}_{F'} \otimes_\Z \Z_p)^\times$.

Now let $V \subseteq \GL_2(\mathcal{O}_{F'}\otimes_\Z \widehat{\Z})$ be another compact open subgroup such that $V_v = \mathrm{Iw}_1(v)$ for all $v|p$, such that $V_v \subseteq U_v$ for all $v\nmid p$, and small enough such that $V$ also satisfies \eqref{adneatagain} (with $V$ in place of $U$). Note that $V\cap U = V(p^{a,a})$. Viewing $g$ as a Hecke eigenform in $S_{\kappa',\psi'}^\mathrm{no}(V(p^{a,a}),\mathcal{O})$, we let $\mathfrak{m}$ denote the corresponding maximal ideal of the universal nearly ordinary Hecke algebra $\mathbf{T}_{\psi'}(V)$. In particular, $S_{\kappa',\psi'}^\mathrm{no}(V(p^{a,a}),\mathcal{O})_\mathfrak{m} \ne 0$. Since $V$ satisfies \eqref{adneatagain} (with $V$ in place of $U$) and $V(p^{a,a})/V$ is a $p$-group, we get that $S_{\kappa',\psi'}^\mathrm{no}(V(p^{a,a}),\mathcal{O})_\mathfrak{m}$ is free over $\mathcal{O}[V(p^{a,a})/V]$; hence, $S_{\kappa',\psi'}(V,\mathcal{O})_\mathfrak{m} \ne 0$. Letting $f'$ be any Hecke eigenform in $S_{\kappa',\psi'}(V,\mathcal{O})_\mathfrak{m}$, its transfer $\pi'$ to $\GL_2(\A_{F'})$ satisfies the requirements of the lemma.
	\end{proof} 

\begin{lem}\label{LevelLower}

Let $\pi$ be a regular algebraic cuspidal automorphic representation of $\GL_2(\A_F)$ with central character $\psi_\C$. Assume that $\pi$ is a $p$-nearly ordinary $\overline{\chi}$-good lift of $\overline{\rho}$. Let $\Sigma$ be some (possibly empty) set of finite places of $F$, disjoint from $\{v|p\}$, such that for each $v\in \Sigma$, we have $\pi_v \cong (\gamma_v \circ \det) \otimes\mathrm{St}$ with $\gamma_v$ a character of $F_v^\times$.

There is a $p$-split allowable base change $F'/F$ and a $p$-nearly ordinary regular algebraic cuspidal automorphic representation $\pi'$ of $\GL_2(\A_{F'})$, such that
	\begin{itemize}
	\item[-] the central character of $\pi'$ is $\psi_\C \circ \mathrm{Nm}_{F'/F}$;
	\item[-] for any archimdean place $v$ of $F$ and $w|v$ in $F'$, $\pi_v \cong \pi'_w$ as $(\mathfrak{gl}_2,\mathrm{O}(2))$-modules;
	\item[-] for any finite place $w$ not above $p$ or any of the places of $\Sigma$, $\pi'_v$ is unramified;
	\item[-] for $v\in \Sigma$ and $w|v$, $\pi'_w \cong (\gamma_v\circ\mathrm{Nm}_{F'_w/F_v}\circ
	\det)\otimes\mathrm{St}$;
	\item[-] for any $v|p$ in $F$, if $a\ge 1$ is such that $\pi_v^{\mathrm{Iw}_1(v^a)} \ne 0$, then for any $w|v$ in $F'$, we have $(\pi_w')^{\mathrm{Iw}_1(w^a)} \ne 0$;
	\item[-] $\pi'$ is a $\overline{\chi}$-good lift of $\overline{\rho}$.
	\end{itemize}
\end{lem}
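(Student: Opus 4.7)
The plan is to construct $F'/F$ as a $p$-split allowable base change that kills the ramification of $\pi$ away from $\{v\mid p\}\cup\Sigma$, and then to transfer $\pi$ across it via the cyclic base change theorem of Langlands and Arthur--Clozel, applied in stages to a solvable tower. Since the hypotheses demand $F'/F$ be totally real, solvable, of even degree, disjoint from $\overline{F}^{\ker\overline{\rho}}$ and split at every $v\mid p$, the construction of $F'/F$ with prescribed local behaviour at the finitely many bad auxiliary places is the principal technical point; the subsequent verification of the listed properties of $\pi'$ is essentially bookkeeping via local Langlands.

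First I would identify the finite set $T$ of finite places of $F$ with $v\notin\{w\mid p\}\cup\Sigma$ at which $\pi_v$ is ramified. For each $v\in T$, the Weil--Deligne parameter attached to $\pi_v$ has finite image on the inertia $I_v$, so I may fix a totally ramified finite abelian extension $F'_v/F_v$ (cyclic, of residual characteristic prime to $2$ if $v\nmid 2$, chosen minimally) so large that the restriction of this parameter to $W_{F'_v}$ is unramified; in other words, $\mathrm{BC}_{F'_v/F_v}(\pi_v)$ is an unramified principal series. For $v\in\Sigma$ there is nothing to do: since $\gamma_v$ is unramified and the Steinberg representation remains Steinberg after any base change, the desired isomorphism $\pi'_w\cong(\gamma_v\circ\mathrm{Nm}_{F'_w/F_v}\circ\det)\otimes\mathrm{St}$ holds automatically for any $w\mid v$.

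Next I would patch these local data into a global finite solvable totally real extension $F'/F$ of even degree, disjoint from $\overline{F}^{\ker\overline{\rho}}$, such that every $v\mid p$ splits completely in $F'$ and such that $F'_w$ contains the chosen $F'_v$ for each $v\in T$ and each $w\mid v$. The existence of such an $F'/F$ follows from Grunwald--Wang together with standard class field theory: I first build a cyclic totally real extension of $F$ of a large odd prime degree with the prescribed local conditions at $T$ and total splitness at $p$ and at a chosen Chebotarev set realising each conjugacy class of $\Gal(\overline{F}^{\ker\overline{\rho}}/F)$, then compose with a quadratic totally real totally split-at-$p$ extension to make the degree even. Because I am only imposing the condition that $F'_w$ \emph{contains} a prescribed local extension, the Grunwald--Wang exception at $2$ can be avoided by enlarging the required local extension if necessary.

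I would then take $\pi'=\mathrm{BC}_{F'/F}(\pi)$, built as a composition of cyclic base changes. Cuspidality of $\pi'$ follows since $F'/F$ is disjoint from $\overline{F}^{\ker\overline{\rho}}$, which by the absolute irreducibility of $\overline{\rho}$ precludes $\pi$ from being automorphically induced from the subfield $F'\cap\overline{F}^{\ker\overline{\rho}}=F$. The required properties are then immediate from local Langlands compatibility of solvable base change: the central character becomes $\psi_\C\circ\mathrm{Nm}_{F'/F}$; at archimedean places $\pi_v$ is discrete series of weight $k_\tau-1$ with the prescribed central character and base changes without change since $F'/F$ is totally real; at places of $T$ the local base change is unramified by construction; at $\Sigma$ we obtain the Steinberg twist as noted; at $v\mid p$ the extension is split, so $\pi'_w\cong\pi_v$ as representations of $\GL_2(F'_w)=\GL_2(F_v)$, preserving the Iwahori level $\mathrm{Iw}_1(v^a)$. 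Finally, since $\rho_{\pi'}|_{G_w}\cong\rho_\pi|_{G_v}$ for every $w\mid v\mid p$, the ordinary quotient character $\chi_v$ restricts to a lift of $\overline{\chi}_w=\overline{\chi}_v|_{G_w}$, so $\pi'$ remains a $\overline{\chi}$-good lift of $\overline{\rho}$. The hardest part, as flagged, is the joint realisation in Step 2 of the prescribed local extensions inside a totally real solvable extension of even degree that is simultaneously $p$-split and disjoint from $\overline{F}^{\ker\overline{\rho}}$; everything else reduces to an application of the base change theorem.
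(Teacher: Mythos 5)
Your approach has a genuine gap: solvable (or indeed any) base change cannot remove Steinberg ramification. If $\pi_v$ is an unramified twist of the Steinberg representation at some $v\notin\Sigma\cup\{w\mid p\}$ (which the hypotheses do not exclude, and which is exactly the hard case this lemma is designed to handle), then the Weil--Deligne parameter of $\pi_v$ has $\rho|_{I_v}$ trivial but monodromy operator $N\ne 0$. Base change to $F'_v$ restricts $\rho$ to $W_{F'_v}$ and leaves $N$ untouched, so $\mathrm{BC}_{F'_v/F_v}(\pi_v)$ remains an unramified twist of Steinberg for \emph{every} finite extension $F'_v/F_v$ and never becomes an unramified principal series. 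Your statement ``the Weil--Deligne parameter \ldots has finite image on the inertia $I_v$'' is true but does not imply what you conclude: unramifiedness of a Weil--Deligne representation requires $N=0$ in addition to $\rho|_{I_v}$ trivial, and the $N$ part is insensitive to base change. Consequently the extension $F'/F$ you construct, no matter how ramified, cannot produce a $\pi'$ that is unramified at a place lying over a Steinberg place of $\pi$ outside $\Sigma$. (There is also a smaller issue near the beginning: when $\rho|_{I_v}$ has non-abelian image, as can happen for dihedral or exceptional supercuspidals, no single cyclic or abelian $F'_v/F_v$ kills the inertia action; one must descend through a solvable tower. This is repairable, but the Steinberg problem is not.)

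The paper's argument works around exactly this obstruction. A first $p$-split allowable base change is used to reduce all ramification away from $\Sigma$ and $p$ to unramified twists of Steinberg (base change does suffice to kill principal-series and, eventually, supercuspidal ramification). At that point, instead of base changing further, the paper transfers to a totally definite quaternion algebra and applies a nebentypus-twisting congruence modulo $p$: using \ref{CharKillIsotropy} one chooses a suitable finite-order character $\chi$ of $\prod_{v\in Q} k_v^\times$ and exploits the isomorphism $S_{\kappa,\psi}(U,\mathcal{O})\otimes_\mathcal{O}\F\cong S_{\kappa\otimes\chi,\psi}(U,\mathcal{O})\otimes_\mathcal{O}\F$ to produce an eigenform, congruent to the original one, whose local components at the Steinberg places $Q$ are now \emph{ramified principal series}. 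Only after this swap does base change finish the job, because ramified principal series (unlike Steinberg) do become unramified after a suitable solvable totally ramified extension. Your proposal omits this congruence step, and without some replacement for it the lemma cannot be proved by base change alone.
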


\begin{proof}

For ease of notation, we will throughout denote the base change of a character by the same letter, in particular again write $\psi$ and $\gamma_v$ for $\psi \circ \mathrm{Nm}_{F'/F}$ and $\gamma_v \circ \mathrm{Nm}_{F'_w/F_v}$, respectively. For each $v|p$ in $F$, let $a_v \ge 1$ be minimal such that $\pi_v^{\mathrm{Iw}_1(v^{a_v})} \ne 0$.

Let $F_1/F$ be a $p$-split allowable base change such that, letting $\pi_1$ denote the base change of $\pi$ to $F_1$, if $v$ is a finite place of $F_1$ with $v \nmid p$ at which $(\pi_1)_v$ is ramified, then $(\pi_1)_v$ is an unramified twist of the Steinberg representation, and such that the number of places above $\Sigma$ in $F_1$ is even. Let $\Sigma_1$ denote the set of places of $F_1$ above $\Sigma$. Let $Q_1$ denote the set of places $v$ in $F_1$ such that $(\pi_1)_v$ is ramified and $v \notin \Sigma_1 \cup\{v|p\}$. 

Choose a finite place $w_0 \notin \Sigma_1\cup Q \cup \{v|p\}$, and let $N_{w_0}$ be the order of $\GL_2(k_{w_0})$. Choose a $p$-split allowable base change $F_2/F_1$, split at $w_0$, such that for any finite place $v$ above $Q_1$, the order of the $p$-subgroup of $k_v^\times$ is divisible by the $p$-part of $2p(4N_{w_0})$. Let $\Sigma_2$ denote the set of places of $F_2$ above $\Sigma_1$ and similarly for $Q_2$. Let $\pi_2$ denote the base change of $\pi_1$ to $F_2$. Because $F_2/F$ is split completely at places above $p$ in $F$, for any $v|p$ in $F$ and $w|v$ in $F_w$, we have $(\pi_2)_w \cong \pi_v$ as $\GL_2(F_w) \cong \GL_2(F_v)$ representations; in particular, $(\pi_2)_w^{\mathrm{Iw}_1(w^{a_v})} \ne 0$.

Let $D$ denote the quaternion algebra over $F_2$ ramified at all archimedean places and all places in $\Sigma_2$, and split elsewhere. Fix a maximal order $\mathcal{O}_D$ of $D$ and isomorphisms $(\mathcal{O}_D)_v \cong \mathrm{M}_{2\times 2}(\mathcal{O}_{F_v})$ for every split $v$. Let $U$ denote the open subgroup of $(D\otimes_{F_2} \A_{F_2}^\infty)^\times$ such that
	\begin{itemize}
	\item[-] $U_v = \GL_2(\mathcal{O}_{F_{2,v}})$ for $v \notin \Sigma_2 \cup Q_2 \cup \{w|p\}$;
	\item[-] $U_v = D_v^\times$ for $v \in \Sigma_2$;
	\item[-] $U_v = \mathrm{Iw}(v)$ for $v \in Q_2$;
	\item[-] $U_w = \mathrm{Iw}_1(w^{a_v})$ for $w|p$.
	\end{itemize}
By our choice of $F_2$, if $V$ denotes the open compact subgroup of $\GL_2(\A_{F_2}^\infty)$ given by $V_v = U_v$ for $v\notin \Sigma_2$ and $V_v = \mathrm{Iw}(v)$ for $v\in \Sigma_2$, then $\pi_2^{V} \ne 0$. Let $\kappa$ denote the weight of $\pi_2$. Via the Jaquet-Langlands-Shimizu correspondence, $\pi_2$ is generated by a Hecke eigenform in $S_{\kappa,\psi}^\mathrm{no}(U,\mathcal{O})$ (enlarging $\mathcal{O}$ if necessary).

Define an open subgroup $U'$ of $(D\otimes_{F_2}\A_{F_2}^\infty)^\times$ be letting $U'_v = U_v$ for $v \notin Q_2$, and for $v \in Q_2$ we set
	\[ U'_v = \left\{ \left( \begin{array}{cc} a & b \\ c & d \end{array} \right) \in \mathrm{Iw}(v) :
		a = d \;\mathrm{mod}\; \mathfrak{m}_v \right\}.
	\]
By \ref{CharKillIsotropy}, we can take a non-trivial character $\chi : \prod_{v\in Q_2} k_v^\times \rightarrow \mathcal{O}^\times$ of $p$-power order, and of order divisible by $4$ if $p=2$, such that when viewed as a character of $U(\A_{F_2}^\infty)^\times$, with kernel containing $U'(\A_{F_2}^\infty)^\times$, it is trivial on $(U(\A_{F_2}^\infty)^\times \cap t^{-1} D^\times t)/F^\times$ for any $t \in (D \otimes_{F_2} \A_{F_2}^\infty)^\times$. 

Viewing $W_\kappa(\mathcal{O}) \otimes_\mathcal{O} \mathcal{O}(\chi)$ as a representation of $U(\A_F^\infty)^\times$, we let $S_{\kappa\otimes\chi,\psi}(U,\mathcal{O})$ denote the space of functions
	\[ f : D^\times \backslash (D \otimes_F \A_{F_2}^\infty)^\times \longrightarrow
		W_\kappa(\mathcal{O}) \otimes_\mathcal{O} \mathcal{O}(\chi)
	\]
such that $f(xu) = u^{-1} f(x)$ for all $x \in U$ and $f(zx) = \psi(z) f(z)$ for all $z\in (\A_{F_2}^\infty)^\times$. Fixing a set of representatives $\{t\}$ for the double cosets $D^\times\backslash (D \otimes_{F_2} \A_{F_2}^\infty)^\times/U(\A_{F_2}^\infty)^\times$, we have an isomorphism of $\mathcal{O}$-modules
	\[
	S_{\kappa\otimes\chi,\psi}(U,\mathcal{O}) \cong \bigoplus_{\{t\}} 
	W_\kappa(\mathcal{O})(\chi)^{(U(\A_{F_2}^\infty)^\times \cap t^{-1}D^\times t)/F_2^\times}.
	\]
Since $\chi$ is trivial on each of the $(U(\A_{F_2}^\infty)^\times \cap t^{-1}D^\times t)/F_2^\times$, and $\chi$ is of $p$-power order, $S_{\kappa\otimes\chi,\psi}(U,\mathcal{O})$ and $S_{\kappa,\psi}(U,\mathcal{O})$ have the same $\mathcal{O}$-rank and the same image in $S_{\kappa,\psi}(U,\F)$. The isomorphism
	\begin{equation}\label{actertainiso}
	S_{\kappa,\psi}(U,\mathcal{O})\otimes_\mathcal{O}\F
	\cong S_{\kappa\otimes\chi,\psi}(U,\mathcal{O})\otimes_\mathcal{O}\F
	\end{equation}
then implies the existence of a nearly ordinary maximal ideal $\mathfrak{m}$ in the Hecke subalgebra of $\mathrm{End}(S_{\kappa\otimes\chi,\psi}(U,\mathcal{O}))$, with $\overline{\rho}_\mathfrak{m} \cong \overline{\rho}$. Note that the nearly ordinary condition is preserved since $Q$ does not containing any places above $p$, so the isomorphism \eqref{actertainiso} is equivariant for $T_{\varpi_v}$ for each $v|p$, and for $\langle y \rangle^\mathrm{no}$ for each $y \in (\mathcal{O}_{F'} \otimes_\Z \Z_p)^\times$.

Choose a Hecke eigenform in $S_{\kappa\otimes\chi,\psi}(U,\mathcal{O})_\mathfrak{m}$, and $\pi_2'$ denotes the cuspidal automorphic representation of $\GL_2(\A_{F_2})$ obtained from it via the Jacquet-Langlands-Shimizu correspdonance. Then we have
	\begin{itemize}
	\item[-] the central character of $\pi_2'$ is $\psi_\C \circ \mathrm{Nm}_{F_2/F}$;
	\item[-] for any archimdean place $v$ of $F$ and $w|v$ in $F_2$, $(\pi'_2)_w \cong  \pi_v$ as $(\mathfrak{gl}_2,\mathrm{O}(2))$-modules;
	\item[-] for any finite place $w \notin \Sigma_2 \cup Q_2$ and not above $p$, $(\pi_2')_w$ is unramified;
	\item[-] for any $v \in \Sigma_2$, $(\pi_2')_v \cong (\gamma_v \circ \det)\otimes\mathrm{St}$;
	\item[-] for any $v \in Q_2$, $(\pi_2')_v$ is a ramified principal series;
	\item[-] $\pi_2'$ is $p$-nearly ordinary;
	\item[-] for each $v|p$, $(\pi_2')_v^{\mathrm{Iw}_1(v)} \ne 0$;
	\item[-] $\pi_2'$ is a $\overline{\chi}$-good lift of $\overline{\rho}|_{G_{F_2}}$.
	\end{itemize}
We can then find another $p$-split allowable base change $F'/F_2$, such that the characters defining the principal series representations $(\pi'_2)_v$, for $v\in Q_2$, become unramified. Letting $\pi'$ denote the base change of $\pi_2'$ to $F'$ gives the result.
\end{proof}

\begin{lem}\label{SwitchSt}
Assume $p=2$. Let $D$ be a quaternion algebra with centre $F$, ramified at all archimedean places and at a nonempty set $\Sigma$ of finite places not containing any places above $p$. Fix a maximal order $\mathcal{O}_D$ of $D$. Fix an algebraic weight $\kappa$, a continuous character $\psi : F^\times \backslash (\A_F^\infty)^\times \rightarrow \mathcal{O}^\times$, and an open subgroup $U$ of $(D \otimes_F \A_F^\infty)^\times$ such that
	\begin{itemize}
	\item[-] $U_v\subseteq \GL_2(\mathcal{O}_{F_v})$ for $v$ at which $D$ is split;
	\item[-] $U_v = D_v^\times$ for each $v \in \Sigma$, 
	\item[-] $U_v \supseteq \mathrm{Iw}(v)$ for each $v|p$,
	\item[-] the action of $U\cap (\A_F^\infty)^\times$ on $W_\kappa(\mathcal{O})$ is given by $\psi^{-1}$,
	\item[-] $(U(\A_F^\infty)^\times \cap t^{-1} D^\times t)/F^\times = 1$ for each $t \in (D \otimes_F \A_F^\infty)^\times$. 
	\end{itemize}
Let $U'$ denote the maximal compact subgroup of $U$. Let $\mathfrak{m}$ be a maximal ideal of $\mathbf{T}_{\kappa,\psi}^\mathrm{no}(U',\mathcal{O})$ and let $f \in S_{\kappa,\psi}^\mathrm{no}(U',\mathcal{O})_\mathfrak{m}$ be an eigenform such that for each $v \in \Sigma$, the action of $D_v^\times$ on $f$ is via $\gamma_v\circ \nu_D$, with $\gamma_v : F_v^\times \rightarrow \calO^\times$ an unramified character and $\nu_D$ the reduced norm of $D$.

For each $v\in \Sigma$ let $\gamma_v' : F_v^\times \rightarrow \mathcal{O}^\times$ be either $\gamma_v$ or $-\gamma_v$. There is a Hecke eigenform $f' \in S_{\kappa,\psi}^\mathrm{no}(U',\mathcal{O})_\mathfrak{m}$ such that $D_v^\times$ acts on $f'$ via $\gamma_v'$ for each $v \in \Sigma$.

\end{lem}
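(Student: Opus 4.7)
The plan is to realize $f'$ as a twist of $f$ by a suitable global quadratic Hecke character of $F$. The guiding point is that because $p = 2$, any character valued in $\{\pm 1\}$ is automatically congruent to $1$ modulo $\mathfrak{m}_\calO$; hence any such twist preserves the residual Hecke eigensystem of $f$, and in particular the maximal ideal $\mathfrak{m}$ of $\mathbf{T}_{\kappa,\psi}^\mathrm{no}(U', \calO)$. The formal structure of the argument parallels the twisting construction of \ref{modulartwists}, but with a character obtained globally rather than from a pro-$2$ class field extension.

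Concretely, I would seek a continuous character $\chi : F^\times \backslash \A_F^\times \to \{\pm 1\}$ that is unramified at every finite place of $F$ and satisfies $\chi_v(\varpi_v) = \gamma_v'(\varpi_v)/\gamma_v(\varpi_v) \in \{\pm 1\}$ for each $v \in \Sigma$. Given such a $\chi$, I would define $f'(x) := \chi(\nu_D(x)) f(x)$. The verifications are routine: $f'$ descends to a function on $D^\times \backslash (D \otimes_F \A_F^\infty)^\times$ because $\nu_D(D^\times) \subseteq F^{\times,+} \subseteq F^\times$ and $\chi$ is trivial on $F^\times$; $f' \in S_{\kappa,\psi}^\mathrm{no}(U', \calO)$ because $\chi$ is unramified at every finite place, so trivial on $\nu_D(U'_v) \subseteq \calO_{F_v}^\times$ at all finite $v$, and because $\chi^2 = 1$ preserves the central character $\psi$; at each $v \in \Sigma$ the action of $D_v^\times$ on $f'$ is $(\chi_v \gamma_v) \circ \nu_D = \gamma_v' \circ \nu_D$; and at places $v \notin \Sigma$ the Hecke operators $T_v$ (resp. $T_{\varpi_v}$) change by a factor of $\chi_v(\varpi_v) \in \{\pm 1\}$, which is congruent to $1$ modulo $\mathfrak{m}_\calO$, so $f'$ remains in the $\mathfrak{m}$-localization. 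Near-ordinarity is preserved because $\chi$ is unramified at each $v|p$, so the twist commutes with Hida's idempotent $e_\mathrm{H}$.

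The main obstacle will be the existence of such $\chi$. By class field theory, an unramified-at-finite-places quadratic character of $F^\times \backslash \A_F^\times$ corresponds to a character of the ray class group whose conductor is supported entirely at archimedean primes, and the condition $\chi_v(\varpi_v) = \epsilon_v$ amounts to prescribing the image of the Frobenius class $[\varpi_v]$ in the target $\{\pm 1\}$. Solving this Grunwald--Wang-type system requires checking that any $\F_2$-linear relations among the $[\varpi_v] \in \mathrm{Cl}^+(F)/2$ for $v \in \Sigma$ are compatible with the prescribed signs; the archimedean sign components of $\chi$ give $[F:\Q]$ additional free parameters with which to absorb such relations. In the residual cases where the class-group obstruction is genuine, one would first perform a totally real solvable base change (using \ref{LevelLower} to return to a quaternionic space of the required shape) in which the obstruction is trivialized, construct the twisted form there, and descend. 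With the character in hand the remaining verifications are purely formal; the entire technical content lies in producing $\chi$.
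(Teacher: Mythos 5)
Your proposal via global twisting has a genuine gap: the quadratic Hecke character $\chi$ that you need — trivial on $F^\times$, unramified at all finite places, with prescribed signs $\chi_v(\varpi_v) = \gamma_v'(\varpi_v)/\gamma_v(\varpi_v)$ for $v\in\Sigma$ — need not exist. Such $\chi$ are exactly characters of the narrow class group $\mathrm{Cl}^+(F)$, and if that group is small (e.g.\ trivial, as for $F = \Q(\sqrt 2)$), or if the classes $[\varpi_v]$, $v\in\Sigma$, satisfy an $\F_2$-relation incompatible with the prescribed signs, then no $\chi$ exists. The archimedean sign components are \emph{not} free parameters once you impose triviality on $F^\times$: the only surviving degrees of freedom are $\mathrm{Hom}(\mathrm{Cl}^+(F),\{\pm 1\})$.

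Your fallback — a solvable totally real base change to kill the obstruction, then descent — does not repair this. The lemma's conclusion must produce an eigenform $f'$ in $S_{\kappa,\psi}^{\mathrm{no}}(U',\calO)_\mathfrak{m}$ over the \emph{original} field $F$ at the \emph{original} level; this is how it is used in \ref{LevelRaiseSt}, where \ref{SwitchSt} is invoked on a fixed quaternion algebra and level to correct a local Steinberg sign before proceeding. Twisting an eigenform over $F'$ gives an eigenform over $F'$; there is no descent step that would land you back in the required space over $F$.

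The paper's actual proof is a local (in the group-ring sense) argument that sidesteps the existence of any global twist. Set $\Delta = \prod_{v\in\Sigma} F_v^\times/(F_v^\times)^2\calO_{F_v}^\times$, so that $\nu_D$ induces $U(\A_F^\infty)^\times/U'(\A_F^\infty)^\times \cong \Delta$; by the neatness hypothesis and \ref{freeovergroupring}, $S_{\kappa,\psi}(U',\calO)$ is free over $\calO[\Delta]$. Since $p=2$ and $\Delta$ is a $2$-group, $\calO[\Delta]$ is local; so after applying Hida's idempotent and localizing at $\mathfrak{m}$, freeness over $\calO[\Delta]$ persists. Because $E[\Delta]\cong\prod_{\chi}E(\chi)$ over all quadratic characters $\chi$ of $\Delta$, a free module has every $\chi$-isotypic piece of the same $\calO$-rank. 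Thus the $\gamma'$-eigenspace is nonzero whenever the $\gamma$-eigenspace is, with no global character required. The lesson is that the desired $f'$ need not be (and in general cannot be) a global twist of $f$: existence follows from the module structure over $\calO[\Delta]$, not from a Hecke character on $F$.
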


\begin{proof}

Let $\Delta = \prod_{v \in \Sigma} F_v^\times / (F_v^\times)^2\calO_{F_v}^\times$. The reduced norm defines an isomorphism
	\[ U(\A_F^\infty)^\times / U'(\A_F^\infty)^\times 
	\stackrel{\sim}{\longrightarrow} \Delta. 
	\]
Fix coset representative $\{t\}$ for $D^\times \backslash (D \otimes_F \A_F^\infty)^\times/U(\A_F^\infty)^\times$. By \ref{freeovergroupring}, our assumptions on $U$ implie that we have an isomorphism of $\calO[\Delta]$-modules
	\[ S_{\kappa,\psi}(U'(p^{a,a}),\calO) \cong \bigoplus_{\{t\}} W_{\kappa}(\calO) \otimes_\calO
		\calO[\Delta], \]
and $S_{\kappa,\psi}(U',\calO)$ is free over $\calO[\Delta]$. Since $\Delta$ is a $2$-group, $\calO[\Delta]$ is a local ring and $S_{\kappa,\psi}^\mathrm{no}(U'(p^{a,a}),\calO)_\frakm$ is also free over $\calO[\Delta]$. Since
	\[ E[\Delta] \cong \prod_{\chi : \Delta \rightarrow \{\pm 1\}} E(\chi) \]
we see that the $\calO$-rank of the submodule of $S_{\kappa,\psi}^\mathrm{no}(U',\calO)_\frakm$ consisting of elements on which $D_v^\times$ acts via $\gamma_v$ for all $v\in\Sigma$ is equal to the $\calO$-rank of the submodule on which $D_v^\times$ acts via $\gamma_v'$ for all $v\in\Sigma$. \end{proof}

\begin{lem}\label{LevelRaiseSt}

Let $\pi$ be a $p$-nearly ordinary regular algebraic cuspidal automorphic representation of $\GL_2(\A_F)$ that is a $\overline{\chi}$-good lift of $\overline{\rho}$, and let with $\psi_\C$ denote its central character. Let $\Sigma$ be a finite set of finite places not containing any above $p$ such that for every $v \in \Sigma$, $\pi_v$ is unramified and $\overline{\rho}|_{G_v}$ is an extension of $\overline{\gamma}_v$ by $\overline{\gamma}_v\overline{\epsilon_p}$, for some unramified character $\gamma_v : G_v \rightarrow \mathcal{O}^\times$ with $\gamma_v^2 = \psi|_{G_v}$.

Fix some finite place $w_0$ not in $\Sigma$ and not above $p$. There is a $p$-split allowable base change $F'/F$ and a $p$-nearly ordinary regular algebraic cuspidal representation $\pi'$ of $\GL_2(\A_F')$ such that
	\begin{itemize}
	\item[-] the central character of $\pi'$ is $\psi_\C \circ \mathrm{Nm}_{F'/F}$;
	\item[-] for any archimdean place $v$ of $F$ and $w|v$ in $F'$, $\pi'_w \cong \pi_v$ as $(\mathfrak{gl}_2,\mathrm{O}(2))$-modules;
	\item[-] $\pi$ is unramified outside the places above $\Sigma$, the places above $p$, and the places above $w_0$;
	\item[-] for any $v \in \Sigma$ and $w|v$ in $F'$, $\pi'_w \cong (\gamma_v\circ\mathrm{Nm}_{F'_w/F_v} \circ \det) \otimes \mathrm{St}$;
	\item[-] for any $v|p$ in $F$ and $w|v$ in $F'$, if $a\ge 1$ is such that $\pi_v^{\mathrm{Iw}_1(v^a)} \ne 0$, then $(\pi_w')^{\mathrm{Iw}_1(w^a)} \ne 0$;
	\item[-] $\pi'$ is a $\overline{\chi}$-good lift of $\overline{\rho}$.
	\end{itemize}
\end{lem}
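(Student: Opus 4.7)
The plan is to execute level-raising at the places of $\Sigma$ by transferring, via a sequence of quaternionic comparisons and base changes, to a definite quaternion algebra that is ramified precisely at $\Sigma$ (together with archimedean places and perhaps a place above $w_0$ for parity). The key numerical input is that the condition on $\overline{\rho}|_{G_v}$ forces the characteristic polynomial of $\overline{\rho}(\mathrm{Frob}_v)$ to factor as $(X - \overline{\gamma}_v(\mathrm{Frob}_v))(X - \overline{\gamma}_v(\mathrm{Frob}_v)\mathrm{Nm}(v))$, so the two Satake parameters of $\pi_v$ have ratio congruent to $\mathrm{Nm}(v) \pmod{\mathfrak{m}_{\mathcal{O}}}$; this is the classical Ribet-type level-raising congruence, worked out in the $p=2$ nearly ordinary setting in \cite{KW2} and \cite{KisinFinFlat}.

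First I would perform a $p$-split allowable base change $F_1/F$ that (i) splits completely at $w_0$, (ii) is unramified at $\Sigma$, (iii) makes the cardinality of $\Sigma_1 \cup \{v\mid\infty\}$ even, where $\Sigma_1$ denotes the set of places of $F_1$ above $\Sigma$, and (iv) is small enough (in the sense of \ref{SuffSmallSub}) to guarantee the subsequent neatness assumptions. Let $\pi_1$ be the base change of $\pi$; it is still nearly ordinary, regular algebraic cuspidal, and a $\overline{\chi}$-good lift of $\overline{\rho}|_{G_{F_1}}$ by local-global compatibility and the $p$-splitness of $F_1/F$. Let $D$ be the quaternion algebra over $F_1$ ramified precisely at the archimedean places and at $\Sigma_1$, fix a maximal order, and let $U$ be the $(\Sigma_1 \subseteq \Sigma_1)$-open subgroup of $(D\otimes_{F_1}\A_{F_1}^\infty)^\times$ with $U_v$ maximal compact at places not in $\Sigma_1\cup\{v|p\}$, $U_v = D_v^\times$ at $v\in\Sigma_1$, and $U_v = \mathrm{Iw}_1(v^{a_v})$ for $v|p$ with $a_v$ minimal such that $(\pi_1)_v^{\mathrm{Iw}_1(v^{a_v})} \neq 0$.

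The main step is to produce a Hecke eigenform $f'\in S_{\kappa,\psi}^{\mathrm{no}}(U',\mathcal{O})_{\mathfrak{m}}$, where $U'$ is the maximal compact subgroup of $U$ and $\mathfrak{m}$ is the non-Eisenstein maximal ideal of $\mathbf{T}_{\kappa,\psi}^{\mathrm{no}}(U',\mathcal{O})$ associated to $\overline{\rho}|_{G_{F_1}}$. The existence of such an $f'$ is exactly the level-raising congruence: starting from $\pi_1$, realized as an eigenform on $\GL_2$ (or an auxiliary definite algebra ramified at $w_0$) with unramified component at each $v\in\Sigma_1$, the congruence between Satake parameters described above allows one to transfer the system of Hecke eigenvalues across the Jacquet-Langlands boundary to obtain an eigenform on $D$ at level $U'$, congruent to $\pi_1$ modulo $\varpi_E$; this is proved, in forms sufficient for our purposes, in \cite{KW2} and \cite{KisinFinFlat}. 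By construction the local action of $D_v^\times$ on $f'$, for $v\in\Sigma_1$, is by $\pm \gamma_v \circ \mathrm{Nm}_{F_{1,v}/F_v} \circ \nu_D$ (the square must equal $\psi|_{F_{1,v}^\times}$, giving a sign ambiguity). Applying \ref{SwitchSt} at each $v\in\Sigma_1$ I then replace $f'$ by an eigenform $f''$ on which $D_v^\times$ acts via precisely $\gamma_v\circ\mathrm{Nm}_{F_{1,v}/F_v}\circ\nu_D$. Transferring back through \ref{JacquetLanglands} gives an automorphic representation $\pi_1'$ of $\GL_2(\A_{F_1})$ whose local component at each $w\in\Sigma_1$ is $(\gamma_v \circ \mathrm{Nm}_{F_{1,w}/F_v}\circ\det)\otimes\mathrm{St}$, and a final $p$-split allowable base change $F'/F_1$ (split at $w_0$, $\Sigma$, and the places above $p$) allows one to kill any residual tame ramification introduced along the way, yielding $\pi'$ with the required properties.

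The main obstacle is the level-raising input itself: for $p=2$ the usual Ihara-style cohomological arguments fail in the naive form used when $p$ is odd, and one must appeal to the more delicate congruences established in \cite{KW2} and \cite{KisinFinFlat}; a secondary technical point is the sign ambiguity in the unramified character describing the action of $D_v^\times$, which is handled cleanly by \ref{SwitchSt} using that the $2$-group $\prod_{v\in\Sigma_1}F_{1,v}^\times/(F_{1,v}^\times)^2\mathcal{O}_{F_{1,v}}^\times$ acts freely on the pertinent space and so all characters of it appear with equal multiplicity.
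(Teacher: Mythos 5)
Your high-level strategy — transfer to a totally definite quaternion algebra ramified at $\Sigma$, invoke level raising, fix signs via \ref{SwitchSt}, clean up with a further base change — is the right one, but the central step is treated as a black box in a way the cited references do not support, and this is a genuine gap.

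You assert that ``the congruence between Satake parameters \dots\ allows one to transfer the system of Hecke eigenvalues across the Jacquet-Langlands boundary to obtain an eigenform on $D$ at level $U'$'' and say this is ``proved, in forms sufficient for our purposes, in \cite{KW2} and \cite{KisinFinFlat}.'' But the level-raising input that is actually available, namely \cite{KisinFinFlat}*{Corollary 3.1.11}, raises the level at a \emph{single} auxiliary prime $w$ \emph{within a fixed quaternion algebra}; it produces a $w$-new eigenform at Iwahori level and does not move you from one quaternion algebra to another. To get from the totally definite algebra split at all finite places to the one ramified at all of $\Sigma$, the paper instead runs an induction: writing $\Sigma=\{v_1,\dots,v_r\}$, it constructs a tower $F=F_0\subset F_1\subset\cdots\subset F_r$ of quadratic $p$-split allowable extensions, each $F_i/F_{i-1}$ chosen to split the (unique remaining) place above $v_i$ and be inert above the other $v_j$. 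At each stage one raises level at the single place above $v_{i+1}$ via \cite{KisinFinFlat}*{Corollary 3.1.11}, fixes the sign of the Steinberg twist via \ref{SwitchSt}, and then base-changes to $F_{i+1}$ — where that place splits into two — so that the number of finite ramified places becomes even and the next quaternion algebra $D_{i+1}$ exists. Your one-shot transfer skips this entire mechanism and also drops the parity bookkeeping it encodes.

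A second omission: \cite{KisinFinFlat}*{Corollary 3.1.11} requires the perfect $\mathcal{O}$-linear pairing on $W_\kappa(\mathcal{O})$ constructed in \cite{TaylorMeroDeg2}, which only exists when the parallel weight $k$ satisfies $2\le k\le p+1$. The paper therefore first changes weight to $\kappa_0=((k,\dots,k),(w,\dots,w))$ with $2\le k\le p+1$ via \ref{WeightChangeFin} before starting the induction, and restores the original weight at the end. Your argument works throughout with the original $\kappa$, which may not satisfy this constraint, and you never construct the pairing $\langle\,\cdot\,,\,\cdot\,\rangle_i$ on $S_{\kappa_0,\psi}(U_i,\mathcal{O})$ that the level-raising corollary uses. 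As stated, your level-raising step cannot be carried out by the references you cite; it needs either the paper's full inductive construction or an independent argument.
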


\begin{proof}

We prove this as in \cite{KisinFinFlat}*{Lemma 3.5.3}. For ease of notation, we will throughout denote the base change of a character by the same letter, in particular again write $\psi$ and $\gamma_v$ for $\psi \circ \mathrm{Nm}_{F'/F}$ and $\gamma_v$ for $\gamma_v \circ \mathrm{Nm}_{F'_w/F_v}$, respectively. Also, given an algebraic weight $\kappa =(\mathbf{k},\mathbf{w})$ for $F$ we will again denote by $\kappa = (\mathbf{k},\mathbf{w})$ the algebraic weight for $F'$ given by letting $(k_{\tau'},w_{\tau'}) = (k_\tau,w_\tau)$ if $\tau' : F' \hookrightarrow \R$ extends $\tau : F \hookrightarrow \R$. For each $v|p$, let $a_v \ge 1$ be minimal such that $\pi_v^{\mathrm{Iw}(v^{a_v})} \ne 0$.

 By first performing a quadratic $p$-split allowable base change if necessary, we may assume that $[F:\Q]$ is even. Let $\kappa = (\mathbf{k},\mathbf{w})$ denote the weight of $\pi$, and $\psi$ denote the $\mathcal{O}^\times$-valued character of $(\A_F^\infty)^\times$ corresponding to the central character of $\pi$. Let $D_0$ denote the quaternion algebra ramified at all archimedean places of $F$ and split at all finite places of $F$. Fix some finite place $w_0 \notin \Sigma\cup \{v|p\}$. Let $U$ be an open compact subgroup of $\GL_2(\mathcal{O}_F \otimes_\Z \widehat{\Z})$ such that 
\begin{itemize}
	\item[-] $U_v = \GL_2(\mathcal{O}_{F_v})$ for all finite places $v\nmid p w_0$ such that $\pi_v$ is unramified;
	\item[-] $U_v = \mathrm{Iw}_1(v^{a_v})$ for each $v|p$;
	\item[-] $\pi^U \ne 0$;
	\item[-] $(U(\A_F^\infty)^\times \cap t^{-1} D^\times t)/F^\times = 1$ for every $t \in (D \otimes_F \A_F^\infty)^\times$.
\end{itemize}
The last condition can be ensured by taking $U_{w_0}$ small enough. By the Jacquet-Langlands-Shimizu correspondence, $\pi$ is generated by some $\mathbf{T}_{\kappa,\psi}^\mathrm{no}(U,\mathcal{O})$-eigenform in $S_{\kappa,\psi}^\mathrm{no}(U,\mathcal{O})$ (enlarging $\mathcal{O}$ if necessary).

Take $2 \le k \le p+1$ and $w\in \Z$ such that $k+2w = k_\tau + 2w_\tau$ for any $\tau \in I$, and let $\kappa_0 = ((k,\ldots,k),(w,\ldots,w))$. We have a Hecke equivariant isomorphism, cf. \ref{WeightChangeFin},
	\[ 
	S_{\kappa,\psi}^\mathrm{no}(U,\mathcal{O}) \otimes_\mathcal{O} \F \cong
	S_{\kappa_0,\psi}^\mathrm{no}(U,\mathcal{O}) \otimes_\mathcal{O} \F.
	\]
So, there is an eigenfunction $f_0 \in S_{\kappa_0,\psi}^\mathrm{no}(U,\mathcal{O})$ that is a $\overline{\chi}$-good lift of $\overline{\rho}$.

Write $\Sigma = \{v_1,\ldots,v_r\}$. Choose a tower of quadratic extensions $F = F_0 \subset F_1 \subset \cdots \subset F_r$, with each $F_i/F_{i-1}$ a $p$-split allowable base change such that for any $1 \le j \le r$, any $w \in F_{i-1}$ above $v_j$ splits in $F_i$ if $i=j$ and is inert in $F_i$ otherwise. Let $D_i$ be the quaternion algebra with centre $F_i$, ramified at all archimedean places and all places above $v_1,\ldots,v_i$, and split elsewhere. Let $\mathcal{O}_{D_i}$ be a fixed maximal order of $D_i$. We will show, by induction, that there is an open subgroup $U_i$ of $(D_i \otimes_{F_i} \A_{F_i}^\infty)^\times$ and an eigenfunction $f_i \in S_{\kappa_0,\psi}^\mathrm{no}(U_i,\mathcal{O})$ such that
	\begin{enumerate}
		\item[(a)] for any $1\le j\le i$, if $w|v_j$ then $(U_i)_w = (D_i)_w^\times$ acting on $W_{\kappa_0}(\mathcal{O})$ by $\gamma_{v_j}^{-1} \circ \nu_{D_i}$, with $\nu_{D_i}$ the reduced norm of $D_i$;
		\item[(b)] $(U_i)_w = \mathrm{Iw}_1(w^{a_v})$ for any $v|p$ in $F$ and $w|v$ in $F_i$;
		\item[(c)] $(U_i)_w = \GL_2(\mathcal{O}_{F_{i,w}})$ for any finite $w$ not above $\{v_1,\ldots,v_i,w_0\}$ such that $U_v = \GL_2(\mathcal{O}_{F_v})$;
		\item[(d)] $(U_i(\A_{F_i}^\infty)^\times \cap t^{-1} D_i^\times t)/F_i^\times = 1$ for any $t \in (D_i \otimes_{F_i} \A_{F_i}^\infty)^\times$;
		\item[(e)] $f_i$ is a $\overline{\chi}$-good lift of $\overline{\rho}$.
	\end{enumerate}

Take $0\le i <r$, and denote by $\mathfrak{m}$ the maximal ideal of $\mathbf{T}^\mathrm{no}_{\kappa_0,\psi}(U_i,\mathcal{O})$ corresponding to $f_i$. Since $2 \le k \le p+1$, there is a perfect pairing $\langle \;,\; \rangle$ on $W_k(\mathcal{O})$, cf. \cite{TaylorMeroDeg2}*{\S 1}, and thus a perfect pairing on $W_{\kappa_0}(\mathcal{O})$, which we also denote by $\langle \;,\; \rangle$. Fix a set of coset representatives $\{t\}$ for $D_i^\times\backslash (D_i \otimes_{F_i} \A_{F_i}^\infty)^\times/U(\A_{F_i}^\infty)^\times$. Assumption (c) allows us to define  a perfect pairing on $S_{\kappa_0,\psi}(U_i,\mathcal{O})$ by
	\[
	\langle h_1 , h_2 \rangle_i = \sum_{\{t\}} \langle h_1(t), h_2(t) \rangle \psi\circ\nu_{D_i}(t)^{-1}.
	\]
Define an open subgroup $U_i'$ of $(D_i \otimes_{F_i} \A_{F_i}^\infty)^\times$ by letting $(U_i')_w = (U_i)_w$ if $w$ is not above $v_{i+1}$ and $(U_i')_w = \mathrm{Iw}(w)$ if $w$ is the unique place in $F_i$ above $v_{i+1}$. By our assumption on $\overline{\rho}|_{G_{v_{i+1}}}$, we see that
	\[
	(T_{v_{i+1}}^2 - (\mathrm{Nm}(v_{i+1})+1)^2\psi(\varpi_{v_{i+1}})) S_{\kappa_0,\psi}
	(U_i,\mathcal{O}) \subset \mathfrak{m} S_{\kappa_0,\psi}(U_i,
	\mathcal{O}).
	\]
Pulling back $\mathfrak{m}$ to a maximal ideal of $\mathbf{T}_{\kappa_0,\psi}^\mathrm{no}(U_i',\mathcal{O})$, \cite{KisinFinFlat}*{Corollary 3.1.11} shows there is an eigenfunction $h \in S_{\kappa_0,\psi}^\mathrm{no}(U_i',\mathcal{O})$ that is in the support of $\mathfrak{m}$ and is $w$-new, for $w$ the unique place of $F_i$ above $v_{i+1}$. If $(D_i)_{v_{i+1}}^\times$ does not act on $h$ by $\gamma_{v_{i+1}}\circ \nu_{D_i}$, then we must have $p=2$ and $(D_i)_{v_{i+1}}^\times$ acts on $h$ via $-\gamma_{v_{i+1}}\circ\nu_{D_i}$. Applying \ref{SwitchSt} allows us to assume $(D_i)_{v_{i+1}}^\times$ acts on $h$ by $\gamma_{v_{i+1}}\circ \nu_{D_i}$. Considering the base change of $h$ to $F_{i+1}$ together with the Jacquet-Langlands-Shimizu correspondence then yields the desired $f_{i+1}$, and with $U_{i+1}$ an open subgroup of $(D_{i+i}\otimes_{F_{i+1}}\A_{F_{i+1}}^\infty)^\times$ such that
	\begin{itemize}
		\item[-] $(U_{i+1})_w = (D_{i+1})_w^\times$ for any $1\le j \le i+1$ and $w|v_j$;
		\item[-] $(U_{i+1})_w = \mathrm{Iw}_1(w^{a_v})$ for any $v|p$ in $F$ and $w|v$ in $F_{i+1}$;
		\item[-] $(U_{i+1})_w = \GL_2(\mathcal{O}_{F_{i+1,w}})$ for any finite $w$ not above $\{v_1,\ldots,v_{i+1},w_0\}$ such that $U_v = \GL_2(\mathcal{O}_{F_v})$;
		\item[-] for $w|w_0$ in $F_{i+1}$, we take $(U_{i+1})_w$ small enough so that $(U_{i+1}(\A_{F_{i+1}}^\infty)^\times \cap t^{-1} D_{i+1}^\times t)/F_{i+1}^\times = 1$ for any $t \in (D_{i+1} \otimes_{F_{i+1}} \A_{F_{i+1}}^\infty)^\times$
	\end{itemize}

Having obtained $f_r$, we again use \ref{WeightChangeFin} to obtain a $\overline{\chi}$-good lift $f_r'$ of $\overline{\rho}$ of level $U_r$ and weight $\kappa'$, where $(k_{\tau'},w_{\tau'}) = (k_\tau,w_\tau)$ for any $\tau' \in I_{F_r}$ extending $\tau \in I$. Then, applying Jacquet-Langlands-Shimizu to $f_r'$ yields the result.  \end{proof}

We now group most of the above lemmas into one proposition.

\begin{prop}\label{SpecifiedLift}

Let $\pi$ be a $p$-nearly ordinary regular algebraic cuspidal automorphic representation of $\GL_2(\A_F)$ that is a $\overline{\chi}$-good lift of $\overline{\rho}$. Let $\kappa = (\bold{k},\bold{w})$ be an algebraic weight and $\psi_\C : F^\times \backslash \A_F^\times \rightarrow \C^\times$ a character such that $\psi_\C(z) = z_\infty^{2-\bold{k}-2\bold{w}}$ on some open subgroup of $\A_F^\times$. Let $\Sigma$ be a finite set of finite places not containing any places above $p$. For each $v \in \Sigma$, fix an unramified character $\gamma_v : F_v \rightarrow \mathcal{O}^\times$ such that $\overline{\rho}|_{G_v}$ is an extension of $\overline{\gamma}_v$ by $\overline{\gamma}_v\overline{\epsilon_p}$ and such that $\gamma_v(\varpi_v)^2 = \psi(\varpi_v)$. Fix a quadratic extension $L/F$.

There is an $L$-allowable base change $F'/F$ and a $p$-nearly ordinary cuspidal automorphic representation $\pi'$ of $\GL_2(\A_{F'})$ such that
	\begin{itemize}
		\item[-] $\pi$ has central character $\psi_\C \circ \mathrm{Nm}_{F'/F}$;
		\item[-] if $w$ is an archimedean place of $F'$ that extends the embedding $\tau : F \hookrightarrow \R$, then $\pi'_w$ is discrete series of lowest weight $k_\tau-1$ and central character $z_w \mapsto \mathrm{sgn}(z_w)^{k_\tau} \lvert z_w \rvert^{2-k_\tau-2w_\tau}$;
		\item[-] if $w$ is a finite place of $F'$ with $w\nmid p$ and $w$ not above any place in $\Sigma$, then $\pi_w$ is unramified;
		\item[-] if $w$ is a finite place of $F'$ lying above some place in $v \in \Sigma$, then $\pi_w \cong (\gamma_{v,\C} \circ \mathrm{Nm}_{F'_w/F_v} \circ \det) \otimes \mathrm{St}$;
		\item[-] for any $w|p$, $(\pi'_w)^{\mathrm{Iw}_1(w)}\ne 0$;
		\item[-] $\pi'$ is a $\overline{\chi}$-good lift of $\overline{\rho}$.
	\end{itemize}
\end{prop}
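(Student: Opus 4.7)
The plan is to apply the three preceding lemmas in succession over a tower of base changes: first \ref{WeightChange} to match the weight and central character, then \ref{LevelLower} with empty Steinberg set to strip away all unwanted ramification outside $p$, and finally \ref{LevelRaiseSt} to introduce the specified Steinberg behaviour at places above $\Sigma$.

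The hypothesis of \ref{WeightChange} requires $\psi\equiv\psi'\pmod{\frakm_\calO}$, where $\psi$ corresponds to the central character of $\pi$ and $\psi'$ to the target $\psi_\C$. Since both $\pi$ and (by demand of the conclusion) any target $\pi'$ are $\overline{\chi}$-good lifts of $\overline{\rho}$, each has the determinant of its attached Galois representation reducing to $\det\overline{\rho}$, forcing $\psi\equiv\overline{\det\overline{\rho}}\cdot\overline{\epsilon_2}^{-1}\equiv\psi'\pmod{\frakm_\calO}$. Applying that lemma produces an $L$-allowable $F_1/F$ and a $\pi_1$ of the required weight $\kappa$, central character $\psi_\C\circ\mathrm{Nm}_{F_1/F}$, Iwahori level $\mathrm{Iw}_1(w)$ at each $w\mid p$, and still a $\overline{\chi}$-good lift. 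Next, \ref{LevelLower} applied to $\pi_1$ with empty Steinberg set gives a $p$-split allowable $F_2/F_1$ and $\pi_2$ unramified at every finite place not above $p$, with the weight, central character (transported by the norm), level at $p$, and $\overline{\chi}$-good-lift property all preserved. Since $\pi_2$ is then unramified at every place above $\Sigma$ and $\overline{\rho}|_{G_v}$ has the shape dictated by the given $\gamma_v$, \ref{LevelRaiseSt}, applied with Steinberg set equal to the places of $F_2$ above $\Sigma$, produces a further $p$-split allowable $F'/F_2$ and the required $\pi'$ over $F'$.

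To finish, one verifies that the composite $F'/F$ remains $L$-allowable. The first step is $L$-allowable by construction; the subsequent two are $p$-split allowable, and because a $p$-split extension preserves completions at places above $p$, the non-splitting condition in $L$ at such places is carried through each stage. The other defining conditions (totally real, even degree, solvable, disjoint from $\overline{F}^{\ker\overline{\rho}}$) are preserved by composition. The $\mathrm{Iw}_1$-level at places above $p$ and the $\overline{\chi}$-good-lift property are explicitly transported by both \ref{LevelLower} and \ref{LevelRaiseSt}. The main obstacle is therefore not any single analytic or representation-theoretic step—those are entirely absorbed by the three cited lemmas—but rather the bookkeeping required to confirm that every property (weight, central character, ramification profile, level at $p$, $\overline{\chi}$-goodness, and $L$-allowability of the total base change) is carried faithfully through the tower.
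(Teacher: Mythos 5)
Your three-step plan is the right skeleton, but it stops one step too early. Look carefully at the conclusion of \ref{LevelRaiseSt}: it only guarantees that the output $\pi'$ is ``unramified outside the places above $\Sigma$, the places above $p$, \emph{and the places above $w_0$}.'' The auxiliary prime $w_0$ enters because the level at $w_0$ must be shrunk to secure the neatness condition $(U(\A_{F}^\infty)^\times \cap t^{-1}D^\times t)/F^\times=1$, and the resulting eigenform may therefore pick up ramification above $w_0$. So after your third step, the representation can fail the required unramifiedness outside $\Sigma\cup\{v\mid p\}$.

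The paper closes this gap by a fourth application of \ref{LevelLower}, which strips away the spurious ramification above $w_0$ while preserving (as one checks) the Steinberg behaviour at the places above $\Sigma$, the level $\mathrm{Iw}_1(w)$ at $w\mid p$, the weight, the central character, and the $\overline{\chi}$-good lift property. You need to add this final step; otherwise the ramification profile in the conclusion is not met. The rest of your reasoning — the bookkeeping of $L$-allowability through the tower and the preservation of the $\overline{\chi}$-goodness and $p$-level — is essentially what the paper does.
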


\begin{proof}

Note that if $p$ is odd, the unramified characters $\gamma_v$ are determined uniquely by $\psi$ and $\overline{\rho}$, but if $p=2$, then they are only determined up to sign. We first apply \ref{WeightChange} to obtain $\pi_1$ with the right weight and central character, and such that $(\pi_1)_w^{\mathrm{Iw}_1(w)}\ne 0$ for any $w|p$. We then apply \ref{LevelLower} to obtain $\pi_2$ that is unramified for every $v \nmid p$. We then apply \ref{LevelRaiseSt} to obtain $\pi_3$ such that $\pi_3$ is our desired twist of Steinberg at each $w$ above $v\in \Sigma$. However, $\pi_3$ may no longer be unramified at all places not above $\Sigma$ and $p$. So, we apply \ref{LevelLower} one more time to get our desired $\pi'$. Note that when applying \ref{LevelLower} this last time we can ensure that the places above those in $\Sigma$ remain the desired twist of Steinberg. \end{proof}

\subsection{Proof of the main theorem}\label{TheThms}

We can now prove our main theorem.

\begin{thm}\label{finalthm}

Let $F$ be a totally real subfield of $\overline{\Q}$.  Let $J_F$ denote set of embeddings $F \hookrightarrow \overline{\Q}$. Fix embeddings $\overline{\Q}\hookrightarrow \Qbar_2$ and $\overline{\Q}\hookrightarrow \C$. Via these embeddings we view $J_F$ as the set of embeddings $\{F \hookrightarrow \R\}$ as well as the set of embeddings $\{F \hookrightarrow \Qbar_2\}$. For any $v|2$ in $F$, let $J_{F,v}\subseteq J_F$ denote the subset of $\tau$ that give rise to $v$. We identify $J_{F,v}$ with the set of embedding $F_v \hookrightarrow \Qbar_2$.

Let
	\[ \rho: G_F \longrightarrow \GL_2(\Qbar_2)
	\]
be a continuous representation unramified outside finitely many primes. Assume there is some $(\mathbf{k},\mathbf{w}) \in J_F^2$, with $k_\tau \ge 2$ for each $\tau\in J_F$ and $ w= k_\tau + 2w_\tau$ independent of $\tau$, and such that
\begin{enumerate}
	\item $\det\rho = \phi\epsilon_p^{w-1}$, with $\phi$ a finite order character;
	\item for each $v|2$, $\rho|_{G_v} \cong \left( \begin{array}{cc} \ast & \ast \\ 
			& \chi_v \end{array} \right)$ such that viewing $\chi_v$ as a character of $F_v^\times$ via class field theory, $\chi_v(y) = \prod_{\tau \in J_{F,v}} y^{-w_\tau}$ on some open subgroup of $\mathcal{O}_{F_v}^\times$;
	\item for each choice of complex conjugation $c$, $\det\rho(c) = -1$.
\end{enumerate}

\noindent Let $\overline{\rho}: G_F \rightarrow \GL_2(\overline{\F})$ denote the residual representation associated to $\rho$. We assume
\begin{enumerate}
	\item[(4)] $\overline{\rho}$ is absolutely irreducible;
	\item[(5)] If $L/F$ is a CM extension such that $\overline{\rho}|_{G_L}$ is abelian, then there is some $v|2$ in $F$ that does not split in $L$;
	\item[(6)] there is a $2$-nearly ordinary regular algebraic cuspidal automorphic representation $\pi_0$ of $\GL_2(\A_F)$ with $\overline{\rho_{\pi_0}} \cong \overline{\rho}$.
\end{enumerate}
	
Under these assumptions $\rho$ is modular, i.e. there is a $2$-nearly ordinary regular algebraic cuspidal automorphic representation $\pi$ of $\GL_2(\A_F)$ such that $\rho \cong \rho_\pi$.

\end{thm}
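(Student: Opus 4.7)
The strategy is to reduce to Proposition~\ref{ProMod} by a suitable solvable totally real base change, and then descend using cyclic base change for $\GL_2$ due to Langlands. First I would choose a finite, solvable, totally real extension $F''/F$, linearly disjoint from $\overline{F}^{\ker \rhobar}$, so that after restriction to $G_{F''}$ all the standing assumptions of \S\ref{RredT} are met for $\rhobar|_{G_{F''}}$: namely $[F'':\Q]$ is even and $[F''_w : \Q_2] \ge 4$ for every $w|2$; $\rhobar|_{G_{F''_w}}$ is trivial or has order $2$ at each $w|2$; at each finite place $w \nmid 2$ where $\rho|_{G_{F''_w}}$ is ramified, it is an extension of an unramified character $\gamma_w$ by $\gamma_w\epsilon_2$ (so falls into the Steinberg-type component of \S\ref{lnotpsec}); and the dihedral CM-versus-non-CM condition of \S\ref{RredT} holds. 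The first three properties are arranged by composing a $2$-split solvable base change suitably ramified at $2$ (to raise local degrees and trivialize residual images at $2$) with a further solvable base change turning ramified local components into unramified or Steinberg-type ones; the CM case of the dihedral condition is inherited from hypothesis (5), while the non-CM case requires invoking known cases of Leopoldt's conjecture.

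Next, hypothesis (6) provides a nearly ordinary modular lift $\pi_0$ of $\rhobar$; after base change to $F''$, it is automatically $\overline{\chi}$-good at $w|2$ since the trivial-or-order-$2$ residual image uniquely determines $\overline\chi_w$. Applying Proposition~\ref{SpecifiedLift} with $\Sigma$ the places of $F''$ not above $2$ at which $\rho|_{G_{F''}}$ is ramified, and with local data $(\gamma_w, \psi, \kappa)$ read off from $\rho$, yields (after one more $L''$-allowable base change if necessary) a $2$-nearly ordinary regular algebraic cuspidal $\pi'$ of $\GL_2(\A_{F''})$ whose local components match those predicted for $\rho|_{G_{F''}}$. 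This gives a non-Eisenstein maximal ideal $\frakm$ of the Hecke algebra $\mathbf{T}_\psi(U,\eta)$ of \S\ref{RredT} with residual representation $\rhobar|_{G_{F''}}$ and a $\Lambda(G_2,\eta)$-algebra surjection $\overline{R}_{F'',S}^\psi \twoheadrightarrow \mathbf{T}_\psi(U,\eta)_\frakm$. Hypotheses (1)--(3) on $\rho$ and our arrangements at $\Sigma$ then guarantee that $\rho|_{G_{F''}}$ cuts out an arithmetic prime $\frakp$ of $\Spec \overline{R}_{F'',S}^\psi$.

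By Proposition~\ref{ProMod}, every prime of $\overline{R}_{F'',S}^\psi$ is pro-modular, so $\frakp$ pulls back from a prime of $\mathbf{T}_\psi(U,\eta)_\frakm$. Since $\frakp$ is arithmetic, Hida's control theorem (\ref{VertControl}, \ref{WeightChangeHecke}) together with the Jacquet--Langlands--Shimizu correspondence (\ref{JacquetLanglands}) produces a $2$-nearly ordinary regular algebraic cuspidal automorphic representation $\pi''$ of $\GL_2(\A_{F''})$ with $\rho_{\pi''} \cong \rho|_{G_{F''}}$. Finally, since $F''/F$ is solvable and hypothesis (4) ensures $\rho$ remains absolutely irreducible on every open subgroup, cyclic base change for $\GL_2$ descends $\pi''$ through the tower $F''/F$ to a cuspidal automorphic representation $\pi$ of $\GL_2(\A_F)$ with $\rho_\pi \cong \rho$, completing the proof. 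The main obstacle is arranging the dihedral condition in the non-CM case: here one must simultaneously bound $\mathrm{rank}_{\Z_2} \Gal((L'')_S^-/L'')$ below $[F'':\Q] - 3$ by invoking a known case of Leopoldt's conjecture, while preserving the local conditions (a)--(c) at $2$ and at places of ramification of $\rho$ and maintaining linear disjointness from $\overline{F}^{\ker \rhobar}$; all remaining steps are essentially automatic consequences of the machinery assembled in \S\S\ref{DefTheory}--\ref{LocRequalsT}.
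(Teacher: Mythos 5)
Your proposal follows the same overall route as the paper: base change to an $F''$ satisfying the standing hypotheses of \S\ref{RredT}, use hypothesis (6) together with Proposition~\ref{SpecifiedLift} to produce a nearly ordinary lift with prescribed local behavior and hence a non-Eisenstein maximal ideal, then invoke Proposition~\ref{ProMod}, Hida's control theorem, and Jacquet--Langlands, and descend by cyclic base change.

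That said, two details deserve comment. First, there is a confused phrase: you describe ``a $2$-split solvable base change suitably ramified at $2$'', but an extension split completely at $2$ cannot be ramified at $2$; the paper deliberately uses extensions that are \emph{ramified} above $2$ to raise the degrees $[F''_w:\Q_2]$ and to trivialize the residual image locally, and separately uses $2$-split (or $L$-allowable) base changes to manipulate the data at places away from $2$ without disturbing the data at $2$. Second, you elide a step that the paper spends a paragraph on: after \ref{SpecifiedLift}, the character $\chi^0_v$ attached to the modular lift satisfies $\chi^0_v(y)=\prod_\tau y^{-w_\tau}$ on all of $\mathcal{U}_v^1$, but the character $\chi_v$ from hypothesis~(2) satisfies this only on some open subgroup. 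One must perform an additional solvable base change, totally ramified of $2$-power order above $2$, to kill the finite-order discrepancy $\phi_v(y)=\chi_v(y)\prod_\tau y^{w_\tau}$, so that $\chi_v$ and $\chi^0_v$ agree on the $2$-torsion of $G_v^{\mathrm{ab}}(2)$ and hence cut out points on the same component $\Lambda(G_v,\eta_v)$. Without this step, Proposition~\ref{ProMod} need not apply to the point defined by $\rho$. Your phrase ``local components match those predicted for $\rho|_{G_{F''}}$'' is not an output of \ref{SpecifiedLift} alone; it has to be arranged. Also, ``hypothesis~(4) ensures $\rho$ remains absolutely irreducible on every open subgroup'' is false in general (a dihedral $\rhobar$ becomes reducible over the dihedral field); what saves the descent is the linear disjointness of $F''/F$ from $\overline F^{\ker\rhobar}$, which you did arrange. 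These are gaps in exposition rather than in the underlying strategy, but they are precisely the places where the proof is not ``essentially automatic''.
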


\begin{proof}

First note that by solvable base change, it suffices to prove the theorem after replacing $F$ with any finite solvable totally real extension of $F$.

In the case that $\rho|_{G_v}$ is split for $v|2$, we fix $\chi_v$ as in (2). Recall that if $\rhobar$ is solvable, then there is a unique quadratic extension $L/F$ such that $\rhobar|_{G_L}$ is abelian. In the case that this $L/F$ is CM, we fix a $v_0|p$ satisfying (5). We can find a totally real solvable extension $F'/F$ of even degree such that
\begin{itemize}
	\item[-] $F'/F$ is disjoint from the fixed field of $\ker\overline{\rho}$
	\item[-] for any $v|2$ in $F'$, we have $[F'_v:\Q_2] \ge 4$;
	\item[-] for any $v|2$ in $F'$, the image of $\rhobar|_{G_v}$ has order two if $v$ is above our fixed place $v_0$, and is trivial otherwise;
	\end{itemize}

Now assume $\overline{\rho}$ is dihedral and  that $L/F$ is not CM. Let $r$ and $s$ denote the number of real and complex embeddings of $L$ into $\C$, respectively. By assumption, $r\ge 1$. Let $L_S^\mathrm{ab}$ denote the maximal pro-$p$ abelian extension of $L$ unramified outside $S$. Let $L_S^-$ denote the maximal sub-extension of $L_S^\mathrm{ab}/L$ such that the nontrivial element of $\Gal(L/F)$ act on $\Gal(L_S^-/L)$ by $-1$. We distinguish two sub-cases depending on whether or not $L$ is contained in the $2$-adic cyclotomic extension of $F$.

First assume that $L/F$ is not contained in the $2$-adic cyclotomic extension. Let $F_n$ denote the totally real subfield of the cyclotomic extension $F(\mu_{2^n})$. Set $L_n = F_nL$, and let $r_n$ and $s_n$ denote the number of real and complex embeddings of $L_n$ into $\C$, repsectively. Note that $r_n \ge [F_n:F]$. The subgroup of $\mathcal{O}_{L_n}^\times$ on which $\Gal(L_n/F_n)$ acts via the nontrivial character has rank
	\[ r_n + s_n - [F_n:\Q] = \frac{r_n}{2} \ge \frac{[F_n:F]}{2}.
	\]
The weak Leopoldt conjecture is known to hold for the tower $\{L_n/L\}_{n\ge 1}$, c.f. \cite{NSWCohomNumFields}*{Theorem 10.3.25}. Hence, letting $\overline{\mathcal{O}_{L_n}^\times}$ denote the closure of $\mathcal{O}_{L_n}^\times$ in $(\mathcal{O}_{L_n}\otimes_\Z \Z_2)^\times$ and letting $(\overline{\mathcal{O}_{L_n}^\times})^1$ denote its maximal pro-$2$ subgroup, there is a constant $c$ such that
	\[ \mathrm{rank}_\Z \mathcal{O}_{L_n}^\times
	 - \rank_{\Z_2}(\overline{\mathcal{O}_{L_n}^\times})^1 < c
	\]
for all $n$. Then, we have
	\[ \rank_{\Z_2} \Gal((L_n)_S^-/L_n) \le [F_n:\Q] -\frac{[F_n:F]}{2} + c
	\]
for all $n$. Be replacing $F$ with $F_n$ for $n$ sufficiently large, we may assume that $\rank_{\Z_2}\Gal(L_S^-/L) < [F:\Q] - 3$.

In the case that $L/F$ is contained in the $2$-adic cyclotomic extension, we use an idea of \cite{SWcorrection}. Notice that, if $L_0$ denotes the maximal abelian subextension of $L/\Q$, then $L = L_0F$. Set $F_0 \cap L_0$. Choose an odd prime $\ell$ such that $\Q(\mu_{\ell^\infty})$ is disjoint from the fixed field of $\ker\overline{\rho}$. Let $M_n$ denote the sub-extension of $\Q(\mu_{\ell^\infty})/\Q$ of degree $\ell^n$; in particular, $M_n$ is totally real. Since $L_0/F_0$ is not CM, there is a subgroup of $\mathcal{O}_{M_nL_0}^\times$ of rank at least $\ell^n-1$ on which $\Gal(M_nL_0/M_nF_0)$ acts via the nontrivial character. Since Leopoldt's conjecture is known for abelian extensions of $\Q$, the closure of this subgroup has a maximal pro-$2$ subgroup of $\Z_2$-rank at least $\ell^n-1$. The same is then true for $M_nL/M_nF$, and
	\[ \rank_{\Z_2} \Gal( (M_nL)_S^-/(M_nL)) \le [M_nF:\Q] - \ell^n+1.
	\]
Be replacing $F$ with $M_nF$ for $n$ sufficiently large we may assume that $\rank_{\Z_2}\Gal(L_S^-/L) < [F:\Q] - 3$.

Let $\Sigma$ denote the set of finite places not above $p$ at which $\rho$ is ramified. Recall that an $L$-allowable base change is an extension $F'/F$ such that
	\begin{itemize}
		\item[-] $F'/F$ is finite of even degree, solvable and totally real
		\item[-] $F'/F$ is disjoint from $\overline{F}^{\ker \overline{\rho}}$;
		\item[-] if $\rhobar|_{G_L}$ is abelian, and $v|p$ in $F$ does not split in $L$, then no $w|v$ in $F'$ splits in $F'L$.
	\end{itemize}
By replacing $F$ with an $L$-allowable base change we may assume that $[F:\Q]$ and $\Sigma$ are both even.

By replacing $F$ with an $L$-allowable base change we may assume that for any $v\in \Sigma$, the local representation $\rho|_{G_v}$ is an extension of $\gamma_v$ by $\gamma_v\epsilon_p$, for some unramified character $\gamma_v$. By further replacing $F$ with an allowable base change, \ref{SpecifiedLift} implies we may assume the same properties for $\rho_{\pi_0}$, with the same characters $\gamma_v$ for $v\in \Sigma$, as well as that $\pi_0$ has weight $(\mathbf{k},\mathbf{w})$, and that $\det\rho = \det \rho_{\pi_0}$. Moreover, because \ref{SpecifiedLift} allows us to assume that $(\pi_0)_v^{\mathrm{Iw}_1(v)}\ne 0$ for each $v|2$, writing
	\[ \rho_{\pi_0}|_{G_v}  \cong \left( \begin{array}{cc} \ast & \ast \\ 
			& \chi_v^0 \end{array} \right) \]
with $\chi_v^0$ lifting $\overline{\chi_v}$, we have $\chi_v^0(y) = \prod_{\tau\in J_{F,v}} y^{-w_\tau}$ for every $y \in \mathcal{U}_v^1$, the maximal pro-$2$ subgroup of $\mathcal{O}_{F,v}^\times$.

For each $v|2$, let $\phi_v$ denote the finite order character of $\mathcal{U}_v^1$ given by $\phi_v(y) = \chi_v(y)\prod_{\tau\in J_{F,v}}y^{w_\tau}$. We want a finite totally ramified extension $F_v'/F_v$ of order a power of $2$ such that $\phi_v\circ\mathrm{Nm}_{F_v'/F_v}$ is trivial. However, if $v$ does not split in $L$, we further require this extension is disjoint from $L_v/F_v$, where $L_v$ denotes the completion of $L$ at the unique prime above $v$. We require this in order to ensure the resulting base change still satisfies condition (5) of the theorem. If $L_v/F_v$ is unramified, this is automatic. In the case that $L_v/F_v$ is ramified, there is a choice of uniformizer $\varpi_v$ in $F_v$ such that $\varpi_v$ is not a norm from $L_v$. By class field theory, the subgroup of $F_v^\times$ generated by the kernel of $\phi_v$, the group of prime-to-$2$ roots of unity, and the element $\varpi_v$ then gives the desired extension. We fix such an $F_v'/F_v$ for each $v|p$. Let $F'/F$ be a finite solvable, totally real extension of even degree, disjoint from $\overline{F}^{\ker \overline{\rho}}$, such that for any $v|2$ in $F$ and $w|v$ in $F'$, the completion of $F'$ at $w$ is our fixed extension $F_v'/F_v$. Then, $F'/F$ is $L$-allowable, and replacing $F$ with $F'$, we can now assume that for every $v|2$ in $F$ and every $y \in \mathcal{U}_v^1$, we have $\chi_v(y) =  \prod_{\tau\in J_{F,v}} y^{-w_\tau}$.

Let $E$ be a finite sub-extension of $\Qbar_2/\Q_2$ that contains the image of $\rho$ and $\rho_{\pi_0}$, and let $\mathcal{O}$ denote its ring of integers. For each $v|2$, recall that $G_v^\mathrm{ab}$ denotes the abelianization of $G_v$, $I_v^\mathrm{ab}$ denotes it inertia subgroup, and $G_v^\mathrm{ab}(2)$ and $I_v^\mathrm{ab}(2)$ denote their respective maximal pro-$2$ quotients. Recall also, the notation $\Lambda(G_v) = \mathcal{O}[[G_v^\mathrm{ab}(2)]]$ for $v|2$, and $\Lambda(G_2) = \hat{\otimes}_{v|2} \Lambda(G_v)$. By our choice of base changes, for every $v|2$, the restrictions of $\chi_v$ and $\chi_v^0$ to the maximal pro-$2$ subgroup of $I_v^\mathrm{ab}$ are equal. In particular, the restrictions of $\chi_v$ and $\chi_v^0$ to the $2$-torsion subgroup of $G_v^\mathrm{ab}$ are equal. Viewing $\chi_v$ and $\chi_v^0$ as lifts of $\overline{\chi_v}$ for each $v|2$, this implies that the $\mathcal{O}$-valued points of $\Lambda(G_2)$ determined by $(\chi_v)_{v|2}$ and $(\chi_v^0)_{v|2}$ lie on the same irreducible component.

Let $S = \Sigma\cup \{v|p\} \cup \{v|\infty\}$. Using the Jacquet-Langlands-Shimizu correspondence, the properties of $\pi_0$ allow us to transfer $\pi_0$ to a nearly ordinary eigenform $f$ as in \ref{HeckeAssumptions}. The field $F$ and residual representations satisfy the assumptions of \ref{ResRepAssumptions} as well as \ref{ProMod}. Then letting $\overline{R}_{F,S}^\psi$ be the quotient of $R_{F,S}\hat{\otimes}\Lambda(G_p)$ as in \ref{DefTheoryAssumptions}, we see that $(\rho,(\chi_v)_{v|2})$ defines a point of $\Spf \overline{R}_{F,S}^\psi$. By \ref{ProMod}, the representation $\rho$ arrises as a specialization of the big modular Galois representation in \ref{BigGalRep} via an $\mathcal{O}$-algebra homomorphicm $\mathbf{T}_\psi(U) \rightarrow \Qbar_2$ (for appropriate $U$). By our assumptions on the determinant and local behaviour of $\rho$, the kernel of this specialization is an arithmetic prime, and it factors through $\mathbf{T}_{\kappa,\psi}^\mathrm{no}(U(p^{a,a}),\mathcal{O})$ for some $a\ge 1$ by \ref{HorControl}. The theorem now follows from the Jacquet-Langlands-Shimizu correspondence \ref{JacquetLanglands}.
\end{proof}

Our theorem from the introduction is a result of \ref{finalthm} above together with \ref{OrdinaryLift}.

\subsection{An application to elliptic curves}\label{Examples}

We show how the main theorem implies the corollary from the introduction and conclude by giving some examples of elliptic curves satisfying the assumptions of the corollary from the introduction. For any finite place $v$ of $F$, we normalize the valuation $\val_v$ at $v$ so that a uniformizer has valuation $1$.

\subsubsection{Proof of the main corollary}\label{MainCorProof}

Let $E$ be an elliptic curve over $F$ given by the the Weierstrass equation
	\[ y^2 = x^3 + ax + b.
	\]
The curve $E$ has $2$-torsion defined over $F$ if and only if $x^3 + ax + b$ is reducible. Further, the $G_F$-representation $E[2](\Qbar)$ is absolutely irreducible if and only if the splitting field of $x^3 + ax +b$ is an $S_3$-extension, which happens if and only if $x^3+ax+b$ is irreducible and the discriminant 
	\[\Delta = -16(4a^3 + 27b^2)
	\]
of the Weierstrass equation is not a square in $F$. If the action of $G_F$ on $E[2](\Qbar)$ is absolutely irreducible, then the unique quadratic extension $L/F$ for which the action of $G_L$ on $E[2](\Qbar)$ is abelian is $F(\sqrt{\Delta})$. Lastly, $E$ has multiplicative reduction at $v|2$ if and only if $j_E$ is non-integral at $v$, and has potentially ordinary reduction if and only if $j_E$ is a unit at $v$, c.f. \cite{SilvermanEllCurves}*{Chapter V, Exercise 5.7}. The corollary now follows from the main theorem. \hfill \qed

Note that any elliptic curve satisfying our assumptions is not CM. If it were, then there would be a quadratic CM extension $L/F$ such that the representation $\rho_E|_{G_L}$ is abelian.  By the absolute irreducibility of $E[2](\Qbar)$, we have $L = F(\sqrt{\Delta})$, and $\Delta$ is totally negative. The reducibility of $\rho_E|_{G_v}$ for each $v|2$ then implies that every $v|2$ splits in $F(\sqrt{\Delta})$, a contradiction.

\subsubsection{An example}\label{TheExamples}

We give some examples of elliptic curves satisfying the assumptions of the corollary. Let $j_0 \in F$ be any real number satisfying 
\begin{enumerate}
	\item[(a)] $\mathrm{val}_v(j_0) \le 0$ for all $v|2$;
	\item[(b)] there is some $v\nmid 6$ such that $\val_v(j_0) =1$;
	\item[(c)] $j_0-1728$ is not a square in $F$;
	\item[(d)] either
		\begin{itemize}
		\item[-] there is some $\sigma: F\hookrightarrow \R$ such that $\sigma(j_0) > 1728$, or
		\item[-] there is some $v|2$ such that $\mathrm{val}_{v}(j_0)$ is odd.
	\end{itemize}
\end{enumerate}
Note that condition (c) can be ensured by assuming either that there is some real embedding $\sigma$ such that $\sigma(j_0) < 1728$, or by assuming there is some finite place $v$ such that $\val_v(j_0-1728)$ is odd.

We will show that any elliptic curve $E$ over $F$ with $j(E) = j_0$ satisfies the conditions of the Corollary. Note that if $E$ and $E'$ are non-CM elliptic curves over $F$ with the same $j$-invariant, then they are quadratic twists of each other. Since a quadratic twist has no effect on $2$-torsion, it suffices to show one elliptic curve with $j$-invariant $j_0$ satisfies the hypotheses of the corollary.

Let $E_0$ be the elliptic curve over $F$ given by the Weierstrass equation
	\[ y^2 = x^3 - \frac{27j_0}{4(j_0-1728)}x - \frac{27j_0}{4(j_0-1728)}.
	\]
One computes that the $j$-invariant of this elliptic curve is equal to $j_0$, and that the discriminant $\Delta$ of the cubic in the above Weierstrass equation is
	\[
	\Delta = -16\left(\left(- \frac{27j_0}{4(j_0-1728)}\right)^3 + 
	27\left(- \frac{27j_0}{4(j_0-1728)}\right)^2\right)
	= \frac{2^6\cdot 3^{12} \cdot j_0^2}{(j_0-1728)^3},
	\]
which is not a square in $F$ since $j_0-1728$ is not a square in $F$. Since there is some finite $v\nmid 6$ such that $\val_v(j_0) =1$, we have $\val_v(j_0-1728) = 0$ and $\val_v(-\frac{27j}{4(j_0-1728)})=1$. Then,
	\[  x^3 - \frac{27j_0}{4(j_0-1728)}x - \frac{27j_0}{4(j_0-1728)}
	\]
is irreducible by the Eisenstein criterion at $v$. Lastly, if there is a real embedding $\sigma$ of $F$ such that $\sigma(j_0) > 1728$, we have
	\[ \sigma(\Delta) = \sigma\left(\frac{2^6\cdot 3^{12} \cdot j_0^2}{(j_0-1728)^3}\right) > 0,
	\]
and $\Delta$ is not totally negative. If there is some $v|2$ such that $\val_v(j_0)$ is odd, then since it is also less than than or equal to zero, $\val_v(j_0-1728) = \val_v(j_0)$. Then, $j_0-1728$ is not a square in $F_v$, and neither is $\Delta$.

\begin{bibdiv}
\begin{biblist}

\bib{ArtinTate}{book}{
	author = {Artin, E.}
	author = {Tate, J.}
	title = {Class field theory}
	publisher = {W. A. Benjamin, Inc.}
	address = {New York-Amsterdam}
	date = {1968}
}

\bib{BlasiusRogawskiHilbert}{article}{
	author = {Blasius, D.}
	author = {Rogawski, J.}
	title = {Motives for Hilbert modular forms}
	journal = {Invent. Math.}
	volume = {114}
	date = {1993}
	number = {1}
	pages = {55-87}
}

\bib{BourbakiComAlg}{book}{
	author = {Bourbaki, N.}
	title = {Alg\`{e}bre commutative}
	publisher = {Herman}
	address = {Paris}
	year = {1962}
}

\bib{CarayolHilbert}{article}{
	author = {Carayol, H.}
	title = {Sur les repr\'{e}sentations $l$-adique associ\`{e}es aux formes modulaires de Hilbert}
	journal = {Ann. Sci. \'{E}cole Norm. Sup. (4)}
	volume = {19}
	date = {1986}
	number = {3}
	pages = {409-468}
}

\bib{CarayolAnneauLocal}{article}{
	author = {Carayol, H.}
	title = {Formes modulaires et repr\'{e}sentations galoisiennes \`{a} valeurs dans un anneau local complet}
	conference = {
		title = {$p$-adic monodromy and the Birch and Swinnerton-Dyer conjecture}
		address = {Boston, MA}
		date = {1991}
	}
	book = {
		series = {Comtemp. Math.}
		volume = {165}
		publisher = {Amer. Math. Sco.}
		address = {Providence, RI}
		date = {1994}
	}
	pages = {213-237}
}

\bib{DDTFermat}{article}{
	author = {Darmon, H.}
	author = {Diamond, F.}
	author = {Taylor, R.}
	title = {Fermat's last theorem}
	book = {
		title = {Current developments in mathematics, 1995 (Cambridge, MA)}
		publisher = {Int. Press}
		address = {Cambridge, MA}
		date = {1994}
		}
	pages = {1-154}
}

\bib{Dickinson2adic}{article}{
	author = {Dickinson, M.}
	title = {On the modularity of certain $2$-adic Galois representations}
	journal = {Duke Math. J.}
	volume = {109}
	date = {2001}
	number = {2}
	pages = {319-282}
}

\bib{FlennerOCarrollVogel}{book}{
	author = {Flenner, H.}
	author = {O'Carroll, L.}
	author = {Vogel, W.}
	title = {Joins and intersections}
	series = {Springer Monographs in Mathematics}
	publisher = {Springer-Verlag}
	address = {Berlin}
	date = {1999}
}

\bib{HidaNO}{article}{
	author = {Hida, H.}
	title = {On nearly ordinary Hecke algebras for $\mathrm{GL}(2)$ over totally real fields}
	book = {
		title = {Algebraic number theory}
		series = {Adv. Stud. Pure Math.}
		volume = {17}
		publisher = {Academic Press}
		address = {Boston, MA}
		date = {1989}
	}
	pages = {139-169}
}

\bib{HidaNOGalRep}{article}{
	author = {Hida, H.}
	title = {Nearly ordinary Hecke algebras and Galois representations of several variables}
	conference = {
		title = {Algebraic analysis, geometry, and number theory}
		address = {Baltimore, MD}
		date = {1988}
	}
	book = {
		publisher = {Johns Hopkins Univ. Press}
		address = {Baltimore, MD}
		date = {1989}
	}
	pages = {115-134}
}

\bib{GeraghtyOrdinary}{article}{
	author = {Geraghty, D.}
	title = {Modularity lifting theorems for ordinary Galois representations}
	status = {preprint}
	eprint = {http://www.math.ias.edu/~geraghty/files/oml.pdf}
	date = {2010}
}

\bib{EGA4.1}{book}{
	author = {Grothendieck, A.}
	title = {El\'{e}ments de g\'{e}om\'{e}trie alg\'{e}brique}
	volume = {IV. \'{E}tude locale des sch\'{e}mas et des morphismes de sch\'{e}mas}
	part = {I}
	publisher = {Inst. Hautes \'{E}tudes Sci. Publ. Math.}
	number = {20}
	date = {1964}
}

\bib{EGA4.2}{book}{
	author = {Grothendieck, A.}
	title = {El\'{e}ments de g\'{e}om\'{e}trie alg\'{e}brique}
	volume = {IV. \'{E}tude locale des sch\'{e}mas et des morphismes de sch\'{e}mas}
	part = {II}
	publisher = {Inst. Hautes \'{E}tudes Sci. Publ. Math.}
	number = {24}
	date = {1965}
}

\bib{EGA4.3}{book}{
	author = {Grothendieck, A.}
	title = {El\'{e}ments de g\'{e}om\'{e}trie alg\'{e}brique}
	volume = {IV. \'{E}tude locale des sch\'{e}mas et des morphismes de sch\'{e}mas}
	part = {III}
	publisher = {Inst. Hautes \'{E}tudes Sci. Publ. Math.}
	number = {28}
	date = {1966}
}

\bib{SGA2}{book}{
	author = {Grothendieck, A.}
	title = {Cohomologie local des faisceaux coh\'{e}rents et th\'{e}or\`{e}mes de Lefschetz locaux et globaux (SGA 2)}
	series = {Documents Math\'{e}matiques (Paris)}
	publisher = {Soci\'{e}t\'{e} Math\'{e}matique de France}
	address = {Paris}
	volume = {4}
	note = {Augment\'{e} d'un expos\'{e} de Mich\`{e}le Raynaud. With a preface and edited by Yves Laszlo. Revised reprint of the 1968 French original}
	date = {2005}
}

\bib{KisinOverConvFM}{article}{
	author = {Kisin, M.}
	title = {Overconvergent modular forms and the Fontain-Mazur conjecture}
	journal = {Invent. Math.}
	volume = {153}
	date = {2003}
	number = {2}
	pages = {373-454}
}

\bib{KisinFinFlat}{article}{
	author = {Kisin, M.}
	title = {Moduli of finite flat group schemes and modularity}
	date = {2009}
	journal = {Ann. of Math. (2)}
	volume = {170}
	number = {3}
	pages = {1085-1180}
}

\bib{Kisin2adic}{article}{
	author = {Kisin, M.}
	title = {Modularity of 2-adic Barsotti-Tate representations}
	date = {2009}
	journal = {Invent. Math.}
	volume = {178}
	number = {3}
	pages = {587�634}
}

\bib{KisinModof2}{article}{
	author = {Kisin, M.}
	title = {Modularity of 2-dimensional Galois representations}
	booktitle = {Current developments in mathematics 2005}
	date = {2007}
	pages = {191-230}
}

\bib{KW2}{article}{
	author = {Khare, C.}
	author = {Wintenberger, J.-P.}
	title = {Serre's modularity conjecture (II)}
	date = {2009}
	journal = {Invent. Math.}
	volume = {178}
	number = {3}
	pages = {505-586}
}

\bib{KunzPlanAlgCurv}{book}{
	author = {Kunz, E.}
	title = {Introduction to plane algebraic curves}
	translator = {Belshoff, G.}
	publisher = {Birkh\"{a}use Boston, Inc.}
	address = {Boston, MA}
	date = {2005}
}

\bib{LabuteDemuskin}{article}{
	author = {Labute, J. P.}
	title = {Classification of Demushkin groups}
	journal = {Canad. J. Math.}
	volume = {19}
	date = {1967}
	pages = {106-132}
}

\bib{MatsumuraCRT}{book}{
	author = {Matsumura, H.}
	title = {Commutative ring theory}
	publisher = {Cambridge University Press}
	date = {1989}
	address = {Cambridge}
	series = {Cambridge studies in advanced mathematics, 8}
}

\bib{MazurDefGalRep}{article}{
	author = {Mazur, B.}
	title = {Deforming Galois representations}
	conference = {
		title = {Workshop on Galois groups over $\Q$ and related topics}
		address = {Berkeley, CA}
		date = {1987}
	}
	book = {
		title = {Galois groups over $\Q$}
		publisher = {Springer-Verlag}
		series = {Math. Sci. Res. Inst. Publ.}
		volume = {16}
		address = {New York}
		date = {1989}
	}
	pages = {385-437}
}

\bib{MazurDefTheory}{article}{
	author = {Mazur, B.}
	title = {An introduction to the deformation theory of Galois representations}
	date = {1997}
	booktitle = {Modular forms and Fermat's last theorem (Boston, MA, 1995)}
	pages = {243-311}
}

\bib{MilneADT}{book}{
	author = {Milne, J. S.}
	title = {Arithmetic duality theorems}
	puplisher = {BookSurge, LLC}
	date = {2006}
	address = {Charleston, SC}
	edition = {2}
}

\bib{NSWCohomNumFields}{book}{
	author = {Neukirch, J.}
	author = {Schmidt, A.}
	author = {Wingberg, K.}
	title = {Cohomology of number fields}
	series = {Grundlehren der Mathematischen Wissenschaften [Fundamental Principles of Mathematical Sciences], 323}
	volume = {323}
	publisher = {Springer-Verlag}
	address = {Berlin}
	date = {2000}
}

\bib{NyssenPseudo}{article}{
	author = {Nyssen, L.}
	title = {Pseudo-repr\'{e}sentations}
	journal = {Math. Ann.}
	volume = {306}
	date = {1996}
	number = {2}
	pages = {257-283}
}

\bib{PinkCompactSubs}{article}{
	author = {Pink, R.}
	title = {Compact subgroups of linear algebraic groups}
	journal = {J. Algebra}
	volume = {206}
	date = {1998}
	number = {2}
	pages = {438-504}
}

\bib{RaynaudLef}{article}{
	author = {Raynaud, M.}
	title = {Th\'{e}r\`{e}mes Lefschetz en cohomologie coh\'{e}rente et en cohomologie \'{e}tale}
	journal = {Bull. Soc. Math. France, M\'{e}m.}
	number = {41}
	note = {Suppl\'{e}ment au Bull. Soc. Math. France}
	volume = {103}
	organization = {Soci\'{e}t\'{e} Math\'{e}matique de France}
	date = {1975}
}	

\bib{RouquierPseudo}{article}{
	author = {Rouquier, R.}
	title = {Charact\'{e}risation des caract\`{e}res et pseudo-caract\`{e}res}
	journal = {J. Algebra}
	volume = {180}
	date = {1996}
	number = {2}
	pages = {571-586}
}

\bib{SaitoHilbert}{article}{
	author = {Saito, T.}
	title = {Hilbert modular forms and $p$-adic Hodge theory}
	journal = {Compos. Math.}
	volume = {145}
	date = {2009}
	number = {5}
	pages = {1081-1113}
}

\bib{SerreProp}{article}{
	author = {Serre, J.-P.}
	title = {Structure de certains pro-$p$ groups (d'apr\`{e}s Demu\u{s}kin)}
	book = {
		title = {S\'{e}minaire Bourbaki}
		volume = {8}
		publisher = {Soc. Math. France}
		address = {Paris}
		date = {1995}
	}
	pages = {145-155}
}

\bib{SilvermanEllCurves}{book}{
	author = {Silverman, J. H.}
	title = {The arithmetic of elliptic curves}
	series = {Graduate texts in mathematics, 106}
	publisher = {Springer}
	address = {New York}
	date = {1986}
}		

\bib{SkinnerHilbert}{article}{
	author = {Skinner, C.}
	title = {A note on the $p$-adic Galois representations attached to Hilbert modular forms}
	journal = {Doc. Math.}
	volume = {14}
	year = {2009}
	pages = {241-258}
}

\bib{SWcorrection}{article}{
	author = {Skinner, C.}
	title = {Nearly ordinary deformations of residually dihedral representations}
	status = {preprint}
	date = {2009}
}

\bib{SWreducible}{article}{
	author = {Skinner, C.}
	author = {Wiles, A.}
	title = {Residually reducible representations and modular forms}
	date = {2000}
	journal = {Inst. Hautes \'{E}tudes Sci. Publ. Math.}
	number = {89}
	pages = {5-126}
}

\bib{SWirreducible}{article}{
	author = {Skinner, C.}
	author = {Wiles, A.}
	title = {Nearly ordinary deformations of residually irreducible representations}
	date = {2001}
	journal = {Ann. Fac. Sci. Toulouse Math. (6)}
	volume = {10}
	number = {1}
	pages = {185-215}
}

\bib{SnowdenDefRing}{article}{
	author = {Snowden, A.}
	title = {Singularities of ordinary deformation rings}
	status = {preprint}
	eprint = {http://arxiv.org/abs/1111.3654}
	date = {2011}
}

\bib{TaylorHilbert}{article}{
	author = {Taylor, R.}
	title = {On Galois representations associated to Hilbert modular forms}
	journal = {Invent. Math.}
	volume = {98}
	date = {1989}
	number = {2}
	pages = {265-280}
}

\bib{TaylorMeroDeg2}{article}{
	author = {Taylor, R.}
	title = {On the meromorphic continuation of degree two $L$-functions}
	jountal = {Doc. Math.}
	date = {2006}
	volume = {Extra Vol.}
	pages = {729-779}
}

\bib{WilesOrdinary}{article}{
	author = {Wiles, A.}
	title = {On ordinary $\lambda$-adic representations associated to modular forms}
	journal = {Invent. Math.}
	volume = {94}
	number = {3}
	date = {1988}
	pages = {529-573}
}

\bib{ZariskiSamuel}{book}{
	author = {Zariksi, O.}
	author = {Samuel, P.}
	title = {Commutative Algebra. Vol. II}
	series = {Graduate Texts in Mathematics}
	volume = {II}
	publisher = {Springer-Verlag}
	address = {New York}
	date = {1975}
	note = {Reprint of the 1960 edition}
}

\end{biblist}
\end{bibdiv}

\bibliographystyle{uclathes}

\end{document}